\documentclass[a4paper,11pt]{article}
\usepackage[dvipdfmx]{graphicx}
\usepackage{amsmath,amssymb}
\usepackage{bm,dsfont,mathtools,stmaryrd}
\usepackage{xparse,xstring,xspace}
\usepackage{amsthm,thmtools}
\usepackage[top=80pt, bottom=90pt, left=70pt, right=70pt]{geometry}
\usepackage{upref}
\usepackage{comment}
\usepackage{color}
\usepackage{enumitem}
\usepackage[%
    bookmarksnumbered,
    hypertexnames=false,
    pdfdisplaydoctitle,
    pdfusetitle,
    unicode,
    colorlinks=true,
    linkcolor=blue,
    citecolor=blue
]{hyperref}
\hypersetup{
  pdfauthor={Yuni Iwamasa}
}
\usepackage[capitalise,noabbrev]{cleveref}
\usepackage{bookmark}

\crefname{line}{Line}{Lines}
\Crefname{line}{Line}{Lines}

\usepackage{autonum}
\makeatletter
\autonum@generatePatchedReferenceCSL{Cref}
\makeatother

\usepackage{mleftright}
\mleftright

\usepackage{tikz}
\usetikzlibrary{cd}
\usetikzlibrary{positioning}
\usetikzlibrary{intersections}
\usetikzlibrary{calc}
\usetikzlibrary{arrows.meta}
\usetikzlibrary{shapes.geometric}
\usetikzlibrary{shapes.misc}
\usetikzlibrary{decorations.pathmorphing}
\usetikzlibrary{decorations.markings}
\usetikzlibrary{tikzmark}

\def\numnode{1pt}
\def\sp{5pt}
\makeatletter
\def\pgf@lib@dec@arrowhead#1#2{
  \pgfarrowtotallength{#2}
  \pgftransformxshift{\pgf@x/2}
  \pgftransformxscale{#1}
  \pgfarrowdraw{#2}
}
\makeatother

\usepackage{array}
\usepackage{booktabs}
\usepackage{tabulary}
\usepackage{tabularx}
\newcolumntype{Y}{>{\raggedright\arraybackslash}X}
\newcolumntype{P}[1]{>{\raggedright\arraybackslash}p{#1}}
\newcommand{\coxeterchambercell}[2]{%
    \begin{minipage}[c]{\linewidth}
        \centering
        Spherical:
        #1
        \par
        \vspace{0.45em}
        Euclidean:
        #2
    \end{minipage}%
}

\newcommand{\typesep}{%
    \specialrule{\lightrulewidth}{0.6em}{0.6em}%
}

\tikzset{
    dynkin vertex/.style={
        circle,
        draw,
        line width=0.4pt,
        minimum size=4.8mm,
        inner sep=0pt,
        font=\scriptsize
    },
    dynkin edge/.style={
        line width=0.45pt
    },
    dynkin double edge/.style={
        line width=0.35pt,
        double,
        double distance=1.1pt,
        -{Stealth[length=2.4pt,width=2.8pt]}
    },
    dynkin label/.style={
        font=\scriptsize
    }
}
\newcommand{\dynkindiagram}[1]{%
    \ensuremath{%
        \vcenter{\hbox{#1}}%
    }%
}

\newcommand{\DynkinDiagramA}{%
    \dynkindiagram{%
        \begin{tikzpicture}
            \node[
                circle,
                draw,
                fill=white,
                inner sep=\numnode
            ] (1) at (1,0) {$1$};

            \node[
                circle,
                draw,
                fill=white,
                inner sep=\numnode
            ] (2) at (2,0) {$1$};

            \node at (3,0) {$\cdots$};

            \node[
                circle,
                draw,
                fill=white,
                inner sep=\numnode
            ] (n-1) at (4,0) {$1$};

            \node[
                circle,
                draw,
                fill=white,
                inner sep=\numnode
            ] (n) at (5,0) {$1$};

            \node[below=\sp] at (1) {$v_1$};
            \node[below=\sp] at (2) {$v_2$};
            \node[below=\sp] at (n-1) {$v_{n-1}$};
            \node[below=\sp] at (n) {$v_n$};

            \draw (1) -- (2);
            \draw (2) -- (2.5,0);
            \draw (3.5,0) -- (n-1);
            \draw (n-1) -- (n);
        \end{tikzpicture}%
    }%
}

\newcommand{\DynkinDiagramB}{%
    \dynkindiagram{%
        \begin{tikzpicture}
            \node[
                circle,
                draw,
                fill=white,
                inner sep=\numnode
            ] (1) at (1,0) {$1$};

            \node[
                circle,
                draw,
                fill=white,
                inner sep=\numnode
            ] (2) at (2,0) {$2$};

            \node at (3,0) {$\cdots$};

            \node[
                circle,
                draw,
                fill=white,
                inner sep=\numnode
            ] (n-1) at (4,0) {$2$};

            \node[
                circle,
                draw,
                fill=white,
                inner sep=\numnode
            ] (n) at (5,0) {$2$};

            \node[below=\sp] at (1) {$v_1$};
            \node[below=\sp] at (2) {$v_2$};
            \node[below=\sp] at (n-1) {$v_{n-1}$};
            \node[below=\sp] at (n) {$v_n$};

            \draw (1) -- (2);
            \draw (2) -- (2.5,0);
            \draw (3.5,0) -- (n-1);

            \draw[
                decoration={
                    markings,
                    mark=at position 0.5 with {\arrow{>}}
                },
                postaction=decorate,
                double distance=1.5pt
            ] (n-1) -- (n);
        \end{tikzpicture}%
    }%
}

\newcommand{\DynkinDiagramC}{%
    \dynkindiagram{%
        \begin{tikzpicture}
            \node[
                circle,
                draw,
                fill=white,
                inner sep=\numnode
            ] (1) at (1,0) {$2$};

            \node[
                circle,
                draw,
                fill=white,
                inner sep=\numnode
            ] (2) at (2,0) {$2$};

            \node at (3,0) {$\cdots$};

            \node[
                circle,
                draw,
                fill=white,
                inner sep=\numnode
            ] (n-1) at (4,0) {$2$};

            \node[
                circle,
                draw,
                fill=white,
                inner sep=\numnode
            ] (n) at (5,0) {$1$};

            \node[below=\sp] at (1) {$v_1$};
            \node[below=\sp] at (2) {$v_2$};
            \node[below=\sp] at (n-1) {$v_{n-1}$};
            \node[below=\sp] at (n) {$v_n$};

            \draw (1) -- (2);
            \draw (2) -- (2.5,0);
            \draw (3.5,0) -- (n-1);

            \draw[
                decoration={
                    markings,
                    mark=at position 0.5 with {\arrow{>}}
                },
                postaction=decorate,
                double distance=1.5pt
            ] (n) -- (n-1);
        \end{tikzpicture}%
    }%
}

\newcommand{\DynkinDiagramDThree}{%
    \dynkindiagram{%
        \begin{tikzpicture}
            \node[
                circle,
                draw,
                fill=white,
                inner sep=\numnode
            ] (1) at (1,0) {$1$};

            \node[
                circle,
                draw,
                fill=white,
                inner sep=\numnode
            ] (2) at (2,0.55) {$1$};

            \node[
                circle,
                draw,
                fill=white,
                inner sep=\numnode
            ] (3) at (2,-0.55) {$1$};

            \node[below left=\sp] at (1) {$v_1$};
            \node[above=\sp] at (2) {$v_2$};
            \node[below=\sp] at (3) {$v_3$};

            \draw (1) -- (2);
            \draw (1) -- (3);
        \end{tikzpicture}%
    }%
}

\newcommand{\DynkinDiagramD}{%
    \dynkindiagram{%
        \begin{tikzpicture}
            \node[
                circle,
                draw,
                fill=white,
                inner sep=\numnode
            ] (1) at (1,0) {$1$};

            \node[
                circle,
                draw,
                fill=white,
                inner sep=\numnode
            ] (2) at (2,0) {$2$};

            \node at (3,0) {$\cdots$};

            \node[
                circle,
                draw,
                fill=white,
                inner sep=\numnode
            ] (n-2) at (4,0) {$2$};

            \node[
                circle,
                draw,
                fill=white,
                inner sep=\numnode
            ] (n-1) at (5,0.55) {$1$};

            \node[
                circle,
                draw,
                fill=white,
                inner sep=\numnode
            ] (n) at (5,-0.55) {$1$};

            \node[below=\sp] at (1) {$v_1$};
            \node[below=\sp] at (2) {$v_2$};
            \node[below=\sp] at (n-2) {$v_{n-2}$};
            \node[above=\sp] at (n-1) {$v_{n-1}$};
            \node[below=\sp] at (n) {$v_n$};

            \draw (1) -- (2);
            \draw (2) -- (2.5,0);
            \draw (3.5,0) -- (n-2);
            \draw (n-2) -- (n-1);
            \draw (n-2) -- (n);
        \end{tikzpicture}%
    }%
}

\newcommand{\ExtendedDynkinDiagramAOne}{%
    \dynkindiagram{%
        \begin{tikzpicture}
            \node[
                circle,
                draw,
                fill=white,
                inner sep=\numnode
            ] (0) at (1,0) {$1$};

            \node[
                circle,
                draw,
                fill=white,
                inner sep=\numnode
            ] (1) at (2.2,0) {$1$};

            \node[below=\sp] at (0) {$v_0$};
            \node[below=\sp] at (1) {$v_1$};

            \draw (0) -- (1);
            \node at (1.6,0.28) {$\infty$};
        \end{tikzpicture}%
    }%
}

\newcommand{\ExtendedDynkinDiagramA}{%
    \dynkindiagram{%
        \begin{tikzpicture}
            \node[
                circle,
                draw,
                fill=white,
                inner sep=\numnode
            ] (1) at (1,0) {$1$};

            \node[
                circle,
                draw,
                fill=white,
                inner sep=\numnode
            ] (2) at (2,0) {$1$};

            \node at (3,0) {$\cdots$};

            \node[
                circle,
                draw,
                fill=white,
                inner sep=\numnode
            ] (n-1) at (4,0) {$1$};

            \node[
                circle,
                draw,
                fill=white,
                inner sep=\numnode
            ] (n) at (5,0) {$1$};

            \node[
                circle,
                draw,
                fill=white,
                inner sep=\numnode
            ] (0) at (3,1.05) {$1$};

            \node[below=\sp] at (1) {$v_1$};
            \node[below=\sp] at (2) {$v_2$};
            \node[below=\sp] at (n-1) {$v_{n-1}$};
            \node[below=\sp] at (n) {$v_n$};
            \node[above=\sp] at (0) {$v_0$};

            \draw (1) -- (2);
            \draw (2) -- (2.5,0);
            \draw (3.5,0) -- (n-1);
            \draw (n-1) -- (n);
            \draw (0) -- (1);
            \draw (0) -- (n);
        \end{tikzpicture}%
    }%
}

\newcommand{\ExtendedDynkinDiagramBTwo}{%
    \dynkindiagram{%
        \begin{tikzpicture}
            \node[
                circle,
                draw,
                fill=white,
                inner sep=\numnode
            ] (1) at (1,0) {$1$};

            \node[
                circle,
                draw,
                fill=white,
                inner sep=\numnode
            ] (2) at (2,0) {$2$};

            \node[
                circle,
                draw,
                fill=white,
                inner sep=\numnode
            ] (0) at (3,0) {$1$};

            \node[below=\sp] at (1) {$v_1$};
            \node[below=\sp] at (2) {$v_2$};
            \node[below=\sp] at (0) {$v_0$};

            \draw[
                decoration={
                    markings,
                    mark=at position 0.5 with {\arrow{>}}
                },
                postaction=decorate,
                double distance=1.5pt
            ] (1) -- (2);

            \draw[
                decoration={
                    markings,
                    mark=at position 0.5 with {\arrow{>}}
                },
                postaction=decorate,
                double distance=1.5pt
            ] (0) -- (2);
        \end{tikzpicture}%
    }%
}

\newcommand{\ExtendedDynkinDiagramB}{%
    \dynkindiagram{%
        \begin{tikzpicture}
            \node[
                circle,
                draw,
                fill=white,
                inner sep=\numnode
            ] (0) at (1,0.55) {$1$};

            \node[
                circle,
                draw,
                fill=white,
                inner sep=\numnode
            ] (1) at (1,-0.55) {$1$};

            \node[
                circle,
                draw,
                fill=white,
                inner sep=\numnode
            ] (2) at (2,0) {$2$};

            \node at (3,0) {$\cdots$};

            \node[
                circle,
                draw,
                fill=white,
                inner sep=\numnode
            ] (n-1) at (4,0) {$2$};

            \node[
                circle,
                draw,
                fill=white,
                inner sep=\numnode
            ] (n) at (5,0) {$2$};

            \node[above=\sp] at (0) {$v_0$};
            \node[below=\sp] at (1) {$v_1$};
            \node[below=\sp] at (2) {$v_2$};
            \node[below=\sp] at (n-1) {$v_{n-1}$};
            \node[below=\sp] at (n) {$v_n$};

            \draw (0) -- (2);
            \draw (1) -- (2);
            \draw (2) -- (2.5,0);
            \draw (3.5,0) -- (n-1);

            \draw[
                decoration={
                    markings,
                    mark=at position 0.5 with {\arrow{>}}
                },
                postaction=decorate,
                double distance=1.5pt
            ] (n-1) -- (n);
        \end{tikzpicture}%
    }%
}

\newcommand{\ExtendedDynkinDiagramC}{%
    \dynkindiagram{%
        \begin{tikzpicture}
            \node[
                circle,
                draw,
                fill=white,
                inner sep=\numnode
            ] (0) at (0,0) {$1$};

            \node[
                circle,
                draw,
                fill=white,
                inner sep=\numnode
            ] (1) at (1,0) {$2$};

            \node[
                circle,
                draw,
                fill=white,
                inner sep=\numnode
            ] (2) at (2,0) {$2$};

            \node at (3,0) {$\cdots$};

            \node[
                circle,
                draw,
                fill=white,
                inner sep=\numnode
            ] (n-1) at (4,0) {$2$};

            \node[
                circle,
                draw,
                fill=white,
                inner sep=\numnode
            ] (n) at (5,0) {$1$};

            \node[below=\sp] at (0) {$v_0$};
            \node[below=\sp] at (1) {$v_1$};
            \node[below=\sp] at (2) {$v_2$};
            \node[below=\sp] at (n-1) {$v_{n-1}$};
            \node[below=\sp] at (n) {$v_n$};

            \draw[
                decoration={
                    markings,
                    mark=at position 0.5 with {\arrow{>}}
                },
                postaction=decorate,
                double distance=1.5pt
            ] (0) -- (1);

            \draw (1) -- (2);
            \draw (2) -- (2.5,0);
            \draw (3.5,0) -- (n-1);

            \draw[
                decoration={
                    markings,
                    mark=at position 0.5 with {\arrow{>}}
                },
                postaction=decorate,
                double distance=1.5pt
            ] (n) -- (n-1);
        \end{tikzpicture}%
    }%
}

\newcommand{\ExtendedDynkinDiagramDThree}{%
    \dynkindiagram{%
        \begin{tikzpicture}
            \node[
                circle,
                draw,
                fill=white,
                inner sep=\numnode
            ] (1) at (1,0) {$1$};

            \node[
                circle,
                draw,
                fill=white,
                inner sep=\numnode
            ] (2) at (2,0.75) {$1$};

            \node[
                circle,
                draw,
                fill=white,
                inner sep=\numnode
            ] (0) at (3,0) {$1$};

            \node[
                circle,
                draw,
                fill=white,
                inner sep=\numnode
            ] (3) at (2,-0.75) {$1$};

            \node[left=\sp] at (1) {$v_1$};
            \node[above=\sp] at (2) {$v_2$};
            \node[right=\sp] at (0) {$v_0$};
            \node[below=\sp] at (3) {$v_3$};

            \draw (1) -- (2);
            \draw (2) -- (0);
            \draw (0) -- (3);
            \draw (3) -- (1);
        \end{tikzpicture}%
    }%
}

\newcommand{\ExtendedDynkinDiagramD}{%
    \dynkindiagram{%
        \begin{tikzpicture}
            \node[
                circle,
                draw,
                fill=white,
                inner sep=\numnode
            ] (0) at (1,0.55) {$1$};

            \node[
                circle,
                draw,
                fill=white,
                inner sep=\numnode
            ] (1) at (1,-0.55) {$1$};

            \node[
                circle,
                draw,
                fill=white,
                inner sep=\numnode
            ] (2) at (2,0) {$2$};

            \node at (3,0) {$\cdots$};

            \node[
                circle,
                draw,
                fill=white,
                inner sep=\numnode
            ] (n-2) at (4,0) {$2$};

            \node[
                circle,
                draw,
                fill=white,
                inner sep=\numnode
            ] (n-1) at (5,0.55) {$1$};

            \node[
                circle,
                draw,
                fill=white,
                inner sep=\numnode
            ] (n) at (5,-0.55) {$1$};

            \node[above=\sp] at (0) {$v_0$};
            \node[below=\sp] at (1) {$v_1$};
            \node[below=\sp] at (2) {$v_2$};
            \node[below=\sp] at (n-2) {$v_{n-2}$};
            \node[above=\sp] at (n-1) {$v_{n-1}$};
            \node[below=\sp] at (n) {$v_n$};

            \draw (0) -- (2);
            \draw (1) -- (2);
            \draw (2) -- (2.5,0);
            \draw (3.5,0) -- (n-2);
            \draw (n-2) -- (n-1);
            \draw (n-2) -- (n);
        \end{tikzpicture}%
    }%
}
\newcommand{\positiveRootBlock}[1]{%
    \begin{minipage}[c]{\linewidth}
        \centering
        \setlength{\tabcolsep}{4pt}%
        \renewcommand{\arraystretch}{1.25}%
        \begin{tabular}{
            @{}
            >{\centering\arraybackslash}m{0.62\linewidth}
            >{\centering\arraybackslash}m{0.28\linewidth}
            @{}
        }
            #1
        \end{tabular}
    \end{minipage}%
}

\declaretheorem[style=plain,numberwithin=section,name=Theorem]{theorem}
\declaretheorem[style=plain,sibling=theorem,name=Lemma]{lemma}
\declaretheorem[style=plain,sibling=theorem,name=Proposition]{proposition}
\declaretheorem[style=plain,sibling=theorem,name=Corollary]{corollary}
\declaretheorem[style=plain,sibling=theorem,name=Claim]{claim}

\declaretheorem[style=definition,sibling=theorem,name=Example,qed=$\blacksquare$]{example}

\declaretheorem[style=definition,sibling=theorem,name=Remark,qed=$\blacksquare$]{remark}

\crefname{theorem}{Theorem}{Theorems}
\crefname{proposition}{Proposition}{Propositions}
\crefname{lemma}{Lemma}{Lemmas}
\crefname{exmp}{Example}{Examples}
\crefname{corollary}{Corollary}{Corollaryies}
\crefname{claim}{Claim}{Claims}
\crefname{remark}{Remark}{Remarks}
\crefname{section}{Section}{Sections}

\DeclarePairedDelimiterXPP{\sk}[1]{\mathcal{V}}{\lparen}{\rparen}{}{#1}

\newcommand{\subsp}[1]{{#1}_{\mathrm{sp}}}
\newcommand{\sgn}{\sigma}
\newcommand{\RS}{\Phi}
\newcommand{\simp}{\Delta}

\newcommand{\esimp}{\tilde{\simp}}
\newcommand{\Dynkin}[2]{{\textup{#1}}_{#2}}
\newcommand{\RSA}{\Phi_{\textup{A}_n}}
\newcommand{\RSB}{\Phi_{\textup{B}_n}}
\newcommand{\RSC}{\Phi_{\textup{C}_n}}

\newcommand{\RSD}{\Phi_{\textup{D}_n}}
\newcommand{\CC}{\Sigma}
\newcommand{\spCC}{{\Sigma}_{\mathrm{sp}}}
\newcommand{\euCC}{{\Sigma}_{\mathrm{Euc}}}

\newcommand{\Ldom}{\mathcal{V}}

\newcommand{\Mdom}{\mathcal{L}}

\newcommand{\calH}{\mathcal{H}}
\newcommand{\lincalH}{\mathcal{H}_{\mathrm{lin}}}
\newcommand{\affcalH}{\mathcal{H}_{\mathrm{aff}}}

\newcommand{\onevec}{\mathds{1}}
\newcommand{\MP}{\textup{MP}\xspace}

\newcommand{\Lpoly}[1]{\mathcal{L}_0\left[ #1 \right]}
\newcommand{\Lsub}[2]{{}_{0}\mathcal{L}\left[ #1 \to #2 \right]}
\newcommand{\qLfunc}[2]{\hat{\mathcal{L}}\left[ #1 \to #2 \right]}
\newcommand{\Lfunc}[2]{\mathcal{L}\left[ #1 \to #2 \right]}
\newcommand{\Mpoly}[1]{\mathcal{M}_0\left[ #1 \right]}
\newcommand{\Msub}[2]{{}_{0}\mathcal{M}\left[ #1 \to #2 \right]}
\newcommand{\qMfunc}[2]{\hat{\mathcal{M}}\left[ #1 \to #2 \right]}
\newcommand{\Mfunc}[2]{\mathcal{M}\left[ #1 \to #2 \right]}
\newcommand{\vdist}[1]{\check{\mathcal{T}}\left[ #1 \right]}
\newcommand{\tdist}[1]{\mathcal{T}\left[ #1 \right]}
\newcommand{\subm}[2]{\mathcal{S}_{#1}\left[ #2 \right]}
\newcommand{\chamber}[2]{C_{#1}^{#2}}

\DeclarePairedDelimiter{\rbra}{\lparen}{\rparen} 
\DeclarePairedDelimiter{\sbra}{\lbrack}{\rbrack} 
\DeclarePairedDelimiter{\floor}{\lfloor}{\rfloor} 
\DeclarePairedDelimiter{\ceil}{\lceil}{\rceil} 

\newcommand{\geo}[1]{|{#1}|}
\newcommand{\card}[1]{\# {#1}}
\newcommand{\ext}[1]{\overline{#1}}
\newcommand{\phext}[1]{\overline{#1}^{+}}

\newcommand{\cell}[2]{{#1}_{#2}}
\newcommand{\isc}[2]{{#1}\left[{#2}\right]}
\newcommand{\opnbor}[1]{\mathrm{N}^{\circ}_{#1}}
\newcommand{\nbor}[1]{\mathrm{N}_{#1}}

\newcommand{\St}[2]{\mathrm{St}_{#1}\left({#2}\right)}
\newcommand{\Lk}[2]{\mathrm{Lk}_{#1}\left({#2}\right)}

\newcommand{\B}[1]{\mathbf{B}( {#1} )}
\newcommand{\D}[1]{\mathbf{D}( {#1} )}
\newcommand{\bS}[1]{\mathbf{S}( {#1} )}
\DeclarePairedDelimiterX{\gen}[2]{\langle}{\rangle}{#1\;\delimsize\vert\;#2}

\newcommand{\face}[2]{ {#1}_{#2} }
\newcommand{\tRS}[2]{\RS_{#1}( {#2} )}
\newcommand{\bartRS}[2]{\bar{\RS}_{#1}( {#2} )}

\DeclarePairedDelimiterX{\inpr}[2]{\langle}{\rangle}{#1, #2}
\DeclarePairedDelimiterX{\set}[1]{\lbrace}{\rbrace}{\,#1\,}
\DeclarePairedDelimiterX{\Set}[2]{\lbrace}{\rbrace}{#1\;\delimsize\vert\;#2}

\newcommand{\Z}{\mathbb{Z}}

\newcommand{\R}{\mathbb{R}}
\newcommand{\ER}{\mathbb{R} \cup \{+\infty\}}
\newcommand{\EZ}{\mathbb{Z} \cup \{+\infty\}}
\newcommand{\eps}{\varepsilon}
\newcommand{\doms}[2]{{#1}\to{#2}}
\newcommand{\funcdoms}[3]{{#1}\vcentcolon\doms{#2}{#3}}

\DeclareMathOperator{\dom}{dom}
\DeclareMathOperator{\epi}{epi}

\DeclareMathOperator{\supp}{supp}
\DeclareMathOperator{\spn}{span}

\DeclareMathOperator*{\argmin}{argmin}

\DeclareMathOperator{\conv}{conv}
\DeclareMathOperator{\cl}{cl}
\DeclareMathOperator{\clconv}{\overline{conv}}
\DeclareMathOperator{\cone}{cone}

\DeclareMathOperator{\aff}{aff}
\DeclareMathOperator{\relint}{relint}
\DeclareMathOperator{\intr}{int}
\newcommand{\tcone}[2]{\mathrm{T}_{#2} ( #1 )}
\newcommand{\cvcoeff}[1]{\Lambda_{#1}}
\newcommand{\cncoeff}[1]{M_{#1}}
\DeclareMathOperator{\fan}{\Pi}
\DeclareMathOperator{\barfan}{\overline{\fan}}

\newcommand{\spt}[1]{\sigma \left[ #1 \right]}
\newcommand{\symdiff}{\mathbin{\triangle}}

\DeclareMathOperator{\OG}{O}

\makeatletter
\newsavebox{\@brx}
\newcommand{\llangle}[1][]{\savebox{\@brx}{\(\m@th{#1\langle}\)}%
  \mathopen{\copy\@brx\mkern2mu\kern-0.9\wd\@brx\usebox{\@brx}}}
\newcommand{\rrangle}[1][]{\savebox{\@brx}{\(\m@th{#1\rangle}\)}%
  \mathclose{\copy\@brx\mkern2mu\kern-0.9\wd\@brx\usebox{\@brx}}}
\makeatother

\DeclarePairedDelimiter{\even}{\llbracket}{\rrbracket}
\newcommand{\odd}[2][]{\llangle[#1] #2 \rrangle[#1]}
\newcommand{\qodd}[2][]{\llangle[#1] #2 \rrangle[#1]_{\mkern-2mu \ast}}
\newcommand{\qqodd}[2][]{\llangle[#1] #2 \rrangle[#1]_{\mkern-2mu \ast}^{\mkern-2mu \pm}}
\DeclarePairedDelimiterXPP{\qeven}[1]{}{\llbracket}{\rrbracket}{_{\ast}}{#1}
\DeclarePairedDelimiterXPP{\qqeven}[1]{}{\llbracket}{\rrbracket}{_{\ast}^{\pm}}{#1}
\DeclarePairedDelimiterXPP{\qfloor}[1]{}{\lfloor}{\rrbracket}{_{\ast}}{#1}
\DeclarePairedDelimiterXPP{\qceil}[1]{}{\lceil}{\rrbracket}{_{\ast}}{#1}
\newcommand{\qqfloor}[2][]{\llfloor #2 \rrangle[#1]_{\mkern-2mu \ast}}
\newcommand{\qqceil}[2][]{\llceil #2 \rrangle[#1]_{\mkern-2mu \ast}}

\makeatletter

\@addtoreset{equation}{section}
\makeatother
\usepackage{bm}
\allowdisplaybreaks

\begin{document}
	\title{Towards discrete convex analysis over classical root systems}
	\author{Yuni Iwamasa\thanks{Department of Mathematics, Graduate School of Science, Kobe University, Kobe, 657-8501, Japan.
			Email: \texttt{iwamasa@math.kobe-u.ac.jp}}}
	\date{\today}
	\maketitle
	
\begin{abstract}
  Discrete Convex Analysis (DCA) is a discrete analog of continuous convex analysis, 
  originally proposed as a unified theoretical framework for efficiently solvable combinatorial optimization problems. 
  Recently, DCA has proven to be a powerful tool across diverse fields, ranging from operations research to economics and pure mathematics. 
  
  Motivated by the broad applicability of DCA, this paper establishes a unified theory of discrete convex analysis over discrete structures arising from classical root systems, extending the usual setting of the integer lattice, which essentially corresponds to type~A.
  We adopt the vertex set of the Euclidean Coxeter complex as the primal discrete domain for L-convexity, 
  and the root lattice as the dual discrete domain for M-convexity. 
  Using the associated polyhedral structures, we formulate L- and M-convex
functions together with notions of integrality determined by the root system.
  We show that local optimality guarantees global optimality for these functions. 
  Furthermore, we establish that integral L-convex functions and integral M-convex functions correspond one-to-one via the discrete Fenchel--Legendre conjugate, thereby extending the conjugacy in the original DCA from type~A
to all classical root systems.
\end{abstract}
\begin{quote}
	{\bf Keywords:}
  Discrete convex analysis, L-convexity, M-convexity, Fenchel--Legendre conjugate, root systems, spherical Coxeter complexes, Euclidean Coxeter complexes, root lattices, submodular functions, matroids.
\end{quote}

\setcounter{tocdepth}{2}
\tableofcontents

\section{Introduction}\label{sec:intro}
\emph{Discrete Convex Analysis (DCA)}, initiated by Murota~\cite{Murota1998-yj,Murota2003-yb},
is a theory that combines convex analysis with submodular and matroid theory.
DCA originally emerged as a theoretical framework for efficiently solvable combinatorial optimization problems related to network flows and submodular/matroid optimization.
Recently, however,
it has appeared not only in the field of combinatorial optimization but also in various applied mathematical fields such as economics, game theory~\cite{Murota2016-kn}, and operations research~\cite{Simchi-Levi2014-jf,Chen2017-cy,Chen2021-xr,Chen2021-xa}, as well as in pure mathematics such as algebraic geometry (e.g., Lorentzian polynomials~\cite{Branden2020-nj}); see also~\cite{Murota2008-ej,Murota2024} for recent surveys.
DCA consists of two classes of convex functions defined on the integer lattice, called \emph{L-convex functions} and \emph{M-convex functions}.
The former is a generalization of \emph{submodular functions}~\cite{Fujishige2005-sp} and is defined by the so-called \emph{discrete midpoint convexity},
and the latter is a generalization of \emph{(valuated) matroids}~\cite{Murota2010-ar,Oxley2011-fw}
and is defined based on the simultaneous exchange property of matroids.
For these functions, local optimality guarantees global optimality.
Furthermore, 
an intriguing fact is that the integer-valued L- and M-convex functions (referred to as the \emph{integral} L- and M-convex functions) correspond one-to-one via the \emph{discrete Fenchel--Legendre conjugate}.

The purpose of this research is to establish a unified theory of
\emph{DCA over discrete structures arising from classical root systems}.
Specifically, in this paper, for classical root systems, 
we introduce the concepts of \emph{L-convexity over the vertex set of the Euclidean Coxeter complex} and \emph{M-convexity over the root lattice}. 
Furthermore, by formulating an appropriate notion of integrality
for each root system,
we show that integral L-convex functions and integral M-convex functions correspond one-to-one via the discrete Fenchel--Legendre conjugate.

Here, a \emph{root system} (see e.g.,~\cite{Humphreys1973-ly,Humphreys1992}) is a finite set consisting of (dual) vectors, called \emph{roots}, of a Euclidean space that possesses an integral structure capturing the symmetries of finite reflection groups (see \Cref{subsec:def:root-system} for the formal definition).
It was originally introduced in the late 19th century to classify complex semisimple Lie algebras,
and now serves as a fundamental mathematical structure across various disciplines, including representation theory~\cite{Humphreys1973-ly,Hall2015}, algebraic geometry~\cite{Slodowy1980-ub}, mathematical physics~\cite{Carter2005-jq}, and combinatorics~\cite{Bjorner2005-nz}. 
Irreducible root systems are completely classified.
The classification contains four infinite families, called the
\emph{classical} root systems, of types A, B, C, and D.

Numerous extensions of L- and M-convexity, including those of submodular functions and matroids, have been proposed so far, and most of them closely relate to classical root systems. 
The original L- and M-convexity can be viewed as those over discrete structures arising from the root system of type~A. 
As variants in terms of type~B, \emph{$\Delta$-matroids}~\cite{Bouchet1987-gl,Dress1986-oh,Chandrasekaran1988-wc} (see also~\cite[Chapter~4]{Borovik2011-io}) and \emph{BS-convex sets/functions}~\cite{Fujishige2014-bj} (see also \cite{Iwamasa2024-jw}) generalize matroids and M-convex sets/functions, respectively.
\emph{Bisubmodular functions}~\cite{Qi1988-wj,Bouchet1995-ba} are a generalization of submodular functions in terms of type~B or~C (see also~\cite{Ardila2020-ll}), 
and \emph{UJ-convex functions}~\cite{Fujishige2014-bj} are a generalization of L-convex functions in terms of type~C. 
\emph{Even $\Delta$-matroids}, which form a special subclass of $\Delta$-matroids, can be viewed as a generalization of matroids in terms of type~D (see~\cite[Chapter~4]{Borovik2011-io}). 
Furthermore, \emph{jump systems}~\cite{Bouchet1995-ba} were formulated by extending the exchange axioms of $\Delta$-matroids to the integer lattice. 
\emph{Valuated (even) $\Delta$-matroids}~\cite{Dress1991-ty,Wenzel1993-ee,Takazawa2014-rl} and \emph{M-convex functions on jump systems}~\cite{Murota2006-qi,Murota2021-hx} are valuated generalizations of the corresponding discrete structures,
which are defined based on the simultaneous exchange property.
In combinatorial optimization, these 
serve as mathematical structures capturing polynomial-time solvable combinatorial optimization problems that are not covered by standard DCA, including non-bipartite matchings and related problems;
see also, e.g., \cite{Cunningham2002-gf, Murota2006-qi,Kobayashi2009-rh, Kobayashi2012-cm,Kazda2018-ov, Murota2021-hx, Iwamasa2026-ms} and references therein. 
Related generalizations to arbitrary finite reflection groups include
\emph{Coxeter matroids}~\cite{Borovik2011-io} and, more recently,
\emph{Coxeter submodular functions}~\cite{Ardila2020-ll}.
In addition, L-convex functions particularly admit generalizations as discretizations of convex functions defined on metric spaces more general than Euclidean spaces, specifically \emph{CAT(0) spaces} (see~\cite{Bridson1999-yl}).
These include \emph{$k$-submodular functions}~\cite{Huber2012-dg}, \emph{L${}^\natural$-convex functions on rooted trees}~\cite{Kolmogorov2011-fg}, and \emph{L-convex functions on buildings}~\cite{Hirai2017-gx,Hirai2018-lw,Hamada2021-gw,Hirai2025-go},
which can also be interpreted as convex functions defined over discrete structures arising from root systems, particularly, \emph{Euclidean buildings}~\cite{Abramenko2010}.

The discrete version of convex analysis capturing the extended concepts associated with other root systems has not been well established. 
Indeed,
in the fields of discrete mathematics and combinatorics, 
although generalized discrete structures have been extensively investigated, 
these mathematical objects have rarely been analyzed from the viewpoint of convex analysis. 
Meanwhile, in the context of combinatorial optimization, the underlying discrete structures for the theory have conventionally been restricted to the integer lattice, although the integer lattice might not necessarily be the appropriate discrete structure for developing convex analysis for other types. 
Furthermore, quantitative extensions such as valuated $\Delta$-matroids and M-convex functions on jump systems in \cite{Dress1991-ty,Wenzel1993-ee,Takazawa2014-rl,Murota2006-qi,Murota2021-hx} are defined based on the simultaneous exchange property,
which enables us to easily guarantee that local optimality implies global optimality. 
However, this introduces a gap with the underlying combinatorial structures, i.e., $\Delta$-matroids and jump systems, which do not always satisfy this property (see~\cite{calvert2026characterisationsstrongdeltamatroids}). 
Consequently, in the context of combinatorial optimization, 
these valuated concepts are essentially restricted to functions defined on a special subclass of $\Delta$-matroids or jump systems where the simultaneous exchange property holds. 
This structural discrepancy between the general combinatorial structures and the restricted domains of their valuated counterparts has acted as a major barrier to constructing a theory that treats both the combinatorial structures and the valuated functions in a unified manner.

To establish such a unified framework, one must return to (continuous) convex analysis (see e.g.,~\cite{Rockafellar1996-wm}). 
In continuous optimization, convexity ensures that local optimality implies global optimality and establishes a general duality framework known as \emph{Fenchel duality}. 
A crucial role in Fenchel duality is played by the fact that proper closed convex functions on a Euclidean space $V$ and those on its dual space $V^*$ correspond one-to-one via the \emph{Fenchel--Legendre conjugate}. 
DCA successfully discretizes this conjugacy. 
It is worth noting that discretizing the theory of convex analysis is far from trivial. 
The continuous notion of convexity depends on continuous operations, such as the midpoint of two points, which generally fall outside a discrete domain; 
there is no immediately canonical way to define convexity on discrete structures. 
Furthermore, preserving the conjugacy requires carefully designing discrete structures both in the primal and dual spaces so that these are compatible with the Fenchel--Legendre conjugate.

Discrete structures arising from root systems---the \emph{Euclidean Coxeter complex} and the \emph{root lattice}---provide a suitable framework for developing the discrete Fenchel--Legendre conjugate. 
Recall that
continuous conjugacy connects a primal space $V$ and its dual space $V^*$
and
that an element in $V^*$ acts as a linear function on $V$, 
geometrically representing the normal vector of a hyperplane. 
To construct consistent discrete domains in both spaces, 
one can introduce a highly regular, locally finite arrangement of affine hyperplanes in $V$ by a root system. 
This arrangement partitions $V$ into a simplicial complex known as the \emph{Euclidean Coxeter complex}, 
whose vertex set serves as the primal discrete domain. 
Correspondingly, the normal vectors of these hyperplanes, which are the roots themselves, 
generate a lattice in $V^*$ called the \emph{root lattice}. 
By adopting the root lattice as the dual discrete domain, 
the geometric and algebraic correspondence between the hyperplanes in $V$ and their normal vectors in $V^*$ is maintained. 
This structural relationship provides the discrete primal-dual framework necessary to preserve conjugacy.

The main contribution of this paper is to establish DCA over discrete structures arising from classical root systems. 
Specifically, the details are summarized as follows:

\begin{itemize}
  \item
  As a geometric foundation for the theory in Euclidean spaces, we define
  \emph{L- and M-convex cones and polyhedra} and
  \emph{L- and M-sublinear functions} using the structure of root systems,
  together with appropriate notions of \emph{integrality}.
  Integral L-convex polyhedra correspond to the geometric
  realizations of convex subcomplexes of the Euclidean Coxeter
  complex, whereas the integrality of M-convex polyhedra is defined
  with respect to the root lattice.
  The integral variants of L- and M-sublinear functions are defined
  so that the correspondences between polyhedra and sublinear
  functions preserve integrality.

  \item
  We then formulate \emph{locally polyhedral L- and M-convex functions}
  in Euclidean spaces.
  They are defined by the condition that their minimizer sets
  under linear perturbations are L- or M-convex polyhedra, or
  equivalently, that their directional derivatives are L- or
  M-sublinear functions.
  Such a function is called \emph{primal-integral} if its minimizer
  sets are integral, \emph{dual-integral} if its directional
  derivatives are integral, and \emph{integral} if it satisfies both
  conditions.
  It is worth mentioning that the integrality is defined relative to the root system and need not coincide with integer-valuedness outside type A.
  Defining L- and M-convexity in terms of minimizer sets is similar to
the definitions of BS-convex
  functions~\cite{Fujishige2014-bj} and valuated
  $\Delta$-matroids in the sense of~\cite{cheung2025valuateddeltamatroidsprincipal}, whose underlying $\Delta$-matroids need not
  satisfy the simultaneous exchange property.
  It therefore allows M-convex functions to be defined without
  imposing the simultaneous exchange property.

  \item
  We define \emph{L-convex sets/functions on the vertex set of
  the Euclidean Coxeter complex} and \emph{M-convex sets/functions on the root lattice},
  which constitute the discrete L- and M-convexity studied in this paper.
  An L-convex set is the vertex set of a convex subcomplex of the
  Euclidean Coxeter complex, whereas an M-convex set is the set of
  lattice points of an integral M-convex polyhedron.
  The corresponding discrete functions are defined as the
  restrictions of primal-integral locally polyhedral L- and M-convex
  functions to the respective discrete domains.
  \Cref{prop:L-conv:unique,prop:M-conv:unique} show that the associated continuous
  extensions are uniquely determined by these restrictions.
  Moreover, \Cref{prop:tangent-cone} identifies the tangent cone of the
  convex hull of an M-convex set with the cone generated by the root directions
along which one can move one step within the set.
This relates the geometry of the convex hull to the discrete structure
of the M-convex set.

  \item
  We establish the fundamental optimality and conjugacy properties
  of the resulting discrete functions.
  Local optimality guarantees global optimality for both L- and
  M-convex functions
  (\Cref{thm:minimality:L,thm:local-optimality-M-convex}).
  We also prove that integral L-convex functions and integral
  M-convex functions correspond one-to-one through the \emph{discrete
  Fenchel--Legendre conjugate} (\Cref{thm:conjugacy}).
  This correspondence combines the uniqueness of the continuous
  extensions with the fact that continuous conjugacy exchanges the primal- and dual-integrality structures.

  \item
  We present intrinsic combinatorial characterizations of the discrete
  convex notions introduced above.
  L-convex sets and functions are characterized by the discrete midpoint
  convexity with respect to the Euclidean Coxeter complex
  (\Cref{thm:chara:L-convex-set,thm:chara:L-conv-func}),
  whereas M-convex sets and functions are characterized by the discrete
  tangent-cone property
  (\Cref{thm:chara:M-convex-set,thm:chara-M-conv-func}).
  These characterizations are stated in discrete terms, except for the
hole-freeness condition that appears in the characterizations of
M-convexity.
  Finally, \Cref{sec:examples} identifies the resulting classes with,
  or precisely compares them to, several existing notions associated
  with the classical root systems.
\end{itemize}

Some of the polyhedral notions introduced above recover existing
Coxeter-theoretic objects.
The L- and M-convex cones considered
here coincide with \emph{Coxeter cones}~\cite{Stembridge2008-zf} and
\emph{Coxeter root cones}~\cite{Ardila2020-ll}, respectively, while
the M-convex polytopes coincide with \emph{generalized Coxeter
permutahedra}~\cite{Ardila2020-ll}.
The contribution of the present paper is to introduce the discrete
domains and notions of integrality needed to define L- and M-convex
functions for classical root systems, and to organize these objects
into a common framework for DCA.

The assumption of classical type is needed only in a limited part of the theory.
The type-dependent arguments are essentially confined to
\Cref{prop:distance-function,prop:tangent-cone}.
In particular, most of the results on L-convexity, as well as the discrete
conjugacy theorem, remain valid for
arbitrary irreducible root systems, including the exceptional types;
see \Cref{rem:scope-root-systems}.

\paragraph{Organization.}

The rest of this paper is organized as follows. 
\Cref{sec:preliminaries} summarizes the preliminary concepts and notations concerning simplicial complexes, vector spaces, and continuous convex analysis. 
\Cref{sec:root-system} introduces the definitions and basic properties of root systems, including their classification into classical types. 
\Cref{sec:combinatorial-structures} details the combinatorial structures arising from root systems, such as root lattices and spherical/Euclidean Coxeter complexes, along with their geometric realizations and Lovász extensions. 
In \Cref{sec:DCA:continuous}, we formulate discrete convexity in
Euclidean spaces by defining L- and M-convex cones, polyhedra,
sublinear functions, and locally polyhedral convex functions.
Building upon these geometric concepts,
\Cref{sec:DCA:discrete} formally defines L- and M-convex sets and
functions over combinatorial objects and establishes the conjugacy
between integral L- and M-convex functions.
\Cref{sec:chara} provides combinatorial characterizations for these
discrete convexities.
These three sections constitute the main line of the paper.
To make this line of argument easier to follow, we defer to
\Cref{sec:deferred} the proofs of statements that are not
used in the arguments through \Cref{sec:chara}, together with
related discussions.
In \Cref{sec:examples},
we place our discrete convexity notions in relation to several existing classes,
including the standard DCA,
(valuated) $\Delta$-matroids, bisubmodular functions, BS-convex sets
and functions, UJ-convex functions, and Coxeter
matroids/submodular functions.
Finally, \Cref{sec:conclusion} concludes the paper with a summary of our results and a discussion of future research directions.

\section{Preliminaries}\label{sec:preliminaries}
Let $\Z$ and $\R$ denote the sets of integers and reals, respectively,
and $\Z_+$ and $\R_+$ their nonnegative subsets.
For a positive integer $c$,
we define $c\Z \coloneqq \Set{cx}{x \in \Z}$ and $\Z/c \coloneqq \Set{x/c}{x \in \Z}$.
In particular, $2\Z$ is the set of even integers and $\Z /2$ is the union of the sets of integers and of half-integers.
Let $2\Z + 1$ and $\Z + 1/2$ be the sets of odd integers and of half-integers,
respectively.

Let $S$ be a set.
The number of elements in $S$ is denoted by $\card{S}$,
which is in $\Z_+ \cup \{+\infty\}$.
For $s \in S$ and a tuple $x \in \R^S$ of reals indexed by $S$,
we denote by $x(s)$ its $s$th component.
Similarly,
for $x \in \R^n$ and $i \in [n]$,
we denote by $x(i)$
its $i$th component.
For $x \in \R^S$,
its \emph{support} $\supp{x}$ is the set of indices $s$ with $x(s) \neq 0$.
For nonnegative integers $m,n$ with $m \leq n$,
let $[m, n] \coloneqq \{ m, m+1, \dots, n \}$.
If $m = 1$ and $n \geq 1$, then we abbreviate $[m, n]$ as $[n]$.

\subsection{Simplicial complex}\label{subsec:simplicial_complex}
A \emph{partially ordered set} (or \emph{poset}) is a pair $(\CC, \preceq)$ of a set $\CC$ and a binary relation $\preceq$ over $\CC$ satisfying, for $x,y,z \in \CC$, that $x \preceq x$ (reflexivity), $x \preceq y$ and $y \preceq x$ imply $x = y$ (antisymmetry),
and $x \preceq y$ and $y \preceq z$ imply $x \preceq z$ (transitivity).
By $x \prec y$ we mean $x \preceq y$ and $x \neq y$.
By just $\CC$ we sometimes mean a poset if it is clear from the context.
For a finite set $S$,
the pair $(2^S, \subseteq)$ of the power set $2^S$ of $S$ and the inclusion relation $\subseteq$ forms a poset,
which is called the \emph{Boolean lattice}.

A nonempty poset $(\CC, \preceq)$ is called an \emph{(abstract) simplicial complex}
if all $A, B \in \CC$ have their greatest lower bound, denoted by $A \wedge B$, and
for all $A \in \CC$, there exists a unique nonnegative integer $r_A \in \Z_+$ such that the subposet induced by $\CC_{\preceq A} \coloneqq \Set{B \in \CC}{B \preceq A}$ is isomorphic to the Boolean lattice $(2^{[r_A]}, \subseteq)$.
A member $A$ of $\CC$ is called a \emph{simplex},
and its \emph{rank} is $r_A$.
In particular, a rank-one simplex is called a \emph{vertex}.
The rank of the lowest element of $\CC$, denoted by $\bot$,
is $0$.

Let $(\CC, \preceq)$ be a simplicial complex.
For $A \in \CC$,
a simplex $B$ with $B \preceq A$ is called a \emph{face} of $A$.
By a vertex of a simplex $A$ we mean a rank-one face of $A$.
Let $\sk{\CC}$ denote the set of vertices of $\CC$,
and $\sk{A}$ the set of vertices of a simplex $A$;
the number of the vertices of $A$ is equal to the rank of $A$.
Simplices $A_1, A_2, \dots, A_k \in \CC$ are said to be \emph{joinable}
if there is a simplex $A \in \CC$ such that all of $A_1, A_2, \dots, A_k$ are faces of $A$, or equivalently, there is an upper bound of $A_1, A_2, \dots, A_k$ in $\CC$.
If $A_1, A_2, \dots, A_k \in \CC$ are joinable,
then their least upper bound exists; we refer to it as the \emph{join} of $A_1, A_2, \dots, A_k$.
Two simplices $A, B$ are said to be \emph{disjoint} if $A \wedge B = \bot$.
Two maximal simplices $C, C'$ are said to be \emph{adjacent}
if they have a common corank-one face.
A sequence $(C_0, C_1, \dots, C_\ell)$ of maximal simplices is called a \emph{gallery}
if $C_{i-1}$ and $C_i$ are adjacent for each $i \in [\ell]$.
If $A \preceq C_0$ and $B \preceq C_\ell$ for two simplices $A, B \in \CC$,
then we refer to $(C_0, C_1, \dots, C_\ell)$ as
a \emph{gallery from $A$ to $B$}.
The \emph{length} of a gallery $(C_0, C_1, \dots, C_\ell)$ is $\ell$.
A gallery from $A$ to $B$ is said to be \emph{minimal}
if its length is minimum among all galleries from $A$ to $B$.
The \emph{distance} between $A$ and $B$,
denoted by $d(A, B)$,
is defined by the length of a minimal gallery from $A$ to $B$.

A nonempty subset $\CC' \subseteq \CC$ is called a \emph{subcomplex} of $\CC$
if
$A \in \CC'$ and $B \preceq A$ imply $B \in \CC'$,
i.e.,
$\CC'$ is closed under $\preceq$.
Note that $\CC'$ is also a simplicial complex.
For $A \in \CC$,
we define the subcomplexes $\St{\CC}{A}$ and $\Lk{\CC}{A}$ of $\CC$ by
\begin{align}
    \St{\CC}{A} &\coloneqq \Set{B \in \CC}{\text{$A$ and $B$ are joinable}},\\
    \Lk{\CC}{A} &\coloneqq \Set{B \in \CC}{\text{$A$ and $B$ are disjoint and joinable}}.
\end{align}
$\St{\CC}{A}$ is called the \emph{star complex}
and $\Lk{\CC}{A}$ the \emph{link complex}.
For a subset $D \subseteq \sk{\CC}$,
we denote by $\isc{\CC}{D}$ the subcomplex of $\CC$ induced by $D$, i.e.,
\begin{align}
    \isc{\CC}{D} \coloneqq \Set{A \in \CC}{\sk{A} \subseteq D}.
\end{align}
Note that, if $D = \emptyset$,
then $\isc{\CC}{D}$ consists of the lowest element $\bot$.

A simplicial complex $\CC$ is called a \emph{chamber complex}
if all maximal simplices have the same rank and any two maximal simplices are connected by a gallery.
Let $\CC$ be a chamber complex.
Then, the distance $d(A, B)$ is finite for any two cells $A, B \in \CC$.
A maximal simplex in $\CC$ is called a \emph{chamber}.
The \emph{rank} of $\CC$ is the rank of a chamber.

Suppose that the rank of $\CC$ is $n$.
Then, every chamber contains $n$ vertices.
Let $I$ be a set with $\card{I} = n$.
A function $\funcdoms{\tau}{\sk{\CC}}{I}$ is called a \emph{type function} on $\CC$
if $\tau(C)$ coincides with $I$ for any chamber $C \in \CC$,
where $\tau(A) \coloneqq \Set{\tau(x)}{x \in \sk{A}}$
for $A \in \CC$.
In other words,
a type function $\tau$ is a (proper) $n$-coloring of the $1$-skeleton of $\CC$ in the graphical sense.
We refer to $\tau(A)$ 
as the \emph{type} 
of a simplex $A$
with respect to $\tau$.
We remark that $\card{\tau(A)}$ coincides with the rank of $A$.
A chamber complex $\CC$ is said to be \emph{colorable} if $\CC$ admits a type function.
For any simplex $A$ of a colorable complex $\CC$ with type function $\funcdoms{\tau}{\sk{\CC}}{I}$,
the map $\CC_{\preceq A} \to 2^{\tau(A)}$,
given by $B \mapsto \tau(B)$ 
forms an isomorphism between the subposet $(\CC_{\preceq A}, \preceq)$ and the Boolean lattice $(2^{\tau(A)}, \subseteq)$.

Let $\CC$ be a colorable chamber complex of rank $n$
and $I$ a set of $n$ elements,
and choose an arbitrary chamber $C \in \CC$.
Then, any bijection $\funcdoms{\tau_C}{\sk{C}}{I}$ can be uniquely extended to a type function $\funcdoms{\tau}{\sk{\CC}}{I}$ satisfying $\tau(x) = \tau_C(x)$ for all $x \in \sk{C}$.
Indeed, since $\CC$ is a colorable chamber complex,
there is a type function $\funcdoms{\tau}{\sk{\CC}}{I}$ satisfying $\tau(x) = \tau_C(x)$ for all $x \in \sk{C}$.
For any chamber $D$ adjacent to $C$,
we have $|\sk{C} \setminus \sk{D}| = |\sk{D} \setminus \sk{C}| = 1$;
let $x_C$ and $x_D$ be the unique elements in $\sk{C} \setminus \sk{D}$ and in $\sk{D} \setminus \sk{C}$,
respectively.
Then, the type of $x_D$ must coincide with that of $x_C$, i.e., $\tau(x_D) = \tau(x_C) = \tau_C(x_C)$.
By a similar argument with the fact that there is a gallery from $C$ to any vertex $x \in \sk{\CC}$,
the type of every vertex $x$ is uniquely determined.

\subsection{Vector space}\label{subsec:vectorspace}
In this paper,
let $V$ denote a finite-dimensional vector space over $\R$ with an inner product $\funcdoms{\inpr{\cdot}{\cdot}}{V \times V}{\R}$.
Although we may identify $V$ with its dual vector space $V^*$ via $\inpr{\cdot}{\cdot}$,
we basically distinguish between $V$ and $V^*$.
On the other hand, we always identify $V$ with $V^{**}$ in a canonical way.
Both $V$ and $V^*$ are metric spaces induced from the norm $\|\cdot\| \coloneqq \inpr{\cdot}{\cdot}$.

For a vector $x \in V$, we denote by $x^*$ the corresponding dual vector in $V^*$ with respect to $\inpr{\cdot}{\cdot}$, that is, $x^*(y) = \inpr{x}{y}$ for $y \in V$.
Similarly, for a dual vector $p \in V^*$, we denote by $p^*$ the corresponding primal vector in $V$; $p(y) = \inpr{p^*}{y}$ for $y \in V$.
Let $\OG(V)$ and $\OG(V^*)$ denote the orthogonal groups over $V$ and $V^*$ with respect to $\inpr{\cdot}{\cdot}$,
respectively.
That is,
we have $\inpr{x}{y} = \inpr{wx}{wy}$ for any $x, y \in V$ and $w \in \OG(V)$.
In this paper,
we basically regard the inner product as the duality pairing $\funcdoms{\inpr{\cdot}{\cdot}}{V^* \times V}{\R}$ over $V^*$ and $V$ given by $\inpr{p}{x} \coloneqq p(x)$,
i.e.,
for $p \in V^*$ and $x \in V$,
we have $p(x) = \inpr{p^*}{x} = \inpr{p}{x}$.

By $U \leq V$ 
we mean that $U$ 
is a vector subspace of $V$. 
For $U \leq V$,
the orthogonal space of $U$
with respect to $\inpr{\cdot}{\cdot}$ is denoted by $U^\perp$,
that is,
$U^\perp \coloneqq \Set{p \in V^*}{\inpr{p}{x} = 0 \ (\forall x \in U)}$.
The equality $(U^\perp)^\perp = U$ 
holds.
Let $V/U$ denote the quotient space of $V$ by $U$.
Then, the duality pairing $\inpr{\cdot}{\cdot}$ over $V^*$ and $V$ naturally induces that over $U^\perp$ and $V/U$, since
$\inpr{p}{x + U} \coloneqq \inpr{p}{x}$ is well defined
for $p \in U^\perp$ and $x \in V$,
where $x + U$ denotes the equivalence class containing $x$.
Thus,
$(V/U)^*$ is isometric to $U^\perp$ for $U \leq V$,
and hence
we identify $(V/U)^*$ with $U^\perp$.
Similarly,
the duality pairing over $V/(W^\perp)$ and $W$ is naturally induced by $\inpr{\cdot}{\cdot}$ as
$\inpr{p}{x + W^\perp} \coloneqq \inpr{p}{x}$ for $p \in W$ and $x \in V$;
we also identify $V/(W^\perp)$ with $W^*$ for $W \leq V^*$.

For a subset $X \subseteq V$,
we denote by $\spn{X}$ the linear hull of $X$ in $V$,
and by $\aff{X}$ the affine hull of $X$ in $V$.
For any affine subspace $A$ of $V$,
there exists a unique linear subspace $A_0$ that is obtained by a parallel transformation of $A$,
i.e.,
$A = A_0 + x$ for $x \in A$.
For notational simplicity,
let $V / \aff{X}$ denote $V / (\aff{X})_0$,
where $(\aff{X})_0$ is the unique linear subspace of $V$ that is parallel to $\aff{X}$.
For a subset $X, Y \subseteq V$,
we denote by $X + Y$ and $X - Y$ the Minkowski sum and difference of $X$ and $Y$, respectively.
That is, $X + Y \coloneqq \Set{x + y}{x \in X,\ y \in Y}$
and $X - Y \coloneqq \Set{x - y}{x \in X,\ y \in Y}$.
In particular, if $X = \{x\}$ (resp. $Y = \{y\}$), then we use $x + Y$ (resp. $X - y$) instead of $\{x\} + Y$ (resp. $X - \{y\}$).
Let $\cl{X}$ and $\relint{X}$ denote the closure and the relative interior of $X$,
respectively.
For $x, y \in V$,
let $[x,y] \coloneqq \Set{\lambda x + (1-\lambda) y}{\lambda \in [0,1]}$ and $(x,y) \coloneqq \Set{\lambda x + (1-\lambda) y}{\lambda \in (0,1)}$ denote the closed and open line segments connecting $x$ and $y$, respectively.

In the case of $V = \R^n$,
we assume that the inner product $\inpr{\cdot}{\cdot}$ is given by $\inpr{x}{y} = \sum_{i=1}^n x(i) y(i)$ for $x, y \in V$,
and identify its dual $(\R^n)^*$ with $\R^n$.
Let $\onevec_n$ denote the all-one vector in $\R^n$.
The $i$th unit vector in $\R^n$ is denoted by $e_i$.

\subsection{Basics of convex analysis}\label{subsec:convex-analysis}
Let $\funcdoms{f}{V}{\R \cup \{ \pm \infty \}}$ be a function.
The sets
\begin{align}
    \dom{f} &\coloneqq \Set{x \in V}{-\infty < f(x) < +\infty},\\
    \epi{f} &\coloneqq \Set{(x,y) \in V \times \R}{f(x) \leq y}
\end{align}
are called the \emph{effective domain} and the \emph{epigraph} of $f$,
respectively.
If $\epi{f}$ is a closed set in $V \times \R$,
then $f$ is said to be \emph{closed}.
A set $C \subseteq V$ is called a \emph{cone} if $\mu x \in C$ holds for $x \in C$ and $\mu > 0$.
A function $f$ is said to be \emph{positively homogeneous} 
if $f(\mu x) = \mu f(x)$ holds for any $x \in V$ and $\mu > 0$,
or equivalently,
its epigraph $\epi{f}$ forms a cone.
For a function $\funcdoms{g}{V}{\R \cup \{\pm\infty\}}$,
we say that $f$ \emph{majorizes} $g$, or $g$ \emph{minorizes} $f$ if
$g \leq f$,
i.e.,
$g(x) \leq f(x)$ for all $x \in V$.
If $f$ majorizes some affine function,
then $f$ cannot take $-\infty$,
namely, $f$ is a function from $V$ to $\ER$.

A set $C \subseteq V$ is said to be \emph{convex}
if $x, y \in C$ implies $\lambda x + (1-\lambda)y \in C$ for all $\lambda \in (0,1)$.
A function $\funcdoms{f}{V}{\R \cup \{\pm \infty\}}$ is said to be \emph{convex}
if $\lambda f(x) + (1-\lambda) f(y) \geq f(\lambda x + (1-\lambda)y)$
holds for any distinct $x,y \in V$ and $\lambda \in (0,1)$.
It is well known that $f$ is a convex function if and only if its epigraph $\epi{f}$ is a convex set.
A convex function $f$ is said to be \emph{proper}
if $f$ never takes $-\infty$ and $\dom{f}$ is nonempty.
A convex and positively homogeneous function is referred to as a \emph{sublinear function},
which is characterized as a function whose epigraph is a convex cone.

Let $C \subseteq V$ be a convex set.
A convex subset $F \subseteq C$ is called a \emph{face} (see e.g.,~\cite[Section~18]{Rockafellar1996-wm}) of $C$ if $x, y \in C$ and $\lambda x + (1-\lambda)y \in F$ for some $\lambda \in (0,1)$ imply $x, y \in F$,
and called an \emph{exposed face} if there is $p \in V^*$ such that $F = \argmin_{x \in C} \inpr{p}{x}$.
Any exposed face is a face, but the converse does not necessarily hold.
A face $F$ of $C$ is said to be \emph{proper} if $F \neq \emptyset$ and $F \neq C$.
The face relation is transitive;
for a face $F$ of $C$,
a subset $F' \subseteq F$ is a face of $F$ if and only if $F'$ is a face of $C$.
The family consisting of the relative interiors of all faces of $C$ forms a partition of $C$~\cite[Theorem~18.2]{Rockafellar1996-wm}.
That is, for any point $x$ in a convex set $C$,
there exists a unique face $F$ of $C$ such that $x \in \relint{F}$.

For a closed convex set $C \subseteq V$ and a point $x \in C$,
the \emph{tangent cone} $\tcone{C}{x}$ of $C$ at $x$ is defined by
$\tcone{C}{x} \coloneqq \cl(\cone{\Set{y - x}{y \in C}})$.
We can easily derive the following by fundamental arguments in convex analysis.
\begin{lemma}\label{lem:clconv-tcone}
    For a nonempty closed convex set $C \subseteq V$,
    we have $C = \bigcap_{x \in C} (x + \tcone{C}{x})$.
\end{lemma}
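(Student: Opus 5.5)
The plan is to prove the two inclusions separately. Only the inclusion $\bigcap_{x \in C} (x + \tcone{C}{x}) \subseteq C$ has real content.

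For $C \subseteq \bigcap_{x \in C}(x + \tcone{C}{x})$, fix $z \in C$ and any $x \in C$. Then $z - x \in \Set{y - x}{y \in C} \subseteq \cone\Set{y-x}{y\in C} \subseteq \tcone{C}{x}$, so $z \in x + \tcone{C}{x}$.

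For the reverse inclusion, take $z \notin C$ and produce some $x \in C$ with $z \notin x + \tcone{C}{x}$. The natural choice is the metric projection $x$ of $z$ onto $C$, which exists and is unique because $C$ is nonempty, closed and convex. Put $p \coloneqq (z - x)^* \in V^*$, which is nonzero since $z \notin C$. The standard variational characterization of the projection gives $\inpr{p}{y - x} \le 0$ for all $y \in C$. I will derive it directly: for $y \in C$ and $\lambda \in (0,1]$ we have $x + \lambda(y-x) \in C$, so $\|z - x - \lambda(y-x)\|^2 \ge \|z-x\|^2$. Expanding, dividing by $\lambda$ and letting $\lambda \to 0$ yields $\inpr{z-x}{y-x} \le 0$.

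The closed half-space $\Set{v \in V}{\inpr{p}{v} \le 0}$ is therefore a closed convex cone containing every $y - x$ with $y \in C$. It contains their conic hull, and being closed it also contains the closure, so it contains $\tcone{C}{x}$. But $\inpr{p}{z - x} = \|z-x\|^2 > 0$, hence $z - x \notin \tcone{C}{x}$, that is, $z \notin x + \tcone{C}{x}$. This proves the reverse inclusion.

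The only step that needs care is the obtuse-angle property of the projection, and the short limiting argument above handles it. Closedness of $C$ is used exactly once, to guarantee that the projection exists. If $C$ were not closed, the statement could fail at points of $\cl C \setminus C$.
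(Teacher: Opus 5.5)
Your proof is correct and follows the same route as the paper. The paper cites a separation theorem that yields a point $\hat{x}\in C$ at which the separating functional attains its extremum over $C$, and then notes that this functional has a fixed sign on the whole tangent cone at $\hat{x}$; your metric projection supplies exactly that point and functional, and you prove the obtuse-angle inequality directly instead of citing it.
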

\begin{proof}
    The inclusion $C\subseteq\bigcap_{x\in C}(x+\tcone{C}{x})$ is immediate.
    Conversely, let $y\notin C$.
    By the separation theorem (see e.g.,~\cite[Corollary~A.4.2.4]{Hiriart-Urruty2001-ck}), there are
    $\hat{x}\in C$ and $p\in V^*$ such that $\inpr{p}{y} < \inpr{p}{\hat{x}} \leq \inpr{p}{x}$ for all $x \in C$.
    Hence, $\inpr{p}{d}\geq0$ for every $d\in\tcone{C}{\hat{x}}$, and therefore $y\notin\hat{x}+\tcone{C}{\hat{x}}$.
\end{proof}

\subsubsection{Convex and conical hull}
For a set $S$, let $\cvcoeff{S}$ and $\cncoeff{S}$ denote the sets of all convex combination coefficients $(\lambda(s))_{s \in S}$ over $S$ and of all conical combination coefficients $(\mu(s))_{s \in S}$ over $S$, respectively. That is,
\begin{align}
    \cvcoeff{S} &\coloneqq \Set*{\lambda \in \R^S}{\lambda(s) \geq 0 \ (\forall s \in S),\ \card{\supp{\lambda}} < +\infty, \  \sum_{s \in S} \lambda(s) = 1},\\
    \cncoeff{S} &\coloneqq \Set*{\mu \in \R^S}{\mu(s) \geq 0 \ (\forall s \in S),\ \card{\supp{\mu}} < +\infty}.
\end{align}

For a subset $X \subseteq V$,
we denote by $\conv{X}$ and $\cone{X}$ the convex hull and conical hull of $X$, respectively:
\begin{align}
    \conv{X} \coloneqq \Set*{\sum_{x \in X} \lambda(x) x}{\lambda \in \cvcoeff{X}}, \qquad
    \cone{X} \coloneqq \Set*{\sum_{x \in X} \mu(x) x}{\mu \in \cncoeff{X}}.
\end{align}
Note that $\conv{\emptyset} = \emptyset$
and $\cone{\emptyset} = \{0\}$.
The \emph{closed convex hull}
of $X$,
denoted by $\clconv{X}$,
is the closure of $\conv{X}$.
The (closed) convex hull of $X$ can be characterized as the minimum (closed) convex set that contains $X$.

For a function $\funcdoms{f}{V}{\ER}$,
its \emph{convex hull} $\conv{f}$ and \emph{closed convex hull} $\clconv{f}$ are defined by
\begin{align}
    (\conv{f})(x) &\coloneqq \inf{\Set*{\sum_{y \in \dom{f}} \lambda(y) f(y)}{\lambda \in \cvcoeff{\dom{f}},\ x = \sum_{y \in V} \lambda(y) y}},\label{eq:func:conv} \\
    (\clconv{f})(x) &\coloneqq \sup{\Set{ \inpr{p}{x} - r}{p \in V^*,\  r \in \R,\  \inpr{p}{y} - r \leq f(y) \ (\forall y \in \dom{f})}}, \label{eq:func:clconv}
\end{align}
respectively.
If $f$ majorizes some affine function,
then the (closed) convex hull of $f$ can be characterized as the pointwise maximum (closed) convex function that minorizes $f$;
particularly,
$\clconv{f} \leq \conv{f} \leq f$ holds.

The following lemma shows the relationship between the effective domains of a function and its (closed) convex hulls,
in which the identity $\conv(\dom{f}) = \dom(\conv{f})$ immediately follows from the definition of the convex hull.
\begin{lemma}[{\cite[Corollary~7.4.1]{Rockafellar1996-wm}}]\label{lem:conv=clconv}
    Let $\funcdoms{f}{V}{\ER}$ be a function minorized by some affine function.
    Then, we have
    $\conv(\dom{f}) = \dom(\conv{f}) \subseteq \dom(\clconv{f}) \subseteq \clconv(\dom{f})$.
\end{lemma}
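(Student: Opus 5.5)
We prove the chain $\conv(\dom{f}) = \dom(\conv{f}) \subseteq \dom(\clconv{f}) \subseteq \clconv(\dom{f})$ one link at a time, working directly from the definitions \eqref{eq:func:conv} and \eqref{eq:func:clconv}. Throughout, fix an affine minorant $\inpr{p_0}{\cdot} - r_0 \leq f$, which exists by hypothesis.

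\textbf{First link: the equality.} If $x \in \conv(\dom{f})$, then $x = \sum_y \lambda(y) y$ for some $\lambda \in \cvcoeff{\dom{f}}$. The sum $\sum_y \lambda(y) f(y)$ is finite, so $(\conv{f})(x) < +\infty$. For the lower bound, every admissible combination for $x$ satisfies
\[
\sum_y \lambda(y) f(y) \geq \sum_y \lambda(y)\bigl(\inpr{p_0}{y} - r_0\bigr) = \inpr{p_0}{x} - r_0 .
\]
Hence $(\conv{f})(x) > -\infty$. Conversely, if $x \notin \conv(\dom{f})$, the defining set in \eqref{eq:func:conv} is empty, so the infimum is $+\infty$. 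Together these give $\conv(\dom{f}) = \dom(\conv{f})$.

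\textbf{Second link.} The pair $(p_0, r_0)$ is admissible in \eqref{eq:func:clconv}, so $\clconv{f} > -\infty$ everywhere. For the upper bound, each admissible $(p,r)$ is an affine minorant of $f$ on $\dom{f}$. Taking a convex combination as above, $\inpr{p}{x} - r \leq \sum_y \lambda(y) f(y)$ for every admissible $\lambda$. Hence $\clconv{f} \leq \conv{f}$, and $\dom(\conv{f}) \subseteq \dom(\clconv{f})$ follows.

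\textbf{Third link: the main step.} Take $x \notin \clconv(\dom{f})$; we must show $(\clconv{f})(x) = +\infty$. The set $\clconv(\dom{f})$ is closed and convex.
\begin{itemize}
\item If $\dom{f}$ is empty, every affine function is admissible in \eqref{eq:func:clconv}, so the supremum is $+\infty$.
\item Otherwise, strict separation gives $q \in V^*$ and $c \in \R$ with $\inpr{q}{y} \leq c$ for all $y \in \dom{f}$ and $\inpr{q}{x} > c$.
\end{itemize}
In the second case, for every $t \geq 0$ the affine function $\inpr{p_0 + t q}{\cdot} - (r_0 + tc)$ still minorizes $f$ on $\dom{f}$. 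Its value at $x$ is
\[
\inpr{p_0}{x} - r_0 + t\bigl(\inpr{q}{x} - c\bigr),
\]
which tends to $+\infty$ as $t \to \infty$. So $x \notin \dom(\clconv{f})$.

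This separation-plus-scaling argument is the only step that needs more than unwinding definitions. The key point is that adding a large multiple of the separating functional to the fixed minorant keeps it a minorant while making its value at $x$ unbounded.
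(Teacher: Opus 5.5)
Your proof is correct, but it takes a different route from the paper. The paper proves nothing here and simply cites Rockafellar's Corollary~7.4.1. That corollary concerns a proper convex function $h$ and its closure: $\dom{h} \subseteq \dom(\cl{h}) \subseteq \cl(\dom{h})$.

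To get the lemma from the citation, one takes $h = \conv{f}$, which is proper when $\dom{f} \neq \emptyset$ because of the affine minorant. One then uses the standard fact that the closure of $\conv{f}$ is the supremum of the affine minorants of $f$, i.e.\ the function $\clconv{f}$ of \eqref{eq:func:clconv}. Finally one notes that $\cl(\conv(\dom{f})) = \clconv(\dom{f})$. In that theory the last inclusion can be read off from $\epi(\cl{h}) = \cl(\epi{h})$ by projecting onto $V$.

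You instead work directly with the explicit formulas \eqref{eq:func:conv} and \eqref{eq:func:clconv}. Your first two links are bookkeeping with the fixed minorant. Your third link replaces the epigraph-closure argument by strict separation of $x$ from $\clconv(\dom{f})$, followed by tilting the minorant by $t$ times the separating functional.

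What your version buys:
\begin{itemize}
\item It is self-contained and needs only the separation theorem, which the paper already uses in the proof of \Cref{lem:clconv-tcone}.
\item It avoids identifying $\cl(\conv{f})$ with $\clconv{f}$.
\item It handles $\dom{f} = \emptyset$ explicitly, a case not covered by a statement about proper functions.
\item It proves along the way the inequality $\clconv{f} \leq \conv{f}$, which the paper only asserts.
\end{itemize}
The citation, in exchange, buys brevity and places the lemma inside the standard theory of closures of convex functions.
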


The \emph{conical hull}
of a function $\funcdoms{f}{V}{\ER}$, denoted by $\cone{f}$, is defined by
\begin{align}\label{eq:func:cone}
    (\cone{f})(x) \coloneqq \inf{\Set*{\sum_{y \in \dom{f}} \mu(y) f(y)}{\mu \in \cncoeff{\dom{f}},\ x = \sum_{y \in \dom{f}} \mu(y) y}} \qquad (x \in V);
\end{align}
note that 
$(\cone{f})(0) = 0$
if $\dom{f} = \emptyset$.
If $f$ majorizes some linear function,
then the conical hull of $f$ can be characterized as the pointwise maximum proper sublinear function that minorizes $f$.
If $f$ does not majorize any linear function,
then $(\cone{f})(x) = -\infty$ for all $x \in \cone(\dom{f})$.

\subsubsection{Fenchel--Legendre conjugate}
For a function $\funcdoms{f}{V}{\ER}$ that majorizes some affine function,
its \emph{Fenchel--Legendre conjugate} $f^*$, which is a function on $V^*$, is defined by
\begin{align}
  f^*(p) \coloneqq \sup{\Set{\inpr{p}{x} - f(x)}{x \in V}} \qquad (p \in V^*).\notag
\end{align}
The Fenchel--Legendre conjugate provides the one-to-one correspondence between the proper closed convex functions on $V$ and those on $V^*$, which plays a central role in convex analysis.
\begin{lemma}[{see e.g.,~\cite[Theorems~E.1.1.2 and~E.1.3.5]{Hiriart-Urruty2001-ck}}]\label{lem:conjugate}
  Let $\funcdoms{f}{V}{\ER}$ be a proper function that majorizes some affine function.
  \begin{enumerate}[label={\textup{(\arabic*)}}]
        \item $f^*$ is a closed convex function on $V^*$.
        \item $\clconv{f} = f^{**}$. In particular, if $f$ is a closed convex function, then $f = f^{**}$.
    \end{enumerate}
\end{lemma}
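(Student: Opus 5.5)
The statement is the standard Fenchel--Moreau theorem, which the paper cites from Hiriart-Urruty--Lemaréchal rather than proving. I would give the classical argument, resting on the separation theorem already used in \Cref{lem:clconv-tcone} and on the description \eqref{eq:func:clconv} of $\clconv{f}$ as a supremum of affine minorants.

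For (1), I would write $f^*$ as a pointwise supremum. Fix $x \in \dom{f}$. The map $p \mapsto \inpr{p}{x} - f(x)$ is an affine, continuous function on $V^*$. Points outside $\dom{f}$ contribute $-\infty$ and can be dropped, so
\begin{align}
  f^*(p) = \sup_{x \in \dom{f}} \bigl( \inpr{p}{x} - f(x) \bigr).
\end{align}
Hence $\epi{f^*}$ is the intersection of the closed half-spaces $\Set{(p,t)}{\inpr{p}{x} - f(x) \leq t}$, one for each $x \in \dom{f}$. This intersection is closed and convex, so $f^*$ is a closed convex function.

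Since $f$ is proper, $f^*$ never takes $-\infty$. Since $f$ majorizes some affine function $\inpr{p_0}{\cdot} - r_0$, we get $f^*(p_0) \leq r_0 < +\infty$, so $f^*$ is proper.

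For (2), I would first unwind $f^{**}$. By definition,
\begin{align}
  f^{**}(x) = \sup_{p} \bigl( \inpr{p}{x} - f^*(p) \bigr),
\end{align}
and we may restrict to $p \in \dom f^*$. For such $p$, the inequality $\inpr{p}{y} - r \leq f(y)$ for all $y$ holds exactly when $r \geq f^*(p)$. So for fixed slope $p$, the best affine minorant has intercept $r = f^*(p)$. Taking the supremum over $p$ recovers exactly the right-hand side of \eqref{eq:func:clconv}, which gives $f^{**} = \clconv{f}$.

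Two points need care in this identification.
\begin{itemize}
  \item Both sides should be taken over affine functions minorizing $f$ on all of $V$; minorizing on $\dom{f}$ is the same thing, since $f = +\infty$ elsewhere.
  \item The set of admissible $(p,r)$ must be nonempty. This holds by the majorization hypothesis.
\end{itemize}

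The remaining claim is that $f = f^{**}$ when $f$ is closed and convex, i.e., that a closed proper convex function is the supremum of its affine minorants. This is the main obstacle. Given $x$ and a value $t < f(x)$, I would separate the point $(x,t)$ from the closed convex set $\epi{f}$ in $V \times \R$. This produces a pair $(p,\alpha) \neq (0,0)$ and a constant $c$ with
\begin{align}
  \inpr{p}{y} + \alpha s > c \ge \inpr{p}{x} + \alpha t \quad \text{for all } (y,s) \in \epi{f}.
\end{align}
Epigraphs are upward closed, so $\alpha \geq 0$.

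If $\alpha > 0$, dividing by $\alpha$ gives an affine minorant of $f$ whose value at $x$ exceeds $t$, and we are done.

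If $\alpha = 0$, which can only happen when $x \notin \cl\dom{f}$, I would add a large multiple of this "vertical" separator to the affine minorant supplied by the hypothesis. For a large enough multiple, the resulting affine function still minorizes $f$ and exceeds $t$ at $x$.

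Letting $t \uparrow f(x)$ then gives $f^{**}(x) \geq f(x)$. The reverse inequality $f^{**} \leq f$ is immediate from the definitions, which completes the proof.
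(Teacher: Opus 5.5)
Your proof is correct and follows the standard Fenchel--Moreau argument behind the Hiriart-Urruty--Lemaréchal theorems that the paper cites in place of a proof; given the paper's definition \eqref{eq:func:clconv}, the identity $f^{**} = \clconv{f}$ is essentially an unwinding of definitions, and the real content is the separation step for closed convex $f$, which you handle correctly, including the vertical case $\alpha = 0$ via the assumed affine minorant. One small repair: separating a point from a closed convex set gives $\inpr{p}{y} + \alpha s \geq c > \inpr{p}{x} + \alpha t$, with the strict inequality on the point side rather than the $> c \geq$ you wrote, and this is exactly what you need, both for the minorant to exceed $t$ at $x$ when $\alpha > 0$ and, when $\alpha = 0$, to conclude $x \notin \cl\dom{f}$ and $c - \inpr{p}{x} > 0$.
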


As a special case of the above,
we obtain the following one-to-one correspondence between the nonempty closed convex sets and the proper closed sublinear functions.
For a set $S \subseteq V$, its \emph{support function} $\spt{S}$ is the function on $V^*$ defined by
\begin{align}\label{eq:def:sptfunc}
  \spt{S}(p) \coloneqq \sup{\Set{\inpr{p}{x}}{x \in S}} \qquad (p \in V^*),
\end{align}
and for a function $\sigma$ on $V$,
we define the subset $\bS{\sigma}$ of $V^*$ by
\begin{align}
  \bS{\sigma} \coloneqq \Set{p \in V^*}{\inpr{p}{x} \leq \sigma(x) \ (\forall x \in V)}.
\end{align}
\begin{lemma}[{\cite[Theorem~C.2.2.2]{Hiriart-Urruty2001-ck}}]\label{lem:clconv-sublinear}
  For a nonempty closed convex set $S$ and a proper closed sublinear function $\sigma$,
  we have $S = \bS{\spt{S}}$ and $\sigma = \spt{\bS{\sigma}}$.
  In addition, 
  for any set $S$,
  its support function $\spt{S}$ is a closed sublinear function
  and $\clconv{S} = \bS{\spt{S}}$, i.e.,
  \begin{align}
    \clconv{S} = \Set{p \in V^*}{\inpr{p}{x} \leq \spt{S}(x) \ (\forall x \in V)}.
  \end{align}
\end{lemma}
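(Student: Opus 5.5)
The plan is to derive everything from the conjugacy of \Cref{lem:conjugate}, applied to indicator functions. For $S \subseteq V$ let $\delta_S$ be its indicator function ($0$ on $S$, $+\infty$ elsewhere). Throughout, $V^{**}$ is identified with $V$, so the roles of $V$ and $V^*$ may be swapped freely.

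\textbf{The conjugate pair.} First I would record two computations. Directly from the definitions, $\delta_S^* = \spt{S}$. For a proper closed sublinear $\sigma$, I claim $\sigma^* = \delta_{\bS{\sigma}}$. Properness and closedness force $\sigma(0)=0$: positive homogeneity gives $\sigma(0)=\mu\sigma(0)$ for all $\mu > 0$, and $\sigma(0)=+\infty$ is excluded by lower semicontinuity along rays from a point of $\dom\sigma$. Now fix $x$. If $\inpr{p}{x} \le \sigma(p)$ for all $p$, the supremum defining $\sigma^*(x)$ is $\le 0$ and is attained at $p=0$. Otherwise some $p$ gives a positive value, and scaling $p$ by $\mu\to\infty$ drives the value to $+\infty$.

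\textbf{The two identities.} For nonempty closed convex $S$, the function $\delta_S$ is proper, closed, convex and majorizes the zero function. Then \Cref{lem:conjugate}(2) gives
\[
\delta_S = \delta_S^{**} = (\spt{S})^* = \delta_{\bS{\spt{S}}},
\]
where the last step uses that $\spt{S}$ is proper closed sublinear. It is sublinear and closed as a supremum of linear functions, and proper because it is finite at $0$ and never $-\infty$ since $S \neq \emptyset$. Hence $S = \bS{\spt{S}}$. Symmetrically, $\sigma = \sigma^{**} = (\delta_{\bS{\sigma}})^* = \spt{\bS{\sigma}}$. To apply \Cref{lem:conjugate} to $\sigma$, I need $\sigma$ to majorize an affine function; this is standard for proper closed convex functions, being the nonemptiness of the set of affine minorants via separation of $\epi\sigma$. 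The same fact shows $\bS{\sigma} \neq \emptyset$: an affine minorant $\inpr{p}{\cdot} - r$ of $\sigma$ yields, by homogeneity, the linear minorant $\inpr{p}{\cdot}$.

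\textbf{Arbitrary $S$.} $\spt{S}$ is closed and sublinear as a pointwise supremum of linear functions. The empty case is trivial: $\spt{\emptyset} \equiv -\infty$, so $\bS{\spt{\emptyset}} = \emptyset = \clconv\emptyset$. For nonempty $S$, I would show $\clconv \delta_S = \delta_{\clconv S}$ from \eqref{eq:func:clconv}. An affine function $\inpr{p}{\cdot}-r$ minorizes $\delta_S$ exactly when it is $\le 0$ on $S$, equivalently on $\clconv S$. Hence the supremum is $0$ on $\clconv S$. At a point outside $\clconv S$, the separation theorem gives an affine function that is $\le 0$ on $\clconv S$ and positive at that point; scaling it sends the supremum to $+\infty$. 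Combining with $\delta_S^{**} = \clconv\delta_S$ from \Cref{lem:conjugate}(2),
\[
\delta_{\clconv S} = \delta_S^{**} = (\spt{S})^* = \delta_{\bS{\spt{S}}} .
\]
This yields $\clconv S = \bS{\spt{S}}$. Equivalently, one can apply the first part to the closed convex set $\clconv S$, using $\spt{S} = \spt{\clconv S}$.

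\textbf{Main obstacle.} The only delicate points are bookkeeping: verifying $\sigma(0)=0$ and the existence of an affine minorant for a proper closed sublinear $\sigma$, and keeping track of which space ($V$ or $V^*$) each object lives in. The rest is a direct application of \Cref{lem:conjugate}.
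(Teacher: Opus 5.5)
Your proposal is correct. The paper gives no proof of this lemma: it imports it from \cite[Theorem~C.2.2.2]{Hiriart-Urruty2001-ck}. There, the correspondence between nonempty closed convex sets and closed sublinear functions is obtained directly by separation arguments in the chapter on support functions, before conjugacy is introduced. You instead derive it within the paper's own preliminaries as a corollary of \Cref{lem:conjugate}. The key is the conjugate pair $\delta_S^*=\spt{S}$ and $\sigma^*=\delta_{\bS{\sigma}}$ for proper closed sublinear $\sigma$; this is essentially the treatment of support functions in \cite[Section~13]{Rockafellar1996-wm}.

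Your route buys economy and uniformity. Once the biconjugate theorem is available, every assertion reduces to these two elementary computations plus the bookkeeping you flag: $\sigma(0)=0$, an affine minorant of $\sigma$, and the case $S=\emptyset$. Your closing remark that $\spt{S}=\spt{\clconv S}$ even makes the separate separation argument for $\clconv\delta_S=\delta_{\clconv S}$ unnecessary.

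The direct route is logically lighter. It needs only separation rather than the full biconjugate theorem, which itself rests on the same separation fact in $V\times\R$; that is why the cited source can place it before conjugacy. In this paper \Cref{lem:conjugate} is taken as given, so your reduction involves no circularity.
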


\subsubsection{Directional derivative and subdifferential}
Let $f$ be a function on $V$ and $x \in \dom{f}$.
The \emph{directional derivative} of $f$ at $x$ with respect to $d \in V$, denoted by $f'(x; d)$, is the limit
\begin{align}
    f'(x; d) \coloneqq \lim_{t \downarrow 0} \frac{f(x + td) - f(x)}{t}
\end{align}
if it exists (it can take $+\infty$ or $-\infty$).
If $f'(x; d)$ exists for all $d \in V$, then we refer to the function $\funcdoms{f'(x; \cdot)}{V}{\R \cup \{+\infty, -\infty\}}$ as the \emph{directional derivative} of $f$ at $x$.
The \emph{subdifferential} of $f$ at $x$, denoted by $\partial f(x)$, is the subset of $V^*$ defined by
\begin{align}
    \partial f(x) \coloneqq&\; \Set{p \in V^*}{f(y) \geq f(x) + \inpr{p}{y-x} \ (\forall y \in V)}\\
    =&\; \Set{p \in V^*}{x \in \argmin(f- p)}.
\end{align}
A point $p \in \partial f(x)$ is called a \emph{subgradient} of $f$ at $x$.
If $\funcdoms{f}{V}{\ER}$ is a proper convex function,
then,
for each $x \in \dom{f}$, the directional derivative $f'(x; \cdot)$ (exists and) forms a sublinear function.
Specifically, the following relations hold~\cite[Theorem~23.2]{Rockafellar1996-wm}:
\begin{align}
    \clconv{(f'(x; \cdot))} &= \spt{\partial f(x)},\label{eq:derivative-support}\\
    \partial f(x) &= \bS{f'(x; \cdot)}.\label{eq:subdifferential}
\end{align}

\subsubsection{The minimizer sets with perturbations}

For a function $\funcdoms{f}{V}{\ER}$ and $x \in \dom{f}$,
we define
\begin{align}
    \fan(f) &\coloneqq \Set{\argmin(f-p)}{p \in V^*,\ \argmin(f-p) \neq \emptyset},\\
    \fan(f; x) &\coloneqq \Set{ S \in \fan(f)}{x \in S}.
\end{align}
We note that, if $f$ is a convex function, then any member of $\fan(f)$ can be viewed as the projection of an exposed face of $\epi{f} \subseteq V \times \R$ to $V$.
If $f$ is a closed convex function,
then every member of $\fan(f)$ is a closed convex set.
We also define the \emph{supports} of $\fan(f)$ and $\fan(f; x)$ by
\begin{align}
    \geo{\fan(f)} \coloneqq \bigcup_{S \in \fan(f)} S, \qquad
    \geo{\fan(f; x)} \coloneqq \bigcup_{S \in \fan(f; x)} S,
\end{align}
respectively.

We here summarize easy observations on $\fan(f)$ of a convex function $f$.
\begin{lemma}\label{lem:argmin-support}
    Let $\funcdoms{f}{V}{\ER}$ be a proper convex function.
    \begin{enumerate}[label={\textup{(\arabic*)}}]
        \item $\relint(\dom{f}) \subseteq \geo{\fan(f)} \subseteq \dom{f}$.
        In particular, if $\geo{\fan(f)}$ is closed,
    then $\geo{\fan(f)} = \dom{f}$.
        \item For $S, T \in \fan(f)$,
        we have that $S$ is a face of $T$ if and only if $(\relint{S}) \cap T \neq \emptyset$.
        \item If there is a subset $\fan' \subseteq \fan(f)$ such that $\dom{f} = \bigcup_{S \in \fan'} S$, then $\geo{\fan(f)} = \dom{f}$ and any member of $\fan(f)$ is a face of some member of $\fan'$.
        \item Let $x \in \dom{f}$ and $C \in \fan(f)$ such that $x \in \relint{C}$.
        For any $y \in C$, we have $\bS{f'(x; \cdot)} \subseteq \bS{f'(y; \cdot)}$.
        \item If $f$ is a proper closed convex function,
        then $\argmin(f^* - x) = \bS{f'(x; \cdot)}$ for $x \in \dom{f}$
        and $\argmin(f - p) = \bS{(f^*)'(p; \cdot)}$ for $p \in \dom{f^*}$.
        In particular, we have
        \begin{align}
          \fan(f^*) = \Set{ \partial f(x) }{x \in \dom{f},\ \partial f(x) \neq \emptyset} = \Set{ \bS{f'(x; \cdot)} }{ x \in \dom{f},\ \bS{f'(x; \cdot)} \neq \emptyset }.
        \end{align}
    \end{enumerate}
\end{lemma}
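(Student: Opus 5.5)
I will prove the five items in order, using only the definitions of $\fan(f)$ and $\partial f$, the face theory of convex sets, and \Cref{lem:conjugate}, \Cref{lem:clconv-sublinear}, \eqref{eq:derivative-support} and \eqref{eq:subdifferential}.

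For (1), the inclusion $\geo{\fan(f)} \subseteq \dom{f}$ holds because minimizers of the proper function $f - p$ have finite value. For $\relint(\dom{f}) \subseteq \geo{\fan(f)}$, take $x \in \relint(\dom{f})$. By \cite[Theorem~23.4]{Rockafellar1996-wm}, $\partial f(x) \neq \emptyset$, and any $p \in \partial f(x)$ gives $x \in \argmin(f-p) \in \fan(f)$. If $\geo{\fan(f)}$ is closed, it contains $\cl\relint(\dom{f}) \supseteq \dom{f}$, which gives equality.

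For (2), let $S = \argmin(f-p)$ and $T = \argmin(f-q)$. If $S$ is a face of $T$, then $\relint S \subseteq S \subseteq T$, and $S$ is nonempty. Conversely, suppose $z \in (\relint S) \cap T$. Then $z$ minimizes both $f-p$ and $f-q$, so $z$ also minimizes $f - \frac{p+q}{2}$. For $y \in S \cap T$, equality must hold in both inequalities, so $\argmin(f - \frac{p+q}{2}) = S \cap T$. The convex function $f-q$ is constant on $T$, and on $S$ it equals $f - p$ plus the affine function $\inpr{p-q}{\cdot}$. Since $z \in \relint S$ minimizes this affine function over $S$, it is constant on $S$, so $S \subseteq T$. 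Next, $S$ is exposed in $T$ by $p - q$: for $y \in T$ we have $(f-p)(y) = (f-q)(y) + \inpr{q-p}{y}$, and $(f-q)(y)$ is the constant $\min(f-q)$. Hence $S = \argmin_{y\in T}\inpr{q-p}{y}$ is an exposed face of $T$, and in particular a face.

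For (3), equality of supports follows at once from $\fan' \subseteq \fan(f)$ and (1). Given $S \in \fan(f)$, pick $x \in \relint S$. Then $x \in T$ for some $T \in \fan'$, and (2) shows that $S$ is a face of $T$.

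For (4), let $p \in \bS{f'(x;\cdot)} = \partial f(x)$ by \eqref{eq:subdifferential}. Write $C = \argmin(f-q)$. Then $x$ minimizes $f - p$, and the function $f-p$ restricted to $C$ equals a constant plus the affine function $\inpr{q-p}{\cdot}$. This restriction attains its minimum over $C$ at the relative interior point $x$, so it is constant on $C$. Hence every $y \in C$ also minimizes $f-p$, that is, $p \in \partial f(y) = \bS{f'(y;\cdot)}$.

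For (5), I will use Fenchel--Young: for closed proper convex $f$ (with $f = f^{**}$ by \Cref{lem:conjugate}), $p \in \partial f(x)$ holds if and only if $f(x) + f^*(p) = \inpr{p}{x}$, if and only if $x \in \partial f^*(p)$. Then $\argmin(f^* - x) = \Set{p}{f^*(p) - \inpr{p}{x} = -f(x)} = \partial f(x) = \bS{f'(x;\cdot)}$ by \eqref{eq:subdifferential}. The second identity follows symmetrically, applying the same argument to $f^*$, which is closed proper convex with $f^{**} = f$. Collecting over $x$ gives the displayed description of $\fan(f^*)$: each nonempty $\argmin(f^*-x)$ must have $x \in \dom f$, since otherwise $f^* - x$ would be unbounded below or have empty minimizer set by the same identity.

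The main obstacle I expect is the converse in (2), where one must show both $S \subseteq T$ and that $S$ is a face of $T$ from a single relative interior point. The key step there is that an affine function minimized at a relative interior point of a convex set is constant on that set.
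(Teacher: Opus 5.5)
Your proof is correct. Items (1), (3), (4) and (5) follow essentially the paper's route:

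\begin{itemize}
\item (1) uses the existence of subgradients on $\relint(\dom f)$.
\item (3) is derived from (1) and (2).
\item In (4) you show that every $y\in C$ lies in $\argmin(f-p)$. You redo the relative-interior argument inline, where the paper simply invokes (2).
\item (5) uses the equality case of Fenchel--Young, which is what the cited \cite[Theorem~23.5]{Rockafellar1996-wm} encodes. Your identification of $\argmin(f^*-x)$ with the Fenchel--Young equality set implicitly uses $\inf(f^*-x)=-f^{**}(x)=-f(x)$, which you have from $f=f^{**}$.
\end{itemize}

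The genuine difference is in (2). The paper lifts $S$ and $T$ to the corresponding exposed faces of $\epi f$ and appeals to \cite[Theorem~18.1 and Corollary~18.1.2]{Rockafellar1996-wm}. You instead stay in $V$ and use only one elementary fact: an affine function that attains its minimum over a convex set at a relative-interior point is constant on that set.

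Your route is self-contained. It also gives a little more: $S\subseteq T$ and $S=\argmin_{y\in T}\inpr{q-p}{y}$, so $S$ is an exposed face of $T$, not merely a face.

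The paper's route is shorter on the page and reuses standard face theory. It does, however, rely on the routine but unstated correspondence between faces of the graph piece $\{(x,f(x)) : x\in T\}\subseteq\epi f$ and faces of $T$.

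Your side remark that $\argmin(f-(p+q)/2)=S\cap T$ is true but plays no role in your argument and can be dropped.
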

\begin{proof}
    (1).
    The inclusion $\geo{\fan(f)}\subseteq\dom{f}$ is immediate.
    By~\cite[Theorem~23.4]{Rockafellar1996-wm},
    every point in $\relint(\dom{f})$ admits a subgradient, and hence
    belongs to some member of $\fan(f)$.

    (2) follows from \cite[Theorem~18.1 and Corollary~18.1.2]{Rockafellar1996-wm}, applied to the exposed faces of $\epi{f}$ corresponding to the members of $\fan(f)$.
    
    (3) follows from the assertions~(1) and~(2).

    (4).
    If $p\in \partial f(x)$, i.e., $x \in \argmin(f-p) \in \fan(f)$, then $C$ is a face of $\argmin(f-p)$ by the assertion~(2).
    Thus, every $y\in C$ belongs to $\argmin(f-p)$, i.e., $p \in \partial f(y)$.
    Therefore, we obtain
    $\partial f(x)\subseteq\partial f(y)$,
    which is equivalent to $\bS{f'(x; \cdot)} \subseteq \bS{f'(y; \cdot)}$ by~\eqref{eq:subdifferential}.

    (5) follows from \cite[Theorem~23.5 and Corollary~23.5.1]{Rockafellar1996-wm}
    with the equation~\eqref{eq:subdifferential}.
\end{proof}

\subsubsection{Polyhedral structures}

A subset $P \subseteq V$ is called a \emph{polyhedron} (see e.g.,~\cite[Theorem~19.1]{Rockafellar1996-wm})
if it is representable as the intersection of finitely many closed half-spaces of $V$, i.e.,
\begin{align}\label{eq:poly-outer}
P = \bS{\theta} = \Set{x \in V}{\inpr{p}{x} \leq \theta(p) \ (p \in \dom{\theta})}
\end{align}
for some $\funcdoms{\theta}{V^*}{\ER}$ such that $\dom{\theta}$ is a finite set,
or equivalently,
it is representable as the Minkowski sum of the convex hull of finitely many points in $V$ and the conical hull of finitely many vectors in $V$, i.e.,
\begin{align}\label{eq:poly-inner}
P = \conv{X} + \cone{Y}
\end{align}
for some finite sets $X, Y \subseteq V$;
the second summand $\cone{Y}$ in~\eqref{eq:poly-inner} is uniquely determined by $P$ and is called the \emph{recession cone} (or \emph{characteristic cone}) of $P$.
We refer to the former representation of a polyhedron $P$ as an \emph{outer-representation}
and the latter an \emph{inner-representation}.
In the inner-representation~\eqref{eq:poly-inner},
$P \neq \emptyset$ if and only if $X \neq \emptyset$.
A bounded polyhedron is called a \emph{polytope}.
A \emph{polyhedral cone} is a cone that is also a polyhedron;
an inner-representation of a polyhedral cone is of the form $\cone{Y}$ for some finite set $Y \subseteq V$,
and an outer-representation is of the form $\Set{ x \in V }{ \inpr{p}{x} \leq 0 \ (\forall p \in F) }$
for some finite set $F \subseteq V^*$.

For a cone $K \subseteq V$, its \emph{polar cone} $K^* \subseteq V^*$ is
\begin{align}
    K^* \coloneqq \Set{p \in V^*}{\inpr{p}{x} \leq 0 \ (\forall x \in K)},
\end{align}
which forms a closed convex cone.
If $K$ is a closed convex cone then $K = K^{**}$,
and if $K$ is a polyhedral cone then so is $K^*$.
From an outer-representation (resp. an inner-representation) of a polyhedral cone,
we can easily construct an inner-representation (resp. an outer-representation) of
its polar cone as follows.
\begin{lemma}[{Farkas lemma; see e.g.,~\cite[Examples~A.3.2.2]{Hiriart-Urruty2001-ck} and~\cite[Corollary~22.3.1]{Rockafellar1996-wm}}]\label{lem:polar_polyhedral_cone}
    If $K$ is a polyhedral cone of the form $\Set{ x \in V }{\inpr{p}{x} \leq 0 \ (\forall p \in Q)}$ for some finite set $Q \subseteq V^*$,
then its polar cone $K^*$ is representable as $K^* = \cone{Q}$.
\end{lemma}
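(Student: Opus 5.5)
To prove the Farkas lemma in the form stated, I will show the two inclusions $\cone{Q} \subseteq K^*$ and $K^* \subseteq \cone{Q}$, where $K = \Set{x \in V}{\inpr{p}{x} \leq 0 \ (\forall p \in Q)}$.

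The inclusion $\cone{Q} \subseteq K^*$ is immediate. Take $q = \sum_{p \in Q} \mu(p) p$ with $\mu \in \cncoeff{Q}$, and let $x \in K$. Then
\begin{align}
  \inpr{q}{x} = \sum_{p \in Q} \mu(p) \inpr{p}{x} \leq 0,
\end{align}
so $q \in K^*$. If $Q = \emptyset$, then $K = V$, $K^* = \{0\}$ and $\cone{Q} = \{0\}$, so both sides agree in that case.

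For the reverse inclusion, I will first show that $\cone{Q}$ is closed, since $Q$ is finite. The plan is a Carathéodory-type argument.
\begin{enumerate}[label={\textup{(\roman*)}}]
  \item Every point of $\cone{Q}$ is a conical combination of a linearly independent subset $Q' \subseteq Q$. If the vectors with positive coefficients are dependent, one moves along the dependence relation until some coefficient vanishes, and repeats.
  \item For linearly independent $Q'$, the set $\cone{Q'}$ is the image of the closed orthant $\R_+^{Q'}$ under an injective linear map, so it is closed.
  \item Hence $\cone{Q}$ is a finite union of closed sets, and therefore closed.
\end{enumerate}

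Next, suppose $r \in V^* \setminus \cone{Q}$. The set $\cone{Q}$ is a nonempty closed convex cone, so the separation theorem (as used in the proof of \Cref{lem:clconv-tcone}) gives $x \in V = V^{**}$ and $\alpha \in \R$ such that
\begin{align}
  \inpr{q}{x} \leq \alpha < \inpr{r}{x} \qquad (\forall q \in \cone{Q}).
\end{align}
Because $\cone{Q}$ is a cone, scaling $q \to \lambda q$ with $\lambda \to \infty$ forces $\inpr{q}{x} \leq 0$ for all $q \in \cone{Q}$. Taking $q = 0$ forces $\alpha \geq 0$, and so $\inpr{r}{x} > 0$. In particular $\inpr{p}{x} \leq 0$ for every $p \in Q$, which means $x \in K$. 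Then $\inpr{r}{x} > 0$ shows $r \notin K^*$. This gives $K^* \subseteq \cone{Q}$.

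The only nontrivial step is the closedness of $\cone{Q}$, which is needed for strict separation to apply. The reduction to linearly independent generators in (i) handles it.
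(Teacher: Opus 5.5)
Your proof is correct and complete. The conic Carath\'eodory reduction gives closedness of $\cone{Q}$. In the separation step, the scaling argument forces $\inpr{q}{x} \leq 0$ on $\cone{Q}$ and $\alpha \geq 0$, so it produces $x \in K$ with $\inpr{r}{x} > 0$, as required.

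The paper gives no proof of its own here; it only cites Hiriart-Urruty--Lemar\'echal and Rockafellar, and your argument is the standard one behind those references. Your separation step is in effect a proof of the bipolar identity $C^{**} = C$ for the closed convex cone $C = \cone{Q}$, which the paper records just before the lemma. Given that fact, the proof reduces to two observations: $K = (\cone{Q})^{*}$, and $\cone{Q}$ is closed.
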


Let $P$ be a polyhedron.
For $x \in P$, its tangent cone $\tcone{P}{x}$ of $P$ at $x$ forms a polyhedral cone representable as $\tcone{P}{x} = \cone{\Set{ d \in V }{\exists \eps > 0.\  x + \eps d \in P}}$.
For $x \in P$, let $\face{P}{x}$ denote the unique face of $P$ such that $x \in \relint(\face{P}{x})$.
The following is a fundamental fact on the tangent cone of a polyhedron.
\begin{lemma}[{see e.g.,~\cite[Proposition~3.5.2]{Matthias-Beck2018-lk}}]\label{lem:poly:tcone}
    For a polyhedron $P$ and $x, y \in P$ with $\face{P}{x} = \face{P}{y}$,
    we have $\tcone{P}{x} = \tcone{P}{y}$.
\end{lemma}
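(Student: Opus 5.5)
The plan is to reduce the claim to a comparison of the cones of feasible directions at $x$ and at $y$, working from an outer representation of $P$. Write $P = \Set{z \in V}{\inpr{p}{z} \leq \theta(p) \ (p \in \dom{\theta})}$ with $\dom{\theta}$ finite, as in~\eqref{eq:poly-outer}. For $z \in P$, let $I(z) \coloneqq \Set{p \in \dom{\theta}}{\inpr{p}{z} = \theta(p)}$ be its active constraints. First we show that
\begin{align}
    \tcone{P}{z} = \Set{d \in V}{\inpr{p}{d} \leq 0 \ (\forall p \in I(z))}.
\end{align}
For $\subseteq$: if $z + \eps d \in P$ with $\eps > 0$, then $\eps\inpr{p}{d} \leq \theta(p) - \inpr{p}{z} = 0$ for every $p \in I(z)$. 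The right-hand side is closed and convex, so it contains the closed conical hull. For $\supseteq$: given $d$ on the right, each inactive constraint has positive slack at $z$. By finiteness, a small enough $\eps>0$ keeps all of them satisfied at $z+\eps d$, while the active ones stay satisfied because $\inpr{p}{d}\le 0$. Hence $z + \eps d \in P$, so $d \in \tcone{P}{z}$.

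By this formula it suffices to prove $I(x) = I(y)$ whenever $\face{P}{x} = \face{P}{y} =: F$. Fix $p \in I(x)$. The set $H_p \coloneqq \Set{z \in P}{\inpr{p}{z} = \theta(p)}$ is the set of minimizers of $-p$ over $P$. Since $x \in H_p$, this set is nonempty and is an exposed face, hence a face of $P$. Because $x \in \relint F$, the face $H_p$ meets $\relint F$. By the standard fact~\cite[Theorem~18.1]{Rockafellar1996-wm} that a face $H$ meeting the relative interior of a convex subset $F$ of $P$ must contain all of $F$, we get $F \subseteq H_p$. In particular $y \in H_p$, so $p \in I(y)$. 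Exchanging the roles of $x$ and $y$ gives $I(x) = I(y)$, and the two tangent cones coincide.

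The main point needing care is the step ``face meets relative interior $\Rightarrow$ contains $F$.'' I would verify it directly. For any $w \in F$, since $x \in \relint F$, there is a point $w' \in F$ with $x \in (w, w')$; this is obtained by extending the segment from $w$ through $x$ slightly beyond $x$. The face property of $H_p$ then forces $w \in H_p$. Everything else is routine bookkeeping with finitely many inequalities.
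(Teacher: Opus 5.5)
Your proof is correct. It is the standard argument behind the cited result; the paper gives no proof of its own, only the reference. You describe $\tcone{P}{z}$ as the cone cut out by the constraints active at $z$, which is the same form in which the paper invokes \cite[Proposition~3.5.2]{Matthias-Beck2018-lk} in the proof of \Cref{lem:L-convex-tcone}, and you then show that the active set is constant on $\relint{\face{P}{x}}$. The only case your face argument leaves unstated is $w = x$, which is trivial because then $w \in H_p$ already and no extension of the segment is needed.
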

Since the number of faces of $P$ is finite (see e.g.,~\cite[Section~8.3]{Schrijver2000-xk}),
so is
the set $\Set{\tcone{P}{x}}{x \in P}$ by \Cref{lem:poly:tcone}, which consists of polyhedral cones.
We can see that this property ensures a closed convex set to be a polyhedron.
\begin{lemma}\label{lem:tcone-poly-chara}
    For a closed convex set $C$,
    it is a polyhedron if and only if
    $\Set{ \tcone{C}{x} }{ x \in C }$ is a finite set consisting of polyhedral cones.
\end{lemma}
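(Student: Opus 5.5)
The forward direction is immediate from the facts recalled just before the statement. If $C$ is a polyhedron, every tangent cone $\tcone{C}{x}$ is polyhedral. By \Cref{lem:poly:tcone}, $\tcone{C}{x}$ depends only on the face $\face{C}{x}$, and $C$ has finitely many faces. Hence $\Set{\tcone{C}{x}}{x \in C}$ is a finite set of polyhedral cones.

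For the converse, I use \Cref{lem:clconv-tcone}, which gives
\begin{align}
C = \bigcap_{x \in C} (x + \tcone{C}{x}).
\end{align}
The cones are finitely many, say $K_1, \dots, K_m$, and each $K_i$ has a finite outer-representation $K_i = \Set{d \in V}{\inpr{p}{d} \leq 0 \ (\forall p \in Q_i)}$ with $Q_i \subseteq V^*$ finite. Set $Q \coloneqq \bigcup_i Q_i$, a finite set. For $x \in C$ with $\tcone{C}{x} = K_i$, the translate $x + K_i$ is $\Set{y}{\inpr{p}{y} \leq \inpr{p}{x} \ (p \in Q_i)}$. Therefore
\begin{align}
C = \Set{y \in V}{\inpr{p}{y} \leq \theta(p) \ (\forall p \in Q)}, \qquad \theta(p) \coloneqq \sup\Set{\inpr{p}{x}}{x \in C,\ p \in Q_{i(x)}},
\end{align}
where $i(x)$ denotes the index with $\tcone{C}{x} = K_{i(x)}$.

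The one point needing care is that the supremum $\theta(p)$ might be $+\infty$ or fail to be attained. Because of this, the intersection is not yet visibly an intersection of finitely many closed half-spaces. I handle it with two observations:
\begin{itemize}
\item If $\theta(p) = +\infty$, the constraint for $p$ is vacuous and can be dropped.
\item If $\theta(p) < +\infty$, the intersection over all relevant $x$ of the half-spaces $\inpr{p}{y} \leq \inpr{p}{x}$ equals the single closed half-space $\inpr{p}{y} \leq \theta(p)$. This holds whether or not the supremum is attained, since an intersection of nested closed half-spaces with a common normal is the half-space at the infimum of the bounds.
\end{itemize}
Here I have reorganized the intersection as $\bigcap_{p \in Q} \bigcap_{x \,:\, p \in Q_{i(x)}} \Set{y}{\inpr{p}{y} \leq \inpr{p}{x}}$, which is valid because each $x + K_{i(x)}$ is exactly the intersection of its own half-spaces.

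So $C$ is an intersection of at most $\card{Q}$ closed half-spaces, hence a polyhedron in the sense of \eqref{eq:poly-outer}. If $C$ is empty the statement is trivial, so \Cref{lem:clconv-tcone} applies in the nonempty case.
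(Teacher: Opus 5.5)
Your proposal is essentially the paper's proof: fix one finite outer representation per distinct tangent cone, pool the normals into a finite set, and collapse $\bigcap_{x\in C}(x+\tcone{C}{x})$ from \Cref{lem:clconv-tcone} normal by normal. (The paper writes the bound as $\inf_{x\in C} r_p(x)$, with $r_p(x)=+\infty$ when $p$ is not a normal of $\tcone{C}{x}$.) One slip: $\theta(p)$ should be the infimum of the bounds, as your own justification says, not the supremum---though it makes no difference here, because every $x$ with $p\in Q_{i(x)}$ satisfies $\inpr{p}{y}\le\inpr{p}{x}$ for all $y\in C$, so all these bounds equal $\max_{y\in C}\inpr{p}{y}$ and the concerns in your two bullets never arise.
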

\begin{proof}
    We only show the if part; we have already seen the only-if part above.
    For $x \in C$,
    since $\tcone{C}{x}$ is a polyhedral cone,
    there is a finite set $S_x \subseteq V^*$ such that
    $\tcone{C}{x} = \Set{ y \in V }{ \inpr{p}{y} \leq 0 \ (\forall p \in S_x) }$;
    we may assume that $S_x = S_y$ if $\tcone{C}{x} = \tcone{C}{y}$.
    Then, since $\Set{ \tcone{C}{x} }{ x \in C }$ is finite,
    so is the set $S \coloneqq \bigcup_{x \in C} S_x$.
    For each $p \in S$ and $x \in C$,
    let $r_p(x) \coloneqq \inpr{p}{x}$ if $p \in S_x$,
    and $r_p(x) \coloneqq +\infty$ if $p \notin S_x$.
    Then, $x + \tcone{C}{x} = \Set{ y \in V }{ \inpr{p}{y} \leq r_p(x) \ (\forall p \in S) }$ holds.
    Since $C = \bigcap_{x \in C} (x + \tcone{C}{x})$ (\Cref{lem:clconv-tcone}),
    we obtain $C = \Set{ y \in V }{ \inpr{p}{y} \leq \inf_{x \in C} r_p(x)\ (\forall p \in S) }$.
    The finiteness of $S$ implies that $C$ is a polyhedron.
\end{proof}

A function is said to be \emph{polyhedral convex}
if its epigraph forms a polyhedron.
In particular, a \emph{polyhedral sublinear function} is a function whose epigraph forms a polyhedral cone.
We note that the effective domain of a polyhedral convex function (resp. polyhedral sublinear function) forms a polyhedron (resp. a polyhedral cone).

A proper polyhedral sublinear function $\sigma$ admits two kinds of representations similar to a polyhedron (see~\eqref{eq:poly-outer} and~\eqref{eq:poly-inner}); $\sigma$ is always representable as
\begin{align}
    \sigma(p) &= (\cone{\theta})(p) = \inf{\Set*{ \sum_{q \in \dom{\theta}} \mu(q) \theta(q)}{ p = \sum_{q \in \dom{\theta}} \mu(q) q, \ (\mu(q))_{q \in \dom{\theta}} \in \cncoeff{\dom{\theta}} }}\label{eq:poly-sublinear-inner}
\end{align}
for some function $\funcdoms{\theta}{V^*}{\ER}$ with $\dom{\theta}$ finite,
and as
\begin{align}
    \sigma(p) &=
    \begin{cases}
        \max{\Set{\inpr{p}{x} }{ x \in X }} & \text{if $p \in (\cone{Y})^* = \Set{ q \in V^* }{ \inpr{q}{y} \leq 0 \ (\forall y \in Y) }$},\\
        +\infty & \text{otherwise}\label{eq:poly-sublinear-outer}
    \end{cases}
\end{align}
for some finite sets $X, Y \subseteq V$.
The former and latter representations of $\sigma$ are referred to as an \emph{inner-} and \emph{outer-representation},
respectively.

The specialization of the one-to-one correspondence between the nonempty closed convex sets and the proper closed sublinear functions (\Cref{lem:clconv-sublinear}) to the polyhedral case provides the one-to-one correspondence between the nonempty polyhedra and the proper polyhedral sublinear functions~\cite[Theorem~19.2 and Corollary~19.2.1]{Rockafellar1996-wm}.
More specifically,
from an outer-representation (resp. an inner-representation) of a polyhedron,
we can easily construct an inner-representation (resp. an outer-representation) of
its support function,
and vice versa, as follows.
\begin{lemma}\label{lem:conjugate-poly-sublinear}
    Let $P$ be a nonempty polyhedron of the form~\eqref{eq:poly-outer} (resp.~\eqref{eq:poly-inner})
    and $\sigma$ a proper polyhedral sublinear function of the form~\eqref{eq:poly-sublinear-inner} (resp.~\eqref{eq:poly-sublinear-outer}).
    Then, we have $P = \bS{\sigma}$ and $\sigma = \spt{P}$.
    In addition, for $p \in \dom{\sigma}$,
    the set $\Set{ x \in P }{ \inpr{p}{x} = \sigma(p) }$ is a nonempty face of $P$.
\end{lemma}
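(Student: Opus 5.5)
The lemma has two parts: the identities $P = \bS{\sigma}$, $\sigma = \spt{P}$ for each pair of representations, and the face statement. Since $P$ is a nonempty closed convex set, \Cref{lem:clconv-sublinear} gives $P = \bS{\spt{P}}$. It therefore suffices to show $\sigma = \spt{P}$ in each case; then $\bS{\sigma} = \bS{\spt{P}} = P$ follows at once.

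For the inner/outer pair, let $P = \conv{X} + \cone{Y}$ with $X \neq \emptyset$ and let $\sigma$ be given by~\eqref{eq:poly-sublinear-outer}. I will compute $\spt{P}(p)$ directly as
\[
\sup_{x \in \conv X} \inpr{p}{x} + \sup_{y \in \cone Y} \inpr{p}{y}.
\]
The first term equals $\max_{x \in X} \inpr{p}{x}$, since a linear function attains its maximum over a convex hull at one of the generators. The second term is $0$ when $\inpr{p}{y} \leq 0$ for all $y \in Y$, and $+\infty$ otherwise. This is exactly~\eqref{eq:poly-sublinear-outer}.

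For the outer/inner pair, let $P = \Set{x}{\inpr{q}{x} \leq \theta(q)\ (q \in \dom\theta)}$ and $\sigma = \cone\theta$. For any $x \in P$ and any representation $p = \sum_q \mu(q) q$ with $\mu \geq 0$, we have $\inpr{p}{x} \leq \sum_q \mu(q)\theta(q)$. Hence $\spt{P} \leq \cone\theta$.

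The reverse inequality is the main step, and I will obtain it by LP duality (a form of the Farkas lemma, \Cref{lem:polar_polyhedral_cone}). For fixed $p$, the value $\spt{P}(p)$ is the optimum of the primal LP $\max\{\inpr{p}{x} : x \in P\}$, which is feasible because $P \neq \emptyset$. The dual LP is $\min\{\sum_q \mu(q)\theta(q) : \mu \geq 0,\ \sum_q \mu(q) q = p\}$, whose optimum is $(\cone\theta)(p)$. There are two cases:
\begin{itemize}
\item If the primal is bounded, strong duality gives equality of the two optima, and the dual optimum is attained.
\item If the primal is unbounded, then $p$ lies outside the polar of the recession cone $\Set{d}{\inpr{q}{d} \leq 0\ (q \in \dom\theta)}$. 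That polar is $\cone(\dom\theta)$ by \Cref{lem:polar_polyhedral_cone}, so $p \notin \cone(\dom\theta)$ and $(\cone\theta)(p) = +\infty$.
\end{itemize}
The only care needed is this bookkeeping of the infinite cases; in particular, properness of $\sigma$ ensures that no $-\infty$ value occurs.

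Finally, for the face statement, take $p \in \dom\sigma$ and set $F = \Set{x \in P}{\inpr{p}{x} = \sigma(p)}$. By the inner representation, the supremum $\sigma(p) = \spt{P}(p)$ is finite and attained: a maximizing $x \in X$ exists, and the cone part contributes $0$. So $F \neq \emptyset$. Moreover $F = \argmin_{x \in P} \inpr{-p}{x}$ is an exposed face of $P$, and hence a face.
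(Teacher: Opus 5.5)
Your proof is correct, but the paper does not argue this way. It gives no proof of its own: it presents the lemma as the polyhedral case of \Cref{lem:clconv-sublinear} and cites \cite[Theorem~19.2 and Corollary~19.2.1]{Rockafellar1996-wm}. The argument behind that citation is conjugacy-based.
\begin{itemize}
\item For the inner/outer pair, it is essentially your direct support-function computation.
\item For the outer/inner pair, one notes that $\cone\theta$ is finitely generated, hence polyhedral and in particular closed. One computes its conjugate directly as the indicator function of $\bS{\theta} = P$. Biconjugation then gives $\cone\theta = (\cone\theta)^{**} = \spt{P}$.
\end{itemize}
You replace ``closedness plus biconjugation'' by strong LP duality for the inequality $\spt{P} \geq \cone\theta$. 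Both routes rest on the same Farkas/Minkowski--Weyl content. Yours is more elementary and self-contained, and it also shows that the infimum defining $(\cone\theta)(p)$ is attained for $p \in \dom\sigma$. The cited route instead slots the lemma into the Fenchel-conjugacy framework the paper uses throughout.

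Two small simplifications are available. First, the unbounded case needs neither the recession cone nor \Cref{lem:polar_polyhedral_cone}; your claim there also tacitly uses Minkowski--Weyl. Your weak inequality $\spt{P} \leq \cone\theta$ already forces $(\cone\theta)(p) = +\infty$ whenever $\spt{P}(p) = +\infty$. Second, properness plays no role, since $\cone\theta \geq \spt{P} > -\infty$ as soon as $P \neq \emptyset$.

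In the face statement for an outer-represented $P$, you pass silently to an inner representation. This is fine by the equivalence of~\eqref{eq:poly-outer} and~\eqref{eq:poly-inner} stated in the paper; alternatively, use attainment of the LP optimum.
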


A family $\fan$ of polyhedra is called a \emph{polyhedral complex} if it satisfies that
\begin{enumerate}[label={\textup{(\roman*)}}]
  \item if $P \in \fan$ and $P'$ is a face of $P$ then $P' \in \fan$; and
  \item the nonempty intersection $P \cap Q$ of two members $P, Q \in \fan$ is a face of $P$ and $Q$.
\end{enumerate}
A polyhedral complex consisting of polyhedral cones is particularly called a \emph{polyhedral fan}.
The \emph{support} of a polyhedral complex $\fan$,
denoted by $\geo{\fan}$, is defined as $\bigcup_{P \in \fan} P$.
For example, for a polyhedral convex function $f$ and a polyhedral sublinear function $\sigma$,
the families $\fan(f)$ and $\fan(\sigma)$ form a polyhedral complex whose support is $\dom{f}$ and a polyhedral fan whose support is $\dom{\sigma}$,
respectively (see e.g.,~\cite[Chapter~5]{Ziegler1994} and~\cite{Hirai2006-bx}).
We can easily obtain the following from \Cref{lem:conjugate-poly-sublinear},
which is a special case of \Cref{lem:argmin-support}~(5).
\begin{lemma}\label{lem:fan-support}
  Let $P \subseteq V$ be a nonempty polyhedron and $\funcdoms{\sigma}{V^*}{\ER}$ a proper polyhedral sublinear function such that $P = \bS{\sigma}$, namely, $\sigma = \spt{P}$.
  Then, we have $\fan(\sigma) = \Set{ (\tcone{P}{x})^* }{ x \in P }$ and $\tcone{P}{x} = (\argmin(\sigma - x))^*$ for $x \in P$.
\end{lemma}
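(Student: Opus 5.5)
We first prove the identity $\tcone{P}{x} = (\argmin(\sigma - x))^*$ for each $x \in P$, and then deduce the description of $\fan(\sigma)$ from it. Note that $\argmin(\sigma - x)$ is a subset of $V^*$ and is a cone, since $\sigma - x$ is positively homogeneous. Fix $x \in P$. Because $\sigma = \spt{P}$ and $x \in P$, we have $\sigma(p) \geq \inpr{p}{x}$ for all $p$, with equality at $p = 0$. So $\min(\sigma - x) = 0$, the minimum is attained, and
\begin{align}
\argmin(\sigma - x) = \Set{p \in V^*}{\sigma(p) = \inpr{p}{x}} = \Set{p \in V^*}{\inpr{p}{y - x} \leq 0 \ (\forall y \in P)}.
\end{align}
The right-hand side is exactly the polar cone of $\cone\Set{y - x}{y \in P}$. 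A cone and its closure have the same polar, so it is the polar $N$ of $\tcone{P}{x}$, i.e., the normal cone. Since $\tcone{P}{x}$ is a closed convex (indeed polyhedral) cone, bipolarity $K = K^{**}$ gives $\tcone{P}{x} = N^* = (\argmin(\sigma - x))^*$. This proves the second assertion.

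For the first assertion, note that every member of $\fan(\sigma)$ has the form $\argmin(\sigma - x)$ for some $x \in V = V^{**}$ at which the argmin is nonempty. By \Cref{lem:argmin-support}~(5) applied to the closed convex function $\sigma$, with $\sigma^* $ the indicator function of $P$ (by \Cref{lem:conjugate} and \Cref{lem:conjugate-poly-sublinear}), such an argmin equals $\bS{(\sigma^*)'(x;\cdot)}$ and is nonempty only for $x \in \dom \sigma^* = P$.

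A direct check also works and avoids \Cref{lem:argmin-support}~(5). If $x \notin P$, separation gives some $p$ with $\inpr{p}{x} > \sigma(p)$. Then $\sigma - x$ is negative at $p$ and hence, by homogeneity, unbounded below, so its argmin is empty. If $x \in P$, the computation above shows the argmin is nonempty (it contains $0$) and equals $(\tcone{P}{x})^{**\!*} = (\tcone{P}{x})^*$, because a closed convex cone $K$ satisfies $K^{***} = K^*$. Hence $\fan(\sigma) = \Set{(\tcone{P}{x})^*}{x \in P}$.

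The only delicate point is the identification of the polars, namely that the polar of the generated cone equals the polar of its closure, together with bipolarity. Both are standard.
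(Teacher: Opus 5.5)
Your proof is correct and follows essentially the paper's route: the paper only remarks that the lemma follows from \Cref{lem:conjugate-poly-sublinear} as the special case of \Cref{lem:argmin-support}~(5) with $f$ the indicator function of $P$, and your computation is that specialization written out — you identify $\argmin(\sigma - x)$ with the normal cone $(\tcone{P}{x})^*$ for $x \in P$, show it is empty for $x \notin P$ by positive homogeneity, and finish with bipolarity. A minor simplification: for $x \notin P$ you do not need a separation argument, since $P = \bS{\sigma}$ directly gives some $p$ with $\inpr{p}{x} > \sigma(p)$.
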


A convex set $Q \subseteq V$ is said to be \emph{locally polyhedral} if, for any polytope $P$, the intersection $Q \cap P$ forms a polytope.
Similarly, a function is said to be \emph{locally polyhedral convex} if its epigraph forms a locally polyhedral set,
or equivalently, if for any polytope $P$, the restriction of $f$ to $P$ is a polyhedral convex function.
We can easily see that locally polyhedral convex sets and functions are closed.

The definition of locally polyhedral convex sets (resp. functions) states that a locally polyhedral convex set (resp. function) behaves locally like a polyhedron (resp. polyhedral convex function).
Hence, many local properties of polyhedral structures
can be generalized to the case of locally polyhedral structures.
We summarize basic (local) properties of a locally polyhedral convex function as follows.
\begin{lemma}\label{lem:quasi-polyhedral-convex-function}
    Let $\funcdoms{f}{V}{\ER}$ be a locally polyhedral convex function.
    \begin{enumerate}[label={\textup{(\arabic*)}}]
        \item $\dom{f} = \geo{\fan(f)}$.
        \item $f'(x; \cdot)$ is a proper polyhedral sublinear function and $\partial f(x)$ is a nonempty polyhedron for $x \in \dom{f}$.
        In addition,
        $\fan(f'(x; \cdot)) = \Set{ \tcone{Q}{x} }{ Q \in \fan(f; x) }$
        and
        $f'(x; d) = f(x + d) - f(x)$
        for $x \in \dom{f}$ and $d \in \geo{\fan(f; x)} - x$.
        \item If every member of $\fan(f)$ is a polyhedron,
        then $\fan(f)$ forms a polyhedral complex.
    \end{enumerate}
\end{lemma}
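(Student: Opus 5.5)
The three assertions are local statements about polyhedral convex functions. The plan is to localize: intersect with a suitable polytope, transfer the known polyhedral facts, and check that nothing is lost by the restriction.

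\textbf{Localization.} Fix $x \in \dom{f}$ and let $B$ be a polytope containing $x$ in its interior, for instance a cube centred at $x$. By local polyhedrality, $f_B \coloneqq f + \delta_B$ is polyhedral convex, where $\delta_B$ is the indicator of $B$. The directional derivative only sees values of $f$ near $x$, and $x \in \intr B$, so $f'(x;\cdot) = f_B'(x;\cdot)$.

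\textbf{Assertion (2).} For a proper polyhedral convex function, $f_B'(x;\cdot)$ is a proper polyhedral sublinear function. Its epigraph is the tangent cone of the polyhedron $\epi f_B$ at $(x,f(x))$, and we invoke \Cref{lem:poly:tcone,lem:tcone-poly-chara}. Moreover $f_B(x+td)-f_B(x) = t\,f_B'(x;d)$ for $t$ small, since $\epi f_B$ coincides with $(x,f(x))$ plus its tangent cone near that point. Properness gives $\partial f(x) = \bS{f'(x;\cdot)}$ by~\eqref{eq:subdifferential}, and this set is a nonempty polyhedron by \Cref{lem:conjugate-poly-sublinear}.

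For the fan identity, compare $\fan(f;x)$ and $\fan(f_B;x)$.
\begin{itemize}
\item If $Q=\argmin(f-p)\ni x$, then $Q\cap B = \argmin(f_B-p)$ and $\tcone{Q\cap B}{x}=\tcone{Q}{x}$.
\item Conversely, every member of $\fan(f_B;x)$ of the form $\argmin(f_B-p)$ with $x$ in it satisfies $p\in\partial f_B(x)=\partial f(x)$, since subgradients at interior points are local.
\end{itemize}
By convexity, $p\in\partial f(x)$ then gives $\argmin(f-p)\in\fan(f;x)$. The identity $\fan(f'(x;\cdot))=\Set{\tcone{Q}{x}}{Q\in\fan(f;x)}$ is a statement about minimizers of $f'(x;\cdot)-p$. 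These are exactly the faces of $\epi f'(x;\cdot)$ exposed by $p\in\partial f(x)$. They equal the tangent cones at $x$ of the corresponding exposed faces $\argmin(f-p)$, which is standard for polyhedral epigraphs (\Cref{lem:fan-support} applied to the polyhedral situation).

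\textbf{The exact formula $f'(x;d)=f(x+d)-f(x)$ for $d\in\geo{\fan(f;x)}-x$.} Write $x+d\in Q=\argmin(f-p)$ with $x\in Q$. On the segment $[x,x+d]\subseteq Q$, the function $f$ equals the affine function $f(x)+\inpr{p}{\cdot-x}$, so $f'(x;d)=\inpr{p}{d}=f(x+d)-f(x)$. I expect this to be the main subtle step, because the bounded polytope $B$ cannot be used directly for a possibly long $d$. The affine argument handles it, provided I first confirm that $f(x+td)=f(x)+t\inpr{p}{d}$ on the whole segment $[x,x+d]$, which holds because $Q$ is convex and $f-p$ is constant on it.

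\textbf{Assertion (1).} By \Cref{lem:argmin-support}(1) it suffices to show that every $x\in\dom f$ lies in some member of $\fan(f)$, i.e.\ that $\partial f(x)\neq\emptyset$. This was just proved in (2), since $\bS{f'(x;\cdot)}$ is nonempty for a proper polyhedral sublinear function.

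\textbf{Assertion (3).} Axiom (i): a face $P'$ of $Q=\argmin(f-p)$, with $Q$ a polyhedron, should itself be a member of $\fan(f)$. Choose a point $y\in\relint P'$ and some $q\in\partial f(y)$ that is generic in the relative interior of the normal cone. Then $\argmin(f-q)$ is the smallest member containing $y$. Using \Cref{lem:argmin-support}(2),(4) together with the local polyhedral structure at $y$ from (2), this smallest member equals $P'$. Axiom (ii): if $Q_1=\argmin(f-p_1)$ and $Q_2=\argmin(f-p_2)$ meet, then $Q_1\cap Q_2=\argmin(f-\tfrac{p_1+p_2}{2})$ is a member of $\fan(f)$ lying in both. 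Since it is an exposed face of each (by \Cref{lem:argmin-support}(2)), it is a face of each, and the polyhedral complex axioms follow.
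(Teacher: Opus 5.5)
Your treatment of (1), (2) and axiom (ii) of (3) is essentially the paper's. You localize with a polytope around $x$ and import the polyhedral facts; the paper just cites \cite[Theorem~23.10]{Rockafellar1996-wm} for $f|_P$, while you re-derive them from the tangent cone of $\epi f_B$. You then read off $f'(x;d)=f(x+d)-f(x)$ from the affineness of $f$ on $Q\in\fan(f;x)$, and use $Q_1\cap Q_2=\argmin(f-(p_1+p_2)/2)$.

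One citation is misplaced. \Cref{lem:fan-support} relates $\fan(\spt{P})$ to tangent cones of $P$, and does not give the fan identity. What gives it is the fact you already have, $f_B(x+td)-f_B(x)=t\,f_B'(x;d)$ for small $t>0$. For $p\in\partial f(x)$, it shows $f'(x;d)=\inpr{p}{d}$ if and only if $x+td\in\argmin(f-p)$ for small $t>0$. Hence $\argmin(f'(x;\cdot)-p)=\tcone{\argmin(f-p)}{x}$, the last step because $\argmin(f-p)\cap B$ is a polyhedron.

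For axiom (i) you take a genuinely different route. The paper intersects with a full-dimensional polytope $P$ meeting $\relint F$, and uses that $\fan(f|_P)$ is a polyhedral complex to write $P\cap F=\argmin(f|_P-p)$. It then lifts this to the global $\argmin(f-p)$ and concludes by comparing dimensions.

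You instead work pointwise at $y\in\relint P'$ with $q\in\relint\partial f(y)$; that is the set you need, not a normal cone. That $S\coloneqq\argmin(f-q)$ is then the smallest member of $\fan(f)$ containing $y$ is a general convexity fact. However, the decisive claim $S=P'$ is only asserted. \Cref{lem:argmin-support}(4) cannot supply it, since it presupposes a member of $\fan(f)$ with $y$ in its relative interior, which is exactly what is at stake.

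It can be filled as follows.
\begin{enumerate}
\item By \Cref{lem:argmin-support}(2), $S$ is a face of $Q$. Since $y\in S\cap\relint P'$, this gives $P'\subseteq S$.
\item Take $B$ now to be a cube centred at $y$. By the identity above at $y$ and $f'(y;\cdot)=\spt{\partial f(y)}$, we get $\tcone{S}{y}=\Set{d\in V}{\inpr{q}{d}=\spt{\partial f(y)}(d)}$.
\item For such $d$, the linear function $\inpr{\cdot}{d}$ attains its maximum over $\partial f(y)$ at the relative-interior point $q$. It is therefore constant on $\partial f(y)$, so $-d$ lies in the set too.
\item Thus $\tcone{S}{y}$ is a linear subspace. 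Since $S\cap B$ is a polyhedron, this forces $y\in\relint S$.
\item Two faces $S$ and $P'$ of $Q$ sharing the relative-interior point $y$ coincide.
\end{enumerate}

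With this step supplied, your argument is a valid alternative. It concentrates all polyhedral input in the pointwise statement (2) at $y$. It avoids the paper's lifting-and-dimension step, where one must also arrange $\intr P\cap\relint F\neq\emptyset$ so that minimizers over $P$ are global. The cost is an explicit subdifferential analysis, which the paper's appeal to the polyhedral complex $\fan(f|_P)$ hides.
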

\begin{proof}
    (1) and (2).
    Fix any $x \in \dom{f}$ and polytope $P \subseteq V$ with $x \in \intr{P}$.
    Then, the restriction $f|_{P}$ of $f$ to $P$ is a polyhedral convex function such that
    $f'(x; \cdot) = (f|_{P})'(x; \cdot)$ (and hence $\partial f(x) = \partial f|_{P} (x)$).
    It follows from \cite[Theorem~23.10]{Rockafellar1996-wm} that $(f|_{P})'(x; \cdot)$ is a proper polyhedral convex function and $\partial f|_{P} (x)$ is a nonempty polyhedron.
    Hence so are $f'(x; \cdot)$ and $\partial f(x)$,
    which implies the assertion~(1) and the former statement of the assertion~(2).
    The latter statement follows from the linearity of $f$ and $f'(x; \cdot)$ on $Q \in \fan(f; x)$.

    (3).
    For any convex function $f$, the family $\fan(f)$ generally satisfies the condition~(ii) in the definition of a polyhedral complex.
    Indeed, for any $Q = \argmin(f-p), Q' = \argmin(f- p') \in \fan(f)$ with $Q \cap Q' \neq \emptyset$,
    we have $Q \cap Q' = \argmin(f - (p + p')/2)$,
    which implies that $Q \cap Q' \in \fan(f)$.
    Hence, it suffices to see that any nonempty face $F$ of $Q \in \fan(f)$ belongs to $\fan(f)$.

    Take any $Q \in \fan(f)$ and its nonempty face $F$.
    Let $P$ be a full-dimensional polytope in $V$ such that $P \cap \relint{F} \neq \emptyset$.
    Since $f$ is a locally polyhedral convex function,
    the restriction $f|_{P}$ of $f$ to $P$ is a polyhedral convex function,
    and hence, $\fan(f|_{P})$ is a polyhedral complex containing $P \cap Q$.
    Since $F$ is a face of $Q$ and $P \cap \relint{F} \neq \emptyset$,
    $P \cap F$ is a nonempty face of $P \cap Q$,
    which implies that $P \cap F \in \fan(f|_{P})$.
    Thus, there is $p \in V^*$ such that $P \cap F = \argmin(f|_{P} - p)$.
    For such $p$,
    we have $P \cap F \subseteq \argmin(f - p)$.
    Hence, $\argmin(f - p) \cap \relint{F} \neq \emptyset$ holds,
    which implies that $F$ is a face of $\argmin(f - p)$.
    Since $\dim(P \cap F) = \dim{F} = \dim(\argmin(f - p))$ by the full-dimensionality of $P$,
    we obtain $F = \argmin(f - p)$, that is, $F \in \fan(f)$.
    This completes the proof.
\end{proof}

\subsubsection{Integrality of polyhedra}
A subset $\Mdom \subseteq V^*$ is called a \emph{lattice}
if there is a basis $\{p_1, p_2, \dots, p_n\}$ of $V^*$
such that $\Mdom$ is representable as 
\begin{align}\label{eq:lattice}
    \Mdom = \Set*{ \sum_{i = 1}^n c_i p_i }{ c_1, c_2, \dots, c_n \in \Z }.
\end{align}
We particularly say that the lattice of the form~\eqref{eq:lattice} is \emph{generated from $\{p_1, p_2, \dots, p_n\}$}
and that $\{p_1, p_2, \dots, p_n\}$ is an \emph{$\Mdom$-basis} (or just a \emph{basis}).

A polyhedron $P \subseteq V^*$ is said to be \emph{$\Mdom$-integral}
if $P = \conv(P \cap \Mdom)$ holds.
There are several characterizations of the ($\Z^n$-)integrality of polyhedra,
which are well known in the field of combinatorial optimization (see e.g.,~\cite[Section~16.3]{Schrijver2000-xk}).
By adapting the proof there, we immediately obtain the following.
\begin{lemma}[{see e.g.,~\cite[Section~16.3]{Schrijver2000-xk}}]\label{lem:integral-polyhedron}
    Let $\Mdom$ be a lattice in $V^*$
    and $P \subseteq V^*$ a polyhedron.
    Then,
    $P$ is $\Mdom$-integral if and only if $P = \conv{X} + \cone{Y}$ for some finite sets $X, Y \subseteq \Mdom$.
    In addition, if $P$ is $\Mdom$-integral, then so is every nonempty face of $P$.
\end{lemma}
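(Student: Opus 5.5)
The plan is to run the standard proof for $\Z^n$ (Schrijver, Section~16.3) after a change of coordinates. Take an $\Mdom$-basis $\{p_1,\dots,p_n\}$ and let $\phi\colon V^* \to \R^n$ be the linear isomorphism $\sum_i c_i p_i \mapsto (c_1,\dots,c_n)$. Then $\phi(\Mdom) = \Z^n$. A linear isomorphism preserves polyhedra, convex hulls, conical hulls, faces, and inner-representations. Hence $P$ is $\Mdom$-integral if and only if $\phi(P)$ is $\Z^n$-integral, and $P = \conv X + \cone Y$ with $X,Y\subseteq\Mdom$ if and only if the same holds for $\phi(P)$ with $X,Y\subseteq\Z^n$. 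So it suffices to treat $V^*=\R^n$ and $\Mdom=\Z^n$. I will nevertheless phrase the steps intrinsically.

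For the \emph{only-if} direction, assume $P=\conv(P\cap\Mdom)$. If $P=\emptyset$, take $X=Y=\emptyset$. Otherwise write $P=\conv X_0+\cone Y_0$ with $X_0, Y_0$ finite, using~\eqref{eq:poly-inner}. The recession cone $\cone Y_0$ is a rational polyhedral cone. This is the first point I need to justify: $P$ contains lattice points $z$, and every recession direction arises as a limit of directions $(z_k-z)/\|z_k-z\|$ with $z_k \in P\cap\Mdom$. The cleaner route is to take an outer-representation of $P$ and observe that its facet normals can be chosen rational with respect to the dual basis. Then $\cone Y_0$ is rational, so I may take $Y\subseteq\Mdom$ by scaling generators.

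Next let $B \coloneqq \{\sum_{y\in Y}\mu(y)y : 0\le\mu(y)\le1\}$, which is bounded. Every point of $P$ is $x+\sum\mu(y)y$ with $x\in\conv X_0$. Decompose $\mu(y)$ into its integer and fractional parts. This shows that every lattice point of $P$ lies in $(P\cap\Mdom\cap(\conv X_0+B)) + (\text{integer combination of }Y)$. Set $X\coloneqq P\cap\Mdom\cap(\conv X_0+B)$, which is finite. Then $P=\conv(P\cap\Mdom)\subseteq \conv X+\cone Y\subseteq P$.

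For the \emph{if} direction, $X \subseteq \Mdom\cap P$ gives $\conv X+\cone Y\subseteq$, hmm, we need $P\subseteq\conv(P\cap\Mdom)$. A point $x+\sum\mu(y)y$ with $x\in\conv X$ lies in $\conv(\{x'+ N y : x' \in X\}\cup X)$ for suitable large integers $N$, by writing each ray step as a convex combination. These points are lattice points of $P$, which gives the inclusion.

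For faces: a nonempty face $F$ of $P=\conv X+\cone Y$ is exposed, say $F=\argmin_P\inpr{\cdot}{d}$ in the dual pairing. Then $F=\conv X'+\cone Y'$, where $X'$ is the set of minimizers in $X$ and $Y'=\{y\in Y:\inpr{y}{d}=0\}$. Now apply the first part.

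The main obstacle is the rationality of the recession cone in the only-if direction. I would handle it via the argument that $\conv$ of a set of lattice points with polyhedral closure has a recession cone generated by lattice differences; it is easiest to cite Schrijver's proof after transporting by $\phi$.
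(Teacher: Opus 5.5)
Your route is the one the paper has in mind: change coordinates to $\Z^n$ and run Schrijver's integer/fractional-part decomposition (the paper itself gives only the citation). Your if-direction is correct as written. The gap is the step you flag yourself: showing that the recession cone of an $\Mdom$-integral polyhedron is generated by finitely many vectors of $\Mdom$.

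The lemma does not assume $P$ is rational, whereas Schrijver's treatment of integer hulls and integral polyhedra assumes rationality from the outset. So transporting by $\phi$ and citing him does not supply this step. Your two sketches do not supply it either:
\begin{itemize}
\item Saying that recession directions are limits of $(z_k-z)/\|z_k-z\|$ says nothing about rationality, since every direction is a limit of lattice directions.
\item The claim that facet normals ``can be chosen rational'' is exactly what has to be proved.
\end{itemize}
Your face argument starts from the representation $\conv X+\cone Y$ with $X,Y\subseteq\Mdom$, so it inherits the same gap.

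The missing idea is short. Let $P=\conv S$ with $S=P\cap\Mdom$. Then every face $F$ of $P$ satisfies $F=\conv(F\cap S)$. Indeed, a point of $F$ written as a convex combination of points of $S\subseteq P$ with positive weights forces all those points into $F$, by the definition of a face.

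Hence $\aff P$ and the affine hull of every facet are affine spans of lattice points. Solving rational linear systems in the coordinates given by $\phi$ then yields an outer description $P=\{p : Bp=b,\ a_F^{\top}p\le\beta_F \text{ for each facet } F\}$ with $B$ and all $a_F$ integral. The recession cone $\{d : Bd=0,\ a_F^{\top}d\le 0\}$ is therefore a rational polyhedral cone, hence generated by finitely many integral vectors. With that in hand, the rest of your only-if argument goes through unchanged.

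The same observation $F=\conv(F\cap S)$ also proves the statement about faces directly, without first establishing the representation $\conv X+\cone Y$.
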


\section{Root system}\label{sec:root-system}
In this section, we review the fundamental concepts and basic properties of root systems. 
\Cref{subsec:def:root-system} provides the formal definition of a root system and introduces the notion of \emph{(extended) simple systems}. 
\Cref{subsec:Weyl-Dynkin} introduces \emph{Weyl groups} and \emph{(extended) Dynkin diagrams}, which capture the geometric and algebraic symmetries of root systems. 
Finally, \Cref{subsec:classification} reviews the classification of irreducible root systems, with a particular focus on the classical types A, B, C, and D.

\subsection{Definitions}\label{subsec:def:root-system}
For a nonzero dual vector $\alpha \in V^* \setminus \{ 0\}$, let $\alpha^{\vee}$ be a vector in $V$ defined by
\begin{align}\label{eq:coroot}
    \alpha^\vee \coloneqq \frac{2}{\inpr{\alpha}{\alpha^*}} \alpha^*;
\end{align}
we can easily see that $\inpr{\alpha}{\alpha^{\vee}} = 2$.
We similarly define $x^{\vee} \in V^*$ for a nonzero vector $x \in V \setminus \{0\}$.
Then, we have $(\alpha^{\vee})^{\vee} = \alpha$.
The \emph{reflection} $s_\alpha$ with respect to $\alpha$ is a linear map in $V$ defined by
\begin{align}\label{eq:reflection}
    s_\alpha(x) \coloneqq x -  \inpr{\alpha}{x} \alpha^{\vee} \qquad (x \in V).
\end{align}
That is, $s_\alpha$ is the reflection with respect to the hyperplane in $V$ orthogonal to $\alpha$.
By abuse of notation, we assume that $s_\alpha$ also represents the reflection in $V^*$ with respect to $\alpha^\vee$:
\begin{align}\label{eq:reflection-dual}
    s_\alpha(p) \coloneqq p -  \inpr{p}{\alpha^{\vee}} \alpha \qquad (p \in V^*).
\end{align}

A nonempty finite set $\RS \subseteq V^* \setminus \{0\}$ of nonzero dual vectors is called a \emph{root system}~\cite[Definition~8.1]{Hall2015} if it satisfies the following conditions \eqref{cond:R1}--\eqref{cond:R3}:
\begin{enumerate}[label={\textup{(R\arabic*)}},ref={\textup{R\arabic*}}]
    \item \label{cond:R1}
    $s_\alpha(\beta) \in \RS$ for any $\alpha, \beta \in \RS$;
    \item \label{cond:R2}
    $\Set{ \lambda \alpha}{ \lambda \in \R } \cap \RS = \{ \alpha, -\alpha \}$ for any $\alpha \in \RS$;
    \item \label{cond:R3}
    $\inpr{\alpha}{\beta^{\vee}} \in \Z$ for any $\alpha, \beta \in \RS$.
\end{enumerate}
A member $\alpha$ of $\RS$ is called a \emph{root} and $\alpha^{\vee}$ a \emph{coroot}.
Let $\RS^{\vee} \coloneqq \Set{\alpha^{\vee} }{ \alpha \in \RS } \subseteq V$.
Then, $\RS^{\vee}$ also satisfies \eqref{cond:R1}--\eqref{cond:R3}, and is called the \emph{coroot system}.
A root system $\RS$ is said to be \emph{essential} if $\spn{\RS} = V^*$.

It is known~\cite[Proposition~8.6]{Hall2015} that
for
any two distinct roots $\alpha, \beta \in \RS$ with $\| \alpha \| \geq \|\beta\|$ and $\alpha \neq -\beta$,
one of the following \eqref{cond:T0}--\eqref{cond:T3} holds:
\begin{enumerate}[label={\textup{(T\arabic*)}}, ref={\textup{T\arabic*}}]
\setcounter{enumi}{-1}
    \item \label{cond:T0}
    The angle between $\alpha$ and $\beta$ is $\pi/2$, or equivalently, $\inpr{\alpha}{\beta^{\vee}} = \inpr{\beta}{\alpha^\vee} = 0$;
    \item \label{cond:T1}
    $\| \alpha \| = \|\beta\|$ and the angle between $\alpha$ and $\beta$ is $\pi/3$ or $2\pi/3$, or equivalently, $\inpr{\alpha}{\beta^{\vee}} = \inpr{\beta}{\alpha^\vee} = 1$ or $\inpr{\alpha}{\beta^{\vee}} = \inpr{\beta}{\alpha^\vee} = -1$;
    \item \label{cond:T2}
    $\| \alpha \| = \sqrt{2}\| \beta \|$ and the angle between $\alpha$ and $\beta$ is $\pi/4$ or $3\pi/4$, or equivalently, $\inpr{\alpha}{\beta^{\vee}} = 2$ and $\inpr{\beta}{\alpha^\vee} = 1$, or $\inpr{\alpha}{\beta^{\vee}} = -2$ and $\inpr{\beta}{\alpha^\vee} = -1$;
    \item \label{cond:T3}
    $\| \alpha \| = \sqrt{3}\| \beta \|$ and the angle between $\alpha$ and $\beta$ is $\pi/6$ or $5\pi/6$, or equivalently, $\inpr{\alpha}{\beta^{\vee}} = 3$ and $\inpr{\beta}{\alpha^\vee} = 1$, or $\inpr{\alpha}{\beta^{\vee}} = -3$ and $\inpr{\beta}{\alpha^\vee} = -1$.
\end{enumerate}

If a root system $\RS$ admits a nontrivial partition $\{ \RS_1, \RS_2, \dots, \RS_k \}$
such that,
for any $\alpha \in \RS_i$ and $\beta \in \RS_j$ with distinct $i,j$,
we have $\inpr{\alpha}{\beta^{\vee}} = 0$,
i.e.,
$\alpha$ and $\beta$ satisfy (T0),
then each of $\RS_1, \RS_2, \dots, \RS_k$ is also a root system;
in this case, $\RS$ is called the \emph{direct sum} of $\RS_1, \RS_2, \dots, \RS_k$
and is denoted by $\RS_1 \oplus \RS_2 \oplus \cdots \oplus \RS_k$.
A root system $\RS$ is said to be \emph{reducible} if $\RS$ is the direct sum of at least two root systems,
and \emph{irreducible} if it is not reducible.
If $\RS = \RS_1 \oplus \RS_2 \oplus \cdots \oplus \RS_k$ for irreducible root systems $\RS_1, \RS_2, \dots, \RS_k$,
then each of $\RS_1, \RS_2, \dots, \RS_k$ is called an \emph{irreducible component} of $\RS$.
The set of irreducible components of $\RS$ is uniquely determined.
Two root systems $\RS_1$ and $\RS_2$ are said to be \emph{isomorphic}
if there exists an invertible linear map $\funcdoms{\varphi}{V^*}{V^*}$ such that
$\varphi$ induces a bijection between $\RS_1$ and $\RS_2$ and for all $\alpha \in \RS_1$ and $p \in V^*$, we have $\varphi(s_{\alpha}(p)) = s_{\varphi(\alpha)}(\varphi(p))$, i.e., the following diagram
\begin{equation}\label{eq:diagram}
    \begin{tikzcd}
    V^* \ar[r, "s_\alpha"] \ar[d,"\varphi"'] & V^* \ar[d,"\varphi"] \\
    V^* \ar[r, "s_{\varphi(\alpha)}"'] & V^*
  \end{tikzcd}
\end{equation}
commutes.
We write $\RS_1 \lesssim \RS_2$
if for each $\alpha \in \RS_1$,
there exist $\mu \in \R_+$ and $\alpha' \in \RS_2$ such that $\alpha' = \mu \alpha$.
If $\RS_1 \lesssim \RS_2$ and $\RS_1 \gtrsim \RS_2$,
then we say that $\RS_1$ and $\RS_2$ are \emph{equivalent},
and denoted by $\RS_1 \sim \RS_2$.

Let $\RS$ be a root system.
A subset $\Delta \subseteq \RS$ is a \emph{simple system} if
$\Delta$ is a basis of $\spn{\RS}$ as a vector space and $\RS = \left(\RS \cap \cone{\Delta}\right) \cup \left(\RS \cap \cone{(-\Delta)}\right)$.
We can easily see that, if $\RS = \RS_1 \oplus \RS_2 \oplus \cdots \oplus \RS_k$
and $\Delta_i$ is a simple system of $\RS_i$ for each $i$,
then $\Delta = \Delta_1 \cup \Delta_2 \cup \cdots \cup \Delta_k$ is a simple system of $\RS$.
Suppose that $n \coloneqq \dim (\spn{\RS})$ and that $\Delta = \{ \alpha_1, \alpha_2, \dots, \alpha_n \} \subseteq \RS$ is a simple system.
By the first condition in the definition of a root system, every vector $p \in \spn{\RS}$ is uniquely representable as the linear combination of $\alpha_1, \alpha_2, \dots, \alpha_n$; $p = \sum_{i = 1}^n p_i \alpha_i$ for some unique $p_1, p_2, \dots, p_n \in \R$.
We introduce a partial order $\leq_{\Delta}$ on $V^*$ as:
For $p = \sum_{i = 1}^n p_i \alpha_i, q = \sum_{i = 1}^n q_i \alpha_i \in V^*$,
$p \leq_{\Delta} q$ if and only if $p_i \leq q_i$ for all $i \in [n]$.
The second condition in the definition of a simple system says that every root $\alpha \in \RS$ is either $\alpha >_{\Delta} 0$ or $\alpha <_{\Delta} 0$.
The former root is said to be \emph{positive} and the latter \emph{negative}.
An intriguing fact is that
every root $\alpha \in \RS$ is representable as the linear combination of the simple roots with \emph{integral} coefficients:
\begin{lemma}[{\cite[Chapter~2.9]{Humphreys1992}}]\label{lem:root_integrality}
    Let $\RS$ be a root system and $\{ \alpha_1, \alpha_2, \dots, \alpha_n \}$ a simple system of $\RS$.
    For every root $\alpha \in \RS$, there (uniquely) exist integers $c_1, c_2, \dots, c_n \in \Z$ such that 
    $\alpha = \sum_{i = 1}^n c_i \alpha_i$.
\end{lemma}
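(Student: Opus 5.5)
Since the simple system $\Delta$ already spans $\spn{\RS}$ with unique real coefficients, the claim is really the integrality of the coefficients. I would prove it in two steps: first for positive roots by induction on height, then for negative roots by symmetry.

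\textbf{Step 1: a descent lemma.} Every positive root $\alpha$ that is not simple admits a simple root $\alpha_i$ with $\inpr{\alpha}{\alpha_i^\vee} > 0$. To see this, write $\alpha = \sum_j c_j \alpha_j$ with all $c_j \geq 0$. If instead $\inpr{\alpha}{\alpha_j^\vee} \leq 0$ for all $j$, then
\begin{align}
  0 < \inpr{\alpha}{\alpha^*} = \sum_j c_j \inpr{\alpha}{\alpha_j^*} \leq 0,
\end{align}
since each $\alpha_j^*$ is a positive multiple of $\alpha_j^\vee$. This is a contradiction.

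\textbf{Step 2: distinct simple roots pair non-positively.} For distinct $\alpha_i, \alpha_j \in \Delta$ we need $\inpr{\alpha_i}{\alpha_j^\vee} \leq 0$. Suppose it is positive. By (T1)--(T3), and swapping $i,j$ if needed (which preserves the sign), $\inpr{\alpha_i}{\alpha_j^\vee} = 1$. Then $s_{\alpha_j}(\alpha_i) = \alpha_i - \alpha_j$ lies in $\RS$ by (R1), with one coefficient $+1$ and one $-1$. This contradicts that every root lies in $\cone{\Delta}$ or $\cone(-\Delta)$, using the uniqueness of coordinates.

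\textbf{Step 3: integrality of positive roots by induction on height.} Define the height $\mathrm{ht}(\alpha) = \sum_j c_j$. Take a positive non-simple root $\alpha$ and $\alpha_i$ from Step 1. Set $\beta = s_{\alpha_i}(\alpha) = \alpha - m\alpha_i$, where $m = \inpr{\alpha}{\alpha_i^\vee}$ is a positive integer by (R3).

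The root $\beta$ is positive. Since $\alpha$ is positive and not a multiple of $\alpha_i$ (by (R2)), some coordinate $c_j$ with $j \neq i$ is positive. That coordinate is unchanged in $\beta$, and $\beta$ is a root, so $\beta$ must be positive.

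Moreover $\mathrm{ht}(\beta) = \mathrm{ht}(\alpha) - m < \mathrm{ht}(\alpha)$. Heights of positive roots form a finite set of positive reals, so induction applies. For the base case, simple roots trivially have integer coefficients. By induction $\beta$ has integer coefficients, hence so does $\alpha = \beta + m\alpha_i$.

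\textbf{Step 4: negative roots.} A negative root is $-\alpha$ with $\alpha$ positive, because $\RS = -\RS$ by (R2).

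The main obstacle is Step 2, specifically handling the sign conventions in (T1)--(T3) carefully. If that step becomes messy, I would instead cite the standard fact that distinct simple roots form obtuse angles, as in Humphreys~\cite{Humphreys1992}.

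Uniqueness of the integers $c_1, \dots, c_n$ is immediate, since $\Delta$ is a basis.
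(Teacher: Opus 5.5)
Your proof is correct, but it is not the route behind the citation. The paper gives no argument of its own and defers to Humphreys. The standard argument there goes through the Weyl group:
\begin{itemize}
\item by (R3), each simple reflection maps the lattice $\mathbb{Z}\Delta$ into itself;
\item $W_\Phi$ is generated by the simple reflections (Lemma~\ref{lem:Weyl-simple-system});
\item every root is $W_\Phi$-conjugate to a simple root.
\end{itemize}
Together these give $\Phi = W_\Phi\Delta \subseteq \mathbb{Z}\Delta$.

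Your height induction runs the same descent that proves those two Weyl-group facts. You choose $\alpha_i$ with $\langle \alpha, \alpha_i^\vee\rangle > 0$, reflect, stay positive, and lose a positive integer of height. The difference is that you do this directly on coordinates and track integrality along the way. This makes the proof self-contained: it uses only (R1)--(R3), finiteness of $\Phi$, and the sign-coherence in the definition of a simple system, with no appeal to the structure theory of $W_\Phi$. The cited route is shorter only because that structure theory is taken as known.

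One correction to your own assessment: Step~2 is never used. Step~1 needs only $c_j \geq 0$ and $\langle \alpha, \alpha^* \rangle > 0$. In Step~3, the positivity of $\beta = \alpha - m\alpha_i$ comes from an untouched coordinate $c_j > 0$ with $j \neq i$, which exists by (R2) since $\alpha$ is not simple. Neither step uses the fact that distinct simple roots pair non-positively, so the part you flagged as the main obstacle can simply be deleted. As written, Step~2 is also fine: the two pairings have the same sign and their product is $4\cos^2\theta < 4$, so if they are positive, one of them equals $1$.
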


Suppose that a root system $\RS$ is irreducible.
Then,
for each simple system $\Delta = \{ \alpha_1, \alpha_2, \dots, \alpha_n \}$,
there exists a unique maximal element $\tilde{\alpha} \in \RS$ with respect to $\leq_{\Delta}$,
which is called the \emph{highest root with respect to $\Delta$}.
The set $\esimp \coloneqq \{ \alpha_0, \alpha_1, \dots, \alpha_n \}$ that consists of the roots $\alpha_i$ in a simple system $\simp = \{ \alpha_1, \dots, \alpha_n \}$
and the negative $\alpha_0 \coloneqq - \tilde{\alpha}$ of the highest root $\tilde{\alpha}$ with respect to $\simp$
is called 
an \emph{extended simple system}.
In the following,
when we say that $\esimp = \{ \alpha_0, \alpha_1, \dots, \alpha_n \}$ is an extended simple system,
we assume that $\simp = \{\alpha_1, \dots, \alpha_n\}$ is a simple system of an irreducible root system and
$\alpha_0$ is the negative of the highest root with respect to $\simp$.

\subsection{Weyl groups and (extended) Dynkin diagrams}\label{subsec:Weyl-Dynkin}

Let $\Phi$ be a root system.
The group $W_\RS$ generated by the reflections $s_\alpha$ with respect to the roots $\alpha \in \RS$, i.e.,
\begin{align}\label{eq:def:Weyl-group}
    W_\RS \coloneqq \gen{s_\alpha}{\alpha \in \RS}
\end{align}
is called the \emph{Weyl group} with respect to $\RS$.
The Weyl group is viewed as the subgroup of $\OG(V)$ or $\OG(V^*)$ that depends on the context,
i.e.,
this group naturally acts on $V$ or $V^*$ as
$W_\RS \ni w \colon v \mapsto wv$.
We can easily see that, if $\RS = \RS_1 \oplus \RS_2 \oplus \cdots \oplus \RS_k$
then $W_\RS = W_{\RS_1} \oplus W_{\RS_2} \oplus \cdots \oplus W_{\RS_k}$,
and that if $\RS_1$ and $\RS_2$ are equivalent
then $W_{\RS_1} = W_{\RS_2}$.
The following lemma states that the Weyl group is generated by the reflections with respect to a simple system:
\begin{lemma}[{\cite[Theorem~1.69~(1)]{Abramenko2010} and \cite[Proposition~8.24]{Hall2015}}]\label{lem:Weyl-simple-system}
    For an arbitrary simple system $\Delta$ of a root system $\RS$,
    we have $W_\RS = \gen{ s_\alpha }{ \alpha \in \Delta }$.
\end{lemma}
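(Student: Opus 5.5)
We prove that $W_\RS = W_0 \coloneqq \gen{ s_\alpha }{ \alpha \in \Delta }$ by the classical route. The inclusion $W_0 \subseteq W_\RS$ is immediate, so the task is to show $s_\beta \in W_0$ for every root $\beta \in \RS$. Since $s_{-\beta} = s_\beta$, we may assume $\beta$ is positive with respect to $\Delta$. We also restrict attention to $\spn{\RS}$, as every reflection acts trivially on its orthogonal complement.

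\textbf{Step 1 (permuting positive roots).} We show that for a simple root $\alpha_i \in \Delta$, the reflection $s_{\alpha_i}$ permutes the positive roots other than $\alpha_i$. Take such a positive root $\beta = \sum_j c_j \alpha_j$. By (R2) it is not a multiple of $\alpha_i$, so some $c_j > 0$ with $j \neq i$. Then
\begin{align}
s_{\alpha_i}(\beta) = \beta - \inpr{\beta}{\alpha_i^\vee}\alpha_i
\end{align}
has the same coefficient $c_j>0$ at $\alpha_j$. By (R1) it is a root, and a root is either positive or negative, so it must be positive.

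\textbf{Step 2 (reduction to a simple root).} We show that every positive root is $W_0$-conjugate to a simple root, inducting on the height $\mathrm{ht}(\beta) = \sum_j c_j$. By \Cref{lem:root_integrality} the coefficients $c_j$ are nonnegative integers, so the height is a positive integer. If $\mathrm{ht}(\beta)=1$, then $\beta$ is simple. Otherwise, since $\beta$ is a nonnegative combination of $\Delta$,
\begin{align}
0 < \inpr{\beta}{\beta^*} = \sum_j c_j \inpr{\alpha_j}{\beta^*},
\end{align}
so some $\alpha_i$ satisfies $\inpr{\beta}{\alpha_i^\vee} > 0$. Because $\beta \neq \alpha_i$ by the height assumption, Step 1 shows that $s_{\alpha_i}\beta$ is a positive root. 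Its height equals $\mathrm{ht}(\beta) - \inpr{\beta}{\alpha_i^\vee}$, which is strictly smaller, since this pairing is an integer by (R3). By induction there is $w \in W_0$ such that $w\beta \in \Delta$.

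\textbf{Step 3 (conjugation).} For any orthogonal $w$ we have the identity $w s_\beta w^{-1} = s_{w\beta}$; this follows by a direct check from~\eqref{eq:reflection}, using $\inpr{\beta}{w^{-1}x} = \inpr{w\beta}{x}$. Choosing $w$ as in Step 2 gives $s_\beta = w^{-1} s_{w\beta} w \in W_0$, which completes the proof.

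The main obstacle is Step 2: we must secure the existence of a simple root with positive pairing and ensure the height strictly decreases. Both rely on integrality, namely \Cref{lem:root_integrality} together with (R3).
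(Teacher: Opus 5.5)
Your proof is correct. The paper does not prove this lemma itself. The sources it cites (Abramenko--Brown, Hall) derive it from the geometry of Weyl chambers: the subgroup generated by the simple reflections acts transitively on chambers, and every root hyperplane is a wall of some chamber. Hence every root is conjugate under that subgroup to $\pm$ a simple root.

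You reach the same intermediate fact (every positive root is $W_0$-conjugate to a simple root) by the algebraic height-reduction argument from Humphreys' treatment of finite reflection groups. It needs only (R1), (R2), the definition of a simple system, and the identity $ws_\beta w^{-1}=s_{w\beta}$. Your route is shorter and self-contained. The chamber route needs more set-up, but it also delivers the chamber--simple system correspondence of \Cref{lem:chamber} and the simple transitivity of \Cref{lem:Weyl-group}.

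One correction to your closing remark: integrality is not needed anywhere. The existence of $\alpha_i$ with $\inpr{\beta}{\alpha_i^\vee}>0$ uses only $c_j\geq 0$. The strict drop in height uses only the positivity of that pairing, not its integrality. Termination follows from the finiteness of $\RS$: take $\gamma$ of minimal (real) height in the finite set $W_0\beta\cap\cone{\Delta}$. If $\gamma\notin\Delta$, your Steps 1 and 2 produce an element of that same set with smaller height, a contradiction.

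It is worth removing \Cref{lem:root_integrality} and (R3) from the argument. The usual proof of \Cref{lem:root_integrality}, including the one in the reference the paper cites for it (Humphreys, Section~2.9), writes each root as $w\alpha$ with $w$ a product of simple reflections. That is, it uses the very statement you are proving. Inside the paper, where \Cref{lem:root_integrality} is imported as a black box, your argument is not formally circular. The integrality-free version avoids the issue entirely and also works for non-crystallographic finite root systems.
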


For $S \subseteq V^*$ (or $S \subseteq V$) and $w \in W_{\RS}$,
we denote $\Set{ w s }{ s \in S }$ by $w S$.
The following is an intriguing fact on the Weyl group.
\begin{lemma}[{\cite[Theorem~1.69~(2)]{Abramenko2010}}]\label{lem:Weyl-group}
    The Weyl group $W_\RS$ acts simply transitively on the set of simple systems, i.e.,
    for any $w \in W_\RS$ and simple system $\Delta$,
    the set $w\Delta$
    is also a simple system,
    and for any two simple systems $\Delta, \Delta'$ of $\RS$,
    there exists a unique $w \in W_\RS$ such that $\Delta' = w \Delta$.
    The same holds for the extended simple systems.
\end{lemma}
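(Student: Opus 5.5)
The statement has two parts. First, $W_\RS$ acts simply transitively on simple systems; this is the standard fact cited from \cite[Theorem~1.69~(2)]{Abramenko2010}. Second, the same holds for extended simple systems, and that is what needs an argument. The plan is to reduce the extended case to the non-extended one through the bijection $\simp \mapsto \esimp = \simp \cup \{-\tilde{\alpha}_\simp\}$, where $\tilde{\alpha}_\simp$ is the highest root with respect to $\simp$. This assumes $\RS$ is irreducible, which is built into the definition of an extended simple system.

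For the non-extended part, I would recall the standard route, though I would simply invoke it.
\begin{enumerate}[label={\textup{(\roman*)}}]
\item Since $w \in W_\RS \subseteq \OG(V^*)$ permutes $\RS$ by \eqref{cond:R1} and is linear, $w\simp$ is a basis of $\spn{\RS}$. Moreover, $w(\cone{\simp}) = \cone(w\simp)$, so the decomposition $\RS = (\RS\cap\cone{\simp})\cup(\RS\cap\cone(-\simp))$ transports to $w\simp$. Hence $w\simp$ is again a simple system.
\item Transitivity: simple systems correspond to positive systems, and hence to Weyl chambers. One shows that $W_\RS$ is transitive on chambers by taking a minimal gallery of chambers and reflecting across its walls.
\item Freeness: if $w\simp = \simp$ with $w \neq 1$, write $w$ as a reduced word in the simple reflections (using \Cref{lem:Weyl-simple-system}). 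The exchange/deletion condition then gives a simple root $\alpha$ with $w\alpha <_\simp 0$, which contradicts $w\alpha \in \simp$.
\end{enumerate}

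For the extended part, the key step is equivariance: $\tilde{\alpha}_{w\simp} = w\tilde{\alpha}_\simp$. The map $w$ is linear and sends $\simp$ bijectively to $w\simp$. So for $p = \sum_i c_i \alpha_i$ we get $wp = \sum_i c_i\, w\alpha_i$, and the coefficients of $wp$ in the basis $w\simp$ equal the coefficients of $p$ in the basis $\simp$. Thus $p \leq_\simp q$ if and only if $wp \leq_{w\simp} wq$. Since $w$ permutes $\RS$, it maps the unique $\leq_\simp$-maximal root to the unique $\leq_{w\simp}$-maximal root. It follows that $w\esimp = \widetilde{w\simp}$, so $W_\RS$ acts on extended simple systems.

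Transitivity then follows from the non-extended case. Given $\esimp$ and $\esimp'$ with underlying simple systems $\simp$ and $\simp'$, choose $w$ with $w\simp = \simp'$; equivariance gives $w\esimp = \esimp'$.

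Uniqueness is the step I expect to be the main subtlety. An equality of sets $w\esimp = \esimp'$ does not obviously force $w\simp = \simp'$, because $w$ might move $-\tilde{\alpha}_\simp$ onto a simple root. To settle this, I would first show that $\simp$ is recoverable from $\esimp$. The point is that $\simp = \esimp \setminus \{\alpha_0\}$, where $\alpha_0$ is the unique element of $\esimp$ such that the remaining $n$ elements form a simple system whose negative highest root is $\alpha_0$.

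The cleanest way to make this precise is the convention stated just before the statement. There an extended simple system is understood as the labeled data $(\simp, \alpha_0)$, that is, a simple system together with its distinguished extra root. Under that reading, $w\esimp = \esimp'$ means $w\simp = \simp'$ and $w\alpha_0 = \alpha_0'$, and uniqueness reduces immediately to freeness in the non-extended case.

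If instead one wants the unlabeled version, I would try the following. Show that any $w$ with $w\esimp = \esimp$ must fix $\alpha_0$, and hence, by the non-extended part, satisfy $w = 1$. An element $w \in W_\RS$ permuting $\esimp$ induces an automorphism of the extended Dynkin diagram that is realized inside $W_\RS$. Such an automorphism fixes the point $\sum_{i=0}^n m_i\,\alpha_i$, where $\alpha_0 + \sum_i m_i\alpha_i = 0$ with $m_0 = 1$ and $m_i \geq 1$.

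Concretely, if $w\alpha_0 = \alpha_j$ with $j \neq 0$, then $w^{-1}\simp$ would be a simple system containing $\alpha_0 = -\tilde{\alpha}_\simp$, and its associated open chamber would be $w^{-1}$ of the fundamental chamber. I would then compare this with the fundamental alcove to derive a contradiction from simple transitivity. I expect this comparison to be the delicate point, and if it resists I would adopt the labeled convention.
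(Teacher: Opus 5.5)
Under the labeled reading, where an extended simple system is $\simp$ together with the distinguished root $\alpha_0=-\tilde{\alpha}_\simp$, your argument is correct and matches the paper's route. The paper gives no proof beyond citing \cite[Theorem~1.69~(2)]{Abramenko2010} and asserting the extended case. Your equivariance $\tilde{\alpha}_{w\simp}=w\tilde{\alpha}_\simp$ (which holds because $w$ preserves coefficients relative to $\simp$ and $w\simp$, and permutes $\RS$) is exactly what justifies that assertion, and uniqueness then reduces to freeness on simple systems. This reading is also the one the paper needs, since its definition $\tau_{w\esimp}(w\alpha_i)\coloneqq v_i$ presupposes that $\alpha_0$ is part of the data.

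Your discussion of the unlabeled version is wrong, however. That version is not delicate but false, so the labeled convention is forced rather than a fallback. Both claims you make there fail: that $\alpha_0$ is the unique element of $\esimp$ whose deletion leaves a simple system with negated highest root equal to it, and that every $w$ with $w\esimp=\esimp$ fixes $\alpha_0$.
\begin{itemize}
\item In type $\Dynkin{A}{1}$, the simple systems $\{\alpha\}$ and $\{-\alpha\}$ yield the same set $\{\alpha,-\alpha\}$. So both $1$ and $s_\alpha$ carry one extended simple system to the other.
\item In type $\Dynkin{A}{2}$, take $\esimp=\{e_1-e_2,e_2-e_3,e_3-e_1\}$. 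The cyclic permutation $e_1\mapsto e_2\mapsto e_3\mapsto e_1$ lies in $W_{\Phi_{\textup{A}_2}}$, maps $\esimp$ onto itself, and sends $\alpha_0=e_3-e_1$ to the simple root $e_1-e_2$. Deleting any one of the three roots leaves a simple system whose negated highest root is the deleted root.
\end{itemize}
In general, the stabilizer of the set $\esimp$ in $W_\RS$ has one element for each vertex of mark~$1$ in $\tilde{D}(\RS)$, so it is nontrivial in every classical type. Hence your planned comparison with the fundamental alcove cannot produce a contradiction. Also, the point $\sum_{i=0}^n m_i\alpha_i$ you want fixed is simply $0$.
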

Thus, we obtain the one-to-one correspondence between the set of simple systems and the Weyl group $W_\RS$, once we fix a simple system that corresponds to the identity element of $W_\RS$.

Let $\Delta = \{ \alpha_1, \alpha_2, \dots, \alpha_n \}$ be a simple system of $\RS$.
The \emph{Dynkin diagram} $D(\RS)$ of $\RS$ is a mixed graph (a graph that may have edges and arcs) such that
its vertex set is $\{ v_1,v_2,\dots, v_n \}$
and
for each pair of distinct vertices $v_i, v_j$,
\begin{enumerate}[label={\textup{(D\arabic*)}}, ref={\textup{D\arabic*}}]
\setcounter{enumi}{-1}
    \item \label{cond:D0}
    there are no edges and arcs if $\alpha_i$ and $\alpha_j$ satisfy~\eqref{cond:T0},
    i.e., $\inpr{\alpha_i}{\alpha_j^\vee} = \inpr{\alpha_j}{\alpha_i^\vee} = 0$;
    \item \label{cond:D1}
    there is an edge $\{v_i,v_j\}$ if $\alpha_i$ and $\alpha_j$ satisfy~\eqref{cond:T1},
    i.e., $\inpr{\alpha_i}{\alpha_j^\vee} = \inpr{\alpha_j}{\alpha_i^\vee} = -1$;
    \item \label{cond:D2}
    there are two parallel arcs of the form $(v_i,v_j)$ if $\alpha_i$ and $\alpha_j$ satisfy~\eqref{cond:T2},
    i.e., $\inpr{\alpha_i}{\alpha_j^{\vee}} = -2$ and $\inpr{\alpha_j}{\alpha_i^\vee} = -1$;
    \item \label{cond:D3}
    there are three parallel arcs of the form $(v_i,v_j)$ if $\alpha_i$ and $\alpha_j$ satisfy~\eqref{cond:T3},
    i.e., $\inpr{\alpha_i}{\alpha_j^{\vee}} = -3$ and $\inpr{\alpha_j}{\alpha_i^\vee} = -1$.
\end{enumerate}
Here, the inner product conditions in \eqref{cond:D0}--\eqref{cond:D3}
follow from the fact \cite[Proposition~8.13]{Hall2015} that $\inpr{\alpha}{\beta^\vee} \leq 0$ for any distinct $\alpha, \beta \in \simp$.

The \emph{extended Dynkin diagram} $\tilde{D}(\RS)$ of an irreducible root system $\RS$ is defined from an extended simple system $\esimp = \{ \alpha_0, \alpha_1, \dots, \alpha_n \}$ of $\RS$ similarly as the Dynkin diagram from a simple system $\Delta$
with the following additional rule:
\begin{enumerate}
    \item[(D$\infty$)] There is an edge $\{v_i,v_j\}$ with label $\infty$ if $\alpha_i = -\alpha_j$.
\end{enumerate}
The vertex set of $\tilde{D}(\RS)$ is $\{ v_0, v_1, \dots, v_n \}$
in which $v_i$ corresponds to $\alpha_i$
and the edge set is similarly defined as that of the Dynkin diagram.
For notational simplicity,
we also denote by $D(\RS)$ (resp. $\tilde{D}(\RS)$) the vertex set of the Dynkin diagram $D(\RS)$ (resp. the extended Dynkin diagram $D(\RS)$).

The graph structures of $D(\RS)$ and $\tilde{D}(\RS)$ are independent of the choice of a simple system by \Cref{lem:Weyl-group}.
In addition, they characterize the structure of a root system $\RS$.
\begin{lemma}[{\cite[Proposition~8.32]{Hall2015}}]\label{lem:root_integer}
    \begin{enumerate}[label={\textup{(\arabic*)}}]
        \item Root systems $\RS_1$ and $\RS_2$ are isomorphic if and only if $D(\RS_1)$ and $D(\RS_2)$ are isomorphic in the graphical sense.
        \item A root system is irreducible if and only if its Dynkin diagram is connected.
    \end{enumerate}
\end{lemma}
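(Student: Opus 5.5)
Both parts reduce to two standard facts about a simple system $\simp = \{\alpha_1, \dots, \alpha_n\}$. The first is \Cref{lem:Weyl-simple-system}: $W_\RS$ is generated by $s_{\alpha_1}, \dots, s_{\alpha_n}$. The second is that every root is $W_\RS$-conjugate to a simple root, i.e., $\RS = W_\RS \simp$. I would cite the latter from \cite[Chapter~1.5]{Humphreys1992} (or \cite{Hall2015}), since it is not stated earlier in the paper. I also use that the Cartan integers $\inpr{\alpha_i}{\alpha_j^\vee}$ can be read off from the Dynkin diagram. This holds because, by \eqref{cond:D0}--\eqref{cond:D3} together with $\inpr{\alpha_i}{\alpha_j^\vee} \leq 0$ for distinct simple roots, the number and direction of the arcs between $v_i$ and $v_j$ recover the pair $(\inpr{\alpha_i}{\alpha_j^\vee}, \inpr{\alpha_j}{\alpha_i^\vee})$. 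On the orthogonal complement of $\spn{\RS}$ all reflections act as the identity, so we may work inside $\spn{\RS}$.

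For (2), I would first treat the case where $\RS$ is reducible, say $\RS = \RS_1 \oplus \RS_2$. Take simple systems $\simp_i$ of $\RS_i$. As noted after the definition of a simple system, $\simp_1 \cup \simp_2$ is a simple system of $\RS$. Its elements from different parts are orthogonal, so there are no edges between $\simp_1$ and $\simp_2$ and the diagram is disconnected. Since the diagram does not depend on the choice of simple system (\Cref{lem:Weyl-group}), this proves the reverse implication. Conversely, suppose the diagram splits as $\simp = \simp_1 \sqcup \simp_2$ with $\simp_1 \perp \simp_2$. Then each $s_\alpha$ with $\alpha \in \simp_1$ fixes $\spn{\simp_2}$ pointwise and preserves $\spn{\simp_1}$, and symmetrically for $\simp_2$. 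Hence every $w \in W_\RS$ factors as $w = w_1 w_2$ with $w_i$ a word in the reflections of $\simp_i$. Any root can be written as $w\alpha_j$ with $\alpha_j \in \simp_1$, say. Then $w\alpha_j = w_1 \alpha_j \in \spn{\simp_1}$. So $\RS$ is partitioned into $\RS \cap \spn{\simp_1}$ and $\RS \cap \spn{\simp_2}$, which are mutually orthogonal, and $\RS$ is reducible.

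For (1), suppose first that $\varphi$ is an isomorphism. Comparing $\varphi(s_\alpha\beta) = \varphi\beta - \inpr{\beta}{\alpha^\vee}\varphi\alpha$ with $s_{\varphi\alpha}\varphi\beta = \varphi\beta - \inpr{\varphi\beta}{(\varphi\alpha)^\vee}\varphi\alpha$ gives $\inpr{\varphi\beta}{(\varphi\alpha)^\vee} = \inpr{\beta}{\alpha^\vee}$, so all Cartan integers are preserved. Moreover, $\varphi$ is linear and bijective on roots, so $\varphi(\simp)$ is a basis satisfying the cone condition, i.e., a simple system of $\RS_2$. Its diagram is therefore isomorphic to that of $\simp$. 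Conversely, suppose the diagrams are isomorphic via $v_i \mapsto v'_i$. Define $\varphi$ linear with $\alpha_i \mapsto \alpha'_i$, extended by an arbitrary linear isomorphism between the orthogonal complements (the dimensions agree because both systems have $n$ simple roots). Since the Cartan integers agree, checking on the basis gives $\varphi s_{\alpha_i}\varphi^{-1} = s_{\alpha'_i}$, and hence $\varphi W_{\RS_1}\varphi^{-1} = W_{\RS_2}$. Any root has the form $w\alpha_i$, so $\varphi(w\alpha_i) = (\varphi w \varphi^{-1})\alpha'_i \in \RS_2$. Applying the same argument to $\varphi^{-1}$ shows that $\varphi$ is a bijection $\RS_1 \to \RS_2$. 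Finally, $s_{w\alpha_i} = w s_{\alpha_i} w^{-1}$ because $W_\RS$ consists of orthogonal maps, and conjugating by $\varphi$ gives the commuting square \eqref{eq:diagram} for every root.

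The main obstacle is the fact $\RS = W_\RS \simp$, which is not among the results stated earlier in the paper. If a self-contained argument is wanted, I would use the standard height induction: for a positive non-simple root $\beta$, some simple $\alpha$ has $\inpr{\beta}{\alpha^\vee} > 0$. Then $s_\alpha\beta$ is a positive root of smaller height, since $s_\alpha$ permutes $\RS^+ \setminus \{\alpha\}$.
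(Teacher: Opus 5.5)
Your proof is correct. The paper gives no argument of its own here --- it simply cites Hall's Proposition~8.32 --- and your route (simple reflections generate $W_\Phi$, every root is $W_\Phi$-conjugate to a simple root, the diagram encodes the Cartan integers $\langle \alpha_i, \alpha_j^\vee \rangle$, and conjugating the simple reflections by $\varphi$) is essentially the standard proof of that cited result; the one ingredient not stated in the paper, $\Phi = W_\Phi \Delta$, is correctly supplied by your height-induction sketch.
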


Next, we introduce the \emph{type} $\tau_\simp(\alpha)$ of a root $\alpha$ in a simple system $\simp$ as follows.
Suppose that $D(\RS)$, whose vertex set is $\{v_1, v_2, \dots, v_n\}$, is constructed from a simple system $\simp = \{ \alpha_1, \alpha_2, \dots, \alpha_n \}$ as in the definition of the Dynkin diagram.
By \Cref{lem:Weyl-group},
for any simple system $\simp'$,
there exists a unique $w \in W_{\RS}$ such that $\simp' = w\simp = \{ w\alpha_1, w\alpha_2, \dots, w\alpha_n \}$.
Hence,
we can uniquely correspond $w\alpha_i \in \simp'$ to $\alpha_i \in \simp$, and hence to the vertex $v_i \in D(\RS)$.
Then, we define $\tau_{\simp'}(w\alpha_i) \coloneqq v_i$,
which is referred to as the \emph{type of $w\alpha_i$ with respect to $\simp'$}.
Also, in the case of the extended Dynkin diagram $\tilde{D}(\RS)$,
we can provide the one-to-one correspondence between $w\alpha_0, w\alpha_1, w\alpha_2, \dots, w\alpha_n \in w\tilde{\Delta}$
and $v_0, v_1, v_2, \dots, v_n \in \tilde{D}(\RS)$ in a similar manner;
we define $\tau_{w\esimp}(w\alpha_i) \coloneqq v_i$,
which is referred to as the \emph{type of $w\alpha_i$ with respect to $w\esimp$}.

For a simple system $\simp$ and vertex subset $J \subseteq D(\RS)$,
let
\begin{align}
    \simp|_J &\coloneqq \Set{ \alpha \in \simp }{ \tau_\simp(\alpha) \in J },\\
    \simp|_{-J} &\coloneqq \simp \setminus \simp|_J.
\end{align}
For notational simplicity,
we denote $\simp|_{-\{v\}}$ 
by $\simp|_{-v}$
for a vertex $v \in D(\RS)$.
We similarly define $\esimp|_J$ and $\esimp|_{-J}$ for an extended simple system $\esimp$ and a nonempty and proper vertex subset $J \subseteq \tilde{D}(\RS)$,
and denote $\esimp|_{-\{v\}}$ by $\esimp|_{-v}$ for a vertex $v \in \tilde{D}(\RS)$.
We remark that $\esimp|_{-v_0} = \simp$ for an irreducible root system $\RS$ and its simple system $\simp$.
For a subset $\simp' \subseteq \simp$ or $\simp' \subseteq \esimp$,
let
\begin{align}
    \RS(\simp') &\coloneqq \bigcup_{ w \in \gen{s_\alpha}{\alpha \in \simp'}} w \simp'.
\end{align}
\begin{lemma}[{\cite[Corollary~2.6]{Humphreys1992}}; see also \Cref{tab:classical-Dynkin-diagrams}]\label{lem:contraction:root-system}
    \begin{enumerate}[label={\textup{(\arabic*)}}]
        \item Let $\RS$ be a root system, $\simp$ a simple system of $\RS$, and $J \subseteq D(\RS)$ a nonempty vertex subset of $D(\RS)$.
    Then, $\RS(\simp|_{J})$ is a root system whose Dynkin diagram is the graph obtained from $D(\RS)$ by restricting the vertex set to $J$.
    Moreover, the simple systems of $\RS(\simp|_{J})$ are exactly the sets $w \simp|_{J}$ for $w \in \gen{s_\alpha}{\alpha \in \simp|_{J}}$.
        \item Let $\RS$ be an irreducible root system, $\esimp$ an extended simple system of $\RS$, and $J \subseteq \tilde{D}(\RS)$ a nonempty and proper vertex subset of $\tilde{D}(\RS)$.
        Then, $\RS(\esimp|_{J})$ is a root system whose Dynkin diagram is the graph obtained from $\tilde{D}(\RS)$ by restricting the vertex set to $J$,
        and the simple systems of $\RS(\esimp|_{J})$ are exactly the sets $w \esimp|_{J}$ for $w \in \gen{s_\alpha}{\alpha \in \esimp|_{J}}$.
    \end{enumerate}
\end{lemma}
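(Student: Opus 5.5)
The plan is to verify the root system axioms for $\RS(\simp')$ directly and then reduce the simple-system claims to \Cref{lem:Weyl-group}. Both parts of the statement have the same shape, so I will treat them together. Put $\simp' \coloneqq \simp|_J$ in~(1) and $\simp' \coloneqq \esimp|_J$ in~(2), and write $W' \coloneqq \gen{s_\alpha}{\alpha \in \simp'}$, so that $\RS(\simp') = W'\simp'$.

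\textbf{Root system axioms.} The only property of $\simp'$ needed here is that $\simp' \subseteq \RS$.
\begin{enumerate}[label={(\roman*)}]
  \item $W' \subseteq W_\RS$ permutes $\RS$ by~\eqref{cond:R1}, so $\RS(\simp') \subseteq \RS$. Hence~\eqref{cond:R3} and the inclusion $\RS(\simp') \cap \R\alpha \subseteq \{\pm\alpha\}$ are inherited.
  \item For $\alpha \in \simp'$ we have $s_\alpha \alpha = -\alpha$, so $\RS(\simp')$ is closed under negation, and~\eqref{cond:R2} holds.
  \item For~\eqref{cond:R1}, take $w\alpha$ and $w'\beta$ with $w,w' \in W'$ and $\alpha,\beta \in \simp'$. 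Since $W'$ acts orthogonally, $s_{w\alpha} = w s_\alpha w^{-1} \in W'$. Therefore $s_{w\alpha}(w'\beta) = (w s_\alpha w^{-1} w')\beta \in W'\simp'$.
  \item The same conjugation identity shows that the Weyl group of $\RS(\simp')$ is exactly $W'$.
\end{enumerate}

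\textbf{Simple system.} Once $\simp'$ is shown to be a simple system of $\RS(\simp')$, the remaining claims follow quickly. The Dynkin diagram of $\RS(\simp')$ is built from the pairings $\inpr{\alpha_i}{\alpha_j^\vee}$ with $\alpha_i,\alpha_j \in \simp'$. These are the same numbers as in $D(\RS)$ or $\tilde{D}(\RS)$, so the diagram is the induced subgraph on $J$. By \Cref{lem:Weyl-group} applied to $\RS(\simp')$, whose Weyl group is $W'$, the simple systems are exactly the sets $w\simp'$ with $w \in W'$.

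\textbf{Case (1).} This case is easy. $\simp|_J$ is a subset of a basis, hence linearly independent. Each reflection $s_\alpha$ with $\alpha \in \simp|_J$ maps $\spn(\simp|_J)$ to itself by~\eqref{eq:reflection-dual}. Hence
\[
  \RS(\simp|_J) \subseteq \RS \cap \spn(\simp|_J).
\]
Every root of $\RS$ lies in $\cone\simp$ or in $\cone(-\simp)$. If in addition it lies in $\spn(\simp|_J)$, its coefficients on $\simp|_{-J}$ vanish, so it lies in $\cone(\simp|_J)$ or in $\cone(-\simp|_J)$.

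\textbf{Case (2), the main obstacle.} The set $\esimp|_J$ is no longer a subset of a simple system of $\RS$, so the sign argument of case~(1) is unavailable. I would proceed in three steps.
\begin{enumerate}[label={(\alph*)}]
  \item \emph{Linear independence.} The highest root is $\tilde\alpha = \sum_{i=1}^n c_i\alpha_i$ with all $c_i > 0$, by irreducibility. Hence the only linear relation among $\alpha_0,\dots,\alpha_n$ involves every $\alpha_i$ with a nonzero coefficient. Removing at least one element, as $J$ is proper, therefore leaves a linearly independent set.
  \item \emph{Obtuse pairings.} Pairwise $\inpr{\alpha_i}{\alpha_j^\vee} \le 0$ for distinct $\alpha_i,\alpha_j \in \esimp|_J$. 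For $i,j \ge 1$ this is \cite[Proposition~8.13]{Hall2015}. For pairs involving $\alpha_0$ it follows from dominance of the highest root, namely $\inpr{\tilde\alpha}{\alpha_i^\vee} \ge 0$ for $i \ge 1$.
  \item \emph{Sign-coherence.} Every element of $W'\simp'$ lies in $\cone\simp' \cup \cone(-\simp')$. I would prove this via the standard argument of \cite[Sections~1.6--1.7 and 5.4]{Humphreys1992} for a linearly independent, pairwise obtuse set generating a finite reflection group. Define the length $\ell$ on $W'$ with respect to the generators $s_\alpha$, $\alpha \in \simp'$. By induction on $\ell(w)$, for each $\alpha \in \simp'$ either $w\alpha \in \cone\simp'$ or $w\alpha \in \cone(-\simp')$.
\end{enumerate}
The inductive step in~(c) reduces to the rank-two dihedral subgroups generated by pairs $s_\alpha, s_\beta$ with $\alpha,\beta \in \simp'$. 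The obtuse-angle condition is exactly what makes the dihedral computation sign-coherent there. Finiteness of $W'$ holds since $W' \subseteq W_\RS$. I expect step~(c) to be the main obstacle. With~(a) and~(c) in hand, $\esimp|_J$ is a basis of $\spn\RS(\esimp|_J)$ satisfying the cone condition, so it is a simple system, and the argument above finishes~(2).
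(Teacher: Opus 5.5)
Everything outside step~(c) is sound. That includes the root-system axioms via $s_{w\alpha}=ws_\alpha w^{-1}$, the identification of the Weyl group of $\RS(\simp')$ with $W'$, case~(1), steps~(a) and~(b), and the final reduction to \Cref{lem:Weyl-group}. (The paper itself only cites this lemma, so your argument has to stand on its own.)

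The gap is in step~(c), on which all of case~(2) rests. You appeal to a principle for ``a linearly independent, pairwise obtuse set generating a finite reflection group,'' and you assert that obtuseness is exactly what makes the rank-two computation sign-coherent. That principle is false. Take unit vectors $\alpha,\beta$ in a plane at angle $3\pi/5$. Then $\inpr{\alpha}{\beta}<0$, and $s_\alpha s_\beta$ is a rotation by $6\pi/5$, so $\langle s_\alpha,s_\beta\rangle$ is dihedral of order $10$. The orbit of $\alpha$ is the ten unit vectors at angles $k\pi/5$. The one at angle $4\pi/5$ lies in neither $\cone\{\alpha,\beta\}$ nor $\cone\{-\alpha,-\beta\}$. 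The rank-two step in Section~5.4 of \cite{Humphreys1992} needs the angle between $\alpha$ and $\beta$ to be exactly $\pi-\pi/m$, where $m$ is the order of $s_\alpha s_\beta$. There this is built into the geometric representation, $B(\alpha_s,\alpha_{s'})=-\cos(\pi/m(s,s'))$. Obtuseness plus finiteness does not supply it.

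In your setting the missing input comes from crystallographicity, and you need to state and use it. By \eqref{cond:T0}--\eqref{cond:T3}, the only obtuse angles between roots are $\pi/2$, $2\pi/3$, $3\pi/4$, and $5\pi/6$. For these, $s_\alpha s_\beta$ has order $m=2,3,4,6$, so the angle is exactly $\pi-\pi/m$. With that, the length-function induction transfers to the concrete group $W'$.

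Alternatively, you can drop the dihedral induction (and step~(b)) and argue just as in case~(1). If $\alpha_0\notin\esimp|_J$, you are in case~(1). Otherwise, choose $x_J\in V$ with:
\begin{itemize}
\item $\inpr{\alpha}{x_J}=0$ for $\alpha\in\esimp|_J\setminus\{\alpha_0\}$;
\item $\inpr{\alpha}{x_J}>0$ for $\alpha\in\simp\setminus\esimp|_J$ (a nonempty set, since $J$ is proper);
\item $\inpr{\tilde\alpha}{x_J}=1$ (scale to achieve this).
\end{itemize}
Set $\RS_J\coloneqq\{\beta\in\RS:\inpr{\beta}{x_J}\in\Z\}$. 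By \eqref{cond:R1} and \eqref{cond:R3}, $\RS_J$ is stable under $s_\beta$ for every $\beta\in\RS_J$. It contains $\esimp|_J$, hence it contains $\RS(\esimp|_J)$.

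Every positive root satisfies $0\leq_{\simp}\beta\leq_{\simp}\tilde\alpha$, because the unique maximal element of the finite poset $(\RS,\leq_{\simp})$ is its maximum. So a positive $\beta\in\RS_J$ has $\inpr{\beta}{x_J}\in\{0,1\}$.
\begin{itemize}
\item Value $0$ forces $\beta\in\cone(\esimp|_J\setminus\{\alpha_0\})$.
\item Value $1$ forces $\tilde\alpha-\beta\in\cone(\esimp|_J\setminus\{\alpha_0\})$, so $-\beta=\alpha_0+(\tilde\alpha-\beta)\in\cone(\esimp|_J)$.
\end{itemize}
Together with step~(a), this gives the sign-coherence directly.
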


We then introduce the \emph{mark} of a vertex of $D(\RS)$ and $\tilde{D}(\RS)$.
Suppose that $\RS$ is an irreducible root system and $\simp = \{ \alpha_1, \alpha_2, \dots, \alpha_n \}$ is a simple system of $\RS$.
Then, $\simp$ admits the highest root $\tilde{\alpha} = \sum_{i = 1}^n a_i \alpha_i \in \RS$ with respect to $\simp$,
where each of the coefficients $a_i$ is a nonnegative integer by \Cref{lem:root_integrality};
the coefficients $a_i$ are actually positive integers by the maximality of $\tilde{\alpha}$
(see also \Cref{tab:classical-Dynkin-diagrams}).
For a vertex $v_i$ of $D(\RS)$ with $\tau_\simp(\alpha_i) = v_i$,
we refer to the coefficient $a_i$ as the \emph{mark} of $v_i$.
This is well defined,
since for any simple system $\simp'$ of $\RS$,
the coefficient of $\alpha' \in \simp'$ in the highest root with respect to $\simp'$ coincides with
that of $\alpha \in \simp$ if
$\tau_{\simp'}(\alpha') = \tau_{\simp}(\alpha)$.
Similarly,
we define the \emph{marks} of vertices $v_1, v_2, \dots, v_n$ of $\tilde{D}(\RS)$ as $a_1, a_2, \dots, a_n$, respectively.
In addition, we define the \emph{mark} $a_0$ of $v_0 \in \tilde{D}(\RS) \setminus D(\RS)$ (that corresponds to the negative of the highest root) as $1$.
Then, the equality $\sum_{i = 0}^n a_i\alpha_i = 0$ holds.
If $\RS = \RS_1 \oplus \RS_2 \oplus \cdots \oplus \RS_k$ is reducible,
where $\RS_1, \RS_2, \dots, \RS_k$ are irreducible components of $\RS$,
then $D(\RS)$ is the disjoint union of $D(\RS_1), D(\RS_2), \dots, D(\RS_k)$ (see \Cref{lem:root_integer}~(2));
we define the \emph{mark} of $v \in D(\RS_i) \subseteq D(\RS)$ as the mark of $v$ with respect to $D(\RS_i)$.

\subsection{Classification of irreducible root systems}\label{subsec:classification}
It is known (see e.g.,~\cite[Chapters~2 and~4]{Humphreys1992}) that irreducible root systems have been completely classified
as four infinite families $\textup{A}_n$, $\textup{B}_n$, $\textup{C}_n$ ($n \geq 1$), and $\textup{D}_n$ ($n \geq 3$) of root systems, called the \emph{classical root systems},
and five exceptional ones $\textup{E}_6$, $\textup{E}_7$, $\textup{E}_8$, $\textup{F}_4$, and $\textup{G}_2$, called the \emph{exceptional root systems}.
Here, the subscript of each of the letters A, B, C, D, E, F, and G describes the dimension of $\spn{\RS}$, or equivalently, the size of a simple system.

\Cref{tab:classical-Dynkin-diagrams} provides the (extended) Dynkin diagrams of the classical root systems of types $\textup{A}_n$, $\textup{B}_n$, $\textup{C}_n$, and $\textup{D}_n$.
The circles represent the vertices $(v_0,) v_1, \dots, v_n$ of the (extended) Dynkin diagram,
and a positive integer written inside a vertex represents its mark.
In particular, in the extended Dynkin diagrams,
the vertex $v_0$ corresponds to the negative of the highest root.
\begin{table}[tbp]
    \centering
    \small
    \setlength{\tabcolsep}{4pt}
    \renewcommand{\arraystretch}{1.5}

    \begin{tabular}{
        @{}
        >{\centering\arraybackslash}p{0.07\linewidth}
        >{\centering\arraybackslash}p{0.35\linewidth}
        >{\centering\arraybackslash}p{0.49\linewidth}
        @{}
    }
        \toprule
        Type
        &
        Dynkin diagram
        &
        Extended Dynkin diagram
        \\
        \midrule

        $\Dynkin{A}{n}$
        &
        \begin{minipage}[c]{\linewidth}
            \centering
            \DynkinDiagramA
        \end{minipage}
        &
        \begin{minipage}[c]{\linewidth}
            \centering
            \ExtendedDynkinDiagramAOne
            \par
            $n=1$
            \par\medskip
            \ExtendedDynkinDiagramA
            \par
            $n\geq2$
        \end{minipage}
        \\

        \specialrule{\lightrulewidth}{0.8em}{0.8em}

        $\Dynkin{B}{n}$
        &
        \begin{minipage}[c]{\linewidth}
            \centering
            \DynkinDiagramB
        \end{minipage}
        &
        \begin{minipage}[c]{\linewidth}
            \centering
            \ExtendedDynkinDiagramBTwo
            \par
            $n=2$
            \par\medskip
            \ExtendedDynkinDiagramB
            \par
            $n\geq3$
        \end{minipage}
        \\

        \specialrule{\lightrulewidth}{0.8em}{0.8em}

        $\Dynkin{C}{n}$
        &
        \begin{minipage}[c]{\linewidth}
            \centering
            \DynkinDiagramC
        \end{minipage}
        &
        \begin{minipage}[c]{\linewidth}
            \centering
            \ExtendedDynkinDiagramC
        \end{minipage}
        \\

        \specialrule{\lightrulewidth}{0.8em}{0.8em}

        $\Dynkin{D}{n}$
        &
        \begin{minipage}[c]{\linewidth}
            \centering
            \DynkinDiagramDThree
            \par
            $n=3$
            \par\medskip
            \DynkinDiagramD
            \par
            $n\geq4$
        \end{minipage}
        &
        \begin{minipage}[c]{\linewidth}
            \centering
            \ExtendedDynkinDiagramDThree
            \par
            $n=3$
            \par\medskip
            \ExtendedDynkinDiagramD
            \par
            $n\geq4$
        \end{minipage}
        \\

        \bottomrule
    \end{tabular}

    \caption{
        The Dynkin and extended Dynkin diagrams of the classical
        types.
    }
    \label{tab:classical-Dynkin-diagrams}
\end{table}

In this paper, we adopt the root systems $\RSA, \RSB, \RSC, \RSD$,
given in \Cref{tab:classical-coordinate-realizations},
as the concrete realizations for the classical root systems of type $\Dynkin{A}{n}, \Dynkin{B}{n}, \Dynkin{C}{n}, \Dynkin{D}{n}$, respectively.
It is worth noting that
a commonly used realization of a root system of type
$\Dynkin{C}{n}$ is $2\RSC = \Set{ \pm e_i \pm e_j \in \R^n }{  \text{$i,j \in [n]$ with $i \neq j$} } \cup \Set{ \pm 2e_i \in \R^n }{ i \in [n] }$.
We use the rescaled realization $\RSC$, since it makes the relations with UJ-convex and bisubmodular
functions transparent; see \Cref{tab:classical-coxeter-vertices} and \Cref{subsec:example:bisubmodular,subsec:example:UJ-convex}.
We can easily see that $\RSB \sim \RSC$.
\Cref{tab:classical-coordinate-realizations} also presents
the Weyl groups $W_{\RSA}, W_{\RSB}, W_{\RSC}, W_{\RSD}$ corresponding to $\RSA, \RSB, \RSC, \RSD$,
respectively.
Here,
for a positive integer $m$, let $S_m$ denote the
symmetric group on $[m]$.
Furthermore,
for $\pi\in S_{n+1}$, let $s_\pi$ be the linear map $\sum_{i=1}^{n+1} c_i e_i \mapsto \sum_{i=1}^{n+1}c_i e_{\pi(i)}$,
and
for $\pi\in S_n$ and a sign
$\funcdoms{\sgn}{[n]}{\{\pm 1\}}$, let $s_{(\pi,\sgn)}$ be the linear map $\sum_{i=1}^{n} c_i e_i \mapsto \sum_{i=1}^{n} \sgn(i)c_i e_{\pi(i)}$.
\begin{table}[tbp]
    \centering
    \small
    \setlength{\tabcolsep}{5pt}
    \renewcommand{\arraystretch}{1.25}

    \begin{tabularx}{\linewidth}{
    @{}
    >{\centering\arraybackslash}m{0.09\linewidth}
    >{\centering\arraybackslash}m{0.48\linewidth}
    >{\centering\arraybackslash}X
    @{}
}
        \toprule
        Type
        &
        Concrete realization
        &
        Weyl group
        \\
        \midrule

        $\Dynkin{A}{n}$
        &
        $\displaystyle
            \RSA
            \coloneqq
            \Set*{
                e_i-e_j\in\R^{n+1}
            }{
                i,j\in[n+1],\ i\neq j
            }
        $
        &
        $\displaystyle
            W_{\RSA}
            =
            \Set*{
                s_\pi
            }{
                \pi\in S_{n+1}
            }
        $
        \\
        \typesep

        $\Dynkin{B}{n}$
        &
        $\displaystyle
            \begin{aligned}
                \RSB
                \coloneqq{}&
                \Set*{
                    \pm e_i\pm e_j\in\R^n
                }{
                    i,j\in[n],\ i\neq j
                }
                \\
                &{}\cup
                \Set*{
                    \pm e_i\in\R^n
                }{
                    i\in[n]
                }
            \end{aligned}
        $
        &
        $\displaystyle
            W_{\RSB}
            =
            \Set*{
                s_{(\pi,\sgn)}
            }{
                \begin{gathered}
                    \pi\in S_n,\\
                    \sgn\in\{\pm1\}^{n}
                \end{gathered}
            }
        $
        \\
        \typesep

        $\Dynkin{C}{n}$
        &
        $\displaystyle
            \begin{aligned}
                \RSC
                \coloneqq{}&
                \Set*{
                    \pm\frac{1}{2} e_i
                    \pm\frac{1}{2} e_j
                    \in\R^n
                }{
                    i,j\in[n],\ i\neq j
                }
                \\
                &{}\cup
                \Set*{
                    \pm e_i\in\R^n
                }{
                    i\in[n]
                }
            \end{aligned}
        $
        &
        $\displaystyle
            \begin{aligned}
                W_{\RSC}
                &=
                W_{\RSB}
                \\
                &=
                \Set*{
                    s_{(\pi,\sgn)}
                }{
                    \begin{gathered}
                        \pi\in S_n,\\
                        \sgn\in\{\pm1\}^{n}
                    \end{gathered}
                }
            \end{aligned}
        $
        \\
        \typesep

        $\Dynkin{D}{n}$
        &
        $\displaystyle
            \RSD
            \coloneqq
            \Set*{
                \pm e_i\pm e_j\in\R^n
            }{
                i,j\in[n],\ i\neq j
            }
        $
        &
        $\displaystyle
        \begin{aligned}
            &W_{\RSD} =\\
            &\Set*{
                s_{(\pi,\sgn)}
            }{
                \begin{gathered}
                    \pi\in S_n,\quad
                    \sgn\in\{\pm1\}^{n},\\
                    \card{
                        \Set*{
                            i\in[n]
                        }{
                            \sgn(i)=-1
                        }
                    }
                    \in2\Z
                \end{gathered}
            }
            \end{aligned}
        $
        \\

        \bottomrule
    \end{tabularx}

    \caption{
        The concrete realizations chosen in this paper for the
        classical root systems and their Weyl groups.
    }
    \label{tab:classical-coordinate-realizations}
\end{table}

Additional properties of the classical root systems, which are used (only) in the proof of \Cref{prop:tangent-cone},
are summarized in \Cref{subsec:proof}.

\section{Combinatorial structures arising from root systems}\label{sec:combinatorial-structures}
In this section, we introduce the combinatorial structures arising from root systems, 
which serve as the underlying discrete domains for discrete convex analysis. 
\Cref{subsec:root-lattice} defines \emph{root lattices} and \emph{coroot lattices}. 
\Cref{subsec:complexes} details \emph{spherical and Euclidean Coxeter complexes}, including their spherical expressions, geometric realizations, (positively homogeneous) Lov\'{a}sz extensions, and the properties of their convex subcomplexes.

Let $\RS$ be a root system.
In this section,
for notational simplicity,
we assume that $\RS$ is essential,
i.e., $V^* = \spn{\RS}$;
if $\RS$ is not essential,
in the following argument,
we replace $V$ and $V^*$ with $V/(\spn{\RS})^\perp$ and $\spn{\RS}$, respectively; recall \Cref{subsec:vectorspace}.
Let $n \coloneqq \dim \spn{\RS}$, which coincides with $\dim V^* = \dim V$ by the assumption.

\subsection{(Co)root lattices}\label{subsec:root-lattice}

The \emph{root lattice} $\Mdom(\RS)$ of $\RS$ is the set of integral linear combinations of $\RS$, i.e.,
\begin{align}
    \Mdom(\RS) \coloneqq \Set*{ \sum_{\alpha \in \RS} k_\alpha \alpha }{ k_\alpha \in \Z \ (\forall \alpha \in \RS) }.
\end{align}
\Cref{lem:root_integrality} implies that $\Mdom(\RS)$ is actually a lattice in $V^*$,
which is generated by an arbitrary simple system $\Delta$ of $\RS$:
\begin{align}
    \Mdom(\RS) = \Set*{ \sum_{\alpha \in \Delta} k_\alpha \alpha }{ k_\alpha \in \Z \ (\forall \alpha \in \Delta) }.
\end{align}
Similarly,
the \emph{coroot lattice} $\Mdom^{\vee}(\RS)$ of $\RS$ is
\begin{align}
    \Mdom^{\vee}(\RS) \coloneqq\, &\Set*{ \sum_{\alpha \in \RS} k_{\alpha} \alpha^{\vee} }{ k_{\alpha} \in \Z \ (\forall \alpha \in \RS) }\\
    =\, &\Set*{ \sum_{\alpha \in \Delta} k_\alpha \alpha^{\vee} }{  k_\alpha \in \Z \ (\forall \alpha \in \Delta) },
\end{align}
which is a lattice in $V$ generated by an arbitrary simple system $\Delta^{\vee} \coloneqq \Set{\alpha^{\vee} }{ \alpha \in \Delta }$ of the coroot system $\RS^{\vee}$.
By the crystallographic condition~\eqref{cond:R3}, we obtain
\begin{align}\label{eq:px:Z}
  \inpr{p}{x}
  \in
  \Z
  \qquad
  \rbra*{
    p\in\Mdom(\RS),\
    x\in\Mdom^\vee(\RS)
  }.
\end{align}

\Cref{tab:root-coroot-lattices} summarizes the root and coroot lattices of the
classical root systems $\RSA$, $\RSB$, $\RSC$, and $\RSD$.
Here, $[x] \coloneqq x + \spn{\onevec_{n+1}}$ denotes the equivalence class of $\Z^{n+1} / {\spn{\onevec_{n+1}}}$ that contains $x \in \Z^{n+1}$;
note that $\sum_{i = 1}^{n+1} x(i) \pmod {n+1}$ is well-defined for $[x] \in \Z ^{n+1} / \spn{\onevec_{n+1}}$.
\begin{table}[t]
    \centering
    \small
    \renewcommand{\arraystretch}{1.5}
    \begin{tabularx}{\linewidth}{
        @{}
        c
        >{\centering\arraybackslash}p{0.32\linewidth}
        >{\centering\arraybackslash}X
        @{}
    }
        \toprule
        Root system
        &
        Root lattice $\Mdom(\RS)$
        &
        Coroot lattice $\Mdom^{\vee}(\RS)$
        \\
        \midrule
        $\RSA$
        &
        $\displaystyle
        \Set*{
            p\in\Z^{n+1}
        }{
            \sum_{i=1}^{n+1}p(i)=0
        }$
        &
        $\displaystyle
        \Set*{
            [x]\in
            \Z^{n+1}/\spn{\onevec_{n+1}}
        }{
            \sum_{i=1}^{n+1}x(i)
            \equiv 0\pmod{n+1}
        }$
        \\
        \typesep
        $\RSB$
        &
        $\Z^n$
        &
        $\displaystyle
        \Set*{
            x\in\Z^n
        }{
            \sum_{i=1}^{n}x(i)\in2\Z
        }$
        \\
        \typesep
        $\RSC$
        &
        $\displaystyle
        \Set*{
            p\in(\Z/2)^n
        }{
            \sum_{i=1}^{n}p(i)\in\Z
        }$
        &
        $(2\Z)^n$
        \\
        \typesep
        $\RSD$
        &
        $\displaystyle
        \Set*{
            p\in\Z^n
        }{
            \sum_{i=1}^{n}p(i)\in2\Z
        }$
        &
        $\displaystyle
        \Set*{
            x\in\Z^n
        }{
            \sum_{i=1}^{n}x(i)\in2\Z
        }$
        \\
        \bottomrule
    \end{tabularx}
    \caption{
        The root and coroot lattices of the classical root systems $\RSA$, $\RSB$, $\RSC$, and $\RSD$.
    }
    \label{tab:root-coroot-lattices}
\end{table}

Suppose that $\RS = \RS_1 \oplus \RS_2 \oplus \cdots \oplus \RS_k$ in which $\RS_1, \RS_2, \dots, \RS_k$ are irreducible components of $\RS$.
Since $\spn{\RS_i}$ and $\spn{\RS_j}$ are orthogonal for any two distinct irreducible components $\RS_i, \RS_j$,
we have
\begin{align}\label{eq:Mdom-sum}
    \Mdom(\RS) = \Mdom(\RS_1) + \Mdom(\RS_2) + \cdots + \Mdom(\RS_k),
\end{align}
where $+$ denotes the Minkowski sum.

\subsection{Spherical and Euclidean Coxeter complexes}\label{subsec:complexes}
This subsection is devoted to introducing \emph{spherical Coxeter complexes} and \emph{Euclidean Coxeter complexes}.
These are special cases of a more general concept called \emph{Coxeter complexes}.
In this paper,
we only introduce spherical/Euclidean Coxeter complexes and we adapt the arguments for general Coxeter complexes to these special cases.
See \cite{Humphreys1992,Abramenko2010,Hall2015} for details.

\subsubsection{Definitions}\label{subsubsec:def-CoxeterComplexes}

Let $\RS$ be a root system.
For $\alpha \in \RS$ and $k \in \Z$,
we define a hyperplane $H_{\alpha, k}^0$ in $V$ by
\begin{align}
    H_{\alpha, k}^0 &\coloneqq \Set{ x \in V }{ \inpr{\alpha}{x} = k },
\end{align}
and open half-spaces $H_{\alpha, k}^+$ and $H_{\alpha, k}^-$ by
\begin{align}
    H_{\alpha, k}^+ \coloneqq \Set{ x \in V }{ \inpr{\alpha}{x} > k }, \qquad
    H_{\alpha, k}^- \coloneqq \Set{ x \in V }{ \inpr{\alpha}{x} < k }.
\end{align}
Let $\lincalH(\RS)$ and $\affcalH(\RS)$ be the hyperplane arrangements defined by
\begin{align}
    \lincalH(\RS) \coloneqq \Set{ H_{\alpha, 0}^0 }{ \alpha \in \RS }, \qquad
    \affcalH(\RS) \coloneqq \Set{ H_{\alpha, k}^0 }{ \alpha \in \RS,\ k \in \Z },
\end{align}
respectively.
Note that each hyperplane $H$ in $\lincalH(\RS)$ (resp. in $\affcalH(\RS)$) corresponds to exactly two indices $\alpha, -\alpha$ in $\RS$ (resp. $(\alpha, k), (-\alpha, -k)$ in $\RS \times \Z$).

A nonempty subset $A \subseteq V$ is called an \emph{(open) cell with respect to $\lincalH(\RS)$}
if there is a sequence $\sigma(A) = (\sigma_{\alpha}(A))_{\alpha \in \RS} \in \{ +, 0, - \}^{\RS}$ such that
\begin{align}
    A = \bigcap_{\alpha \in \RS} H_{\alpha, 0}^{\sigma_{\alpha}(A)},
\end{align}
and called an \emph{(open) cell with respect to $\affcalH(\RS)$}
if there is a sequence $\sigma(A) = (\sigma_{(\alpha, k)}(A))_{(\alpha, k) \in \RS \times \Z} \in \{ +, 0, - \}^{\RS \times \Z}$ such that
\begin{align}
    A = \bigcap_{(\alpha, k) \in \RS \times \Z} H_{\alpha, k}^{\sigma_{(\alpha, k)}(A)}.
\end{align}
In both cases,
the sequence $\sigma(A)$ corresponding to $A$ is uniquely determined;
we call $\sigma(A)$ the \emph{sign sequence} of $A$.

We are ready to define spherical and Euclidean Coxeter complexes:
Examples are given in \Cref{tab:classical-coxeter-chambers,tab:classical-coxeter-vertices}.

The \emph{spherical Coxeter complex} of type $\RS$, denoted by $\spCC(\RS)$, is the collection of all open cells with respect to $\lincalH(\RS)$, endowed with the facial relation $\preceq$ as: 
$A \preceq B$ if and only if $\cl{A} \subseteq \cl{B}$.
In the spherical Coxeter complex $\spCC(\RS)$,
the least element $\bot$ is $\{0\}$.
If $\RS$ is a root system of type A, B, C, or D,
then we refer to $\spCC(\RS)$ as the spherical Coxeter complex of type A, B, C, or D,
respectively.
If $\RS_1 \lesssim \RS_2$,
then we have $\lincalH(\RS_1) \subseteq \lincalH(\RS_2)$,
which implies that $\spCC(\RS_1)$ is a coarse simplicial complex of $\spCC(\RS_2)$
in the sense that, for any cell $A_2 \in \spCC(\RS_2)$,
there exists a unique cell $A_1 \in \spCC(\RS_1)$ such that $A_1 \supseteq A_2$.
In particular, if two root systems $\RS_1$ and $\RS_2$ are equivalent,
then $\spCC(\RS_1) = \spCC(\RS_2)$.
Hence, the spherical Coxeter complexes of type $\Dynkin{B}{n}$ and of type $\Dynkin{C}{n}$ are the same; see~\Cref{tab:classical-coxeter-chambers}.

In considering the Euclidean Coxeter complex,
we assume that $\RS$ is irreducible.
The \emph{Euclidean Coxeter complex} of type $\RS$, denoted by $\euCC(\RS)$,
is the collection of all open cells with respect to $\affcalH(\RS)$ with the empty set $\emptyset$, endowed with the facial relation $\preceq$ as: 
$A \preceq B$ if and only if $\cl{A} \subseteq \cl{B}$.
In the Euclidean Coxeter complex $\euCC(\RS)$,
the least element $\bot$ is $\emptyset$.
We define
the sign sequence of the least element $\emptyset$ as $(0)_{(\alpha, k) \in \RS \times \Z}$.
If $\RS$ is a root system of type A, B, C, or D,
then we refer to $\euCC(\RS)$ as the Euclidean Coxeter complex of type A, B, C, or D,
respectively.
Unlike spherical Coxeter complexes,
even if two irreducible root systems $\RS_1$ and $\RS_2$ are equivalent,
the corresponding Euclidean Coxeter complexes $\euCC(\RS_1)$ and $\euCC(\RS_2)$ are different in general
(see \Cref{tab:classical-coxeter-chambers} again).

Both $\spCC(\RS)$ and $\euCC(\RS)$ are simplicial complexes, or more precisely, colorable chamber complexes~\cite[Theorem~3.5]{Abramenko2010},
whose ranks are $n$
and $n + 1$, respectively.
The following provides
the one-to-one correspondence between the chambers and the simple systems (with the members of the coroot lattice in the case of a Euclidean Coxeter complex):
\begin{lemma}[{\cite[Proposition~8.21]{Hall2015}} and {\cite[Chapter~10.1.8]{Abramenko2010}}]\label{lem:chamber}
\begin{enumerate}[label={\textup{(\arabic*)}}]
\item 
For each chamber $C$ of $\spCC(\RS)$,
    there exists a unique simple system $\simp$ such that
\begin{align}\label{eq:spherical-chamber}
    C = \Set{ x \in V }{ \inpr{\alpha}{x} > 0 \ (\forall \alpha \in \Delta) }.
\end{align}
Conversely, for each simple system $\Delta$,
the set $C$ defined as~\eqref{eq:spherical-chamber} is a chamber of $\spCC(\RS)$.
\item For each chamber $C$ of $\euCC(\RS)$,
    there uniquely exist a simple system $\simp$ and $x_0 \in \Mdom^{\vee}(\RS)$
    such that
    \begin{align}\label{eq:affine-chamber}
        C = x_0 + \Set{ x \in V }{ \text{$\inpr{\alpha_0}{x} > -1$ and $\inpr{\alpha}{x} > 0$  $(\forall \alpha \in \Delta)$} },
    \end{align}
    where $\alpha_0$ is the negative of the highest root with respect to the simple system $\Delta$.
    Conversely, for each simple system $\simp$ and $x_0 \in \Mdom^{\vee}(\RS)$,
    the set $C$ defined as~\eqref{eq:affine-chamber} is a chamber of $\euCC(\RS)$.
\end{enumerate}
\end{lemma}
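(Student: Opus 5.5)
The statement has two parts. For (1) I would use the standard positive-system argument. For (2) I would use the fundamental alcove together with a transitivity argument for the affine Weyl group $W_\RS \ltimes \Mdom^\vee(\RS)$ acting on alcoves. Throughout I assume, as in this section, that $\RS$ is essential.

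\textbf{Part (1).} Let $C$ be a chamber of $\spCC(\RS)$. Since $C$ is a maximal cell, its sign sequence has no zero entries. Fix $x \in C$ and put $\RS^+ \coloneqq \Set{\alpha \in \RS}{\inpr{\alpha}{x} > 0}$. This set contains exactly one of $\alpha, -\alpha$ for each root, and it is closed under sums that stay in $\RS$.

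Let $\simp$ be the set of indecomposable elements of $\RS^+$, i.e.\ those that are not sums of two elements of $\RS^+$. Next I would check that $\simp$ is a simple system. The classical argument of \cite[Chapter~1.3]{Humphreys1992} gives this: every element of $\RS^+$ is a nonnegative integer combination of $\simp$, and $\inpr{\alpha}{\beta^\vee} \le 0$ for distinct $\alpha, \beta \in \simp$, which forces linear independence.

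Then $\Set{y}{\inpr{\alpha}{y} > 0 \ (\forall \alpha \in \simp)}$ is exactly the set on which every root has the same sign as at $x$. That set is $C$.

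For uniqueness, $\simp$ is recovered from $C$ as the set of roots that are positive on $C$ and define its facets, equivalently the primitive generators of the extreme rays of $C^*$.

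For the converse, take a simple system $\simp$. Because $\simp$ is a basis, the set~\eqref{eq:spherical-chamber} is a nonempty open simplicial cone. Every root lies in $\cone{\simp}$ or in $\cone{(-\simp)}$, so every root has a constant nonzero sign on this cone. Hence it is an open cell with no zero signs, i.e.\ a chamber.

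\textbf{Part (2).} Fix a simple system $\simp$ with highest root $\tilde\alpha$, and let $A_0$ denote the set~\eqref{eq:affine-chamber} with $x_0 = 0$.

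First I would show that $A_0$ is a chamber. Take $y \in A_0$ and a positive root $\alpha$. Since $\tilde\alpha - \alpha \ge_\simp 0$, we get $0 < \inpr{\alpha}{y} \le \inpr{\tilde\alpha}{y} < 1$. So no hyperplane $H^0_{\alpha,k}$ meets $A_0$, and every sign is constant and nonzero on $A_0$.

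Next, each translation by $x_0 \in \Mdom^\vee(\RS)$ and each $w \in W_\RS$ permutes $\affcalH(\RS)$, using~\eqref{eq:px:Z} and \eqref{cond:R1}. Therefore each set $x_0 + wA_0$ is a chamber. This set is exactly~\eqref{eq:affine-chamber} for the simple system $w\simp$, whose highest root is $w\tilde\alpha$ by \Cref{lem:Weyl-group}. This proves the converse direction.

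For the forward direction, let $C$ be any chamber. Pick generic points $y \in C$ and $z \in A_0$; only finitely many hyperplanes of $\affcalH(\RS)$ separate them. Suppose the hyperplane $H^0_{\alpha,k}$ is a wall of the current chamber and separates it from $C$. The affine reflection $y \mapsto s_\alpha(y) + k\alpha^\vee$ lies in $W_\RS \ltimes \Mdom^\vee(\RS)$, and it strictly decreases the number of separating hyperplanes. Induction then shows that $C = x_0 + wA_0$ for some $w$ and $x_0$.

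For uniqueness, $x_0$ is the vertex of $\cl{C}$ at which the $n$ walls coming from $\simp$ meet. The other vertices of $\cl{C}$ are $x_0 + \omega_i^\vee/a_i$, and I would check that these do not lie in $\Mdom^\vee(\RS)$. Once $x_0$ is fixed, $\simp$ is determined by part~(1) applied to the tangent cone of $C$ at $x_0$.

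I expect this uniqueness step to be the main obstacle. It requires showing that exactly one vertex of an alcove lies in the coroot lattice, which is the statement that no nonzero minuscule coweight lies in $\Mdom^\vee(\RS)$. I would first try to derive it from $\sum_{i} a_i\alpha_i = 0$ and~\eqref{eq:px:Z}. Failing that, I would fall back on the classification through \Cref{tab:root-coroot-lattices}.
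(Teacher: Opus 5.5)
The paper gives no proof of this lemma; it is quoted from the cited references. In standard treatments it rests on the simple transitivity of $W_\RS$ on spherical chambers and of $W_\RS\ltimes\Mdom^\vee(\RS)$ on alcoves.

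Your part~(1) and the existence and converse halves of part~(2) are sound. The gap is the uniqueness in part~(2), where you expected it. Your reduction is correct: a second representation $(x_0',\simp')$ of $C$ would make $x_0'$ another vertex of $\cl{C}$ lying in $\Mdom^\vee(\RS)$. Also, \eqref{eq:px:Z} rules out the vertices $x_0+\omega_i^\vee/a_i$ with $a_i\ge2$, since $\inpr{\alpha_i}{\omega_i^\vee/a_i}=1/a_i\notin\Z$.

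For $a_i=1$, however, your first plan cannot succeed. The relation $\sum_i a_i\alpha_i=0$ and the integrality \eqref{eq:px:Z} only constrain pairings with the root lattice. Every such constraint also holds on the coweight lattice $\Set{x\in V}{\inpr{\alpha}{x}\in\Z\ (\forall\alpha\in\RS)}$, which contains $\omega_i^\vee$. An argument built from these facts alone would therefore also exclude $\omega_i^\vee$ from the coweight lattice, which is false. What you need is information about $\Mdom^\vee(\RS)$ itself.

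Your fallback through \Cref{tab:root-coroot-lattices} does settle the classical realizations. For the standard simple systems the mark-one coweights are $[e_1+\dots+e_j]$ in type~A, $e_1$ in type~B, $\onevec_n$ in type~C, and $e_1,\tfrac12(1,\dots,1,\pm1)$ in type~D; none of these lies in the listed coroot lattice. But the lemma is stated for every irreducible $\RS$, and \Cref{rem:scope-root-systems} relies on it for the exceptional types. In particular $\Dynkin{E}{6}$ and $\Dynkin{E}{7}$ have mark-one nodes and are not covered by the table.

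The missing idea is that $W_\RS\ltimes\Mdom^\vee(\RS)$ acts \emph{freely} on the chambers of $\euCC(\RS)$; your wall-crossing induction proves only transitivity. Write $t_x$ for the translation $y\mapsto y+x$. Freeness gives uniqueness at once: $t_{x_0}wA_0=t_{x_0'}w'A_0$ forces $t_{x_0}w=t_{x_0'}w'$, hence $x_0=x_0'$ and $w\simp=w'\simp$.

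Your vertex statement is essentially the same fact as freeness, so it needs independent input. Indeed, a nontrivial $t_\lambda w$ fixing $A_0$ sends the vertex $0$ to a vertex $\lambda\in\Mdom^\vee(\RS)$ with $\lambda\neq0$: if $\lambda=0$, then $w$ fixes the Weyl chamber at $0$, so $w=1$ by \Cref{lem:Weyl-group}.

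A classification-free route is as follows.
\begin{enumerate}
\item Let $S$ consist of the $n+1$ reflections in the walls of $A_0$.
\item Run your induction inside $\langle S\rangle$; the wall reflections of $gA_0$ are $gsg^{-1}$ with $s\in S$. This shows that $\langle S\rangle$ is transitive on chambers and contains every affine reflection $y\mapsto s_\alpha(y)+k\alpha^\vee$.
\item Hence $\langle S\rangle=W_\RS\ltimes\Mdom^\vee(\RS)$, because $t_{\alpha^\vee}$ is the product of the reflections in $H^0_{\alpha,1}$ and $H^0_{\alpha,0}$.
\item Prove, as for finite reflection groups, that the number of hyperplanes of $\affcalH(\RS)$ separating $A_0$ from $gA_0$ equals the length of $g$ in the generators $S$ (see \cite[Chapter~4]{Humphreys1992}). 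In particular, only $g=1$ fixes $A_0$.
\end{enumerate}
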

The above lemma (\Cref{lem:chamber}) also says that
each chamber $C$ of $\spCC(\RS)$ (resp. $\euCC(\RS)$) can be characterized by the inequalities indexed by the simple system $\Delta$ (resp. the extended simple system $\tilde{\Delta} = \Delta \cup \{ \alpha_0 \}$) corresponding to $C$.
By using the integral vector $\tilde{d} \in \Z^{\tilde{\Delta}}$ defined by
$\tilde{d}(\alpha) \coloneqq -1$ if $\alpha = \alpha_0$ and $\tilde{d}(\alpha) \coloneqq 0$ if $\alpha \in \Delta$,
the formulation~\eqref{eq:affine-chamber} of a chamber $C$ of $\euCC(\RS)$ is representable as
\begin{align}
    C = x_0 + \Set{ x \in V }{ \inpr{\alpha}{x} > \tilde{d}(\alpha) \ (\forall \alpha \in \tilde{\Delta}) }.
\end{align}

Fix an arbitrary chamber $C \in \spCC(\RS)$
and let $\Delta$ be the simple system corresponding to $C$.
Then, for each vertex $x \in \sk{C}$,
there exists a unique root $\alpha \in \Delta$ such that $\inpr{\alpha}{x} > 0$.
This enables us to define the bijection $\funcdoms{\tau_C}{\sk{C}}{D(\RS)}$
by $\tau_C(x) \coloneqq \tau_\simp(\alpha)$,
where $\alpha$ is the unique root in $\simp$ satisfying $\inpr{\alpha}{x} > 0$.
We define the \emph{canonical} type function $\funcdoms{\tau_{\spCC(\RS)}}{\sk{\spCC(\RS)}}{D(\RS)}$ of $\spCC(\RS)$
by extending the bijection $\tau_C$ (see the last paragraph of \Cref{subsec:simplicial_complex}).
We can similarly define the \emph{canonical} type function $\funcdoms{\tau_{\euCC(\RS)}}{\sk{\euCC(\RS)}}{\tilde{D}(\RS)}$ of $\euCC(\RS)$.
Note that the canonical type function $\tau_{\spCC(\RS)}$ (resp. $\tau_{\euCC(\RS)}$) is independent of the choice of a chamber $C \in \spCC(\RS)$ (resp. $C \in \euCC(\RS)$).
The subscripts of $\tau_{\spCC(\RS)}$ and $\tau_{\euCC(\RS)}$ may be omitted if it is clear from the context.

For a simplex $A \in \spCC(\RS)$ and the simple system $\simp$ corresponding to a chamber $C \in \St{\spCC(\RS)}{A}$, i.e., a chamber $C$ of $\spCC(\RS)$ with $C \succeq A$,
we define
\begin{align}
  \simp|_{A} \coloneqq \simp|_{\tau(A)}, \qquad \simp|_{-A} \coloneqq \simp|_{-\tau(A)}.
\end{align}
Similarly, for a simplex $A \in \euCC(\RS)$ and the extended simple system $\esimp$ corresponding to a chamber $C \in \St{\euCC(\RS)}{A}$,
we define
\begin{align}
  \esimp|_{A} \coloneqq \esimp|_{\tau(A)}, \qquad \esimp|_{-A} \coloneqq \esimp|_{-\tau(A)}.
\end{align}
Then, by a general theory of Coxeter complexes and \Cref{lem:chamber},
each simplex $A$ of $\spCC(\RS)$ or $\euCC(\RS)$
admits an easy representation as follows.
\begin{corollary}[{see e.g., \cite[Chapter~3]{Abramenko2010}}]\label{cor:type-function}
    \begin{enumerate}[label={\textup{(\arabic*)}}]
        \item Let $\RS$ be a root system.
        For any simplex $A \in \spCC(\RS)$ and simple system $\simp$ corresponding to a chamber 
        $C \in \St{\spCC(\RS)}{A}$, 
        we have
        \begin{align}\label{eq:spherical:A}
            A = \Set*{ x \in V }{ \text{$\inpr{\alpha}{x} > 0$ $(\forall \alpha \in \simp|_{A})$ and $\inpr{\alpha}{x} = 0$ $(\forall \alpha \in \simp|_{-A})$} }.
        \end{align}
        \item Let $\RS$ be an irreducible root system.
        For any simplex $A \in \euCC(\RS)$ and the pair $(x_0, \esimp)$ of $x_0 \in \Mdom^{\vee}(\RS)$ and extended simple system $\esimp$ corresponding to a chamber 
        $C \in \St{\euCC(\RS)}{A}$,
        we have
        \begin{align}\label{eq:affine:A}
            A = x_0 + \Set*{ x \in V }{ \text{$\inpr{\alpha}{x} > d^*_\alpha$ $(\forall \alpha \in \esimp|_{A})$ and $\inpr{\alpha}{x} = \tilde{d}(\alpha)$ $(\forall \alpha \in \esimp|_{-A})$} },
        \end{align}
        where
        $\tilde{d} \in \Z^{\esimp}$ is the integral vector defined by
$\tilde{d}(\alpha) \coloneqq -1$ if $\alpha = \alpha_0$ and $\tilde{d}(\alpha) \coloneqq 0$ if $\alpha \in \simp$.
    \end{enumerate}
\end{corollary}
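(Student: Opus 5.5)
We reduce both parts to the case of a chamber, which is \Cref{lem:chamber}, together with the correspondence between faces of a chamber and subsets of its vertex types. We treat~(1) first; (2) is the same argument with the extended simple system.

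For~(1), fix a chamber $C \succeq A$ with simple system $\simp = \{\alpha_1,\dots,\alpha_n\}$. By \Cref{lem:chamber}~(1), $C$ is the open simplicial cone $\Set{x}{\inpr{\alpha}{x}>0\ (\alpha\in\simp)}$. Let $\{\varpi_1,\dots,\varpi_n\}\subseteq V$ be the dual basis, so $\inpr{\alpha_i}{\varpi_j}=\delta_{ij}$. Then $\cl{C}=\cone\{\varpi_1,\dots,\varpi_n\}$.

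Every root is a nonnegative or nonpositive integral combination of $\simp$ (\Cref{lem:root_integrality}). Hence on $\relint\cone\{\varpi_j : j\in J\}$ the sign of each root $\alpha$ is determined by its coefficients on $\{\alpha_j : j\in J\}$: it is $+$, $-$, or $0$ according as $\alpha$ is positive with some coefficient in $J$ nonzero, negative with some coefficient in $J$ nonzero, or has all coefficients in $J$ zero. So each set
\[
F_J \coloneqq \Set{x}{\inpr{\alpha_j}{x}>0\ (j\in J),\ \inpr{\alpha_i}{x}=0\ (i\notin J)}
\]
has a constant sign sequence. Conversely, any point of $\cl{C}$ with this sign pattern on $\simp$ lies in $F_J$. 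Therefore $F_J$ is exactly an open cell of $\lincalH(\RS)$.

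The cells contained in $\cl{C}$ are therefore exactly the $2^n$ sets $F_J$, matching the Boolean lattice $\CC_{\preceq C}$. The vertices of $C$ are the rays $F_{\{j\}}$. By definition of $\tau_C$, the ray $F_{\{j\}}$ is the one on which $\alpha_j>0$, so it has type $\tau_\simp(\alpha_j)$. Since $A\preceq C$ means $\cl{A}\subseteq\cl{C}$, we get $A=F_J$ where $J$ indexes the vertices of $A$. The join structure gives $\tau(A)=\Set{\tau_\simp(\alpha_j)}{j\in J}$, i.e.\ $\simp|_A=\{\alpha_j : j\in J\}$. This is~\eqref{eq:spherical:A}.

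One point needs care: the canonical type function is defined from one fixed chamber, and we must check it agrees with $\tau_C$ on an arbitrary chamber $C$. Both are type functions. Take adjacent chambers $C=\Set{x}{\inpr{\alpha}{x}>0\ (\alpha\in\simp)}$ and $C'=s_{\alpha_i}C$, whose simple system is $s_{\alpha_i}\simp$. Their common facet is $F_{[n]\setminus\{i\}}$, and $s_{\alpha_i}$ fixes it pointwise. A vertex $\varpi_j$ ($j\neq i$) of that facet satisfies $\inpr{s_{\alpha_i}\alpha_j}{\varpi_j}=\inpr{\alpha_j}{s_{\alpha_i}\varpi_j}=\inpr{\alpha_j}{\varpi_j}>0$. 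So $\tau_{C'}$ gives it type $\tau_{s_{\alpha_i}\simp}(s_{\alpha_i}\alpha_j)=\tau_\simp(\alpha_j)$, the same as $\tau_C$. Two type functions that agree on a corank-one face of two adjacent chambers coincide there. By the gallery-propagation uniqueness at the end of \Cref{subsec:simplicial_complex}, all $\tau_C$ therefore agree. This consistency step is the main obstacle, and it is where \Cref{lem:Weyl-group} enters.

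For~(2), apply the same argument to $C=x_0+\Set{x}{\inpr{\alpha}{x}>\tilde d(\alpha)\ (\alpha\in\esimp)}$, with $\esimp=\{\alpha_0,\dots,\alpha_n\}$. This is an open $n$-simplex: the relation $\sum_i a_i\alpha_i=0$ with all marks $a_i>0$ makes it bounded, and its $n+1$ facets are cut out by the hyperplanes $\inpr{\alpha_i}{x-x_0}=\tilde d(\alpha_i)$. Its closed faces are obtained by turning a subset of the inequalities into equalities. The relative interior of each such face is one cell of $\affcalH(\RS)$. To see this, note that within $\cl{C}$ the value $\inpr{\beta}{x-x_0}$ for any root $\beta$ lies in $[-1,1]$ and is an integral combination of the $\inpr{\alpha_i}{x-x_0}$ with $i\ge1$. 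Its position relative to the integers is therefore determined by which $\inpr{\alpha_i}{\cdot}$ vanish or attain $\tilde d$. Translating by $x_0\in\Mdom^\vee(\RS)$ preserves $\affcalH(\RS)$ by~\eqref{eq:px:Z}, so we may take $x_0=0$. The type identification and the consistency across adjacent chambers go as before, now using affine reflections in the walls and \Cref{lem:Weyl-group} for extended simple systems. This yields~\eqref{eq:affine:A}, with the strict inequality constant $\tilde d(\alpha)$ in place of $d^*_\alpha$.
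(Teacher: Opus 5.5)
Your proof is correct, but it does not follow the paper's route. The paper gives no argument of its own. It reads the corollary off \Cref{lem:chamber} via the general theory of Coxeter complexes \cite[Chapter~3]{Abramenko2010}, where the faces of a chamber and their types are handled abstractly through the type-preserving Weyl group action.

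You instead verify everything inside the concrete arrangement:
\begin{itemize}
\item the basis of $V$ dual to $\simp$ exhibits $\cl{C}$ as a simplicial cone;
\item sign-coherence of roots with respect to $\simp$ shows that each relatively open face $F_J$ is a single cell of $\lincalH(\RS)$;
\item the identity $\inpr{s_{\alpha_i}\alpha_j}{\varpi_j}=\inpr{\alpha_j}{\varpi_j}$, together with gallery propagation, shows that $\tau_C$ agrees with the canonical type function on every chamber.
\end{itemize}
The last point is precisely the chamber-independence of the canonical type function, which the paper only asserts. The citation is shorter and covers arbitrary Coxeter complexes. Your version is self-contained and actually proves that assertion, at the cost of some affine bookkeeping that you compress into ``as before''.

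When writing part~(2) out, make two points explicit.
\begin{itemize}
\item Sign-constancy on the faces of $\cl{C}$ comes from the domination $0\le k_i\le a_i$ of a positive root $\beta=\sum_{i\ge1}k_i\alpha_i$ by the highest root, not from integrality. This domination gives $\inpr{\beta}{x-x_0}\in[0,1]$ on $\cl{C}$. The value $1$ occurs exactly when $\inpr{\alpha_0}{x-x_0}=-1$ and $\inpr{\alpha_i}{x-x_0}=0$ for all $i$ with $k_i<a_i$, which is a condition on the face pattern.
\item Across the wall $\inpr{\alpha_0}{x-x_0}=-1$, the neighbouring chamber is the image of $C$ under $x_0+z\mapsto x_0-\alpha_0^\vee+s_{\alpha_0}z$, with data $(x_0-\alpha_0^\vee,\,s_{\alpha_0}\esimp)$. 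This is of the form in \Cref{lem:chamber}~(2) for two reasons: $-\alpha_0^\vee\in\Mdom^\vee(\RS)$, and each $w\in W_\RS$ sends the highest root for $\simp$ to the highest root for $w\simp$. Hence the type bookkeeping $\tau_{w\esimp}(w\alpha)=\tau_{\esimp}(\alpha)$ goes through as in part~(1).
\end{itemize}

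You are also right that $d^*_\alpha$ in the statement must be read as $\tilde{d}(\alpha)$.
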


For a simplex $A \in \spCC(\RS)$, we define
\begin{align}
  \RS|_{-A} \coloneqq \RS(\simp|_{-A}),
\end{align}
where $\simp$ is the simple system corresponding to a chamber 
$C \in \St{\spCC(\RS)}{A}$.
It is well defined, since $\RS(\simp|_{-A}) = \RS(\simp'|_{-A})$ for any simple systems $\simp$ and $\simp'$ corresponding to chambers 
$C, C' \in \St{\spCC(\RS)}{A}$,
respectively.
Similarly, for a simplex $A \in \euCC(\RS) \setminus \{\emptyset\}$, we define
\begin{align}
  \tilde{\RS}|_{-A} \coloneqq \RS(\esimp|_{-A}),
\end{align}
where $\esimp$ is the extended simple system corresponding to a chamber 
$C \in \St{\euCC(\RS)}{A}$;
this is also well defined.
The following is an easy consequence of \Cref{cor:type-function}.
\begin{lemma}\label{lem:span}
    \begin{enumerate}[label={\textup{(\arabic*)}}]
        \item Let $\RS$ be a root system and $A \in \spCC(\RS)$.
        Then,
        $\spn{A} = (\spn{\RS|_{-A}})^\perp$ holds.
        In particular, $\spCC(\RS|_{-A})$
        is a spherical Coxeter complex that lies in $V/\spn{A}$.
        \item Let $\RS$ be an irreducible root system and $A \in \euCC(\RS) \setminus \{\emptyset\}$.
        Then, $(\aff{A})_0 = (\spn{\tilde{\RS}|_{-A}})^\perp$ holds.
        In particular, $\spCC(\tilde{\RS}|_{-A})$
        is a spherical Coxeter complex that lies in $V/\aff{A}$.
    \end{enumerate}
\end{lemma}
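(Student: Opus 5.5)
We read off both spans directly from the explicit descriptions of simplices in \Cref{cor:type-function} and then compare dimensions.

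For (1), fix $A \in \spCC(\RS)$ and a chamber $C \succeq A$ with simple system $\simp$. By~\eqref{eq:spherical:A}, $A$ is an open subset of the linear subspace
\begin{align}
L \coloneqq \Set{x \in V}{\inpr{\alpha}{x} = 0 \ (\forall \alpha \in \simp|_{-A})} = (\spn{\simp|_{-A}})^\perp .
\end{align}
It is relatively open because the remaining conditions $\inpr{\alpha}{x} > 0$ are strict and finitely many. Since $\simp$ is linearly independent, $A$ is nonempty, and in fact the dual basis vectors for $\simp|_A$ lie in $\cl{A}$ and span $L$. Hence $\spn{A} = L$. It remains to show $\spn{\simp|_{-A}} = \spn{\RS|_{-A}}$. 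The inclusion $\subseteq$ is clear. For $\supseteq$, every element of $\RS(\simp|_{-A})$ has the form $w\beta$ with $w \in \gen{s_\alpha}{\alpha \in \simp|_{-A}}$ and $\beta \in \simp|_{-A}$, and each such reflection maps $\spn{\simp|_{-A}}$ into itself by~\eqref{eq:reflection-dual}. The "in particular" part then follows from \Cref{lem:contraction:root-system}~(1), which says $\RS|_{-A}$ is a root system. Its span is $(\spn{A})^\perp = (V/\spn{A})^*$ by the identification in \Cref{subsec:vectorspace}, so $\spCC(\RS|_{-A})$ is an (essential) spherical Coxeter complex in $V/\spn{A}$.

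For (2), the argument is the same but uses~\eqref{eq:affine:A}. Here $A$ is a relatively open subset of the affine space
\begin{align}
x_0 + \Set{x}{\inpr{\alpha}{x} = \tilde d(\alpha) \ (\forall \alpha \in \esimp|_{-A})}.
\end{align}
The main point to check is that the equalities indexed by $\esimp|_{-A}$ are linearly independent, so that $A$ is nonempty and full-dimensional in this affine space. Since $A \neq \emptyset$, we have $\esimp|_{-A} \subsetneq \esimp$. Any proper subset of $\esimp$ is linearly independent, because the only linear relation among $\alpha_0, \dots, \alpha_n$ is $\sum_i a_i \alpha_i = 0$ with all marks $a_i > 0$. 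It follows that $A$ is an open simplex whose affine hull is exactly this space. Therefore
\begin{align}
(\aff{A})_0 = (\spn{\esimp|_{-A}})^\perp.
\end{align}
As in (1), $\spn{\esimp|_{-A}} = \spn{\RS(\esimp|_{-A})}$ by reflection invariance. The "in particular" part now follows from \Cref{lem:contraction:root-system}~(2) together with the identification $(V/\aff{A})^* = ((\aff{A})_0)^\perp$.

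The only mildly delicate step is the nondegeneracy in (2), namely that the affine space above has the expected dimension and that $A$ is open in it. This rests on the positivity of the marks.
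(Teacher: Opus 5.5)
Your proof is correct and follows the paper's route. Both arguments read off $\spn{A}$ (resp.\ $(\aff{A})_0$) from the representation in \Cref{cor:type-function} as $(\spn{\simp|_{-A}})^\perp$ (resp.\ $(\spn{\esimp|_{-A}})^\perp$), and then identify this span with that of $\RS|_{-A}$ (resp.\ $\tilde{\RS}|_{-A}$). You differ only in proving that span equality by reflection invariance instead of citing \Cref{lem:contraction:root-system}, and in spelling out the relative-openness and nondegeneracy details the paper leaves implicit; the linear-independence check in (2) is harmless but not strictly needed, since $A \neq \emptyset$ already makes the affine space nonempty with $A$ relatively open in it.
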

\begin{proof}
    The latter assertions of (1) and (2) follow from the former assertions,
    since $\spCC(\RS|_{-A})$ and $\spCC(\tilde{\RS}|_{-A})$ are simplicial complexes that lie in $V/(\spn{(\RS|_{-A})})^\perp$ and in $V/(\spn{(\tilde{\RS}|_{-A})})^\perp$, respectively, by the definitions.
    In the following, we prove the former assertions of (1) and (2).

    (1).
    By \Cref{cor:type-function}~(1), there is a simple system $\simp$
    such that $A$ is representable as~\eqref{eq:spherical:A}.
    Then, we obtain
    \begin{align}\label{eq:spanA}
        \spn{A} = \Set{ x \in V }{ \inpr{\alpha}{x} = 0 \ (\forall \alpha \in \simp|_{-A}) } = (\spn{\simp|_{-A}})^\perp = (\spn{\RS|_{-A}})^\perp.
    \end{align}
    Here, since $\simp|_{-A}$ is a simple system of the root system $\RS|_{-A}$ by \Cref{lem:contraction:root-system}~(1),
    the last equality follows.

    (2).
    By \Cref{cor:type-function}~(2), there is an extended simple system $\tilde{\Delta}$ and $x_0 \in \Mdom^{\vee}(\RS)$
    such that $A$ is representable as~\eqref{eq:affine:A}.
    Then, by a similar argument to the assertion~(1) above,
    we obtain $\aff{A} = x_0 + \Set{ x \in V }{ \inpr{\alpha}{x} = \tilde{d}(\alpha) \ (\forall \alpha \in \esimp|_{-A}) }$,
    which implies
    \begin{align}\label{eq:affA}
        (\aff{A})_0 = \Set{ x \in V }{ \inpr{\alpha}{x} = 0 \ (\forall \alpha \in \esimp|_{-A}) } = (\spn{\esimp|_{-A}})^\perp = (\spn{\tilde{\RS}|_{-A}})^\perp.
    \end{align}
    In particular, the last equality follows from \Cref{lem:contraction:root-system}~(2).
\end{proof}

For $\CC = \spCC(\RS)$ or $\CC = \euCC(\RS)$,
the \emph{mark} of a vertex $x \in \sk{\CC}$,
denoted by $a_\CC(x)$,
is defined as the mark of its type $\tau_\CC(x)$ (see the last paragraph of \Cref{subsec:Weyl-Dynkin} for the definition of the mark of the types);
we refer to $\funcdoms{a_\CC}{\sk{\CC}}{\Z_+}$ as the \emph{mark function} of $\CC$.
The subscript $\spCC(\RS)$ of $a_{\spCC(\RS)}$ (resp. $\euCC(\RS)$ of $a_{\euCC(\RS)}$) may be omitted if it is clear from the context.

We present explicit descriptions of the chambers of the spherical
and Euclidean Coxeter complexes associated with the classical root
systems.
These descriptions, together with the corresponding Euclidean
vertex sets and mark functions, are summarized in
\Cref{tab:classical-coxeter-chambers,tab:classical-coxeter-vertices}.
Here, recall that $[x]\coloneqq x+\spn{\onevec_{n+1}} \in\R^{n+1}/\spn{\onevec_{n+1}}$ for $x\in\R^{n+1}$.

\begin{table}[t]
    \centering
    \small
    \setlength{\tabcolsep}{6pt}
    \renewcommand{\arraystretch}{1.25}
    \begin{tabularx}{\linewidth}{
        @{}
        >{\centering\arraybackslash}m{0.12\linewidth}
        >{\centering\arraybackslash}X
        @{}
    }
        \toprule
        Root system
        &
        Chambers
        \\
        \midrule

        $\RSA$
        &
        \coxeterchambercell{
            $\displaystyle
            \chamber{\pi}{}
            \coloneqq
            \Set*{
                [x]\in
                \R^{n+1}/\spn{\onevec_{n+1}}
            }{
                x(\pi(1))
                >
                \cdots
                >
                x(\pi(n+1))
            }$
        }{
            $\displaystyle
            \chamber{\pi}{[x_0]}
            \coloneqq
            [x_0]
            +
            \Set*{
                [x]\in\chamber{\pi}{}
            }{
                x(\pi(n+1))
                >
                x(\pi(1))-1
            }$
        }
        \\
        \typesep

        $\RSB$
        &
        \coxeterchambercell{
            $\displaystyle
            \chamber{\pi,\sgn}{}
            \coloneqq
            \Set*{
                x\in\R^n
            }{
                \sgn(1)x(\pi(1))
                >
                \cdots
                >
                \sgn(n)x(\pi(n))
                >
                0
            }$
        }{
            $\displaystyle
            \chamber{\pi,\sgn}{x_0}
            \coloneqq
            x_0
            +
            \Set*{
                x\in\chamber{\pi,\sgn}{}
            }{
                \sgn(1)x(\pi(1))
                +
                \sgn(2)x(\pi(2))
                <
                1
            }$
        }
        \\
        \typesep

        $\RSC$
        &
        \coxeterchambercell{
            $\displaystyle
            \chamber{\pi,\sgn}{}
            \coloneqq
            \Set*{
                x\in\R^n
            }{
                \sgn(1)x(\pi(1))
                >
                \cdots
                >
                \sgn(n)x(\pi(n))
                >
                0
            }$
        }{
            $\displaystyle
            \chamber{\pi,\sgn}{x_0}
            \coloneqq
            x_0
            +
            \Set*{
                x\in\chamber{\pi,\sgn}{}
            }{
                \sgn(1)x(\pi(1))
                <
                1
            }$
        }
        \\
        \typesep

        $\RSD$
        &
        \coxeterchambercell{
            $\displaystyle
            \chamber{\pi,\sgn}{}
            \coloneqq
            \Set*{
                x\in\R^n
            }{
                \sgn(1)x(\pi(1))
                >
                \cdots
                >
                \sgn(n-1)x(\pi(n-1))
                >
                \lvert
                    \sgn(n)x(\pi(n))
                \rvert
            }$
        }{
            $\displaystyle
            \chamber{\pi,\sgn}{x_0}
            \coloneqq
            x_0
            +
            \Set*{
                x\in\chamber{\pi,\sgn}{}
            }{
                \sgn(1)x(\pi(1))
                +
                \sgn(2)x(\pi(2))
                <
                1
            }$
        }
        \\

        \bottomrule
    \end{tabularx}

    \caption{
      Chambers of the spherical and Euclidean Coxeter complexes
        associated with $\RSA$, $\RSB$, $\RSC$, and $\RSD$.
        In type $\RSA$, 
        $\pi$ and $[x_0]$ are taken over $S_{n+1}$ and $\Mdom^{\vee}(\RSA)$,
        respectively.
        In type $\RS \in \{\RSB, \RSC, \RSD\}$, 
        $\pi$ and $x_0$ are taken over $S_{n}$ and $\Mdom^{\vee}(\RS)$,
        respectively.
        In addition, $\sgn \in \{\pm1\}^{n}$ in types $\RSB$ and $\RSC$,
        and $\sgn \in \{\pm1\}^{n}$ with $\card{\Set{i \in [n]}{\sigma(i) = -1}} \in 2\Z$ in type $\RSD$.
    }
    \label{tab:classical-coxeter-chambers}
\end{table}
\begin{table}[t]
    \centering
    \small
    \setlength{\tabcolsep}{5pt}
    \renewcommand{\arraystretch}{1.5}
    \renewcommand{\tabularxcolumn}[1]{m{#1}}
    \begin{tabularx}{\linewidth}{
        @{}
        >{\centering\arraybackslash}m{0.14\linewidth}
        >{\centering\arraybackslash}X
        >{\centering\arraybackslash}X
        @{}
    }
        \toprule
        Root system
        &
        \shortstack[c]{
            Vertex set \\of the Euclidean Coxeter complex
        }
        &
        Vertex mark
        \\
        \midrule
        $\RSA$
        &
        $\Z^{n+1}/\spn{\onevec_{n+1}}$
        &
        $a([x])=1$
        \\
        \typesep

        $\RSB$
        &
        $\Set*{
            x\in(\Z/2)^n
        }{
            \card{\supp_{1/2}(x)}\neq1
        }$
        &
        $\displaystyle
        a(x)
        =
        \begin{cases}
            1 & \text{if $x\in\Z^n$},\\
            2 & \text{otherwise}
        \end{cases}$
        \\
        \typesep

        $\RSC$
        &
        $\Z^n$
        &
        $\displaystyle
        a(x)
        =
        \begin{cases}
            1
            &
            \text{if
                $x\in(2\Z)^n\cup(2\Z+1)^n$},\\
            2
            &
            \text{otherwise}
        \end{cases}$
        \\
        \typesep

        $\RSD$
        &
        $\Set*{
            x\in(\Z/2)^n
        }{
            \card{\supp_{1/2}(x)}
            \notin
            \{1,n-1\}
        }$
        &
        $\displaystyle
        a(x)
        =
        \begin{cases}
            1
            &
            \text{if
                $x\in\Z^n\cup(\Z+1/2)^n$},\\
            2
            &
            \text{otherwise}
        \end{cases}$
        \\
        \bottomrule
    \end{tabularx}
    \caption{
        Vertex sets and vertex marks of the Euclidean Coxeter
        complexes associated with $\RSA$, $\RSB$, $\RSC$, and $\RSD$.
        Let $\supp_{1/2}(x) \coloneqq \Set{ i \in [n] }{ x(i) \in \Z + 1/2 }$ for $x \in (\Z/2)^n$.
    }
    \label{tab:classical-coxeter-vertices}
\end{table}

In connection with the discrete midpoint convexity (see~\eqref{eq:midpointconvexity} below), it is also important,
for each classical type, to have an explicit description of the
simplex $\cell{A}{z}$ containing the midpoint
$z\coloneqq(x+y)/2$ of two vertices
$x,y\in\sk{\euCC(\RS)}$.
In types~$\RSA$ and~$\RSC$, these descriptions are also useful for
clarifying the relationship between the present framework and
existing notions (see~\Cref{subsubsec:example:L,subsec:example:bisubmodular}).
We collect these descriptions in \Cref{subsec:DMC}.

It is known~\cite[Proposition~3.16]{Abramenko2010} that the link complex $\Lk{\CC}{A}$ of a (general) Coxeter complex $\CC$ at any simplex $A \in \CC$ is also a Coxeter complex.
More specifically, the following holds for a spherical/Euclidean Coxeter complex.
\begin{lemma}\label{lem:complex:contraction}
    \begin{enumerate}[label={\textup{(\arabic*)}}]
        \item Let $\RS$ be a root system and
        $A$ a simplex of $\spCC(\RS)$.
        Then, the map $B \mapsto B + \spn{A}$
        forms an isomorphism from $\Lk{\spCC(\RS)}{A}$ to $\spCC(\RS|_{-A})$.
        \item
        Let $\RS$ be an irreducible root system and $A$ a nonempty simplex of $\euCC(\RS)$.
        Then, the map $B \mapsto \cone{(B - \hat{x})} + (\aff{A})_0$ forms an isomorphism from $\Lk{\euCC}{A}$ to $\spCC(\tilde{\RS}|_{-A})$,
        where $\hat{x}$ is a point in $A$.
    \end{enumerate}
\end{lemma}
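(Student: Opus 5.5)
The plan is to reduce both assertions to \Cref{cor:type-function} and \Cref{lem:span}, which describe a simplex together with all simplices of its star in coordinates adapted to a single chamber. For (1), fix a chamber $C \succeq A$ with simple system $\simp$. Every $B \in \Lk{\spCC(\RS)}{A}$ is joinable with $A$, so some chamber $C' \succeq A, B$ exists. The associated simple system $\simp'$ satisfies $\simp'|_{-A} = w\,\simp|_{-A}$ for some $w \in \gen{s_\alpha}{\alpha \in \simp|_{-A}}$. This is because the chambers containing $A$ are exactly the translates of $C$ under the stabilizer of $A$, which is the parabolic subgroup generated by the reflections fixing $\spn{A}$, i.e.\ by $\simp|_{-A}$ (a standard fact for Coxeter complexes that I would cite from \cite{Abramenko2010}). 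By \Cref{lem:contraction:root-system}~(1), $\simp'|_{-A}$ is then a simple system of $\RS|_{-A}$.

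Next I write $B$ via \eqref{eq:spherical:A} with respect to $\simp'$. The roots in $\simp'|_{A}$ vanish on $(\spn{\simp'|_{-A}})^\perp = \spn{A}$ only trivially, and the conditions indexed by $\simp'|_{-A}$ are invariant under adding $\spn{A}$. Hence $B + \spn{A}$ is precisely the cell $\Set{ \bar x \in V/\spn{A} }{ \inpr{\alpha}{\bar x} > 0 \ (\alpha \in \simp'|_{-A} \cap \simp'|_{B}),\ \inpr{\alpha}{\bar x} = 0 \ (\text{otherwise in } \simp'|_{-A}) }$. By \Cref{cor:type-function}~(1) applied to $\RS|_{-A}$ in $V/\spn{A}$ (well defined by \Cref{lem:span}~(1)), this is a simplex of $\spCC(\RS|_{-A})$, and its type is $\tau(B)$. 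So the map is well defined and preserves types. It is monotone because $\cl{B} \subseteq \cl{B'}$ is preserved under adding $\spn{A}$. For injectivity and surjectivity I will exhibit the inverse: given a cell $\bar B$ of $\spCC(\RS|_{-A})$, pick a simple system $w\simp|_{-A}$ of $\RS|_{-A}$ whose chamber contains $\bar B$, and let $B$ be the cell of $\spCC(\RS)$ defined by $\simp' = w\simp$ with vanishing on $\simp'|_{A}$ replaced appropriately. Concretely, $B$ is the face of $wC$ of type $\tau(\bar B)$. Disjointness from $A$ follows from the types being disjoint, and joinability holds since both are faces of $wC$. The order is reflected because both complexes are Boolean below each simplex with matching types.

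For (2) the argument is the same in affine form. I fix $\hat x \in A$ and a chamber $C \succeq A$ with data $(x_0, \esimp)$. By \Cref{lem:span}~(2) the linear part of $\aff A$ is $(\spn{\tilde\RS|_{-A}})^\perp$. For $B \in \Lk{\euCC}{A}$ I take a chamber containing $A$ and $B$ and write $B$ via \eqref{eq:affine:A}. Then $\cone(B - \hat x)$ is obtained by dropping the strict inequalities of $\esimp'|_{A}$, which are slack at $\hat x$, and homogenizing the conditions of $\esimp'|_{-A}$, which are tight at $\hat x$ because $\inpr{\alpha}{\hat x - x_0} = \tilde d(\alpha)$. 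Adding $(\aff A)_0$ then gives the spherical cell described by \Cref{cor:type-function}~(1) for $\tilde\RS|_{-A}$ with simple system $\esimp'|_{-A}$.

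The main obstacle is this local homogenization step. I must check that near $\hat x$ only the hyperplanes $H^0_{\alpha,k}$ passing through $\hat x$ matter, and that these are exactly the reflecting hyperplanes of $\tilde\RS|_{-A}$ translated to $\hat x$. The cleanest route is local finiteness of $\affcalH(\RS)$: in a small ball around $\hat x$, the affine arrangement coincides with the central arrangement $\Set{H}{\hat x \in H}$. That set equals $\hat x + \lincalH(\tilde\RS|_{-A})$ because the stabilizer of $\hat x$ in the affine Weyl group is generated by the reflections in $\esimp|_{-A}$. Given this, the cells of $\euCC$ containing $A$ in their closure correspond bijectively to the cells of the central arrangement through $\hat x$ via $B \mapsto \cone(B-\hat x)$. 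Since the cone depends only on the germ of $B$ at $\hat x$, the map is independent of the choice of $\hat x \in A$. The remaining bijectivity and order arguments are then verbatim as in (1).
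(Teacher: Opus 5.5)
Your proposal follows essentially the paper's route. You fix a chamber containing both $A$ and $B$, describe $B$ in the adapted (extended) simple system via \Cref{cor:type-function}, and use \Cref{lem:span} to identify $B+\spn{A}$ (resp.\ $\cone(B-\hat{x})+(\aff{A})_0$) with the cell of the contracted system cut out by $\simp|_{-A}$ (resp.\ $\esimp|_{-A}$). For bijectivity you rely on standard Coxeter-complex facts, which the paper simply cites from \cite{Abramenko2010}.

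There is one slip in your added argument for (2). A link simplex $B$ is disjoint from $A$, so $\hat{x}\notin\cl{B}$, the $\esimp'|_{A}$-conditions on $B$ are equalities rather than strict inequalities, and $B$ has no germ at $\hat{x}$. Everything you say holds for the join $AB\succeq A$ instead. To repair it, insert the order isomorphism $B\mapsto AB$ from $\Lk{\euCC}{A}$ onto the simplices above $A$, together with the identity $\cone(B-\hat{x})+(\aff{A})_0=\cone(AB-\hat{x})+(\aff{A})_0$.

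The same device also supplies the injectivity that your inverse construction in (1) leaves unverified. A cell above $A$ is determined by its image modulo $\spn{A}$, because its signs on roots not vanishing on $A$ coincide with those of $A$.
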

\begin{proof}
    We omit the proof of~(2), since it can be obtained by a similar argument to that of~(1).

    (1).
    Take any $B \in \Lk{\spCC(\RS)}{A}$
    and let $C$ be a chamber of $\spCC(\RS)$ with $A \preceq C$ and $B \preceq C$
    and $\simp$ the corresponding simple system to $C$.
    By \Cref{cor:type-function}~(1), we have
    \begin{align}
        B = \Set*{ x \in V }{ \text{$\inpr{\alpha}{x} > 0$ $(\forall \alpha \in \simp|_{B})$ and $\inpr{\alpha}{x} = 0$ $(\forall \alpha \in \simp|_{-B})$} }.
    \end{align}
    By a general theory of Coxeter complexes (see e.g.,~\cite[Chapter~3]{Abramenko2010}), \Cref{cor:type-function}~(1), and \Cref{lem:span}~(1),
    the map from $B$ to
    \begin{align}\label{eq:B+spanA}
        \Set{ x \in V / \spn{A} }{ \text{$\inpr{\alpha}{x} > 0$ $(\forall \alpha \in \simp|_{B})$ and $\inpr{\alpha}{x} = 0$ $(\forall \alpha \in \simp|_{-A} \cap \simp|_{-B})$} }
    \end{align}
    forms an isomorphism from $\Lk{\spCC(\RS)}{A}$ to $\spCC(\RS|_{-A})$.
    Here, we note that $\tau(A)$ and $\tau(B)$ are disjoint,
    i.e., $\simp|_{B} \subseteq \simp|_{-A}$.
    By the equation~\eqref{eq:spanA} on $\spn{A}$,
    the set~\eqref{eq:B+spanA} is representable as $B + \spn{A}$.
\end{proof}
\Cref{lem:complex:contraction}~(1) enables us to introduce the canonical type function and the mark function of $\CC' \coloneqq \Lk{\spCC(\RS)}{A}$
via those of $\CC \coloneqq \spCC(\RS|_{-A})$ 
i.e.,
$\tau_{\CC'}(x) \coloneqq \tau_{\CC}(x + \spn{A})$
and
$a_{\CC'}(x) \coloneqq a_{\CC}(x + \spn{A})$.
It is worth mentioning that
$\tau_{\CC'}$ is simply the restriction of $\tau_{\spCC(\RS)}$ to $\sk{\CC'}$,
but this is not the case for $a_{\CC'}$.
Similarly,
by \Cref{lem:complex:contraction}~(2),
we can introduce the canonical type function and the mark function of $\CC' \coloneqq \Lk{\euCC(\RS)}{A}$
via those of $\CC \coloneqq \spCC(\tilde{\RS}|_{-A})$, 
i.e.,
$\tau_{\CC'}(x) \coloneqq \tau_{\CC}(\cone{(x - x_0)} + (\aff{A})_0)$
and
$a_{\CC'}(x) \coloneqq a_{\CC}(\cone{(x-x_0)} + (\aff{A})_0)$ for $x \in \sk{\CC'}$.

As we can see in \Cref{cor:type-function} (and \Cref{tab:classical-coxeter-chambers}),
while the closure of each simplex in $\euCC(\RS)$ is a geometric simplex in $V$,
that of each simplex in $\spCC(\RS)$ is a simplicial cone in $V$.
In particular,
while a vertex in $\euCC(\RS)$ is a point in $V$,
that in $\spCC(\RS)$ is an open half-line in $V$.
We sometimes use the letter $\ell$ to represent a vertex of $\spCC(\RS)$
to emphasize that a vertex of $\spCC(\RS)$ is \emph{not} a point in $V$.

\subsubsection{Spherical expressions of spherical Coxeter complexes}\label{subsubsec:spherical-expression}
Let $\RS$ be a root system.
Take an arbitrary point $x_\ell \in \ell$ for each vertex $\ell \in \sk{\spCC(\RS)}$,
and define $A^\circ$ as the relative interior of $\conv{\Set{ x_\ell }{\ell \in \sk{A} }}$
for each cell $A \in \spCC(\RS) \setminus \{0\}$.
Then, the \emph{spherical expression} $\CC^\circ$ of $\spCC(\RS)$ generated from $\Set{ x_\ell }{ \ell \in \sk{\spCC(\RS)} }$ is the poset defined by
\begin{align}
    \CC^\circ \coloneqq (\Set{ A^\circ }{ A \in \spCC(\RS) \setminus \{0\} } \cup \{ \emptyset \}, \preceq ),
\end{align}
where $\emptyset \preceq A^\circ$ for any $A^\circ$, and $A^\circ \preceq B^\circ$ if $\cl{A^\circ} \subseteq \cl{B^\circ}$.
The correspondence $A \mapsto A^\circ$ and $\{0\} \mapsto \emptyset$ forms an isomorphism between $\spCC(\RS)$ and its spherical expression
as a poset,
and the inverse map is given by $A^\circ \mapsto \cone{A^\circ}$.
Hence, a spherical expression is also a Coxeter complex that admits the canonical type function $x_\ell \mapsto \tau(\ell)$ and the mark function $x_\ell \mapsto a(\ell)$,
which are denoted by $\tau_{\CC^\circ}$ and $a_{\CC^\circ}$,
respectively.
Similarly, for $A^\circ \in \CC^\circ$, its link complex $\Lk{\CC^\circ}{A^\circ}$ is naturally isomorphic to $\Lk{\spCC(\RS)}{A}$,
which enables us to introduce the canonical type function and the mark function with respect to $\Lk{\CC^\circ}{A^\circ}$ as well.

Suppose further that $\RS = \RS_1 \oplus \RS_2 \oplus \cdots \oplus \RS_k$ in which $\RS_1, \RS_2, \dots, \RS_k$ are irreducible components of $\RS$.
Take any vertex $\ell \in \sk{\spCC(\RS)}$.
Let $C$ be a chamber with $\ell \preceq C$, i.e., $\ell \in \sk{C}$,
and $\Delta = \Delta_1 \cup \cdots \cup \Delta_k$ the simple system corresponding to $C$
in which $\Delta_1, \Delta_2, \dots, \Delta_k$ are simple systems of the irreducible component $\RS_1, \RS_2, \dots, \RS_k$ of $\RS$,
respectively.
Suppose that $\tau(\ell)$ coincides with the type of a root $\alpha$ in $\Delta_i$ with respect to $\Delta$.
Then, we define the nonzero vector $x^{\star}_\ell \in V \setminus \{0\}$ as the unique point in $\ell \subseteq V \setminus \{0\}$
that satisfies $\inpr{\alpha_0}{x^{\star}_\ell} = -1$,
where $\alpha_0$ is the negative of the highest root with respect to $\Delta_i$;
this is independent of the choice of a chamber $C$ with $\ell \preceq C$.
We refer to $x^{\star}_\ell$ as the \emph{standard point of $\ell$}.
The \emph{standard spherical expression} of $\spCC(\RS)$, denoted by $\spCC^{\star}(\RS)$,
is its spherical expression generated from the set $\Set{ x^{\star}_\ell }{ \ell \in \sk{\spCC(\RS)} }$ of standard points.

The definition of the standard spherical expression and the representation~\eqref{eq:affine:A} of a vertex of $\Lk{\euCC(\RS)}{0}$
immediately imply the following.
\begin{lemma}\label{lem:link-standard}
    For an irreducible root system $\RS$,
    we have $\spCC^\star(\RS) = \Lk{\euCC(\RS)}{0}$.
\end{lemma}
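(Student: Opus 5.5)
The plan is to compare the two posets cell by cell. Both the chambers of $\spCC(\RS)$ and the chambers of $\euCC(\RS)$ containing the vertex $0$ are parametrised by simple systems $\simp$ (\Cref{lem:chamber}). I would show that every simplex of $\Lk{\euCC(\RS)}{0}$ is the slice of the matching spherical cell by the affine hyperplane $\inpr{\alpha_0}{x} = -1$, and that this slice is exactly $A^\circ$ for the standard points.

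\emph{Step 1: the chambers of $\euCC(\RS)$ having $0$ as a vertex.} Consider a chamber $C = x_0 + \Set{x}{\inpr{\alpha_0}{x} > -1,\ \inpr{\alpha}{x} > 0\ (\alpha\in\simp)}$ as in~\eqref{eq:affine-chamber}. By~\eqref{eq:affine:A}, its vertex of type $v_0$ is the point cut out by $\inpr{\alpha}{x - x_0} = 0$ for all $\alpha \in \simp$, i.e., $x_0$ itself. Every other vertex, of type $v_i$ with $i \geq 1$, satisfies $\inpr{\alpha_0}{x - x_0} = -1$. Taking $x_0 = 0$ shows that $0$ is a vertex of type $v_0$ for the canonical type function, which is independent of the chosen chamber. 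Hence if $0$ is a vertex of a chamber $C$, it is the type-$v_0$ vertex of $C$, and therefore $x_0 = 0$. So the chambers in $\St{\euCC(\RS)}{\{0\}}$ are exactly
\[
C_\simp \coloneqq \Set{x \in V}{\inpr{\alpha_0}{x} > -1,\ \inpr{\alpha}{x} > 0\ (\forall \alpha \in \simp)}
\]
for the simple systems $\simp$.

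\emph{Step 2: describing the link.} A simplex $B \in \Lk{\euCC(\RS)}{\{0\}}$ is a face of some $C_\simp$ whose type avoids $v_0$, so $\tau(B) = J \subseteq D(\RS)$. By \Cref{cor:type-function}~(2) with $x_0 = 0$,
\[
B = \Set{x}{\inpr{\alpha_0}{x} = -1,\ \inpr{\alpha}{x} > 0\ (\alpha \in \simp|_J),\ \inpr{\alpha}{x} = 0\ (\alpha \in \simp|_{-J})}.
\]
Let $A$ be the spherical cell given by~\eqref{eq:spherical:A}, namely the cell $\{\inpr{\alpha}{x} > 0\ (\alpha \in \simp|_J),\ \inpr{\alpha}{x} = 0\ (\alpha \in \simp|_{-J})\}$. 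Then $B = A \cap \{\inpr{\alpha_0}{x} = -1\}$. Conversely, every nonzero cell $A$ of $\spCC(\RS)$ arises this way from some $\simp$ and $J = \tau(A)$.

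\emph{Step 3: identifying the slice with $A^\circ$.} By definition, each vertex $\ell$ of $A$ has its standard point $x^\star_\ell$ on the hyperplane $\inpr{\alpha_0}{x} = -1$. Here $\RS$ is irreducible, so the same $\alpha_0$ is used for every vertex. The highest root $-\alpha_0$ has positive coefficients on $\simp$, so $\alpha_0$ is strictly negative on $\cl{A} \setminus \{0\}$. Hence the hyperplane meets every ray of the simplicial cone $\cl{A}$ exactly once, and the slice $\cl{A} \cap \{\inpr{\alpha_0}{x} = -1\}$ is the geometric simplex $\conv\Set{x^\star_\ell}{\ell \in \sk{A}}$. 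Passing to relative interiors gives $B = A^\circ$.

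\emph{Step 4: conclusion.} The assignments $A \mapsto A^\circ$ for $A \neq \{0\}$ and $\{0\} \mapsto \emptyset$ then give a bijection onto $\Lk{\euCC(\RS)}{\{0\}}$. It preserves $\preceq$, because both orders are closure inclusion and taking the slice commutes with closure. Thus $\spCC^\star(\RS) = \Lk{\euCC(\RS)}{0}$ as posets.

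I expect Step 1 to be the main obstacle: one must check carefully that $0$ cannot occur as a vertex of type $v_i$ with $i \geq 1$ in some chamber $x_0 + \cdots$ with $x_0 \neq 0$. The argument above settles this through the canonical type function and~\eqref{eq:affine:A}, but the consistency of types across chambers is the delicate point.
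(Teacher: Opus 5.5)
Your proposal is correct and follows the paper's route. The paper obtains the lemma directly from two facts: each standard point lies on the hyperplane $\inpr{\alpha_0}{x} = -1$, and the simplices around $0$ have the representation \eqref{eq:affine:A} with $x_0 = 0$. Your Steps 1--4 make this explicit, including a point the paper leaves implicit: every chamber containing the vertex $0$ must have $x_0 = 0$, because $0$ is its type-$v_0$ vertex.
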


For a simple system $\simp = \{ \alpha_1, \alpha_2, \dots, \alpha_n \}$,
its \emph{fundamental coweights} $\omega_1^\vee, \omega_2^\vee, \dots, \omega_n^\vee$
are the points in $V$ that satisfy
\begin{align}\label{eq:fundamental-coweights}
    \inpr{\alpha_i}{\omega_j^\vee} =
    \begin{cases}
        1 & \text{if $i = j$},\\
        0 & \text{otherwise}
    \end{cases}
\end{align}
for $i,j \in [n]$.
The following lemma says that the standard points are the same as the fundamental coweights divided by their marks.
\begin{lemma}\label{lem:inpr}
    Let $\RS$ be a root system, $x \in \sk{\spCC^\star(\RS)}$, and $\simp$ the simple system of $\RS$ corresponding to a chamber $C \in \spCC^\star(\RS)$ with $x \in \sk{C}$,
    $\alpha \in \simp$ the root with $\tau(x) = \tau_\simp(\alpha)$.
    Then, we have
    \begin{align}
        \inpr{\beta}{x} =
        \begin{cases}
            1/a(x) & \text{if $\beta = \alpha$},\\
            0 & \text{if $\beta \in \simp \setminus \{ \alpha \}$}.
        \end{cases}
    \end{align}
\end{lemma}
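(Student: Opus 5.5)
The identity is checked directly from the definitions, reducing to the irreducible component that contains the relevant simple root. Let $\simp = \simp_1 \cup \cdots \cup \simp_k$ be the decomposition of $\simp$ along the irreducible components $\RS_1, \dots, \RS_k$, and suppose $\alpha \in \simp_i$. Let $\ell = \cone\{x\} \setminus \{0\}$ be the vertex of $\spCC(\RS)$ containing $x$. By the definition of the standard spherical expression, $x = x^\star_\ell$.

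First I locate $x$ inside $C$. By \Cref{cor:type-function}~(1), a vertex $\ell$ of the chamber $C$ with $\tau(\ell) = \tau_\simp(\alpha)$ is the open ray
\[
\Set{y \in V}{\inpr{\alpha}{y} > 0,\ \inpr{\beta}{y} = 0 \ (\forall \beta \in \simp\setminus\{\alpha\})}.
\]
This already gives the second case of the claim: $\inpr{\beta}{x} = 0$ for every $\beta \in \simp \setminus \{\alpha\}$. Comparing with~\eqref{eq:fundamental-coweights}, the ray is $\R_{>0}\,\omega^\vee_\alpha$, where $\omega^\vee_\alpha$ is the fundamental coweight dual to $\alpha$. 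Hence $x = c\,\omega^\vee_\alpha$ for some $c > 0$, and $\inpr{\alpha}{x} = c$.

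Next I determine $c$ from the normalization of the standard point. Let $\tilde\alpha_i = \sum_{\beta \in \simp_i} a_\beta \beta$ be the highest root of $\RS_i$ with respect to $\simp_i$, so $\alpha_0 = -\tilde\alpha_i$. By the definition of the mark, $a_\alpha = a(\tau_\simp(\alpha))$. The normalization is $\inpr{\alpha_0}{x} = -1$, i.e.\ $\inpr{\tilde\alpha_i}{x} = 1$. Expanding, and using that $\inpr{\beta}{x} = 0$ for all $\beta \in \simp_i \setminus \{\alpha\}$, this reads $a_\alpha c = 1$. So $c = 1/a_\alpha$.

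It remains to check that $a_\alpha = a(x)$, which is the step requiring care. Here $a(x) = a_{\spCC^\star(\RS)}(x)$ is defined as the mark $a(\ell)$ of the vertex $\ell$ of $\spCC(\RS)$, i.e.\ the mark of its type $\tau_{\spCC(\RS)}(\ell)$. The canonical type function is built from $\tau_C$ using any chamber $C$ containing $\ell$, and is independent of that choice. For our chamber it gives $\tau(\ell) = \tau_\simp(\alpha)$, because $\alpha$ is the unique root of $\simp$ that is positive on $\ell$. Marks are well defined on types, as shown in \Cref{subsec:Weyl-Dynkin}, including the reducible case where marks are taken componentwise. Therefore $a(x) = a_\alpha$, which yields $\inpr{\alpha}{x} = 1/a(x)$ and completes the plan.
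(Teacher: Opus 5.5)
Your proposal is correct and follows essentially the same route as the paper: reduce to the irreducible component containing $\alpha$, use \Cref{cor:type-function}~(1) to get $\inpr{\beta}{x}=0$ for $\beta\in\simp\setminus\{\alpha\}$, and expand the normalization $\inpr{\alpha_0}{x}=-1$ against the highest root to obtain $a_\alpha\inpr{\alpha}{x}=1$, where $a_\alpha=a(x)$ by the definition of the mark. The detour through fundamental coweights and your extra check of the well-definedness of $a(x)$ are harmless but not needed; the paper simply cites the definition of the mark at that step.
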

\begin{proof}
    Suppose that $\RS = \RS_1 \oplus \RS_2 \oplus \cdots \oplus \RS_k$ in which $\RS_i$ is an irreducible component of $\RS$ for $i \in [k]$,
    and that
    $\simp = \simp_1 \cup \cdots \cup \simp_k$ the simple system corresponding to a chamber $C \in \spCC(\RS)$ with $x \in \sk{C}$
in which $\simp_i$ is a simple system of $\RS_i$ for $i \in [k]$.
    Let $\alpha$ be the root in $\simp$ such that $\tau_\simp(\alpha) = \tau(x)$.
    We may assume that $\alpha$ belongs to $\simp_1$.
    Then, by \Cref{cor:type-function}~(1) and the definition of standard points,
    the vertex $x$ is the unique point in $V$ that satisfies
    $\inpr{\alpha_0}{x} = -1$ and
    $\inpr{\beta}{x} = 0$ for $\beta \in \simp \setminus \{ \alpha \}$,
    where $\alpha_0 = -\sum_{\beta \in \simp_1} a_\beta \beta$ is the negative of the highest root with respect to $\simp_1$.
    Since $a_\alpha = a(x)$ by the definition of the mark,
    we obtain $\inpr{\alpha}{x} = \inpr{-\alpha_0-\sum_{\beta \in \simp_1 \setminus \{\alpha\}} a_\beta \beta}{x}/a(x) = -\inpr{\alpha_0}{x}/a(x) = 1/a(x)$.
    This completes the proof.
\end{proof}

\begin{corollary}\label{cor:inpr}
    \begin{enumerate}[label={\textup{(\arabic*)}}]
        \item Let $\RS$ be a root system, $x \in \sk{\spCC^\star(\RS)}$, and $\simp = \{ \alpha_1, \alpha_2, \dots, \alpha_n \}$ the simple system of $\RS$ corresponding to a chamber $C \in \spCC^\star(\RS)$ with $x \in \sk{C}$,
        $\alpha_k \in \simp$ the root with $\tau_\simp(\alpha_k) = \tau(x)$.
        Then,
        for $p = \sum_{i = 1}^n p_i \alpha_i \in V^*$,
        we have $\inpr{p}{x} = p_k/ a(x)$.
        In particular, if $p \in \Mdom(\RS)$,
        then $\inpr{p}{x} \in \Z/a(x)$.
        \item Let $\RS$ be an irreducible root system.
        For any $x \in \sk{\euCC(\RS)}$ and $p \in \Mdom(\RS)$,
        we have $\inpr{p}{x} \in \Z /a(x)$.
    \end{enumerate}
\end{corollary}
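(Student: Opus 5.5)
For (1), I would simply expand $p$ in the simple system and pair it with $x$ using \Cref{lem:inpr}. Write $p = \sum_{i=1}^n p_i \alpha_i$. By linearity of the pairing,
\begin{align}
\inpr{p}{x} = \sum_{i=1}^n p_i \inpr{\alpha_i}{x}.
\end{align}
\Cref{lem:inpr} gives $\inpr{\alpha_k}{x} = 1/a(x)$ and $\inpr{\alpha_i}{x} = 0$ for $i \neq k$, so the sum collapses to $\inpr{p}{x} = p_k/a(x)$. If $p \in \Mdom(\RS)$, then $\simp$ is a $\Mdom(\RS)$-basis (\Cref{lem:root_integrality} together with the description of $\Mdom(\RS)$ in \Cref{subsec:root-lattice}). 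Hence every $p_k$ is an integer, and $\inpr{p}{x} \in \Z/a(x)$.

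For (2), let $x \in \sk{\euCC(\RS)}$ and pick a chamber $C \succeq \{x\}$. By \Cref{lem:chamber}~(2), $C$ is given by some $x_0 \in \Mdom^\vee(\RS)$ and an extended simple system $\esimp$. The plan is to decompose $x = x_0 + (x - x_0)$ and show that $x - x_0$ is a vertex of $\Lk{\euCC(\RS)}{0}$ up to translation. Concretely, let $C' \coloneqq C - x_0$, which is the chamber with base point $0$ for the same $\esimp$. Then $x' \coloneqq x - x_0$ is a vertex of $C'$, and by \Cref{cor:type-function}~(2) it is determined by equalities on $\esimp$ minus one root. There are two cases.

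\textbf{Case 1: $\tau(x) = v_0$.} Then $x' = 0$, so $x = x_0 \in \Mdom^\vee(\RS)$, and \eqref{eq:px:Z} gives $\inpr{p}{x} \in \Z \subseteq \Z/a(x)$.

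\textbf{Case 2: $\tau(x) \neq v_0$.} Then $x'$ is a vertex of a chamber containing $0$ and $x' \neq 0$, so $\{x'\}$ is a vertex of $\Lk{\euCC(\RS)}{0} = \spCC^\star(\RS)$ by \Cref{lem:link-standard}. Part (1) then gives $\inpr{p}{x'} \in \Z/a(x')$. Combined with $\inpr{p}{x_0} \in \Z$ from \eqref{eq:px:Z}, this yields $\inpr{p}{x} \in \Z/a(x')$.

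The only delicate point is checking that the marks agree, $a(x') = a_{\euCC(\RS)}(x)$. Here $x'$ satisfies $\inpr{\alpha_0}{x'} = -1$ and $\inpr{\beta}{x'} = 0$ for the simple roots $\beta$ other than the one $\alpha_k$ with $\tau_\simp(\alpha_k) = \tau(x)$. Both marks are then the coefficient $a_k$ of $\alpha_k$ in the highest root with respect to $\simp$. That is, the type of $x$ in $\tilde D(\RS)$ is $v_k$, and the type of $x'$ in $D(\RS)$ is also $v_k$, since both are read off from the same $\esimp$ and $\simp = \esimp|_{-v_0}$. I would verify this type correspondence directly from the definitions of the canonical type functions; if it is inconvenient, one can bypass it by computing directly as in \Cref{lem:inpr} that $\inpr{\alpha_k}{x'} = 1/a_k$.
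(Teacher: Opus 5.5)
Your proposal is correct and follows essentially the same route as the paper. Part (1) is the same expansion via \Cref{lem:inpr}, and part (2) uses the same decomposition $x = x_0 + x'$ with $x_0 \in \Mdom^\vee(\RS)$ and $x'$ a vertex of $\Lk{\euCC(\RS)}{0} = \spCC^\star(\RS)$, combined with \eqref{eq:px:Z} and part (1); your separate treatment of the type-$v_0$ case and your explicit check that the two marks agree simply spell out details the paper leaves implicit.
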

\begin{proof}
    (1).
    Since $p = \sum_{i = 1}^n p_i \alpha_i$,
    we have $\inpr{p}{x} = \sum_{i = 1}^n p_i \inpr{\alpha_i}{x} = p_k / a(x)$,
    where the last equality follows from \Cref{lem:inpr}.
    In addition, if $p \in \Mdom(\RS)$,
    then $p_i \in \Z$ for all $i \in [n]$.
    Thus, we obtain $\inpr{p}{x} \in \Z /a(x)$.

    (2).
  Take any $x \in \sk{\euCC(\RS)}$.
  Then, by \Cref{cor:type-function}~(2),
  there are a point $x_0 \in \Mdom^{\vee}(\RS)$ and a chamber $C$ with $\sk{C} = \{ 0, x_1, \dots, x_n \}$ such that $x = x_0 + x_k$,
  where $\tau(x) = \tau(x_k)$.
  Let $\simp = \{ \alpha_1, \alpha_2, \dots, \alpha_n \}$ be the simple system corresponding to $C$,
  in which
  $\tau_\simp(\alpha_k) = \tau(x_k)$.
  Here, we note that $\{ x_1, \dots, x_n \}$ is the vertex set of some chamber in $\Lk{\euCC(\RS)}{0} = \spCC^\star(\RS)$ (see \Cref{lem:link-standard}).

  Take any $p \in \Mdom(\RS)$,
  which is representable as $p = \sum_{i = 1}^n p_i \alpha_i$ for some integers $p_i \in \Z$.
  Since $x_0 \in \Mdom^{\vee}(\RS)$ and $p \in \Mdom(\RS)$,
  we have $\inpr{p}{x_0} \in \Z$ by~\eqref{eq:px:Z}.
  By the assertion~(1),
  we have $\inpr{p}{x_k} = p_k \inpr{\alpha_k}{x_k} = p_k/a(x_k) \in \Z / a(x_k) = \Z / a(x)$.
  Thus, we obtain $\inpr{p}{x} = \inpr{p}{x_0} + \inpr{p}{x_k } \in \Z / a(x)$.
\end{proof}

Let $\CC^\circ$ be a spherical expression of $\spCC(\RS)$.
For a simplex $A^\circ \in \CC^\circ$,
we define $\CC^\circ / A^\circ$ as
the spherical expression of $\spCC(\RS|_{-{\cone{A^\circ}}})$ generated from
$\Set{ x + \spn{A^\circ} }{ x \in \sk{\Lk{\CC^\circ}{A^\circ}} }$.
In the case where $\RS$ is irreducible,
for a simplex $A \in \euCC \coloneqq \euCC(\RS)$,
we define $\euCC / A$ as the spherical expression of $\spCC(\tilde{\RS}|_{-A})$ generated from $\Set{ x - \hat{x} + (\aff{A})_0 }{ x \in \sk{\Lk{\euCC}{A}} }$,
where $\hat{x}$ is a point in $A$.
These are well defined by \Cref{lem:complex:contraction}.
In general,
for a simplex $A^\star \in \spCC^\star(\RS)$ (resp. $A \in \euCC(\RS)$),
the simplicial complex $\spCC^\star(\RS) / A^\star$ (resp. $\euCC(\RS) / A$) is (isomorphic to but) different from $\spCC^\star(\RS|_{-\cone{A^\star}})$ (resp. $\spCC^\star(\tilde{\RS}|_{-A})$).
The following lemma gives a precise relation between them.
\begin{lemma}\label{lem:standard}
    \begin{enumerate}[label={\textup{(\arabic*)}}]
        \item Let $\RS$ be a root system, $\spCC^\star \coloneqq \spCC^\star(\RS)$, and $A^\star$ a simplex of $\spCC^\star$.
        Moreover, let $a$ and $a_{\mathrm{Lk}}$ denote the mark functions of $\spCC^\star$ and $\Lk{\spCC^\star}{A^\star}$,
    respectively.
        For $x \in \sk{\Lk{\spCC^\star}{A^\star}}$,
    let $a(x)$ and $a_{\mathrm{Lk}}(x)$ denote the marks of $x$ with respect to $\spCC^\star$ and $\Lk{\spCC^\star}{A^\star}$,
    respectively.
    Then, $\spCC^\star(\RS|_{-\cone{A^\star}})$
    is the spherical expression of $\spCC(\RS|_{-\cone{A^\star}})$ generated from
    \begin{align}
        \Set*{ \frac{a(x)}{a_{\mathrm{Lk}}(x)}x + \spn{A^\star} }{ \text{$x \in \sk{\Lk{\spCC^\star}{A^\star}}$} }.
    \end{align}
    \item Let $\RS$ be an irreducible root system, $\euCC \coloneqq \euCC(\RS)$, and $A$ a simplex of $\euCC$.
    Moreover, let $a$ and $a_{\mathrm{Lk}}$ denote the mark functions of $\euCC$ and $\Lk{\euCC}{A}$,
    respectively.
    Then, $\spCC^\star(\tilde{\RS}|_{-A})$
    is the spherical expression of $\spCC(\tilde{\RS}|_{-A})$ generated from
    \begin{align}
        \Set*{ \frac{a(x)}{a_{\mathrm{Lk}}(x)}(x - \hat{x}) + (\aff{A})_0 }{ \text{$x \in \sk{\Lk{\euCC}{A}}$} },
    \end{align}
    where $\hat{x}$ is a point in $A$.
    \end{enumerate}
\end{lemma}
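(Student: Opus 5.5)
The plan is to identify, for each vertex $x$ of the link, the root $\alpha$ that "sees" $x$. Both $x$ (suitably translated) and the standard point of the corresponding ray in the quotient space are then pinned down by their pairings with one simple system of the contracted root system. The two pairings differ exactly by the factor $a(x)/a_{\mathrm{Lk}}(x)$.

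For (1), fix $x \in \sk{\Lk{\spCC^\star}{A^\star}}$. I would take a chamber $C$ of $\spCC^\star$ having both $A^\star$ and $x$ as faces, and let $\simp$ be the simple system corresponding to $\cone{C}$. Let $\alpha \in \simp$ be the root with $\tau_\simp(\alpha) = \tau(x)$. Since $\tau(x) \notin \tau(A^\star)$, we have $\alpha \in \simp|_{-A^\star}$. By \Cref{lem:contraction:root-system}~(1), $\simp|_{-A^\star}$ is a simple system of $\RS|_{-\cone{A^\star}}$. By \Cref{lem:span}~(1), every $\beta \in \simp|_{-A^\star}$ lies in $(\spn{A^\star})^\perp$, so $\inpr{\beta}{\cdot}$ is well defined on $V/\spn{A^\star}$.

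\Cref{lem:inpr}, applied to $\spCC^\star(\RS)$, gives $\inpr{\alpha}{x} = 1/a(x)$ and $\inpr{\beta}{x} = 0$ for $\beta \in \simp|_{-A^\star}\setminus\{\alpha\}$. By \Cref{lem:complex:contraction}~(1), $\cone{\{x\}} + \spn{A^\star}$ is a vertex $\ell$ of $\spCC(\RS|_{-\cone{A^\star}})$ lying in the chamber associated with $\simp|_{-A^\star}$. Its type corresponds to $\alpha$, and by definition of the link mark, $a_{\mathrm{Lk}}(x)$ is the mark of $\alpha$ in the Dynkin diagram of $\RS|_{-\cone{A^\star}}$ (within its irreducible component). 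Applying \Cref{lem:inpr} to $\RS|_{-\cone{A^\star}}$, the standard point $y$ of $\ell$ satisfies $\inpr{\alpha}{y} = 1/a_{\mathrm{Lk}}(x)$.

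Since $y$ lies on the open ray $\ell$, we have $y = c\,(x + \spn{A^\star})$ for some $c > 0$. Comparing the pairings with $\alpha$ gives $c/a(x) = 1/a_{\mathrm{Lk}}(x)$, so $c = a(x)/a_{\mathrm{Lk}}(x)$. This holds for every vertex of the link. The standard spherical expression is the one generated by these standard points, so it coincides with the expression generated by the displayed set.

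For (2), I would argue the same way in the Euclidean setting, and this is where the only real computation lies. Take a chamber $C \succeq A$ with $x \in \sk{C}$, with data $(x_0, \esimp)$ from \Cref{lem:chamber}~(2), and let $\alpha \in \esimp|_{-A}$ be the root of the type of $x$. By \Cref{cor:type-function}~(2), $\inpr{\beta}{x - x_0} = \tilde d(\beta)$ for all $\beta \in \esimp\setminus\{\alpha\}$, and $\inpr{\beta}{\hat x - x_0} = \tilde d(\beta)$ for all $\beta \in \esimp|_{-A}$. Hence $\inpr{\beta}{x - \hat x} = 0$ for $\beta \in \esimp|_{-A}\setminus\{\alpha\}$.

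To get $\inpr{\alpha}{x-\hat x}$, I would use the relation $\sum_{i=0}^n a_i\alpha_i = 0$, where $a_i$ are the marks. If $\alpha \neq \alpha_0$, this gives $a(x)\inpr{\alpha}{x-x_0} = -a_0\tilde d(\alpha_0) = 1$, and since $\tilde d(\alpha) = 0$ we get $\inpr{\alpha}{x-\hat x} = 1/a(x)$. If $\alpha = \alpha_0$, then $x = x_0$ and $a(x) = 1$, so $\inpr{\alpha_0}{x - \hat x} = 0 - \tilde d(\alpha_0) = 1 = 1/a(x)$.

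The roots $\esimp|_{-A}$ form a simple system of $\tilde\RS|_{-A}$ by \Cref{lem:contraction:root-system}~(2), and they vanish on $(\aff{A})_0$ by \Cref{lem:span}~(2). So the same comparison as in (1) applies, now using \Cref{lem:complex:contraction}~(2) and \Cref{lem:inpr} for $\tilde\RS|_{-A}$, and it yields the factor $a(x)/a_{\mathrm{Lk}}(x)$. The main points needing care are this case split on whether $\alpha = \alpha_0$, and checking that the mark $a_{\mathrm{Lk}}(x)$ is indeed the mark of $\alpha$ in the relevant irreducible component of the contracted system.
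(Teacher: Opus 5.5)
Your proposal is correct and follows the paper's proof of (1): both arguments characterize $x$ and the standard point of the ray $\cone\{x\} + \spn{A^\star}$ by their pairings with the simple system $\simp|_{-A^\star}$ via \Cref{lem:inpr}, and read off the factor $a(x)/a_{\mathrm{Lk}}(x)$; the paper uses uniqueness of the solution of these linear equations, while you compare a single pairing along the ray, which is equivalent. For (2), which the paper omits as analogous, your computation of $\langle \alpha, x-\hat{x}\rangle = 1/a(x)$ from $\sum_{i=0}^n a_i\alpha_i = 0$, with the separate case $\alpha=\alpha_0$ (where $x = x_0$ and $a(x)=1$), correctly supplies the step needed to run the same argument.
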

\begin{proof}
    We omit the proof of~(2), since it can be obtained by a similar argument to that of~(1).

    (1).
    Let $A \coloneqq \cone{A^\star}$, which is the simplex of $\spCC(\RS)$ corresponding to $A^\star$.
    We first note that $\Lk{\spCC^\star(\RS)}{A^\star}$ is isomorphic to $\spCC^\star(\RS|_{-A})$,
    since $\Lk{\spCC^\star(\RS)}{A^\star} \simeq \Lk{\spCC(\RS)}{A} \simeq \spCC(\RS|_{-A}) \simeq \spCC^\star(\RS|_{-A})$.
    More precisely,
    the isomorphism is given by
    $\sk{\Lk{\spCC^\star(\RS)}{A^\star}} \ni x \mapsto \cone{x} \mapsto \cone{x} + \spn{A} \mapsto x^- \in \sk{\spCC^\star(\RS|_{-A})}$,
    where $x^-$ denotes the standard point of the vertex $\cone{x} + \spn{A}$ of $\spCC(\RS|_{-A})$ (see \Cref{lem:complex:contraction}~(1)).
    Hence, it suffices to show that
    $x^- = (a(x)/a_{\mathrm{Lk}}(x))x + \spn{A}$.

    Let
    $\tau$ be the type function of $\Lk{\spCC(\RS)}{A}$,
    $\simp$ a simple system corresponding to a chamber $C$ of $\St{\spCC(\RS)}{A}$,
    and $\alpha$ the unique root in $\simp$ such that $\tau_\simp(\alpha) = \tau(\cone{x})$.
    Since $\cone{x} \in \Lk{\spCC(\RS)}{A}$,
    we have $\alpha \in \simp|_{-A}$.
    By \Cref{lem:inpr},
    $x$ is the unique point in $V$ that satisfies
    \begin{align}\label{eq:x}
        \inpr{\alpha}{x} = 1/a(x) \qquad \text{and} \qquad \inpr{\beta}{x} = 0 \quad (\beta \in \simp \setminus \{ \alpha \}).
    \end{align}
    Similarly,
    $x^-$ is the unique point in $V/\spn{A}$ that satisfies
    \begin{align}\label{eq:x^-}
        \inpr{\alpha}{x^-} = 1/a_{\mathrm{Lk}}(x) \qquad \text{and} \qquad \inpr{\beta}{x^-} = 0 \quad (\beta \in \simp|_{-A} \setminus \{ \alpha \}).
    \end{align}
    By~\eqref{eq:x} and~\eqref{eq:x^-},
    we obtain $x^- = (a(x)/a_{\mathrm{Lk}}(x))x + \spn{A}$.
    This completes the proof.
\end{proof}

\subsubsection{Geometric realization and (positively homogeneous) Lov\'{a}sz extension}\label{subsubsec:geometric-realization}
Let $\CC$ be a spherical Coxeter complex, a Coxeter complex, or a spherical expression of the spherical Coxeter complex.
For a subset $\CC'$ of $\CC$,
its \emph{geometric realization} $\geo{\CC'}$
is defined by
\begin{align}
    \geo{\CC'} \coloneqq \bigcup_{A \in \CC'} A,
\end{align}
which is a subset of $V$.

Suppose that $\CC$ is a spherical or Euclidean Coxeter complex.
Then, we have $\geo{\CC} = V$.
Since any two distinct simplices are disjoint,
$\CC$ forms a partition of $V$.
Hence, for each $x \in \geo{\CC} = V$,
there exists a unique cell in $\CC$ that contains $x$,
which is denoted by $\cell{A}{x}$.
For a vertex subset $D \subseteq \sk{\CC}$,
its \emph{Lov\'{a}sz extension} $\ext{D}$ is defined as the geometric realization $\geo{\CC[D]}$ of the subcomplex of $\CC$ induced by $D$, i.e.,
\begin{align}
    \ext{D} \coloneqq \geo{\CC[D]} = \bigcup \Set{ A \in \CC }{ \sk{A} \subseteq D },
\end{align}
which is also a subset of $V$.
Note that a point $x \in V$ belongs to $\ext{D}$ if and only if $\sk{\cell{A}{x}} \subseteq D$.

We particularly focus on the case where $\CC$ is a Euclidean Coxeter complex $\euCC \coloneqq \euCC(\RS)$.
Since $\cell{A}{x}$ is a geometric simplex in $V$ that contains $x$ in its relative interior,
each point $x \in V$ is uniquely representable as the convex combination of the vertices in $\sk{\cell{A}{x}}$;
let $\cell{\lambda}{x} \in \cvcoeff{\sk{\euCC}}$ denote the corresponding convex combination coefficient, namely,
\begin{align}
    x = \sum_{v \in \sk{\euCC}} \cell{\lambda}{x}(v) v
\end{align}
with $\supp{\cell{\lambda}{x}} = \sk{\cell{A}{x}}$.
For $D \subseteq \sk{\euCC}$,
its Lov\'{a}sz extension $\ext{D}$ can be viewed as the linear extension of $D$ with respect to the Euclidean Coxeter complex $\euCC$, i.e.,
\begin{align}
    \ext{D} = \Set*{ \sum_{v \in \sk{A}} \lambda(v) v }{ A \in \euCC[D],\ \lambda \in \cvcoeff{\sk{A}} }.
\end{align}
For a function $f$ on $\sk{\euCC}$,
its \emph{Lov\'{a}sz extension} $f$ is the function on $\geo{\euCC} = V$
defined by
\begin{align}\label{eq:Lovasz-ext}
    \ext{f}(x) \coloneqq \sum_{v \in \sk{\euCC}} \cell{\lambda}{x}(v) f(v) = \sum_{v \in \sk{A_x}} \cell{\lambda}{x}(v) f(v)
\end{align}
for $x \in V$.

We then consider a spherical expression $\CC^\circ$ of a spherical Coxeter complex $\spCC = \spCC(\RS)$.
In this case,
$\geo{\CC^\circ}$ is homeomorphic to the $n$-sphere.
For any nonzero point $x \in V \setminus \{0\}$,
there exists a unique cell $A$ in $\CC^\circ$ such that $x$ belongs to the corresponding cell $\cone{A}$ of $\spCC(\RS)$ to $A$;
we denote the cell $A$ by $\cell{A}{x}$.
Then, each nonzero point $x \in V \setminus \{0\}$ is uniquely representable as the positive combination of the vertices in $\sk{\cell{A}{x}}$;
let $\cell{\mu}{x} \in \cncoeff{\sk{\CC^\circ}}$ denote the corresponding nonnegative combination coefficient, namely,
\begin{align}
    x = \sum_{v \in \sk{\CC^\circ}} \cell{\mu}{x}(v) v
\end{align}
with $\supp{\cell{\mu}{x}} = \sk{\cell{A}{x}}$.
We also define $\mu_0$ as the all-zero coefficient vector in $\cncoeff{\sk{\CC^\circ}}$.
For a vertex subset $D \subseteq \sk{\CC^\circ}$,
its \emph{positively homogeneous Lov\'{a}sz extension} $\phext{D}$ is defined as the Lov\'{a}sz extension $\ext{\subsp{D}}$ of the vertex subset $\subsp{D} \coloneqq \Set{ \cone(x) \in \sk{\spCC} }{ x \in D }$ of $\sk{\spCC}$
arising from $D$, or equivalently, the union of conical hulls $\cone{A}$ of all simplices $A$ in $\CC^\circ[D]$.
That is,
\begin{align}
    \phext{D} \coloneqq&\;\geo{\spCC[\subsp{D}]}\\
    =&\;\Set*{ \sum_{v \in \sk{A}} \mu(v) v }{ A \in \CC^\circ[D],\ \mu \in \cncoeff{\sk{A}}},
\end{align}
which forms a closed cone in $V$.
Note that a point $x \in V$ belongs to $\phext{D}$ if and only if $\sk{\cell{A}{x}} \subseteq D$.
Since $A_0 = \{0\}$ and $\sk{A_0} = \emptyset$,
we have $0 \in \phext{D}$ for any vertex subset $D \subseteq \sk{\CC^\circ}$.
Similarly, for a function $\rho$ on $\sk{\CC^\circ}$,
its \emph{positively homogeneous Lov\'{a}sz extension} $\phext{\rho}$ is the function on $\geo{\spCC} = V$ defined by
\begin{align}\label{eq:ph-Lovasz-ext}
    \phext{\rho}(x) \coloneqq \sum_{v \in \sk{\CC^\circ}} \mu_x(v) \rho(v) = \sum_{v \in \sk{A_x}} \mu_x(v) \rho(v) \qquad (x \in V),
\end{align}
which is a positively homogeneous function on $V$ satisfying $\phext{\rho}(x) = \rho(x)$ for all $x \in \sk{\CC^\circ}$ and $\phext{\rho}(0) = 0$.

\subsubsection{Convex subcomplexes}\label{subsubsec:convex-subcomplex}

In the following arguments of this subsection,
$(\calH,\CC)$ denotes $(\lincalH(\RS),\spCC(\RS))$ or $(\affcalH(\RS),\euCC(\RS))$;
this notation is used in the arguments that can be applied to both spherical and Euclidean Coxeter complexes.
We denote by $I$
the index set of a sign sequence,
namely, $I \coloneqq \RS$ when we consider the spherical Coxeter complex
and $I \coloneqq \RS \times \Z$ when we consider the Euclidean Coxeter complex.

For cells $A, B \in \CC$,
the \emph{product} $AB$ of $A$ and $B$ is the (unique) cell in $\CC$ such that its sign sequence $\sigma(AB)$ is
\begin{align}
    \sigma_{i}(AB) =
    \begin{cases}
        \sigma_{i}(A) & \text{if $\sigma_{i}(A) \neq 0$},\\
        \sigma_{i}(B) & \text{if $\sigma_{i}(A) = 0$}.
    \end{cases}
\end{align}
Equivalently, the product $AB$ can be defined as the cell
containing a point $(1-\eps)a + \eps b$ for $a \in A$, $b \in B$, and sufficiently small $\eps > 0$.
It is known (\cite[Corollary~3.110]{Abramenko2010}) that $\CC$ is a semigroup under the binary operation $(A,B) \mapsto AB$, i.e.,
for all $A,B,C \in \CC$, we have $(AB)C = A(BC)$.

A subcomplex $\CC'$ of $\CC$ is said to be \emph{convex} if $\CC'$ is a subsemigroup of $\CC$, i.e.,
$AB \in \CC'$ for any cells $A, B \in \CC'$.
The following lemma (\Cref{lem:convex_subcomplex}) says that
the convexity of a subcomplex $\CC'$ of $\CC$ can be characterized as the representability of $\CC'$ as the intersection of closed half-spaces arising from hyperplanes in $\calH$,
and as the convexity of the geometric realization $\geo{\CC'}$ in $V$.
\begin{lemma}\label{lem:convex_subcomplex}
    Let $\CC = \spCC(\RS)$ or $\CC = \euCC(\RS)$.
    For a subcomplex $\CC'$ of $\CC$,
    the following are equivalent:
    \begin{enumerate}[label={\textup{(\alph*)}}]
        \item $\CC'$ is a convex subcomplex.
        \item $\CC' = \bigcap_{i \in I'}\Set{ A \in \CC }{ A \subseteq H_i^{0} \cup H_i^-}$ for some index subset $I' \subseteq I$.
        \item The geometric realization $\geo{\CC'}$ of $\CC'$ is a convex set in $V$.
    \end{enumerate}
    In particular,
    the geometric realization $\geo{\CC'}$ of a convex subcomplex $\CC'$ of $\spCC(\RS)$ is of the form
    \begin{align}\label{eq:geo-sp}
        \geo{\CC'} = \Set{ x \in V }{ \inpr{\alpha}{x} \leq 0 \ (\forall \alpha \in I') }
    \end{align}
    for some index subset $I' \subseteq \RS$,
    and that of a convex subcomplex $\CC'$ of $\euCC(\RS)$ is of the form
    \begin{align}\label{eq:geo-aff}
        \geo{\CC'} = \Set{ x \in V }{ \inpr{\alpha}{x} \leq k \ (\forall (\alpha,k) \in I') }
    \end{align}
    for some index subset $I' \subseteq \RS \times \Z$.
\end{lemma}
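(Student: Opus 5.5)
We prove the cycle (b)$\Rightarrow$(c)$\Rightarrow$(a)$\Rightarrow$(b). The displayed forms \eqref{eq:geo-sp} and \eqref{eq:geo-aff} then follow from (b). We treat $(\calH,\CC)$ uniformly and write $H_i^{\le}\coloneqq H_i^0\cup H_i^-$ for $i\in I$.

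(b)$\Rightarrow$(c). Each cell $A\in\CC$ is an open cell with a fixed sign sequence. Hence $A\subseteq H_i^{\le}$ exactly when $\sigma_i(A)\in\{0,-\}$, and this is equivalent to $A\cap H_i^+=\emptyset$. Since $\CC$ partitions $V$, the realization $\geo{\CC'}$ is the set of points $x$ with $\sigma_i(A_x)\neq +$ for all $i\in I'$, i.e., $\bigcap_{i\in I'}H_i^{\le}$. This is an intersection of closed half-spaces, so it is convex, and we obtain the claimed explicit form.

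(c)$\Rightarrow$(a). Let $A,B\in\CC'$. For $a\in A$, $b\in B$ and small $\eps>0$, the point $(1-\eps)a+\eps b$ lies in $AB$ and, by convexity, in $\geo{\CC'}$. Every point of $V$ lies in a unique cell, so the cell of $\CC'$ containing this point must be $AB$. Thus $AB\in\CC'$.

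(a)$\Rightarrow$(b). This is the main obstacle. Let $I'\coloneqq\Set{i\in I}{A\subseteq H_i^{\le}\ \forall A\in\CC'}$, the set of half-spaces containing $\geo{\CC'}$. The inclusion $\CC'\subseteq\bigcap_{i\in I'}\{\cdots\}$ is trivial. For the converse, take a cell $B\notin\CC'$ with $\sigma_i(B)\neq+$ for all $i\in I'$, and derive a contradiction by producing a wall that separates.

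First observe that $\geo{\CC'}$ is convex. Local finiteness of $\affcalH$ lets us join points of $\geo{\CC'}$ by segments, and products of cells along such a segment stay in $\CC'$. Alternatively, one can use the standard Coxeter-complex fact (\cite[Proposition~3.94 and Theorem~3.131]{Abramenko2010}, Tits' characterization) that a convex subcomplex, i.e., a subsemigroup closed under faces, is an intersection of roots (half-apartments). I would invoke that result, specialized to our hyperplane arrangements, in which roots are exactly the closed half-spaces $\Set{A}{A\subseteq H_i^{\le}}$.

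If one wants a direct argument, proceed as follows. Choose $A\in\CC'$ maximal (so $\aff A$ spans $\aff\geo{\CC'}$), and consider $AB$. Convexity gives that $\CC'$ is closed under products and faces. If $AB\in\CC'$, one iterates: a gallery/segment from $A$ toward $B$ must leave $\CC'$ across some panel, i.e., a codimension-one face lying in a wall $H_i^0$. The half-space $H_i^{\le}$ containing $\CC'$ near that panel must, by the semigroup property, contain all of $\CC'$. This is the delicate point, and it is handled by the projection (product) arguments in Abramenko--Brown. Such an $i$ then lies in $I'$ with $\sigma_i(B)=+$, which is the desired contradiction.
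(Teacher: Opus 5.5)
Your proposal is correct and essentially coincides with the paper's proof. Your (b)$\Rightarrow$(c), which also gives the explicit half-space forms, and your (c)$\Rightarrow$(a) via the point $(1-\eps)a+\eps b\in AB$ are exactly the paper's arguments, and for (a)$\Rightarrow$(b) the paper likewise just invokes Abramenko--Brown (it cites \cite[Proposition~3.137]{Abramenko2010} for the equivalence of closure under products with being an intersection of roots), so your optional ``direct argument'' is not needed---which is just as well, since as written it leaves unproved the key claim that the wall crossed on leaving $\CC'$ bounds all of $\CC'$.
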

\begin{proof}
    (a) $\Leftrightarrow$ (b) follows from \cite[Proposition~3.137]{Abramenko2010}.
    
    (b) $\Rightarrow$ (c) and the latter assertion.
    If a subcomplex $\CC'$ of $\CC$ satisfies (b),
    then $\geo{\CC'}$ is of the form~\eqref{eq:geo-sp} if $\CC'$ is a subcomplex of $\spCC(\RS)$,
    and $\geo{\CC'}$ is of the form~\eqref{eq:geo-aff} if $\CC'$ is a subcomplex of $\euCC(\RS)$.
    Thus, since $\geo{\CC'}$ is the intersection of closed half-spaces, it is a convex set in $V$

    (c) $\Rightarrow$ (a).
    Take any two cells $A, B \in \CC'$ and pick any points $x \in A$ and $y \in B$.
    Since $\geo{\CC'}$ is convex in $V$,
    we have $z \coloneqq (1-\eps)x + \eps y \in \geo{\CC'}$ for sufficiently small $\eps > 0$.
    Hence, $\cell{A}{z} = AB$ belongs to $\CC'$.
    This implies that $\CC'$ is a subsemigroup of $\CC$,
    i.e., a convex subcomplex of $\CC$.
\end{proof}

Since the star complex $\St{\CC}{A}$ is a convex subcomplex of $\CC$ for a simplex $A \in \CC$~\cite[Proposition~3.93 and Theorem~3.131]{Abramenko2010},
we immediately obtain the following by \Cref{lem:convex_subcomplex}.
\begin{lemma}\label{lem:star_complex}
    Let $\CC = \spCC(\RS)$ or $\CC = \euCC(\RS)$.
    For a simplex $A \in \CC$,
    the geometric realization $\geo{\St{\CC}{A}}$ of
    the star complex $\St{\CC}{A}$ is a convex set in $V$.
\end{lemma}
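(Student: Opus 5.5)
The plan is to obtain \Cref{lem:star_complex} directly from the cited fact and the equivalence (a) $\Leftrightarrow$ (c) in \Cref{lem:convex_subcomplex}. By \cite[Proposition~3.93 and Theorem~3.131]{Abramenko2010}, $\St{\CC}{A}$ is a convex subcomplex of $\CC$ in the semigroup sense, since it is closed under products $BB'$. Applying (a) $\Rightarrow$ (c) of \Cref{lem:convex_subcomplex} then gives convexity of $\geo{\St{\CC}{A}}$ in $V$.

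To keep the argument self-contained, I would also check convexity of the star directly via the description (b). Fix a chamber $C \succeq A$, with simple system $\simp$ (spherical case) or pair $(x_0,\esimp)$ (Euclidean case) as in \Cref{lem:chamber}. For $B \in \CC$, the simplices $A$ and $B$ are joinable exactly when $AB$ is a chamber-face compatible with $B$, that is, $BA \succeq B$. In sign-sequence terms, this means $\sigma_i(A)$ and $\sigma_i(B)$ are never opposite.

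Hence
\begin{align}
\St{\CC}{A} = \bigcap_{i \in I,\ \sigma_i(A) \neq 0} \Set{ B \in \CC }{ B \subseteq H_i^0 \cup H_i^{\sigma_i(A)} }.
\end{align}
After replacing each index $(\alpha,k)$ by $(-\alpha,-k)$ where needed, every condition in this intersection takes the form $B \subseteq H_i^0 \cup H_i^-$. This is exactly representation (b), so $\geo{\St{\CC}{A}}$ is an intersection of closed half-spaces and hence convex.

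The main point to verify is the characterization of joinability: $A$ and $B$ are joinable if and only if no hyperplane in $\calH$ strictly separates them. The forward direction holds because a common chamber lies on one side of every hyperplane. For the converse, I take the cell $(AB)$, note that it contains $A$ and $B$ in its closure by the product formula, and extend it to a chamber $C' = (AB)C''$ for any chamber $C''$. That $A, B \preceq AB$ when signs never conflict follows from the sign-sequence definition of the product.
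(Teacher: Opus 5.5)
Your main argument is exactly the paper's: cite \cite[Proposition~3.93 and Theorem~3.131]{Abramenko2010} for the fact that $\St{\CC}{A}$ is a convex subcomplex, then apply (a)~$\Rightarrow$~(c) of \Cref{lem:convex_subcomplex}. Your supplementary self-contained check, which is essentially \Cref{lem:sign-sequence}~(3) and exhibits $\geo{\St{\CC}{A}}$ directly as an intersection of closed half-spaces, is also sound, except that ``$BA \succeq B$'' should read $AB \succeq B$, since $BA \succeq B$ holds for every $B$.
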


The following immediately follows from~\Cref{lem:convex_subcomplex},
which holds for a general Coxeter complex~\cite[Proposition~3.136]{Abramenko2010}.
\begin{lemma}[{\cite[Proposition~3.136]{Abramenko2010}}]\label{lem:chamber-complex}
    A convex subcomplex $\CC'$ of a Coxeter complex $\CC$ is a chamber complex.
    In particular, any maximal simplex of $\CC'$ has the same rank.
\end{lemma}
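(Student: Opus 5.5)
The plan is to use the characterization (a)$\Leftrightarrow$(b)$\Leftrightarrow$(c) of \Cref{lem:convex_subcomplex} to pass from $\CC'$ to its geometric realization $P \coloneqq \geo{\CC'}$. This is a convex set cut out by hyperplanes of $\calH$, and the proof works with $P$ throughout.

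First, I fix a maximal simplex $M$ of $\CC'$ of largest rank $r$. Let $E$ denote $\spn{M}$ in the spherical case and $\aff{M}$ in the Euclidean case. The first claim is that $\aff{P} = E$. Suppose some cell $B \in \CC'$ is not contained in $E$. Then the product $MB \in \CC'$ contains points $(1-\eps)a + \eps b$ with $a \in M$, $b \in B$, which lie off $E$. Hence $MB$ has strictly larger dimension than $M$, yet $M \preceq MB$, since $MB$ is obtained by refining the sign sequence of $M$. This contradicts the choice of $M$. So $P \subseteq \cl(E)$ and $\dim P = \dim M$.

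Next, let $N$ be an arbitrary maximal simplex of $\CC'$. The same product argument applies: $NM \in \CC'$ satisfies $N \preceq NM$, so maximality gives $NM = N$. Therefore the sign sequence of $N$ already refines that of $M$ at every index where $\sigma_i(N)=0$. From this I want to conclude that $N$ is open in $E$, i.e.\ $\sigma_i(N) = 0$ exactly for the hyperplanes containing $E$. Concretely, if $H_i \supseteq N$ but $H_i \not\supseteq E$, then $\sigma_i(N)=0$ while $\sigma_i(M)\neq 0$ for some choice of $M$ adapted to $H_i$. Then $\sigma_i(NM)=\sigma_i(M)\neq 0$, contradicting $NM=N$. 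Making this step rigorous requires that $M$ be chosen among the top-dimensional cells with $\sigma_i(M)\neq 0$. Such an $M$ exists because the top-dimensional cells of $P$ are dense in $P$ (the lower-dimensional ones have measure zero in $E$). This shows all maximal simplices have rank $r$.

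For gallery connectivity, I would take two chambers $N,N'$ of $\CC'$ and connect generic points $x\in N$, $y\in N'$ by the segment $[x,y]\subseteq P$, using the convexity from (c). After perturbing $x,y$ inside $N,N'$, the segment avoids all cells of codimension $\ge 2$ in $E$, since this is a finite or locally finite family of lower-dimensional sets. The segment then crosses successive chambers of $\CC'$ through common corank-one faces, and this sequence is a gallery in $\CC'$. The main obstacle is the genericity and local-finiteness argument in the Euclidean case. It is handled by working in a compact neighborhood of $[x,y]$, where only finitely many hyperplanes of $\affcalH(\RS)$ meet it.
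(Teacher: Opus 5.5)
Your proof is correct, but it does not follow the paper's route. The paper gives no argument of its own: it derives the lemma from \Cref{lem:convex_subcomplex} together with \cite[Proposition~3.136]{Abramenko2010}, a combinatorial result valid for arbitrary (abstract) Coxeter complexes. You instead prove the lemma directly inside the realization $P=\geo{\CC'}$. Closure of $\CC'$ under products controls the affine hulls of maximal simplices. A generic segment in the convex set $P$, together with local finiteness of $\affcalH(\RS)$, then produces the gallery; this is the same technique the paper later uses in the proof of \Cref{thm:chara:L-conv-func}. The citation buys generality and brevity. Your argument buys a self-contained, elementary proof, at the price of applying only to $\spCC(\RS)$ and $\euCC(\RS)$; these are the only complexes for which the paper defines convex subcomplexes, so nothing is lost.

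You can drop your second step entirely, because Step~1 never used that $M$ has largest rank. Take any maximal simplex $N$ of $\CC'$ and any $B\in\CC'$. Maximality gives $NB=N$, so $(1-\eps)a+\eps b\in N$ for $a\in N$, $b\in B$ and small $\eps>0$, whence $b\in\aff N$. Thus $\aff N=\aff P$ for every maximal $N$, and all maximal simplices have the rank determined by $\dim P$. Equivalently, for maximal $M,N$ the identities $NM=N$ and $MN=M$ say that $\sigma(M)$ and $\sigma(N)$ vanish at exactly the same indices. In particular, the density argument and the ``adapted'' choice of $M$ are unnecessary: a cell of dimension $\dim E$ contained in $E$ cannot lie in a hyperplane $H_i^0\not\supseteq E$, so $\sigma_i(M)\neq0$ holds automatically. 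In the gallery step, a crossed codimension-one cell $F\ni z$ is a common face of the chambers $N_j,N_{j+1}$ on either side of $z$. This follows because $z\in F\cap\cl N_j\cap\cl N_{j+1}$ gives $\sigma(F)\preceq\sigma(N_j)$ and $\sigma(F)\preceq\sigma(N_{j+1})$, and then \Cref{lem:sign-sequence}~(1) applies.
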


\subsubsection{Sign sequences and distances}
We say that
a hyperplane $H \in \calH$ \emph{strictly separates} two cells $A, B \in \CC$
if $A$ and $B$ belong to different open half-spaces arising from $H$,
i.e.,
$A \subseteq H_{i}^+$ and $B \subseteq H_{i}^-$ for some $i \in I$ with $H = H_{i}^0$.
It is known~\cite[Proposition~3.78]{Abramenko2010} that
the distance $d(A,B)$ between cells $A, B \in \CC$
is equal to the number of hyperplanes in $\calH$ that strictly separate $A$ and $B$.
That is, we have
\begin{align}
    d(A, B) &= \card{\Set{ H \in \calH }{ \text{$H$ strictly separates $A$ and $B$} }}\\
    &= \frac{1}{2} \cdot \card{\Set*{ i \in I }{ \{\sigma_{i}(A), \sigma_{i}(B)\} = \{+, -\} }}.\label{eq:dist}
\end{align}

We introduce a partial order $\prec$ on $\{+, 0, -\}$ as:
$+ \succ 0 \prec -$, and $+$ and $-$ are incomparable.
This partial order $\prec$ can be naturally extended to that on the sign sequences:
$\sigma(A) \preceq \sigma(B)$ if and only if $\sigma_{i}(A) \preceq \sigma_{i}(B)$ for all $i \in I$.
We say that $\sigma(A)$ and $\sigma(B)$ are \emph{compatible}
if there is no $i \in I$ such that $\{\sigma_{i}(A), \sigma_{i}(B)\} = \{+, -\}$.

We summarize the basic properties of the product and the distance.
\begin{lemma}\label{lem:sign-sequence}
    Let $A, A_1, \dots, A_k, B$ be cells in $\CC$.
    \begin{enumerate}[label={\textup{(\arabic*)}}]
        \item $A \preceq B$ if and only if $\sigma(A) \preceq \sigma(B)$.
        \item $ABA = AB$.
        \item $A_1, \dots, A_k$ are joinable if and only if $\sigma(A_1), \dots, \sigma(A_k)$ are mutually compatible.
        In addition, if $A_1, \dots, A_k$ are joinable, its join is the product $A_1 \cdots A_k$ of $A_1, \dots, A_k$.
        \item If $A_1$ and $A_2$ are joinable, then $d(A_1 A_2, B) \geq d(A_1, A_2 B)$.
        \item If $A_1 \preceq A_2$, then $d(A_1, B) \leq d(A_2, B)$.
        \item Let $x \in \sk{A}$ be a vertex of $A$.
        If $d(A, B) > 0$ and $d(A, B) = d(x, B)$, then $d(A', B) < d(A, B)$ for any $A' \prec A$ with $x \not\preceq A'$.
    \end{enumerate}
    In the following assertion, we consider the Euclidean Coxeter complex $\euCC(\RS)$.
    \begin{enumerate}[label={\textup{(\arabic*)}},resume]
      \item Let $x, y \in \sk{\euCC(\RS)}$ and $B \in \euCC(\RS)$ with $\sk{B} \ni y$.
      For $z \coloneqq (x+y)/2$, we have $d(x, B) \geq d(\cell{A}{z}, B)$.
    In addition, if the equality holds, then $x \preceq \cell{A}{z}$.
    \end{enumerate}
\end{lemma}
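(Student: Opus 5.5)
The plan is to derive every item directly from the sign-sequence description of cells and the distance formula~\eqref{eq:dist}. I will write $S(A,B)\coloneqq\Set{i\in I}{\{\sigma_i(A),\sigma_i(B)\}=\{+,-\}}$, so that $d(A,B)=\frac12\card{S(A,B)}$. Most items then reduce to an inclusion (or equality) between such index sets, checked coordinatewise.

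For (1), I would use that $\cl A=\bigcap_i \cl(H_i^{\sigma_i(A)})$, where $\cl H_i^0=H_i^0$. Then $\cl A\subseteq\cl B$ holds exactly when every nonzero sign of $A$ agrees with that of $B$, i.e.\ $\sigma(A)\preceq\sigma(B)$. Item (2) is a coordinatewise check of the product rule: if $\sigma_i(A)\ne0$ both sides equal $\sigma_i(A)$, and otherwise both equal $\sigma_i(B)$.

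For (3), suppose $C$ is an upper bound. By (1), each $\sigma(A_j)\preceq\sigma(C)$, so the $\sigma(A_j)$ are mutually compatible. Conversely, if they are compatible, the product $A_1\cdots A_k$ (a cell by the semigroup property) has $\sigma_i$ equal to the common nonzero value among the $\sigma_i(A_j)$, or $0$ if there is none. Hence it dominates each $\sigma(A_j)$ and is dominated by the sign sequence of any upper bound. By (1) it is the join.

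For (4), I would show $S(A_1,A_2B)\subseteq S(A_1A_2,B)$. Take $i$ with $\sigma_i(A_1)=+$ and $\sigma_i(A_2B)=-$. By compatibility, $\sigma_i(A_2)\ne-$, and $\sigma_i(A_2)=+$ would force $\sigma_i(A_2B)=+$. Hence $\sigma_i(A_2)=0$ and $\sigma_i(B)=-$, while $\sigma_i(A_1A_2)=+$; the case with signs swapped is symmetric. For (5), the relation $\sigma(A_1)\preceq\sigma(A_2)$ gives $S(A_1,B)\subseteq S(A_2,B)$ immediately.

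For (7), let $y\preceq B$, so $\sigma_i(y)\in\{0,\sigma_i(B)\}$. If $i\in S(A_z,B)$, then $z$ lies strictly on the side opposite to $B$. Since $y$ lies in the closed half-space on $B$'s side and $z=(x+y)/2$, the point $x$ lies strictly on the opposite side. Thus $S(A_z,B)\subseteq S(x,B)$, which gives the inequality. In the equality case I would verify $\sigma(x)\preceq\sigma(A_z)$. Take $\sigma_i(x)=+$. If $\sigma_i(y)\in\{+,0\}$, then $\sigma_i(z)=+$ directly. If $\sigma_i(y)=-$, then $\sigma_i(B)=-$, so $i\in S(x,B)=S(A_z,B)$, which forces $\sigma_i(z)=+$. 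Then (1) gives $x\preceq A_z$.

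Item (6) is the step I expect to be the main obstacle. By (5), $S(A',B)\subseteq S(A,B)$, so only strictness is needed. Moreover, $d(x,B)=d(A,B)$ together with $x\preceq A$ gives $S(x,B)=S(A,B)\ne\emptyset$. I would reduce to $A'=F$, the facet of $A$ with $\sk F=\sk A\setminus\{x\}$, since $A'\preceq F$ and (5) applies. The aim is then to find an index in $S(A,B)$ on which $F$ has sign $0$. My first attempt would apply (4) with $A_1=F$ and $A_2=x$, using $A=Fx$, to compare $d(A,B)$ with $d(F,xB)$. I would combine this with the equality $S(x,B)=S(A,B)$ and with the fact that every $i$ with $\sigma_i(F)=0\ne\sigma_i(A)$ has $\sigma_i(x)=\sigma_i(A)$, since otherwise all vertices of $A$ would lie on that hyperplane. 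The goal is to show that some hyperplane through $F$ but not through $x$ separates $x$ from $B$. If this fails, I would fall back on the gallery or projection characterization of distance from \cite{Abramenko2010}.
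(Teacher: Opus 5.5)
Your arguments for (1)--(5) and (7) are correct and essentially the paper's: the inclusion $S(A_1,A_2B)\subseteq S(A_1A_2,B)$ is the paper's count in (4), and your half-space argument in (7) restates its coordinatewise observation geometrically.

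The gap is in (6). The reduction to the facet $F$ with $\sk{F}=\sk{A}\setminus\{x\}$ is right, and so is the target: an index $j\in S(A,B)$ with $\sigma_j(F)=0$. But you give no argument that reaches this target, and the suggested route cannot reach it.
\begin{itemize}
\item Applying (4) to $A_1=F$, $A_2=x$ only yields $d(A,B)\ge d(F,xB)$. Here $S(F,xB)=\{\, i\in S(F,B) : \sigma_i(x)=0 \,\}$, so this statement concerns hyperplanes \emph{through} $x$, the opposite of what you need.
\item The relations you list, namely $S(x,B)=S(A,B)$ and $\sigma_i(A)=\sigma_i(x)$ whenever $\sigma_i(F)=0$, are all consistent with the negation $S(F,B)=S(x,B)=S(A,B)$.
\end{itemize}
So sign-sequence bookkeeping alone does not close (6); some input about the structure of chambers is indispensable.

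The paper supplies this input through galleries.
\begin{itemize}
\item Take a minimal gallery $(C_0,\dots,C_\ell)$ from $A$ to $B$, where $\ell=d(A,B)>0$.
\item Since $x\preceq A\preceq C_0$ and $d(x,B)=\ell$, it is also a minimal gallery from $x$ to $B$. Hence $x\not\preceq C_1$, since otherwise $(C_1,\dots,C_\ell)$ would be shorter.
\item Therefore the common corank-one face of $C_0$ and $C_1$ has vertex set $\sk{C_0}\setminus\{x\}\supseteq\sk{A'}$.
\item So $A'\preceq C_1$, and $(C_1,\dots,C_\ell)$ is a gallery from $A'$ to $B$, giving $d(A',B)\le \ell-1$.
\end{itemize}

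If you prefer to stay within your own framework, the same input can be used as follows.
\begin{itemize}
\item Pick a chamber $C\succeq AB$. Then $S(C,B)=S(AB,B)=S(A,B)=S(x,B)\neq\emptyset$, so $B\cap\cl C=\emptyset$.
\item By \Cref{lem:chamber}, $\cl C$ is the intersection of the closed half-spaces bounded by its walls, and these walls belong to the arrangement. A point of $B$ violates one of these half-spaces, giving a wall $H_j$ of $C$ with $j\in S(C,B)=S(x,B)$.
\item Then $x\notin H_j$. A wall of $C$ contains every vertex of $C$ but one, so $H_j$ contains $\sk{F}$.
\item Hence $\sigma_j(F)=0$ and $j\in S(A,B)\setminus S(F,B)$, which is exactly the index you were looking for.
\end{itemize}
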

\begin{proof}
    The assertion (1) follows from $A \preceq B \iff \cl{A} \subseteq \cl{B} \iff \sigma(A) \preceq \sigma(B)$,
    and
    (2) follows from the definition of product.

    (3). If $A_1, A_2, \dots, A_k$ are joinable, then there is a cell $A \in \CC$ such that $A_1, A_2, \dots, A_k \preceq A$.
    Hence, by the assertion~(1), we have $\sigma(A_1), \sigma(A_2), \dots, \sigma(A_k) \preceq \sigma(A)$,
    which implies that $\sigma(A_1), \sigma(A_2), \dots, \sigma(A_k)$ are mutually compatible.
    Conversely, if $\sigma(A_1), \sigma(A_2), \dots, \sigma(A_k)$ are mutually compatible, then $\sigma(A_1), \sigma(A_2), \dots, \sigma(A_k) \preceq \sigma(A_1 A_2 \cdots A_k)$
    by the definition of the product,
    which implies that $A_1, A_2, \dots, A_k \preceq A_1 A_2 \cdots A_k$ by the assertion~(1).
    Hence, $A_1, A_2, \dots, A_k$ are joinable.
    Moreover, in this case, since there is no cell $A' \in \CC$ satisfying $A_1, A_2, \dots, A_k \preceq A' \prec A_1 A_2 \cdots A_k$,
    the product $A_1 A_2 \cdots A_k$ is the join of $A_1, A_2, \dots, A_k$.

    (4). By the equality~\eqref{eq:dist} on the distance function,
    we have
    \begin{align}
        2d(A_1A_2, B) \geq\ &\card{ \Set{ i \in I }{ \{\sigma_{i}(A_1), \sigma_{i}(B)\} = \{+, -\} }},\\
        2d(A_1, A_2B) =\ &\card{ \Set{ i \in I }{ \{\sigma_{i}(A_1), \sigma_{i}(A_2)\} = \{+, -\} }}\\
         &+ \card{\Set{ j \in I }{ \sigma_j(A_2) = 0 \text{ and } \{\sigma_{j}(A_1), \sigma_{j}(B)\} = \{+, -\} }}.\label{eq:d2}
    \end{align}
    Here, since $A_1$ and $A_2$ are joinable, the first summand in the RHS of~\eqref{eq:d2} is zero.
    Hence, we obtain $2d(A_1A_2, B) \geq 2d(A_1, A_2B)$.

    (5). By $A_1 \preceq A_2$ and the assertion~(1), if $\sigma_i(A_1) \neq 0$ then $\sigma_i(A_1) = \sigma_i(A_2)$.
    Hence, we have $\card{ \Set{ i \in I }{ \{\sigma_{i}(A_1), \sigma_{i}(B)\} = \{+, -\} }} \leq \card{ \Set{ i \in I }{ \{\sigma_{i}(A_2), \sigma_{i}(B)\} = \{+, -\} }}$,
    which implies $d(A_1, B) \leq d(A_2, B)$ by~\eqref{eq:dist}.

    (6).
    Take any minimal gallery $(C_0, C_1, \dots, C_\ell)$ from $A$ to $B$,
    where $\ell = d(A, B) > 0$.
    Since $d(A, B) = d(x, B)$, this is also a minimal gallery from $x$ to $B$.
    Hence, we have $x \not\preceq C_1$ and $A' \preceq C_1$ for any $A' \prec A$ with $x \notin \sk{A'}$,
    which implies that $(C_1, \dots, C_\ell)$ forms a gallery from $A'$ to $B$.
    Therefore, we obtain $d(A', B) \leq \ell - 1 < d(A, B)$ for any $A' \prec A$ with $x \not\preceq A'$.

    (7).
    We can observe that,
    for $(\alpha, k) \in \RS \times \Z$,
    if $\sigma_{(\alpha, k)}(x) \preceq \sigma_{(\alpha, k)}(y)$ (resp. $\sigma_{(\alpha, k)}(x) \succeq \sigma_{(\alpha, k)}(y)$)
    then $\sigma_{(\alpha, k)}(\cell{A}{z}) = \sigma_{(\alpha, k)}(y)$ (resp. $\sigma_{(\alpha, k)}(\cell{A}{z}) = \sigma_{(\alpha, k)}(x)$).
    Also, since $y \preceq B$,
    we have $\sigma(y) \preceq \sigma(B)$ by the assertion~(1).
    Hence,
    for each $(\alpha, k) \in \RS \times \Z$, we have $\sigma_{(\alpha, k)}(\cell{A}{z}) \preceq \sigma_{(\alpha, k)}(x)$ or $\sigma_{(\alpha, k)}(\cell{A}{z}) \preceq \sigma_{(\alpha, k)}(y) \preceq \sigma(B)$.
    Therefore, if $\{\sigma_{(\alpha, k)}(\cell{A}{z}), \sigma_{(\alpha, k)}(B)\} = \{+,-\}$,
    then $\sigma_{(\alpha, k)}(\cell{A}{z}) = \sigma_{(\alpha, k)}(x)$,
    which implies $\{\sigma_{(\alpha, k)}(x), \sigma_{(\alpha, k)}(B)\} = \{+,-\}$.
    Thus, we obtain $d(x, B) \geq d(\cell{A}{z}, B)$.

    Suppose that the equality $d(x, B) = d(\cell{A}{z}, B)$ holds.
    Take any $(\alpha, k) \in \RS \times \Z$ with $\sigma_{(\alpha, k)}(x) \neq 0$.
    As observed above, if $\sigma_{(\alpha, k)}(x) \succeq \sigma_{(\alpha, k)}(y)$,
    we have $\sigma_{(\alpha, k)}(\cell{A}{z}) = \sigma_{(\alpha, k)}(x)$.
    Furthermore, even if $\sigma_{(\alpha, k)}(x) \not\succeq \sigma_{(\alpha, k)}(y)$,
    i.e.,  $\{\sigma_{(\alpha, k)}(x), \sigma_{(\alpha, k)}(B)\} = \{+,-\}$,
    we have $\sigma_{(\alpha, k)}(\cell{A}{z}) = \sigma_{(\alpha, k)}(x)$ by the equality $d(x, B) = d(\cell{A}{z}, B)$.
    Hence, $\sigma_{(\alpha, k)}(x) \preceq \sigma_{(\alpha, k)}(\cell{A}{z})$ holds,
    which implies $x \preceq \cell{A}{z}$ by the assertion~(1).
\end{proof}

\section{Discrete convexity in Euclidean spaces}\label{sec:DCA:continuous}
This section formulates discrete convexity in Euclidean spaces to provide a geometric foundation for the discrete theory. 
\Cref{subsec:cone} defines L- and M-convex cones. 
\Cref{subsec:L-poly:M-sub,subsec:M-poly-L-sub} formulate \emph{L- and M-convex polyhedra}, \emph{L- and M-sublinear functions}, and their integrality. 
Finally, \Cref{subsec:locally-polyhedral} introduces \emph{locally polyhedral L- and M-convex functions} and reveals the conjugacy between them.

Throughout
\Cref{sec:DCA:continuous,sec:DCA:discrete,sec:chara},
we assume that $\RS$ is a root system such that its irreducible components are of classical types.
Note that any subsystem of a classical root system satisfies this assumption (see \Cref{lem:Dynkin-shape}~(2)).
Let
$\spCC \coloneqq \spCC(\RS)$,
$\spCC^\star \coloneqq \spCC^\star(\RS)$,
and
$\euCC \coloneqq \euCC(\RS)$
denote
the spherical Coxeter complex,
the standard spherical expression of the spherical Coxeter complex,
and Euclidean Coxeter complexes of type $\RS$,
respectively;
in considering the Euclidean Coxeter complex,
we assume that $\RS$ is irreducible.
Let $\Ldom \coloneqq \sk{\euCC}$ denote the vertex set of $\euCC$,
and $\Mdom \coloneqq \Mdom(\RS)$ the root lattice of $\RS$.
All notions involving L- or M-convexity are defined relative
to the fixed root system $\RS$.
Accordingly, the full names of these notions include the qualifier
``of type $\RS$.''
For notational simplicity, we omit this qualifier whenever the
underlying root system is clear from the context.

\begin{remark}\label{rem:scope-root-systems}
The above assumption of classical type is imposed so that all statements in
\Cref{sec:DCA:continuous,sec:DCA:discrete,sec:chara}
can be presented under a common hypothesis.
Most of the definitions and arguments in these sections do not use
the classification of irreducible root systems.
The definitions of L- and M-convex cones, polyhedra, sublinear
functions, locally polyhedral convex functions, and the corresponding discrete notions can be formulated for
arbitrary irreducible root systems.

The arguments that depend on the classical types are essentially
confined to \Cref{prop:distance-function}, which gives an explicit
type-by-type characterization of tight distance functions, and
\Cref{prop:tangent-cone}.
In particular, the results on L-convexity in
\Cref{prop:L-conv:unique,thm:minimality:L,thm:chara:L-convex-set,thm:chara:L-conv-func}
and the discrete conjugacy theorem (\Cref{thm:conjugacy})
remain valid for arbitrary irreducible root systems, including the
exceptional types.
By contrast, the M-convex local optimality and intrinsic
characterizations that use \Cref{prop:tangent-cone} are established
here under the classical-type assumption.
\end{remark}

\subsection{L- and M-convex cones}\label{subsec:cone}
A polyhedral cone $K \subseteq V$ is said to be \emph{L-convex of type $\RS$}
if it admits an outer-representation
\begin{align}\label{eq:L-convex-cone}
    K = \Set{ x \in V }{ \inpr{\alpha}{x} \leq 0 \ (\forall \alpha \in \RS') }
\end{align}
for some $\RS' \subseteq \RS$.
Similarly,
a polyhedral cone $K \subseteq V^*$
is said to be \emph{M-convex of type $\RS$}
if it admits an inner-representation
\begin{align}\label{eq:M-convex-cone}
    K = \cone{\RS'}
\end{align}
for some $\RS' \subseteq \RS$.
We note that,
for a root system $\RS_-$ with $\RS_- \precsim \RS$,
an L-convex (resp. M-convex) cone of type $\RS_-$ is also L-convex (resp. M-convex) of type $\RS$.

The following one-to-one correspondence between the L- and M-convex cones is a direct consequence of \Cref{lem:polar_polyhedral_cone}.
\begin{theorem}\label{thm:L-M-polar}
    The polar cone of an L-convex cone is an M-convex cone, and vice versa.
\end{theorem}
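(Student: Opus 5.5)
The plan is to apply the Farkas lemma (\Cref{lem:polar_polyhedral_cone}) in each direction and then use the bipolar identity $K = K^{**}$ for closed convex cones.

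\emph{L-convex to M-convex.} Let $K \subseteq V$ be L-convex of type $\RS$. Then $K = \Set{ x \in V }{ \inpr{\alpha}{x} \leq 0 \ (\forall \alpha \in \RS') }$ for some $\RS' \subseteq \RS$. Since $\RS'$ is a finite subset of $V^*$, \Cref{lem:polar_polyhedral_cone} applies with $Q = \RS'$ and gives $K^* = \cone{\RS'}$. This is an inner-representation of the form~\eqref{eq:M-convex-cone}, so $K^*$ is M-convex of type $\RS$. The edge case $\RS' = \emptyset$ is covered: then $K = V$ and $K^* = \{0\} = \cone{\emptyset}$.

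\emph{M-convex to L-convex.} Let $K = \cone{\RS'} \subseteq V^*$ with $\RS' \subseteq \RS$. Using the identification $V^{**} = V$, the polar cone is
\begin{align}
    K^* = \Set{ x \in V }{ \inpr{p}{x} \leq 0 \ (\forall p \in \cone{\RS'}) }.
\end{align}
A nonnegative combination of elements of $\RS'$ pairs nonpositively with $x$ exactly when every generator does. Hence
\begin{align}
    K^* = \Set{ x \in V }{ \inpr{\alpha}{x} \leq 0 \ (\forall \alpha \in \RS') },
\end{align}
which is L-convex by~\eqref{eq:L-convex-cone}. Alternatively, this direction follows from the first one together with $K = K^{**}$. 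Indeed, writing $\RS'$-generated cones as polars of L-convex cones and applying the Farkas lemma with the roles of $V$ and $V^*$ exchanged yields the same conclusion.

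Finally, since both classes consist of polyhedral (hence closed convex) cones, $K^{**} = K$ holds. Therefore $K \mapsto K^*$ is a bijection between the L-convex cones in $V$ and the M-convex cones in $V^*$. Nothing here is a genuine obstacle. The only point needing care is the duality bookkeeping, namely that the polar of a cone in $V^*$ is taken in $V^{**} = V$ with the pairing $\inpr{\cdot}{\cdot}$. Once that is fixed, the argument is just the generator check above.
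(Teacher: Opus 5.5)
Your proof is correct and takes essentially the paper's approach: the paper states the theorem as a direct consequence of the Farkas lemma (\Cref{lem:polar_polyhedral_cone}). Your generator check for the M-to-L direction (or, equivalently, bipolarity) just spells out the routine converse that the paper leaves implicit.
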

\Cref{lem:convex_subcomplex} (particularly, the inequality expression~\eqref{eq:geo-sp}) and the definition of L-convex cones~\eqref{eq:L-convex-cone} immediately imply that
the L-convex cones are exactly the geometric realizations of nonempty convex subcomplexes of $\spCC$.
For later reference, we summarize this characterization of L-convex cones.
\begin{lemma}\label{lem:chara-L-convex-cone}
    A set $K \subseteq V$ is an L-convex cone if and only if it is the geometric realization of some nonempty convex subcomplex of $\spCC$.
\end{lemma}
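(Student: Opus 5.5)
We prove the two implications separately; both come down to \Cref{lem:convex_subcomplex}, which applies to $\CC = \spCC$ with index set $I = \RS$. For the index $\alpha \in \RS$, the open half-spaces are $H_{\alpha,0}^{\pm}$.

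\emph{Only-if part.} Let $K = \Set{x \in V}{\inpr{\alpha}{x} \leq 0 \ (\forall \alpha \in \RS')}$ for some $\RS' \subseteq \RS$, and put
\begin{align}
    \CC' \coloneqq \bigcap_{\alpha \in \RS'} \Set{A \in \spCC}{A \subseteq H_{\alpha,0}^0 \cup H_{\alpha,0}^-}.
\end{align}
The first step is to check that $\CC'$ is a subcomplex. Take $A \in \CC'$ and $B \preceq A$. Then $\cl{B} \subseteq \cl{A} \subseteq \Set{x \in V}{\inpr{\alpha}{x} \leq 0}$ for each $\alpha \in \RS'$. The cell $B$ has a constant sign for every $\alpha$, so this inclusion forces $B \subseteq H_{\alpha,0}^0 \cup H_{\alpha,0}^-$, that is, $B \in \CC'$. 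The subcomplex is nonempty because $\{0\} \in \CC'$.

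Next, $\CC'$ has the form (b) of \Cref{lem:convex_subcomplex}, so it is a convex subcomplex. It remains to identify $\geo{\CC'}$ with $K$. The cells of $\spCC$ partition $V$, and each cell lies either inside or outside a given closed half-space $H_{\alpha,0}^0 \cup H_{\alpha,0}^-$, again because its sign is constant. Hence $\geo{\CC'}$ consists of exactly the points $x$ with $\inpr{\alpha}{x} \leq 0$ for all $\alpha \in \RS'$, which is $K$. This agrees with \eqref{eq:geo-sp}.

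\emph{If part.} Let $\CC'$ be a nonempty convex subcomplex of $\spCC$. By the final assertion of \Cref{lem:convex_subcomplex}, namely \eqref{eq:geo-sp}, we have $\geo{\CC'} = \Set{x \in V}{\inpr{\alpha}{x} \leq 0 \ (\forall \alpha \in I')}$ for some $I' \subseteq \RS$. Because $\RS$ is finite, this is a polyhedral cone given by an outer-representation of the form \eqref{eq:L-convex-cone}. So $\geo{\CC'}$ is an L-convex cone.

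There is no real obstacle in either direction. The one point that needs care is the sign-constancy argument: a cell is either contained in a closed half-space $H_{\alpha,0}^0 \cup H_{\alpha,0}^-$ or disjoint from it. This is what makes $\CC'$ closed under faces and its realization exactly $K$. It follows directly from the definition of cells through sign sequences.
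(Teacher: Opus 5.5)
Your proof is correct and follows essentially the same route as the paper, which derives the lemma directly from \Cref{lem:convex_subcomplex} (the equivalence (a)$\Leftrightarrow$(b) and the description \eqref{eq:geo-sp}) together with the definition \eqref{eq:L-convex-cone} of L-convex cones. The only difference is that you write out the routine checks the paper calls immediate: that the family defined by the half-space conditions is a nonempty subcomplex, and that its geometric realization is exactly $K$.
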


We note that L-convex cones and M-convex cones are exactly the same as \emph{Coxeter cones}~\cite{Stembridge2008-zf} and \emph{Coxeter root cones}~\cite{Ardila2020-ll},
respectively.

\subsection{L-convex polyhedra and M-sublinear functions}\label{subsec:L-poly:M-sub}
We refer to a function $\funcdoms{\gamma}{\RS}{\ER}$ as a \emph{distance function of type $\RS$}.
For consistency with existing notation in DCA~\cite[Chapter~5.2]{Murota2003-yb},
we denote the polyhedron $\bS{\gamma}$ by $\D{\gamma}$, i.e.,
\begin{align}
    \D{\gamma} := \bS{\gamma} = \Set{ x \in V }{ \inpr{\alpha}{x} \leq \gamma(\alpha) \ (\forall \alpha \in \RS) }.
\end{align}
Similarly, we denote $\bS{\cone{\gamma}}$ by $\D{\cone{\gamma}}$;
recall that $\D{\cone{\gamma}} = \D{\gamma}$ (\Cref{lem:conjugate-poly-sublinear}).
We say that a distance function $\funcdoms{\gamma}{\RS}{\ER}$ is \emph{valid}
if $\D{\gamma}$ is a nonempty polyhedron, or equivalently, $\cone{\gamma}$ is a proper polyhedral sublinear function.
A valid distance function $\funcdoms{\gamma}{\RS}{\ER}$ is said to be \emph{integral}
if there is an integer-valued distance function $\funcdoms{\gamma'}{\RS}{\EZ}$ such that $\D{\gamma} = \D{\gamma'}$, or equivalently, $\cone{\gamma} = \cone{\gamma'}$.

A set $P \subseteq V$ is called an \emph{L-convex polyhedron of type $\RS$}
if $P = \D{\gamma}$ for some valid distance function $\gamma$.
In addition, an L-convex polyhedron $P$ is \emph{integral} if $P = \D{\gamma}$
for some integral valid distance function $\gamma$.
We denote by $\Lpoly{\R}$ (resp. $\Lpoly{\Z|\R}$) the family of L-convex polyhedra (resp. integral L-convex polyhedra).
We note that
an L-convex cone is an integral L-convex polyhedron,
since it is representable as $\D{\gamma}$ for some $\funcdoms{\gamma}{\RS}{\{0,+\infty\}}$.

A function $\funcdoms{\sigma}{V^*}{\ER}$ is called an \emph{M-sublinear function of type $\RS$}
if $\sigma = \cone{\gamma}$ for some valid distance function $\gamma$.
In addition, an M-sublinear function is said to be \emph{integral} if $\sigma = \cone{\gamma}$ for some integral valid distance function $\gamma$.
We denote by $\Msub{\R}{\R}$ (resp. $\Msub{\R}{\Z|\R}$) the family of M-sublinear functions (resp. integral M-sublinear functions).

When we emphasize the integrality of an L-convex polyhedron (resp. an M-sublinear function) of type $\RS$,
we specifically use the term \emph{$\RS$-integral}.

For a root system $\RS_-$ with $\RS_- \precsim \RS$,
although an L-convex polyhedron (resp. an M-sublinear function) of type $\RS_-$ is L-convex (resp. M-sublinear) of type $\RS$,
a $\RS_-$-integral L-convex polyhedron (resp. a $\RS_-$-integral M-sublinear function) is not necessarily $\RS$-integral in general.

We immediately obtain
the following one-to-one correspondence between L-convex polyhedra and M-sublinear functions
from \Cref{lem:conjugate-poly-sublinear} and the definitions of L-convex polyhedra and M-sublinear functions.
\begin{theorem}\label{thm:conjugate-Lpoly-Msub}
    The support function $\spt{P}$ of an (integral) L-convex polyhedron $P$ is an (integral) M-sublinear function, and the set $\D{\sigma}$ arising from an (integral) M-sublinear function $\sigma$ is an (integral) L-convex polyhedron.
\end{theorem}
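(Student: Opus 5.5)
The plan is to reduce both directions to \Cref{lem:conjugate-poly-sublinear}, applied to a single distance function $\gamma$ that serves for both the polyhedron and the sublinear function.

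First, let $P$ be an L-convex polyhedron. Then $P = \D{\gamma}$ for some valid distance function $\funcdoms{\gamma}{\RS}{\ER}$. We regard $\gamma$ as a function on $V^*$ with finite effective domain $\dom{\gamma} \subseteq \RS$ by setting it to $+\infty$ off $\RS$. Then $\D{\gamma} = \bS{\gamma}$ is an outer-representation of the form~\eqref{eq:poly-outer}. By validity, $P$ is a nonempty polyhedron and $\cone{\gamma}$ is a proper polyhedral sublinear function. Moreover, $\cone{\gamma}$ is given by an inner-representation of the form~\eqref{eq:poly-sublinear-inner} with $\theta = \gamma$. \Cref{lem:conjugate-poly-sublinear} then yields $\spt{P} = \cone{\gamma}$, which is M-sublinear by definition. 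If in addition $P$ is integral, we choose $\gamma$ integral in the definition, and the same identity shows that $\spt{P}$ is an integral M-sublinear function.

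Conversely, let $\sigma$ be M-sublinear, say $\sigma = \cone{\gamma}$ with $\gamma$ valid. Again by \Cref{lem:conjugate-poly-sublinear}, with the same pair of representations, we have $\bS{\sigma} = \bS{\gamma}$, that is, $\D{\sigma} = \D{\cone{\gamma}} = \D{\gamma}$, as already recorded in the text. This set is an L-convex polyhedron, and it is integral whenever $\gamma$ is integral.

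The only point that needs care is the integrality bookkeeping. Integrality is defined as the existence of some integral valid $\gamma$ representing the object, so we simply take that $\gamma$ as the witness in both directions; no further obstacle is expected. One should also check that validity, meaning nonemptiness of $\D{\gamma}$ and properness of $\cone{\gamma}$, is what \Cref{lem:conjugate-poly-sublinear} requires, and this holds by the stated equivalence in the definition of validity.
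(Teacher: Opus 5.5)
Your proposal is correct and matches the paper's argument: the paper obtains the theorem directly from \Cref{lem:conjugate-poly-sublinear}, applied with $\theta = \gamma$, together with the definitions of (integral) L-convex polyhedra and (integral) M-sublinear functions. Using the same valid, and when needed integral, distance function $\gamma$ as the witness on both sides is exactly the integrality bookkeeping the statement requires.
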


\Cref{lem:convex_subcomplex} (particularly, the inequality expression~\eqref{eq:geo-aff}) and the definition of integral L-convex polyhedra immediately imply that,
if $\RS$ is an irreducible root system,
then the integral L-convex polyhedra are exactly the geometric realizations of nonempty convex subcomplexes of $\euCC$.
For later reference, we explicitly describe this characterization of integral L-convex polyhedra.
\begin{lemma}\label{lem:chara-integral-L-convex-poly}
    Suppose that $\RS$ is irreducible.
    Then,
    a set $P \subseteq V$ is an integral L-convex polyhedron if and only if it is the geometric realization of some nonempty convex subcomplex of $\euCC$.
\end{lemma}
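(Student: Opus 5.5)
The plan is to prove the two directions separately, using \Cref{lem:convex_subcomplex}, in particular the inequality form~\eqref{eq:geo-aff} of the geometric realization of a convex subcomplex of $\euCC$. Throughout, recall that $\affcalH(\RS)$ consists of the hyperplanes $H^0_{\alpha,k}$ with $\alpha \in \RS$ and $k \in \Z$.

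\emph{If part.} Let $\CC'$ be a nonempty convex subcomplex of $\euCC$. By \Cref{lem:convex_subcomplex}, there is $I' \subseteq \RS \times \Z$ with $\geo{\CC'} = \Set{x \in V}{\inpr{\alpha}{x} \le k \ (\forall (\alpha,k) \in I')}$. We then define $\gamma'(\alpha) \coloneqq \inf\Set{k}{(\alpha,k) \in I'}$, taking the value $+\infty$ when no such $k$ exists. This infimum is either $+\infty$ or an attained integer, or else $-\infty$. The value $-\infty$ cannot occur, because $\geo{\CC'}$ is nonempty: $\CC'$ contains a nonempty cell, since it contains a vertex. Hence $\gamma'$ is $\EZ$-valued and $\geo{\CC'} = \D{\gamma'}$. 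Since $\D{\gamma'}$ is nonempty, and it is a polyhedron because $\RS$ is finite, $\gamma'$ is a valid integral distance function. Therefore $\geo{\CC'}$ is an integral L-convex polyhedron.

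\emph{Only-if part.} Let $P = \D{\gamma}$ with $\gamma$ integral valid. By the definition of integrality we may replace $\gamma$ by an integer-valued $\gamma'$ with $\D{\gamma} = \D{\gamma'}$. Set $I' \coloneqq \Set{(\alpha,\gamma'(\alpha))}{\alpha \in \dom \gamma'} \subseteq \RS\times\Z$, and let $\CC' \coloneqq \bigcap_{i\in I'} \Set{A \in \euCC}{A \subseteq H_i^0 \cup H_i^-}$. By (b)$\Rightarrow$(a) of \Cref{lem:convex_subcomplex}, $\CC'$ is a convex subcomplex, provided it is nonempty as a complex.

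The remaining step is to check that $\geo{\CC'} = P$. Every cell of $\euCC$ has a constant sign with respect to each hyperplane $H_i^0$, so a cell meeting $P$ lies in $H_i^0 \cup H_i^-$ for every $i \in I'$. Hence the cells contained in $P$ are exactly the cells meeting $P$. Because the cells partition $V$, this gives $\geo{\CC'} = P$. Nonemptiness of $P$ then forces $\CC'$ to contain a nonempty cell, so $\CC'$ is a nonempty convex subcomplex.

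I expect no real obstacle here. The only points needing care are the bookkeeping of $\pm\infty$ values and of the empty cell $\bot = \emptyset$, and the fact that hyperplanes with integer right-hand sides are exactly those of $\affcalH(\RS)$, so that integrality corresponds precisely to being cut out by walls of the complex.
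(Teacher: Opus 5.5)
Your proof is correct and follows the same route as the paper, which obtains the lemma directly from \Cref{lem:convex_subcomplex} (in particular the half-space description \eqref{eq:geo-aff}) together with the definition of integral L-convex polyhedra. You make explicit the details the paper leaves implicit: collapsing possibly infinitely many constraints per root by taking an infimum, ruling out the value $-\infty$ by nonemptiness, and identifying the cells contained in $P$ with the cells meeting $P$.
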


The following lemma states that the M-sublinearity of a polyhedral sublinear function $\sigma$ can be characterized as the M-convexity of the polyhedral cones in $\fan(\sigma)$,
which is the counterpart of the definition of an L-sublinear function; see \Cref{subsec:M-poly-L-sub} below.
\begin{lemma}\label{lem:M-sublinear}
  A proper polyhedral sublinear function $\funcdoms{\sigma}{V^*}{\ER}$ is M-sublinear if and only if every member of $\fan(\sigma)$ is an M-convex cone.
\end{lemma}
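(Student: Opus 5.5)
Both directions go through \Cref{lem:fan-support}, which connects the cones in $\fan(\sigma)$ to the tangent cones of $P \coloneqq \bS{\sigma}$ by polarity. Since $\sigma$ is a proper polyhedral sublinear function, $P = \bS{\sigma}$ is a nonempty polyhedron in $V$ and $\sigma = \spt{P}$ (\Cref{lem:conjugate-poly-sublinear}). \Cref{lem:fan-support} then gives $\fan(\sigma) = \Set{(\tcone{P}{x})^*}{x \in P}$. By \Cref{thm:L-M-polar}, together with $K = K^{**}$ for polyhedral cones, every member of $\fan(\sigma)$ is M-convex if and only if every tangent cone $\tcone{P}{x}$ with $x \in P$ is L-convex. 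So it suffices to show that $P$ is an L-convex polyhedron exactly when all its tangent cones are L-convex cones. This is enough because $\sigma$ is M-sublinear iff $\sigma = \cone{\gamma}$ for a valid $\gamma$, iff $\bS{\sigma} = \D{\gamma}$ (\Cref{thm:conjugate-Lpoly-Msub} and \Cref{lem:conjugate-poly-sublinear}), i.e., iff $P$ is L-convex.

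\emph{Only if.} Suppose $P = \D{\gamma}$, and fix $x \in P$. Let $\RS_x \coloneqq \Set{\alpha \in \RS}{\inpr{\alpha}{x} = \gamma(\alpha)}$ be the set of roots whose constraints are tight at $x$. For a polyhedron given by finitely many inequalities, the tangent cone at $x$ is cut out by the tight constraints: $\tcone{P}{x} = \Set{d}{\inpr{\alpha}{d} \le 0\ (\alpha \in \RS_x)}$. This is an L-convex cone of the form~\eqref{eq:L-convex-cone}.

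\emph{If.} Suppose each $\tcone{P}{x}$ is L-convex, say $\tcone{P}{x} = \Set{d}{\inpr{\alpha}{d}\le 0\ (\alpha \in \RS_x)}$ with $\RS_x \subseteq \RS$. By \Cref{lem:clconv-tcone}, $P = \bigcap_{x\in P}(x + \tcone{P}{x})$. Each set in this intersection is $\Set{y}{\inpr{\alpha}{y} \le \inpr{\alpha}{x}\ (\alpha \in \RS_x)}$, so all normals come from the finite set $\RS$. Define $\gamma(\alpha) \coloneqq \inf\Set{\inpr{\alpha}{x}}{x \in P,\ \alpha \in \RS_x}$, with $\gamma(\alpha) = +\infty$ if no such $x$ exists. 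Then $P = \D{\gamma}$, exactly as in the proof of \Cref{lem:tcone-poly-chara}. Since $P$ is nonempty, $\gamma$ is valid.

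The only delicate point is this last construction. One must check that an infimum equal to $-\infty$ cannot occur. This holds because $P$ is nonempty and contained in every half-space $\Set{y}{\inpr{\alpha}{y} \le \inpr{\alpha}{x}}$, so each infimum is bounded below by $\inpr{\alpha}{y}$ for any $y \in P$. The rest is bookkeeping with the polarity lemmas.
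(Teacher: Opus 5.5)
Your proof is correct, but it takes a different route from the paper's.

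\textbf{The paper's route.} The paper stays in $V^*$ and argues directly with conical hulls.
\begin{itemize}
\item For the only-if part, it computes $\argmin(\cone{\gamma} - x) = \cone\Set{\alpha \in \RS}{\gamma(\alpha) = \inpr{\alpha}{x}}$, using that the minimum value is $0$ when the argmin is nonempty.
\item For the if part, it sets $\gamma \coloneqq \sigma|_{\RS}$. It gets $\sigma \geq \cone{\gamma}$ from the linearity of $\sigma$ on each cell $\cone{\RS_K} \in \fan(\sigma)$, and $\sigma \leq \cone{\gamma}$ from sublinearity.
\end{itemize}

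\textbf{Your route.} You pass to the primal polyhedron $P = \bS{\sigma}$. Using \Cref{lem:fan-support}, \Cref{thm:L-M-polar}, and bipolarity, you translate both sides of the equivalence. The lemma then becomes the statement that $P$ is an L-convex polyhedron if and only if all its tangent cones are L-convex cones. That is exactly \Cref{lem:L-convex-tcone}, which the paper proves later by the same tight-constraint and \Cref{lem:clconv-tcone} argument you give. There is no circularity: neither that argument nor \Cref{thm:conjugate-Lpoly-Msub} depends on the present lemma.

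\textbf{What each buys.}
\begin{itemize}
\item Your route shows that the two ``counterpart'' characterizations the paper mentions are polar duals of each other. The cost is heavier machinery: \Cref{lem:fan-support}, bipolarity, and the support-function correspondence.
\item The paper's route is more elementary. It also yields the canonical representation $\sigma = \cone(\sigma|_{\RS})$, which is the tight distance function introduced immediately afterward.
\end{itemize}

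\textbf{A small remark.} The $-\infty$ issue you flag is not actually delicate. If $\alpha \in \RS_x$, then $P \subseteq x + \tcone{P}{x}$ forces $x$ to maximize $\inpr{\alpha}{\cdot}$ over $P$. So every term in your infimum equals $\spt{P}(\alpha) = \sigma(\alpha)$. Your $\gamma$ therefore agrees with $\sigma|_{\RS}$ wherever it is finite.
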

\begin{proof}
    (Only-if part).
    Since $\sigma$ is M-sublinear, there is a valid $\funcdoms{\gamma}{\RS}{\ER}$ such that $\sigma = \cone{\gamma}$.
    Take any $K \in \fan(\sigma)$.
    Then, for some $x \in V$,
    we have
    $K = \argmin(\sigma - x) = \argmin(\cone{\gamma} - x)$.
    Since $K$ is nonempty, we have $\gamma(\alpha) - \inpr{\alpha}{x} \geq 0$ for any $\alpha \in \dom{\gamma}$.
    Thus, $K = \cone{\Set{ \alpha \in \RS }{ \gamma(\alpha) = \inpr{\alpha}{x} }}$,
    which implies that $K$ is an M-convex cone.

    (If part).
    Since every member of $\fan(\sigma)$ is an M-convex cone, for each $K \in \fan(\sigma)$, there is $\RS_K \subseteq \RS$ such that $K = \cone{\RS_K}$.
    Let $\funcdoms{\gamma}{\RS}{\ER}$ be the restriction of $\sigma$ to $\RS$, i.e., $\gamma(\alpha) \coloneqq \sigma(\alpha)$ for $\alpha \in \RS$.
    Then, for $K \in \fan(\sigma)$ and $p \in K$,
    we have
    $\sigma(p) = \sum_{\alpha \in \RS_K} \mu(\alpha)\gamma(\alpha) \geq (\cone{\gamma})(p)$ by the definition of $\cone{\gamma}$~\eqref{eq:func:cone},
    where $\mu \in \cncoeff{\RS_K}$ satisfying $\sum_{\alpha \in \RS_K} \mu(\alpha) \alpha = p$.
    Thus, we obtain $\sigma \geq \cone{\gamma}$.
    On the other hand, since $\sigma$ is a proper sublinear function satisfying $\sigma(\alpha) = \gamma(\alpha)$ for $\alpha \in \RS$
    and $\cone{\gamma}$ is the pointwise maximum proper sublinear function satisfying $(\cone{\gamma})(\alpha) \leq \gamma(\alpha)$ for all $\alpha \in \RS$,
    we have $\sigma \leq \cone{\gamma}$.
    This implies that $\sigma = \cone{\gamma}$, i.e., $\sigma$ is M-sublinear.
\end{proof}

\subsubsection{Tight distance functions}\label{subsubsec:tight-distance}
A distance function $\funcdoms{\gamma}{\RS}{\ER}$ is said to be \emph{tight} if $\gamma$ is the restriction of an M-sublinear function to $\RS$, i.e.,
$\gamma$ is a valid distance function with $\gamma = (\cone{\gamma})|_{\RS}$.
Moreover, a tight distance function is said to be \emph{integral} if it is the restriction of an integral M-sublinear function.
Let $\vdist{\R}$ denote the family of all valid distance functions,
and $\tdist{\R}$ (resp. $\tdist{\Z}$) the family of tight (resp. integral tight) distance functions.
See the commutative diagram~\eqref{eq:diagram-Lset} below for the relations among
$\Lpoly{\R}$, $\Lpoly{\Z|\R}$, $\Msub{\R}{\R}$, $\Msub{\R}{\Z|\R}$, $\vdist{\R}$, $\tdist{\R}$, and $\tdist{\Z}$.

We conclude \Cref{subsubsec:tight-distance} with a ``triangle-inequality-like'' characterization of tight distance functions for each type of root system;
note that the condition for type~A is exactly the same as that presented in \cite[Sections~5.2 and 5.6]{Murota2003-yb}.
Unlike the preceding results, the following explicit
characterization uses the coordinate descriptions of the classical
root systems.
The definition of tight distance functions and the general
correspondences developed above do not require this restriction.
Here, for notational simplicity, we consider $2\RSC$ instead of $\RSC$ in the case of Type~C.
\begin{proposition}\label{prop:distance-function}
    A distance function $\funcdoms{\gamma}{\RS}{\ER}$ is tight
    if and only if
    $\gamma(\alpha) + \gamma(-\alpha) \geq 0$ holds for all $\alpha \in \RS$
    and the following conditions hold for each case of $\RS = \RSA$ (Type~A), $\RS =\RSB$ (Type~B), $\RS =2\RSC$ (Type~C), and $\RS =\RSD$ (Type~D):
    \begin{description}[font=\normalfont]
        \item[(Type~A)] $\gamma(\alpha_1) + \gamma(\alpha_2) \geq \gamma(\alpha_1+ \alpha_2)$ for any distinct $\alpha_1, \alpha_2 \in \RS$ with $\alpha_1 + \alpha_2 \in \RS$,
        which is equivalent to
        $\gamma(e_i - e_j) + \gamma(e_j - e_k) \geq \gamma(e_i - e_k)$ for any distinct $i, j, k \in [n+1]$.
    \item[(Type~B)] $\gamma(\alpha_1) + \gamma(\alpha_2) \geq \gamma(\alpha_1+ \alpha_2)$ for any distinct $\alpha_1, \alpha_2 \in \RS$ with $\alpha_1 + \alpha_2 \in \RS$,
    and $\gamma(\alpha_1) + \gamma(\alpha_2) \geq 2\gamma((\alpha_1 + \alpha_2)/2)$
    for any distinct $\alpha_1, \alpha_2 \in \RS$ with $(\alpha_1 + \alpha_2)/2 \in \RS$,
    which are equivalent to
    \begin{itemize}
      \item $\gamma(\sigma_i e_i + \sigma_j e_j) + \gamma(- \sigma_j e_j + \sigma_k e_k) \geq \gamma(\sigma_i e_i + \sigma_k e_k)$,
        \item $\gamma(\sigma_i e_i) + \gamma(\sigma_j e_j) \geq \gamma(\sigma_i e_i + \sigma_j e_j)$,
        \item $\gamma(\sigma_i e_i) + \gamma(-\sigma_i e_i +\sigma_j e_j) \geq \gamma(\sigma_j e_j)$, and 
        \item $\gamma(\sigma_i e_i + \sigma_j e_j) + \gamma(\sigma_i e_i - \sigma_j e_j) \geq 2\gamma(\sigma_i e_i)$
    \end{itemize}
    for any distinct $i, j, k \in [n]$ and any $\sigma_i, \sigma_j, \sigma_k \in \{\pm1\}$ in the first bullet, and for any distinct $i, j \in [n]$ and any $\sigma_i, \sigma_j \in \{\pm1\}$ in the other bullets.
    \item[(Type~C)] $\gamma(\alpha_1) + \gamma(\alpha_2) \geq \gamma(\alpha_1+ \alpha_2)$ for any distinct $\alpha_1, \alpha_2 \in \RS$ with $\alpha_1 + \alpha_2 \in \RS$,
    and $\gamma(\alpha_1) + \gamma(\alpha_2) \geq 2\gamma((\alpha_1 + \alpha_2)/2)$
    for any distinct $\alpha_1, \alpha_2 \in \RS$ with $(\alpha_1 + \alpha_2)/2 \in \RS$,
    which are equivalent to
    \begin{itemize}
      \item $\gamma(\sigma_i e_i + \sigma_j e_j) + \gamma(- \sigma_j e_j + \sigma_k e_k) \geq \gamma(\sigma_i e_i + \sigma_k e_k)$,
        \item $\gamma(\sigma_i e_i + \sigma_j e_j) + \gamma(\sigma_i e_i - \sigma_j e_j) \geq \gamma(2\sigma_i e_i)$,
        \item $\gamma(2\sigma_i e_i) + \gamma(-\sigma_i e_i + \sigma_j e_j) \geq \gamma(\sigma_i e_i +\sigma_j e_j)$, and
        \item $\gamma(2\sigma_i e_i) + \gamma(2\sigma_j e_j) \geq 2\gamma(\sigma_i e_i + \sigma_j e_j)$
    \end{itemize}
    for any distinct $i, j, k \in [n]$ and any $\sigma_i, \sigma_j, \sigma_k \in \{\pm1\}$ in the first bullet, and for any distinct $i, j \in [n]$ and any $\sigma_i, \sigma_j \in \{\pm1\}$ in the other bullets.
    \item[(Type~D)] 
    $\gamma(\alpha_1) + \gamma(\alpha_2) \geq \gamma(\alpha_1+ \alpha_2)$ for any distinct $\alpha_1, \alpha_2 \in \RS$ with $\alpha_1 + \alpha_2 \in \RS$,
    and $\gamma(\alpha_1) + \gamma(\alpha_2) + \gamma(\alpha_3) + \gamma(\alpha_4) \geq 2\gamma((\alpha_1 + \alpha_2 + \alpha_3 + \alpha_4)/2)$
    for any distinct $\alpha_1, \alpha_2, \alpha_3, \alpha_4 \in \RS$ with $(\alpha_1 + \alpha_2 + \alpha_3 + \alpha_4)/2 \in \RS$,
    which are equivalent to
    \begin{itemize}
      \item $\gamma(\sigma_i e_i + \sigma_j e_j) + \gamma(- \sigma_j e_j + \sigma_k e_k) \geq \gamma(\sigma_i e_i + \sigma_k e_k)$ and
        \item $\gamma(\sigma_i e_i + \sigma_{i'} e_{i'}) + \gamma(\sigma_i e_i -\sigma_{i'} e_{i'}) + \gamma(\sigma_j e_j + \sigma_{j'} e_{j'}) + \gamma(\sigma_j e_j -\sigma_{j'} e_{j'}) \geq 2\gamma(\sigma_i e_i + \sigma_j e_j)$
    \end{itemize}
    for any distinct $i,j,k \in [n]$ and any $\sigma_i, \sigma_j, \sigma_k \in \{\pm1\}$ in the first bullet,
    and
    for any distinct $i,i', j, j' \in [n]$ and any $\sigma_i, \sigma_{i'}, \sigma_j, \sigma_{j'} \in \{\pm1\}$ in the second bullet.
    \end{description}
\end{proposition}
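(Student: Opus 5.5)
Our plan is to reduce tightness to a statement about the cones of linearity of $\cone{\gamma}$, and then to a finite, type-by-type check on the root system.

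Tightness of a valid $\gamma$ means $\gamma(\beta) \le (\cone{\gamma})(\beta)$ for every $\beta \in \RS$; the reverse inequality always holds. Unwinding \eqref{eq:func:cone}, this says that $\gamma(\beta) \le \sum_{\alpha} \mu(\alpha)\gamma(\alpha)$ whenever $\beta = \sum_\alpha \mu(\alpha)\alpha$ with $\mu \in \cncoeff{\RS}$. By Carathéodory, and by linear programming over a finite set, it suffices to consider the extreme representations. These are vertices of the polyhedron $\Set{\mu \ge 0}{\sum_\alpha \mu(\alpha)\alpha = \beta}$, which has linearly independent support.

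\emph{Necessity.} Each listed inequality is one particular conical representation of a root. For instance, $\beta = \alpha_1+\alpha_2$; or $\beta = \tfrac12\alpha_1+\tfrac12\alpha_2$; or, in type D, $\beta=\tfrac12(\alpha_1+\alpha_2+\alpha_3+\alpha_4)$. The condition $\gamma(\alpha)+\gamma(-\alpha)\ge 0$ is the representation $0=\alpha+(-\alpha)$. Without it, $\cone{\gamma}(0)=-\infty$ and $\gamma$ is not valid, hence not tight. So necessity is immediate from the definition of $\cone{\gamma}$. The equivalence with the coordinate bullet lists is a routine enumeration: list all pairs or quadruples of roots in $\RSA,\RSB,2\RSC,\RSD$ whose sum or half-sum is a root, and reduce by the sign symmetry $s_{(\pi,\sgn)}$.

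\emph{Sufficiency.} We aim to show that the local inequalities imply $\gamma(\beta)\le\sum\mu(\alpha)\gamma(\alpha)$ for every conical representation. The approach is by induction on a potential such as $\sum_\alpha \mu(\alpha)\|\alpha\|_1$, or on the size of the support. Take a minimal counterexample, i.e., a representation $\beta=\sum\mu(\alpha)\alpha$ minimizing $\sum\mu(\alpha)\gamma(\alpha)-\gamma(\beta)<0$ with fewest terms.

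In type A, for $\beta = e_i-e_k$, decompose $\mu$ as a flow from $i$ to $k$ in the complete digraph, with arc $e_a-e_b$ carrying $\mu$. Cycles have nonnegative cost by the triangle inequalities together with $\gamma(\alpha)+\gamma(-\alpha)\ge0$, since a cycle can be shortcut until it has length two. A path from $i$ to $k$ shortcuts to the single arc by repeated triangle inequalities. This is the classical Murota argument \cite[Section~5.6]{Murota2003-yb}.

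For types B, C, D we plan to use the standard ``bidirected graph'' encoding. Vertices are $[n]$, and a root $\sigma_ie_i+\sigma_je_j$ is an edge with signs at its ends; in types B and C the roots $\sigma_i e_i$ (resp. $2\sigma_ie_i$) are half-edges. A conical combination equal to $\beta$ then decomposes, via a signed-graph flow decomposition, into
\begin{itemize}
\item an alternating path realizing $\beta$, and
\item closed alternating walks (possibly using half-edges or odd cycles, which produce the halving phenomena) realizing $0$.
\end{itemize}
Each elementary piece is then handled by the local rules. The sum rule shortcuts two consecutive compatible edges. The half-sum rules of B and C replace a pair $\sigma_ie_i\pm\sigma_je_j$, or a pair of half-edges, by the intermediate root. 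In type D, the four-term rule handles an ``odd'' walk that returns to a vertex with flipped sign; such a walk is forced to appear twice with coefficient $\tfrac12$ when $\beta=\sigma_ie_i+\sigma_je_j$ is reached through $\sigma_ie_i$ and $\sigma_je_j$, which are not themselves roots in type D.

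\emph{Main obstacle.} We expect the main obstacle to be sufficiency in types C and D. There the extreme representations involve coefficient $\tfrac12$ and blossom-like odd cycles, so a plain path decomposition does not suffice. We would first try a vertex-of-LP argument: classify the extreme representations of each root as those with linearly independent supports, checking that up to $W_\RS$-symmetry they are exactly the lists above composed with shortcuts. We would then show inductively that every extreme representation reduces to a listed inequality by replacing two or four terms with one.
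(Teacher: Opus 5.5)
Your necessity argument and your type~A argument (one $i$--$k$ path plus cycles, each shortcut by the triangle inequalities) are fine. The reduction to extreme representations can also be made rigorous, but your justification skips a step. The minimum of $\sum_\alpha\mu(\alpha)\gamma(\alpha)$ over $\{\mu\geq 0 : \sum_\alpha\mu(\alpha)\alpha=\beta\}$ is attained at a vertex only if it is bounded, i.e.\ only if $(\cone{\gamma})(0)\geq 0$. In the if-direction this validity must be proved, not assumed. It is not implied by $\gamma(\alpha)+\gamma(-\alpha)\geq 0$ alone: in type~D the circuit $(e_1+e_2)+(e_1-e_2)+2(-e_1-e_3)+(e_3+e_4)+(e_3-e_4)=0$ contains no antipodal pair, and its nonnegativity needs the four-term rule. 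Validity does follow once \emph{all} extreme representations of all roots are checked, because a circuit with one root $\alpha_0$ removed is, after rescaling, an extreme representation of $-\alpha_0$.

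The genuine gap is the crux itself. For types B, C, D you never show that every nontrivial representation of a root admits a cost-nonincreasing move from the listed inequalities. The assertion that each piece of a signed-graph decomposition ``is handled by the local rules'' is exactly the content of the if-direction. Neither the classification of extreme supports nor the check that the listed moves cover all obstructions is carried out; for instance, nothing shows that in type~D a failed merge always leads to an antipodal pair or to the four-term pattern. You yourself defer this as the ``main obstacle''. In addition, your suggested potential $\sum_\alpha\mu(\alpha)\|\alpha\|_1$ does not decrease under the non-cancelling moves that types B and C require. Examples are $\sigma_ie_i,\sigma_je_j\mapsto\sigma_ie_i+\sigma_je_j$ in type~B, and $2\sigma_ie_i,2\sigma_je_j\mapsto\sigma_ie_i+\sigma_je_j$ with doubled weight in type~C.

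The paper fills this gap with a coordinate argument rather than a graph classification. If tightness fails at $\hat\alpha$, it fixes a representation of cost $r<\gamma(\hat\alpha)$. It then takes an optimal $\hat\mu$ of the LP minimizing $\sum_\alpha\mu(\alpha)(\|\alpha-\hat\alpha\|_1+\|\alpha\|_1)$ subject to $\mu\geq 0$, $\sum_\alpha\mu(\alpha)\alpha=\hat\alpha$, and $\sum_\alpha\mu(\alpha)\gamma(\alpha)\leq r$. This LP has three useful features:
\begin{itemize}
\item it is bounded below by $0$ whether or not $\gamma$ is valid, so the boundedness issue never arises;
\item $\supp{\hat\mu}\neq\{\hat\alpha\}$, since $\gamma(\hat\alpha)>r$;
\item the term $\|\alpha-\hat\alpha\|_1$ makes the non-cancelling moves strictly lower the objective too.
\end{itemize}
Next it takes $\alpha_1\in\supp{\hat\mu}\setminus\{\hat\alpha\}$ with a coordinate $i$ where its sign disagrees with that of $\hat\alpha$. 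Balance in coordinate $i$ forces a counterpart $\alpha_2\in\supp{\hat\mu}$ with the opposite sign there, and if $\alpha_1+\alpha_2\in\RS\cup\{0\}$ one cancels or merges. When this fails, the root system is rigid:
\begin{itemize}
\item in type~B the pair is, up to symmetry, $\{e_i+\sigma e_j,-e_i+\sigma e_j\}$, and the half-sum move applies;
\item if no disagreeing coordinate exists (types B and C), the support lies in $\{\hat\alpha,\sigma_ie_i,\sigma_je_j\}$, resp.\ $\{\hat\alpha,2\sigma_ie_i,2\sigma_je_j\}$, and the remaining rules apply;
\item in type~D, chasing counterparts yields either an antipodal pair or, up to symmetry, the configuration $e_2\pm e_1$, $e_3\pm e_4$ with $\hat\alpha=e_2+e_3$, which the four-term rule handles.
\end{itemize}
This local case analysis is what your proposal lacks. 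With it supplied, your extreme-representation framework could be completed as well.
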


The above proposition is of independent interest, 
but lies outside the main line of development and is not used in the proofs of our other results.
We therefore defer its proof to \Cref{sec:proof:prop:distance-function}.

\subsubsection{Basic properties of L-convex polyhedra}
We here summarize basic properties of L-convex polyhedra.
\begin{lemma}\label{lem:face:L-poly}
    Every nonempty face of an (integral) L-convex polyhedron is an (integral) L-convex polyhedron.
\end{lemma}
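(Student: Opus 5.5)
Let $P = \D{\gamma}$ be an L-convex polyhedron with $\gamma$ a valid distance function, and let $F$ be a nonempty face of $P$. The plan is to write $F$ by turning some of the defining inequalities of $P$ into equalities. Each equality is then rewritten as two inequalities indexed by roots, using the fact that $\RS = -\RS$.

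First, $F$ is a face of a polyhedron given by finitely many inequalities $\inpr{\alpha}{x} \leq \gamma(\alpha)$ for $\alpha \in \dom{\gamma}$. By the standard description of faces of polyhedra (see e.g.~\cite[Section~8.3]{Schrijver2000-xk}), there is a subset $T \subseteq \dom{\gamma}$ such that
\begin{align}
F = \Set{ x \in P }{ \inpr{\alpha}{x} = \gamma(\alpha) \ (\forall \alpha \in T) }.
\end{align}
Each equality $\inpr{\alpha}{x} = \gamma(\alpha)$ with $\alpha \in T$ is equivalent, given the inequality already imposed by $P$, to the extra inequality $\inpr{-\alpha}{x} \leq -\gamma(\alpha)$. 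Since $-\alpha \in \RS$, this is again of the required form.

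Define $\gamma_F \colon \RS \to \ER$ by
\begin{align}
\gamma_F(\beta) \coloneqq \min\{\gamma(\beta),\ -\gamma(-\beta) \text{ if } -\beta \in T\},
\end{align}
where the second entry is included only when $-\beta \in T$. Then $\D{\gamma_F} = F$. Since $F$ is nonempty, $\gamma_F$ is valid, so $F$ is an L-convex polyhedron.

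For the integral case, we may assume from the start that $\gamma$ is integer-valued. This is allowed because an integral $P$ equals $\D{\gamma'}$ for some $\gamma' \colon \RS \to \EZ$, and $\D{\gamma'}$ is the same polyhedron, so we may apply the face description above to $\gamma'$. With $\gamma$ integer-valued, $\gamma_F$ is a minimum of integers and $+\infty$, hence integer-valued as well. Therefore $F = \D{\gamma_F}$ is an integral L-convex polyhedron.

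The only step needing care is the face description: we must make sure that a face, and not only an exposed face, is obtained by tightening a subset of the defining inequalities. For polyhedra every face is exposed, so this holds. Beyond that, the argument is bookkeeping.
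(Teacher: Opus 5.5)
Your proof is correct and follows the same route as the paper. You describe the nonempty face by tightening a subset of the finitely many defining inequalities of $P=\mathbf{D}(\gamma)$, first passing to an integer-valued representative of $\gamma$ in the integral case. You then read off a new valid, and in the integral case integer-valued, distance function. Two points you make explicit are only implicit in the paper: each equality $\langle\alpha,x\rangle=\gamma(\alpha)$ becomes the root-indexed inequality $\langle-\alpha,x\rangle\le-\gamma(\alpha)$ because $-\alpha\in\Phi$, and every face of a polyhedron is exposed, so the standard face description applies.
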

\begin{proof}
    Let $P$ be an L-convex polyhedron
    and suppose that $P$ admits an outer-representation $P = \D{\gamma}$
    for a distance function $\funcdoms{\gamma}{\RS}{\ER}$.
    Then, by a well-known fact (see e.g.,~\cite[Chapter~8.3]{Schrijver2000-xk}) on the faces of a polyhedron,
    for any nonempty face $F$ of $P$, there is a subset $\RS' \subseteq \dom{\gamma}$
    such that
    \begin{align}
        F = \Set{ x \in V }{ \inpr{\alpha}{x} = \gamma(\alpha) \ (\forall \alpha \in \RS'),\ \inpr{\alpha}{x} \leq \gamma(\alpha) \ (\forall \alpha \in \RS \setminus \RS') }.
    \end{align}
    Hence, $F$ is representable as $F = \D{\gamma'}$ for some distance function $\gamma'$, implying that $F$ is an L-convex polyhedron.
    In particular, if $P$ is integral, then we can take $\gamma'$ as integer-valued.
    Therefore, $F$ is also integral.
\end{proof}

The definition of L-convex polyhedra immediately implies the following:
\begin{lemma}\label{lem:intersection-L-convex}
    The intersection of arbitrarily many (integral) L-convex polyhedra is an (integral) L-convex polyhedron if it is nonempty.
\end{lemma}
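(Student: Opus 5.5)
Given a family $\{P_j\}_{j\in J}$ of L-convex polyhedra, I would write each as $P_j=\D{\gamma_j}$ with $\gamma_j$ valid, and define the pointwise infimum $\gamma(\alpha)\coloneqq\inf_{j\in J}\gamma_j(\alpha)$ for $\alpha\in\RS$. Then directly from the definition of $\D{\cdot}$,
\begin{align}
\bigcap_{j\in J} P_j=\Set{x\in V}{\inpr{\alpha}{x}\leq\gamma_j(\alpha)\ (\forall j\in J,\ \forall\alpha\in\RS)}=\D{\gamma}.
\end{align}

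The first point to check is that $\gamma$ takes values in $\ER$, i.e.\ never equals $-\infty$. Since the intersection is assumed nonempty, fix a point $x$ in it. Then $\gamma_j(\alpha)\geq\inpr{\alpha}{x}$ for every $j$, so $\gamma(\alpha)\geq\inpr{\alpha}{x}>-\infty$. Because $\RS$ is finite, $\D{\gamma}$ is cut out by at most finitely many inequalities, so it is a polyhedron. It is nonempty by hypothesis, hence $\gamma$ is valid and the intersection is an L-convex polyhedron.

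For the integral case, I may take each $\gamma_j$ integer-valued, so $\gamma_j\colon\RS\to\EZ$. Then $\gamma(\alpha)$ is an infimum of integers bounded below by $\inpr{\alpha}{x}$. An infimum of integers bounded below is attained, so it is itself an integer (or $+\infty$). Thus $\gamma$ is integer-valued and valid, and $\D{\gamma}$ is an integral L-convex polyhedron.

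The only delicate point is this attainment of the infimum, which needs the lower bound supplied by nonemptiness; without it $\gamma$ could fail to be integral or could reach $-\infty$. Everything else is immediate from the definitions.
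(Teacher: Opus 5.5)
Your proposal is correct and matches the paper's proof: you take the pointwise infimum $\gamma=\inf_j\gamma_j$, use a point of the nonempty intersection to rule out the value $-\infty$, and in the integral case use that each $\gamma_j$ may be chosen integer-valued. Your remark that an infimum of integers bounded below is attained (hence integral) spells out the step the paper states only as ``$\gamma$ is also integer-valued.''
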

\begin{proof}
    Let $\{ P_\lambda \}_{\lambda \in \Lambda}$ be a family of L-convex polyhedra, where $\Lambda$ is an arbitrary index set,
    and $P \coloneqq \bigcap_{\lambda \in \Lambda} P_\lambda$.
    Suppose that $P$ is nonempty.
    For each $\lambda \in \Lambda$, there is a distance function $\funcdoms{\gamma_\lambda}{\RS}{\ER}$ such that $P_\lambda = \D{\gamma_\lambda}$.
    Then, we have $P = \bigcap_{\lambda \in \Lambda} \D{\gamma_\lambda} = \D{\gamma}$, where $\gamma \coloneqq \inf_{\lambda \in \Lambda} \gamma_\lambda$.
    Since $P$ is nonempty, $\gamma$ never takes $-\infty$ and $\gamma$ is valid.
    Thus, $P$ is an L-convex polyhedron.

    If each $P_\lambda$ is integral, then we can take $\gamma_\lambda$ to be integer-valued for each $\lambda \in \Lambda$.
    Hence, $\gamma \coloneqq \inf_{\lambda \in \Lambda} \gamma_\lambda$ is also integer-valued, which implies that $P$ is an integral L-convex polyhedron.
\end{proof}

The following provides the characterization of L-convex polyhedra as the L-convexity of its tangent cone,
which is the counterpart of the definition of M-convex polyhedra (see \Cref{subsec:M-poly-L-sub} below).
\begin{lemma}\label{lem:L-convex-tcone}
    A nonempty closed convex set $C$ is an L-convex polyhedron if and only if, for any $x \in C$,
    the tangent cone $\tcone{C}{x}$ of $C$ at $x$ is an L-convex cone.
\end{lemma}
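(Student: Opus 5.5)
We prove the two implications separately. For the only-if direction we compute the tangent cone directly from the outer representation. For the if direction we use the polyhedrality criterion \Cref{lem:tcone-poly-chara} together with the intersection formula \Cref{lem:clconv-tcone}.

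\emph{Only-if part.} Let $C = \D{\gamma}$ for a valid distance function $\gamma$, and fix $x \in C$. Since $C$ is a polyhedron given by finitely many inequalities indexed by $\dom{\gamma} \subseteq \RS$, the standard description of tangent cones of polyhedra gives
\begin{align}
\tcone{C}{x} = \Set{ d \in V }{ \inpr{\alpha}{d} \leq 0 \ (\forall \alpha \in \RS_x) },
\qquad
\RS_x \coloneqq \Set{ \alpha \in \dom{\gamma} }{ \inpr{\alpha}{x} = \gamma(\alpha) }.
\end{align}
This is the active-constraint description. Its inclusion $\supseteq$ holds because all inactive constraints remain satisfied under a small step $x + \eps d$; its inclusion $\subseteq$ is immediate. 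The right-hand side has exactly the form~\eqref{eq:L-convex-cone} with $\RS' = \RS_x \subseteq \RS$, so $\tcone{C}{x}$ is an L-convex cone.

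\emph{If part.} Assume every tangent cone $\tcone{C}{x}$ is L-convex. First, note that $\RS$ is finite, so there are only finitely many subsets $\RS' \subseteq \RS$. Hence there are only finitely many L-convex cones, and each of them is polyhedral. By \Cref{lem:tcone-poly-chara}, $C$ is therefore a polyhedron.

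To get the outer representation indexed by $\RS$, we use \Cref{lem:clconv-tcone}. For each $x \in C$ choose $\RS_x \subseteq \RS$ with $\tcone{C}{x} = \Set{ d }{ \inpr{\alpha}{d} \leq 0 \ (\alpha \in \RS_x) }$. Then
\begin{align}
x + \tcone{C}{x} = \Set{ y \in V }{ \inpr{\alpha}{y} \leq \inpr{\alpha}{x} \ (\forall \alpha \in \RS_x) },
\end{align}
and intersecting over all $x \in C$ gives $C = \D{\gamma}$, where
\begin{align}
\gamma(\alpha) \coloneqq \inf \Set{ \inpr{\alpha}{x} }{ x \in C,\ \alpha \in \RS_x },
\end{align}
with the convention $\gamma(\alpha) = +\infty$ when no such $x$ exists.

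It remains to check that $\gamma$ is a valid distance function. For each $\alpha$ with $\gamma(\alpha) < +\infty$, every $x \in C$ with $\alpha \in \RS_x$ satisfies $\inpr{\alpha}{y} \leq \inpr{\alpha}{x}$ for all $y \in C$. This forces all such $x$ to share the same value $\inpr{\alpha}{x} = \max_{C} \inpr{\alpha}{\cdot}$, so the infimum is attained and $\gamma(\alpha) > -\infty$. Consequently $\D{\gamma} = C$ is a nonempty polyhedron, $\gamma$ is valid, and $C$ is an L-convex polyhedron.

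The only delicate point is the bookkeeping in this last step: confirming that the infimum defining $\gamma$ is never $-\infty$ and that the intersection formula really produces $\D{\gamma}$. The observation that all points $x$ with $\alpha \in \RS_x$ lie on the common supporting hyperplane $\Set{ y }{ \inpr{\alpha}{y} = \max_C \inpr{\alpha}{\cdot} }$ disposes of both.
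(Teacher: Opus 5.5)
Your proof is correct and follows the paper's argument. The only-if part is the same active-constraint description of $\tcone{C}{x}$. The if part combines \Cref{lem:clconv-tcone} with what is exactly the proof of \Cref{lem:intersection-L-convex} written out inline; the paper just notes that each $x + \tcone{C}{x}$ is an L-convex polyhedron and cites that lemma.

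Your preliminary appeal to \Cref{lem:tcone-poly-chara} is harmless but redundant, since the explicit representation $C = \D{\gamma}$ with $\dom{\gamma} \subseteq \RS$ finite already shows that $C$ is a polyhedron. Likewise, finiteness of $\gamma$ follows at once from $\gamma(\alpha) \geq \inpr{\alpha}{y}$ for any fixed $y \in C$.
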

\begin{proof}
    (Only-if part).
    Since $C$ is an L-convex polyhedron, there exists a valid function $\funcdoms{\gamma}{\RS}{\ER}$ such that $C = \D{\gamma}$.
    Take any $x \in C$.
    It follows from \cite[Proposition~3.5.2]{Matthias-Beck2018-lk} that
    the tangent cone $\tcone{C}{x}$ is of the form $\Set{ y \in V }{ \inpr{\alpha}{y} \leq 0 \ (\forall \alpha \in \RS') }$,
    where $\RS' \coloneqq \Set{ \alpha \in \dom{\gamma} }{ \inpr{\alpha}{x} = \gamma(\alpha) }$.
    Thus, $\tcone{C}{x}$ is an L-convex cone.

    (If part).
    For each $x \in C$,
    the set $x + \tcone{C}{x}$ forms an L-convex polyhedron,
    since $\tcone{C}{x}$ is an L-convex cone.
    By \Cref{lem:clconv-tcone},
    we have $C = \bigcap_{x \in C} (x+\tcone{C}{x})$.
    Thus, $C$ is an L-convex polyhedron by \Cref{lem:intersection-L-convex}.
\end{proof}

\begin{lemma}\label{lem:union-closed:L}
  The union of arbitrarily many integral L-convex polyhedra is closed.
\end{lemma}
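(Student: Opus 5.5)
The plan is to reduce the statement to local finiteness of the hyperplane arrangement $\affcalH(\RS)$. Suppose first that $\RS$ is irreducible; the reducible case is handled at the end. Let $\{P_\lambda\}_{\lambda\in\Lambda}$ be a family of integral L-convex polyhedra and $U\coloneqq\bigcup_{\lambda} P_\lambda$. By \Cref{lem:chara-integral-L-convex-poly}, each $P_\lambda$ is the geometric realization $\geo{\CC_\lambda}$ of a nonempty convex subcomplex $\CC_\lambda$ of $\euCC$. Hence $U=\geo{\CC'}$ with $\CC'\coloneqq\bigcup_\lambda \CC_\lambda$. This union is again a subcomplex of $\euCC$, since closure under faces is preserved by unions. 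So it suffices to show that the geometric realization of any subcomplex of $\euCC$ is closed.

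Take a subcomplex $\CC'$ and a point $x\in\cl{\geo{\CC'}}$. I use local finiteness: every bounded set, in particular a closed ball $B$ around $x$ of radius $1$, meets only finitely many hyperplanes $H^0_{\alpha,k}$. This holds because $\RS$ is finite and, for each $\alpha$, only finitely many integers $k$ satisfy $\min_B\inpr{\alpha}{\cdot}\le k\le\max_B\inpr{\alpha}{\cdot}$. Consequently $B$ meets only finitely many cells of $\euCC$, because a cell is determined by its sign sequence, and on $B$ only finitely many coordinates of that sequence are non-constant. Therefore $\geo{\CC'}\cap \intr B$ is covered by finitely many cells $A_1,\dots,A_m\in\CC'$. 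Since $x$ is a limit point, $x\in\cl{A_j}$ for some $j$.

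Next, $\cl{A_j}$ is the union of the faces of $A_j$. This follows from \Cref{cor:type-function}~(2): $\cl{A_j}$ is obtained by relaxing strict inequalities to non-strict ones, and each point of $\cl{A_j}$ lies in the relative interior of a unique face, which is a cell $A'\preceq A_j$. Because $\CC'$ is closed under $\preceq$, we get $A'\in\CC'$, and hence $x\in\geo{\CC'}$. This proves that $\geo{\CC'}$ is closed.

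If $\RS$ is reducible, I would carry out the same argument with the L-convex cones and the spherical complex $\spCC$, whose arrangement is finite, or decompose along the irreducible components. In either case the only needed facts are local finiteness of the relevant arrangement and closure of subcomplexes under faces. The main obstacle is making the step ``a point of the closure lies in a face of a nearby cell'' rigorous. I would justify it through the sign-sequence description. If $x_t\to x$ with each $x_t$ in the fixed cell $A_j$, then each sign $\sigma_i(x)$ either equals $\sigma_i(A_j)$ or is $0$. By \Cref{lem:sign-sequence}~(1), this means $A_x\preceq A_j$.
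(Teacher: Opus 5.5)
Your argument is correct for irreducible $\RS$, but it takes a different route from the paper. You go through \Cref{lem:chara-integral-L-convex-poly} and local finiteness of $\affcalH(\RS)$. The paper instead works directly with the outer descriptions $\D{\gamma_\lambda}$, where each $\gamma_\lambda$ is integer-valued. For $x\notin U$, each $\lambda$ has a root $\alpha_\lambda$ with $\gamma_\lambda(\alpha_\lambda)<\inpr{\alpha_\lambda}{x}$. Since $\RS$ is finite and these right-hand sides are integers, there is a single $\delta>0$ with $\inpr{\alpha_\lambda}{x}-\gamma_\lambda(\alpha_\lambda)\ge\delta$ for all $\lambda$, so a ball around $x$ misses $U$. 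Also, the step you flag as the main obstacle is not needed: each $A_j$ lies in some $P_\lambda$, which is closed, so $x\in\cl{A_j}\subseteq P_\lambda\subseteq U$.

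The genuine gap is the reducible case, which the lemma covers. In \Cref{sec:DCA:continuous}, $\RS$ need only have classical irreducible components, and integral L-convex polyhedra are defined through integer-valued distance functions rather than through $\euCC$, which exists only for irreducible $\RS$. The lemma is stated in that generality and is invoked in the proof of \Cref{prop:loc-poly-LM-convex-sufficient-condition}~(1).

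Your first suggestion for this case fails. $\Set{x\in V}{\inpr{\alpha}{x}\le 1}$ is an integral L-convex polyhedron but not a cone, hence not a union of cells of $\lincalH(\RS)$. So finiteness of the spherical arrangement does not help.

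Your second suggestion is only a pointer. The set $\bigcup_\lambda\prod_i P^{(i)}_\lambda$ is not the product of the componentwise unions, so you cannot invoke the irreducible case factor by factor. Moreover, the product cells are not simplices, so \Cref{lem:chara-integral-L-convex-poly} and \Cref{lem:sign-sequence} do not apply as stated.

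The clean fix is to bypass $\euCC$ entirely. The arrangement $\affcalH(\RS)$ is defined and locally finite for every root system. If $\gamma$ is integer-valued, each half-space defining $\D{\gamma}$ is bounded by a hyperplane $H^0_{\alpha,\gamma(\alpha)}\in\affcalH(\RS)$, so every cell of $\affcalH(\RS)$ is either contained in $\D{\gamma}$ or disjoint from it. Then only finitely many cells meet your ball $B$, each one meeting $U$ lies in some closed $P_\lambda$, and so $x$ lies in a finite union of the $P_\lambda$. Alternatively, the paper's uniform-gap argument needs no cell structure at all, and it carries over verbatim to \Cref{lem:union-closed:M}.
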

\begin{proof}
  Let $\{\D{\gamma_\lambda}\}_{\lambda \in \Lambda}$ be an arbitrary family of integral L-convex polyhedra,
  where $\funcdoms{\gamma_\lambda}{\RS}{\Z}$ is an integer-valued distance function for each $\lambda \in \Lambda$,
  and $L \coloneqq \bigcup_{\lambda \in \Lambda} \D{\gamma_\lambda}$.
  Take any $x \in V \setminus L$.
  Then, for all $\lambda \in \Lambda$,
  there is a root $\alpha_\lambda \in \RS$ such that $\gamma_\lambda(\alpha_\lambda) \in \Z$ and $\inpr{\alpha_\lambda}{x} > \gamma_\lambda(\alpha_\lambda)$.
  Since $\Set{ \alpha_\lambda }{ \lambda \in \Lambda } \subseteq \RS$ is a finite set,
  we have $\card{\Set{ \inpr{\alpha_\lambda}{x} }{ \lambda \in \Lambda }} < +\infty$.
  By $\gamma_\lambda(\alpha_\lambda) \in \Z$,
  there is $\delta > 0$ such that $\inpr{\alpha_\lambda}{x} - \delta > \gamma_\lambda(\alpha_\lambda)$ for any $\lambda \in \Lambda$.
  Therefore,
  there is a sufficiently small $\eps > 0$
  such that, for all $d \in V$ with $\|d\| < \eps$ and $\lambda \in \Lambda$,
  we have $\inpr{\alpha_\lambda}{x + d} > \gamma_\lambda(\alpha_\lambda)$.
  This implies that the open ball $B(x; \eps)$ centered at $x$ with radius $\eps$ is contained in $V \setminus L$.
  Hence, $L$ is closed.
\end{proof}

\subsection{M-convex polyhedra and L-sublinear functions}\label{subsec:M-poly-L-sub}
A nonempty set $P \subseteq V^*$ is called an \emph{M-convex polyhedron of type $\RS$}
if it is a closed convex set such that, for any $p \in P$,
the tangent cone $\tcone{P}{p}$ of $P$ at $p$ is an M-convex cone (cf. \Cref{lem:L-convex-tcone}).
An M-convex polyhedron $P$ is actually a polyhedron.
Indeed, since the number of M-convex cones of type $\RS$ is bounded by that of subsets of $\RS$, particularly, a finite value,
the set $\Set{\tcone{P}{p} }{ p \in P }$ is a finite set,
which implies that $P$ is a polyhedron by \Cref{lem:tcone-poly-chara}.
An M-convex polyhedron $P$ is said to be \emph{integral} 
if it is $\Mdom$-integral, i.e., $P = \conv(P \cap \Mdom)$.
We note that an M-convex cone is an integral M-convex polyhedron.
Indeed, for an M-convex cone $K$, which is representable as $K = \cone{\RS'}$ for some $\RS' \subseteq \RS$, and $p = \sum_{i = 1}^k \mu_i \alpha_i \in K$ with $\alpha_1, \alpha_2, \dots, \alpha_k \in \RS'$ and $\mu_1, \mu_2, \dots, \mu_k > 0$,
we have $\tcone{K}{p} = \cone(\RS' \cup \{ -\alpha_1, -\alpha_2, \dots, -\alpha_k \})$;
this implies that $K$ is an M-convex polyhedron.
Moreover, since $K = \conv\{0\} + \cone{\RS'}$,
it is $\Mdom$-integral by \Cref{lem:integral-polyhedron}.
We denote by $\Mpoly{\R}$ (resp. $\Mpoly{\Z|\R}$) the family of M-convex polyhedra (resp. integral M-convex polyhedra).

A polyhedral sublinear function $\funcdoms{\sigma}{V}{\ER}$ is called an \emph{L-sublinear function of type $\RS$} if every member of $\fan(\sigma)$ is an L-convex cone (cf. \Cref{lem:M-sublinear}).
An L-sublinear function $\sigma$ is said to be \emph{integral}
if there is a finite subset $B \subseteq \Mdom$ such that
$\sigma$ is representable as
\begin{align}\label{eq:integral-Lsub}
    \sigma(x) =
        \max{\Set{ \inpr{p}{x} }{ p \in B }}
\end{align}
for $x \in \dom{\sigma}$.
In other words, a polyhedral sublinear function $\funcdoms{\sigma}{V}{\ER}$ is an integral L-sublinear function if and only if every member of $\fan(\sigma)$ is an L-convex cone
and, for any maximal $K \in \fan(\sigma)$, there is $p \in \Mdom$ such that $K = \argmin(\sigma - p)$.
We denote by $\Lsub{\R}{\R}$ (resp. $\Lsub{\R}{\Z|\R}$) the family of L-sublinear functions (resp. integral L-sublinear functions).

When we emphasize the integrality of an M-convex polyhedron (resp. an L-sublinear function) of type $\RS$,
we specifically use the term \emph{$\RS$-integral}.

For a root system $\RS_-$ with $\RS_- \precsim \RS$,
although an M-convex polyhedron (resp. an L-sublinear function) of type $\RS_-$ is M-convex (resp. L-sublinear) of type $\RS$,
a $\RS_-$-integral M-convex polyhedron (resp. a $\RS_-$-integral L-sublinear function) is not necessarily $\RS$-integral.

As in the case of L-convex polyhedra and M-sublinear functions (\Cref{thm:conjugate-Lpoly-Msub}),
we establish the one-to-one correspondence between the (integral) M-convex polyhedra and the (integral) L-sublinear functions as follows.
Here,
for an L-sublinear function $\funcdoms{\sigma}{V}{\ER}$,
we denote the polyhedron $\bS{\sigma}$ by $\B{\sigma}$ (cf.~\cite[Chapter~4.3]{Murota2003-yb}).
\begin{theorem}\label{thm:conjugate-Mpoly-Lsub}
    The support function $\spt{P}$ of an (integral) M-convex polyhedron $P$ is an (integral) L-sublinear function, and the set $\B{\sigma}$ arising from an L-sublinear function $\sigma$ is an (integral) M-convex polyhedron.
\end{theorem}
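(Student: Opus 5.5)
We will use the polyhedral correspondence of \Cref{lem:conjugate-poly-sublinear} and \Cref{lem:fan-support}: for a nonempty polyhedron $P \subseteq V^*$ with $\sigma = \spt{P}$ we have $P = \bS{\sigma}$, $\fan(\sigma) = \Set{ (\tcone{P}{p})^* }{ p \in P }$, and $\tcone{P}{p} = (\argmin(\sigma - p))^*$. Combined with \Cref{thm:L-M-polar} (polars exchange L- and M-convex cones), this handles the non-integral parts. If $P$ is M-convex, then every $\tcone{P}{p}$ is an M-convex cone. Hence every member of $\fan(\sigma)$ is an L-convex cone, so $\sigma$ is L-sublinear. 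Recall that $P$ is a polyhedron, so $\sigma$ is a proper polyhedral sublinear function. Conversely, let $\sigma$ be L-sublinear; we treat it as proper so that $\B{\sigma}$ is nonempty. For each $p \in \B{\sigma}$, the set $\argmin(\sigma - p)$ is nonempty, since it contains $0$, and so belongs to $\fan(\sigma)$. It is therefore an L-convex cone, and its polar $\tcone{\B{\sigma}}{p}$ is M-convex. Thus $\B{\sigma}$ is an M-convex polyhedron.

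For integrality in the direction from polyhedra to functions, suppose $P = \conv(P \cap \Mdom)$. By \Cref{lem:integral-polyhedron}, $P = \conv{X} + \cone{Y}$ with finite $X, Y \subseteq \Mdom$. The outer representation~\eqref{eq:poly-sublinear-outer} then gives $\sigma(x) = \max_{p \in X} \inpr{p}{x}$ on $\dom{\sigma} = (\cone{Y})^*$. This is exactly the form~\eqref{eq:integral-Lsub} with $B = X$.

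For the converse, suppose $\sigma(x) = \max_{p \in B} \inpr{p}{x}$ on $\dom{\sigma}$ with $B \subseteq \Mdom$ finite. We aim to show that $\B{\sigma}$ is $\Mdom$-integral by showing that every minimal face of $\B{\sigma}$ contains a point of $\Mdom$. Once that is done, we set $X \subseteq \Mdom$ to consist of one lattice point from each minimal face. The recession cone of $\B{\sigma}$ is $(\dom{\sigma})^*$, which is polar to an L-convex cone and hence of the form $\cone{\RS'}$ with $\RS' \subseteq \RS \subseteq \Mdom$. We can then write $\B{\sigma} = \conv{X} + \cone{\RS'}$ and conclude by \Cref{lem:integral-polyhedron}.

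To find lattice points on minimal faces, let $F$ be a minimal face and take $x \in \intr{\dom{\sigma}}$ exposing it, i.e.\ $F = \argmax_{p \in \B{\sigma}} \inpr{p}{x}$; this choice is possible since minimal faces correspond to maximal cones of $\fan(\sigma)$. We may choose $x$ to lie in the interior of a maximal cone $K \in \fan(\sigma)$. On $K$ the function $\sigma$ is linear and equals $\inpr{p}{\cdot}$ for some $p \in B$. Since $\sigma \ge \inpr{p}{\cdot}$ on $\dom{\sigma}$, we get $p \in \B{\sigma}$. Because $\inpr{p}{x} = \sigma(x)$, this $p$ lies in $F$, and $p \in \Mdom$.

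The main obstacle is this last step. One has to check that every point of $\B{\sigma}$ satisfies $\inpr{q}{\cdot} \le \sigma$ on all of $V$, which holds trivially off $\dom{\sigma}$. One also has to confirm that minimal faces are exactly those exposed by interior points of maximal cones, including the case of a nontrivial lineality space $\dom{\sigma}^\perp$, where minimal faces are translates of that space. Both should follow from \Cref{lem:fan-support} by matching maximal cones of $\fan(\sigma)$ with minimal faces $F$ via $\tcone{P}{p}$ for $p \in \relint F$.
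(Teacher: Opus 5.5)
Your handling of the non-integral correspondence, and of integrality in the direction from $P$ to $\spt{P}$, matches the paper's proof. The difference lies in the converse integrality.

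\textbf{The paper's route.} The paper notes that $\dom{\sigma}$ is an L-convex cone (\Cref{lem:L-sublinear-effective-domain}), so $\dom{\sigma} = (\cone{\RS'})^*$ for some $\RS' \subseteq \RS$. The hypothesis $\sigma(x) = \max_{p \in B} \inpr{p}{x}$ on $\dom{\sigma}$ is then literally an outer representation of the form \eqref{eq:poly-sublinear-outer} with $X = B$ and $Y = \RS'$. \Cref{lem:conjugate-poly-sublinear} immediately gives $\B{\sigma} = \conv{B} + \cone{\RS'}$, and integrality follows from \Cref{lem:integral-polyhedron}.

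\textbf{Your route.} The minimal-face argument is valid but longer, and the obstacle you flag is largely of your own making. You have already observed that every $p \in B$ lies in $\B{\sigma}$. Every nonempty face $F$ of a polyhedron is exposed by some $x$, necessarily with $x \in \dom{\sigma}$. Any $p \in B$ attaining $\max_{p' \in B}\inpr{p'}{x} = \sigma(x)$ therefore lies in $F$. You need no maximal cones, no interior points, and no linearity of $\sigma$ on $K$.

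\textbf{Points to fix.}
\begin{itemize}
\item As written, choosing $x \in \intr{\dom{\sigma}}$ fails whenever $\dom{\sigma}$ is not full-dimensional, which is exactly the lineality case you mention. You would at least have to replace $\intr$ by $\relint$.
\item The claim that $(\dom{\sigma})^*$ is generated by roots requires $\dom{\sigma}$ to be an L-convex cone. This does not follow directly from the definition of L-sublinearity; justify it by \Cref{lem:L-sublinear-effective-domain}.
\end{itemize}

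Once you have $B \subseteq \B{\sigma}$ and this description of the recession cone, comparing support functions finishes the argument without any face decomposition. Indeed $\spt{\conv{B} + \cone{\RS'}} = \sigma$, so $\B{\sigma} = \conv{B} + \cone{\RS'}$; this is precisely the paper's shortcut.
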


The following lemma is an easy observation on L-sublinear functions,
which is used in the proof of \Cref{thm:conjugate-Mpoly-Lsub}.
\begin{lemma}\label{lem:L-sublinear-effective-domain}
    The effective domain of an L-sublinear function is an L-convex cone.
\end{lemma}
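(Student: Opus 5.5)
The effective domain of an L-sublinear function should be the union of the cones in its fan, which we can compute directly. Let $\sigma\colon V \to \ER$ be L-sublinear. Since $\sigma$ is polyhedral sublinear, \Cref{lem:quasi-polyhedral-convex-function}~(1) (or the general fact that $\fan(\sigma)$ is a polyhedral fan with support $\dom\sigma$) gives $\dom{\sigma} = \geo{\fan(\sigma)}$, and $\dom{\sigma}$ is a polyhedral cone. So $\dom{\sigma}$ is a convex cone that is a finite union of L-convex cones, each of which is the geometric realization of a nonempty convex subcomplex of $\spCC$ by \Cref{lem:chara-L-convex-cone}.

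The plan is to show that $\dom\sigma$ is itself the geometric realization of a subcomplex of $\spCC$, and then invoke \Cref{lem:convex_subcomplex} (c)$\Rightarrow$(a) together with \Cref{lem:chara-L-convex-cone}. Take $\CC' \coloneqq \bigcup_{K \in \fan(\sigma)} \Set{A \in \spCC}{A \subseteq K}$. Each $K\in\fan(\sigma)$ is an L-convex cone, hence by \Cref{lem:chara-L-convex-cone} a union of cells of $\spCC$, and the set of cells contained in $K$ is a subcomplex, because $K$ is closed and faces of a cell lie in its closure. Therefore $\CC'$ is a subcomplex, and $\geo{\CC'} = \bigcup_{K} K = \dom\sigma$. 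It is nonempty since $\{0\}\in\CC'$.

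Since $\dom\sigma$ is convex, \Cref{lem:convex_subcomplex} shows that $\CC'$ is a convex subcomplex, and \Cref{lem:chara-L-convex-cone} then shows that $\dom\sigma = \geo{\CC'}$ is an L-convex cone. An alternative, more direct route is to write $\dom\sigma$ via its tangent cones and mimic \Cref{lem:L-convex-tcone}. However, the subcomplex argument avoids having to identify facet normals with roots.

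The only point needing care is the first step, namely that $\dom\sigma$ equals the union of the members of $\fan(\sigma)$. For a proper polyhedral sublinear $\sigma$ this is exactly \Cref{lem:argmin-support}~(1), since $\geo{\fan(\sigma)}$ is closed as a finite union of closed cones; alternatively it follows from \Cref{lem:quasi-polyhedral-convex-function}~(1). If $\sigma$ is not proper, meaning its domain is empty or it takes $-\infty$, then the convention of the paper presumably excludes this case, and I would note that L-sublinear functions are taken to be proper.
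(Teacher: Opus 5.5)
Your proposal is correct and is essentially the paper's proof. The paper likewise writes $\dom\sigma = \geo{\fan(\sigma)}$ and takes the union $\CC'$ of the convex subcomplexes of $\spCC$ realizing the members of $\fan(\sigma)$. It then uses convexity of $\dom\sigma = \geo{\CC'}$ with \Cref{lem:convex_subcomplex} to get that $\CC'$ is convex, and concludes with \Cref{lem:chara-L-convex-cone}. Your justification of $\dom\sigma = \geo{\fan(\sigma)}$ is fine, and properness is indeed needed for the lemma, which the paper also assumes tacitly.
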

\begin{proof}
    Suppose that $\funcdoms{\sigma}{V}{\ER}$ is an L-sublinear function.
    Then, we have $\dom{\sigma} = \geo{\fan(\sigma)} = \bigcup_{K \in \fan(\sigma)} K$.
    Since each $K \in \fan(\sigma)$ is an L-convex cone,
    it is the geometric realization of a convex subcomplex $\CC_K$ of $\spCC$ by \Cref{lem:chara-L-convex-cone}.
    Then, $\CC' \coloneqq \bigcup_{K \in \fan(\sigma)} \CC_K$ is also a convex subcomplex of $\spCC$ by \Cref{lem:convex_subcomplex}, since $\dom{\sigma} = \bigcup_{K \in \fan(\sigma)} \geo{\CC_K} = \geo{\CC'}$ is convex.
    Thus, $\dom{\sigma}$ is the geometric realization of a convex subcomplex of $\spCC$, which implies that $\dom{\sigma}$ is an L-convex cone by \Cref{lem:chara-L-convex-cone}.
\end{proof}

We are ready to show \Cref{thm:conjugate-Mpoly-Lsub}.
\begin{proof}[Proof of \Cref{thm:conjugate-Mpoly-Lsub}]
    If $P$ is an M-convex polyhedron,
    then we have $\fan(\spt{P}) = \Set{ (\tcone{P}{p})^* }{ p \in P }$ by \Cref{lem:fan-support}.
    Since $\tcone{P}{p}$ is an M-convex cone for any $p \in P$ by the definition,
    its polar cone $(\tcone{P}{p})^*$ is an L-convex cone by \Cref{thm:L-M-polar}.
    Thus, $\fan(\spt{P})$ consists of L-convex cones, which implies that $\spt{P}$ is an L-sublinear function.
    Conversely, if $\sigma$ is an L-sublinear function,
    then $\B{\sigma}$ is a nonempty polyhedron.
    By \Cref{lem:fan-support},
    for any $p \in \B{\sigma}$
    we have $\tcone{\B{\sigma}}{p} = (\argmin(\sigma-p))^*$.
    Since $\argmin(\sigma-p) \in \fan(\sigma)$ is an L-convex cone, its polar cone $\tcone{\B{\sigma}}{p}$ is an M-convex cone by \Cref{thm:L-M-polar}.
    Thus, $\B{\sigma}$ is an M-convex polyhedron.

    By the above correspondence,
    the recession cone of an M-convex polyhedron is an M-convex cone.
    Indeed, for an M-convex polyhedron $P$ and its corresponding L-sublinear function $\sigma$,
    the recession cone of $P$ is $(\dom{\sigma})^*$
    by \Cref{lem:conjugate-poly-sublinear},
    which is an M-convex cone by \Cref{thm:L-M-polar} and \Cref{lem:L-sublinear-effective-domain}.
    
    We then see that the one-to-one correspondence between M-convex polyhedra and L-sublinear functions preserves integrality.
    If $P$ is an integral M-convex polyhedron,
    then $P$ admits an inner-representation $P = \conv{B} + \cone{\RS'}$
    for some finite sets $B \subseteq \Mdom$ and $\RS' \subseteq \RS \subseteq \Mdom$ by \Cref{lem:integral-polyhedron}.
    Here, we have already seen that $\spt{P}$ is L-sublinear.
    By \Cref{lem:conjugate-poly-sublinear},
    we obtain $\spt{P}(x) = \max{\Set{ \inpr{p}{x} }{ p \in B }}$
    for $x \in \dom(\spt{P})$.
    Thus, $\spt{P}$ is integral.
    Conversely, if $\sigma$ is an integral L-sublinear function,
    then there is a finite set $B \subseteq \Mdom$ such that $\sigma(x) = \max{\Set{ \inpr{p}{x} }{ p \in B }}$ for $x \in \dom{\sigma}$.
    Here, we have already seen that $\B{\sigma}$ is an M-convex polyhedron.
    By \Cref{lem:conjugate-poly-sublinear},
    we obtain $\B{\sigma} = \conv{B} + \cone{\RS'}$, which implies that $\B{\sigma}$ is integral by \Cref{lem:integral-polyhedron}.
\end{proof}

\subsubsection{(Spherical) submodular functions}\label{subsubsec:spherical-submodular}
Fix an arbitrary spherical expression $\CC^\circ$ of $\spCC$.
A function $\funcdoms{\rho}{\sk{\CC^\circ}}{\ER}$ is said to be \emph{spherical submodular of type $\RS$}
if $\rho$ is the restriction of some L-sublinear function $\sigma$ to $\sk{\CC^\circ}$.
In addition, a spherical submodular function $\rho$ is said to be \emph{integral} if there is an integral L-sublinear function $\sigma$ of type $\RS$ such that $\rho$ is the restriction of $\sigma$ to $\sk{\CC^\circ}$.
For consistency with existing notation in DCA~\cite[Chapter~4.3]{Murota2003-yb},
for a spherical submodular function $\funcdoms{\rho}{\sk{\CC^\circ}}{\ER}$,
we denote the polyhedron $\bS{\rho}$ by $\B{\rho}$, i.e.,
\begin{align}
    \B{\rho} \coloneqq \bS{\rho} = \Set{ p \in V^* }{ \inpr{p}{x} \leq \rho(x) \ (\forall x \in \dom{\rho}) }.
\end{align}
We particularly refer to $\B{\rho}$ as the \emph{base polyhedron} of $\rho$.

The following proposition states that the spherical submodular functions play a role as the tight distance functions in \Cref{subsec:L-poly:M-sub}
and that they can be characterized as the convexity of its positively homogeneous Lov\'{a}sz extension.
\begin{proposition}\label{prop:spherical-submodular-cone}
    Let $\CC^\circ$ be a spherical expression of $\spCC$ and $\funcdoms{\rho}{\sk{\CC^\circ}}{\ER}$ a function.
    Then, the following are equivalent:
    \begin{enumerate}[label={\textup{(\alph*)}}]
        \item $\rho$ is spherical submodular.
        \item $\phext{\rho}$ is convex.
        \item $\cone{\rho}$ is an L-sublinear function and $\rho = (\cone{\rho})|_{\CC^\circ}$.
    \end{enumerate}
    In addition, for a spherical submodular function $\rho$,
    we have $\cone{\rho} = \phext{\rho}$,
    which forms the unique L-sublinear function whose restriction to $\CC^\circ$ is $\rho$.
\end{proposition}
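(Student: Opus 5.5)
I would prove the cycle (a) $\Rightarrow$ (b) $\Rightarrow$ (c) $\Rightarrow$ (a) and obtain the final uniqueness statement along the way. The key observation is that an L-sublinear function is linear on each cone of $\spCC$. Every member of $\fan(\sigma)$ is an L-convex cone, hence by \Cref{lem:chara-L-convex-cone} a union of closed cells of $\spCC$. These cones cover $\dom{\sigma}$ (\Cref{lem:quasi-polyhedral-convex-function}~(1), or directly since $\sigma$ is polyhedral). So for every cell $A$ with $\cl{A} \subseteq \dom{\sigma}$, some $K \in \fan(\sigma)$ satisfies $A \cap \relint{K} \neq \emptyset$ or $A \subseteq K$. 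Taking $K$ minimal containing a point of $A$ and using the face property (\Cref{lem:argmin-support}~(2)), $\cl{A}$ lies in a single $K$, on which $\sigma$ is linear.

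\textbf{(a) $\Rightarrow$ (b).} Let $\rho = \sigma|_{\sk{\CC^\circ}}$ with $\sigma$ L-sublinear. For $x \neq 0$, write $x = \sum \mu_x(v) v$ over $\sk{\cell{A}{x}}$. By the observation, $\sigma(x) = \sum \mu_x(v)\sigma(v) = \phext{\rho}(x)$ when $\cl{\cell{A}{x}} \subseteq \dom\sigma$. Outside $\dom\sigma$ both sides are $+\infty$. To check this, note that $\dom\sigma$ is an L-convex cone (\Cref{lem:L-sublinear-effective-domain}), hence the realization of a subcomplex, so $x \notin \dom\sigma$ forces some vertex of $\cell{A}{x}$ outside $\dom\sigma$, where $\rho = +\infty$. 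This gives $\phext{\rho} = \sigma$, which is convex.

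\textbf{(b) $\Rightarrow$ (c).} Assume $\phext{\rho}$ is convex; it is also positively homogeneous with $\phext{\rho}(0) = 0$, hence sublinear. Its domain is $\phext{D}$ with $D = \dom\rho$, a union of closed cells, so $\phext{\rho}$ is polyhedral: it is linear on each of finitely many closed simplicial cones. Since $\phext{\rho}$ minorizes $\rho$ on the vertices (with equality) and $\cone{\rho}$ is the largest sublinear minorant, we get $\phext{\rho} \le \cone{\rho}$. Conversely $\cone{\rho}(x) \le \sum \mu_x(v)\rho(v) = \phext{\rho}(x)$ by definition. Hence $\cone{\rho} = \phext{\rho}$, and restricting gives $\rho = (\cone\rho)|_{\CC^\circ}$.

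It remains to show that every $K = \argmin(\phext{\rho} - p) \in \fan(\phext{\rho})$ is an L-convex cone. Because $\phext{\rho} - p$ is linear on each closed cell cone, $K$ is a union of closed cells: if $K$ meets the relative interior of a cell at a minimizer, then all of that closed cell minimizes, by linearity and nonnegativity of the minimum value $0$. Thus $K$ is the realization of a subcomplex, and it is convex. So it is the realization of a convex subcomplex by \Cref{lem:convex_subcomplex}, hence L-convex by \Cref{lem:chara-L-convex-cone}. I expect this cell-structure argument for $K$ to be the main point needing care: one must verify that the minimizer set is closed under taking faces and is a union of full cells, using the face property of exposed faces together with the linearity on cells.

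\textbf{(c) $\Rightarrow$ (a)} is immediate from the definition of spherical submodularity.

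\textbf{Uniqueness.} Any L-sublinear $\sigma$ with $\sigma|_{\CC^\circ} = \rho$ equals $\phext{\rho}$ by the computation in (a) $\Rightarrow$ (b). Hence $\sigma = \phext{\rho} = \cone{\rho}$.
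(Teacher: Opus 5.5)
Your proposal is correct and follows essentially the same route as the paper. In (a)$\Rightarrow$(b) you identify $\phext{\rho}$ with $\sigma$ using linearity of $\sigma$ on cones of $\fan(\sigma)$, which are unions of closed cells. In (b)$\Rightarrow$(c) you sandwich $\cone{\rho}$ against $\phext{\rho}$ and show that each $\argmin(\phext{\rho}-p)$ is a union of closed cells, hence an L-convex cone via \Cref{lem:convex_subcomplex} and \Cref{lem:chara-L-convex-cone}.

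Your explicit check that $\phext{\rho}$ is polyhedral is a detail the paper leaves implicit. The ``minimal $K$ plus face property'' step is unnecessary, since any $K \in \fan(\sigma)$ containing a point of a cell already contains that cell's closure. For $x \notin \dom{\sigma}$, what you actually need is convexity of $\dom{\sigma}$, not merely that it is the realization of a subcomplex.
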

\begin{proof}
    The implication (c) $\Rightarrow$ (a) is clear.

    (a) $\Rightarrow$ (b).
    Since $\rho$ is a spherical submodular function, there is an L-sublinear function $\funcdoms{\sigma}{V}{\ER}$ such that $\rho$ is the restriction of $\sigma$ to $\sk{\CC^\circ}$.
    Our aim is to show that $\phext{\rho} = \sigma$,
    which implies that $\phext{\rho}$ is convex (and $\sigma$ is the unique L-sublinear function whose restriction to $\sk{\CC^\circ}$ is $\rho$).
    Since $\dom{\phext{\rho}} = \phext{\dom{\rho}} \subseteq \cone(\dom{\rho}) \subseteq \dom{\sigma}$,
    it suffices to show that $\phext{\rho}(x) = \sigma(x)$ for any $x \in \dom{\sigma}$.

    Take any $x \in \dom{\sigma}$.
    Recall that $\cell{A}{x}$ denotes the unique cell of $\CC^\circ$ such that the corresponding simplex $\cone{\cell{A}{x}}$ of $\spCC$ contains $x$.
    Let $K \in \fan(\sigma)$ with $x \in K$.
    Since $K$ is an L-convex cone, there is a convex subcomplex $\CC_K$ of $\spCC$ such that $K = \geo{\CC_K}$ by \Cref{lem:chara-L-convex-cone}.
    In particular, the cell $\cone{\cell{A}{x}}$ belongs to $\CC_K$.
    Since $\sigma$ is linear on $K$,
    we have $\sigma(x) = \sum_{v \in \sk{\cell{A}{x}}} \mu_x(v) \sigma(v) = \sum_{v \in \sk{\cell{A}{x}}} \mu_x(v) \rho(v) = \phext{\rho}(x)$,
    where the second equality follows from the fact that $\rho$ is the restriction of $\sigma$ to $\sk{\CC^\circ}$.

    (b) $\Rightarrow$ (c).
    Suppose that $\phext{\rho}$ is convex.
    It follows from the definitions~\eqref{eq:func:cone} and~\eqref{eq:ph-Lovasz-ext} of $\cone{\rho}$ and $\phext{\rho}$, respectively,
    that $\phext{\rho} \geq \cone{\rho}$ and $\phext{\rho}(x) = \rho(x)$ for $x \in \sk{\CC^\circ}$.
    Since $\cone{\rho}$ is the pointwise maximum proper sublinear function satisfying $\cone{\rho}(x) \leq \rho(x)$ for all $x \in \sk{\CC^\circ}$,
    we have $\cone{\rho} \geq \phext{\rho}$, which implies that $\phext{\rho} = \cone{\rho}$.
    Thus, it suffices to see that every member of $\fan(\phext{\rho})$ is an L-convex cone.

    Take any $K \in \fan(\phext{\rho})$ and $x \in K$.
    By the definition of $\phext{\rho}$,
    we have $\phext{\rho}(x) = \sum_{v \in \sk{\cell{A}{x}}} \cell{\mu}{x}(v) \rho(v)$.
    Suppose that $K = \argmin(\phext{\rho} - p)$ for $p \in V^*$.
    If $\sk{\cell{A}{x}} \not\subseteq K$,
    then $\phext{\rho}(x) - \inpr{p}{x} > \min(\phext{\rho} - p)$,
    which contradicts that $x \in K$.
    Thus, $\sk{\cell{A}{x}} \subseteq K$, i.e., $\cone{\cell{A}{x}} \subseteq K$ holds.
    Hence, $K$ is representable as $K = \bigcup_{x \in K} \cone{\cell{A}{x}}$,
    which implies that $K$ is the geometric realization of a subcomplex of $\spCC$.
    Furthermore, since $K$ is a convex set,
    it is the geometric realization of a convex subcomplex by \Cref{lem:convex_subcomplex}.
    Thus, $K$ is an L-convex cone by \Cref{lem:chara-L-convex-cone}.
\end{proof}

The following corollary is a direct consequence of \Cref{thm:conjugate-Mpoly-Lsub} and \Cref{prop:spherical-submodular-cone}.
\begin{corollary}\label{cor:Mpoly-subm}
    A subset $P \subseteq V^*$ is an (integral) M-convex polyhedron if and only if
    $P = \B{\rho}$ for some (integral) spherical submodular function $\rho$ on a spherical expression of $\spCC$.
\end{corollary}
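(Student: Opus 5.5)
The statement is \Cref{cor:Mpoly-subm}. I will derive it by chaining \Cref{thm:conjugate-Mpoly-Lsub} with \Cref{prop:spherical-submodular-cone}, through the base polyhedron $\B{\rho}=\bS{\rho}$.

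For the only-if direction, let $P$ be an (integral) M-convex polyhedron. By \Cref{thm:conjugate-Mpoly-Lsub}, $\sigma\coloneqq\spt{P}$ is an (integral) L-sublinear function. Since $P$ is a nonempty polyhedron, \Cref{lem:conjugate-poly-sublinear} gives $P=\bS{\sigma}$.

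Fix any spherical expression $\CC^\circ$ and let $\rho\coloneqq\sigma|_{\sk{\CC^\circ}}$. By definition, $\rho$ is (integral) spherical submodular. By \Cref{prop:spherical-submodular-cone}, $\cone{\rho}=\phext{\rho}=\sigma$. It remains to show $\bS{\rho}=\bS{\sigma}$:
\begin{itemize}
\item The inclusion $\bS{\sigma}\subseteq\bS{\rho}$ is trivial, because $\rho$ is a restriction of $\sigma$.
\item Conversely, if $\inpr{p}{x}\le\rho(x)$ for all $x\in\dom{\rho}$, then the linear function $\inpr{p}{\cdot}$ minorizes $\rho$. Since $\cone{\rho}$ is the pointwise maximum sublinear minorant of $\rho$, we get $\inpr{p}{\cdot}\le\cone{\rho}=\sigma$. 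One can also see this directly from the conical-combination formula \eqref{eq:func:cone}.
\end{itemize}
Hence $P=\B{\rho}$.

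For the if direction, let $\rho$ be (integral) spherical submodular on some $\CC^\circ$. By \Cref{prop:spherical-submodular-cone}, $\sigma\coloneqq\cone{\rho}$ is L-sublinear and restricts to $\rho$. In the integral case I will not use $\cone{\rho}$ itself. Instead I take the integral L-sublinear $\sigma'$ from the definition of integrality, which restricts to $\rho$, and invoke the uniqueness clause of \Cref{prop:spherical-submodular-cone} to get $\sigma'=\cone{\rho}$. The same argument as above gives $\B{\rho}=\bS{\cone{\rho}}=\B{\sigma}$. By \Cref{thm:conjugate-Mpoly-Lsub}, this set is an (integral) M-convex polyhedron.

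The only point needing care is whether $\bS{\sigma}$ is nonempty, which the L-sublinearity in \Cref{thm:conjugate-Mpoly-Lsub} needs. Since $\sigma$ is a proper polyhedral sublinear function, $\bS{\sigma}\neq\emptyset$ by \Cref{lem:clconv-sublinear}. Everything else is bookkeeping, so I expect no real obstacle beyond matching the integrality notions through the uniqueness statement.
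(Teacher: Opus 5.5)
Your proposal is correct and follows the same route as the paper, which obtains this corollary directly by combining \Cref{thm:conjugate-Mpoly-Lsub} with \Cref{prop:spherical-submodular-cone}. Your explicit check that $\B{\rho}=\B{\cone{\rho}}$, and your use of the uniqueness clause to match the two notions of integrality, fill in bookkeeping that the paper leaves implicit.
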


Let $\subm{\CC^\circ}{\R}$ and $\subm{\CC^\circ}{\Z}$ denote the families of spherical submodular functions and integral spherical submodular functions, respectively.
The commutative diagram~\eqref{eq:diagram-Mset} below summarizes the relations among $\Mpoly{\R}$, $\Mpoly{\Z}$, $\Lsub{\R}{\R}$, $\Lsub{\R}{\Z|\R}$, $\subm{\CC^\circ}{\R}$, and $\subm{\CC^\circ}{\Z}$,
which is the counterpart of the diagram~\eqref{eq:diagram-Lset} for L-convex polyhedra and M-sublinear functions.

Suppose that $\RS$ is irreducible.
For a vertex $x \in \Ldom$,
we refer to a vertex of $\St{\euCC}{x}$ as a \emph{neighbor} of $x$
and denote by $\nbor{x}(\RS)$ the set of neighbors of $x$, that is,
$\nbor{x}(\RS) \coloneqq \sk{\St{\euCC}{x}}$;
we write just $\nbor{x}$ if it is clear from the context.
A vertex in $\sk{\Lk{\euCC}{x}} = \nbor{x} \setminus \{x\}$
is called a \emph{proper neighbor} of $x$
and we define $\opnbor{x}(\RS) = \opnbor{x} \coloneqq \sk{\Lk{\euCC}{x}}$.
In particular,
we have $\opnbor{0} = \sk{\spCC^\star}$
and $\nbor{0} = \sk{\spCC^\star} \cup \{0\}$
by \Cref{lem:link-standard}.
Recall that $\Lk{\euCC}{x} - x$ is the spherical expression of $\spCC(\tilde{\RS}|_{-x})$ generated from $\opnbor{x} - x$ (see \Cref{lem:standard}~(2)).

Fix $x_* \in \Ldom$.
A function $\funcdoms{\rho}{\nbor{x_*}}{\ER}$ with $\rho(x_*) < +\infty$ is said to be \emph{submodular}
if the function $\rho^\circ$ on $\opnbor{x_*} - x_*$ defined by 
\begin{align}\label{eq:rho-circ}
    \rho^\circ(x - x_*) \coloneqq \rho(x) - \rho(x_*) \qquad (x \in \opnbor{x_*})
\end{align}
is spherical submodular (of type $\tilde{\RS}_{-x_*}$);
we particularly refer to a submodular function on $\nbor{0} = \nbor{0}(\RS)$ as a \emph{submodular function of type $\RS \;(= \tilde{\RS}_{-0})$}.
In addition, a submodular function $\rho$ is \emph{integral} if $\rho^\circ$ is integral.
For a submodular function $\funcdoms{\rho}{\nbor{x_*}}{\ER}$ with $\rho(x_*) = 0$,
we define its \emph{base polyhedron} $\B{\rho}$ by
\begin{align}
    \B{\rho} \coloneqq \Set{ p \in V^* }{ \inpr{p}{x - x_*} \leq \rho(x) \ (\forall x \in \nbor{x_*}) },
\end{align}
which coincides with the base polyhedron $\B{\rho^\circ}$ of the spherical submodular function $\rho^\circ$ defined in~\eqref{eq:rho-circ}.

We can easily see that the submodularity can be characterized as the convexity of its Lov\'{a}sz extension by the definition of submodularity and \Cref{prop:spherical-submodular-cone}.
Moreover, the following \emph{submodular inequality} also characterizes the submodularity.
We say that a function $\funcdoms{\rho}{\nbor{x_*}}{\ER}$ with $\rho(x_*) < +\infty$ satisfies the \emph{submodular inequality} if
\begin{align}\label{eq:subm-ineq}
    \rho(x) + \rho(y) \geq 2\sum_{v \in \nbor{x_*}} \cell{\lambda}{z}(v) \rho(v) \qquad (x, y \in \nbor{x_*}).
\end{align}
We also refer to the inequalities~\eqref{eq:subm-ineq} as the \emph{discrete midpoint convexity}; see also~\eqref{eq:midpointconvexity}.
Note that the inequality~\eqref{eq:subm-ineq} is well-defined.
Indeed,
by \Cref{lem:star_complex},
the geometric realization $\geo{\St{\euCC}{x_*}}$ of $\St{\euCC}{x_*}$ is a convex set in $V$.
Thus, for any $x, y \in \nbor{x_*} = \sk{\St{\euCC}{x_*}} \subseteq \Ldom$,
their midpoint $z \coloneqq (x+y)/2$ belongs to $\geo{\St{\euCC}{x_*}}$,
which implies $\cell{A}{z} \in \St{\euCC}{x_*}$, i.e., $\supp{\cell{\lambda}{z}} = \sk{\cell{A}{z}} \subseteq \nbor{x_*}$.
\begin{theorem}\label{thm:submodular-chara}
    Let $\RS$ be an irreducible root system and $x_* \in \Ldom$.
    For a function $\funcdoms{\rho}{\nbor{x_*}}{\ER}$ with $\rho(x_*) < +\infty$,
    the following are equivalent:
    \begin{enumerate}[label={\textup{(\alph*)}}]
        \item $\rho$ is submodular.
        \item $\ext{\rho}$ is convex.
        \item $\rho$ satisfies the discrete midpoint convexity.
    \end{enumerate}
\end{theorem}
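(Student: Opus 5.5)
The plan is to prove (a)$\Leftrightarrow$(b) by reducing to \Cref{prop:spherical-submodular-cone} after translating $x_*$ to the origin. Then (b)$\Rightarrow$(c) is immediate, and the real work is (c)$\Rightarrow$(b), a ``local-to-global'' convexity argument across the walls of the star.

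\textbf{(a)$\Leftrightarrow$(b).} By \Cref{lem:complex:contraction}~(2) and \Cref{lem:standard}~(2), $\Lk{\euCC}{x_*}-x_*$ is a spherical expression $\CC^\circ$ of $\spCC(\tilde{\RS}|_{-x_*})$ generated from $\opnbor{x_*}-x_*$. Every simplex of $\St{\euCC}{x_*}$ is either a link simplex $B$ or the join of $B$ with $x_*$. Hence for $z\in\geo{\St{\euCC}{x_*}}$ the barycentric coefficients are
\[
\cell{\lambda}{z}(v)=\cell{\mu}{z-x_*}(v-x_*) \quad (v\neq x_*), \qquad \cell{\lambda}{z}(x_*)=1-\sum_{v \neq x_*}\cell{\lambda}{z}(v).
\]
This gives $\ext{\rho}(z)=\rho(x_*)+\phext{\rho^\circ}(z-x_*)$ on $\geo{\St{\euCC}{x_*}}$. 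The set $\geo{\St{\euCC}{x_*}}-x_*$ is convex (\Cref{lem:star_complex}) and a neighborhood of $0$. A positively homogeneous function is convex if and only if it is convex on some convex neighborhood of $0$: for $u,w$ one rescales both into the neighborhood by a common factor. Therefore $\ext{\rho}$ is convex iff $\phext{\rho^\circ}$ is convex, iff $\rho^\circ$ is spherical submodular by \Cref{prop:spherical-submodular-cone}, i.e.\ iff $\rho$ is submodular.

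\textbf{(b)$\Rightarrow$(c).} For $x,y\in\nbor{x_*}$ put $z=(x+y)/2$. Then $\ext{\rho}(z)=\sum_v \cell{\lambda}{z}(v)\rho(v)$ and $\ext{\rho}$ agrees with $\rho$ on vertices, so convexity of $\ext{\rho}$ at the midpoint is exactly \eqref{eq:subm-ineq}.

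\textbf{(c)$\Rightarrow$(b).} I would proceed in two steps.
\begin{enumerate}[label=\textup{(\roman*)}]
\item Show that $\dom\ext{\rho}$ is convex. By \eqref{eq:subm-ineq}, $x,y\in\dom\rho$ forces $\sk{\cell{A}{z}}\subseteq\dom\rho$. I would then show that the induced subcomplex $\isc{\euCC}{\dom\rho}\cap\St{\euCC}{x_*}$ is a convex subcomplex, arguing via products of cells and \Cref{lem:sign-sequence}~(7) as in the proof of \Cref{lem:convex_subcomplex}. This gives convexity of the domain.
\item Show that $\ext{\rho}$, which is piecewise linear on the chambers of the star inside its convex domain, is convex. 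It suffices to check convexity across each common panel of two adjacent chambers $C,C'$ in the domain. Let $x\in\sk{C}\setminus\sk{C'}$ and $y\in\sk{C'}\setminus\sk{C}$. In a Coxeter complex, $y=s(x)$ for the reflection $s$ in the hyperplane $H$ containing the common panel. Hence $z=(x+y)/2$ is the orthogonal projection of $x$ onto $H$.
\end{enumerate}
The key point, and the main obstacle I expect, is to show that $z$ lies in the closure of the common panel, or at least in $\cl C\cap\cl C'$. Then the single linear piece of $\ext{\rho}$ on $\cl C$, extended affinely, takes at $y$ the value $2\ext{\rho}(z)-\rho(x)$. Inequality \eqref{eq:subm-ineq} says this value is $\le\rho(y)$, which is precisely the wall-convexity condition (the ``bend'' across $H$ is upward).

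To establish this point, I would first use the explicit chamber descriptions of \Cref{lem:chamber} and \Cref{lem:inpr}, with the standard-point normalisation of $\Lk{\euCC}{x_*}$, to verify that the projection of the opposite vertex lands in the panel. Failing that, I would replace the midpoint test by a chain of midpoint inequalities along the segment $[x,y]$, using \Cref{lem:sign-sequence}~(7) to keep $\cell{A}{z}$ close to $x$.

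Finally, the standard fact that a continuous piecewise-linear function on a convex polyhedral domain that is convex across every codimension-one wall is convex yields (b).
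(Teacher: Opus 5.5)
You handle (a)$\Leftrightarrow$(b) correctly; the paper only says it is immediate, and your rescaling argument fills it in. Your (b)$\Rightarrow$(c) is also correct, and step (i) is exactly \Cref{thm:chara:L-convex-set} applied to $D=\dom\rho$. The gap is the step you flag yourself in (ii), and the route you propose for it does not work in general. The identity $y=s(x)$ holds only when $C,C'$ are genuine chambers of $\euCC$. But $\dom\rho$ may span a proper affine subspace. Then the maximal simplices of $\isc{\euCC}{\dom\rho}$ are adjacent only with respect to the arrangement restricted to that subspace, they need not be mirror images of each other, and the midpoint $z$ need not lie on their common panel. Concretely, take $\RSB$ with $n=3$ and $x_*=0$. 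The set $\dom\rho=\{0,(0,0,1),(\frac12,\frac12,0),(\frac12,\frac12,\frac12)\}$ induces a convex subcomplex of the star formed by the triangles $\conv\{0,(0,0,1),(\frac12,\frac12,\frac12)\}$ and $\conv\{0,(\frac12,\frac12,0),(\frac12,\frac12,\frac12)\}$ in the plane $x_1=x_2$. These share the edge from $0$ to $(\frac12,\frac12,\frac12)$. Yet the midpoint $(\frac14,\frac14,\frac12)$ of the two opposite vertices lies in the relative interior of the first triangle, off that edge. For full-dimensional $C,C'$ your claim is true: distinct members of an extended simple system have nonpositive inner products, so the foot of the altitude from $x$ lies in the closed opposite facet. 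However, you do not prove this, and it does not cover the general case.

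What is missing is the observation that $z$ need not lie on the panel; it suffices that $z\in\cl C\cup\cl C'$. Let $\ell_C,\ell_{C'}$ be the affine extensions of $\ext\rho$ restricted to $\cl C$ and $\cl C'$. The roles of $(x,C)$ and $(y,C')$ are symmetric, so you may assume $z\in\cl C$. Then your computation $\ell_C(y)=2\ext\rho(z)-\rho(x)\le\rho(y)=\ell_{C'}(y)$ goes through unchanged. The inclusion $z\in\cl C\cup\cl C'$ follows because $\cl C\cup\cl C'$ is convex. After restricting $\affcalH(\RS)$ to the affine hull of the domain, $C$ and $C'$ are adjacent regions separated by exactly one hyperplane. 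Hence $\cl C\cup\cl C'$ is the intersection of the closed half-spaces given by their common signs at all other hyperplanes. This is precisely \Cref{cl:A1-U-A2}. The paper obtains (c)$\Rightarrow$(b) by extending $\rho$ by $+\infty$ to $\Ldom$ and invoking \Cref{thm:chara:L-conv-func} (see \Cref{rmk:proof:subm}). That proof also establishes the gluing statement you call standard, via segments in simple position and a limiting argument. If you cite it instead, state it relative to the affine hull of the domain, since walls and adjacency only make sense there.
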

The following only provides the proof of (a) $\Leftrightarrow$ (b) $\Rightarrow$ (c).
The proof of (c) $\Rightarrow$ (b) can be obtained from a more general statement (\Cref{thm:chara:L-conv-func}) below; see \Cref{rmk:proof:subm}.
\begin{proof}
    (a) $\Leftrightarrow$ (b) immediately follows from the definitions of (spherical) submodularity
    and (positively homogeneous) Lov\'{a}sz extension.

    (b) $\Rightarrow$ (c).
    For any $x, y \in \nbor{x_*}$,
    we have $\rho(x) + \rho(y) = \ext{\rho}(x) + \ext{\rho}(y) \geq 2\ext{\rho}(z) = 2\sum_{v \in \nbor{x_*}} \cell{\lambda}{z}(v) \rho(v)$,
    where $z \coloneqq (x+y)/2$.
    Here, the inequality follows from the convexity of the Lov\'{a}sz extension $\ext{\rho}$
    and the last equality follows from the definition of the Lov\'{a}sz extension and $\supp{\cell{\lambda}{z}} = \sk{\cell{A}{z}} \subseteq \nbor{x_*}$.
\end{proof}

Recall that, for a root system $\RS_-$ with $\RS_- \precsim \RS$,
if $\cone{\rho}$ is an L-sublinear function of type~$\RS_-$,
then it is also of type~$\RS$.
That is, a spherical submodular function $\funcdoms{\rho}{\sk{\CC_-}}{\ER}$ of type $\RS_-$ can also be viewed as the restriction of the L-sublinear function of type $\RS$ to $\sk{\CC_-}$,
where $\CC_-$ is a spherical expression of $\spCC(\RS_-)$.
Thus, we can define the \emph{$\RS$-integrality} for spherical submodular functions of type $\RS_-$:
For a root system $\RS_-$,
a spherical submodular function $\funcdoms{\rho}{\sk{\CC_-}}{\ER}$ of type $\RS_-$
is said to be \emph{$\RS$-integral}
if $\cone{\rho}$ is a $\RS$-integral L-sublinear function.
We also define the \emph{$\RS$-integrality} for submodular functions in a similar way.

We can characterize the $\RS$-integrality of (spherical) submodular functions as that of their function values in the following sense.
\begin{proposition}\label{prop:integral-subm}
    \begin{enumerate}[label={\textup{(\arabic*)}}]
        \item A spherical submodular function $\funcdoms{\rho}{\sk{\spCC^\star}}{\ER}$ is integral if and only if $\rho(x) \in \Z/a(x)$
    for any $x \in \dom{\rho}$.
        \item Suppose that $\RS$ is an irreducible root system
        and that $\funcdoms{a}{\Ldom}{\Z}$ is the mark function of $\euCC$.
        Let $x_* \in \Ldom$
    and $\funcdoms{\rho}{\nbor{x_*}}{\ER}$ with $\rho(x_*) < +\infty$ a submodular function.
    If there is $r \in \R$ such that
    $\rho(x) - r \in \Z/a(x)$ for all $x \in \dom{\rho}$,
    then $\rho$ is $\RS$-integral.
    In addition, the converse also holds if, for each maximal simplex $A$ of $\euCC[\dom{\rho}]$ and $x \in \sk{A} \setminus \{x_*\}$, there is a vertex $x' \in \sk{A} \setminus \{x\}$ such that $a(x') = 1$.
    \end{enumerate}
\end{proposition}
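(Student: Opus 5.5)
We prove (1) by working chamber by chamber with the fan of $\sigma \coloneqq \cone{\rho} = \phext{\rho}$ (\Cref{prop:spherical-submodular-cone}), and then reduce (2) to (1) via \Cref{lem:standard}~(2).

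\textbf{(1), only-if direction.} Suppose $\rho$ is integral. Then $\sigma(x) = \max_{p \in B} \inpr{p}{x}$ on $\dom{\sigma}$ for some finite $B \subseteq \Mdom$. Take $x \in \dom{\rho}$, so $x \in \sk{\spCC^\star}$ and $x \in \dom{\sigma}$. The maximum is attained at some $p \in B$, and \Cref{cor:inpr}~(1) gives $\inpr{p}{x} \in \Z/a(x)$. Hence $\rho(x) = \sigma(x) \in \Z/a(x)$.

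\textbf{(1), if direction.} Suppose $\rho(x) \in \Z/a(x)$ for all $x \in \dom{\rho}$. It suffices to show that for each maximal $K \in \fan(\sigma)$ there is $p \in \Mdom$ with $\sigma = p$ on $K$. Such a $K$ is an L-convex cone, hence by \Cref{lem:chara-L-convex-cone} a union of cells of $\spCC$, and $\sigma$ is linear on $K$.
\begin{itemize}
\item First treat the case where $K$ is full-dimensional. Pick a chamber $C \subseteq K$ with simple system $\simp = \{\alpha_1,\dots,\alpha_n\}$ and standard vertices $x_1,\dots,x_n$, where $\tau(x_k) = \tau_\simp(\alpha_k)$. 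Define $p \coloneqq \sum_k a(x_k)\rho(x_k)\,\alpha_k$. By hypothesis $p \in \Mdom$, and by \Cref{lem:inpr} we get $\inpr{p}{x_k} = \rho(x_k)$. So $p$ agrees with $\sigma$ on $\cone{C}$, and since $\sigma$ is linear on $K$, it agrees on all of $K$.
\item If $\dom{\sigma}$ is not full-dimensional, pass to the lineality quotient. $\dom{\sigma}$ is an L-convex cone (\Cref{lem:L-sublinear-effective-domain}), so a maximal cone is a union of maximal cells $A$ of $\spCC[\dom]$. For such a cell, choose a chamber $C \succeq A$ and define $p$ with coefficients $a(x_k)\rho(x_k)$ for $x_k \in \sk{A}$ and arbitrary integers elsewhere. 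This still lies in $\Mdom$ and agrees with $\sigma$ on $\cone{A}$, hence on $K$ by linearity.
\end{itemize}

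\textbf{(2), sufficiency.} Translate so that $x_* = 0$. Using \Cref{lem:standard}~(2), $\rho^\circ$ viewed on the standard expression of $\spCC(\tilde\RS|_{-x_*})$ takes values $(a_{\mathrm{Lk}}(x)/a(x))\,(\rho(x) - \rho(x_*))$. But $\RS$-integrality refers to the lattice $\Mdom(\RS)$ and not the sublattice, so it is more robust to repeat the construction of (1) directly in $V$:
\begin{itemize}
\item For a maximal simplex $A$ of $\euCC[\dom{\rho}]$ containing $x_*$, choose a chamber of $\euCC$ containing $A$ with extended simple system $\esimp$.
\item Solve for $p = \sum_k c_k \alpha_k$ with $\inpr{p}{x - x_*} = \rho(x) - \rho(x_*)$ for $x \in \sk{A}$, using the barycentric form $\inpr{\alpha_k}{x_j - x_0} = \delta_{jk}/a(x_j)$ (\Cref{lem:inpr} after translation). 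This gives $c_k = a(x_k)(\rho(x_k) - \rho(x_*))$.
\item The hypothesis gives $a(x_k)(\rho(x_k) - r) \in \Z$. This is not enough by itself, since we also need $a(x_k)(\rho(x_*) - r) \in \Z$.
\end{itemize}
So instead I will allow an affine shift: integrality of a submodular function only needs $\cone{\rho^\circ}$ with gradients $p \in \Mdom$ up to the constant normalization. I expect this normalization subtlety to be the main obstacle, and I will handle it by choosing the vertex of mark $1$ (or $x_*$) as the base point of the chamber's barycentric coordinates.

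\textbf{(2), converse under the extra hypothesis.} Pick $x'$ with $a(x') = 1$ in each maximal simplex, and set $r$ so that $\rho(x') - r \in \Z$. For every other vertex $x$ of $A$, an integral gradient $p$ gives $\rho(x) - \rho(x') = \inpr{p}{x - x'} \in \Z/a(x)$, using \Cref{cor:inpr}~(2) on both $x$ and $x'$. Maximal simplices sharing vertices keep $r$ consistent, since all of them contain $x_*$ and the differences are controlled through it.
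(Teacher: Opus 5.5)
Your part~(1) is correct and is essentially the paper's proof. The gaps are in part~(2).

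\textbf{Sufficiency.} You correctly see that $c_k=a(x_k)(\rho(x_k)-\rho(x_*))$ need not be integers, but you never exhibit a $p$ that provably lies in $\Mdom$. The proposed remedy does not address the problem: shifting $\rho$ by a constant changes nothing, since $\rho^\circ(x-x_*)=\rho(x)-\rho(x_*)$ does not see $r$, and $p$ must satisfy $\inpr{p}{x-x_*}=\rho(x)-\rho(x_*)$ exactly on $\sk{A}$. Two features of your setup hide the real issue when $a(x_*)>1$. First, translating $x_*$ to $0$ is then not a symmetry of $\euCC$. Second, $\esimp\setminus\{\alpha_*\}$ generates only an index-$a(x_*)$ sublattice of $\Mdom$, so integrality of coefficients in that basis is strictly stronger than $p\in\Mdom$.

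The missing idea is to absorb the offset with the root $\alpha_*\in\esimp$ of type $\tau(x_*)$. Here $\esimp$ is the extended simple system of a chamber containing $A$, and $\alpha_k$ has the type of $x_k$. For vertices $x_j,x_k\neq x_*$ of that chamber, $\inpr{\alpha_*}{x_j-x_*}=-1/a(x_*)$ and $\inpr{\alpha_k}{x_j-x_*}=\delta_{jk}/a(x_j)$. Hence the vector
\[
p_A\coloneqq\sum_{x_k\in\sk{A}}a(x_k)\bigl(\rho(x_k)-r\bigr)\alpha_k\qquad(\text{the sum includes }x_k=x_*)
\]
has integer coefficients by hypothesis, so it lies in $\Mdom$. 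It also satisfies $\inpr{p_A}{x_j-x_*}=(\rho(x_j)-r)-(\rho(x_*)-r)$ for $x_j\in\sk{A}$, and one then concludes as in~(1). This is exactly the paper's construction. Your mark-one base-point hint can also be completed: take the type-$v_0$ vertex $x_0$, the $\Mdom$-basis $\simp$, and coefficients $a(x_k)(\rho(x_k)-c)$ with $c=\rho(x_0)$ if $x_0\in\sk{A}$ and $c=r$ otherwise. As written, though, it is only a plan.

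\textbf{Converse.} Your route fixes $r$ through $\rho$ at a mark-one vertex and uses $\inpr{p}{x}\in\Z/a(x)$ and $\inpr{p}{x'}\in\Z$. This is simpler than the paper's, which takes $r$ to be the fractional part of $\rho(x_*)-\inpr{p}{x_*}$. The paper must then prove in \Cref{cl:r} that $\inpr{p-p'}{x_*}\in\Z$ for gradients of equal or adjacent maximal cones of $\sigma=\cone{\rho^\circ}$, including two gradients of one cone when $\dom{\sigma}$ is not full-dimensional.

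However, your consistency step is wrongly justified. Comparing two maximal simplices through their common vertex $x_*$ only gives $r_A-r_{A'}\in\Z/a(x_*)$. When $a(x_*)=2$ this does not determine $\rho(x)-r$ modulo $\Z$ at mark-one vertices $x$. You also use only the weak consequence ``$A$ has a mark-one vertex'' of the hypothesis, whereas its actual form is what makes propagation work:
\begin{itemize}
\item $\euCC[\dom{\rho}]$ is a convex subcomplex, because its realization is $\dom{\ext{\rho}}$, which is convex by \Cref{thm:submodular-chara}. Hence it is a gallery-connected chamber complex.
\item Its maximal simplices all contain $x_*$, by \Cref{lem:star_complex}.
\item Two adjacent maximal simplices $A,A'$ therefore share the face with vertex set $\sk{A}\setminus\{x\}$ for some $x\neq x_*$.
\item Applying the hypothesis to this $x$ puts a mark-one vertex $x'$ in that shared face, so $r_A\equiv\rho(x')\equiv r_{A'}\pmod{\Z}$.
\end{itemize}
With this repair your converse is complete.
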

\Cref{prop:integral-subm}~(2) (and its consequences \Cref{cor:integral-subm} and \Cref{thm:integral:L-func}~(2)) are basically used only in \Cref{sec:examples} to
clarify the relationship between the present notion of integrality and
previously studied one.
We therefore defer its proof to \Cref{subsec:proof:prop:integral-subm} and prove only \Cref{prop:integral-subm}~(1)
here.
\begin{proof}[Proof of \Cref{prop:integral-subm}~(1)]
    Let $\funcdoms{\rho}{\sk{\spCC^\star}}{\ER}$ be a spherical submodular function
    and $\sigma$ the (unique) L-sublinear function whose restriction to $\sk{\spCC^\star}$ is $\rho$.

    (Only-if part).
    Since $\rho$ is integral,
    so is $\sigma$.
    Take any $x \in \dom{\rho}$ and maximal L-convex cone $K \in \fan(\sigma)$ with $x \in K$.
    Then,
    there is $p \in \Mdom$ such that
    $K = \argmin(\sigma - p)$.
    For such $p$,
    we have $\rho(x) = \sigma(x) = \inpr{p}{x} \in \Z / a(x)$ by \Cref{cor:inpr}~(1).

    (If part).
    Let $K$ be an arbitrary maximal L-convex cone in $\fan(\sigma)$;
    our aim is to show that there is $p_K \in \Mdom$ such that $K = \argmin(\sigma - p_K)$.

    Let $\CC_K$ be the convex subcomplex of $\spCC$ whose geometric realization is $K$ (see \Cref{lem:chara-L-convex-cone}).
    Fix an arbitrary maximal simplex $A_K$ of $\CC_K$
    and let $A^\star$ be the simplex of $\spCC^\star$ corresponding to $A_K$.
    Then, we obtain
    \begin{align}\label{eq:dim:=}
        \dim(\dom{\sigma}) = \dim{K} = \dim{A_K} = \dim(\cone{A^\star}),
    \end{align}
    where the second identity follows from the fact that $\CC_K$ is a chamber complex (\Cref{lem:chamber-complex}).

    Suppose that $\simp$ is the simple system of $\RS$ corresponding to some chamber $C$ of $\spCC$ with $A_K \preceq C$.
    Then, we define the vector $p_K$ by
    \begin{align}
        p_K \coloneqq \sum_{x \in \sk{A^\star}} a(x)\rho(x)\alpha_x,
    \end{align}
    where, for $x \in \sk{A^\star}$, we denote by $\alpha_x$ the (unique) root in $\simp$ satisfying $\tau_{\simp}(\alpha_x) = \tau(x)$.

    Since $\rho(x) \in \Z/a(x)$ for $x \in \sk{A^\star}$ by the assumption,
    we obtain $p_K \in \Mdom$.
    Furthermore, it follows from \Cref{cor:inpr}~(1) that $\inpr{p_K}{x} = \rho(x)$ for all $x \in \sk{A^\star}$.
    Since $\sigma = \phext{\rho}$ by \Cref{prop:spherical-submodular-cone},
    $\sigma$ is linear on $K$, and $\dim{K} = \dim(\cone{A^\star})$ by~\eqref{eq:dim:=},
    we obtain $\inpr{p_K}{x} = \sigma(x)$ for all $x \in K$,
    which further implies that $K = \argmin(\sigma - p_K)$,
    since $\dim{K} = \dim(\dom{\sigma})$ by~\eqref{eq:dim:=} again and $\sigma$ is convex function.
    This completes the proof.
\end{proof}

The following is a direct consequence of \Cref{prop:integral-subm}~(2).
\begin{corollary}\label{cor:integral-subm}
    Suppose that $\RS$ is an irreducible root system.
    Let $x_* \in \Ldom$
    and $\funcdoms{\rho}{\nbor{x_*}}{\ER}$ with $\rho(x_*) < +\infty$ a submodular function.
    Then, $\rho$ is $\RS$-integral if and only if there is $r \in \R$ such that $\rho(x) -r \in \Z/a(x)$
    for any $x \in \dom{\rho}$,
    provided that one of the following conditions holds:
    \begin{enumerate}[label={\textup{(\alph*)}}]
        \item $a(x_*) = 1$.
        \item $\dom{\rho}$ is full-dimensional, i.e., there is a chamber $C$ of $\St{\euCC}{x_*}$ such that $\sk{C} \subseteq \dom{\rho}$.
    \end{enumerate}
\end{corollary}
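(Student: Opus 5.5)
The plan is to derive the corollary from \Cref{prop:integral-subm}~(2). The ``if'' direction is exactly the first assertion there and holds with no extra hypothesis. For the ``only-if'' direction, it suffices to check that each of (a) and (b) implies the sufficient condition of \Cref{prop:integral-subm}~(2). That condition is: for every maximal simplex $A$ of $\euCC[\dom{\rho}]$ and every $x \in \sk{A} \setminus \{x_*\}$, there is $x' \in \sk{A} \setminus \{x\}$ with $a(x') = 1$.

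We first note that every maximal simplex $A$ of $\euCC[\dom{\rho}]$ contains $x_*$. Indeed, all vertices of $A$ lie in $\nbor{x_*} = \sk{\St{\euCC}{x_*}}$, and we have $\rho(x_*) < +\infty$, so $x_* \in \dom{\rho}$. Since $\euCC[\nbor{x_*}]$ equals $\St{\euCC}{x_*}$ (a simplex all of whose vertices are neighbors of $x_*$ is joinable with $x_*$), $A$ is joinable with $x_*$. The join is again a simplex of $\euCC[\dom{\rho}]$, so maximality gives $x_* \preceq A$. This step needs a little care, namely that $\euCC[\nbor{x_*}] = \St{\euCC}{x_*}$. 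I would argue it via \Cref{lem:sign-sequence}~(3) together with convexity of $\geo{\St{\euCC}{x_*}}$ (\Cref{lem:star_complex}), or cite the flag property of Coxeter complexes.

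Under (a), the claim is then immediate. For any $x \in \sk{A} \setminus \{x_*\}$, take $x' \coloneqq x_* \in \sk{A} \setminus \{x\}$, which satisfies $a(x') = 1$.

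Under (b), the chamber $C$ with $\sk{C} \subseteq \dom{\rho}$ shows that the maximal simplices of the convex subcomplex $\euCC[\dom{\rho}]$ are chambers. For convexity, $\dom{\rho}$ corresponds to the domain of the convex Lovász extension, so $\euCC[\dom{\rho}]$ is convex; then all its maximal simplices have the same rank by \Cref{lem:chamber-complex}, and that rank is $n+1$ by the existence of $C$. Each chamber has exactly one vertex of each type in $\tilde{D}(\RS)$. From \Cref{tab:classical-Dynkin-diagrams}, every irreducible classical type with $n \geq 1$ has at least two vertices of mark $1$ in $\tilde{D}(\RS)$ (for instance $v_0$ and $v_1$, or $v_0$ and $v_n$ in type C). Hence for any $x \in \sk{A}$ there is another vertex $x' \in \sk{A}$ of a mark-one type different from $\tau(x)$, and $a(x') = 1$.

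The main obstacle is justifying that maximal simplices of $\euCC[\dom{\rho}]$ are chambers in case (b). This needs $\euCC[\dom{\rho}]$ to be convex, which I would obtain from submodularity via \Cref{thm:submodular-chara}: $\ext{\rho}$ is convex, so its domain $\ext{\dom{\rho}}$ is convex, and \Cref{lem:convex_subcomplex} then applies. The remaining steps are routine.
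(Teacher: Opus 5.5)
Your proposal is correct and follows the paper's route: the paper records this corollary as a direct consequence of \Cref{prop:integral-subm}~(2), and you supply the omitted check that (a), resp.\ (b), yields the extra hypothesis there, since every maximal simplex of $\euCC[\dom{\rho}]$ contains $x_*$ (by the pairwise compatibility criterion of \Cref{lem:sign-sequence}~(3)), resp.\ is a chamber of the convex subcomplex $\euCC[\dom{\rho}]$ and therefore has at least two vertices of mark $1$. The one point worth flagging is that this last count relies on the classical-type assumption: for $\mathrm{E}_8$, $\mathrm{F}_4$, $\mathrm{G}_2$, the vertex $v_0$ is the only vertex of $\tilde{D}(\RS)$ with mark $1$.
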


It remains unclear whether the additional assumptions in
\Cref{prop:integral-subm}~(2) and
\Cref{cor:integral-subm} can be omitted, that is, whether
$\RS$-integrality is characterized solely by the existence of
$r\in\R$ such that
$\rho(x)-r\in\Z/a(x)\cup\{+\infty\}$ for all
$x\in\nbor{x_*}$.

\subsubsection{Basic properties of M-convex polyhedra}
We here summarize basic properties of M-convex polyhedra.
\begin{lemma}\label{lem:face:M-poly}
    Every nonempty face of an (integral) M-convex polyhedron is an (integral) M-convex polyhedron.
\end{lemma}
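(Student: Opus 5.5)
The plan is to check M-convexity and integrality separately. For M-convexity we verify the defining tangent-cone condition directly on the face. For integrality we use \Cref{lem:integral-polyhedron}.

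Let $P$ be an M-convex polyhedron and $F$ a nonempty face of $P$. Since $P$ is a polyhedron, $F$ is a polyhedron, and in particular a nonempty closed convex set. Fix $p \in F$. We must show that $\tcone{F}{p}$ is an M-convex cone.

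First we relate $\tcone{F}{p}$ to $\tcone{P}{p}$. Since $F$ is a face of the polyhedron $P$, it is exposed. So there is $x \in V$ with
\[
F = \Set{ q \in P }{ \inpr{q}{x} = \max_{q' \in P} \inpr{q'}{x} }.
\]
Then $K \coloneqq \tcone{P}{p}$ satisfies $\inpr{d}{x} \leq 0$ for all $d \in K$. A standard polyhedral fact gives
\[
\tcone{F}{p} = \Set{ d \in K }{ \inpr{d}{x} = 0 } = K \cap x^{\perp}.
\]
This holds because $P$ and $p + K$ agree near $p$, so $F$ and $p + (K \cap x^\perp)$ also agree near $p$.

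Thus it suffices to show that a face of an M-convex cone is again an M-convex cone. The set $K \cap x^\perp$ is the face of $K$ exposed by $x$. Write $K = \cone{\RS'}$ with $\RS' \subseteq \RS$. For $d = \sum_{\alpha} \mu(\alpha)\alpha \in K$ we have $\inpr{d}{x} = \sum_{\alpha} \mu(\alpha)\inpr{\alpha}{x}$. Every term is $\leq 0$, since each $\alpha \in \RS'$ lies in $K$. Hence $\inpr{d}{x} = 0$ exactly when $\mu$ is supported on $\RS'' \coloneqq \Set{\alpha \in \RS'}{\inpr{\alpha}{x} = 0}$. This gives $K \cap x^\perp = \cone{\RS''}$, which is an M-convex cone. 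Therefore $F$ is an M-convex polyhedron.

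For integrality, assume in addition that $P$ is integral, i.e., $P = \conv(P \cap \Mdom)$. By \Cref{lem:integral-polyhedron}, every nonempty face of an $\Mdom$-integral polyhedron is $\Mdom$-integral, so $F$ is integral.

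The only step requiring care is the identity $\tcone{F}{p} = \tcone{P}{p} \cap x^\perp$. It relies on $P$ being a polyhedron, so that tangent cones are closed and locally exact, with no closure issues. Everything else is direct.
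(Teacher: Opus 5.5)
Your proof is correct and follows the paper's route. The paper also argues that $\tcone{F}{p}$ is a face of the M-convex cone $\tcone{P}{p} = \cone{\RS'}$, hence of the form $\cone{\RS''}$, and it gets integrality from \Cref{lem:integral-polyhedron}. You additionally supply the details the paper leaves implicit: the exposing functional giving $\tcone{F}{p} = \tcone{P}{p} \cap x^\perp$, and the sign argument identifying this face as $\cone{\RS''}$.
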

\begin{proof}
  Let $F$ be a nonempty face of an M-convex polyhedron $P$.
  Take any $p \in F$; our aim is to show that $\tcone{F}{p}$ is an M-convex cone.
  Then, $\tcone{F}{p}$ is a face of $\tcone{P}{p}$.
  Since $\tcone{P}{p}$ is an M-convex cone,
  there is $\RS' \subseteq \RS$ such that $\tcone{P}{p} = \cone{\RS'}$.
  Thus, the face $\tcone{F}{p}$ is of the form $\cone{\RS''}$ for some $\RS'' \subseteq \RS'$,
  which implies that $\tcone{F}{p}$ is an M-convex cone.

  In addition, since any face of an $\Mdom$-integral polyhedron is $\Mdom$-integral by \Cref{lem:integral-polyhedron},
  we can conclude that any nonempty face of an integral M-convex polyhedron is an integral M-convex polyhedron.
\end{proof}

\begin{lemma}\label{lem:union-closed:M}
  The union of arbitrarily many integral M-convex polyhedra is closed.
\end{lemma}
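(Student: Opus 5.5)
The plan mirrors the proof of \Cref{lem:union-closed:L}, working in $V^*$ instead of $V$. Let $\{P_\lambda\}_{\lambda \in \Lambda}$ be a family of integral M-convex polyhedra and $U \coloneqq \bigcup_{\lambda} P_\lambda$. We take $q \in V^* \setminus U$ and look for a ball around $q$ that misses $U$. The approach is to describe each $P_\lambda$ by inequalities whose normals come from a fixed finite set and whose right-hand sides lie in a fixed discrete set of values. Then the violated inequalities stay violated by a uniform margin $\delta > 0$ that does not depend on $\lambda$.

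\textbf{Step 1: fixed finite set of normals.} Fix the standard spherical expression $\spCC^\star$. By \Cref{cor:Mpoly-subm} (applied with $\CC^\circ = \spCC^\star$), each $P_\lambda = \B{\rho_\lambda}$ for an integral spherical submodular function $\rho_\lambda$ on $\sk{\spCC^\star}$. Thus
\begin{align}
P_\lambda = \Set{ p \in V^* }{ \inpr{p}{x} \le \rho_\lambda(x) \ (\forall x \in \dom{\rho_\lambda}) }.
\end{align}
The normals range over the finite set $\sk{\spCC^\star}$, which does not depend on $\lambda$.

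\textbf{Step 2: discrete right-hand sides.} By \Cref{prop:integral-subm}~(1), $\rho_\lambda(x) \in \Z/a(x)$ for every $x \in \dom{\rho_\lambda}$. Since the marks are positive integers bounded by some $a_{\max}$ (namely $1$ or $2$ for classical types), every value $\rho_\lambda(x)$ lies in $\Z/a(x)$, a discrete subset of $\R$.

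\textbf{Step 3: the margin argument.} Since $q \notin P_\lambda$, there is $x_\lambda \in \dom{\rho_\lambda}$ with $\inpr{q}{x_\lambda} > \rho_\lambda(x_\lambda)$. The vertices $x_\lambda$ range over a finite set, so the values $\inpr{q}{x_\lambda}$ form a finite set. For each such vertex $x$, the gap between $\inpr{q}{x}$ and the largest element of $\Z/a(x)$ strictly below it is positive. Taking the minimum of these gaps over the finitely many vertices gives $\delta > 0$ with $\inpr{q}{x_\lambda} - \delta \ge \rho_\lambda(x_\lambda)$ for all $\lambda$. Because the $x_\lambda$ have bounded norm, there is $\eps > 0$ such that every $q'$ with $\|q' - q\| < \eps$ still satisfies $\inpr{q'}{x_\lambda} > \rho_\lambda(x_\lambda)$ for all $\lambda$. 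Hence this ball misses $U$, and $U$ is closed.

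The only potentially delicate point is Step 1: we must realize every $P_\lambda$ with the same finite set of normals, and with integrality showing up as discreteness of the values. Using the standard spherical expression, through \Cref{cor:Mpoly-subm} and \Cref{prop:integral-subm}~(1), provides both properties. If the essential/non-essential reduction causes trouble, we first reduce to the case $V^* = \spn{\RS}$ as in \Cref{sec:combinatorial-structures}.
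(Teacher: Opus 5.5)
Your proposal is correct and follows the paper's proof: the paper likewise writes each $P_\lambda$ as $\B{\rho_\lambda}$ for an integral spherical submodular function $\rho_\lambda$ on the finite vertex set $\sk{\spCC^\star}$ via \Cref{cor:Mpoly-subm}, and uses \Cref{prop:integral-subm}~(1) to obtain $\rho_\lambda(x) \in \Z/a(x)$. It then runs the same uniform-margin argument as in \Cref{lem:union-closed:L}, which you have simply spelled out in more detail.
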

\begin{proof}
    By \Cref{cor:Mpoly-subm},
    for any integral M-convex polyhedron $P$,
    there is an integral spherical submodular function $\funcdoms{\rho}{\sk{\spCC^\star}}{\ER}$.

    Let $\{ \B{\rho_\lambda} \}_{\lambda \in \Lambda}$ be an arbitrary family of integral M-convex polyhedra,
    where $\rho_\lambda$ is an integral spherical submodular function on $\spCC^\star$.
    For any $\lambda \in \Lambda$ and $x \in \dom{\rho_\lambda}$,
    we have $\rho_\lambda(x) \in \Z/a(x)$ by \Cref{prop:integral-subm}~(1).
    By a similar argument to the proof of \Cref{lem:union-closed:L},
    we can show that $M \coloneqq \bigcup_{\lambda \in \Lambda} \B{\rho_\lambda}$ is closed.
\end{proof}

We then provide a characterization of M-convex polyhedra in addition to \Cref{cor:Mpoly-subm},
which is closely related to the definition of \emph{Coxeter matroids} (see \Cref{subsec:Coxeter-matroids}).
The \emph{maximality property \eqref{cond:MP}} for a nonempty subset $S \subseteq V^*$ is as follows: 
\begin{enumerate}[label=(\MP),ref=\MP]
\item \label{cond:MP} 
For any $p, q \in S$ and simple system $\simp = \{ \alpha_1, \alpha_2, \dots, \alpha_n \}$,
there exists $r \in S$ with $p \leq_\simp r$ and $q \leq_\simp r$.
\end{enumerate}

\begin{theorem}\label{thm:chara:M-poly}
    \begin{enumerate}[label={\textup{(\arabic*)}}]
        \item A set $P \subseteq V^*$ is an M-convex polyhedron
    if and only if
    $P = \clconv{S}$ for some nonempty set $S \subseteq V^*$ having~\eqref{cond:MP}.
        \item A set $P \subseteq V^*$ is an integral M-convex polyhedron
    if and only if
    $P = \conv{B}$ for some nonempty set $B \subseteq \Mdom$ having~\eqref{cond:MP}.
    \end{enumerate}
\end{theorem}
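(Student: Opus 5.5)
Both directions go through the support function $\spt{P}$ together with \Cref{lem:fan-support}, \Cref{thm:L-M-polar} and the correspondence of \Cref{thm:conjugate-Mpoly-Lsub}. The organizing observation is this. Let $C$ be the chamber of $\spCC$ associated with the simple system $\simp$ as in \Cref{lem:chamber}~(1). Then $p \leq_\simp r$ holds if and only if $r - p \in \cone{\simp}$. Moreover $\cone{\simp} = (\cl(-C))^*$ is the polar of a closed chamber, since $\inpr{\alpha}{x} \geq 0$ for $\alpha\in\simp$ and $x \in \cl{C}$. So \eqref{cond:MP} says: for every chamber $C$, the linear function $x \mapsto \inpr{\cdot}{x}$ with $x \in C$ attains its maximum over $S$ at a common point $r_C \in S$, and this point dominates every element of $S$ in the order $\leq_\simp$.

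\emph{Only-if part.} Let $P$ be an M-convex polyhedron and $\sigma \coloneqq \spt{P}$, which is L-sublinear. Fix a simple system $\simp$ with chamber $C$, and choose $x \in -C$ in general position so that $\sigma(x)$ is finite; the case $-C \not\subseteq \dom{\sigma}$ is treated below. The minimizer set $K \coloneqq \argmin(\sigma - p)$ of the fan cone containing $x$ in its relative interior is an L-convex cone. By \Cref{lem:chara-L-convex-cone} it is a union of closed spherical cells, so $\cl(-C) \subseteq K$ once $x$ lies in a chamber contained in $K$. By \Cref{lem:fan-support}, the face $F \coloneqq \argmax_{q\in P} \inpr{q}{x}$ then satisfies $\tcone{P}{r} \subseteq K^* \subseteq (\cl(-C))^* = \cone(-\simp)$ for $r \in F$. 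Hence $P - r \subseteq \cone(-\simp)$, that is, $q \leq_\simp r$ for all $q\in P$, and we take $S \coloneqq P$.

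The delicate case is when $\dom{\sigma}$ is not full-dimensional, or $P$ is unbounded in some direction inside $\cone{\simp}$. Then no maximizing point exists for $S = P$. I expect to handle this by working in the quotient by the lineality space and the recession cone. The recession cone is the M-convex cone $(\dom\sigma)^* = \cone{\RS'}$; since it is closed under $\pm$ on its lineality part, I take $S$ to be a suitable subset of $P$ (for instance, $P$ intersected with a large dilate region that is closed in the needed directions) whose closed convex hull is still $P$. This is the step I consider the main obstacle, together with the general-position choice of $x$.

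\emph{If part.} Put $P \coloneqq \clconv{S}$ and $\sigma \coloneqq \spt{S} = \spt{P}$. For each chamber $C$ with simple system $\simp$, the point $r_C$ given by \eqref{cond:MP} satisfies $\sigma(x) = \inpr{r_C}{x}$ for all $x \in -\cl{C}$, because $S - r_C \subseteq \cone(-\simp)$. Thus $\sigma$ is finite and linear on every closed chamber, so $\dom{\sigma} = V$ and $\sigma$ is polyhedral, being a maximum of finitely many linear functions $\inpr{r_C}{\cdot}$. Every cone of $\fan(\sigma)$ is then a union of closed chambers and their faces, i.e.\ the realization of a subcomplex of $\spCC$. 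Being convex, it is L-convex by \Cref{lem:convex_subcomplex} and \Cref{lem:chara-L-convex-cone}. Hence $\sigma$ is L-sublinear, and $P = \B{\sigma}$ is M-convex by \Cref{thm:conjugate-Mpoly-Lsub}. With $B = S$ finite-free, $P = \conv\{r_C\}$, so $\conv{S}$ is closed.

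\emph{Part (2).} For the if direction, $B \subseteq \Mdom$ gives $r_C \in \Mdom$. So $\sigma = \max_C \inpr{r_C}{\cdot}$ is integral L-sublinear, and $P = \conv\{r_C\}_C = \conv{B}$ is integral. For the only-if direction, take $B \coloneqq P\cap\Mdom$; it satisfies \eqref{cond:MP} by the same argument as above. Here \Cref{lem:integral-polyhedron} applied to the face $F$ supplies a lattice point $r$ in $F$.
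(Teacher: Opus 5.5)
Your proposal reads \eqref{cond:MP} as ``for each simple system $\simp$, the set $S$ has a $\leq_\simp$-greatest element $r_C$.'' That is not the definition. \eqref{cond:MP} only asks that any two $p,q\in S$ have a common $\leq_\simp$-upper bound in $S$.

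The two readings agree for finite (or compact) $S$. In that setting, and for bounded $P$, your support-function argument is sound once a sign is fixed. You should take $x\in C$: with $x\in -C$ you get a \emph{least} element, since, as you note yourself, $(\cl(-C))^*=\cone\simp$. The same slip recurs in the if part.

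The theorem, however, concerns arbitrary $S$ and M-convex polyhedra that may be unbounded, and there your reading fails completely. Suppose $P$ has a nonzero recession direction $d$, and choose $x$ in an open chamber $C$ with $\inpr{d}{x}>0$. Then $\spt{S}(x)=\spt{P}(x)=+\infty$ for every $S$ with $\clconv{S}=P$. A $\leq_\simp$-greatest $r\in S$ would instead give $\spt{S}(x)=\inpr{r}{x}<+\infty$.

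So for the M-convex cone $\cone\{\alpha\}$, say, no set of the kind you want exists. The ``suitable subset of $P$'' in your delicate case cannot be constructed; under your reading the only-if direction is simply false. The if direction fails the same way: $\cone\{\alpha\}$, $\{k\alpha : k\in\Z_+\}$ and the open segment $\{t\alpha : 0<t<1\}$ all satisfy \eqref{cond:MP} but have no maximizers. Hence your conclusions $\dom{\sigma}=V$, $P=\conv\{r_C\}$ and that $\conv{S}$ is closed fail in general.

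What is missing is an argument that uses only pairwise upper bounds.

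\emph{If part.} The paper never looks for a maximizer. Write $x=\sum_i \mu_i x_i$ in a closed chamber with vertices $x_i\in\sk{\spCC^\star}$. Pick $q^i\in S$ whose $\alpha_i$-coordinate comes within $\eps$ of $\spt{S}(x_i)/\inpr{\alpha_i}{x_i}$, or exceeds any bound if this is $+\infty$. Apply \eqref{cond:MP} finitely often to get one $r\in S$ dominating every $q^i$. This gives $\spt{S}=\phext{\rho}$ for $\rho$ the restriction of $\spt{S}$ to $\sk{\spCC^\star}$, and L-sublinearity, with the value $+\infty$ allowed, follows from \Cref{prop:spherical-submodular-cone}. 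For (2), the paper shows $\conv{B}=\B{\spt{B}}$: for $p\in\B{\spt{B}}$ and each $\simp$ it takes some $r^\simp\in B$ with $r^\simp\geq_\simp p$, and checks that $0\in\conv\{r^\simp-p\}$.

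\emph{Only-if part of (1).} Take $S=P$ and prove pairwise domination by an improvement step. Maximize $\inpr{p'}{\omega_j^\vee}$ over $P\cap(p+\cone\simp)$. An optimum $\hat p$ with $\hat p_j<q_j$ is impossible: $q-\hat p\in\tcone{P}{\hat p}=\cone\RS'$ yields a positive root $\beta\in\tcone{P}{\hat p}$ with positive $\alpha_j$-coefficient. Moving slightly along $\beta$ stays feasible and improves the objective.

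\emph{Only-if part of (2).} Your face $F$ may be empty, and the LP point above need not lie in $\Mdom$. The paper obtains lattice upper bounds in $P\cap\Mdom$ from the discrete tangent-cone property \Cref{prop:tangent-cone}, moving up along roots $\beta$ with $r+\beta\in B$ as in \Cref{thm:chara:M-convex-set}. For polytopes your vertex argument does give (2) without \Cref{prop:tangent-cone}, which is a nice observation, but it does not reach the general case.
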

The proof of
the only-if part of (2) is deferred to \Cref{rmk:Mpoly:proof}.
\begin{proof}
    We first show the if parts of (1) and (2).
    Let $S \subseteq V^*$ be a nonempty set having~\eqref{cond:MP}.
    The following claim is essential in our proof.
    \begin{claim}\label{cl:MP}
        The support function $\spt{S}$ of $S$ is an L-sublinear function.
        In addition, if $S \subseteq \Mdom$, then $\spt{S}$ is integral.
    \end{claim}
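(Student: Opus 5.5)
We need to prove Claim~\ref{cl:MP}: for a nonempty $S\subseteq V^*$ with \eqref{cond:MP}, $\spt{S}$ is L-sublinear, and it is integral when $S\subseteq\Mdom$.

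\textbf{Structure on a closed chamber.} Fix a chamber $C$ of $\spCC$ with simple system $\simp$. For $x\in\cl C$ we have $\inpr{\alpha}{x}\ge 0$ for all $\alpha\in\simp$. Hence $p\le_\simp r$ gives $\inpr{p}{x}\le\inpr{r}{x}$. Call $r\in S$ a \emph{$\simp$-top} if $r$ is $\le_\simp$-maximal in $S$. By \eqref{cond:MP}, any two elements of $S$ have a common upper bound in $S$. So if a $\simp$-top $r_\simp$ exists, it is the unique maximal element and dominates every $p\in S$. Then $\spt{S}(x)=\inpr{r_\simp}{x}$ on $\cl C$.

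If no maximum exists, $S$ is unbounded in some $\le_\simp$-direction. I would handle this through the closure: $\spt{S}(x)=\lim_t\inpr{r_t}{x}$ along a $\le_\simp$-increasing chain (a countable directed sequence suffices, since the coordinates $p_i$ are real). On $\cl C$ the value is then either $+\infty$ or $\sum_i (\sup r_i)\inpr{\alpha_i}{x}$, where the suprema are taken coordinatewise in the $\simp$-basis. Directedness shows that the coordinatewise sups are attained in the limit simultaneously.

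\textbf{Conclusion on $\cl C$.} Let $J=\{i:\sup_{r\in S} r_i=+\infty\}$. Then $\spt{S}$ restricted to $\cl C$ is linear, given by $\bar r_\simp$ (the sup vector, with coordinates in $J$ irrelevant), on the face $\cl C\cap\{\inpr{\alpha_i}{x}=0\ (i\in J)\}$, and $+\infty$ elsewhere on $\cl C$. This face is the closure of a simplex of $\spCC$ by Corollary~\ref{cor:type-function}. So $\dom\spt{S}$ is a union of closed cells of $\spCC$, it is convex, and $\spt{S}$ is linear on each closed cell. Since there are finitely many chambers, $\spt{S}$ is polyhedral: its epigraph is a finite union of polyhedral cones that is convex.

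\textbf{Faces of the fan.} Take $K=\argmin(\spt{S}-x)\in\fan(\spt{S})$. I would show $K$ is a union of closed cells, as in step (b)$\Rightarrow$(c) of Proposition~\ref{prop:spherical-submodular-cone}. If $y\in K$ lies in the relative interior of a cell $A$, linearity of $\spt{S}$ on $\cl A$ together with minimality forces $\cl A\subseteq K$. Because $K$ is convex, Lemma~\ref{lem:convex_subcomplex} and Lemma~\ref{lem:chara-L-convex-cone} then make $K$ an L-convex cone. This establishes L-sublinearity.

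\textbf{Integrality.} Suppose $S\subseteq\Mdom$. A maximal $K\in\fan(\spt{S})$ contains a full-dimensional chamber of $\dom\spt{S}$, i.e.\ $\cl C\cap\dom$ for some $C$. On it $\spt{S}=\inpr{\bar r_\simp}{\cdot}$, where the coordinates in $J$ may be replaced by anything, since they are orthogonal to the domain. The coordinates not in $J$ are suprema of integers that are bounded above, so they are attained and integral. Choosing integer values on $J$ gives $p\in\Mdom$ with $K=\argmin(\spt{S}-p)$, which is exactly the definition of integrality in \eqref{eq:integral-Lsub}.

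\textbf{Main obstacle.} I expect the hardest step to be the non-integral case, where a coordinatewise supremum is finite but not attained. There I must check that the chamber-wise formula still gives a closed, consistently glued function; closedness of support functions (Lemma~\ref{lem:clconv-sublinear}) should handle this.
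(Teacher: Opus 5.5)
Your proposal is correct and follows the paper's route. Your coordinatewise-supremum computation via \eqref{cond:MP} on each closed chamber is exactly how the paper shows $\spt{S}=\phext{\rho}$ for the restriction $\rho$ of $\spt{S}$ to $\sk{\spCC^\star}$; where the paper then simply cites Proposition~\ref{prop:spherical-submodular-cone} and Proposition~\ref{prop:integral-subm}~(1) (with Corollary~\ref{cor:inpr}~(1)), you re-derive by hand both the fan argument and the lattice point $p_K$, built from the integral suprema $\hat{p}_i$.

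The ``main obstacle'' you flag is not one. The sup formula on $\cl C$ holds whether or not the suprema are attained, so your remark that a missing maximum forces unboundedness is inaccurate (the suprema can be finite but unattained when $S$ is not closed), though harmless. No gluing is needed either, since $\spt{S}$ is a single well-defined function.
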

    \begin{proof}[Proof of \Cref{cl:MP}]
        Let $\rho$ be the restriction of $\spt{S}$ to $\sk{\spCC^\star}$.
    It suffices to show that
    $\spt{S} = \phext{\rho}$.
    Indeed, if $\spt{S} = \phext{\rho}$,
    then $\phext{\rho}$ is convex,
    which implies that $\rho$ is spherical submodular 
    and $\spt{S}$ is L-sublinear by \Cref{prop:spherical-submodular-cone}.
    In addition, if $S \subseteq \Mdom$,
    then, for $x \in \sk{\spCC^\star}$,
    we have $\rho(x) = \spt{S}(x) = \sup{\Set{ \inpr{p}{x} }{ p \in S }} \in \Z/a(x) \cup \{+\infty\}$ by \Cref{cor:inpr}~(1).
    Thus, $\rho$ is an integral spherical submodular function by \Cref{prop:integral-subm}~(1),
    which implies that $\spt{S}$ is integral.

    Take any $x \in V$ and chamber $C \in \spCC^\star$ with $x \in \cl(\cone{C})$,
    where we recall that $\cone{C}$ is the chamber of $\spCC$ corresponding to $C$; see \Cref{subsubsec:spherical-expression}.
    Suppose that $\sk{C} = \{ x_1, x_2, \dots, x_n \}$, in which $\tau(x_i) = v_i$ for $i \in [n]$.
    We may assume that $\sk{A_x} = \{ x_1, \dots, x_k \}$,
    namely,
    $x = \sum_{i = 1}^k \mu_i x_i$ for some $\mu_i > 0$ for $i \in [k]$.
    Our aim is to show that $\spt{S}(x) = \sum_{i = 1}^k \mu_i \rho(x_i)$,
    which implies $\spt{S}(x) = \phext{\rho}(x)$.

    By the definition of $\spt{S}$ and $\rho$,
    we obtain
    \begin{align}
        \spt{S}(x) &= \sup{\Set{ \inpr{p}{x} }{ p \in S }}\\
        &= \sup{\Set*{ \sum_{i = 1}^k \mu_i \inpr{p}{x_i}}{ p \in S }}\\
        &\leq \sum_{i = 1}^k \mu_i \sup{\Set{ \inpr{p}{x_i} }{ p \in S }}\\
        &= \sum_{i = 1}^k \mu_i \rho(x_i).
    \end{align}
    Let $\simp = \{ \alpha_1, \alpha_2, \dots, \alpha_n \} \subseteq \RS$ be the simple system corresponding to $C$ with $\tau_{\simp}(\alpha_i) = \tau(x_i)$
    and $\hat{p}_i \coloneqq \sup\left\{ p_i \ \middle|\ \sum_{i = 1}^n p_i \alpha_i \in S \right\} \in \ER$.
    Then, for $i \in [n]$, we have
    \begin{align}
        \rho(x_i) = \sup{\Set{ \inpr{p}{x_i} }{ p \in S }} = \inpr{\alpha_i}{x_i}\sup{\Set*{ p_i }{ \sum_{i = 1}^n p_i \alpha_i \in S }} = \inpr{\alpha_i}{x_i} \hat{p}_i,
    \end{align}
    where the second equality follows from \Cref{lem:inpr}.
    Thus,
    \begin{itemize}
        \item if $\rho(x_i) < +\infty$, i.e., $\hat{p}_i < +\infty$,
        then for any sufficiently small $\eps > 0$, there is $q^i = \sum_{j = 1}^n q_j \alpha_j \in S$ such that $q_i \geq \hat{p}_i - \eps/ (\mu_i \inpr{\alpha_i}{x_i})$; and
        \item if $\rho(x_i) = +\infty$, i.e., $\hat{p}_i = +\infty$, then for any sufficiently large $M > 0$, there is $q^i = \sum_{j = 1}^n q_j \alpha_j \in S$ such that $q_i \geq M / (\mu_i \inpr{\alpha_i}{x_i})$.
    \end{itemize}
    Since $S$ has~\eqref{cond:MP}, for any sufficiently small $\eps > 0$ and large $M > 0$, there is $r_{\eps, M} = \sum_{i = 1}^n r_i \alpha_i \in S$ satisfying $r_{\eps, M} \geq_{\Delta} q^i$ for $q^i \in S$ defined as above for $i \in [k]$,
    which implies
    \begin{align}
        r_i \geq
        \begin{cases}
            \hat{p}_i - \eps/ (\mu_i \inpr{\alpha_i}{x_i}) & \text{if $\rho(x_i) < + \infty$},\\
            M / (\mu_i \inpr{\alpha_i}{x_i}) & \text{if $\rho(x_i) = + \infty$}.
        \end{cases}
    \end{align}
    Hence, we obtain
    \begin{align}
        \spt{S}(x) &\geq \inpr{r_{\eps, M}}{x} = \sum_{i = 1}^k \mu_i r_i \inpr{\alpha_i}{x_i}\\
        &\geq 
        \begin{dcases}
            \sum_{i = 1}^k \mu_i \rho(x_i) - k \eps & \text{if $\rho(x_i) < +\infty$ for all $i \in [k]$},\\
            M & \text{if $\rho(x_i) = +\infty$ for some $i \in [k]$}
        \end{dcases}
    \end{align}
    for any sufficiently small $\eps > 0$ and large $M > 0$.
    This implies that $\spt{S}(x) = \sum_{i = 1}^k \mu_i \rho(x_i)$.
    \end{proof}

    Let us return to the proof of \Cref{thm:chara:M-poly}.

    (If part of~(1)).
    By \Cref{lem:clconv-sublinear},
    we have $\clconv{S} = \B{\spt{S}}$,
    where we note that $\spt{S}$ is L-sublinear by \Cref{cl:MP}.
    Thus, by \Cref{thm:conjugate-Mpoly-Lsub},
    we can conclude that $\clconv{S}$ forms an M-convex polyhedron.

    (If part of~(2)).
    Suppose that $S \subseteq \Mdom$.
    Since $\spt{S}$ is an integral L-sublinear function by \Cref{cl:MP},
    $\B{\spt{S}}$ is an integral M-convex polyhedron by \Cref{thm:conjugate-Mpoly-Lsub}.
    Thus, it suffices to prove that $\conv{S} = \B{\spt{S}}$,
    which implies that $\conv{S}$ is an integral M-convex polyhedron.
    Since $\conv{S} \subseteq \clconv{S}$ holds in general and we have already shown $\clconv{S} = \B{\spt{S}}$ above,
    we prove $\conv{S} \supseteq \B{\spt{S}}$ in the following.

    Let $\simp = \{ \alpha_1, \alpha_2, \dots, \alpha_n \}$ be an arbitrary simple system
    and $C$ be the chamber of $\spCC^\star$ corresponding to $\simp$,
    where $\sk{C} = \{ x_1, x_2, \dots, x_n \}$ and $\tau_{\simp}(\alpha_i) = \tau(x_i)$ for $i \in [n]$.
    Take any $p = \sum_{i = 1}^n p_i \alpha_i \in \B{\spt{S}}$.
    Then,
    we have $\inpr{p}{x_i} = p_i\inpr{\alpha_i}{x_i} \leq \spt{S}(x_i)$ for each $i \in [n]$,
    in which the first equality follows from \Cref{lem:inpr}.
    On the other hand,
    since $\spt{S}(x) = \sup{\Set{ \inpr{p}{x} }{ p \in S }}$,
    for each $i \in [n]$, there is $q^i = \sum_{j = 1}^n q^i_j \alpha_j \in S$ such that
    \begin{align}
        q^i_i
        \begin{cases}
            = \spt{S}(x_i)/\inpr{\alpha_i}{x_i} (\geq p_i) & \text{if $x_i \in \dom{\spt{S}}$},\\
            \geq p_i & \text{if $x_i \notin \dom{\spt{S}}$}.
        \end{cases}
    \end{align}
    Since $S$ has~\eqref{cond:MP}, there exists $r^\simp \in S$ such that
    $r^\simp \geq_{\simp} q^i$ for each $i \in [n]$;
    this $r^\simp$ also satisfies $r^\simp \geq_{\simp} p$.

    Let $R \coloneqq \Set{ r^\simp - p }{ \text{$\simp:$ simple system} }$.
    Take any $x \in V$ and chamber $C' \in \spCC^\star$ with $x \in \cl(\cone{C'})$,
    where we suppose that $\sk{C'} = \{ x_1', x_2', \dots, x_n' \}$.
    Let $\simp' = \{ \alpha_1', \alpha_2', \dots, \alpha_n' \} \subseteq \RS$ be the simple system corresponding to $C'$ with $\tau_{\simp'}(\alpha_i') = \tau(x_i')$ for $i \in [n]$.
    Then we have $\spt{R}(x) \geq \inpr{r^{\simp'} - p}{x} = \sum_{i = 1}^n \cell{\mu}{x}(x_i) \inpr{r^{\simp'} - p}{x_i} \geq 0$,
    where the last inequality follows from $p \leq_{\simp'} r^{\simp'}$.
    Therefore, we obtain
    \begin{align}
        0 \in \Set{ q \in V^* }{ \inpr{q}{x} \leq \spt{R}(x) \ (\forall x \in V) } = \clconv{R} = \conv{R},
    \end{align}
    where the first equality follows from \Cref{lem:clconv-sublinear}
    and the second follows from the fact that $R$ is a finite set.
    This implies $p \in \conv{\Set{ r^\simp }{ \text{$\simp:$ simple system}}} \subseteq \conv{S}$.

    (Only-if part of (1)).
    Let $P \subseteq V^*$ be an M-convex polyhedron.
    Take any $p, q \in P$ and simple system $\simp = \{ \alpha_1, \alpha_2, \dots, \alpha_n \}$.
    Let $\omega_1^\vee, \omega_2^\vee, \dots, \omega_n^\vee$ be the fundamental coweights with respect to $\alpha_1, \alpha_2, \dots, \alpha_n$ (recall the definition~\eqref{eq:fundamental-coweights}).
    It suffices to find $r \in P$ such that $r \geq_{\simp} p$ and $r \geq_{\simp} q$,
    or equivalently, $\inpr{r}{\omega_i^\vee} \geq \max\{ \inpr{p}{\omega_i^\vee}, \inpr{q}{\omega_i^\vee} \}$ for all $i \in [n]$.

    Suppose that $p = \sum_{i = 1}^n p_i \alpha_i$ and $q = \sum_{i = 1}^n q_i \alpha_i$.
    Take any $j \in [n]$ with $p_j < q_j$ (if there is no such $j$, then $r = p$ is a desired point).
    Then we consider the following LP:
    \begin{align}
    \begin{array}{ll}
	\text{Maximize} & \displaystyle \inpr{p'}{\omega_j^\vee}\\
	\text{subject to} & \displaystyle p' \in P \cap \left( p + \cone{\{\alpha_1, \alpha_2, \dots, \alpha_n\}}\right),
	\end{array}    
    \end{align}
    where we note that $p + \cone{\{\alpha_1, \alpha_2, \dots, \alpha_n\}}$ is the set of points $p'$ with $p \preceq_{\simp} p'$.
    Then the optimal value is at least $q_j$.
    Indeed, suppose to the contrary that the optimal value is smaller than $q_j$.
    Let $\hat{p} = \sum_{i = 1}^n \hat{p}_i \alpha_i$ be an optimal solution.
    Then, $\inpr{\hat{p}}{\omega_j^\vee} = \hat{p}_j < q_j$ by the assumption.
    Since $P$ is an M-convex polyhedron and $\hat{p}, q \in P$, there is $\mu \in \cncoeff{\RS}$ such that
    $\supp{\mu} \subseteq \tcone{P}{\hat{p}} \cap \RS$ and
    $\hat{p} + \sum_{\alpha \in \RS} \mu(\alpha) \alpha = q$.
    In particular, by $\hat{p}_j < q_j$,
    there is $\beta = \sum_{i = 1}^n b_i \alpha_i \in \supp{\mu}$ such that $b_j > 0$,
    which implies that $\beta$ is a positive root.
    Since $\beta \in \tcone{P}{\hat{p}}$, there is $\eps > 0$ such that $\hat{p} + \eps \beta \in P$, which improves the value of the objective function, i.e., $\inpr{\hat{p} + \eps \beta}{\omega_j^\vee} > \inpr{\hat{p}}{\omega_j^\vee}$.
    This contradicts the optimality of $\hat{p}$.

    Take any feasible solution $p'$ of the above LP with $p'_j \geq q_j$
    and update $p \leftarrow p'$.
    By repeating this update (at most $n$ times),
    we finally obtain $r = \sum_{i = 1}^n r_i \alpha_i \in P$ such that $r_i \geq \max\{p_i, q_i\}$ for each $i \in [n]$,
    which implies $r \geq_{\simp} p$ and $r \geq_{\simp} q$.
\end{proof}

\begin{corollary}\label{cor:M-convex-set:MP}
    A nonempty polytope $P \subseteq V^*$ is M-convex
    if and only if,
    for any simple system $\simp$,
    there exists a unique maximal member $p \in P$ with respect to $\leq_\simp$.
\end{corollary}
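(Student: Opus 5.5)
We deduce \Cref{cor:M-convex-set:MP} from \Cref{thm:chara:M-poly}~(1), applied with $S \coloneqq P$; since $P$ is a polytope, $\clconv{P} = P$.

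For the if part, suppose that for every simple system $\simp$ there is a unique maximal member $p^\simp$ of $P$ with respect to $\leq_\simp$. We show that $P$ has \eqref{cond:MP}. Fix $\simp$. Since $P$ is compact, for any $q \in P$ the set $\Set{r \in P}{q \leq_\simp r}$ is a nonempty compact set. Maximizing the strictly $\leq_\simp$-monotone linear functional $\sum_i \inpr{\cdot}{\omega_i^\vee}$ over this set yields a $\leq_\simp$-maximal element of $P$ that dominates $q$. By uniqueness this element is $p^\simp$, so $q \leq_\simp p^\simp$ for every $q \in P$. Hence $r = p^\simp$ witnesses \eqref{cond:MP} for any pair $p, q$, and \Cref{thm:chara:M-poly}~(1) shows that $P = \clconv{P}$ is M-convex.

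For the only-if part, let $P$ be an M-convex polytope and fix $\simp$. By the same compactness argument, $P$ has at least one $\leq_\simp$-maximal element. Suppose $m, m'$ are both maximal. The only-if direction of \Cref{thm:chara:M-poly}~(1) gives $r \in P$ with $m \leq_\simp r$ and $m' \leq_\simp r$. Maximality of $m$ forces $r = m$, and maximality of $m'$ forces $r = m'$, so $m = m'$. Thus the maximal element is unique.

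The argument has no real obstacle. The only points needing care are that the definition of \eqref{cond:MP} in \Cref{thm:chara:M-poly} is applied to $S = P$ itself, which is legitimate since $P$ is closed and convex, and that the existence of maximal elements uses the boundedness of $P$. The existence step follows because each $\inpr{\cdot}{\omega_i^\vee}$ is the $i$th $\simp$-coordinate, so a maximizer of their sum over $\Set{r \in P}{q \leq_\simp r}$ cannot be strictly dominated within $P$.
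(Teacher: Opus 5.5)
Your proposal is correct and follows the route the paper intends: it states the corollary immediately after \Cref{thm:chara:M-poly} without a separate proof. For a polytope, having a unique $\leq_\simp$-maximal element is equivalent by compactness to having a $\leq_\simp$-maximum, i.e., to $P$ itself satisfying \eqref{cond:MP}, so \Cref{thm:chara:M-poly}~(1) with $S = P$ applies.

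One step needs a more precise citation. The \emph{statement} of the only-if part of \Cref{thm:chara:M-poly}~(1) only produces some $S$ with \eqref{cond:MP} and $\clconv{S} = P$, and your maximal elements $m, m'$ need not lie in $S$. For the uniqueness step you should therefore cite the \emph{proof} of that part, which verifies \eqref{cond:MP} for $P$ itself. Alternatively, for compact $P = \clconv{S}$, \eqref{cond:MP} on $S$ and a limiting argument give a point of $P$ attaining all $\simp$-coordinatewise suprema over $S$, and that point is a $\leq_\simp$-maximum of $P$.
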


\subsection{Locally polyhedral L- and M-convex functions}\label{subsec:locally-polyhedral}
In this subsection,
we introduce the L- and M-convexity of locally polyhedral convex functions.
For a locally polyhedral L-convex (resp. M-convex) function $\hat{h}$, one might naturally consider two definitions: The directional derivative is L-sublinear (resp. M-sublinear) at each point in $\dom{\hat{h}}$, or each member $P$ of $\fan(\hat{h})$ is an L-convex (resp. M-convex) polyhedron.
The following proposition ensures that it does not matter which definition we adopt, as they are equivalent.
\begin{proposition}\label{prop:locally-poly-LM}
  \begin{enumerate}[label={\textup{(\arabic*)}}]
        \item For a proper locally polyhedral convex function $\funcdoms{\hat{f}}{V}{\ER}$,
  $\fan(\hat{f})$ consists of L-convex polyhedra
  if and only if $\hat{f}'(x; \cdot)$ forms an L-sublinear function
  for any $x \in \dom{\hat{f}}$.
        \item For a proper locally polyhedral convex function $\funcdoms{\hat{g}}{V^*}{\ER}$,
    $\fan(\hat{g})$ consists of M-convex polyhedra
    if and only if $\hat{g}'(p; \cdot)$ forms an M-sublinear function
    for any $p \in \dom{\hat{g}}$.
    \end{enumerate}
\end{proposition}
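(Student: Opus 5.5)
The plan is to reduce both assertions to statements about tangent cones. The bridge is \Cref{lem:quasi-polyhedral-convex-function}~(2): for a proper locally polyhedral convex function $\hat{h}$ and $x \in \dom{\hat{h}}$, the directional derivative $\hat{h}'(x;\cdot)$ is a proper polyhedral sublinear function, and
\begin{align}
  \fan(\hat{h}'(x;\cdot)) = \Set{ \tcone{Q}{x} }{ Q \in \fan(\hat{h}; x) }.
\end{align}
With this in hand, both parts become a comparison between the tangent cones of the members of $\fan(\hat{h})$ and the cones in the fans of the derivatives. The two characterizations we need are already available. On the L-side, \Cref{lem:L-convex-tcone} says a nonempty closed convex set is an L-convex polyhedron if and only if all its tangent cones are L-convex cones. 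On the M-side, this tangent-cone condition is exactly the definition of an M-convex polyhedron. For the sublinear side, L-sublinearity is defined by the members of the fan being L-convex cones, and \Cref{lem:M-sublinear} gives the same characterization for M-sublinearity with M-convex cones.

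For (1), the only-if direction goes as follows. Fix $x \in \dom{\hat{f}}$. Every member of $\fan(\hat{f}'(x;\cdot))$ has the form $\tcone{Q}{x}$ with $Q \in \fan(\hat{f};x)$. Such a $Q$ is an L-convex polyhedron by hypothesis, so $\tcone{Q}{x}$ is an L-convex cone by \Cref{lem:L-convex-tcone}. Since $\hat{f}'(x;\cdot)$ is polyhedral sublinear, it is therefore L-sublinear.

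For the if direction of (1), take $Q \in \fan(\hat{f})$. It is nonempty, and it is closed and convex because $\hat{f}$ is closed and convex. For each $y \in Q$ we have $Q \in \fan(\hat{f};y)$, so $\tcone{Q}{y}$ is a member of $\fan(\hat{f}'(y;\cdot))$ and hence an L-convex cone. Applying \Cref{lem:L-convex-tcone} again shows that $Q$ is an L-convex polyhedron.

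Part (2) is proved by the same two arguments, run in $V^*$ with $\hat{g}$. In the only-if direction, the members of $\fan(\hat{g}'(p;\cdot))$ are the cones $\tcone{Q}{p}$ with $Q \in \fan(\hat{g};p)$. Each is an M-convex cone by the definition of M-convex polyhedron, so \Cref{lem:M-sublinear} (which applies because $\hat{g}'(p;\cdot)$ is proper polyhedral sublinear) gives M-sublinearity. In the if direction, \Cref{lem:M-sublinear} shows that every member of $\fan(\hat{g}'(p;\cdot))$ is an M-convex cone. Hence, for $Q \in \fan(\hat{g})$ and $p \in Q$, the cone $\tcone{Q}{p}$ is M-convex, and $Q$, being nonempty, closed and convex, is an M-convex polyhedron by definition.

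The main point needing care is the fan identity from \Cref{lem:quasi-polyhedral-convex-function}~(2). It requires checking that the derivative there is taken at a point $x$ lying in $Q$, and not merely in $\relint Q$. It also requires that the tangent cone $\tcone{Q}{x}$ is the same object as the corresponding exposed face of the derivative's fan. This holds because $\hat{h}$ is polyhedral near $x$, so the tangent cone is closed and no closure issues arise. Everything else is a direct application of the cited lemmas.
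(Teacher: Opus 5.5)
Your proposal is correct and takes essentially the same route as the paper. Both directions of (1) combine the fan identity of \Cref{lem:quasi-polyhedral-convex-function}~(2) with \Cref{lem:L-convex-tcone} and the definition of L-sublinearity, and (2) swaps their roles, with \Cref{lem:M-sublinear} handling M-sublinearity and the definition of M-convex polyhedra handling the tangent cones.
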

\begin{proof}
  We only show the assertion~(1).
  The proof for the assertion~(2) is given by replacing ``the definition'' and ``\Cref{lem:L-convex-tcone}'' with ``\Cref{lem:M-sublinear}'' and ``the definition,''
  respectively, in the following proof.

    (Only-if part).
    Fix an arbitrary $x \in \dom{\hat{f}}$.
    By \Cref{lem:quasi-polyhedral-convex-function}~(2),
    we have $\fan(\hat{f}'(x; \cdot)) = \Set{ \tcone{P}{x} }{ P \in \fan(\hat{f}; x) }$.
    Since every $P \in \fan(\hat{f}; x)$ is an L-convex polyhedron by the assumption,
    its tangent cone $\tcone{P}{x}$ at $x$ is an L-convex cone by \Cref{lem:L-convex-tcone}.
    Since $\fan(\hat{f}'(x; \cdot))$ consists of L-convex cones,
    $\hat{f}'(x; \cdot)$ is L-sublinear by the definition.

    (If part).
    Take any $P \in \fan(\hat{f})$ and $x \in P$.
    Since $P$ is a closed convex set,
    it suffices to see that $\tcone{P}{x}$ is an L-convex cone by \Cref{lem:L-convex-tcone}.
    By \Cref{lem:quasi-polyhedral-convex-function}~(2),
    we have $\tcone{P}{x} \in \fan(\hat{f}'(x; \cdot))$.
    Since
    $\hat{f}'(x; \cdot)$ is L-sublinear by the assumption,
    $\tcone{P}{x}$ forms an L-convex cone by the definition.
\end{proof}

Hence, we say that a proper locally polyhedral convex function $\funcdoms{\hat{f}}{V}{\ER}$ is a \emph{locally polyhedral L-convex function of type $\RS$}
if $\fan(\hat{f})$ consists of L-convex polyhedra of type $\RS$,
or equivalently,
$\hat{f}'(x; \cdot)$ forms an L-sublinear function of type $\RS$
for any $x \in \dom{\hat{f}}$.
In addition, it is said to be \emph{primal-integral} and \emph{dual-integral}
if every member of $\fan(\hat{f})$ is an integral L-convex polyhedron
and $\hat{f}'(x; \cdot)$ forms an integral L-sublinear function
for any $x \in \dom{\hat{f}}$,
respectively.
If a locally polyhedral L-convex function is primal- and dual-integral,
then it is said to be just \emph{integral}.
The M-convex counterparts,
namely, a \emph{locally polyhedral M-convex function $\funcdoms{\hat{g}}{V^*}{\ER}$ of type $\RS$} and its \emph{primal-integral}, \emph{dual-integral}, and \emph{integral} variants,
are defined analogously by replacing $\hat{f}$, $V$, and ``L-convex'' with $\hat{g}$, $V^*$, and ``M-convex'', respectively.

The following theorem (\Cref{thm:conjugate-loc-poly-L-M-convex}) describes the relations among the (primal-integral/dual-integral/integral) locally polyhedral L-/M-convex functions,
particularly, the one-to-one correspondences
\begin{itemize}
    \item between the dual-integral locally polyhedral L-convex (resp. M-convex) functions and the primal-integral locally polyhedral M-convex (resp. L-convex) functions; and
    \item between the integral locally polyhedral L- and M-convex functions
\end{itemize}
via the Fenchel--Legendre conjugates.
The proof is given later.
\begin{theorem}\label{thm:conjugate-loc-poly-L-M-convex}
  \begin{enumerate}[label={\textup{(\arabic*)}}]
    \item If $\hat{f}$ (resp. $\hat{g}$) is a (primal-integral) locally polyhedral L-convex (resp. M-convex) function and $\hat{f}^*$ (resp. $\hat{g}^*$) is a locally polyhedral convex function, then $\hat{f}^*$ (resp. $\hat{g}^*$) is a (dual-integral) locally polyhedral M-convex (resp. L-convex) function.
    \item If $\hat{f}$ (resp. $\hat{g}$) is a dual-integral locally polyhedral L-convex (resp. M-convex) function, then $\hat{f}^*$ (resp. $\hat{g}^*$) is a primal-integral locally polyhedral M-convex (resp. L-convex) function.
    \item If $\hat{f}$ (resp. $\hat{g}$) is an integral locally polyhedral L-convex (resp. M-convex) function, then $\hat{f}^*$ (resp. $\hat{g}^*$) is an integral locally polyhedral M-convex (resp. L-convex) function.
  \end{enumerate}
\end{theorem}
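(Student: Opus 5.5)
The engine of the proof is \Cref{lem:argmin-support}~(5), which exchanges minimizer sets and directional derivatives under conjugation. For a proper closed convex $\hat{f}$ and $p \in \dom{\hat{f}^*}$ it gives $\argmin(\hat{f} - p) = \bS{(\hat{f}^*)'(p;\cdot)}$, and
\[
\fan(\hat{f}^*) = \Set{ \bS{\hat{f}'(x;\cdot)} }{ x \in \dom{\hat{f}},\ \bS{\hat{f}'(x;\cdot)} \neq \emptyset }.
\]
I will treat only the L-convex side ($\hat{f}$). The M-convex side follows by swapping the roles of \Cref{thm:conjugate-Lpoly-Msub} and \Cref{thm:conjugate-Mpoly-Lsub}, and of \Cref{lem:M-sublinear} and the definition of L-sublinearity, as in \Cref{prop:locally-poly-LM}. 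The one-to-one statement in~(3) is not extra work: since $\hat{f}^{**} = \hat{f}$ by \Cref{lem:conjugate}, it follows by applying~(3) in both directions.

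\textbf{(1).} Assume $\hat{f}^*$ is locally polyhedral. It is proper, since $\hat{f}$ majorizes an affine function. By \Cref{prop:locally-poly-LM}~(2) it suffices to show that $(\hat{f}^*)'(p;\cdot)$ is M-sublinear for every $p \in \dom{\hat{f}^*}$. Because $\hat{f}^*$ is locally polyhedral, \Cref{lem:quasi-polyhedral-convex-function}~(2) makes $(\hat{f}^*)'(p;\cdot)$ a proper polyhedral sublinear function, hence closed. \Cref{lem:clconv-sublinear} then gives $(\hat{f}^*)'(p;\cdot) = \spt{\bS{(\hat{f}^*)'(p;\cdot)}} = \spt{\argmin(\hat{f}-p)}$. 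The set $\argmin(\hat{f}-p)$ is nonempty (as $p \in \dom{\hat{f}^*} = \geo{\fan(\hat{f}^*)}$, i.e., $p$ has a subgradient) and lies in $\fan(\hat{f})$, so it is an (integral) L-convex polyhedron. By \Cref{thm:conjugate-Lpoly-Msub}, its support function is an (integral) M-sublinear function. This proves~(1), including the dual-integral version.

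\textbf{(2).} Each $\hat{f}'(x;\cdot)$ is an integral L-sublinear function, so \Cref{thm:conjugate-Mpoly-Lsub} makes each $\bS{\hat{f}'(x;\cdot)}$ an integral M-convex polyhedron. By the displayed formula, every member of $\fan(\hat{f}^*)$ is therefore an integral M-convex polyhedron. What remains, and what I expect to be the main obstacle, is showing that $\hat{f}^*$ is \emph{locally polyhedral}; after that the conclusion follows from the definitions. My plan is as follows.
\begin{enumerate}
\item Fix a polytope $Q \subseteq V^*$. By \Cref{lem:argmin-support}~(1), every relative interior point of $\dom{\hat{f}^*}$ lies in some member of $\fan(\hat{f}^*)$.
\item Each member is $\conv{B} + \cone{\RS'}$ with $B \subseteq \Mdom$ finite (\Cref{lem:integral-polyhedron}). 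Since $\Mdom$ is discrete and $\RS$ is finite, only finitely many members meet $Q$.
\item The support of $\fan(\hat{f}^*)$ is closed by \Cref{lem:union-closed:M}. So \Cref{lem:argmin-support}~(1) gives $\dom{\hat{f}^*} = \geo{\fan(\hat{f}^*)}$, and $\dom{\hat{f}^*}\cap Q$ is covered by finitely many polyhedra.
\item On each such polyhedron $\argmin(\hat{f} - p)$, $\hat{f}^*$ is affine: it equals $\inpr{\cdot}{x}-\hat{f}(x)$ for a fixed $x$ in the relative interior of the corresponding face.
\item Hence $\hat{f}^*|_Q$ is convex and piecewise affine on finitely many polyhedra, and therefore polyhedral.
\end{enumerate}
The delicate point is step~2: bounding the number of distinct members meeting $Q$. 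I would argue this through the integrality of vertices together with the finiteness of the possible recession cones.

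\textbf{(3).} Given~(2), $\hat{f}^*$ is a primal-integral locally polyhedral M-convex function, so in particular it is locally polyhedral. Applying~(1) with primal-integrality then yields dual-integrality of $\hat{f}^*$. Together these make $\hat{f}^*$ integral.
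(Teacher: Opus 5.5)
Your parts (1) and (3) follow the paper's proof. So does your reduction in (2): every member of $\fan(\hat f^*)$ is an integral M-convex polyhedron, their union is closed by \Cref{lem:union-closed:M}, and so it equals $\dom\hat f^*$. The gap is your step~2, which you yourself flag as delicate. Discreteness of $\Mdom$ plus finiteness of the possible recession cones does not bound the number of members meeting $Q$. A member can meet $Q$ while its vertices lie arbitrarily far away, and integrality alone permits infinitely many such polyhedra. For instance, in rank one the intervals $[-k\alpha,k\alpha]$, $k=1,2,\dots$, are integral M-convex polytopes with the same recession cone, and all of them contain $0$.

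What rules this out is the face structure of $\fan(\hat f^*)$, and the argument has to use it explicitly, via a local-finiteness step followed by compactness.
\begin{enumerate}
\item Fix a point $p$. Two distinct members containing $p$ have distinct tangent cones at $p$: if the cones were equal, the relative interiors would meet, so the members coincide by \Cref{lem:argmin-support}~(2).
\item These tangent cones are M-convex cones, and there are only finitely many of those. Hence only finitely many members contain $p$.
\item The union of all remaining members is closed by \Cref{lem:union-closed:M} and misses $p$. So some ball around $p$ meets only finitely many members.
\item Compactness of $Q$ then gives the finiteness you need.
\end{enumerate}
This is exactly Claim~(1) in the proof of \Cref{prop:loc-poly-LM-convex-sufficient-condition}. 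The paper does not redo it at this point; it simply invokes \Cref{prop:loc-poly-LM-convex-sufficient-condition}~(2). The hypotheses of that proposition are precisely what you have already established: a subfamily of $\fan(\hat f^*)$ consisting of integral M-convex polyhedra whose union is $\dom\hat f^*$. Your steps~1--5 therefore re-derive that proposition with its key step missing. Either cite it directly or insert the local-finiteness argument above.

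A smaller point concerns step~4. The members of $\fan(\hat f^*)$ are the sets $\partial\hat f(x)=\argmin(\hat f^*-x)\subseteq V^*$, and on each of them $\hat f^*=\inpr{\cdot}{x}-\hat f(x)$. Writing them as $\argmin(\hat f-p)$ conflates the primal and dual sides, although the affine formula you state is correct.
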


Let $\qLfunc{\R}{\R}$, $\qLfunc{\Z|\R}{\R}$, $\qLfunc{\R}{\Z|\R}$, and $\qLfunc{\Z|\R}{\Z|\R}$ denote the classes of locally polyhedral L-convex functions, and their primal-integral, dual-integral, and integral variants, respectively.
The corresponding classes of M-convex functions are denoted by $\qMfunc{\R}{\R}$, $\qMfunc{\Z|\R}{\R}$, $\qMfunc{\R}{\Z|\R}$, and $\qMfunc{\Z|\R}{\Z|\R}$, respectively.
Furthermore,
the corresponding unhatted symbols (e.g., $\Lfunc{\R}{\R}$, $\Mfunc{\R}{\R}$, etc.) denote their respective subclasses consisting of functions whose conjugates are also locally polyhedral convex.
\Cref{thm:conjugate-loc-poly-L-M-convex}~(2) and~(3)
imply that
the conjugate of
any dual-integral/integral locally polyhedral L- or M-convex function is a locally polyhedral function.
Thus, we obtain
$\qLfunc{\R}{\Z|\R} = \Lfunc{\R}{\Z|\R}$, $\qMfunc{\R}{\Z|\R} = \Mfunc{\R}{\Z|\R}$,
$\qLfunc{\Z|\R}{\Z|\R} = \Lfunc{\Z|\R}{\Z|\R}$, and $\qMfunc{\Z|\R}{\Z|\R} = \Mfunc{\Z|\R}{\Z|\R}$.
The commutative diagram~\eqref{eq:diagram-LM-func} below includes the relationships among these classes.

The following is a sufficient condition for a given convex function to be primal-integral locally polyhedral L- or M-convex function,
which is used in the proof of \Cref{thm:conjugate-loc-poly-L-M-convex}~(2) and several statements in \Cref{sec:DCA:discrete,sec:chara}.
\begin{proposition}\label{prop:loc-poly-LM-convex-sufficient-condition}
    \begin{enumerate}[label={\textup{(\arabic*)}}]
        \item Let $\funcdoms{\hat{f}}{V}{\ER}$ be a proper convex function.
    If there is a subset $\fan' \subseteq \fan(\hat{f})$ such that
    every member of $\fan'$ is an integral L-convex polyhedron
    and
    $\dom{\hat{f}} = \bigcup_{P \in \fan'} P$,
    then $\hat{f}$ is a primal-integral locally polyhedral L-convex function.
        \item Let $\funcdoms{\hat{g}}{V^*}{\ER}$ be a proper convex function.
    If there is a subset $\fan' \subseteq \fan(\hat{g})$ such that
    every member of $\fan'$ is an integral M-convex polyhedron
    and
    $\dom{\hat{g}} = \bigcup_{P \in \fan'} P$,
    then $\hat{g}$ is a primal-integral locally polyhedral M-convex function.
    \end{enumerate}
\end{proposition}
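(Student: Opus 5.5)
We treat (1) in detail; (2) is obtained by the same argument with \Cref{lem:union-closed:M} in place of \Cref{lem:union-closed:L}, and with M-convex polyhedra and faces (\Cref{lem:face:M-poly}) in place of L-convex ones (\Cref{lem:face:L-poly}). The plan has three steps. First, show that $\hat{f}$ is locally polyhedral. Second, show that every member of $\fan(\hat{f})$ is a face of a member of $\fan'$. Third, deduce integral L-convexity of all members from the face lemma.

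\emph{Step 1 (local polyhedrality).} By \Cref{lem:union-closed:L}, $\dom{\hat{f}} = \bigcup_{P\in\fan'} P$ is closed. The key finiteness point is that the family $\fan'$ is locally finite: only finitely many members meet any bounded set. The reason is that an integral L-convex polyhedron is $\D{\gamma}$ with $\gamma$ integer-valued. If such a polyhedron meets a bounded set $B$, then each finite value $\gamma(\alpha)$ is bounded below by $\min_B \inpr{\alpha}{\cdot}$. We may replace $\gamma$ by its tight version, which takes values in $\Z$ (or $\Z/2$, etc.) and is at most $\gamma(\alpha)$. Then each finite value of the tight $\gamma$ is bounded above by $-\gamma(-\alpha)\le \max_B\inpr{\alpha}{\cdot}$ whenever $\gamma(-\alpha)<\infty$; otherwise the constraint is irrelevant near $B$ up to finitely many truncations. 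I expect this local finiteness to be the main obstacle. The cleanest route is geometric instead: by \Cref{lem:chara-integral-L-convex-poly}, each such polyhedron is a union of closed cells of the locally finite complex $\euCC$. Only finitely many cells meet a polytope $Q$, so only finitely many distinct sets $P\cap Q$ arise.

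On each $P\in\fan'$, $\hat{f}$ coincides with an affine function: if $P=\argmin(\hat f-p)$, then $\hat{f}=p+c$ on $P$. Hence on a polytope $Q$, $\hat f|_Q$ is a convex function whose domain $\dom\hat f\cap Q$ is a finite union of polytopes $P\cap Q$, with $\hat f$ affine on each piece. Its epigraph is therefore the convex hull of finitely many polyhedra of the form $\epi(\text{affine}|_{P\cap Q})$, which is a polyhedron, since closedness follows from finiteness. Consequently $\hat f$ is locally polyhedral convex. For (2), local finiteness follows similarly from \Cref{cor:Mpoly-subm} and \Cref{prop:integral-subm}(1): the support values $\rho(x)\in\Z/a(x)$ lie in a discrete set and are bounded on a bounded region, so there are finitely many possibilities.

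\emph{Steps 2--3.} By \Cref{lem:argmin-support}(3), since $\dom\hat f=\bigcup_{P\in\fan'}P$, every member of $\fan(\hat f)$ is a face of some member of $\fan'$. A nonempty face of an integral L-convex polyhedron is again an integral L-convex polyhedron by \Cref{lem:face:L-poly}. Hence every member of $\fan(\hat f)$ is an integral L-convex polyhedron. Combined with local polyhedrality, this shows that $\hat f$ is a primal-integral locally polyhedral L-convex function. In the M-convex case we use \Cref{lem:face:M-poly} in the same way.
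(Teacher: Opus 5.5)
Your Steps~2--3 are exactly the paper's argument: \Cref{lem:argmin-support}~(3), followed by \Cref{lem:face:L-poly} or \Cref{lem:face:M-poly}. Your final polyhedrality step is equivalent to the paper's formula, which writes $\hat{f}|_Q$ as a maximum of finitely many affine functions on the polytope $Q\cap\dom{\hat{f}}$; you instead note that the epigraph of $\hat{f}|_Q$ is a convex finite union of polyhedra with common recession cone $\{0\}\times\R_+$.

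The genuine difference is how you obtain local finiteness. The paper fixes $x\in\dom{\hat{f}}$ and observes that distinct members of $\fan'$ through $x$ have distinct tangent cones at $x$, by \Cref{lem:argmin-support}~(2). Each such cone is of the form $(\cone{\RS'})^*$, so $\fan'(x)$ is finite. Closedness of $\bigcup(\fan'\setminus\fan'(x))$ (\Cref{lem:union-closed:L}) and compactness then show that only finitely many \emph{members} of $\fan'$ meet a compact set. The same argument handles~(2), using M-convex tangent cones and \Cref{lem:union-closed:M}.

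You instead use integrality directly: integral polyhedra are unions of closed cells of a locally finite arrangement, so only finitely many distinct \emph{traces} $P\cap Q$ occur. This is weaker than the statement you announce (``only finitely many members meet any bounded set''), but it is all your epigraph argument needs. It also holds for an arbitrary family of integral L-convex polyhedra, whereas the paper's finiteness uses $\fan'\subseteq\fan(\hat{f})$. The paper's route gives the stronger finiteness and treats L and M uniformly through tangent cones.

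Two small repairs are needed.
\begin{enumerate}
\item \Cref{lem:chara-integral-L-convex-poly} assumes $\RS$ is irreducible, while the proposition does not. Work with $\affcalH(\RS)$ directly. For integer-valued $\gamma$, each hyperplane $\inpr{\alpha}{x}=\gamma(\alpha)$ belongs to $\affcalH(\RS)$, so every cell lies on one side of it. Hence $\D{\gamma}$ is a union of closed cells, and $\affcalH(\RS)$ is locally finite for every $\RS$.
\item Your first sketch has the bound backwards. Tightness gives only $\gamma(\alpha)\ge-\gamma(-\alpha)$, and finite values of $\gamma$ are not bounded above in terms of $Q$. The correct version is truncation. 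A constraint with $\gamma(\alpha)>\max_Q\inpr{\alpha}{\cdot}$ is inactive on $Q$ and may be replaced by $+\infty$. After that, only integer values in $[\min_Q\inpr{\alpha}{\cdot},\max_Q\inpr{\alpha}{\cdot}]$ remain, which gives finitely many traces with no need for $\euCC$ at all.
\end{enumerate}
Your phrase ``bounded on a bounded region'' in~(2) must be read the same way. There you use $\rho(x)\in\Z/a(x)$ on the finite set $\sk{\spCC^\star}$, via \Cref{cor:Mpoly-subm} and \Cref{prop:integral-subm}~(1).
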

\begin{proof}
    We only show the assertion~(1).
  The proof for the assertion~(2) is basically the same as that for the assertion~(1).

  We first observe that every member of $\fan(f)$ is an integral L-convex polyhedron. 
  Indeed,
  by \Cref{lem:argmin-support}~(3),
  for any $S \in \fan(\hat{f})$,
  there is $P \in \fan'$ such that $S$ is a face of $P$.
  Since $P$ is an integral L-convex polyhedron by the assumption,
  so is $S$ by \Cref{lem:face:L-poly}.

By the above observation, it suffices to show that $\hat{f}$ is a locally polyhedral convex function.
To this end, we see the following claim.
    Here, for $S \subseteq V$,
    let $\fan'(S)$ denote the set of polyhedra in $\fan'$ that intersect $S$.
If $S$ is a singleton $\{x\}$, we simply write $\fan'(x)$ instead of $\fan'(\{x\})$.
  \begin{claim}\label{cl:finite}
    \begin{enumerate}[label={\textup{(\arabic*)}}]
        \item For any compact set $C \subseteq V$, $\card{\fan'(C)}$ is finite.
        \item For any polytope $P \subseteq V$, the intersection $P \cap \dom{\hat{f}}$ is also a polytope.
    \end{enumerate}
\end{claim}
\begin{proof}[Proof of \Cref{cl:finite}]
  (1).
  Fix an arbitrary $x \in \dom{\hat{f}}$.
  Since every $P \in \fan'(x)$ is an (integral) L-convex polyhedron,
  we have $\tcone{P}{x} = (\cone{\RS'})^*$ for some $\RS' \subseteq \RS$
  by \Cref{lem:L-convex-tcone}.
  Furthermore, $\tcone{P}{x} \neq \tcone{P'}{x}$ holds for distinct $P, P' \in \fan'(x)$.
  Indeed, if $\tcone{P}{x} = \tcone{P'}{x}$ holds for $P, P' \in \fan'(x)$ then $(\relint{P}) \cap (\relint{P'}) \neq \emptyset$,
  which implies $P = P'$ by \Cref{lem:argmin-support}~(2).
  Thus, $\card{\fan'(x)}$ is bounded by the number of subsets of $\RS$, which is finite.

  Let $E_x \coloneqq \bigcup_{P \in \fan' \setminus \fan'(x)} P$.
  Since $E_x$ is closed by \Cref{lem:union-closed:L} and $x \notin E_x$,
  we can take a sufficiently small $\eps_x > 0$ so that
  $B(x; \eps_x) \cap E_x = \emptyset$,
  where $B(x; \eps_x)$ denotes the open ball centered at $x$ with radius $\eps_x$.
  For this $\eps_x$, we have $\fan'(x) = \fan'(B(x; \eps_x))$.
  In particular, since $\dom{\hat{f}} = \bigcup_{P \in \fan'} P$,
  we have $B(x; \eps_x) \cap \dom{\hat{f}} \subseteq \bigcup_{P \in \fan'(B(x; \eps_x))} P = \bigcup_{P \in \fan'(x)} P$.

    Take any compact set $C \subseteq V$.
    Since $\dom{\hat{f}} = \bigcup_{P \in \fan'} P$ is closed by \Cref{lem:union-closed:L},
    the intersection $C \cap \dom{\hat{f}}$ is compact.
    Hence, there are finitely many points $x_1, x_2, \dots, x_k$ in $C \cap \dom{\hat{f}}$ such that $C \cap \dom{\hat{f}} \subseteq \bigcup_{i = 1}^k B(x_i; \eps_{x_i})$.
    Then, we have $\fan'(C) = \bigcup_{i = 1}^k \fan'(B(x_i; \eps_{x_i})) = \bigcup_{i = 1}^k \fan'(x_i)$,
    which is a finite set.

    (2).
    By the assertion~(1), $\fan'(P)$ is a finite set;
    suppose that $\fan'(P) = \{ P_1, P_2, \dots, P_\ell \}$.
    For each $P_i \in \fan'(P)$,
    since $P \cap P_i$ is a polytope,
    there is a finite set $F_i \subseteq V$ such that $P \cap P_i = \conv{F_i}$.
    Thus, $\bigcup_{i = 1}^\ell F_i \subseteq P \cap \dom{\hat{f}} = \bigcup_{i = 1}^\ell (P \cap P_i) \subseteq \conv\left(\bigcup_{i = 1}^\ell F_i\right)$ holds.
    Since $P \cap \dom{\hat{f}}$ is a convex set,
    we obtain $P \cap \dom{\hat{f}} = \conv\left(\bigcup_{i = 1}^\ell F_i\right)$.
    Thus,
    $P \cap \dom{\hat{f}}$ is a polytope by the finiteness of $\bigcup_{i = 1}^\ell F_i$.
\end{proof}

We are ready to show that $\hat{f}$ is a locally polyhedral convex function.
Take any polytope $P \subseteq V$.
By \Cref{cl:finite}~(1),
$\fan'(P) = \{P_1, P_2, \dots, P_\ell\}$ is a finite set.
For each $i \in [\ell]$,
we pick one $p_i \in V^*$ such that $P_i = \argmin(\hat{f}-p_i)$.
Then,
we have
\begin{align}
    (\hat{f}|_P)(x) =
    \begin{cases}
        \max{\Set*{ \inpr{p_i}{x} + \min(\hat{f} - p_i) }{ i \in [\ell] }} & \text{if $x \in P \cap \dom{\hat{f}}$},\\
        +\infty & \text{otherwise}.
    \end{cases}
\end{align}
Since $P \cap \dom{\hat{f}}$ is a polytope by \Cref{cl:finite}~(2),
the restriction of $\hat{f}$ to $P$ is a polyhedral convex function.
Thus, $\hat{f}$ is a locally polyhedral convex function.
\end{proof}

\begin{proof}[Proof of \Cref{thm:conjugate-loc-poly-L-M-convex}]
    We only show the assertions for a (primal-integral/dual-integral/integral) locally polyhedral L-convex function $\hat{f}$.
    The proofs for a (primal-integral/dual-integral/integral) locally polyhedral M-convex function $\hat{g}$ are similar.

  (1).
  Since $\hat{f}^*$ is a locally polyhedral convex function by the assumption,
  the directional derivative $(\hat{f}^*)'(p; \cdot)$ is a polyhedral sublinear function by \Cref{lem:quasi-polyhedral-convex-function}~(2),
  particularly, a closed convex function.
  Thus,
  $(\hat{f}^*)'(p; \cdot)$ is the support function of $\partial \hat{f}^* (p)$; recall~\eqref{eq:derivative-support}.
  Since $\partial \hat{f}^* (p) = \argmin(\hat{f} - p)$ by \Cref{lem:argmin-support}~(5)
  and $\argmin(\hat{f} - p)$ is an (integral) L-convex polyhedron by the assumption,
  $(\hat{f}^*)'(p; \cdot)$ is an (integral) M-sublinear function by \Cref{thm:conjugate-Lpoly-Msub}.

  (2).
  Clearly, $\hat{f}^*$ is a proper convex function.
  By \Cref{lem:argmin-support}~(5),
  we have $\fan(\hat{f}^*) = \Set{\partial \hat{f}(x) }{ x \in \dom{\hat{f}},\ \partial \hat{f}(x) \neq \emptyset }$.
  For any $x \in \dom{\hat{f}}$,
  since $\hat{f}'(x; \cdot)$ is an integral L-sublinear function,
  the set $\partial \hat{f}(x) = \B{\hat{f}'(x; \cdot)}$ is an integral M-convex polyhedron by \Cref{thm:conjugate-Mpoly-Lsub}.
  Thus,
  $\fan(\hat{f}^*)$ consists of integral M-convex polyhedra,
  which implies that
  $\geo{\fan(\hat{f}^*)}$ is a closed set by \Cref{lem:union-closed:M}.
  Moreover, the closedness implies $\geo{\fan(\hat{f}^*)} = \dom{\hat{f}^*}$ by \Cref{lem:argmin-support}~(1).
  Therefore,
  by \Cref{prop:loc-poly-LM-convex-sufficient-condition}~(2),
  $\hat{f}^*$ is a primal-integral locally polyhedral M-convex function.

  (3) immediately follows from the assertions~(1) and (2).
\end{proof}

As one may expect,
the effective domain of a primal-integral locally polyhedral L-/M-convex function forms an (integral) L-/M-convex polyhedron.
\begin{proposition}\label{prop:dom:LM-convex}
  The effective domain of a primal-integral locally polyhedral L-convex function (resp. M-convex function) is an integral L-convex polyhedron (resp. M-convex polyhedron).
\end{proposition}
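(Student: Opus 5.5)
We treat the L-convex case; the M-convex case is identical once the L-convex tools are replaced by their M-convex counterparts. Let $\hat{f}$ be a primal-integral locally polyhedral L-convex function and put $D \coloneqq \dom{\hat{f}}$. By \Cref{lem:quasi-polyhedral-convex-function}~(1), $D = \geo{\fan(\hat{f})}$, and by assumption every member of $\fan(\hat{f})$ is an integral L-convex polyhedron. Hence $D$ is a union of integral L-convex polyhedra. By \Cref{lem:union-closed:L}, $D$ is closed; it is also convex and nonempty, since $\hat{f}$ is proper and convex.

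The plan is to show that every tangent cone of $D$ is an L-convex cone and then apply \Cref{lem:L-convex-tcone}. Fix $x \in D$. Because $\hat{f}$ is locally polyhedral, we may choose a polytope $P$ with $x \in \intr{P}$ such that $\hat{f}|_P$ is polyhedral convex. Then $D \cap P$ is a polytope, so $\tcone{D}{x} = \tcone{D\cap P}{x} = \dom{\hat{f}'(x;\cdot)}$. By \Cref{prop:locally-poly-LM}~(1), $\hat{f}'(x;\cdot)$ is L-sublinear, so by \Cref{lem:L-sublinear-effective-domain} its effective domain is an L-convex cone. (Alternatively, $\tcone{D}{x} = \bigcup_{Q \in \fan(\hat{f};x)} \tcone{Q}{x}$, a convex finite union of L-convex cones, which is L-convex by the same argument as in \Cref{lem:L-sublinear-effective-domain}.) \Cref{lem:L-convex-tcone} now gives that $D$ is an L-convex polyhedron.

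It remains to prove integrality. By \Cref{lem:chara-integral-L-convex-poly}, it suffices to show that $D$ is the geometric realization of a convex subcomplex of $\euCC$. Each $Q \in \fan(\hat{f})$ equals $\geo{\euCC_Q}$ for a convex subcomplex $\euCC_Q$. Setting $\euCC' \coloneqq \bigcup_Q \euCC_Q$ gives a subcomplex with $\geo{\euCC'} = D$, and $\euCC'$ is convex by \Cref{lem:convex_subcomplex}, since $D$ is convex. Hence $D$ is an integral L-convex polyhedron. (This argument also yields L-convexity directly, so the tangent-cone step is only a consistency check.)

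For the M-convex case, let $\hat{g}$ be the function and put $D \coloneqq \dom{\hat{g}}$. The set $D$ is closed by \Cref{lem:union-closed:M} and convex. Its tangent cones are the effective domains of $\hat{g}'(p;\cdot)$, which are M-sublinear by \Cref{prop:locally-poly-LM}~(2). The effective domain of an M-sublinear function $\cone{\gamma}$ is $\cone(\dom{\gamma})$, an M-convex cone. Hence $D$ is an M-convex polyhedron by definition. The statement only claims M-convexity here; integrality of $D$ is not needed.

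The step I expect to be the main obstacle is the identification $\tcone{D}{x} = \dom{\hat{f}'(x;\cdot)}$ in the presence of infinitely many cells. It is handled by the local polyhedrality assumption: near $x$ the function is genuinely polyhedral, so only finitely many cells meet a neighbourhood of $x$.
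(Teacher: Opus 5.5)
Your L-convex argument is the paper's own. You write each member of $\fan(\hat f)$ as $\geo{\euCC_Q}$, take the union subcomplex, and use convexity of $\dom\hat f$ with \Cref{lem:convex_subcomplex,lem:chara-integral-L-convex-poly}. The tangent-cone detour is correct but redundant, as you note. Your proof that $\dom\hat g$ is an M-convex polyhedron, via $\tcone{\dom\hat g}{p}=\dom(\hat g'(p;\cdot))$ and M-sublinearity, also matches the paper.

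The gap is integrality in the M-convex case. The word ``integral'' in the statement governs both alternatives, and the paper proves that $\dom\hat g$ is an \emph{integral} M-convex polyhedron, i.e., $\dom\hat g=\conv(\dom\hat g\cap\Mdom)$.

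Your claim that this is not needed is wrong. It is used directly in \Cref{prop:M-conv:unique}, for the equality $\conv(\dom\hat g\cap\Mdom)=\dom\hat g$. It is also used in the only-if part of \Cref{thm:chara-M-conv-func}, to conclude that $\dom g=\dom\hat g\cap\Mdom$ is an M-convex set.

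Your opening remark that the M-convex case is ``identical'' also breaks down here. There is no M-side analogue of \Cref{lem:chara-integral-L-convex-poly} through which integrality could be transferred via a subcomplex.

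The missing step is short. Each $P\in\fan(\hat g)$ is $\Mdom$-integral, so $P=\conv(P\cap\Mdom)\subseteq\conv(\dom\hat g\cap\Mdom)$. Taking the union over $P$ and using $\dom\hat g=\geo{\fan(\hat g)}$ gives $\dom\hat g\subseteq\conv(\dom\hat g\cap\Mdom)\subseteq\dom\hat g$.
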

\begin{proof}
  We first show the assertion for a primal-integral locally polyhedral L-convex function $\hat{f}$.
  Since each $P \in \fan(\hat{f})$ is an integral L-convex polyhedron,
    it is the geometric realization of a convex subcomplex $\CC_P$ of $\euCC$ by \Cref{lem:chara-integral-L-convex-poly}.
    Then, $\CC' \coloneqq \bigcup_{P \in \fan(\hat{f})} \CC_P$ is also a convex subcomplex of $\euCC$ by \Cref{lem:convex_subcomplex}, since $\dom{\hat{f}} = \bigcup_{P \in \fan(\hat{f})} \geo{\CC_P} = \geo{\CC'}$ is convex.
    Thus, $\dom{\hat{f}}$ is the geometric realization of a convex subcomplex of $\euCC$, which implies that $\dom{\hat{f}}$ is an integral L-convex polyhedron by \Cref{lem:chara-integral-L-convex-poly}.

  We next see the assertion for a primal-integral locally polyhedral M-convex function $\hat{g}$.
  It follows from $\dom{\hat{g}} = \geo{\fan(\hat{g})}$ and \Cref{lem:union-closed:M}
  that $\dom{\hat{g}}$ is a closed convex set.
  For any $p \in \dom{\hat{g}}$,
  we have $\tcone{\dom{\hat{g}}}{p} = \dom(\hat{g}'(p; \cdot))$,
  which forms an M-convex cone by the M-sublinearity of $\hat{g}'(p; \cdot)$.
  Thus, by the definition,
  the effective domain $\dom{\hat{g}}$ is an M-convex polyhedron.

  Hence, it suffices to show $\dom{\hat{g}} = \conv(\dom{\hat{g}} \cap \Mdom)$ for the integrality of $\dom{\hat{g}}$.
  The inclusion $\dom{\hat{g}} \supseteq \conv(\dom{\hat{g}} \cap \Mdom)$ clearly holds.
  We can obtain the reverse inclusion as
  \begin{align}
    \conv(\dom{\hat{g}} \cap \Mdom) &= \conv\left( \left(\bigcup_{P \in \fan(\hat{g})} P\right) \cap \Mdom \right) = \conv\left( \bigcup_{P \in \fan(\hat{g})} (P \cap \Mdom) \right)\\
    &\supseteq \bigcup_{P \in \fan(\hat{g})} \conv(P \cap \Mdom)
    = \bigcup_{P \in \fan(\hat{g})} P = \dom{\hat{g}},
  \end{align}
  where the penultimate equality follows from the fact that $P \in \fan(\hat{g})$ is an integral M-convex polyhedron.
\end{proof}

\section{Discrete convexity over combinatorial objects}\label{sec:DCA:discrete}
Building upon the geometric formulations in \Cref{sec:DCA:continuous}, 
this section formally defines discrete convexity over combinatorial objects. 
\Cref{subsec:L-convex,subsec:M-convex} introduce \emph{L- and M-convex sets and functions} over the vertex sets of Euclidean Coxeter complexes and root lattices, respectively, and their integral ones.
\Cref{subsec:conjugacy} establishes the \emph{discrete Fenchel--Legendre conjugacy} between integral L-convex functions and integral M-convex functions. 

\subsection{Definitions of L-convex sets and functions}\label{subsec:L-convex}

In this subsection, we assume that $\RS$ is an irreducible root system.
Recall that $P \subseteq V$ is an integral L-convex polyhedron of type $\RS$ if and only if
$P = \geo{\CC'}$ for some nonempty convex subcomplex $\CC'$ of $\euCC$ (\Cref{lem:chara-integral-L-convex-poly}).

A nonempty subset $D \subseteq \Ldom$ is said to be \emph{L-convex of type $\RS$} 
if it is the vertex set of some nonempty convex subcomplex $\CC'$ of $\euCC$,
or equivalently,
if there exists an integral L-convex polyhedron $P$ such that $D = P \cap \Ldom$.
This $P$ is uniquely determined as $P = \ext{D} = \conv{D} = \clconv{D}$,
since $P = \geo{\CC'} = \ext{D} \subseteq \conv{D} \subseteq \clconv{D} \subseteq P$,
where the last inclusion follows from the fact that $\clconv{D}$ is the inclusion-wise minimum closed convex set that contains $D$.
For later reference,
we summarize this fact.
\begin{corollary}\label{cor:ext=conv=clconv}
For an L-convex set $D \subseteq \Ldom$,
we have $\ext{D} = \conv{D} = \clconv{D}$,
which forms an integral L-convex polyhedron.
\end{corollary}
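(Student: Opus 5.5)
The plan is to prove the chain $P = \ext{D} \subseteq \conv{D} \subseteq \clconv{D} \subseteq P$, where $P$ is the integral L-convex polyhedron attached to $D$. The middle and last inclusions are essentially formal; the only place the Coxeter structure is used is in identifying $P$ with $\ext{D}$.

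First, fix the data. Since $D$ is L-convex, it is $\sk{\CC'}$ for some nonempty convex subcomplex $\CC'$ of $\euCC$. Set $P \coloneqq \geo{\CC'}$. By \Cref{lem:chara-integral-L-convex-poly}, $P$ is an integral L-convex polyhedron. This also gives the final clause of the statement once $P = \ext{D}$ is established.

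Second, prove $\geo{\CC'} = \ext{D}$. Recall that $\ext{D} = \geo{\euCC[D]}$, where $\euCC[D] = \Set{A \in \euCC}{\sk{A} \subseteq D}$.
\begin{itemize}
\item Every $A \in \CC'$ satisfies $\sk{A} \subseteq \sk{\CC'} = D$, since $\CC'$ is closed under taking faces. Hence $\CC' \subseteq \euCC[D]$.
\item Conversely, take $A \in \euCC$ with $\sk{A} \subseteq D$. Each vertex of $A$ lies in $\CC'$, and the vertices of $A$ are joinable. By \Cref{lem:sign-sequence}~(3), their join is their product, and $\CC'$ is closed under products by convexity. So the join, which is $A$ itself, lies in $\CC'$.
\end{itemize}
Thus $\CC' = \euCC[D]$, and therefore $\ext{D} = P$. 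This is the step that needs some care, though it is short. An alternative is to use the characterization (b) of \Cref{lem:convex_subcomplex} by closed half-spaces: every vertex of $A$ satisfies the defining inequalities, hence so does $\cl{A}$.

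Third, the remaining inclusions. Each point of $\ext{D}$ is a convex combination of the vertices of its cell $\cell{A}{x}$, all of which lie in $D$; hence $\ext{D} \subseteq \conv{D}$. The inclusion $\conv{D} \subseteq \clconv{D}$ is trivial. Finally, $D \subseteq P$ and $P$ is a closed convex set (a polyhedron), while $\clconv{D}$ is the smallest closed convex set containing $D$; hence $\clconv{D} \subseteq P$.

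Since the chain starts and ends at $P$, all of its members coincide, giving $\ext{D} = \conv{D} = \clconv{D} = P$, an integral L-convex polyhedron.
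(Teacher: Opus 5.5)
Your proof is correct and follows the paper's own argument, which writes the same chain $P = \geo{\CC'} = \ext{D} \subseteq \conv{D} \subseteq \clconv{D} \subseteq P$ and takes $\geo{\CC'} = \ext{D}$ for granted; your two bullets supply the justification the paper leaves implicit. Only your first bullet, $\CC' \subseteq \euCC[D]$ (hence $P \subseteq \ext{D}$), is actually needed, because the reverse inclusion $\ext{D} \subseteq P$ already follows from the rest of the chain.
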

By combining the above arguments with those in~\Cref{subsubsec:tight-distance}, we obtain the following commutative diagram, describing relations among $\Lpoly{\R}$, $\Lpoly{\Z|\R}$, $\Msub{\R}{\R}$, $\Msub{\R}{\Z|\R}$, $\vdist{\R}$, $\tdist{\R}$, and $\tdist{\Z}$ with the family $\Lpoly{\Z}$ of L-convex sets.
\begin{align}\label{eq:diagram-Lset}
\begin{tikzcd}[row sep=3em, column sep=6em, ampersand replacement=\&, crossing over clearance=0.8ex]
\& \& \& |[alias=vdistR]| \vdist{\R} \& \\
\& \& \& |[alias=tdistR]| \tdist{\R} \& \\
\& |[alias=LpolyR]| \Lpoly{\R} \& \& \& |[alias=MsubRR]| \Msub{\R}{\R} \\
|[alias=Lset]| \Lpoly{\Z} \& \& \& |[alias=tdistZ]| \tdist{\Z}\& \\
\& |[alias=LpolyZR]| \Lpoly{\Z|\R} \& \& \& |[alias=MsubRZR]| \Msub{\R}{\Z|\R}
\arrow[from=tdistR, to=vdistR, hook]
\arrow[from=tdistZ, to=tdistR, hook]
\arrow[from=MsubRZR, to=MsubRR, hook]
\arrow[from=vdistR, to=LpolyR, "\D{\cdot}"{sloped, pos=0.5, above}]
\arrow[from=vdistR, to=MsubRR, "\cone(\cdot)"{sloped, pos=0.5, above}] 
\arrow[from=LpolyR, to=MsubRR, shift left=0.4ex, "\spt{\cdot}"{pos=0.5, above}, crossing over]
\arrow[from=MsubRR, to=LpolyR, shift left=0.4ex, "\D{\cdot}"{pos=0.5, below}, crossing over]
\arrow[from=tdistR, to=LpolyR, shift right=0.4ex, "\D{\cdot}"{sloped, pos=0.5, above}]
\arrow[from=LpolyR, to=tdistR, shift right=0.4ex, "(\spt{\cdot})|_{\RS}"{sloped, pos=0.5, below}]
\arrow[from=tdistR, to=MsubRR, shift left=0.4ex, "\cone(\cdot)"{sloped, pos=0.5, above}]
\arrow[from=MsubRR, to=tdistR, shift left=0.4ex, "(\cdot)|_{\RS}"{sloped, pos=0.5, below}]
\arrow[from=tdistZ, to=LpolyZR, shift right=0.4ex, "\D{\cdot}"{sloped, pos=0.5, above}]
\arrow[from=LpolyZR, to=tdistZ, shift right=0.4ex, "(\spt{\cdot})|_{\RS}"{sloped, pos=0.5, below}]
\arrow[from=tdistZ, to=MsubRZR, shift left=0.4ex, "\cone(\cdot)"{sloped, pos=0.5, above}]
\arrow[from=MsubRZR, to=tdistZ, shift left=0.4ex, "(\cdot)|_{\RS}"{sloped, pos=0.5, below}]
\arrow[from=LpolyZR, to=MsubRZR, shift left=0.4ex, "\spt{\cdot}"{pos=0.5, above}]
\arrow[from=MsubRZR, to=LpolyZR, shift left=0.4ex, "\D{\cdot}"{pos=0.5, below}]
\arrow[from=Lset, to=LpolyZR, shift right=0.4ex, "\ext{(\cdot)} = \conv(\cdot) = \clconv(\cdot)"{sloped, pos=0.5, below}]
\arrow[from=LpolyZR, to=Lset, shift right=0.4ex, "(\cdot) \cap \Ldom"{sloped, pos=0.5, above}]
\arrow[from=Lset, to=tdistZ, shift left=0.4ex, "(\spt{\cdot})|_{\RS}"{sloped, pos=0.5, above}]
\arrow[from=tdistZ, to=Lset, shift left=0.4ex, "\D{\cdot} \cap \Ldom"{sloped, pos=0.5, below}]
\arrow[from=LpolyZR, to=LpolyR, hook, crossing over]
\end{tikzcd}
\end{align}

A function $\funcdoms{f}{\Ldom}{\ER}$ is said to be \emph{L-convex of type $\RS$} if there is a primal-integral locally polyhedral L-convex function $\funcdoms{\hat{f}}{V}{\ER}$ such that $f$ is the restriction of $\hat{f}$ to $\Ldom$.
An L-convex function $\funcdoms{f}{\Ldom}{\ER}$ is said to be \emph{integral}
if it is the restriction of an integral locally polyhedral L-convex function.
\begin{proposition}\label{prop:L-conv:unique}
    For an L-convex function $\funcdoms{f}{\Ldom}{\ER}$,
    we have $\ext{f} = \conv{f} = \clconv{f}$,
    which forms the unique primal-integral locally polyhedral L-convex function whose restriction to $\Ldom$ is $f$.
\end{proposition}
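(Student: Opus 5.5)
Fix a primal-integral locally polyhedral L-convex function $\hat{f}$ whose restriction to $\Ldom$ is $f$. The plan is to show $\hat{f} = \ext{f}$ first, and then sandwich $\conv{f}$ and $\clconv{f}$ between $\ext{f}$ and $\hat{f}$. The identity $\hat{f}=\ext{f}$ gives uniqueness at once, since $\ext{f}$ depends only on $f$.

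\emph{Step 1: $\hat{f} = \ext{f}$.} By \Cref{lem:quasi-polyhedral-convex-function}~(1), $\dom{\hat{f}} = \geo{\fan(\hat{f})}$. By \Cref{prop:dom:LM-convex}, $\dom{\hat{f}}$ is an integral L-convex polyhedron, hence by \Cref{lem:chara-integral-L-convex-poly} it is the geometric realization of a convex subcomplex of $\euCC$. Consequently, for $x \in \dom{\hat{f}}$ the whole cell $\cell{A}{x}$ lies in $\dom{\hat{f}}$, so $\sk{\cell{A}{x}} \subseteq \dom{f}$. Conversely, if $x\notin\dom{\hat f}$ then some vertex of $\cell{A}{x}$ lies outside $\dom{\hat f}$ (otherwise convexity would put $x$ in $\dom{\hat f}$), so $\ext{f}(x)=+\infty$. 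It therefore remains to treat $x \in \dom{\hat{f}}$. Pick $P \in \fan(\hat{f})$ with $x \in P$, say $P=\argmin(\hat f-p)$. Then $\hat f$ is affine on $P$, and $P$ is an integral L-convex polyhedron, hence a union of cells of $\euCC$ by \Cref{lem:chara-integral-L-convex-poly}. Thus $\cell{A}{x} \subseteq P$, its closure lies in $P$, and all vertices of $\cell{A}{x}$ lie in $P$. Affineness of $\hat f$ on $P$ then gives $\hat{f}(x) = \sum_v \cell{\lambda}{x}(v)\hat{f}(v) = \ext{f}(x)$.

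\emph{Step 2: sandwich.} By definition, $\ext{f}(x)$ is one admissible convex combination in~\eqref{eq:func:conv}, so $\conv{f} \le \ext{f}$. Since $\hat{f}$ is closed convex, it majorizes some affine function; because $f = \hat f|_{\Ldom}$, the same holds for $f$, so the general inequality $\clconv{f} \le \conv{f}$ applies. Moreover $\hat{f}$ is a closed convex function minorizing $f$ (agreeing on $\Ldom$, $+\infty$ is irrelevant), and $\clconv{f}$ is the pointwise maximum such function, so $\hat{f} \le \clconv{f}$. Combining these with Step 1 gives
\[\ext{f}=\hat f\le\clconv{f}\le\conv{f}\le\ext{f},\]
so all four functions coincide.

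The main obstacle is Step 1, specifically the claim that a cell meeting a member $P\in\fan(\hat f)$ in a point is entirely contained in $P$, together with the behaviour outside $\dom{\hat f}$. Both should follow cleanly from the cell-decomposition characterization \Cref{lem:chara-integral-L-convex-poly}, since an integral L-convex polyhedron is exactly a union of closed cells forming a convex subcomplex, so any point of it lies in a cell contained in it.
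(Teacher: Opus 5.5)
Your proposal is correct and follows the paper's proof essentially step for step. Both arguments use the same sandwich $\hat f \le \clconv{f} \le \conv{f} \le \ext{f}$ and the same identification $\hat f = \ext{f}$. That identification rests on \Cref{prop:dom:LM-convex}, \Cref{lem:chara-integral-L-convex-poly}, and the affineness of $\hat f$ on a member of $\fan(\hat f)$ containing $x$.

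The only cosmetic difference concerns points outside $\dom{\hat f}$. You handle them by a convexity argument, whereas the paper identifies the subcomplex directly as $\euCC[\dom{f}]$ to conclude $\dom{\hat f} = \dom{\ext{f}}$.
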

\begin{proof}
    Let $\funcdoms{f}{\Ldom}{\ER}$ be an L-convex function
    and suppose that $\funcdoms{\hat{f}}{V}{\ER}$ is a primal-integral locally polyhedral L-convex function satisfying $f = \hat{f}|_{\Ldom}$.
    Since $\hat{f}$ is a closed convex function that minorizes $f$,
    we have $\hat{f} \leq \clconv{f} \leq \conv{f} \leq \ext{f}$,
    where the last inequality follows from the definitions of $\conv{f}$ and $\ext{f}$.
    Thus, it suffices to show $\ext{f} = \hat{f}$.

    We first see that $\dom{\hat{f}} = \dom{\ext{f}}$.
    Since $\dom{\hat{f}}$ is an integral L-convex polyhedron by \Cref{prop:dom:LM-convex},
    there is a nonempty convex subcomplex $\CC_f$ of $\euCC$ with $\dom{\hat{f}} = \geo{\CC_f}$ by \Cref{lem:chara-integral-L-convex-poly}.
    Since $\dom{f} = \dom{\hat{f}} \cap \Ldom$,
    we obtain $\CC_f = \euCC[\dom{f}]$.
    Thus, we have $\dom{\hat{f}} = \geo{\euCC[\dom{f}]} = \dom{\ext{f}}$.

    Take any $x \in \dom{\hat{f}} = \dom{\ext{f}}$;
    our aim is to show that $\hat{f}(x) = \ext{f}(x)$.
    Since $\dom{\hat{f}} = \geo{\fan(\hat{f})}$,
    there is an integral L-convex polyhedron $P \in \fan(\hat{f})$ with $x \in P$,
    which is the geometric realization of a certain nonempty convex subcomplex $\CC_P$ of $\euCC$ by \Cref{lem:chara-integral-L-convex-poly}.
    Since $x \in \geo{\CC_P}$,
    we have $\cell{A}{x} \in \CC_P$, implying that $\sk{\cell{A}{x}} \subseteq \sk{\CC_P} \subseteq \dom{f}$.
    Since $\hat{f}$ is affine on $P = \geo{\CC_P}$,
    we have
    \begin{align}
        \hat{f}(x) = \sum_{v \in \sk{\cell{A}{x}}} \cell{\lambda}{x}(v) \hat{f}(v) = \sum_{v \in \sk{\cell{A}{x}}} \cell{\lambda}{x}(v) f(v) = \ext{f}(x).
    \end{align}
    This completes the proof.
\end{proof}

For a function $\funcdoms{f}{\Ldom}{\ER}$
and $x \in \dom{f}$,
we define 
$\funcdoms{\rho_{f, x}}{(\opnbor{x} - x)}{\ER}$ 
by
\begin{align}
  \rho_{f, x}(d) \coloneqq f(x+d) - f(x) \qquad (d \in \opnbor{x} - x).
\end{align}
Intuitively, $\rho_{f, x}$ plays a role as the directional derivative of $f$ at $x$.
\begin{proposition}\label{prop:directional:derivative:L}
  Suppose that $\funcdoms{f}{\Ldom}{\ER}$ is an L-convex function.
  Then, $\rho_{f, x}$ is the restriction of $\hat{f}'(x; \cdot)$ to $\opnbor{x} - x$ for any $x \in \dom{f}$,
  where $\hat{f}$ denotes the primal-integral locally polyhedral L-convex function whose restriction to $\Ldom$ is $f$.
  In particular, $\rho_{f,x}$ is a spherical submodular function for any $x \in \dom{f}$.
\end{proposition}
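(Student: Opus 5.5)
The plan is to take $\hat{f} = \ext{f}$, the unique primal-integral locally polyhedral L-convex extension from \Cref{prop:L-conv:unique}, fix $x \in \dom{f}$, and compare $\rho_{f,x}(d) = f(x+d) - f(x)$ with $\hat{f}'(x;d)$ for each $d \in \opnbor{x} - x$. The key tool is the last statement of \Cref{lem:quasi-polyhedral-convex-function}~(2): $\hat{f}'(x; d) = \hat{f}(x+d) - \hat{f}(x)$ whenever $d \in \geo{\fan(\hat{f}; x)} - x$. So for each proper neighbor $y = x + d$ I must either place $y$ in a member of $\fan(\hat{f};x)$, or else check that both sides equal $+\infty$.

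\emph{Case $y \in \dom{f}$.} Both $x$ and $y$ are vertices of one simplex of $\St{\euCC}{x}$, so the closed segment $[x,y]$ is the closure of an edge $B$ of $\euCC$ with $\sk{B} = \{x,y\} \subseteq \dom{f}$; hence $[x,y] \subseteq \dom{\ext{f}} = \dom{\hat{f}}$. Pick a point $z$ in the open segment $(x,y)$. Since $\dom{\hat{f}} = \geo{\fan(\hat{f})}$, some $P \in \fan(\hat{f})$ contains $z$. Because $P$ is an integral L-convex polyhedron, \Cref{lem:chara-integral-L-convex-poly} shows it is the geometric realization of a subcomplex of $\euCC$. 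It therefore contains the cell $\cell{A}{z} = B$, hence its closure, hence both $x$ and $y$. Thus $P \in \fan(\hat{f};x)$ and $d \in P - x$, and the lemma gives $\hat{f}'(x;d) = \hat{f}(y) - \hat{f}(x) = f(y) - f(x) = \rho_{f,x}(d)$.

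\emph{Case $y \notin \dom{f}$.} Here $\rho_{f,x}(d) = +\infty$. By \Cref{prop:dom:LM-convex} and the proof of \Cref{prop:L-conv:unique}, $\dom{\hat{f}} = \geo{\euCC[\dom f]}$. This is the geometric realization of a convex subcomplex, so its tangent cone at $x$ is $\dom(\hat{f}'(x;\cdot))$. Suppose $x + td \in \dom{\hat{f}}$ for some $t>0$. For small $t$ this point lies in the open edge $B$ with vertex set $\{x,y\}$. A subcomplex containing a point of $B$ contains $B$, so $y \in \dom f$, a contradiction. Hence no such $t$ exists, and $\hat{f}'(x;d) = +\infty$ as well.

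For the final assertion, $\hat{f}'(x;\cdot)$ is L-sublinear by the definition of locally polyhedral L-convexity. Moreover $\opnbor{x} - x$ generates the spherical expression $\Lk{\euCC}{x} - x$ of $\spCC(\tilde{\RS}|_{-x})$ (\Cref{lem:standard}~(2), \Cref{lem:complex:contraction}~(2)). So $\rho_{f,x}$ is the restriction of an L-sublinear function to the vertex set of a spherical expression, which is exactly spherical submodularity.

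\emph{Main obstacle.} The delicate point is the ambient space. When $x$ is not $0$, or when the relevant root system is $\tilde{\RS}|_{-x}$ rather than $\RS$, I must confirm that an L-sublinear function of type $\RS$ on $V$ restricts to an L-sublinear function of type $\tilde{\RS}|_{-x}$. I would argue that the members of $\fan(\hat{f}'(x;\cdot)) = \{\tcone{Q}{x} : Q \in \fan(\hat{f};x)\}$ are tangent cones at $x$ of geometric realizations of convex subcomplexes of $\St{\euCC}{x}$. Translated by $-x$, these are realizations of convex subcomplexes of $\spCC(\tilde{\RS}|_{-x})$, so they are L-convex cones of that type by \Cref{lem:chara-L-convex-cone}.
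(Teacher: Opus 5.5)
Your proof is correct, but it takes a different route from the paper's.

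\textbf{The paper's route.} The paper computes the directional derivative of $\hat{f} = \ext{f}$ at the vertex $x$ in every direction at once. Near $x$, $\ext{f}$ is affine on each cell of $\St{\euCC}{x}$. Writing $x+td$ in barycentric coordinates of the join of $x$ with the link simplex containing $d$ gives $\hat{f}'(x;\cdot) = \phext{\rho_{f,x}}$ on all of $V$. Here the positively homogeneous Lov\'{a}sz extension is taken along the spherical expression $\Lk{\euCC}{x} - x$ of $\spCC(\tilde{\RS}|_{-x})$. Restricting to $\opnbor{x} - x$ gives the first claim. Since $\phext{\rho_{f,x}}$ is then convex, \Cref{prop:spherical-submodular-cone} ((b) $\Rightarrow$ (a)) gives spherical submodularity. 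The correct type $\tilde{\RS}|_{-x}$ comes for free, because the extension is built on $\spCC(\tilde{\RS}|_{-x})$ from the start.

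\textbf{Your route.} You check only the vertex directions $d \in \opnbor{x} - x$. For finite values you use affinity of $\hat{f}$ on members of $\fan(\hat{f};x)$ (\Cref{lem:quasi-polyhedral-convex-function}~(2)); for the $+\infty$ values you add a separate domain argument. You then obtain spherical submodularity from the L-sublinearity of $\hat{f}'(x;\cdot)$. This forces the type reduction from $\RS$ to $\tilde{\RS}|_{-x}$ that you rightly flag as the delicate step, and your sketch via \Cref{lem:complex:contraction}~(2) and \Cref{lem:chara-L-convex-cone} is sound. One wording fix: $Q$ realizes a convex subcomplex of $\euCC$, not of $\St{\euCC}{x}$, so pass to $Q \cap \geo{\St{\euCC}{x}}$, which has the same tangent cone at $x$.

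\textbf{Comparison.} The paper's route is shorter. It also directly yields the identity $\hat{f}'(x;\cdot) = \phext{\rho_{f,x}}$ on the whole space, which is what \Cref{thm:minimality:L} uses. Your route recovers that identity only after the type reduction, through the uniqueness statement in \Cref{prop:spherical-submodular-cone}.

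\textbf{Minor simplifications.} In your Case~1 you can bypass $\fan(\hat{f})$ entirely: $\ext{f}$ is affine on $\cl{B} = [x,y]$ directly by the definition of the Lov\'{a}sz extension. In Case~2 it suffices, and is what you actually prove, that $x+td \notin \dom{\hat{f}}$ for $t \in (0,1)$.
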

\begin{proof}
  We only show the former assertion; the latter assertion immediately follows from the former.

  Let $\funcdoms{\hat{f}}{V}{\ER}$ be the (unique) primal-integral locally polyhedral L-convex function such that $f = \hat{f}|_{\Ldom}$.
  By \Cref{prop:L-conv:unique},
  we have $\hat{f} = \ext{f}$.
  Take an arbitrary $x \in \dom{f} \subseteq \dom{\hat{f}}$.
  By the convexity of $\ext{f}$ and the definitions of (positively homogeneous) Lov\'{a}sz extension,
  $\ext{f}'(x; \cdot)$ coincides with the positively homogeneous Lov\'{a}sz extension of $\rho_{f, x}$.
  Hence,
  we obtain $\hat{f}'(x; \cdot) = \ext{f}'(x; \cdot) = \phext{\rho_{f, x}}$,
  which implies that $\rho_{f,x}$ is the restriction of $\hat{f}'(x; \cdot)$ to $\opnbor{x} - x$.
\end{proof}

\begin{theorem}\label{thm:integral:L-func}
    Let $\funcdoms{f}{\Ldom}{\ER}$ be an L-convex function.
    \begin{enumerate}[label={\textup{(\arabic*)}}]
        \item $f$ is integral if and only if $\rho_{f, x}$ is $\RS$-integral for all $x \in \dom{f}$.
        \item If there is $r \in \R$ such that $f(x) - r \in \Z/a(x)$ for all $x \in \dom{f}$, then $f$ is integral.
        In addition, the converse also holds if $\RS$ is of type A or $\dom{f}$ is full-dimensional, i.e., there is a chamber $C$ of $\euCC$ with $\sk{C} \subseteq \dom{f}$.
    \end{enumerate}
\end{theorem}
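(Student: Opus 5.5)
Let $\hat{f} = \ext{f}$ be the unique primal-integral locally polyhedral L-convex extension (\Cref{prop:L-conv:unique}). By definition, $f$ is integral iff $\hat{f}$ is dual-integral, i.e., iff $\hat{f}'(x;\cdot)$ is an integral L-sublinear function for every $x \in \dom{\hat{f}}$. So the plan is to reduce everything to vertices and then apply \Cref{prop:integral-subm}.

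\textbf{Part (1).} By \Cref{prop:directional:derivative:L}, $\hat{f}'(x;\cdot) = \phext{\rho_{f,x}}$ for $x\in\dom{f}$. Combined with \Cref{prop:spherical-submodular-cone}, this equals $\cone{\rho_{f,x}}$. Hence, by the definition of $\RS$-integrality of (spherical) submodular functions, $\hat{f}'(x;\cdot)$ is integral L-sublinear iff $\rho_{f,x}$ is $\RS$-integral. This gives the ``only if'' direction immediately.

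For ``if'', we must also handle nonvertex points $y \in \dom{\hat{f}}$. Take $y \in \relint$ of some face and let $A_y$ be its cell with a vertex $x \in \sk{A_y}$. The key step is to show that the maximal cones of $\fan(\hat{f}'(y;\cdot))$ correspond to maximal members $P \in \fan(\hat{f};y)$. For such $P$, \Cref{lem:argmin-support}~(2),(4) show that $P$ contains $\cl A_y \ni x$, so $P \in \fan(\hat{f};x)$. Integrality of $\hat{f}'(x;\cdot)$ then supplies $p\in\Mdom$ with $\tcone{P}{x}=\argmin(\hat{f}'(x;\cdot)-p)$. I will check that the same $p$ works at $y$: locally, $P=\argmin(\hat f - p)$ near $x$, and since $\hat{f}$ is affine on $P$ with gradient $p$, this gives $\tcone{P}{y} = \argmin(\hat{f}'(y;\cdot)-p)$. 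I expect this step, passing from a vertex to points in relative interiors, to be the main obstacle. If the direct argument is awkward, I would instead use that $\hat{f}'(y;\cdot)$ is a ``link'' of $\hat{f}'(x;\cdot)$, namely $\hat f'(y;d)=\hat f'(x;(y-x)+\epsilon d)/\epsilon$-type localization, which preserves the representation $\max_{p\in B}\inpr{p}{\cdot}$ over a subset $B\subseteq \Mdom$.

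\textbf{Part (2).} Assume $f(v)-r\in\Z/a(v)$ on $\dom f$. Fix $x\in\dom f$ and view $\rho_{f,x}$ as a submodular function $\rho$ on $\nbor{x}$ with $\rho(v)=f(v)$. Then $\rho(v)-r\in\Z/a(v)$ with $a$ the mark function of $\euCC$, so \Cref{prop:integral-subm}~(2) yields $\RS$-integrality, and part (1) gives integrality of $f$.

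For the converse, assume $f$ is integral. If $\dom f$ is full-dimensional, every maximal $P\in\fan(\hat f)$ is full-dimensional, and by part (1) it equals $\argmin(\hat f-p_P)$ with $p_P\in\Mdom$. On vertices $v\in P$ we have $f(v)=\inpr{p_P}{v}+c_P$, and \Cref{cor:inpr}~(2) gives $\inpr{p_P}{v}\in\Z/a(v)$. It remains to see that the constants $c_P$ agree modulo the appropriate lattices across adjacent full-dimensional cells. Two adjacent maximal cells share a facet containing a vertex $w$, and evaluating at $w$ gives $c_P-c_{P'}\in \Z/a(w)$. I would refine this using a mark-$1$ vertex on a shared face, which exists since every chamber contains a mark-$1$ vertex (the one of type $v_0$). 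Connectivity of the chamber complex (\Cref{lem:chamber-complex}) then propagates a single $r$.

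In type A all marks equal $1$, and \Cref{cor:integral-subm}~(a) applies at every $x$. One then chains differences along edges of the connected $1$-skeleton of $\euCC[\dom f]$, using $f(v)-f(x)=\rho_{f,x}(v-x)\in\Z$, to obtain a global $r$.
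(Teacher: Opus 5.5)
Your argument is correct in substance, but for the ``if'' direction of (1) it takes a different route from the paper. The paper never looks at non-vertex points. It shows that the integral M-convex polyhedra $\B{\rho_{f,x}}$, $x \in \dom f$, are members of $\fan(\hat f^*)$ covering $\geo{\fan(\hat f^*)}$ (via \Cref{lem:argmin-support}~(4)). It then deduces from \Cref{prop:loc-poly-LM-convex-sufficient-condition}~(2) that $\hat f^*$ is primal-integral, and returns to $\hat f = \hat f^{**}$ through \Cref{thm:conjugate-loc-poly-L-M-convex}~(1),(3).

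You instead stay on the primal side: you take a certificate $p_P \in \Mdom$ at a vertex $x$ of each maximal $P \in \fan(\hat f)$ and propagate it to every $y \in P$. This is more elementary and avoids the covering and closedness arguments on the dual side. The paper's route, in exchange, yields the integrality of $\hat f^*$ as a by-product.

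The step you flagged as the main obstacle is routine. From $\tcone{P}{x} = \argmin(\hat f'(x;\cdot) - p_P)$ you get $p_P \in \partial \hat f(x)$. The affine function $\hat f - p_P$ on $P$ has zero derivative along $\tcone{P}{x}$, which spans the direction space of $\aff P$, so $P \subseteq \argmin(\hat f - p_P)$. Hence $\tcone{P}{y} \subseteq \argmin(\hat f'(y;\cdot) - p_P) \in \fan(\hat f'(y;\cdot))$, and maximality of $\tcone{P}{y}$ forces equality.

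Two justifications should be corrected. First, $\cl{A_y} \subseteq P$ holds because $P$ is the geometric realization of a convex subcomplex of $\euCC$ (\Cref{lem:chara-integral-L-convex-poly}), not by \Cref{lem:argmin-support}~(2),(4). Second, you should note that $\tcone{P}{x}$ is a maximal cone of $\fan(\hat f'(x;\cdot))$; this follows from maximality of $P$ in the polyhedral complex $\fan(\hat f)$.

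In (2), the forward direction and the type~A converse match what the paper intends. The paper just cites \Cref{cor:integral-subm} at each vertex and leaves the gluing of the local constants implicit.

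In your full-dimensional converse, the stated reason for a mark-$1$ vertex on the shared face is wrong. The shared facet of two adjacent cells is not a chamber, and the type-$v_0$ vertex of an adjacent chamber may be exactly the vertex off that facet. Evaluating at a mark-$2$ vertex only gives $c_P - c_{P'} \in \Z/2$, which does not produce a global $r$.

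What you actually need is that every classical extended Dynkin diagram has at least two mark-$1$ nodes, so every codimension-one simplex of $\euCC$ contains a mark-$1$ vertex. Running along a gallery of chambers of $\euCC[\dom f]$, each contained in a full-dimensional member of $\fan(\hat f)$, then gives $c_P \equiv c_{P'}$ modulo $\Z$ for consecutive cells. This is the same classical-type fact that makes \Cref{cor:integral-subm}~(b) follow from \Cref{prop:integral-subm}~(2).
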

\begin{proof}
  (1).
  Let $\hat{f}$ be the primal-integral locally polyhedral L-convex function whose restriction to $\Ldom$ is $f$.
  By \Cref{prop:directional:derivative:L},
  for any $x \in \dom{f}$,
  the function $\rho_{f,x}$ is the restriction of $\hat{f}'(x; \cdot)$ to $\opnbor{x} - x$.

  If $f$ is an integral L-convex function, i.e., $\hat{f}$ is an integral locally polyhedral L-convex function,
  then $\hat{f}'(x; \cdot)$ is an integral L-sublinear function for $x \in \dom{f}$.
  Thus, $\rho_{f,x}$ is $\RS$-integral,
  which implies the only-if part.

  We then show the if part.
  It suffices to prove that ${\hat{f}}^*$ is a primal-integral locally polyhedral M-convex function.
  Indeed,
  since $\hat{f}$ is a primal-integral locally polyhedral L-convex function,
  by \Cref{thm:conjugate-loc-poly-L-M-convex}~(1),
  its conjugate $\hat{f}^*$ forms a dual-integral locally polyhedral M-convex function provided that it is a locally polyhedral convex function.
  Thus, if $\hat{f}^*$ is a primal-integral locally polyhedral M-convex function,
  then it is an integral locally polyhedral M-convex function,
  implying that $\hat{f}^{**} = \hat{f}$ is an integral locally polyhedral L-convex function by \Cref{thm:conjugate-loc-poly-L-M-convex}~(3).
  Thus, we can conclude that $f$ is an integral L-convex function.

  Let $\Pi \coloneqq \Set{ \B{\rho_{f,x}} }{ x \in \dom{f} }$.
  Since $\rho_{f,x}$ is $\RS$-integral for any $x \in \dom{f}$,
  $\Pi$ consists of integral M-convex polyhedra of type $\RS$.
  Furthermore, since $\fan(\hat{f}^*) = \Set{ \B{\hat{f}'(y; \cdot)} }{ y \in \dom{\hat{f}} }$ by \Cref{lem:argmin-support}~(5)
  and $\B{\rho_{f,x}} = \B{\hat{f}'(x; \cdot)}$,
  we have $\Pi \subseteq \fan(\hat{f}^*)$.

  Take any $x \in \dom{\hat{f}}$
  and let $P$ be the member of $\fan(\hat{f})$ such that $x \in \relint{P}$.
  We note that, since $\fan(\hat{f})$ forms a polyhedral complex and $\dom{\hat{f}} = \geo{\fan(\hat{f})}$,
  such $P$ uniquely exists.
  Since $P$ is an integral L-convex polyhedron, we have $P \cap \Ldom \neq \emptyset$.
  For any $y \in P \cap \Ldom \subseteq \dom{f}$,
  we have $\B{\hat{f}'(x; \cdot)} \subseteq \B{\hat{f}'(y; \cdot)} = \B{\rho_{f,y}}$
  by \Cref{lem:argmin-support}~(4).
  Thus, we obtain that $\bigcup_{P \in \Pi} P = \geo{\fan(\hat{f}^*)}$.
  By \Cref{prop:loc-poly-LM-convex-sufficient-condition}~(2),
  $\hat{f}^*$ is a primal-integral locally polyhedral M-convex function.

  (2) immediately follows from the assertion~(1) and \Cref{cor:integral-subm}.
\end{proof}

We can see that local optimality guarantees global optimality.
\begin{theorem}\label{thm:minimality:L}
  For an L-convex function $\funcdoms{f}{\sk{\euCC}}{\ER}$
  and $x \in \dom{f}$,
  we have that
  $x \in \argmin{f}$
  if and only if
  $f(x) \leq f(y)$ for all $y \in \opnbor{x}$.
\end{theorem}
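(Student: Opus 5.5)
The only-if direction is immediate, since $\opnbor{x} \subseteq \Ldom$. For the if direction, I plan to pass to the continuous extension. By \Cref{prop:L-conv:unique}, $\hat{f} \coloneqq \ext{f}$ is the unique primal-integral locally polyhedral L-convex function with $\hat{f}|_{\Ldom} = f$, and it is a proper convex function on $V$. By convexity, $x$ minimizes $\hat{f}$ over $V$ if and only if $\hat{f}'(x; d) \geq 0$ for every $d \in V$. Global minimality of $x$ for $\hat{f}$ implies minimality for $f$, because $f$ is the restriction of $\hat{f}$ to $\Ldom$. So the task reduces to showing that the directional derivative $\hat{f}'(x;\cdot)$ is nonnegative everywhere.

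The key input is \Cref{prop:directional:derivative:L}. It states that $\rho_{f,x}$ is the restriction of $\hat{f}'(x;\cdot)$ to $\opnbor{x} - x$. The proof of that proposition moreover shows that $\hat{f}'(x;\cdot) = \phext{\rho_{f,x}}$, the positively homogeneous Lov\'{a}sz extension with respect to the spherical expression $\Lk{\euCC}{x} - x$ of $\spCC(\tilde{\RS}|_{-x})$. Here \Cref{lem:complex:contraction}~(2) and \Cref{lem:standard}~(2) identify the link at $x$ with a spherical Coxeter complex living in $V$ (note $(\aff\{x\})_0 = \{0\}$). The local optimality hypothesis says exactly that $\rho_{f,x}(d) = f(x+d) - f(x) \geq 0$ for every $d \in \opnbor{x} - x$, with the value $+\infty$ allowed outside the domain.

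By \eqref{eq:ph-Lovasz-ext}, every $d \in V$ satisfies
\[
\phext{\rho_{f,x}}(d) = \sum_{v} \mu_d(v)\,\rho_{f,x}(v),
\]
a nonnegative combination over the vertices of the cell $A_d$ of that spherical expression; the case $d = 0$ gives the value $0$. Each term is nonnegative or $+\infty$, so $\hat{f}'(x;d) \geq 0$ for all $d$. Hence $0 \in \partial \hat{f}(x) = \B{\hat{f}'(x;\cdot)}$ by \eqref{eq:subdifferential}, that is, $x \in \argmin \hat{f}$.

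The only delicate point is confirming that the link of $x$ in $\euCC$ is a full spherical expression covering all of $V$. This is needed so that every direction $d$ is a nonnegative combination of vectors $v - x$ with $v \in \opnbor{x}$ lying in a common simplex, and hence so that the formula $\hat{f}'(x;\cdot) = \phext{\rho_{f,x}}$ holds on all of $V$ rather than only on $\dom \hat{f} - x$. It follows from \Cref{lem:complex:contraction}~(2) for the nonempty vertex simplex $\{x\}$. Outside $\dom\hat{f} - x$ both sides equal $+\infty$, since the corresponding vertices lie outside $\dom f$. Beyond this point, the argument is routine.
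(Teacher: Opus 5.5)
Your proposal is correct and is essentially the paper's proof. Both obtain $\hat{f}'(x;\cdot)=\phext{\rho_{f,x}}\geq 0$ from \Cref{prop:directional:derivative:L} and the hypothesis $\rho_{f,x}\geq 0$, and then conclude that $x$ minimizes $\hat f$. The paper does this via $\hat f(y)\geq \hat f(x)+\hat f'(x;y-x)$, you via the equivalent $0\in\partial\hat f(x)=\B{\hat f'(x;\cdot)}$.

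One minor slip in your side remark: the directions where both sides equal $+\infty$ are those outside $\tcone{\dom\hat{f}}{x}$, not those outside $\dom\hat{f}-x$. A direction $d$ with $x+d\notin\dom\hat f$ can still have finite $\hat f'(x;d)$. This is not load-bearing, since the proof of \Cref{prop:directional:derivative:L} already supplies the identity on all of $V$.
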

\begin{proof}
    The only-if part is clear.
    We show the if part.
    Suppose that $f(x) \leq f(y)$ for all $y \in \opnbor{x}$,
    or equivalently, $\rho_{f,x} \geq 0$.
    Let $\hat{f}$ be the primal-integral locally polyhedral L-convex function satisfying $f = \hat{f}|_{\Ldom}$.
    By \Cref{prop:directional:derivative:L},
    we have $\hat{f}'(x; \cdot) = \cone{\rho_{f,x}}$,
    which implies that $\hat{f}'(x; \cdot) \geq 0$.
    Thus, for any $y \in \dom{f}$,
    we have
    \begin{align}
        f(y) = \hat{f}(y) \geq \hat{f}(x) + \hat{f}'(x; y -x) \geq \hat{f}(x) = f(x),
    \end{align}
    where the first inequality follows from a fundamental fact in convex analysis; see e.g.,~\cite[Proposition~4.47]{Mordukhovich2023-hw}.
\end{proof}

\subsection{Definitions of M-convex sets and functions}\label{subsec:M-convex}
Recall that an integral M-convex polyhedron $P$ is an M-convex polyhedron having the $\Mdom$-integrality, i.e., $P = \conv(P \cap \Mdom)$; see \Cref{subsec:M-poly-L-sub}.

A nonempty subset $B \subseteq \Mdom$ is said to be \emph{M-convex} if
$B$ is the set of lattice points in some integral M-convex polyhedron $P$,
i.e.,
$B = P \cap \Mdom$ for some integral M-convex polyhedron $P$.
This is equivalent to the condition that
$\conv{B}$ is an integral M-convex polyhedron and $B = (\conv{B}) \cap \Mdom$.

For an M-convex set $B \subseteq \Mdom$,
we have $\clconv{B} = \conv{B}$,
since $\conv{B}$ forms a polyhedron.
Thus, with the consideration of \Cref{subsubsec:spherical-submodular}, 
we obtain the following commutative diagram describing the relations among
$\Mpoly{\R}$, $\Mpoly{\Z}$, $\Lsub{\R}{\R}$, $\Lsub{\R}{\Z|\R}$, $\subm{\CC^\circ}{\R}$, $\subm{\CC^\circ}{\Z}$
with the family $\Mpoly{\Z}$ of M-convex sets.
\begin{align}\label{eq:diagram-Mset}
\begin{tikzcd}[row sep=3em, column sep=6em, ampersand replacement=\&, crossing over clearance=0.8ex]
\& |[alias=submCR]| \subm{\CC^\circ}{\R} \& \& \& \\
|[alias=LsubRR]| \Lsub{\R}{\R} \& \& \& |[alias=MpolyR]| \Mpoly{\R} \& \\
\& |[alias=submCZ]| \subm{\CC^\circ}{\Z} \& \& \& |[alias=Mset]| \Mpoly{\Z} \\
|[alias=LsubRZR]| \Lsub{\R}{\Z|\R} \& \& \& |[alias=MpolyZR]| \Mpoly{\Z|\R} \&
\arrow[from=submCZ, to=submCR, hook]
\arrow[from=LsubRZR, to=LsubRR, hook]
\arrow[from=LsubRR, to=MpolyR, shift left=0.4ex, "\B{\cdot}"{pos=0.5, above}, crossing over]
\arrow[from=MpolyR, to=LsubRR, shift left=0.4ex, "\spt{\cdot}"{pos=0.5, below}, crossing over]
\arrow[from=submCR, to=LsubRR, shift right=0.4ex, "\cone(\cdot) = \phext{(\cdot)}"{sloped, pos=0.5, above}]
\arrow[from=LsubRR, to=submCR, shift right=0.4ex, "(\cdot)|_{\CC^\circ}"{sloped, pos=0.5, below}]
\arrow[from=submCR, to=MpolyR, shift left=0.4ex, "\B{\cdot}"{sloped, pos=0.5, above}]
\arrow[from=MpolyR, to=submCR, shift left=0.4ex, "(\spt{\cdot})|_{\CC^\circ}"{sloped, pos=0.5, below}]
\arrow[from=submCZ, to=LsubRZR, shift right=0.4ex, "\cone(\cdot) = \phext{(\cdot)}"{sloped, pos=0.5, above}]
\arrow[from=LsubRZR, to=submCZ, shift right=0.4ex, "(\cdot)|_{\CC^\circ}"{sloped, pos=0.5, below}]
\arrow[from=submCZ, to=MpolyZR, shift left=0.4ex, "\B{\cdot}"{sloped, pos=0.5, above}]
\arrow[from=MpolyZR, to=submCZ, shift left=0.4ex, "(\spt{\cdot})|_{\CC^\circ}"{sloped, pos=0.5, below}]
\arrow[from=LsubRZR, to=MpolyZR, shift left=0.4ex, "\B{\cdot}"{pos=0.5, above}]
\arrow[from=MpolyZR, to=LsubRZR, shift left=0.4ex, "\spt{\cdot}"{pos=0.5, below}]
\arrow[from=Mset, to=MpolyZR, shift left=0.4ex, "\conv(\cdot) = \clconv(\cdot)"{sloped, pos=0.5, below}]
\arrow[from=MpolyZR, to=Mset, shift left=0.4ex, "(\cdot) \cap \Mdom"{sloped, pos=0.5, above}]
\arrow[from=submCZ, to=Mset, shift left=0.4ex, "\B{\cdot} \cap \Mdom"{pos=0.5, above}]
\arrow[from=Mset, to=submCZ, shift left=0.4ex, "(\spt{\cdot})|_{\CC^\circ}"{pos=0.5, below}]
\arrow[from=MpolyZR, to=MpolyR, hook, crossing over]
\end{tikzcd}
\end{align}

A function $\funcdoms{g}{\Mdom}{\ER}$ is said to be \emph{M-convex} if there is a primal-integral locally polyhedral M-convex function $\funcdoms{\hat{g}}{V^*}{\ER}$ such that $g$ is the restriction of $\hat{g}$ to $\Mdom$.
An M-convex function $\funcdoms{g}{\Mdom}{\ER}$ is said to be \emph{integral}
if it is the restriction of an integral locally polyhedral M-convex function.

\begin{proposition}\label{prop:M-conv:unique}
    For an M-convex function $\funcdoms{g}{\Mdom}{\ER}$,
    we have $\conv{g} = \clconv{g}$,
    which forms the unique primal-integral locally polyhedral M-convex function whose restriction to $\Mdom$ is $g$.
\end{proposition}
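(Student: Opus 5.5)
We will imitate the proof of \Cref{prop:L-conv:unique}, with the Lov\'{a}sz extension replaced by the convex hull. Let $\hat{g}$ be a primal-integral locally polyhedral M-convex function with $g = \hat{g}|_{\Mdom}$. Since $\hat{g}$ is closed (locally polyhedral convex functions are closed), convex, and minorizes $g$ on $\dom{g}$, we get
\begin{align}
\hat{g} \leq \clconv{g} \leq \conv{g}.
\end{align}
It therefore suffices to show $\conv{g} \leq \hat{g}$, i.e., $\conv{g}(p) \leq \hat{g}(p)$ for every $p \in \dom{\hat{g}}$. This identity also gives uniqueness: any other primal-integral locally polyhedral M-convex extension $\hat{g}'$ of $g$ satisfies $\hat{g}' = \conv{g} = \hat{g}$.

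Take any $p \in \dom{\hat{g}}$. By \Cref{lem:quasi-polyhedral-convex-function}~(1), $\dom{\hat{g}} = \geo{\fan(\hat{g})}$, so there is $P \in \fan(\hat{g})$ with $p \in P$, say $P = \argmin(\hat{g} - x)$ for some $x \in V$. By primal-integrality, $P$ is an integral M-convex polyhedron, so $P = \conv(P \cap \Mdom)$. Write $p = \sum_i \lambda_i q_i$ as a convex combination of finitely many $q_i \in P \cap \Mdom$.

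On $P$ the function $\hat{g}$ is affine, namely $\hat{g}(q) = \inpr{q}{x} + \min(\hat{g} - x)$ for $q \in P$. Hence
\begin{align}
\hat{g}(p) = \sum_i \lambda_i \hat{g}(q_i) = \sum_i \lambda_i g(q_i).
\end{align}
Each $q_i$ lies in $\dom{g}$ because $q_i \in \Mdom \cap \dom{\hat{g}}$. By definition~\eqref{eq:func:conv}, $\sum_i \lambda_i g(q_i) \geq \conv{g}(p)$, so $\hat{g}(p) \geq \conv{g}(p)$.

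For $p \notin \dom{\hat{g}}$ there is nothing to prove, since then $\hat{g}(p) = +\infty$. Together with the chain above, this gives $\conv{g} = \clconv{g} = \hat{g}$.

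The only delicate point is the first inequality $\hat{g} \leq \clconv{g}$. It relies on $\clconv{g}$ being the largest closed convex function minorizing $g$, viewed as a function on $V^*$ that is $+\infty$ off $\Mdom$. For this characterization we need $g$ to majorize some affine function. That holds here because $g \geq \hat{g}$ on $\Mdom$ and the proper closed convex function $\hat{g}$ majorizes an affine function. Unlike the L-convex case, no Lov\'{a}sz-extension formula is available, so the domain equality is not shown separately. It is absorbed into the argument above: every point of $\dom{\hat{g}}$ is a convex combination of lattice points of a single cell $P \in \fan(\hat{g})$.
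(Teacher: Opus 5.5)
Your proposal is correct and follows essentially the same route as the paper's proof. Both sandwich $\hat{g} \leq \clconv{g} \leq \conv{g}$, then use that $\hat{g}$ is affine on a cell $P \in \fan(\hat{g})$ containing $p$ with $P = \conv(P \cap \Mdom)$ to get $\conv{g}(p) \leq \hat{g}(p)$. The only difference is that the paper first checks $\dom(\conv{g}) = \dom{\hat{g}}$ via \Cref{lem:conv=clconv} and \Cref{prop:dom:LM-convex}; as you observe, that step is redundant, because the inequality is trivial outside $\dom{\hat{g}}$.
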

\begin{proof}
    The proof strategy is similar to that of \Cref{prop:L-conv:unique}.
    Let $\funcdoms{g}{\Mdom}{\ER}$ be an M-convex function
    and suppose that $\funcdoms{\hat{g}}{V^*}{\ER}$ is a primal-integral locally polyhedral M-convex function satisfying $g = \hat{g}|_{\Mdom}$.
    Since $\hat{g}$ is a closed convex function that minorizes $g$,
    we have $\hat{g} \leq \clconv{g} \leq \conv{g}$.
    Thus, it suffices to show $\conv{g} \leq \hat{g}$.

    We can easily obtain $\dom(\conv{g}) = \dom{\hat{g}}$ by
    \begin{align}
        \dom(\conv{g}) = \conv(\dom{g}) = \conv(\dom{\hat{g}} \cap \Mdom) = \dom{\hat{g}},
    \end{align}
    where the first and last equalities follow from \Cref{lem:conv=clconv} and \Cref{prop:dom:LM-convex},
    respectively.

    Take any $p \in \dom(\conv{g}) = \dom{\hat{g}}$;
    our aim is to show that $(\conv{g})(p) \leq \hat{g}(p)$.
    Since $\dom{\hat{g}} = \geo{\fan(\hat{g})}$,
    there is an integral M-convex polyhedron $P \in \fan(\hat{g})$ with $p \in P$.
    Let $B \coloneqq P \cap \Mdom \subseteq \dom{g}$,
    which is an M-convex set satisfying $P = \conv{B}$.
    Thus, there is $\lambda \in \cvcoeff{B}$ such that $p = \sum_{q \in B} \lambda(q) q$.
    Since $\hat{g}$ is affine on $P$,
    we obtain
    \begin{align}
        \hat{g}(p) = \sum_{q \in B} \lambda(q) \hat{g}(q) = \sum_{q \in B} \lambda(q) g(q) \geq (\conv{g})(q),
    \end{align}
    where the inequality follows from the definition~\eqref{eq:func:conv} of $\conv{g}$.
    This completes the proof.
\end{proof}

The following immediately follows by applying the proof of \Cref{thm:integral:L-func}~(1) similarly.
\begin{theorem}
    For an M-convex function $\funcdoms{g}{\Mdom}{\ER}$,
    it is integral if and only if $\gamma_{g, p}$ is integral for all $p \in \dom{g}$.
\end{theorem}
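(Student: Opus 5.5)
The plan mirrors the proof of \Cref{thm:integral:L-func}~(1), with the roles of $\Ldom$ and $\Mdom$ exchanged. I read $\gamma_{g,p}$ as the local distance function $\gamma_{g,p}(\alpha) \coloneqq g(p+\alpha) - g(p)$ for $\alpha \in \RS$, the M-convex counterpart of $\rho_{f,x}$. Let $\hat{g} = \conv{g}$ be the unique primal-integral locally polyhedral M-convex extension given by \Cref{prop:M-conv:unique}. It is integral if and only if $\hat{g}'(p;\cdot)$ is an integral M-sublinear function for every $p \in \dom{\hat{g}}$.

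The first step, the analogue of \Cref{prop:directional:derivative:L}, is to show $\hat{g}'(p;\cdot) = \cone{\gamma_{g,p}}$ for every $p \in \dom{g}$. By \Cref{lem:quasi-polyhedral-convex-function}~(2), $\fan(\hat{g}'(p;\cdot)) = \Set{\tcone{P}{p}}{P \in \fan(\hat{g};p)}$. Moreover, $\hat{g}'(p;d) = \hat{g}(p+d) - \hat{g}(p)$ whenever $p + d$ lies in a member of $\fan(\hat{g};p)$. Each $P \in \fan(\hat{g};p)$ is an integral M-convex polyhedron and $p \in P \cap \Mdom$. The key input is \Cref{prop:tangent-cone}: the tangent cone $\tcone{P}{p}$ is generated by the roots $\alpha$ with $p + \alpha \in P$. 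On such a root, $\hat{g}'(p;\alpha) = g(p+\alpha) - g(p) = \gamma_{g,p}(\alpha)$.

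Since $\hat{g}'(p;\cdot)$ is linear on each cone of its fan, and those cones are generated by such roots, we get $\hat{g}'(p;\cdot) \ge \cone{\gamma_{g,p}}$. Conversely, $\gamma_{g,p}(\alpha) \ge \hat{g}'(p;\alpha)$ holds for every $\alpha$ by convexity, including $\gamma_{g,p}(\alpha) = +\infty$ when $p + \alpha \notin \dom{g}$. Maximality of the conical hull then gives $\cone{\gamma_{g,p}} \ge \hat{g}'(p;\cdot)$. In particular, $\gamma_{g,p}$ agrees with the M-sublinear function $\hat{g}'(p;\cdot)$ on every root where it matters. Consequently, $\gamma_{g,p}$ is integral exactly when $\hat{g}'(p;\cdot)$ is. I expect this step to be the main obstacle: it is where the classical-type assumption enters via \Cref{prop:tangent-cone}, and where the roots with $p + \alpha \notin \dom{g}$ need care.

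The only-if part is then immediate. If $g$ is integral, $\hat{g}'(p;\cdot)$ is an integral M-sublinear function, so its restriction $\gamma_{g,p}$ is integral.

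For the if part, I will show that $\hat{g}^*$ is a primal-integral locally polyhedral L-convex function. Let $\Pi \coloneqq \Set{\D{\gamma_{g,p}}}{p \in \dom{g}}$. By the first step and \Cref{thm:conjugate-Lpoly-Msub}, we have $\D{\gamma_{g,p}} = \bS{\hat{g}'(p;\cdot)} = \partial \hat{g}(p)$, which is an integral L-convex polyhedron, and $\Pi \subseteq \fan(\hat{g}^*)$ by \Cref{lem:argmin-support}~(5). To see that $\Pi$ covers the domain, take any $q \in \dom{\hat{g}}$. It lies in the relative interior of some $P \in \fan(\hat{g})$, and $P$ is integral, so it contains some $p \in P \cap \Mdom \subseteq \dom{g}$. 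Then \Cref{lem:argmin-support}~(4) gives $\partial \hat{g}(q) \subseteq \partial \hat{g}(p)$. Hence $\bigcup \Pi = \geo{\fan(\hat{g}^*)}$. This union is closed by \Cref{lem:union-closed:L}, so it equals $\dom{\hat{g}^*}$ by \Cref{lem:argmin-support}~(1). Now \Cref{prop:loc-poly-LM-convex-sufficient-condition}~(1) makes $\hat{g}^*$ a primal-integral locally polyhedral L-convex function. Its conjugate $\hat{g}^{**} = \hat{g}$ is locally polyhedral, so \Cref{thm:conjugate-loc-poly-L-M-convex}~(1) shows that $\hat{g}$ is dual-integral. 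Together with primal-integrality, $\hat{g}$ is integral, and therefore so is $g$.
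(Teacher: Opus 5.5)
Your proof is correct and is essentially the paper's own argument, which mirrors the proof of \Cref{thm:integral:L-func}~(1). The identity $\cone{\gamma_{g,p}} = \hat{g}'(p;\cdot)$ (\Cref{prop:directional:derivative:M}, which you re-derive) gives the only-if part. The if part shows that $\hat{g}^*$ is a primal-integral locally polyhedral L-convex function, because the integral L-convex polyhedra $\partial\hat{g}(p)$, $p \in \dom{g}$, cover $\dom{\hat{g}^*}$.

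The differences from the paper are cosmetic. You spell out the closedness step via \Cref{lem:union-closed:L} and \Cref{lem:argmin-support}~(1), which the paper leaves implicit. You also apply \Cref{thm:conjugate-loc-poly-L-M-convex}~(1) to $\hat{g}^*$ directly rather than going through part~(3). One wording slip: $\gamma_{g,p}$ need not literally be the restriction of $\hat{g}'(p;\cdot)$ to $\RS$. This is harmless, since integrality of a distance function is defined through its conical hull.
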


For $B \subseteq \Mdom$ and $p \in B$,
we define
\begin{align}
    \tRS{p}{B} \coloneqq \Set{ \alpha \in \RS }{ p + \alpha \in B },
\end{align}
which plays a role as the tangent cone of $B$ at $p$ in the following sense.
\begin{proposition}\label{prop:tangent-cone}
    For an M-convex set $B \subseteq \Mdom$
    and $p \in B$,
    we have $\tcone{\conv{B}}{p} = \cone(\tRS{p}{B})$.
\end{proposition}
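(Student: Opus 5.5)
The inclusion $\cone(\tRS{p}{B}) \subseteq \tcone{\conv{B}}{p}$ is immediate: if $\alpha \in \tRS{p}{B}$ then $p+\alpha \in B \subseteq \conv{B}$, so $\alpha$ is a feasible direction at $p$. The work is the reverse inclusion. Put $P \coloneqq \conv{B}$, which is an integral M-convex polyhedron with $B = P \cap \Mdom$. By definition $\tcone{P}{p} = \cone{\RS'}$ for some $\RS' \subseteq \RS$, and we may take $\RS' = \tcone{P}{p} \cap \RS$. So it is enough to show that every root $\alpha$ lying in $\tcone{P}{p}$ is generated by roots $\beta$ with $p+\beta \in P$. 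Integrality of $p+\beta$ is then automatic, since $p \in \Mdom$ and $\beta \in \RS \subseteq \Mdom$, which gives $p + \beta \in B$.

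First I would reduce to extreme rays. Because $\tcone{P}{p}$ is a cone generated by roots, every extreme ray contains a root, and it suffices to treat a root $\alpha$ spanning an extreme ray of $\tcone{P}{p}$. The ray $p + \R_+\alpha$ meets $P$ in an edge of the face of $P$ through $p$ in direction $\alpha$; more precisely, it lies in a face $F$ of $P$ whose tangent cone at $p$ is that ray together with the lineality part. By \Cref{lem:face:M-poly} and \Cref{lem:integral-polyhedron}, the minimal such face is again an integral M-convex polyhedron. If the edge $[p, p+t\alpha]$ is bounded, its other endpoint is a lattice point $q = p + t\alpha$, and we need $t \ge 1$. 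If it is unbounded, $p+\alpha \in P$ trivially. The lineality part can be handled the same way, using $\pm\alpha$.

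The core claim is therefore: if $p, p+t\alpha \in \Mdom$ with $\alpha \in \RS$ and $t>0$, then $t \ge 1$, or else the edge structure forces some alternative generating set. This is where the classical types enter. For type A and for $\RSB$, $\RSD$ (using \Cref{tab:root-coroot-lattices}), a root is primitive in $\Mdom$ except in specific cases. The problem case is type C: with $\RSC$ we have $e_i \in \RS$ but $e_i = 2\cdot(\tfrac12 e_i+\tfrac12 e_j) - \ldots$, and in fact $\tfrac12 e_i \notin \Mdom$ while $e_i$ is primitive. So I would check case by case, via the coordinate descriptions in \Cref{tab:root-coroot-lattices}, that every root $\alpha$ is primitive in $\Mdom$, meaning $\R\alpha \cap \Mdom = \Z\alpha$. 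Then $t\alpha \in \Mdom$ forces $t \in \Z_{>0}$, hence $t\ge 1$, and convexity of the edge gives $p+\alpha \in P \cap \Mdom = B$.

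The main obstacle I expect is the reduction to edges. An extreme ray of $\tcone{P}{p}$ at a point $p$ that is not a vertex need not correspond to a lattice edge of $P$ emanating from $p$ itself; the edge from $p$ in direction $\alpha$ may end at a non-lattice point of a higher-dimensional face. To handle this I would use \Cref{cor:Mpoly-subm} and \Cref{prop:integral-subm}~(1). Write $P = \B{\rho}$ with $\rho$ integral spherical submodular on $\spCC^\star$. Pick a chamber $C$ whose simple system $\simp$ makes $\alpha$ simple, which is possible after applying a Weyl group element by \Cref{lem:Weyl-group}. Compute the maximal step $t = \max\{s : p + s\alpha \in P\}$ as a minimum of $(\rho(x) - \inpr{p}{x})/\inpr{\alpha}{x}$ over $x \in \sk{\spCC^\star}$ with $\inpr{\alpha}{x}>0$. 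Using \Cref{cor:inpr}, both the numerator and the denominator lie in $\Z/a(x)$, and the denominator is a coefficient of $\alpha$ divided by $a(x)$. Showing this ratio is at least $1$ whenever it is positive needs the coefficient of $\alpha$ in the relevant simple-root expansion to be $\pm1$ relative to the mark. This is the genuinely type-dependent verification I would carry out for A, B, C, D, using the marks in \Cref{tab:classical-Dynkin-diagrams}.
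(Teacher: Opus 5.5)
\textbf{This proof has a genuine gap: the non-vertex case is not handled.} The parts that do work:
\begin{itemize}
\item The inclusion $\cone(\tRS{p}{B})\subseteq\tcone{\conv{B}}{p}$ and the reduction to roots lying in the tangent cone are fine.
\item The vertex case is fine. If $p$ is a vertex of $P=\conv{B}$, each extreme ray of $\tcone{P}{p}$ carries a root $\alpha$ and is an edge direction of $P$. A bounded edge ends at a point of $\Mdom$ because $P$ is $\Mdom$-integral. Roots are primitive in $\Mdom$, since every root lies in some simple system and hence in a lattice basis. So $p+\alpha\in B$.
\end{itemize}

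When $p$ is not a vertex, the claim you aim for is false outside type~A. That claim is that a suitable root $\alpha$ of the cone (a lineality root taken ``with $\pm\alpha$'', or a representative of an extreme ray) has maximal step $t\geq 1$.

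\emph{Counterexample.} Take type $\Dynkin{B}{2}$, so $\Mdom=\Z^2$, and $B=\{0,\pm e_1,\pm e_2\}$. Every edge of the diamond $\conv{B}$ is parallel to a root, and $B=\conv{B}\cap\Z^2$, so $B$ is M-convex. At $p=0$ the tangent cone is all of $V^*$ and contains the root $e_1+e_2$. Yet $t=1/2$ for both $\pm(e_1+e_2)$.

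\emph{Why your min-ratio check fails.} The minimizing vertex is $x=(1/2,1/2)\in\sk{\spCC^\star}$, with mark $a(x)=2$. There the ratio is $m/c=1/2$ with $c=2$, the coefficient of $e_2$ in $e_1+e_2=(e_1-e_2)+2e_2$. Coefficients $2$ genuinely occur in types B, C, D, so no verification that ``the coefficient is $\pm1$ relative to the mark'' can succeed.

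\emph{Choosing representatives does not rescue it.} For $\conv\{(0,0),(2,0),(1,1)\}$ at $p=(1,0)$, both roots $\pm e_1+e_2$ have $t=1/2$. Only $e_2$ has $t=1$. Picking the right representatives is exactly the content of the proposition.

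\textbf{The missing idea.} A short step cannot be ruled out; it must be compensated. You need to show that whenever $\varepsilon\coloneqq\sup\{\varepsilon' : p+\varepsilon'\alpha\in\conv{B}\}<1$, in fact $\varepsilon=1/2$ and $\alpha=\alpha_++\alpha_-$ with $\alpha_\pm\in\tRS{p}{B}$.

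The paper does this after reducing to $p=0$, irreducible $\RS$, and $\conv{B}=\B{\rho}$. It takes a maximal simplex $A$ of $\spCC^\star[\rho=\varepsilon\alpha]$ and proves three facts:
\begin{itemize}
\item $\alpha$ is positive for the simple system of a chamber containing $A$.
\item The indices where $\varepsilon\alpha$ has non-integral coefficients avoid $\tau(A)$, by integrality of the face $\face{\B{\rho}}{\varepsilon\alpha}$.
\item Some coefficient of $\alpha$ is at least $2$. This is impossible in type A, where your ratio argument does work.
\end{itemize}
In types B, C, D this forces $\varepsilon=1/2$ and fixes the shape of $\alpha$. The identity $\face{\B{\rho}}{\varepsilon\alpha}=\varepsilon\alpha+\B{(\rho-\varepsilon\alpha)/A}$ then applies. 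Its contracted components are half-integral, and their base polyhedra contain $\pm\tfrac12$ times suitable sums of simple roots. This places the two roots $\alpha_\pm$ inside $\B{\rho}$.

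Your extreme-ray reduction needs an argument of this kind before it can reach non-vertex points.
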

The above proposition (\Cref{prop:tangent-cone}) is the main bridge between the continuous and discrete sides of M-convexity,
which shows that the tangent cone of the convex hull of an M-convex set
is completely generated by feasible increments along single roots.
This identity is used repeatedly in the local optimality theorem and
in the intrinsic characterizations of M-convex sets and functions in \Cref{sec:chara}.
The proof of \Cref{prop:tangent-cone} requires further properties of spherical submodular functions, which are of independent interest,
with type-dependent arguments,
and hence we defer it to \Cref{subsec:proof}.
We further note that
this is the main point at which
the classical-type assumption enters the discrete M-convex theory.
An analogue of this identity for the exceptional root systems would
extend the subsequent M-convex results to those types.

For a function $\funcdoms{g}{\Mdom}{\ER}$ and $p \in \dom{g}$,
we define the distance function $\funcdoms{\gamma_{g,p}}{\RS}{\ER}$ by
\begin{align}
  \gamma_{g,p}(\alpha) \coloneqq g(p+\alpha) - g(p) \qquad (\alpha \in \RS).
\end{align}
Intuitively, $\gamma_{g, p}$ plays a role as the directional derivative of $g$ at $p$.

\begin{proposition}\label{prop:directional:derivative:M}
  Suppose that $\funcdoms{g}{\Mdom}{\ER}$ is an M-convex function.
  Then, we have $\cone{\gamma_{g,p}} = \hat{g}'(p; \cdot)$ for any $p \in \dom{g}$,
  where $\hat{g}$ denotes the primal-integral locally polyhedral M-convex function whose restriction to $\Mdom$ is $g$.
  In particular,
  $\gamma_{g,p}$ is a valid distance function for any $p \in \dom{g}$.
\end{proposition}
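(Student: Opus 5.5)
We aim to show $\cone{\gamma_{g,p}} = \hat{g}'(p;\cdot)$, where $\hat{g} = \conv{g}$ is the unique primal-integral locally polyhedral M-convex extension from \Cref{prop:M-conv:unique}. The plan is to prove two inequalities. Validity of $\gamma_{g,p}$ then follows at once: $\hat{g}'(p;\cdot)$ is a proper polyhedral sublinear function by \Cref{lem:quasi-polyhedral-convex-function}~(2), and by the first inequality below $\D{\gamma_{g,p}} \supseteq \partial\hat{g}(p) \neq \emptyset$.

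\textbf{Step 1: $\hat{g}'(p;\cdot) \le \cone{\gamma_{g,p}}$.} Fix $p \in \dom{g}$ and $\alpha \in \RS$ with $p+\alpha \in \dom{g}$. Convexity of $\hat{g}$ gives
\[
\hat{g}'(p;\alpha) \le \hat{g}(p+\alpha) - \hat{g}(p) = \gamma_{g,p}(\alpha).
\]
Thus $\hat{g}'(p;\cdot)$ is a proper sublinear function minorizing $\gamma_{g,p}$ on $\RS$. Since $\cone{\gamma_{g,p}}$ is the pointwise maximum such function, the inequality follows.

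\textbf{Step 2: $\cone{\gamma_{g,p}} \le \hat{g}'(p;\cdot)$.} Take $d \in \dom{\hat{g}'(p;\cdot)}$. By \Cref{lem:quasi-polyhedral-convex-function}~(2), $\fan(\hat{g}'(p;\cdot)) = \Set{\tcone{Q}{p}}{Q \in \fan(\hat{g};p)}$, so we may choose $Q \in \fan(\hat{g};p)$ with $d \in \tcone{Q}{p}$. On $Q - p$ we have $\hat{g}'(p;e) = \hat{g}(p+e) - \hat{g}(p)$, and $\hat{g}'(p;\cdot)$ is linear on $\tcone{Q}{p}$.

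Because $\hat{g}$ is primal-integral, $Q$ is an integral M-convex polyhedron. Hence $B_Q \coloneqq Q \cap \Mdom$ is an M-convex set with $\conv{B_Q} = Q$ and $p \in B_Q$. Applying \Cref{prop:tangent-cone} to $B_Q$ yields
\[
\tcone{Q}{p} = \cone{\tRS{p}{B_Q}},
\]
so we can write $d = \sum_i \mu_i \alpha_i$ with $\mu_i \ge 0$ and $p + \alpha_i \in Q \cap \Mdom$. For each $i$, since $p$ and $p+\alpha_i$ both lie in $Q$ and $\hat{g}$ is affine on $Q$, we get $\hat{g}'(p;\alpha_i) = g(p+\alpha_i) - g(p) = \gamma_{g,p}(\alpha_i)$. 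Linearity of $\hat{g}'(p;\cdot)$ on the cone $\tcone{Q}{p}$ then gives
\[
\hat{g}'(p;d) = \sum_i \mu_i \gamma_{g,p}(\alpha_i) \ge (\cone{\gamma_{g,p}})(d).
\]

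\textbf{Step 3: effective domains.} For $d \notin \dom{\hat{g}'(p;\cdot)}$ the right-hand side is $+\infty$, so the inequality of Step 2 holds trivially. Combined with Step 1, this gives equality everywhere, including $\dom{\cone{\gamma_{g,p}}} = \dom{\hat{g}'(p;\cdot)}$.

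The main obstacle is Step 2, specifically expressing $d$ through single-root moves that stay inside one cell $Q$ where $\hat{g}$ is affine. This is exactly what \Cref{prop:tangent-cone}, applied to the M-convex set $Q \cap \Mdom$ rather than to $\dom{g}$, supplies. The remaining points to check are that $\hat{g}'(p;\cdot)$ is linear on each cone $\tcone{Q}{p}$, and that $p + \alpha_i \in Q$ (not merely $\in \dom{g}$), so that affineness on $Q$ applies.
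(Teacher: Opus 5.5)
Your proposal is correct and follows the paper's proof essentially step for step. The bound $\hat{g}'(p;\cdot) \le \cone{\gamma_{g,p}}$ comes from $\hat{g}'(p;\alpha) \le \hat{g}(p+\alpha)-\hat{g}(p) = \gamma_{g,p}(\alpha)$ together with sublinearity. The reverse bound comes from picking $P \in \fan(\hat{g};p)$ with $d \in \tcone{P}{p}$ via \Cref{lem:quasi-polyhedral-convex-function}~(2), applying \Cref{prop:tangent-cone} to the M-convex set $P \cap \Mdom$, and using that $\hat{g}$ is affine on $P$ and $\hat{g}'(p;\cdot)$ is linear on $\tcone{P}{p}$.

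Your separate argument for validity via $\partial\hat{g}(p) \subseteq \D{\gamma_{g,p}}$ is also sound. The paper instead obtains validity directly from the equality.
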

\begin{proof}
  We only show the former assertion; the latter assertion immediately follows from the former.

  Let $\funcdoms{\hat{g}}{V^*}{\ER}$ be the (unique) primal-integral locally polyhedral M-convex function such that $g$ is the restriction of $\hat{g}$ to $\Mdom$.
Take any $p \in \dom{g} \subseteq \dom{\hat{g}}$.

Since $\hat{g}'(p; \cdot)$ is M-sublinear and $\gamma_{g,p}(\alpha) = g(p+\alpha) - g(p) = \hat{g}(p+\alpha) - \hat{g}(p) \geq \hat{g}'(p; \alpha)$ for $\alpha \in \RS$,
we obtain $\cone{\gamma_{g,p}} \geq \hat{g}'(p; \cdot)$.

In the following, we prove the reverse inequality $\cone{\gamma_{g,p}} \leq \hat{g}'(p; \cdot)$.
Take any $q \in \dom(\hat{g}'(p; \cdot))$.
Then, there is an M-convex cone $K$ in $\fan(\hat{g}'(p; \cdot))$ that contains $q$.
Since $\hat{g}$ is a primal-integral locally polyhedral M-convex function,
there is an integral M-convex polyhedron $P \in \fan(\hat{g}; p)$
such that $K = \tcone{P}{p} = \cone(\RS_p(P \cap \Mdom))$,
where the former and latter equalities follow from \Cref{lem:quasi-polyhedral-convex-function}~(2) and \Cref{prop:tangent-cone},
respectively.
Thus, we obtain $q \in \cone(\RS_p(P \cap \Mdom))$.
Moreover,
it follows from \Cref{lem:quasi-polyhedral-convex-function}~(2) and $\RS_p(P \cap \Mdom) \subseteq P \in \fan(\hat{g}; p)$
that $\hat{g}'(p; \alpha) = \hat{g}(p+\alpha) - \hat{g}(p) = g(p+\alpha) - g(p)$ for $\alpha \in \RS_p(P \cap \Mdom)$,
where the last equality follows from $g = \hat{g}|_{\Mdom}$.

Assume that $q = \sum_{\alpha \in \RS_p(P \cap \Mdom)} \mu(\alpha) \alpha$
for $\mu \in \cncoeff{\RS_p(P \cap \Mdom)}$.
Since $\hat{g}'(p; \cdot)$ is linear on $K = \cone(\RS_p(P \cap \Mdom))$,
we obtain
\begin{align}
    \hat{g}'(p; q) = \sum_{\alpha \in \RS_p(P \cap \Mdom)} \mu(\alpha)\left(g(p + \alpha) - g(p)\right) = \sum_{\alpha \in \RS_p(P \cap \Mdom)} \mu(\alpha)\gamma_{g,p}(\alpha) \geq (\cone{\gamma_{g,p}})(p),\notag
\end{align}
where the inequality follows from the definition~\eqref{eq:func:cone} of the conical hull.
This implies that $\cone{\gamma_{g,p}} \leq \hat{g}'(p; \cdot)$,
which completes the proof.
\end{proof}

We can see that local optimality guarantees global optimality,
which follows from essentially the same proof as \Cref{thm:minimality:L}. 
\begin{theorem}\label{thm:local-optimality-M-convex}
    For an M-convex function $\funcdoms{g}{\Mdom}{\ER}$
    and $p \in \dom{g}$,
    we have that
    $p \in \argmin{g}$
    if and only if
    $g(p) \leq g(p + \alpha)$ for all $\alpha \in \RS$.
\end{theorem}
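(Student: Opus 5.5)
The plan mirrors the proof of \Cref{thm:minimality:L}, with \Cref{prop:directional:derivative:M} playing the role that \Cref{prop:directional:derivative:L} played there. The only-if direction is immediate: if $p \in \argmin{g}$, then $g(p) \leq g(q)$ for every $q \in \Mdom$, and in particular for $q = p+\alpha$ with $\alpha \in \RS$, since $\RS \subseteq \Mdom$.

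For the if direction, assume $g(p) \leq g(p+\alpha)$ for all $\alpha \in \RS$. This says exactly that $\gamma_{g,p} \geq 0$ on $\RS$, where values $+\infty$ are allowed and cause no trouble. Let $\hat{g}$ be the unique primal-integral locally polyhedral M-convex function with $\hat{g}|_{\Mdom} = g$, given by \Cref{prop:M-conv:unique}.
\begin{enumerate}[label={\textup{(\roman*)}}]
\item By \Cref{prop:directional:derivative:M}, $\hat{g}'(p;\cdot) = \cone{\gamma_{g,p}}$.
\item By the definition~\eqref{eq:func:cone}, $\cone{\gamma_{g,p}}$ is an infimum of nonnegative combinations $\sum \mu(\alpha)\gamma_{g,p}(\alpha)$ of values that are all $\geq 0$. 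Hence $\cone{\gamma_{g,p}} \geq 0$ everywhere on $V^*$, where it equals $+\infty$ outside $\cone(\dom{\gamma_{g,p}})$.
\item Since $\hat{g}$ is a proper convex function and $p \in \dom{\hat{g}}$, the standard inequality $\hat{g}(q) \geq \hat{g}(p) + \hat{g}'(p; q-p)$ holds for every $q \in V^*$; this is the same fact used in \Cref{thm:minimality:L}.
\end{enumerate}

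Combining these, for any $q \in \dom{g}$ we get $g(q) = \hat{g}(q) \geq \hat{g}(p) + \hat{g}'(p;q-p) \geq \hat{g}(p) = g(p)$. For $q \notin \dom{g}$ the inequality $g(q) \geq g(p)$ is trivial. Therefore $p \in \argmin{g}$.

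The only substantive input is step (i). It rests on \Cref{prop:tangent-cone}, the type-dependent identification of tangent cones of M-convex sets, which we take as given here. With that result available, no step presents a real obstacle. The points to check are that the convexity inequality in (iii) remains valid when $\hat{g}'(p;\cdot)$ takes the value $+\infty$, and that (ii) handles roots $\alpha$ with $p+\alpha \notin \dom{g}$ correctly, since such roots contribute $+\infty$ and only enlarge $\cone{\gamma_{g,p}}$.
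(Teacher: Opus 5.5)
Your proposal is correct and is essentially the paper's own proof. The paper obtains \Cref{thm:local-optimality-M-convex} by repeating the argument of \Cref{thm:minimality:L}: \Cref{prop:directional:derivative:M} gives $\hat{g}'(p;\cdot) = \cone{\gamma_{g,p}} \geq 0$, and the convexity inequality $\hat{g}(q) \geq \hat{g}(p) + \hat{g}'(p;q-p)$ finishes it. You also correctly identify \Cref{prop:tangent-cone} as the nontrivial input, since it is what makes $\cone{\gamma_{g,p}}$ equal to the directional derivative rather than merely an upper bound for it.
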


\subsection{Conjugacy between integral L-convex and M-convex functions}\label{subsec:conjugacy}
For a function $\funcdoms{f}{\Ldom}{\ER}$ with $\dom{f} \neq \emptyset$,
its \emph{discrete Fenchel--Legendre conjugate} $\funcdoms{f^\bullet}{\Mdom}{\ER}$
is the function defined by
\begin{align}
    f^\bullet(p) &\coloneqq \sup{\Set{ \inpr{p}{x} - f(x) }{ x \in \dom{f} }} \qquad (p \in \Mdom).
\end{align}
Similarly, for a function $\funcdoms{g}{\Mdom}{\ER}$ with $\dom{g} \neq \emptyset$,
its \emph{discrete Fenchel--Legendre conjugate} $\funcdoms{g^\bullet}{\Ldom}{\ER}$
is the function defined by
\begin{align}
    g^\bullet(x) &\coloneqq \sup{\Set{ \inpr{p}{x} - g(p) }{ p \in \dom{g} }} \qquad (x \in \Ldom).
\end{align}
We note that
$f^\bullet$ (resp. $g^\bullet$) is the restriction of $f^*$ to $\Mdom$ (resp. $g^*$ to $\Ldom$).

The following establishes the one-to-one correspondence between the integral L-convex functions and the integral M-convex functions
via the discrete Fenchel--Legendre conjugate.
\begin{theorem}\label{thm:conjugacy}
  Let $\funcdoms{f}{\Ldom}{\ER}$ and $\funcdoms{g}{\Mdom}{\ER}$ be functions.
  \begin{enumerate}[label={\textup{(\arabic*)}}]
      \item If $f$ is an integral L-convex function, then its discrete Fenchel--Legendre conjugate $f^\bullet$ is an integral M-convex function and $f = (f^\bullet)^\bullet$.
      \item If $g$ is an integral M-convex function, then its discrete Fenchel--Legendre conjugate $g^\bullet$ is an integral L-convex function and $g = (g^\bullet)^\bullet$.
  \end{enumerate}
\end{theorem}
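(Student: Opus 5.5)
We prove (1); (2) is symmetric, with the roles of \Cref{prop:L-conv:unique} and \Cref{prop:M-conv:unique} exchanged. Let $f$ be an integral L-convex function and let $\hat{f}$ be an integral locally polyhedral L-convex function with $f = \hat{f}|_{\Ldom}$. By \Cref{prop:L-conv:unique}, $\hat{f} = \ext{f} = \clconv{f}$ is the unique primal-integral extension. The plan is to show that the discrete conjugate agrees with the restriction of the continuous one, namely
\begin{align}
f^\bullet = \hat{f}^*|_{\Mdom},
\end{align}
and then to use \Cref{thm:conjugate-loc-poly-L-M-convex}~(3). That theorem says $\hat{f}^*$ is an integral locally polyhedral M-convex function. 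In particular it is primal-integral, so its restriction to $\Mdom$ is an integral M-convex function.

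For the identity $f^\bullet = \hat{f}^*|_{\Mdom}$, recall that $f^\bullet$ is by definition the restriction to $\Mdom$ of the continuous conjugate $f^*$ of $f$ viewed as a function on $V$, taking $+\infty$ off $\Ldom$. Since $f$ is proper and minorized by the closed convex proper function $\hat{f}$, it majorizes some affine function. Conjugation is invariant under passing to the closed convex hull, i.e.\ $f^* = (\clconv{f})^*$ (\Cref{lem:conjugate}). Since $\clconv{f} = \hat{f}$, we get $f^* = \hat{f}^*$, hence $f^\bullet = \hat{f}^*|_{\Mdom}$. So $g \coloneqq f^\bullet$ is an integral M-convex function, and its continuous extension is $\hat{g} = \hat{f}^*$.

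For the biconjugate, apply the same argument to $g$. By \Cref{prop:M-conv:unique}, $\clconv{g} = \hat{g}$, so $g^* = (\hat{g})^* = \hat{f}^{**} = \hat{f}$. The last equality holds by \Cref{lem:conjugate}~(2), since $\hat{f}$ is proper, closed and convex. Restricting to $\Ldom$ gives $(f^\bullet)^\bullet = g^\bullet = \hat{f}|_{\Ldom} = f$.

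The main point needing care is the step $f^* = (\clconv{f})^*$ combined with the uniqueness statements. These are exactly what allow the discrete sup over $\dom{f}$ to coincide with the continuous sup over $V$, and the discrete sup over $\dom{g}$ to coincide with the sup over $V^*$. One must also check that the hypotheses of \Cref{lem:conjugate} hold: properness and an affine minorant. These follow because $\hat{f}$ and $\hat{g}$ are proper closed convex functions that minorize $f$ and $g$ respectively. Integrality in both directions comes entirely from \Cref{thm:conjugate-loc-poly-L-M-convex}~(3), which also guarantees that $\hat{f}^*$ is locally polyhedral, so no separate verification is needed.
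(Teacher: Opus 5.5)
Your proposal is correct and is essentially the paper's proof: you identify $f^\bullet$ with $(\clconv{f})^*|_{\Mdom}$ via \Cref{lem:conjugate}, use \Cref{prop:L-conv:unique} and \Cref{thm:conjugate-loc-poly-L-M-convex}~(3) to obtain integral M-convexity, and then use \Cref{prop:M-conv:unique} to get $\clconv(f^\bullet) = (\clconv{f})^*$, whence $(f^\bullet)^\bullet = (\clconv{f})^{**}|_{\Ldom} = f$. The extra step of checking the hypotheses of \Cref{lem:conjugate} (properness and an affine minorant) is left implicit in the paper, and your remark that $\hat{f}$ and $\hat{g}$ are proper closed convex minorants handles it correctly.
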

\begin{proof}
    We only show the assertion~(1);
    the same argument yields the assertion~(2).

    For an integral L-convex function $\funcdoms{f}{\Ldom}{\ER}$,
    we have
    \begin{align}\label{eq:int-conj}
        f^\bullet = (f^*)|_{\Mdom} = (f^{***})|_{\Mdom} = (\clconv{f})^*|_{\Mdom},
    \end{align}
    where the second and last equalities follow from \Cref{lem:conjugate}.
    Since $\clconv{f}$ is an integral locally polyhedral L-convex function
    by \Cref{prop:L-conv:unique},
    its conjugate $(\clconv{f})^*$ forms an integral locally polyhedral M-convex function by \Cref{thm:conjugate-loc-poly-L-M-convex}~(3).
    Thus, $f^\bullet = (\clconv{f})^*|_{\Mdom}$ is an integral M-convex function.

    Furthermore, since $\clconv(f^\bullet) = (\clconv{f})^*$ by \Cref{prop:M-conv:unique} and the above argument,
    we obtain
    \begin{align}
        (f^\bullet)^\bullet = (\clconv(f^\bullet))^*|_{\Ldom} = (\clconv{f})^{**}|_{\Ldom} = (\clconv{f})|_{\Ldom} = f,
    \end{align}
    where the penultimate and last equalities follow from \Cref{lem:conjugate}~(2) and \Cref{prop:L-conv:unique}, respectively.
\end{proof}

Let $\Lfunc{\Z}{\R}$ and $\Lfunc{\Z}{\Z}$ be the classes of L-convex functions and of integral L-convex functions,
respectively;
the corresponding classes of M-convex functions are denoted by $\Mfunc{\Z}{\R}$ and $\Mfunc{\Z}{\Z}$.
By combining the above arguments with those in \Cref{subsec:locally-polyhedral},
we obtain the following commutative diagram.
\begin{align}\label{eq:diagram-LM-func}
\resizebox{\textwidth}{!}{%
\begin{tikzcd}[row sep=4em, column sep=1em, ampersand replacement=\&, crossing over clearance=0.8ex]
\& \& |[alias=qLRR]| \qLfunc{\R}{\R} \& \& \& \& |[alias=qMRR]| \qMfunc{\R}{\R} \& \& \\
\& \& |[alias=LRR]| \Lfunc{\R}{\R} \& \& \& \& |[alias=MRR]| \Mfunc{\R}{\R} \& \&  \\
|[alias=qLZRR]| \qLfunc{\Z|\R}{\R} \& \& \& |[alias=qLRZR]| \qLfunc{\R}{\Z|\R} \& \& |[alias=qMRZR]| \qMfunc{\R}{\Z|\R} \& \& \& |[alias=qMZRR]| \qMfunc{\Z|\R}{\R} \\[-4.5em] 
\& \& \& \rotatebox{90}{$=$} \& \& \rotatebox{90}{$=$} \& \& \& \\[-4.5em] 
\& \& \& |[alias=LRZR]| \Lfunc{\R}{\Z|\R} \& \& |[alias=MRZR]| \Mfunc{\R}{\Z|\R} \& \& \&\\
|[alias=LZRR]| \Lfunc{\Z|\R}{\R} \& \& \& \& \& \& \& \& |[alias=MZRR]| \Mfunc{\Z|\R}{\R} \\
\& \& |[alias=qLZRZR]| \qLfunc{\Z|\R}{\Z|\R} \& \& \& \& |[alias=qMZRZR]| \qMfunc{\Z|\R}{\Z|\R} \& \& \\[-4.5em] 
|[alias=LZR]| \Lfunc{\Z}{\R} \& \& \rotatebox{90}{$=$} \& \& \& \& \rotatebox{90}{$=$} \& \& |[alias=MZR]| \Mfunc{\Z}{\R}\\[-4.5em] 
\& \& |[alias=LZRZR]| \Lfunc{\Z|\R}{\Z|\R} \& \& \& \& |[alias=MZRZR]| \Mfunc{\Z|\R}{\Z|\R} \& \& \\
\& \& |[alias=LZZ]| \Lfunc{\Z}{\Z} \& \& \& \& |[alias=MZZ]| \Mfunc{\Z}{\Z} \& \&
\arrow[from=LRR, to=qLRR, hook]
\arrow[from=MRR, to=qMRR, hook]
\arrow[from=LZRR, to=qLZRR, hook]
\arrow[from=MZRR, to=qMZRR, hook]
\arrow[from=qLZRR, to=qLRR, hook]
\arrow[from=qMZRR, to=qMRR, hook]
\arrow[from=qLZRZR, to=LRZR, hook] 
\arrow[from=qLZRZR, to=LZRR, hook] 
\arrow[from=qLRZR, to=LRR, hook] 
\arrow[from=LZRR, to=LRR, hook] 
\arrow[from=qMZRZR, to=MZRR, hook] 
\arrow[from=qMZRZR, to=MRZR, hook] 
\arrow[from=MZRR, to=MRR, hook] 
\arrow[from=qMRZR, to=MRR, hook] 
\arrow[from=LZZ, to=LZRZR, shift right=0.5ex, "{\substack{\clconv(\cdot) \\[-0.2ex] \rotatebox{90}{$=$} \\[-0.2ex] \conv(\cdot) \\[-0.2ex] \rotatebox{90}{$=$} \\[-0.2ex] \ext{(\cdot)}}}"{right}]
\arrow[from=LZRZR, to=LZZ, shift right=0.5ex, "(\cdot)|_{\Ldom}"{left}]
\arrow[from=LZR, to=LZRR, shift left=0.5ex, "{\substack{\clconv(\cdot) \\[-0.2ex] \rotatebox{90}{$=$} \\[-0.2ex] \conv(\cdot) \\[-0.2ex] \rotatebox{90}{$=$} \\[-0.2ex] \ext{(\cdot)}}}"{left}]
\arrow[from=LZRR, to=LZR, shift left=0.5ex, "(\cdot)|_{\Ldom}"{right}]
\arrow[from=MZZ, to=MZRZR, shift left=0.5ex, "{\substack{\clconv(\cdot) \\[-0.2ex] \rotatebox{90}{$=$} \\[-0.2ex] \conv(\cdot)}}"{left}]
\arrow[from=MZRZR, to=MZZ, shift left=0.5ex, "(\cdot)|_{\Mdom}"{right}]
\arrow[from=MZR, to=MZRR, shift right=0.5ex, "{\substack{\clconv(\cdot) \\[-0.2ex] \rotatebox{90}{$=$} \\[-0.2ex] \conv(\cdot)}}"{right}]
\arrow[from=MZRR, to=MZR, shift right=0.5ex, "(\cdot)|_{\Mdom}"{left}]
\arrow[from=LZZ, to=LZR, hook]
\arrow[from=MZZ, to=MZR, hook]
\arrow[from=LRR, to=MRR, leftrightarrow, "(\cdot)^\ast"{above}]
\arrow[from=LZRZR, to=MZRZR, leftrightarrow, "(\cdot)^\ast"{above}]
\arrow[from=LZZ, to=MZZ, leftrightarrow, "(\cdot)^\bullet"{above}]
\arrow[from=LRZR, to=MZRR, leftrightarrow, "(\cdot)^\ast"{pos=0.5, sloped, above}, crossing over]
\arrow[from=LZRR, to=MRZR, leftrightarrow, "(\cdot)^\ast"{pos=0.5, sloped, above}, crossing over]
\end{tikzcd}%
} 
\end{align}

\subsection{Proof of \texorpdfstring{\Cref{prop:tangent-cone}}{Proposition~6.8}}\label{subsec:proof}

This subsection is devoted to showing \Cref{prop:tangent-cone}.
Before the proof,
we prepare several lemmas on spherical submodular functions,
which are of independent interest.

The following is an easy consequence of the definition of the integrality of a spherical submodular function, or immediately follows from \Cref{cor:inpr}~(2) and \Cref{prop:integral-subm}~(1).
\begin{lemma}\label{lem:rho-p}
  For an integral spherical submodular function $\rho$ and $p \in \Mdom$,
  the function $\rho - p$ is also integral spherical submodular.
\end{lemma}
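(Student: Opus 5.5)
We must show that for an integral spherical submodular function $\rho$ on $\sk{\spCC^\star}$ and $p \in \Mdom$, the function $\rho - p$, namely $x \mapsto \rho(x) - \inpr{p}{x}$ on $\sk{\spCC^\star}$, is again integral spherical submodular. The plan is to verify spherical submodularity and integrality separately, using the characterizations already established.

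\emph{Spherical submodularity.} By \Cref{prop:spherical-submodular-cone}, $\rho$ is the restriction of the L-sublinear function $\sigma \coloneqq \phext{\rho}$. Then $\sigma - p$ is a proper polyhedral sublinear function whose restriction to $\sk{\spCC^\star}$ is $\rho - p$. For every $q \in V^*$ we have $\argmin((\sigma - p) - q) = \argmin(\sigma - (p+q))$, so $\fan(\sigma - p) = \fan(\sigma)$. This family consists of L-convex cones, hence $\sigma - p$ is L-sublinear. Consequently $\rho - p$ is spherical submodular.

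\emph{Integrality.} There are two easy routes. The first works directly from the definition: $\sigma$ is integral, so $\sigma(x) = \max_{q \in B} \inpr{q}{x}$ on $\dom{\sigma}$ for some finite $B \subseteq \Mdom$. Then $\sigma(x) - \inpr{p}{x} = \max_{q \in B - p}\inpr{q}{x}$, and $B - p \subseteq \Mdom$ because $\Mdom$ is a lattice. The domain is unchanged, so $\sigma - p$ is integral. The second route uses \Cref{prop:integral-subm}~(1): $\rho(x) \in \Z/a(x)$ for each $x \in \dom{\rho}$, and $\inpr{p}{x} \in \Z/a(x)$ by \Cref{cor:inpr}~(1), so the difference also lies in $\Z/a(x)$. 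I would present the first route, with the second as a one-line remark.

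No step is a genuine obstacle. The only point needing care is that subtracting a linear function preserves $\fan(\cdot)$ and $\dom$, so the L-sublinear structure is untouched.
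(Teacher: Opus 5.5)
Your proof is correct and matches the paper. The paper only remarks that the lemma follows either directly from the definition of integrality, which is your first route ($\fan(\sigma-p)=\fan(\sigma)$, with the finite set $B$ replaced by $B-p\subseteq\Mdom$), or from \Cref{cor:inpr} together with \Cref{prop:integral-subm}~(1), which is your second route. Leading with the first route is the better choice: it works verbatim for any spherical expression $\CC^\circ$, whereas the second is specific to $\spCC^\star$.
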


Let $\CC^\circ$ be a spherical expression of $\spCC$.
For a simplex $A \in \CC^\circ$ and a function $\funcdoms{\rho}{\sk{\CC^\circ}}{\ER}$,
we define the \emph{contraction $\funcdoms{\rho / A}{\sk{\CC^\circ / A}}{\ER}$ of $\rho$ along $A$} by
\begin{align}\label{eq:def:rho/A}
    (\rho / A)(x + \spn{A}) \coloneqq \rho(x) \qquad \left(x \in \sk{\Lk{\CC^\circ}{A}}\right),
\end{align}
where we recall that $\sk{\CC^\circ / A} = \Set{ x + \spn{A} }{ x \in \sk{\Lk{\CC^\circ}{A}} }$.

\begin{lemma}\label{lem:subm:contraction}
    Let $\funcdoms{\rho}{\sk{\CC^\circ}}{\ER}$ be a spherical submodular function and $A \in \CC^\circ$.
    If $\rho(x) = 0$ for $x \in \sk{A}$,
    then
    the contraction $\rho / A$ is spherical submodular on $\sk{\CC^\circ / A}$.
    In addition, if $\rho$ is $\RS$-integral, then $\rho / A$ is $\RS|_{-A}$-integral.
\end{lemma}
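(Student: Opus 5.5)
The plan is to realize $\rho/A$ as the restriction of a directional derivative of the L-sublinear extension of $\rho$, and then to check that this derivative is L-sublinear of type $\RS|_{-A}$ (and integral when $\rho$ is).

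Let $\sigma \coloneqq \phext{\rho}$. By \Cref{prop:spherical-submodular-cone}, $\sigma$ is the unique L-sublinear function whose restriction to $\sk{\CC^\circ}$ is $\rho$. Fix a point $a \in \relint(\cone{A})$ and put $\tau \coloneqq \sigma'(a;\cdot)$.

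First, $\sigma$ vanishes on the closed cone $\cone(\cl A)$, because $\rho = 0$ on $\sk{A}$ and $\sigma = \phext{\rho}$. Since $a$ lies in the relative interior of $\cone{A}$, for each $v \in \spn{A}$ both $a+tv$ and $a-tv$ remain in $\cone{A}$ for small $t>0$. Hence $\tau(\pm v) = 0$. Sublinearity of $\tau$ then gives $\tau(d+v) = \tau(d)$ for all $d$, so $\tau$ descends to a sublinear function $\bar\tau$ on $V/\spn{A}$.

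Next, let $x \in \sk{\Lk{\CC^\circ}{A}}$. Then $A$ and $\{x\}$ are joinable; let $B$ be their join. The points $a+tx$ with $t>0$ lie in $\cone{B}$, on which $\sigma$ is linear. Therefore $\tau(x) = \sigma(x) = \rho(x)$, and so $\bar\tau$ restricted to $\sk{\CC^\circ/A}$ is exactly $\rho/A$.

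The next step is to show that $\bar\tau$ is L-sublinear of type $\RS|_{-A}$. By \Cref{lem:quasi-polyhedral-convex-function}~(2) (or directly from polyhedrality), $\fan(\tau) = \Set{\tcone{K}{a}}{K \in \fan(\sigma),\ a \in K}$. Tangent cones of L-convex cones are L-convex, as in the proof of \Cref{lem:L-convex-tcone}. Each such cone contains the subspace $\spn{A}$. Write it as $\Set{x}{\inpr{\alpha}{x} \le 0\ (\alpha \in \RS')}$; every root in $\RS'$ is then nonpositive on $\spn{A}$ and hence vanishes on it. By \Cref{lem:span}~(1), these roots lie in $\RS|_{-A}$. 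So the members of $\fan(\bar\tau)$ are L-convex cones of type $\RS|_{-A}$ in $V/\spn{A}$, and $\bar\tau$ is L-sublinear. Applying \Cref{prop:spherical-submodular-cone} to the spherical expression $\CC^\circ/A$ of $\spCC(\RS|_{-A})$, the function $\rho/A$ is spherical submodular, with $\phext{(\rho/A)} = \bar\tau$.

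For integrality, write $\sigma(x) = \max\Set{\inpr{p}{x}}{p \in B}$ on $\dom\sigma$ with $B \subseteq \Mdom$ finite. Then $\tau(d) = \max\Set{\inpr{p}{d}}{p \in B,\ \inpr{p}{a} = \sigma(a) = 0}$ on its domain. Every active $p$ satisfies $\inpr{p}{v} \le \sigma(v) = 0$ on $\cone{A}$ and $\inpr{p}{a} = 0$ at a relative interior point, so $p$ vanishes on $\spn{A}$. Thus $p \in \Mdom \cap (\spn{A})^\perp$.

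The one point needing care is the identity $\Mdom(\RS) \cap \spn(\RS|_{-A}) = \Mdom(\RS|_{-A})$. This holds because $\simp|_{-A}$ is a simple system of $\RS|_{-A}$ (\Cref{lem:contraction:root-system}) and is a subset of the basis $\simp$ that generates $\Mdom$; an integer combination of $\simp$ lying in the span of $\simp|_{-A}$ uses only elements of $\simp|_{-A}$. Consequently $\bar\tau$ is an integral L-sublinear function of type $\RS|_{-A}$, and $\rho/A$ is $\RS|_{-A}$-integral.
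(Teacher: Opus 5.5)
Your proof is correct, and it takes a genuinely different route from the paper's.

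\textbf{The paper's route.} The paper never forms a directional derivative. It represents each class in $V/\spn{A}$ by its unique point $x_A$ in $\geo{\Lk{\spCC}{\cone{A}}}$. It then notes that $\phext{\rho}(x)=\phext{\rho}(x_A)$ on the convex star $\geo{\St{\spCC}{\cone{A}}}$, because $\rho$ vanishes on $\sk{A}$. This lets it transfer subadditivity of $\phext{\rho}$ directly to $\phext{(\rho/A)}$ and conclude with \Cref{prop:spherical-submodular-cone}. For integrality it passes to $\spCC^\star$ and combines two earlier results: the vertex-value criterion of \Cref{prop:integral-subm}~(1), and the mark rescaling of standard points in \Cref{lem:standard}~(1).

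\textbf{Your route.} You identify $\phext{(\rho/A)}$ with the derivative $\sigma'(a;\cdot)$ of $\sigma=\phext{\rho}$ at a relative-interior point $a$ of $\cone{A}$, pushed down to $V/\spn{A}$. You read off L-sublinearity from the tangent cones that form its fan. Integrality then comes from the max-representation: the active lattice points vanish on $\spn{A}$, and $\Mdom(\RS)\cap\spn(\simp|_{-A})=\Mdom(\RS|_{-A})$.

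\textbf{What each buys.} Your argument is mark-free and works verbatim for any spherical expression. It also exhibits $\B{\rho/A}$ as $\partial\sigma(a)$, the face of $\B{\rho}$ exposed by $a$. The paper's argument is more elementary for the first assertion and reuses the vertex criterion it had already established.

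\textbf{One step needs tightening.} \Cref{lem:span}~(1) only places the roots vanishing on $\spn{A}$ in $\spn(\RS|_{-A})$. To conclude that they lie in $\RS|_{-A}$, you need the standard fact that the roots in the span of a subset of a simple system form the corresponding parabolic subsystem, i.e., $\RS\cap\spn(\simp|_{-A})=\RS(\simp|_{-A})$. Either cite that fact, or avoid it as the paper effectively does. To avoid it, check that $\bar\tau$ agrees with $\phext{(\rho/A)}$ cell by cell, using linearity of $\sigma$ on the join of $\cone{A}$ with each link cell. Then $\phext{(\rho/A)}$ is convex and \Cref{prop:spherical-submodular-cone} applies.
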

\begin{proof}
    Recall that $\cone{A}$ is the simplex of $\spCC$ corresponding to $A$
    and that $\geo{\Lk{\spCC}{\cone{A}}} = \phext{\sk{\Lk{\CC^\circ}{A}}}$
    and $\geo{\St{\spCC}{\cone{A}}} = \phext{\sk{\St{\CC^\circ}{A}}}$.
    Note that any point in $V / \spn{A} = V / \spn(\cone{A})$
    is uniquely representable as $x + \spn{A}$ for some $x \in \geo{\Lk{\spCC}{\cone{A}}}$
    and that the midpoint $(x + y)/2$ of $x,y \in \geo{\Lk{\spCC}{\cone{A}}}$ belongs to $\geo{\St{\spCC}{\cone{A}}}$,
    since $\geo{\St{\spCC}{\cone{A}}}$ is convex by \Cref{lem:star_complex}.
    For $x \in \geo{\St{\spCC}{\cone{A}}}$,
    let $x_A$ denote the unique point in $\geo{\Lk{\spCC}{\cone{A}}}$ such that $x + \spn{A} = x_A + \spn{A}$.
    This $x_A$ is particularly representable as $x_A = \sum_{v \in \sk{\cell{A}{x}} \setminus \sk{A}} \mu_x(v) v$,
    where we recall that $\cell{A}{x}$ denotes the unique simplex of $\CC^\circ$ such that $\cone{\cell{A}{x}}$ contains $x$.
    By the assumption that $\rho(v) = 0$ for $v \in \sk{A}$,
    we have
    \begin{align}\label{eq:x=xA}
        \phext{\rho}(x) = \sum_{v \in \sk{A_x}} \mu_x(v) \rho(v) = \sum_{v \in \sk{A_x} \setminus \sk{A}} \mu_x(v) \rho(v) = \phext{\rho}(x_A) \qquad (x \in \geo{\St{\spCC}{\cone{A}}}).
    \end{align}

    We first show the former assertion.
    By the definition of $\rho / A$,
    we have $\phext{\rho / A}(x + \spn{A}) = \phext{\rho}(x)$ for $x \in \geo{\Lk{\spCC}{\cone{A}}}$.
    Furthermore,
    since $\rho$ is spherical submodular,
    $\phext{\rho}$ is (polyhedral) sublinear on $\geo{\spCC} = V$,
    and hence, is convex on $\geo{\St{\spCC}{\cone{A}}}$.
    Therefore,
    for any $x,y \in \geo{\Lk{\spCC}{\cone{A}}}$,
    we obtain
    \begin{align}
        \phext{\rho / A}(x + \spn{A}) + \phext{\rho / A}(y + \spn{A}) &= \phext{\rho}(x) + \phext{\rho}(y)\\
    &\geq \phext{\rho}(x + y)\\
    &= \phext{\rho}((x+y)_A)\\
    &= \phext{\rho / A}((x+y)_A + \spn{A})\\
    &= \phext{\rho / A}((x+y) + \spn{A}),\label{eq:rho/A}
    \end{align}
    where the second equality follows from~\eqref{eq:x=xA}.
    Since $\phext{\rho / A}$ is a positively homogeneous function,
    the inequality~\eqref{eq:rho/A} implies that $\phext{\rho / A}$ is convex on $V / \spn{A}$.
    Thus, $\rho / A$ is a spherical submodular function by \Cref{prop:spherical-submodular-cone}.

    For the latter assertion,
    we may assume that $\rho$ is a $\RS$-integral spherical submodular function on $\spCC^\star$.
    Let $a$ and $a_{{\mathrm{Lk}}}$ denote the mark functions of $\spCC^\star$ and $\Lk{\spCC^\star}{A}$,
    respectively.
    Recall that $a_{\mathrm{Lk}}$ can be viewed as the mark function of $\spCC^\star(\RS|_{-A})$
    that maps from $x + \spn{A}$ to $a_{\mathrm{Lk}}(x)$.
    By \Cref{lem:standard}~(1),
    for $x \in \sk{\Lk{\spCC^\star}{A}}$,
    we have $(a(x)/a_{\mathrm{Lk}}(x))x + \spn{A} \in \sk{\spCC^\star(\RS|_{-A})}$.
    Hence,
    we obtain
    \begin{align}
        \phext{\rho / A}((a(x)/a_{\mathrm{Lk}}(x))x + \spn{A}) = (a(x)/a_{\mathrm{Lk}}(x))\phext{\rho / A}(x + \spn{A})
        = (a(x)/a_{\mathrm{Lk}}(x))\rho(x)\notag
    \end{align}
    for $x \in \sk{\Lk{\spCC^\star}{A}}$.
    Since $\rho$ is a $\RS$-integral spherical submodular function on $\sk{\spCC^\star}$, we have $\rho(x) \in \Z/a(x) \cup \{ +\infty \}$ for any  $x \in \sk{\Lk{\spCC^\star}{A}} \subseteq \sk{\spCC^\star}$
    by \Cref{prop:integral-subm}~(1).
    Thus,
    \begin{align}
        \phext{\rho / A}((a(x)/a_{\mathrm{Lk}}(x))x + \spn{A}) \in \Z / a_{\mathrm{Lk}}(x) \cup \{ +\infty \}
    \end{align}
    holds
    for $x \in \sk{\Lk{\spCC^\star}{A}}$.
    This implies that $\rho / A$ is a $\RS|_{-A}$-integral spherical submodular function by \Cref{prop:integral-subm}~(1) again.
\end{proof}

For a vertex subset $S \subseteq \sk{\CC^\circ}$,
we define $\spCC[S]$ as the subcomplex of $\spCC$ induced by the vertex subset of $\spCC$ corresponding to $S$,
i.e., $\spCC[S] \coloneqq \spCC[\Set{ \cone{x} }{ x \in S }]$.
Note that $\spCC[\emptyset] = \{0\}$.
We can easily see that $\geo{\spCC[S]} = \phext{S}$.
For a function $\funcdoms{\rho}{\sk{\CC^\circ}}{\ER}$ and $p \in V^*$,
we define $\CC^\circ[\rho = p]$ as the subcomplex of $\CC^\circ$ induced by the vertex subset $\Set{ x \in \sk{\CC^\circ} }{ \rho(x) = p(x) }$
and $\spCC[\rho = p]$ as the subcomplex corresponding to $\CC^\circ[\rho = p]$,
i.e., $\spCC[\rho = p] \coloneqq \spCC[\Set{ x \in \sk{\CC^\circ} }{ \rho(x) = p(x) }]$.
We here recall that, for a polyhedron $P \subseteq V^*$ and $p \in V^*$,
let $\face{P}{p}$ denote the unique face of $P$ satisfying $p \in \relint{\face{P}{p}}$.
\begin{lemma}\label{lem:minimal-face}
    Let $\funcdoms{\rho}{\sk{\CC^\circ}}{\ER}$ be a spherical submodular function
    and $p \in \B{\rho}$.
    \begin{enumerate}[label={\textup{(\arabic*)}}]
        \item $\spCC[\rho = p]$ is a convex subcomplex of $\spCC$.
        In particular, $\spCC[\rho = p]$ and $\CC^\circ[\rho = p]$ are chamber complexes.
    \end{enumerate}
    In addition, let $A$ be a maximal simplex in $\CC^\circ[\rho = p]$.
    \begin{enumerate}[label={\textup{(\arabic*)}}, resume]
        \item $\aff{\face{\B{\rho}}{p}} = p + \spn{\RS|_{-A}}$.
        In particular, $\dim{\face{\B{\rho}}{p}} = n - \card{\sk{A}}$.
        \item $\face{\B{\rho}}{p} = p + \B{(\rho - p)/A}$ and $((\rho-p)/A)(x) > 0$ for all $x \in \sk{\CC^\circ / A}$.
    \end{enumerate}
\end{lemma}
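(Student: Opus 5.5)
The plan is to reduce everything to the L-sublinear function $\sigma \coloneqq \phext{\rho} = \cone{\rho}$ of \Cref{prop:spherical-submodular-cone}. The goal is to identify $\spCC[\rho = p]$ with the geometric data of $\argmin(\sigma - p)$, and then use the polarity between tangent cones of $\B{\rho}$ and members of $\fan(\sigma)$ (\Cref{lem:fan-support}).

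\emph{Step (1).} Since $p \in \B{\rho} = \B{\sigma}$, we have $\sigma - p \geq 0$, with $\min(\sigma - p) = 0$ attained at $0$. So $K \coloneqq \argmin(\sigma - p) = \Set{x}{\sigma(x) = \inpr{p}{x}}$ is a member of $\fan(\sigma)$, hence an L-convex cone. I will show $K = \geo{\spCC[\rho = p]}$.

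For $x \in V$, we have $\sigma(x) - \inpr{p}{x} = \sum_{v \in \sk{A_x}} \mu_x(v)(\rho(v) - \inpr{p}{v})$, a positive combination of nonnegative terms. It therefore vanishes if and only if every vertex $v$ of $A_x$ satisfies $\rho(v) = \inpr{p}{v}$, i.e. if and only if $x \in \phext{\Set{v}{\rho(v) = p(v)}}$. Then \Cref{lem:chara-L-convex-cone} and \Cref{lem:convex_subcomplex} give convexity of the subcomplex, and \Cref{lem:chamber-complex} gives the chamber-complex property (transported to $\CC^\circ$ by the isomorphism).

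\emph{Step (2).} Let $A$ be maximal in $\CC^\circ[\rho = p]$. By (1) and \Cref{lem:chamber-complex}, $\cone{A}$ is a maximal simplex of the convex subcomplex whose support is $K$, so $\spn{K} = \spn{A}$. Indeed, a full-dimensional simplex spans, and $\dim K = \card{\sk{A}}$ because all maximal simplices have the same rank.

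By \Cref{lem:fan-support}, $\tcone{\B{\rho}}{p} = K^*$. The affine hull of $\face{\B{\rho}}{p}$ is $p$ plus the lineality space of $\tcone{\B{\rho}}{p}$, which is $(\spn{K})^\perp = (\spn{A})^\perp$. By \Cref{lem:span}~(1) this equals $\spn{\RS|_{-A}}$, since $((\spn{\RS|_{-A}})^\perp)^\perp = \spn{\RS|_{-A}}$. The dimension formula follows: $n - \card{\sk{A}}$.

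\emph{Step (3).} Set $\rho' \coloneqq \rho - p$. It is spherical submodular, vanishes on $\sk{A}$, and is $\geq 0$. By \Cref{lem:subm:contraction}, $\rho'/A$ is spherical submodular on $\CC^\circ/A$. Positivity holds for the following reason. If $(\rho'/A)(x + \spn{A}) = 0$ for some $x \in \sk{\Lk{\CC^\circ}{A}}$, then $x$ is joinable with $A$ and $\rho(x) = p(x)$, so the join of $x$ and $A$ lies in $\CC^\circ[\rho = p]$, contradicting maximality of $A$.

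For the face identity, $\face{\B{\rho}}{p} - p$ lies in $(\spn{A})^\perp = (V/\spn{A})^*$, which I identify with $\spn{\RS|_{-A}}$. A point $q$ in this space lies in $\B{\rho'/A}$ if and only if $\inpr{q}{x} \leq \rho'(x)$ for all link vertices $x$.

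The main obstacle is showing that $\face{\B{\rho}}{p} - p$ equals $\B{\rho'/A}$, i.e. that near $p$ only the inequalities from the star of $A$ matter, and that these can be extended globally. For the inclusion $\subseteq$: points $q$ of the face satisfy $\inpr{q - p}{v} = 0$ on $\sk{A}$ and $\leq \rho'$ elsewhere, so $q - p \in \B{\rho'/A}$.

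For $\supseteq$, I would use sublinearity of $\phext{\rho'}$ and the relation $\phext{\rho'}(x) = \phext{\rho'}(x_A)$ on the star, from \eqref{eq:x=xA}. Take $q$ with $q - p \in \B{\rho'/A}$; it satisfies $\inpr{q - p}{y} \leq \phext{\rho'}(y)$ for all $y$ in $\geo{\St{\spCC}{\cone{A}}}$. The star's realization is the full-dimensional convex cone $\tcone{\cdot}{}$ generated around $\cone{A}$, so this gives the inequality locally around $\spn{A}$-directions.

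The remaining task is to pass from the star to all of $V$. Here I would use the fact that $\phext{\rho'}$ is convex and $\geo{\St{\spCC}{\cone{A}}}$ is a neighborhood of $\relint \cone{A}$. Choose $a \in \relint\cone{A}$. For any $y$, the point $ta + y$ lies in the star for large $t$. Then
$$\inpr{q-p}{y} = \inpr{q-p}{ta + y} \leq \phext{\rho'}(ta + y) \leq t\phext{\rho'}(a) + \phext{\rho'}(y) = \phext{\rho'}(y),$$
which proves $q \in \B{\rho}$. Equality on $A$ then gives $q$ in the face, by dimension or relative-interior considerations from (2).
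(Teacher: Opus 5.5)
Your proof is correct, and it takes a genuinely different route in the key step of (3). Part (1) is the paper's argument. Part (2) differs only mildly: the paper reads $\aff{\face{\B{\rho}}{p}}$ off the constraints of $\B{\rho}$ that are tight at $p$, whereas you go through $\tcone{\B{\rho}}{p} = K^*$ (\Cref{lem:fan-support}) and identify the direction space of the face with the lineality space $(\spn{K})^\perp$ of that cone. Both versions then finish with $\spn{K} = \spn{A}$ and \Cref{lem:span}~(1).

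The real difference is the inclusion $p + \B{(\rho-p)/A} \subseteq \face{\B{\rho}}{p}$ in (3). The paper argues by contradiction. It moves from $p$ along a direction $q'$ until it reaches a proper face of $\face{\B{\rho}}{p}$. It then shows, using that the rank of the chamber complex $\CC^\circ[\rho = p + \mu q']$ exceeds that of $\CC^\circ[\rho = p]$, that a new tight constraint indexed by a vertex of $\Lk{\CC^\circ}{A}$ appears there. This contradicts the strict positivity of $\rho - p$ on the link. You instead argue directly in two steps. First, the link inequalities together with $q - p \perp \spn{A}$ give $\inpr{q-p}{y} \leq \phext{\rho'}(y)$ on $\geo{\St{\spCC}{\cone{A}}}$, by linearity on cells. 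Second, you globalize: $ta + y$ lies in the star for large $t$, and $\phext{\rho'}$ is subadditive with $\phext{\rho'}(a) = 0$.

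Your route is a clean local-to-global property of sublinear functions. It avoids the face-lattice and rank comparison entirely, and positivity of $(\rho-p)/A$ becomes necessary only for the separate second assertion, not for the set identity. The paper's route, in exchange, stays inside the polyhedral combinatorics of $\B{\rho}$ and reuses (1) and (2) directly.

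Two steps you only gesture at should be written out.
\begin{enumerate}
\item The star $\geo{\St{\spCC}{\cone{A}}}$ contains a neighborhood of each $a \in \cone{A}$. Every cell meeting the open set $\bigcap_{\alpha : \sigma_\alpha(\cone{A}) \neq 0} H_{\alpha,0}^{\sigma_\alpha(\cone{A})} \ni a$ has sign sequence $\succeq \sigma(\cone{A})$, hence lies in the star by \Cref{lem:sign-sequence}~(1).
\item The final step is $q \in \B{\rho} \cap (p + \spn{\RS|_{-A}}) = \B{\rho} \cap \aff{\face{\B{\rho}}{p}} = \face{\B{\rho}}{p}$. Equivalently, every vertex tight at $p$ lies in $K \subseteq \spn{A}$, so $q$ is tight there as well.
\end{enumerate}
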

\begin{proof}
    (1).
    The latter statement immediately follows from the former.
    Indeed, if $\spCC[\rho = p]$ is a convex subcomplex of $\spCC$,
    then it is a chamber complex by \Cref{lem:chamber-complex}.
    In addition,
    $\CC^\circ[\rho = p]$ is also a chamber complex,
    since $\CC^\circ[\rho = p]$ is isomorphic to $\spCC[\rho = p]$.

    We show the former statement.
    By \Cref{lem:convex_subcomplex},
    it suffices to see that $\geo{\spCC[\rho = p]}$ is convex.
    Since $p \in \B{\rho}$,
    we have $\inpr{p}{x} \leq \rho(x)$ for all $x \in \sk{\CC^\circ}$.
    Thus, the set $\Set{ x \in \sk{\CC^\circ} }{ \rho(x) = \inpr{p}{x} }$ coincides with $\argmin(\rho - p)$.
    Since $\rho$ is spherical submodular, $\phext{\rho}$ is convex by \Cref{prop:spherical-submodular-cone},
    implying that $\argmin(\phext{\rho} - p)$ forms a convex set.
    Thus, it follows from $\geo{\spCC[\rho = p]} = \phext{\argmin(\rho - p)} = \argmin(\phext{\rho} - p)$
    that
    $\geo{\spCC[\rho = p]}$ is convex.
    This completes the proof of the assertion~(1).

    (2).
    The latter statement immediately follows from the former,
    since $\dim{\face{\B{\rho}}{p}} = \dim{\aff{\face{\B{\rho}}{p}}} = \dim{\spn{\RS|_{-A}}} = n - \card{\sk{A}}$.
    In the following, we show the former statement.

        Since $\B{\rho}$ is the polyhedron of the form
        \begin{align}
            \B{\rho} = \Set{ q \in V^* }{ \inpr{q}{x} \leq \rho(x) \ (\forall x \in \sk{\CC^\circ}) }
        \end{align}
        and $p \in \B{\rho}$ satisfies $\inpr{p}{x} = \rho(x)$ for $x \in \sk{\CC^\circ[\rho = p]}$
        and $\inpr{p}{x} < \rho(x)$ for $x \in \sk{\CC^\circ} \setminus \sk{\CC^\circ[\rho = p]}$,
        we have
        \begin{align}\label{eq:Pp}
            \face{\B{\rho}}{p} &= \Set{ q \in \B{\rho} }{ \inpr{q}{x} = \rho(x) \ (\forall x \in \sk{\CC^\circ[\rho = p]}) }
        \end{align}
        and
        \begin{align}
            \aff{\face{\B{\rho}}{p}} &= p + \Set{ q \in V^* }{ \inpr{q}{x} = 0 \ (\forall x \in \sk{\CC^\circ[\rho = p]}) }\\
            &= p + (\spn{\sk{\CC^\circ[\rho = p]}})^\perp\\
            &= p + (\spn{\geo{\spCC[\rho = p]}})^\perp.\label{eq:affPp}
        \end{align}
        Moreover,
        since $p \in \B{\rho}$, $\spCC[\rho = p]$ and $\CC^\circ[\rho = p]$ are chamber complexes by the assertion~(1).
        Since $A$ is a maximal simplex of $\CC^\circ[\rho = p]$,
        the simplex $\cone{A}$ of $\spCC[\rho = p]$ corresponding to $A$
        is also maximal in $\spCC[\rho = p]$.
        Thus, we have $\dim{\cone{A}} = \dim{\geo{\spCC[\rho = p]}}$,
        implying that $\spn(\cone{A}) = \spn{\geo{\spCC[\rho = p]}}$.
        Since $(\spn(\cone{A}))^\perp = \spn{\RS|_{-A}}$ by \Cref{lem:span}~(1),
        we obtain
        $\aff{\face{\B{\rho}}{p}} = p + (\spn{\geo{\spCC[\rho = p]}})^\perp = p + \spn{\RS|_{-A}}$.

        (3).
        Since $\face{\B{\rho}}{p} = \B{\rho} \cap \aff{\face{\B{\rho}}{p}}$,
        we obtain
        \begin{align}
            \face{\B{\rho}}{p} &= \B{\rho} \cap (p + \spn{\RS|_{-A}})\\
            &= \Set*{ q \in V^* }{ q - p \in \spn{\RS|_{-A}} \text{ and } \inpr{q}{x} \leq \rho(x) \ (\forall x \in \sk{\CC^\circ})}\\
            &= p + \Set*{ q \in \spn{\RS|_{-A}} }{\inpr{q}{x} \leq (\rho - p)(x) \ (\forall x \in \sk{\CC^\circ}) }\\
            &\subseteq p + \Set*{ q \in \spn{\RS|_{-A}} }{ \inpr{q}{x} \leq (\rho - p)(x) \ (\forall x \in \sk{\Lk{\CC^\circ}{A}}) }. \label{eq:Pp:inc}
        \end{align}
        Here, the first equality follows from $\aff{\face{\B{\rho}}{p}} = p + \spn{\RS|_{-A}}$ by the assertion~(2).

        On the other hand,
        since $A$ is a simplex in $\CC^\circ[\rho = p]$,
        we have $(\rho - p)(x) = 0$
        for any $x \in \sk{A}$.
        Thus, $(\rho-p) / A$ is a spherical submodular function on $\CC^\circ / A$ by \Cref{lem:subm:contraction},
        which allows us to consider the base polyhedron $\B{(\rho - p) / A}$.
        Since $\CC^\circ / A$ is a spherical expression of $\spCC(\RS|_{-A})$,
        $\B{(\rho - p) / A}$ lies in $\spn{\RS|_{-A}}$.
        More precisely, $\B{(\rho - p) / A}$ is representable as
        \begin{align}\label{eq:rho-p/A}
            \B{(\rho - p) / A} = \Set*{ q \in \spn{\RS|_{-A}} }{ \inpr{q}{x} \leq (\rho-p)(x) \ (\forall x \in \sk{\Lk{\CC^\circ}{A}}) },
        \end{align}
        since $((\rho -p) / A)(x + \spn{A}) = (\rho -p)(x)$ for $x \in \sk{\Lk{\CC^\circ}{A}}$ by the definition of $(\rho-p)/A$ (see~\eqref{eq:def:rho/A})
        and $\inpr{q}{x + \spn{A}} = \inpr{q}{x}$ for $q \in \spn{\RS|_{-A}} = (\spn(\cone{A}))^\perp = (\spn{A})^\perp$ by \Cref{lem:span}~(1).

        By~\eqref{eq:Pp:inc} and~\eqref{eq:rho-p/A},
        it suffices to show that the inclusion~\eqref{eq:Pp:inc} can be replaced with the identity.
        To this end, we show the following claim.
    \begin{claim}\label{cl:proper-face}
        \begin{enumerate}[label={\textup{(\arabic*)}}]
        \item If $q$ belongs to a proper face of $\face{\B{\rho}}{p}$,
        i.e., $\face{\B{\rho}}{q} \subseteq \face{\B{\rho}}{p}$ and $\dim{\face{\B{\rho}}{q}} < \dim{\face{\B{\rho}}{p}}$,
        then there exists $x \in \sk{\Lk{\CC^\circ}{A}}$ such that $\rho(x) = q(x)$.
        \item $(\rho - p)(x) > 0$ for all $x \in \sk{\Lk{\CC^\circ}{A}}$.
    \end{enumerate}
    \end{claim}
    \begin{proof}[Proof of \Cref{cl:proper-face}]
        (1).
        Since $q \in \face{\B{\rho}}{p}$,
        we have $\sk{\CC^\circ[\rho = p]} \subseteq \sk{\CC^\circ[\rho = q]}$ (see~\eqref{eq:Pp}),
        that is,
        $\CC^\circ[\rho = p]$ is a subcomplex of $\CC^\circ[\rho = q]$.
        In addition, it follows from $\dim{\face{\B{\rho}}{q}} < \dim{\face{\B{\rho}}{p}}$ and the equation~\eqref{eq:affPp}
        that $\dim{\geo{\spCC[\rho = p]}} < \dim{\geo{\spCC[\rho = q]}}$.
        This implies that the rank of $\CC^\circ[\rho = q]$ is strictly larger than that of $\CC^\circ[\rho = p]$;
        recall that $\CC^\circ[\rho = p]$ (resp. $\CC^\circ[\rho = q]$) is isomorphic to $\spCC[\rho = p]$ (resp. $\spCC[\rho = q]$),
        and that $\CC^\circ[\rho = p]$ and $\CC^\circ[\rho = q]$ are chamber complexes by the assertion~(1).
        Hence, $A$ is a simplex of $\CC^\circ[\rho = q]$,
        but not maximal in $\CC^\circ[\rho = q]$,
        i.e., there is a simplex $A' \in \CC^\circ[\rho = q]$ with $\sk{A'} \supsetneq \sk{A}$.
        Since $A$ is a face of $A'$,
        any vertex $x \in \sk{A'} \setminus \sk{A} \neq \emptyset$ belongs to $\sk{\Lk{\CC^\circ}{A}}$;
        this $x$ satisfies $\rho(x) = q(x)$.

        (2).
        If there is $x \in \sk{\Lk{\CC^\circ}{A}}$ with $\rho(x) = p(x)$,
        then the join of $A$ and $x$ is a simplex of $\CC^\circ[\rho = p]$,
        which contradicts that $A$ is a maximal simplex in $\CC^\circ[\rho = p]$.
    \end{proof}

    Suppose to the contrary that the inclusion~\eqref{eq:Pp:inc} is strict,
    i.e., there is $q' \in \spn{\RS|_{-A}}$ such that $\inpr{q'}{x} \leq (\rho - p)(x)$ for all $x \in \sk{\Lk{\CC^\circ}{A}}$ but $p + q' \notin \face{\B{\rho}}{p}$.
    Let $\mu$ be the maximum real satisfying $p + \mu q' \in \face{\B{\rho}}{p}$;
    since $p$ belongs to a relative interior of $\face{\B{\rho}}{p}$ and $p + q' \in (\aff{\face{\B{\rho}}{p}}) \setminus \face{\B{\rho}}{p}$,
    we have $0 < \mu < 1$.
    For this $\mu$, the point $p + \mu q' \in p +\spn{\RS|_{-A}}$ belongs to a proper face of $\face{\B{\rho}}{p}$.
    Hence, by \Cref{cl:proper-face}~(1),
    there is $x \in \Lk{\CC^\circ}{A}$ such that $\rho(x) = (p + \mu q')(x)$,
    or equivalently, $\mu q'(x) = (\rho - p)(x)$.
    Since $(\rho - p)(x) > 0$ by \Cref{cl:proper-face}~(2) and $0 < \mu < 1$,
    we obtain $q'(x) > (\rho - p)(x)$.
    This contradicts the assumption of $q'$.
\end{proof}

Suppose that $\RS = \RS_1 \oplus \RS_2 \oplus \cdots \oplus \RS_k$,
in which $\RS_1, \RS_2, \dots, \RS_k$ are irreducible components of $\RS$.
Since $\spn{\RS_1}, \spn{\RS_2}, \dots, \spn{\RS_k}$ are orthogonal,
the vector spaces $V^* (= \spn{\RS})$ and $V (= (V^*)^*)$ admit orthogonal decompositions as
\begin{align}
    V^* &= \spn{\RS_1} \oplus \spn{\RS_2} \oplus \cdots \oplus \spn{\RS_k},\\
    V &= V / (\spn{\RS_1})^\perp \oplus V / (\spn{\RS_2})^\perp \oplus \cdots \oplus V / (\spn{\RS_k})^\perp,
\end{align}
where we identify $(\spn{\RS_i})^*$ with $V / (\spn{\RS_i})^\perp$ (see~\Cref{subsec:vectorspace}).
By $x = (x_1, x_2, \dots, x_k) \in V$,
we mean $x = \sum_{i = 1}^k x_i$ with $x_i \in (\spn{\RS_i})^\perp$ for $i \in [k]$;
note that such a decomposition of $x$ is unique.
Similarly, we use the notation $p = (p_1, p_2, \dots, p_k)$ for $p \in V^*$.

Along with the above decomposition, we have
$\spCC = \spCC(\RS_1) \times \spCC(\RS_2) \times \cdots \times \spCC(\RS_k)$ (see also~\cite[Exercise~3.30]{Abramenko2010}).
In particular, $\sk{\spCC} = \sk{\spCC(\RS_1)} \cup \sk{\spCC(\RS_2)} \cup \cdots \cup \sk{\spCC(\RS_k)}$ holds,
in which $\sk{\spCC(\RS_i)}$ and $\sk{\spCC(\RS_j)}$ are disjoint if $i \neq j$.
Therefore,
for the spherical expression $\CC^\circ$ of $\spCC$ generated from $\Set{ x_\ell^\circ \in \ell }{ \ell \in \sk{\spCC} }$,
we have $\CC^\circ = \CC^\circ_1 \times \CC^\circ_2 \times \cdots \times \CC^\circ_k$,
where $\CC^\circ_i$ is the spherical expression of $\spCC(\RS_i)$ generated from $\Set{ x_\ell^\circ }{ \ell \in \sk{\spCC(\RS_i)} } \subseteq \sk{\CC^\circ}$.
We refer to $\CC^\circ_i$ as the \emph{$i$th component of $\CC^\circ$}.
Since the mark of $\spCC(\RS_i)$ coincides with the restriction of that of $\spCC$ to $\sk{\spCC(\RS_i)}$ by the definition,
the $i$th component of $\spCC^\star$
is $\spCC^\star(\RS_i)$.
Let $\CC^\circ$ be a spherical expression of $\spCC = \spCC(\RS_1) \times \spCC(\RS_2) \times \cdots \times \spCC(\RS_k)$ again
and $\rho$ a function on $\CC^\circ$.
For each $i \in [k]$,
we define $\rho_i$ as the restriction of $\rho$ to the vertex set $\sk{\CC^\circ_i}$ of the $i$th component of $\CC^\circ$;
we refer to $\rho_i$ as the \emph{$i$th component of $\rho$}.

\begin{lemma}\label{lem:subm:components}
    Let $\RS = \RS_1 \oplus \RS_2 \oplus \cdots \oplus \RS_k$ be a root system in which $\RS_1, \RS_2, \dots, \RS_k$ are irreducible components of $\RS$.
    Let $\CC^\circ$ be a spherical expression of $\spCC$
    and $\CC^\circ_i$ the $i$th component of $\CC^\circ$ for $i \in [k]$.
    Then, the following hold.
    \begin{enumerate}[label={\textup{(\arabic*)}}]
        \item A function $\funcdoms{\rho}{\sk{\CC^\circ}}{\ER}$ is (integral) spherical submodular if and only if so are all of its components $\funcdoms{\rho_i}{\sk{\CC^\circ_i}}{\ER}$.
        \item For a spherical submodular function $\funcdoms{\rho}{\sk{\CC^\circ}}{\ER}$,
        we have
        \begin{align}
            \B{\rho} &= \B{\rho_1} + \cdots + \B{\rho_k},\\
            \B{\rho} \cap \Mdom &= \left(\B{\rho_1} \cap \Mdom(\RS_1)\right) + \cdots + \left(\B{\rho_k} \cap \Mdom(\RS_k)\right).
        \end{align}
        In particular, $\B{\rho}$ is $\Mdom$-integral if and only if $\B{\rho_i}$ is $\Mdom(\RS_i)$-integral for all $i \in [k]$.
    \end{enumerate}
\end{lemma}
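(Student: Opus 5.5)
The plan is to exploit the orthogonal product structure $\spCC = \spCC(\RS_1)\times\cdots\times\spCC(\RS_k)$ and the corresponding decompositions $V^* = \bigoplus_i \spn{\RS_i}$ and $V=\bigoplus_i V/(\spn{\RS_i})^\perp$. The first step is to show that the positively homogeneous Lov\'{a}sz extension splits additively:
\begin{align}
  \phext{\rho}(x_1,\dots,x_k) = \sum_{i=1}^k \phext{\rho_i}(x_i).
\end{align}
This holds because a cell of the product complex $\CC^\circ$ is a join of cells of the components. The vertex set $\sk{\cell{A}{x}}$ is therefore the disjoint union of the sets $\sk{\cell{A}{x_i}}$ taken in $\CC^\circ_i$, and the conical coefficients $\mu_x$ restrict to the $\mu_{x_i}$.

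For (1), use the equivalence (a)$\Leftrightarrow$(b) of \Cref{prop:spherical-submodular-cone}. A separable sum of functions on orthogonal summands is convex if and only if every summand is convex; for the only-if direction, restrict to the subspace $x_j=0$ for $j\neq i$ and use $\phext{\rho_j}(0)=0$. For the integral version, use \Cref{prop:integral-subm}~(1) applied to $\spCC^\star$: integrality of $\rho$ means $\rho(x)\in\Z/a(x)$ for every $x\in\dom\rho$. The $i$th component of $\spCC^\star$ is $\spCC^\star(\RS_i)$, and its mark function is the restriction of $a$, so this condition holds for $\rho$ exactly when it holds for every $\rho_i$.

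If $\CC^\circ$ is a general spherical expression rather than $\spCC^\star$, I would instead argue directly from the definition via L-sublinear functions. By \Cref{prop:spherical-submodular-cone}, $\cone\rho=\phext\rho=\sum_i\phext{\rho_i}$. The sets $\argmin(\sigma-p)$ of a separable function are products of the corresponding componentwise minimizer sets. A maximal cone of $\fan(\phext\rho)$ is given by some $p\in\Mdom$ if and only if each component cone is given by $p_i\in\Mdom(\RS_i)$, and this uses \eqref{eq:Mdom-sum}. Products of L-convex cones of the $\RS_i$ are L-convex cones of $\RS$.

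For (2), I will identify $\B{\rho}=\bS{\phext\rho}$. Since $\phext\rho$ is a separable sum over orthogonal summands, $p=(p_1,\dots,p_k)$ satisfies $\inpr{p}{x}\le\phext\rho(x)$ for all $x$ if and only if $\inpr{p_i}{x_i}\le\phext{\rho_i}(x_i)$ for each $i$; for one direction, test with $x$ supported on a single component, and for the other, sum. This gives $\B\rho=\B{\rho_1}+\cdots+\B{\rho_k}$, where the sum is direct. The lattice statement then follows from \eqref{eq:Mdom-sum}: $p\in\Mdom$ if and only if every $p_i\in\Mdom(\RS_i)$, since the decomposition of $\Mdom$ into the orthogonal sublattices is direct.

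The final claim uses that the convex hull of a direct (Minkowski) sum of sets lying in orthogonal subspaces equals the sum of their convex hulls. Hence $\conv(\B\rho\cap\Mdom)=\sum_i\conv(\B{\rho_i}\cap\Mdom(\RS_i))$. Comparing this componentwise with $\B\rho=\sum_i\B{\rho_i}$, which is legitimate because the sum is direct, gives the equivalence.

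The main obstacle is the bookkeeping in (1) when the integrality definition is stated for a general spherical expression rather than $\spCC^\star$. The cleanest route there is the product-of-fans argument with \eqref{eq:Mdom-sum}, and I expect that step to require the most care.
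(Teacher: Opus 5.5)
Your proposal is correct and follows essentially the same route as the paper: the additive splitting $\phext{\rho}(x)=\sum_i\phext{\rho_i}(x_i)$ coming from $\cell{A}{x}=\cell{A}{x_1}\times\cdots\times\cell{A}{x_k}$, then the equivalence (a)$\Leftrightarrow$(b) of \Cref{prop:spherical-submodular-cone} together with the marks for (1), and a componentwise inequality check plus \eqref{eq:Mdom-sum} for (2). The differences are minor. In (2) you test $p$ against $\phext{\rho}$ rather than against vertex values. You also add the product-of-fans argument for integrality on a general spherical expression and spell out the convex-hull step behind the ``in particular''; the paper leaves both of these implicit.
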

\begin{proof}
    Since $A_x = A_{x_1} \times \cdots \times A_{x_k}$ for $x = (x_1, x_2, \dots, x_k) \in V$,
    we observe that
    \begin{align}\label{eq:rho:decomposition}
        \phext{\rho}(x) = \phext{\rho_1}(x_1) + \phext{\rho_2}(x_2) + \cdots + \phext{\rho_k}(x_k).
    \end{align}

    (1).
    By~\eqref{eq:rho:decomposition}, $\phext{\rho}$ is convex if and only if so are $\phext{\rho_1}, \phext{\rho_2}, \dots, \phext{\rho_k}$.
    Thus, by \Cref{prop:spherical-submodular-cone},
    $\rho$ is spherical submodular if and only if so are $\rho_1, \rho_2, \dots, \rho_k$.
    The integrality assertion follows from the fact that the mark of $\CC^\circ_i$ is the restriction of that of $\CC^\circ$ to $\sk{\CC^\circ_i}$ for each $i$.

    (2).
    The former assertion follows from
    \begin{align}
        p = (p_1, p_2, \dots, p_k) \in \B{\rho} &\iff \inpr{p}{x} \leq \rho(x) \text{ for all } x \in \sk{\CC^\circ}\\
        &\iff \inpr{p_i}{x} \leq \rho_i(x) \text{ for all } i \in [k] \text{ and } x \in \sk{\CC_i^\circ}\\
        &\iff p_i \in \B{\rho_i} \text{ for all } i \in [k]\\
        &\iff p \in \B{\rho_1} + \cdots + \B{\rho_k}.
    \end{align}
    The latter follows from the former and $\Mdom(\RS) = \Mdom(\RS_1) + \cdots + \Mdom(\RS_k)$ (see~\eqref{eq:Mdom-sum}).
\end{proof}

The proof of \Cref{prop:tangent-cone} requires type-dependent arguments.
The following lemma immediately follows from \Cref{lem:root_integer,lem:contraction:root-system} and the shape of the Dynkin diagram depicted in \Cref{tab:classical-Dynkin-diagrams}.
\begin{lemma}\label{lem:Dynkin-shape}
  \begin{enumerate}[label={\textup{(\arabic*)}}]
        \item A root system of type $\Dynkin{A}{3}$ is isomorphic to that of type $\Dynkin{D}{3}$.
        \item Let $\RS$ be an irreducible root system of type~B, C, or D,
        and $\simp$ a simple system of $\RS$.
        Suppose that $\{v_1, v_2, \dots, v_n\}$ is the vertex set of $D(\RS)$ as illustrated in~\eqref{tab:classical-Dynkin-diagrams}
        and that $J \subseteq D(\RS)$ such that $\RS(\simp|_{J})$ is irreducible.
        Then, the following holds:
        \begin{itemize}
          \item In the case where $\RS$ is of type~B,
            \begin{align}
              \text{$\RS(\simp|_{J})$ is of }
              \begin{cases}
                \text{type~B} & \text{if $v_n \in J$},\\
                \text{type~A} & \text{if $v_n \notin J$}.
              \end{cases}
            \end{align}
          \item In the case where $\RS$ is of type~C,
            \begin{align}
              \text{$\RS(\simp|_{J})$ is of }
              \begin{cases}
                \text{type~C} & \text{if $v_n \in J$},\\
                \text{type~A} & \text{if $v_n \notin J$}.
              \end{cases}
            \end{align}
          \item In the case where $\RS$ is of type~D,
            \begin{align}
              \text{$\RS(\simp|_{J})$ is of }
              \begin{cases}
                \text{type~D} & \text{if $\{v_{n-3}, v_{n-2}, v_{n-1}, v_n\} \subseteq J$},\\
                \text{type~A} & \text{if $\{v_{n-3}, v_{n-2}, v_{n-1}, v_n\} \not\subseteq J$}.
              \end{cases}
            \end{align}
        \end{itemize}
    \end{enumerate}
\end{lemma}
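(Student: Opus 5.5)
Both assertions reduce to graph-theoretic statements about Dynkin diagrams, via \Cref{lem:root_integer,lem:contraction:root-system}. The approach is to identify which connected induced subgraphs can occur and read off their types from \Cref{tab:classical-Dynkin-diagrams}.

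For (1), we compare the diagrams in \Cref{tab:classical-Dynkin-diagrams}. $D(\Dynkin{D}{3})$ has vertices $v_1,v_2,v_3$ with simple edges $\{v_1,v_2\}$ and $\{v_1,v_3\}$. This is the path $v_2 - v_1 - v_3$, which is graph-isomorphic to the path $D(\Dynkin{A}{3})$. \Cref{lem:root_integer}~(1) then gives the isomorphism of root systems.

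For (2), let $\RS$ be of type B, C, or D with simple system $\simp$, and let $J\subseteq D(\RS)$ be such that $\RS(\simp|_J)$ is irreducible.
\begin{enumerate}[label=(\roman*)]
  \item By \Cref{lem:contraction:root-system}~(1), $D(\RS(\simp|_J))$ is the subgraph of $D(\RS)$ induced by $J$.
  \item By \Cref{lem:root_integer}~(2), irreducibility means this induced subgraph is connected.
  \item By \Cref{lem:root_integer}~(1), the type of $\RS(\simp|_J)$ is determined by the isomorphism class of that connected induced subgraph.
\end{enumerate}
So we only need to classify the connected induced subgraphs of the three diagrams.

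For types B and C, $D(\RS)$ is a path $v_1,\dots,v_n$ whose only non-simple edge is the double arc between $v_{n-1}$ and $v_n$. A connected $J$ is an interval $\{v_i,\dots,v_j\}$.
\begin{itemize}
  \item If $v_n\notin J$, the interval uses only simple edges, so it is the diagram of type A.
  \item If $v_n\in J$ and $\card{J}\ge 2$, the interval contains the double arc with its orientation unchanged. It is therefore the diagram of type B (respectively C) of rank $\card{J}$.
  \item If $J=\{v_n\}$, the rank-one case is type B$_1$ (respectively C$_1$) by the same convention. This agrees with the statement.
\end{itemize}

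For type D, $D(\RS)$ is the path $v_1,\dots,v_{n-2}$ with two leaves $v_{n-1},v_n$ attached to $v_{n-2}$. A connected induced subgraph that does not contain all of $v_{n-2},v_{n-1},v_n$ has no branch vertex, so it is a simple path and hence of type A. If it contains all three, it is the fork on an interval $\{v_i,\dots,v_{n-2}\}$ together with the two leaves.
\begin{itemize}
  \item If $v_{n-3}\in J$ as well, this is $D(\Dynkin{D}{m})$ with $m\ge4$.
  \item If $v_{n-3}\notin J$, then $J=\{v_{n-2},v_{n-1},v_n\}$, which is $D(\Dynkin{D}{3})$. By (1) this is type A, which matches the stated dichotomy based on whether $\{v_{n-3},\dots,v_n\}\subseteq J$.
\end{itemize}

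The main care needed is in the small-rank edge cases: $J=\{v_n\}$ in types B and C, and $J=\{v_{n-2},v_{n-1},v_n\}$ in type D. When $n=3$ in type D, $v_{n-3}$ does not exist and the whole diagram is already covered by (1). These are exactly the places where (1) and the naming conventions for low rank are used.
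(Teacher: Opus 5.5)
Your proof is correct and is essentially the paper's argument. The paper says the lemma follows immediately from \Cref{lem:root_integer,lem:contraction:root-system} and the shapes of the diagrams in \Cref{tab:classical-Dynkin-diagrams}; you have written out that case analysis explicitly, including the low-rank cases $J=\{v_n\}$ in types B and C and $J=\{v_{n-2},v_{n-1},v_n\}$ in type D, which the paper leaves implicit.
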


In addition, \Cref{tab:classical-positive-roots} enumerates the positive roots of types $\Dynkin{A}{n}$, $\Dynkin{B}{n}$, $\Dynkin{C}{n}$, and $\Dynkin{D}{n}$.
\begin{table}[tbp]
    \centering
    \small
    \setlength{\tabcolsep}{5pt}
    \renewcommand{\arraystretch}{1.25}

    \begin{tabularx}{\linewidth}{
        @{}
        >{\centering\arraybackslash}m{0.11\linewidth}
        >{\centering\arraybackslash}X
        @{}
    }
        \toprule
        Type
        &
        \positiveRootBlock{
            Positive root
            &
            Range of indices
        }
        \\
        \midrule

        $\Dynkin{A}{n}$
        &
        \positiveRootBlock{
            $\displaystyle
                \sum_{i=i_1}^{i_2}\alpha_i
            $
            &
            $\displaystyle
                1\leq i_1\leq i_2\leq n
            $
        }
        \\
        \typesep

        $\Dynkin{B}{n}$
        &
        \positiveRootBlock{
            $\displaystyle
                \sum_{i=i_1}^{i_2}\alpha_i
            $
            &
            $\displaystyle
                1\leq i_1\leq i_2\leq n
            $
            \\[0.6em]

            $\displaystyle
                \sum_{i=i_1}^{i_2-1}\alpha_i
                +
                \sum_{j=i_2}^{n}2\alpha_j
            $
            &
            $\displaystyle
                1\leq i_1<i_2\leq n
            $
        }
        \\
        \typesep

        $\Dynkin{C}{n}$
        &
        \positiveRootBlock{
            $\displaystyle
                \sum_{i=i_1}^{i_2}\alpha_i
            $
            &
            $\displaystyle
                1\leq i_1\leq i_2\leq n
            $
            \\[0.6em]

            $\displaystyle
                \sum_{i=i_1}^{i_2-1}\alpha_i
                +
                \sum_{j=i_2}^{n-1}2\alpha_j
                +
                \alpha_n
            $
            &
            $\displaystyle
                1\leq i_1\leq i_2\leq n-1
            $
        }
        \\
        \typesep

        \begin{tabular}[c]{@{}c@{}}
            $\Dynkin{D}{n}$
            \\
            $n\geq4$
        \end{tabular}
        &
        \positiveRootBlock{
            $\displaystyle
                \sum_{i=i_1}^{i_2}\alpha_i
            $
            &
            $\displaystyle
                1\leq i_1\leq i_2\leq n-1
            $
            \\[0.6em]

            $\displaystyle
                \sum_{i=i_1}^{n}\alpha_i
            $
            &
            $\displaystyle
                i_1\in[n-2]
            $
            \\[0.6em]

            $\displaystyle
                \sum_{i=i_1}^{n-2}\alpha_i+\alpha_n
            $
            &
            $\displaystyle
                i_1\in[n-2]
            $
            \\[0.6em]

            $\displaystyle
                \sum_{i=i_1}^{i_2-1}\alpha_i
                +
                \sum_{j=i_2}^{n-2}2\alpha_j
                +
                \alpha_{n-1}
                +
                \alpha_n
            $
            &
            $\displaystyle
                1\leq i_1<i_2\leq n-2
            $
        }
        \\

        \bottomrule
    \end{tabularx}

    \caption{
        Positive roots of root systems of the classical types with respect to a fixed simple system $\simp = \{ \alpha_1, \alpha_2, \dots, \alpha_n \}$.
    }
    \label{tab:classical-positive-roots}
\end{table}

We are ready to show \Cref{prop:tangent-cone}.
\begin{proof}[Proof of \Cref{prop:tangent-cone}]
    Letting
    \begin{align}
        \bartRS{p}{B} \coloneqq \Set{ \alpha \in \RS }{ \exists \eps > 0.\  p +\eps \alpha \in \conv{B} },
    \end{align}
    we have $\tcone{\conv{B}}{p} = \cone(\bartRS{p}{B})$,
    since $\conv{B}$ is an M-convex polyhedron.
    It is clear that $\bartRS{p}{B} \supseteq \tRS{p}{B}$, 
    implying $\tcone{\conv{B}}{p} \supseteq \cone(\tRS{p}{B})$.
    In the following, we show the reverse inclusion $\tcone{\conv{B}}{p} \subseteq \cone(\tRS{p}{B})$,
    or equivalently, $\bartRS{p}{B} \subseteq \cone(\tRS{p}{B})$.

    By \Cref{cor:Mpoly-subm},
    there is an integral spherical submodular function $\funcdoms{\rho}{\sk{\spCC^\star}}{\ER}$ satisfying $\conv{B} = \B{\rho}$.
    For notational simplicity,
    we only consider the case where $p = 0$ and $\RS$ is irreducible.
    A general case (where $p \in B$ is arbitrary and $\RS$ is not necessarily irreducible) can be reduced to the above case as follows.
    The update $\rho \leftarrow \rho - p$ reduces the case for an arbitrary $p \in B$ to the case for $p = 0$,
    since $\rho - p$ is integral spherical submodular by \Cref{lem:rho-p}
    and $B - p = \B{\rho - p} \cap \Mdom$.
    Let $\RS_0 \coloneqq \tRS{0}{B} = \RS \cap B$ and $\bar{\RS}_0 \coloneqq \bartRS{0}{B} = \Set{\alpha \in \RS}{\exists \eps > 0.\ \eps \alpha \in \B{\rho}}$.
    Suppose that $\RS = \RS_1 \oplus \RS_2 \oplus \cdots \oplus \RS_k$ with its irreducible components $\RS_1, \RS_2,  \dots, \RS_k$.
    By \Cref{lem:subm:components},
    we have $B = B_1 + B_2 + \cdots + B_k$,
    $\RS_0 = (\RS_1)_0 \cup (\RS_2)_0 \cup \cdots \cup (\RS_k)_0$,
    and $\bar{\RS}_0 = (\bar{\RS}_1)_0 \cup (\bar{\RS}_2)_0 \cup \cdots \cup (\bar{\RS}_k)_0$,
    where $B_i \coloneqq \B{\rho_i} \cap \Mdom(\RS_i)$,
    $(\RS_i)_0 \coloneqq \Set{ \alpha \in \RS_i }{ \alpha \in B_i } = \RS_i \cap B_i$,
    and $(\bar{\RS}_i)_0 \coloneqq (\bar{\RS}_i)_0(B_i) = \Set{ \alpha \in \RS_i }{ \exists \eps > 0.\  \eps \alpha \in \B{\rho_i} }$
    for each $i \in [k]$.
    Thus, if $\hat{\alpha}_i \in (\bar{\RS}_i)_0$ implies $\hat{\alpha}_i \in \cone{(\RS_i)_0}$ for each $i \in [k]$,
    then we obtain $\hat{\alpha} \in \cone{\RS_0}$ for any $\hat{\alpha} \in \bar{\RS}_0$,
    which shows the desired inclusion $\cone{\RS_0} \supseteq \bar{\RS}_0$.

    As described above,
    we assume that $p = 0$ and $\RS$ is irreducible in the following.
    Take any $\hat{\alpha} \in \bar{\RS}_0$.
    Let $\eps \coloneqq \sup{\Set{ \eps' }{ \eps'\hat{\alpha} \in \B{\rho} }} > 0$.
    If $\eps \geq 1$,
    then $\hat{\alpha} \in \B{\rho}$.
    Hence, we have $\hat{\alpha} \in \B{\rho} \cap \Mdom = B$,
    which implies $\hat{\alpha} \in \RS_0 \subseteq \cone{\RS_0}$.

    Suppose that $\eps < 1$.
    Let $A$ be a maximal simplex in $\spCC^\star[\rho = \eps \hat{\alpha}]$,
    $C = \{ x_1, x_2, \dots, x_n \}$ a chamber in $\spCC^\star$ with $C \succeq A$,
    and $\simp = \{ \alpha_1, \alpha_2, \dots, \alpha_n \}$ be the simple system corresponding to $C$,
    in which, for each $i \in [n]$, both $\tau(x_i)$ and $\tau_{\simp}(\alpha_i)$ are the vertex $v_i$ in the Dynkin diagram depicted in \Cref{tab:classical-Dynkin-diagrams}.
    By the maximality of $\eps$,
    the point $\eps\hat{\alpha}$ belongs to a proper face of $\B{\rho}$,
    i.e., $\dim{\face{\B{\rho}}{\eps\hat{\alpha}}} < \dim{\B{\rho}}$.
    Thus, we have $\tau(A) \neq \emptyset$ by \Cref{lem:minimal-face}~(2).
    For any $p = \sum_{i = 1}^n p_i \alpha_i \in V^*$,
    let $\supp_\simp{p} \coloneqq \Set{ i \in [n] }{ p_i \neq 0 }$.
    Suppose that $\hat{\alpha} = \sum_{i = 1}^n k_i \alpha_i$
    (and hence $\eps\hat{\alpha} = \sum_{i = 1}^n \eps k_i \alpha_i$).
    We define
    \begin{align}
        I_1 \coloneqq \Set{ i \in \supp_\simp{\eps\hat{\alpha}} }{ \eps k_i \in \Z }, \qquad
        I_{1/2} \coloneqq \Set{ i \in \supp_\simp{\eps\hat{\alpha}} }{ \eps k_i \notin \Z }.
    \end{align}
    Then, the following hold:
    \begin{claim}\label{cl:positive-F}
        \begin{enumerate}[label={\textup{(\arabic*)}}]
        \item $\hat{\alpha}$ is a positive root with respect to $\Delta$.
        \item $\Set{v_i \in D(\RS)}{i \in I_{1/2}} \subseteq D(\RS) \setminus \tau(A)$, or equivalently, $\Set{v_i \in D(\RS)}{i \in I_{1/2}} \cap \tau(A) = \emptyset$.
        \item $\Set{v_i \in D(\RS)}{i \in \supp_\simp{\hat{\alpha}}} \cap \tau(A) \neq \emptyset$. In addition, $\Set{v_i \in D(\RS)}{i \in I_1} \cap \tau(A) \neq \emptyset$.
        \item There exists $v_i \in D(\RS)$ such that $k_i \geq 2$.
    \end{enumerate}
    \end{claim}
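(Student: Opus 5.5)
The plan is to work throughout with the explicit form of $\rho$ on the simplex $A$. Since $A$ is a simplex of $\spCC^\star[\rho = \eps\hat{\alpha}]$, every $x_i \in \sk{A}$ satisfies $\rho(x_i) = \inpr{\eps\hat{\alpha}}{x_i}$. By \Cref{lem:inpr} (equivalently \Cref{cor:inpr}~(1)) this gives $\rho(x_i) = \eps k_i / a(x_i)$ for every $i$ with $v_i \in \tau(A)$. Two further facts will be used repeatedly. First, $0 \in B$, so $0 \in \B{\rho}$ and hence $\rho \geq 0$ on $\sk{\spCC^\star}$. Second, $\rho$ is integral, so by \Cref{prop:integral-subm}~(1) we have $\rho(x_i) \in \Z/a(x_i)$ on $\dom{\rho}$. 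From these, for $v_i \in \tau(A)$ we immediately get $k_i \geq 0$ and $\eps k_i \in \Z$.

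I will prove the first half of (3) first. Suppose, to the contrary, that $k_i = 0$ for all $v_i \in \tau(A)$. Then $\inpr{\hat{\alpha}}{x} = 0$ for every $x \in \sk{A}$, so $\hat{\alpha} \in (\spn{A})^\perp = \spn{\RS|_{-A}}$ by \Cref{lem:span}~(1). By \Cref{lem:minimal-face}~(3), $\face{\B{\rho}}{\eps\hat{\alpha}} = \eps\hat{\alpha} + \B{(\rho - \eps\hat{\alpha})/A}$. Take $q \coloneqq -\eps\hat{\alpha} \in \spn{\RS|_{-A}}$. For every link vertex $x$ the required inequality $\inpr{q}{x} \leq \rho(x) - \inpr{\eps\hat{\alpha}}{x}$ reduces to $\rho(x) \geq 0$, which holds. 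Hence $0$ lies in the face $\face{\B{\rho}}{\eps\hat{\alpha}}$. This face contains $\eps\hat{\alpha}$ in its relative interior, and $0$ lies in the face on the line through $0$ and $\eps\hat{\alpha}$. Therefore $(1+\delta)\eps\hat{\alpha}$ stays in the face, and so in $\B{\rho}$, for small $\delta > 0$. This contradicts the definition of $\eps$ as a supremum. So some $v_i \in \tau(A)$ has $k_i \neq 0$, and combined with the sign fact above, $k_i > 0$.

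With this in hand, (1) follows. By definition of a simple system, a root is either $\geq_\simp 0$ or $\leq_\simp 0$. If $\hat{\alpha}$ were negative, every $k_i$ would be $\leq 0$. Together with $k_i \geq 0$ on $\tau(A)$, this would force $k_i = 0$ on all of $\tau(A)$, contradicting the step just proved. For (2): if $v_i \in \tau(A)$, we have already seen that $\eps k_i \in \Z$, so $i \notin I_{1/2}$. For the second half of (3): the index $i$ found above has $v_i \in \tau(A)$ and $\eps k_i \neq 0$, so $i \in \supp_\simp{\eps\hat{\alpha}}$. By (2) this $i$ lies in $I_1$.

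Finally, (4). Take this same $i$. Then $\eps k_i$ is a positive integer, so $\eps k_i \geq 1$. Since $0 < \eps < 1$, this gives $k_i \geq 1/\eps > 1$, and since $k_i$ is an integer (\Cref{lem:root_integrality}), $k_i \geq 2$. The main obstacle is the face argument in the first half of (3). It requires checking carefully that \Cref{lem:minimal-face}~(3) applies to the point $\eps\hat{\alpha}$, which in turn uses that $\rho$ is spherical submodular and that $A$ is maximal. It also requires checking that the relative-interior membership of $\eps\hat{\alpha}$ really allows extension along the ray. Everything else is bookkeeping with \Cref{lem:inpr} and integrality.
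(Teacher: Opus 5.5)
Your proof is correct. Its core is the same as the paper's. Assuming $k_i = 0$ for all $v_i \in \tau(A)$ forces $\hat{\alpha} \in \spn{\RS|_{-A}}$, and then the relative-interior position of $\eps\hat{\alpha}$ in $\face{\B{\rho}}{\eps\hat{\alpha}}$ lets the ray go slightly past $\eps\hat{\alpha}$, contradicting the maximality of $\eps$.

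The genuine difference is in (2). The paper argues polyhedrally. If some $v_i \in \tau(A)$ had $\eps k_i \notin \Z$, then every point of $\aff{\face{\B{\rho}}{\eps\hat{\alpha}}} = \eps\hat{\alpha} + \spn{\RS|_{-A}}$ would have the same non-integral $\alpha_i$-coordinate. The face would then contain no point of $\Mdom$, contradicting \Cref{lem:integral-polyhedron}. You instead read off $\eps k_i = a(x_i)\rho(x_i) \in \Z$ directly from \Cref{prop:integral-subm}~(1), which is shorter.

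This, together with $\rho \geq 0$ (from $0 \in \B{\rho}$), is what lets you organize everything around one application of the maximality argument. From that single step, (1), (2), the second half of (3), and (4) all become bookkeeping. The paper, by contrast, invokes the maximality argument in both (1) and (3). It also uses $\rho \geq 0$ only implicitly, when it negates ``$\rho(x) > 0$ for some $x \in \sk{A}$'' to ``$\rho(x) = 0$ for all $x \in \sk{A}$''. Your making this fact explicit is an improvement.

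One minor remark: your detour through \Cref{lem:minimal-face}~(3) to show $0 \in \face{\B{\rho}}{\eps\hat{\alpha}}$ is valid but unnecessary. Once $\hat{\alpha} \in \spn{\RS|_{-A}}$, \Cref{lem:minimal-face}~(2) already says $\hat{\alpha}$ is parallel to $\aff{\face{\B{\rho}}{\eps\hat{\alpha}}}$. That is all the ray extension needs, and it is how the paper argues.
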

    \begin{proof}
        (1).
        We first see that there is $x \in \sk{A}$ such that $\rho(x) > 0$.
        Suppose to the contrary that $\inpr{\eps \hat{\alpha}}{x} = \rho(x) = 0$ for all $x \in \sk{A}$.
        Then, $\hat{\alpha}$ is orthogonal to $\spn{\sk{A}} = \spn{A} = (\spn{\RS|_{-A}})^\perp$,
        where the last equality follows from \Cref{lem:span}~(1).
        Hence, $\hat{\alpha} \in \spn{\RS|_{-A}}$.
        Since $\aff{\face{\B{\rho}}{\eps \hat{\alpha}}} = \eps \hat{\alpha} + \spn{\RS|_{-A}}$ by \Cref{lem:minimal-face}~(2),
        for sufficiently small $\eps' > 0$
        we obtain $(\eps + \eps')\hat{\alpha} \in \face{\B{\rho}}{\eps\hat{\alpha}} \subseteq \B{\rho}$.
        This contradicts the maximality of $\eps$.

        Suppose that $x_i \in \sk{A}$ satisfies $\rho(x_i) > 0$.
        In the representation $\hat{\alpha} = \sum_{i = 1}^n k_i \alpha_i$ of $\hat{\alpha}$ with respect to $\Delta$,
        the coefficient $k_i$ of $\alpha_i$ is equal to $k_i = a(x_i)\inpr{\hat{\alpha}}{x_i} = a(x_i)\rho(x_i)/\eps > 0$ by \Cref{cor:inpr}~(1).
        This implies that $\hat{\alpha}$ is a positive root with respect to $\Delta$.

        (2).
        Suppose to the contrary that $\Set{v_i \in D(\RS)}{i \in I_{1/2}} \cap \tau(A) \neq \emptyset$.
        Then, take any $i \in I_{1/2}$ with $v_i \in \tau(A)$.
        Since $\aff{\face{\B{\rho}}{\eps \hat{\alpha}}} = \eps \hat{\alpha} + \spn{\RS|_{-A}} = \eps \hat{\alpha} + \spn{\simp|_{-A}}$ by \Cref{lem:minimal-face}~(2) (and~\eqref{eq:spanA}),
        for any $p = \sum_{i = 1}^n p_i \alpha_i \in \aff{\face{\B{\rho}}{\eps \hat{\alpha}}}$,
        the $i$th coefficient $p_i$ of $p$ with respect to $\simp$ coincides with that $\eps k_i$ of $\eps \hat{\alpha}$ by $v_i \in \tau(A)$.
        This implies that $p_i$ is not an integer by $i \in I_{1/2}$.
        Thus, we obtain $\aff{\face{\B{\rho}}{\eps\hat{\alpha}}} \cap \Mdom = \emptyset$, particularly, $\face{\B{\rho}}{\eps\hat{\alpha}} \cap \Mdom = \emptyset$.
        Therefore, $\face{\B{\rho}}{\eps\hat{\alpha}}$ is not $\Mdom$-integral, which contradicts the $\Mdom$-integrality of $\B{\rho}$ by \Cref{lem:integral-polyhedron}.

        (3).
        If $\Set{v_i \in D(\RS)}{i \in \supp_\simp{\hat{\alpha}}} \subseteq D(\RS) \setminus \tau(A)$,
        then $\hat{\alpha} \in \spn{\simp|_{-A}} = \spn{\RS|_{-A}}$,
        which contradicts the maximality of $\eps$ by the same argument as in the proof of the assertion~(1).
        The latter $\Set{v_i \in D(\RS)}{i \in I_1} \cap \tau(A) \neq \emptyset$ follows from
        $\supp_\simp{\hat{\alpha}} = I_{1/2} \cup I_1$, the assertion~(2), and the former statement $\Set{v_i \in D(\RS)}{i \in \supp_\simp{\hat{\alpha}}} \cap \tau(A) \neq \emptyset$.

        (4).
        Since $I_1 \neq \emptyset$ by the assertion~(3),
        at least one of $\eps k_i$ must be a positive integer in the representation $\hat{\alpha} = \sum_{i = 1}^n k_i \alpha_i$.
        Thus, by $\eps < 1$, we obtain $k_i \geq 2$ for some $i \in [n]$.
    \end{proof}

The remainder of the proof is type-dependent,
and requires the additional specific arguments on the root systems.

(Type A).
Since $\hat{\alpha}$ is a positive root with respect to $\Delta$,
we have $\hat{\alpha} = \sum_{i \in I} \alpha_i$ for some $I \subseteq [n]$ (see \Cref{tab:classical-positive-roots}),
i.e.,
the coefficient of $\alpha_i$ is at most one;
this contradicts \Cref{cl:positive-F}~(4).
This means that, in the case of type A, the strict inequality $\eps < 1$ cannot occur,
that is,
$\hat{\alpha} \in \bar{\RS}_0$ implies $\hat{\alpha} \in \RS_0$.

For the remaining types B, C, and D,
we further review several facts as follows.
Let $\hat{\rho} \coloneqq (\rho - \eps \hat{\alpha})/A$,
which is a spherical submodular function on the spherical expression $\spCC^\star / A$ of $\spCC(\RS|_{-A})$ by \Cref{lem:subm:contraction}.
By \Cref{lem:minimal-face}~(3),
we have $\face{\B{\rho}}{\eps\hat{\alpha}} = \eps \hat{\alpha} + \B{\hat{\rho}}$.
Moreover, $\RS|_{-A}$ is not necessarily irreducible,
i.e.,
$\RS|_{-A}$ may admit the decomposition $(\RS|_{-A})_1 \oplus (\RS|_{-A})_2 \oplus \cdots \oplus (\RS|_{-A})_k$
for the irreducible components $(\RS|_{-A})_1, (\RS|_{-A})_2, \dots, (\RS|_{-A})_k$.
By \Cref{lem:subm:components}~(2),
$\B{\hat{\rho}}$ is representable as
$\B{\hat{\rho}} = \B{\hat{\rho}_1} + \B{\hat{\rho}_2} + \cdots + \B{\hat{\rho}_k}$,
where $\hat{\rho}_\ell$ denotes the $\ell$th component of $\hat{\rho}$.
Since $\hat{\rho}(x) > 0$ for all $x \in \sk{\spCC^\star / A}$ (\Cref{lem:minimal-face}~(2)),
all of $\B{\hat{\rho}_1}, \dots, \B{\hat{\rho}_k}$ contain the origin $0$.
Therefore, for any subset $S \subseteq [k]$,
we have
\begin{align}\label{eq:F}
    \eps \hat{\alpha} + \sum_{\ell \in S} \B{\hat{\rho}_\ell} \subseteq \face{\B{\rho}}{\eps\hat{\alpha}} \subseteq \B{\rho}.
\end{align}
Our aim is to show the existence of $\alpha^+,\alpha^-\in \B{\rho} \cap \RS$ such that $\hat{\alpha} = \alpha^+ + \alpha^-$, which implies that $\hat{\alpha} \in \cone{\RS_0}$.
Here, a spherical submodular function $\rho$ is said to be \emph{half-integral} if $2\rho$ is integral.

We are ready to consider the remaining types.

(Type B).
By \Cref{cl:positive-F}~(4),
$\hat{\alpha}$ is a positive root of the form
\begin{align}
    \hat{\alpha} = \sum_{i = i_1}^{i_2-1} \alpha_i + \sum_{j = i_2}^n 2 \alpha_j
\end{align}
for some $i_1, i_2 \in [n]$ with $1 \leq i_1 < i_2 \leq n$ (see \Cref{tab:classical-positive-roots}).
Since $I_1 \neq \emptyset$ (\Cref{cl:positive-F}~(3)),
we obtain $\eps = 1/2$.
Therefore,
\begin{align}
    \eps \hat{\alpha} = \sum_{i = i_1}^{i_2-1} \frac{1}{2}\alpha_i + \sum_{j = i_2}^n \alpha_j  \in \B{\rho},
\end{align}
implying that $I_{1/2} = [i_1, i_2-1]$ and $I_1 = [i_2, n]$.

\begin{claim}\label{cl:half-integral-subm}
    \begin{enumerate}[label={\textup{(\arabic*)}}]
        \item All of $\hat{\rho}, \hat{\rho}_1, \dots, \hat{\rho}_k$ are half-integral spherical submodular functions.
        \item There is a connected component $J$ of the Dynkin diagram $D(\RS|_{-A})$ containing all $v_i$ with $i \in [i_1, i_2 - 1]$.
        In addition, the induced subgraph of $D(\RS)$ by $J$ forms a Dynkin diagram of type~A.
        \item Let $\alpha' \coloneqq \sum_{i = i_1}^{i_2-1} \alpha_i$.
    There is $\ell \in [k]$ such that $\B{\hat{\rho}_\ell}$ contains both of $\pm \alpha'/2$.
    \end{enumerate}
\end{claim}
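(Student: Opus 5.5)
The plan is to prove the three assertions in order. Each builds on the previous one, and the only delicate point is the last.

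For (1), start from $\eps = 1/2$. Then $2(\rho - \eps\hat{\alpha}) = 2\rho - \hat{\alpha}$. Since $\rho$ is integral, \Cref{prop:integral-subm}~(1) gives $\rho(x) \in \Z/a(x)$, hence $2\rho(x) \in \Z/a(x)$. So $2\rho$ is an integral spherical submodular function on $\spCC^\star$. Because $\hat{\alpha} \in \RS \subseteq \Mdom$, \Cref{lem:rho-p} shows that $2\rho - \hat{\alpha}$ is integral spherical submodular. It vanishes on $\sk{A}$, since $A$ lies in $\spCC^\star[\rho = \eps\hat{\alpha}]$. Hence \Cref{lem:subm:contraction} shows that $(2\rho-\hat{\alpha})/A = 2\hat{\rho}$ is $\RS|_{-A}$-integral. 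Then \Cref{lem:subm:components}~(1) transfers this to every component, so each $2\hat{\rho}_\ell$ is integral. This is exactly the half-integrality of $\hat{\rho}$ and of $\hat{\rho}_1,\dots,\hat{\rho}_k$.

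For (2), use \Cref{cl:positive-F}~(2): no $v_i$ with $i \in I_{1/2} = [i_1, i_2-1]$ lies in $\tau(A)$. By \Cref{lem:contraction:root-system}~(1), $D(\RS|_{-A})$ is the subgraph of the path $D(\RS)$ induced by $D(\RS) \setminus \tau(A)$. The vertices $v_{i_1}, \dots, v_{i_2-1}$ are consecutive on this path, so they lie in a single connected component $J$. By \Cref{cl:positive-F}~(3) and $I_1 = [i_2,n]$, some $v_j$ with $j \ge i_2$ belongs to $\tau(A)$. On the path, this $v_j$ separates $J$ from $v_n$, so $v_n \notin J$. \Cref{lem:Dynkin-shape}~(2) then says that $J$ induces a Dynkin diagram of type A.

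For (3), let $\ell$ be the index of the component of $\RS|_{-A}$ whose diagram is $J$. The approach is to use the $\Mdom$-integrality of the face $\face{\B{\rho}}{\eps\hat{\alpha}} = \eps\hat{\alpha} + \B{\hat{\rho}}$. This face is $\Mdom$-integral by \Cref{lem:integral-polyhedron}, so it contains a lattice point $q = \eps\hat{\alpha} + d$ with $d \in \B{\hat{\rho}}$. Since the coefficients of $\eps\hat{\alpha}$ at $i \in [i_1,i_2-1]$ equal $1/2$, the $\ell$th component $d_\ell \in \B{\hat{\rho}_\ell}$ must have half-odd coefficients on $[i_1, i_2-1]$. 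By (1), $2\B{\hat{\rho}_\ell}$ is an integral M-convex polytope of type A containing $0$ and $2d_\ell$.

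The aim is then to show that $2\B{\hat{\rho}_\ell}$ contains $\pm\alpha'$. For this, use the type-A case of \Cref{prop:tangent-cone}, which was already settled above without any obstacle: the tangent cone at $0$ is generated by the type-A roots lying in $2\B{\hat{\rho}_\ell}$. Combine this with the observation that the point $\eps\hat{\alpha} \pm \alpha'/2$ is a root, namely $\sum_{i=i_1}^n \alpha_i$ or $\sum_{j=i_2}^n \alpha_j$. Any root of $J$ in $2\B{\hat{\rho}_\ell}$ whose support meets $[i_1,i_2-1]$ but is not $\pm\alpha'$ would give, via \eqref{eq:F}, a point of $\B{\rho}$ that violates an inequality $\inpr{\cdot}{x} \le \rho(x)$ at a vertex $x$ of $A$ (or of the chamber $C$). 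One rules this out by a parity and coefficient count, using $\rho(x) \in \Z/a(x)$. The main obstacle is this last step: extracting exactly $\pm\alpha'/2$, rather than some other half-odd point, from the type-A structure of $\B{\hat{\rho}_\ell}$. If the argument above does not go through directly, the fallback is to apply \eqref{cond:MP} to $2\B{\hat{\rho}_\ell}$ with respect to $\simp|_J$ and its opposite simple system, and to read off the maximal points coordinatewise.
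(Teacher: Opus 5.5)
Your parts (1) and (2) are correct and follow the paper; your (1) just spells out the route through $2\rho$, \Cref{lem:rho-p}, \Cref{lem:subm:contraction} and \Cref{lem:subm:components} that the paper compresses. The gap is in (3), where the argument does not reach $\pm\alpha'/2$.

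\textbf{The decisive step fails.} You claim that a root of $J$ in $\B{2\hat{\rho}_\ell}$, other than $\pm\alpha'$ and with support meeting $[i_1,i_2-1]$, would via \eqref{eq:F} give a point of $\B{\rho}$ violating some inequality $\inpr{\cdot}{x}\le\rho(x)$. This cannot happen. \eqref{eq:F} says exactly that $\eps\hat{\alpha}+\B{\hat{\rho}_\ell}\subseteq\B{\rho}$, so every point produced this way satisfies all those inequalities. In fact the correct argument below shows that every root of the component $\RS_J$ of $\RS|_{-A}$ lies in $\B{2\hat{\rho}_\ell}$. So no parity or coefficient count can exclude the roots you want to exclude whenever $J$ is larger than $\{v_{i_1},\dots,v_{i_2-1}\}$.

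The other pieces do not close the gap either. The statement of \Cref{prop:tangent-cone}, that the tangent cone at $0$ is generated by the roots in the set, does not by itself put $\pm\alpha'$ into the set. The (MP) fallback is not an argument. The lattice point $\eps\hat{\alpha}+d$ you extract is correct but leads nowhere. Also, $\B{\hat{\rho}_\ell}$ need not be a polytope, since $\hat{\rho}_\ell$ may take the value $+\infty$.

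\textbf{The missing idea.} You never use $\hat{\rho}(x)>0$ for all $x\in\sk{\spCC^\star/A}$, from \Cref{lem:minimal-face}~(3).
\begin{itemize}
\item It gives $\hat{\rho}_\ell>0$ on the finitely many vertices. Hence $0$ is an interior point of $\B{\hat{\rho}_\ell}$ within $\spn\RS_J$, and $\pm\eps'\alpha'\in\B{\hat{\rho}_\ell}$ for some small $\eps'>0$.
\item By (1) and (2), $\B{2\hat{\rho}_\ell}$ is an integral M-convex polyhedron of type $\RS_J$, which is of type~A, and it contains the lattice point $0$.
\item The type-A branch of the proof of \Cref{prop:tangent-cone} proves more than the cone identity, namely $\bar{\RS}_0\subseteq\RS_0$. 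That is, if $\eps''\beta$ lies in the polyhedron for a root $\beta$ and some $\eps''>0$, then the supremum of such $\eps''$ cannot be less than $1$. Otherwise \Cref{cl:positive-F}~(4) would force a coefficient at least $2$ in a positive type-A root.
\item Applying this to $\beta=\pm\alpha'$ gives $\pm\alpha'\in\B{2\hat{\rho}_\ell}$, i.e.\ $\pm\alpha'/2\in\B{\hat{\rho}_\ell}$.
\end{itemize}
This is the paper's proof of (3).
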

\begin{proof}
    (1).
    Since $\rho$ is integral, $\hat{\alpha} \in \Mdom(\RS)$, and $\eps = 1/2$,
    the function $\hat{\rho}$ is a half-integral spherical submodular function by \Cref{lem:rho-p}.
    Since $\hat{\rho}$ is half-integral,
    so are $\hat{\rho}_1, \hat{\rho}_2, \dots, \hat{\rho}_k$ by \Cref{lem:subm:components}~(1).

    (2).
    By \Cref{cl:positive-F}~(2), $\tau(A)$ contains no $v_i$ with $i \in I_{1/2} = [i_1, i_2 - 1]$.
    Hence, there is a connected component $J$ of $D(\RS|_{-A})$ containing all $v_i$ with $i \in [i_1, i_2 - 1]$.
    In addition, by \Cref{cl:positive-F}~(3),
    $\tau(A)$ contains some $v_j$ with $j \in I_1 =[i_2, n]$.
    Hence, this connected component $J$ does not contain $v_n$,
    which implies that 
    the induced subgraph of $D(\RS)$ by $J$
    forms a Dynkin diagram of type~A by \Cref{lem:Dynkin-shape}~(2).

    (3).
    Let $\ell \in [k]$ be the index with $D((\RS|_{-A})_\ell) = J$
    and $\RS_J \coloneqq (\RS|_{-A})_\ell$ for notational simplicity.
    By the assertion~(2) and \Cref{lem:contraction:root-system}~(1),
    $\RS_J$ is a root system of type A.
    Since $\hat{\rho}_\ell > 0$ and $\pm\alpha' \in \spn{\RS_J}$,
    there is a sufficiently small $\eps' > 0$ such that $\pm \eps'\alpha' \in \B{\hat{\rho}_\ell}$.
    Moreover, since $\hat{\rho}_\ell$ is half-integral,
    we can take $\eps' = 1/2$ by the same argument as in case of the type A.
    Thus, we obtain $\pm \alpha'/2 \in \B{\hat{\rho}_\ell}$.
\end{proof}

Let $\alpha_+ \coloneqq (\hat{\alpha} + \alpha')/2 = \sum_{i = i_1}^{n} \alpha_i$
and $\alpha_- \coloneqq (\hat{\alpha}- \alpha')/2 = \sum_{j = i_2}^{n} \alpha_j$.
Then, we have $\alpha_+, \alpha_- \in \RS$ (see \Cref{tab:classical-positive-roots})
and $\alpha_+, \alpha_- \in \eps \hat{\alpha} + \B{\hat{\rho}_\ell} \subseteq \face{\B{\rho}}{\eps\hat{\alpha}} \subseteq \B{\rho}$ by \Cref{cl:half-integral-subm}~(3) and the relation~\eqref{eq:F}.
In particular, $\alpha_+, \alpha_- \in \RS_0$.
Since $\hat{\alpha} = \alpha_+ + \alpha_-$,
we obtain $\hat{\alpha} \in \cone{\RS_0}$.

(Type C).
By \Cref{cl:positive-F}~(4),
$\hat{\alpha}$ is a positive root of the form
\begin{align}
    \hat{\alpha} = \sum_{i = i_1}^{i_2-1} \alpha_i + \sum_{j = i_2}^{n-1} 2 \alpha_j + \alpha_n
\end{align}
for some $i_1, i_2 \in [n]$ with $1 \leq i_1 \leq i_2 \leq n-1$ (see \Cref{tab:classical-positive-roots}).
Since $I_1 \neq \emptyset$ (\Cref{cl:positive-F}~(3)),
we obtain $\eps = 1/2$.
Therefore,
\begin{align}
    \eps \hat{\alpha} = \sum_{i = i_1}^{i_2-1} \frac{1}{2}\alpha_i + \sum_{j = i_2}^{n-1}  \alpha_j + \frac{1}{2}\alpha_n
\end{align}
implying that $I_{1/2} = [i_1, i_2-1] \cup \{n\}$ and $I_1 = [i_2, n-1]$.

By the same proof as \Cref{cl:half-integral-subm},
all of $\hat{\rho}, \hat{\rho}_1, \dots, \hat{\rho}_k$ are half-integral spherical submodular functions,
there is a connected component $J$ of $D(\RS|_{-A})$ containing all $v_i$ with $i \in [i_1, i_2 - 1]$ that is of type A,
and there is $\ell \in [k]$ such that $\B{\hat{\rho}_\ell}$ contains both of $\pm \alpha'/2$.
In addition, by $n \in I_{1/2}$ and \Cref{cl:positive-F}~(2),
there is a different connected component $J'$ of $D(\RS|_{-A})$ from $J$ containing $v_n$,
which is of the form $\{ v_s, v_{s+1}, \dots,v_n \}$ for some $s \in [i_2+1, n-1]$.
Let $\ell' \in [k]$ be the index with $D((\RS|_{-A})_{\ell'}) = J'$
and $\RS_{J'} \coloneqq (\RS|_{-A})_{\ell'}$.
We note that $\ell' \neq \ell$.

\begin{claim}\label{cl:>=1/2}
    For any $x \in \sk{\Lk{\spCC^\star}{A}}$ with $\tau(x) \in J'$,
    we have $\hat{\rho}_{\ell'}(x + \spn{A}) \geq 1/2$.
\end{claim}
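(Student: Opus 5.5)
The plan is to obtain the bound from the $\Mdom$-integrality of the face $\face{\B{\rho}}{\eps\hat{\alpha}}$. The alternative, reading it off from the half-integrality of $\hat{\rho}_{\ell'}$ (\Cref{cl:half-integral-subm}~(1)), only gives $\hat{\rho}_{\ell'}(x+\spn{A})\in\Z/(2a(x))$, which for mark-$2$ vertices yields the too-weak bound $1/4$. So we need an extra parity constraint.

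\emph{Step 1: reduce to a coset-integrality statement for $\B{\hat{\rho}_{\ell'}}$.} By \Cref{lem:minimal-face}~(3) and \Cref{lem:subm:components}~(2),
\[
\face{\B{\rho}}{\eps\hat{\alpha}} = \eps\hat{\alpha} + \B{\hat{\rho}_1} + \cdots + \B{\hat{\rho}_k},
\]
with the summands lying in the mutually orthogonal spaces $\spn{(\RS|_{-A})_j}$. The face is $\Mdom$-integral by \Cref{lem:integral-polyhedron}. Decompose $\eps\hat{\alpha}$ according to $\spn{\simp|_{A}}\oplus\bigoplus_j\spn{(\RS|_{-A})_j}$ and use \Cref{cl:positive-F}~(2), which says that no $v_i$ with $i\in I_{1/2}$ lies in $\tau(A)$. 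Together these show that every vertex of the face is a lattice point. Hence every vertex $q$ of $\B{\hat{\rho}_{\ell'}}$ satisfies $q \in c + \Mdom(\RS_{J'})$, where $c$ is the $J'$-part of $-\eps\hat{\alpha}$ taken modulo $\Mdom$. Concretely, $c$ has coefficient $\tfrac12$ at $\alpha_n$. Its coefficients at $\alpha_s,\dots,\alpha_{n-1}$ are integers, because $[s,n-1]\subseteq[i_2,n-1]=I_1$ (recall $s\ge i_2+1$).

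\emph{Step 2: evaluate the support function.} Since $\hat{\rho}_{\ell'}$ is the support function of $\B{\hat{\rho}_{\ell'}}$ restricted to vertices (\Cref{prop:spherical-submodular-cone}, \Cref{lem:clconv-sublinear}), and $\hat{\rho}_{\ell'}(x+\spn{A})$ is finite and positive by \Cref{lem:minimal-face}~(3), we have
\[
\hat{\rho}_{\ell'}(x+\spn{A}) = \langle q, x\rangle
\]
for some vertex $q$ of $\B{\hat{\rho}_{\ell'}}$. It therefore suffices to show that every positive value $\langle c+m, x\rangle$ with $m\in\Mdom(\RS_{J'})$ is at least $1/2$. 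Here I will work with the component $\RS_{J'}$, which is of type C by \Cref{lem:Dynkin-shape}~(2). I would use the explicit coordinates of $\RSC$, restricted to the coordinates carried by $J'$, together with \Cref{tab:root-coroot-lattices} and \Cref{tab:classical-coxeter-vertices}, passed to the link via \Cref{lem:standard}~(1) and \Cref{lem:complex:contraction}~(1). In these coordinates $c$ is, up to a lattice element, $\tfrac12 e_{j_0}$ for the unique coordinate $j_0$ corresponding to the short end $v_n$. The coset $c+\Mdom(\RS_{J'})$ is then the set of vectors in $(\Z/2)^{m}$ whose coordinate sum lies in $\Z+\tfrac12$.

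\emph{Step 3: the parity computation.} The vertices $x$ of the link of type in $J'$, written in these coordinates modulo $\spn{A}$, should be, after the standard-point rescaling of \Cref{lem:standard}~(1), vectors of the form $\pm e_{i_1}\pm\cdots\pm e_{i_t}$ (mark $2$) or the short-end vertex. The point to verify is that pairing such a vector with an element of $(\Z/2)^m$ of half-odd coordinate sum gives a number in $\Z+\tfrac12$ up to the mark factor, and never a quarter-integer. This is the step I expect to be the main obstacle. One must check carefully which multiple of the standard point $x$ actually is (the factor $a(x)/a_{\mathrm{Lk}}(x)$), and confirm that the half-odd-sum parity of the coset rules out the values $\tfrac14$ and $\tfrac34$ modulo $\Z$. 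I would first test the smallest case, $J'=\{v_{n-1},v_n\}$ of type $\Dynkin{C}{2}$, by hand, and then induct along the chain $v_s,\dots,v_n$. Once $\langle q,x\rangle\in\tfrac12\Z$ is established, positivity gives $\hat{\rho}_{\ell'}(x+\spn{A})\ge 1/2$.
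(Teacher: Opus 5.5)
Your route is sound and genuinely different from the paper's. The paper does not use the face $\face{\B{\rho}}{\eps\hat{\alpha}}$ here. It checks $\inpr{\hat{\alpha}}{\alpha_p^\vee}=0$ for $p\in[s,n]$, so $\hat{\alpha}$ is fixed by $W_{\RS_{J'}}$. It then writes $x=wx_p$ with $w\in W_{\RS_{J'}}$ and uses that the coefficient of $\alpha_p$ in $\hat{\alpha}$ equals the mark $a(x_p)$ (as $p\geq s>i_2$). This gives the exact value $\inpr{\hat{\alpha}}{x}=1$, hence $\hat{\rho}_{\ell'}(x+\spn{A})=\rho(x)-1/2$. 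That value lies in $\Z/2\cup\{+\infty\}$ because $\rho(x)\in\Z/a(x)\cup\{+\infty\}$ with $a(x)\in\{1,2\}$ (\Cref{prop:integral-subm}~(1)), and positivity finishes. You instead use the $\Mdom$-integrality of $\face{\B{\rho}}{\eps\hat{\alpha}}=\eps\hat{\alpha}+\sum_j\B{\hat{\rho}_j}$ to place a maximizer of $\inpr{\cdot}{x}$ over $\B{\hat{\rho}_{\ell'}}$ in the coset $-(\eps\hat{\alpha})_{J'}+\Mdom(\RS_{J'})$. This is finer information than the half-integrality of $\hat{\rho}_{\ell'}$, and your diagnosis that the latter only yields $\Z/(2a(x))$ is right. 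The paper's argument is coordinate-free and is reused for type~D, case~(ii). Yours requires identifying $\RS$ with $\RSC$ and $\simp$ with its standard simple system. In exchange, it needs nothing about $x$ beyond its being a vertex of $\spCC^\star$.

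In particular, the step you flag as the main obstacle is immediate. Take $\alpha_i=\frac{1}{2}(e_i-e_{i+1})$ for $i<n$ and $\alpha_n=e_n$; note that $\alpha_n$ is the long simple root, not the short one. Then $(\eps\hat{\alpha})_{J'}=\sum_{i=s}^{n-1}\alpha_i+\frac{1}{2}\alpha_n=\frac{1}{2}e_s$, so the coset lies in $(\Z/2)^n$. The vertices of $\spCC^\star(\RSC)$ are the signed permutations of $e_1+\cdots+e_p$, i.e., the nonzero vectors of $\{-1,0,1\}^n$. Since the claim evaluates $\hat{\rho}_{\ell'}$ at $x+\spn{A}$ for a vertex $x$ of $\spCC^\star$ itself, no rescaling by $a(x)/a_{\mathrm{Lk}}(x)$ enters. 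Hence $\inpr{q}{x}\in\frac{1}{2}\Z$ (not necessarily $\Z+\frac{1}{2}$) for every $q$ in the coset. The parity of coordinate sums plays no role, there are no quarter-integers to exclude, and no small-case check or induction is needed.

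Two small repairs are needed. First, $\hat{\rho}_{\ell'}(x+\spn{A})$ may be $+\infty$, since \Cref{lem:minimal-face}~(3) gives only positivity; that case is trivially fine. Second, some $\B{\hat{\rho}_j}$ may contain lines, so the face need not have vertices. Instead of vertices, take the face $G$ of $\B{\hat{\rho}_{\ell'}}$ on which $\inpr{\cdot}{x}$ is maximized. Then $\eps\hat{\alpha}+G+\sum_{j\neq\ell'}\B{\hat{\rho}_j}$ is a nonempty face of an $\Mdom$-integral polyhedron, hence contains a lattice point; take its $J'$-component in the simple-root basis.
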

\begin{proof}
    Let $p \in [s,n]$.
    The direct calculation of $\inpr{\hat{\alpha}}{\alpha_p^\vee} = \sum_{i = i_1}^{i_2 -1}\inpr{\alpha_i}{\alpha_p^\vee} + 2\sum_{j = i_2}^{n -1}\inpr{\alpha_j}{\alpha_p^\vee} + \inpr{\alpha_n}{\alpha_p^\vee}$,
    based on the definition~\eqref{eq:coroot} of the coroot $\alpha_p^{\vee}$,
    the conditions \eqref{cond:D0}--\eqref{cond:D2}, and the shape of Dynkin diagram of type C (see \Cref{tab:classical-Dynkin-diagrams}),
    produces $\inpr{\hat{\alpha}}{\alpha_p^\vee} = 0$.
    Thus, by the definition~\eqref{eq:reflection-dual} of $s_\alpha$, we obtain $s_{\alpha_p}(\hat{\alpha}) = \hat{\alpha} - \inpr{\hat{\alpha}}{\alpha_p^\vee} \alpha_p = \hat{\alpha}$.
    Since $W_{\RS_{J'}} = \gen{s_{\alpha_p}}{ p \in [s,n]}$ by \Cref{lem:Weyl-simple-system} and \Cref{lem:contraction:root-system}~(1),
    we have $w \hat{\alpha} = \hat{\alpha}$ for any $w \in W_{\RS|_{J'}}$.
    Furthermore, since the coefficient of $\alpha_p$ in $\hat{\alpha}$ is $a(x_p)$,
    we have $\inpr{\hat{\alpha}}{x_p} = a(x_p)\inpr{\alpha_p}{x_p} = 1$ for any $p \in [s,n]$.

    Take any $x \in \sk{\Lk{\spCC^\star}{A}}$ with $\tau(x) \in J'$, say, $\tau(x) = v_p$.
    Since there is $w \in W_{\RS_{J'}}$ such that $x = wx_p$,
    we obtain $\inpr{\hat{\alpha}}{x} = \inpr{\hat{\alpha}}{wx_p} = \inpr{w^{-1}\hat{\alpha}}{x_p} = \inpr{\hat{\alpha}}{x_p} = 1$.
    Thus, $\hat{\rho}_{\ell'}(x + \spn{A}) = \hat{\rho}(x + \spn{A}) = (\rho - \hat{\alpha}/2)(x) = \rho(x) - 1/2 \in \Z/2 \cup \{+\infty\}$,
    where $\rho(x) \in \Z/a(x) \cup \{+\infty\}$ and $a(x) \in \{1,2\}$.
    Since $\hat{\rho}_{\ell'}$ only takes a positive value,
    we obtain $\hat{\rho}_{\ell'}(x + \spn{A}) \geq 1/2$.
\end{proof}
Since $|\inpr{\alpha_n}{x}| \leq 1$ for any $x \in \sk{\spCC^\star}$,
we obtain $|\inpr{\alpha_n/2}{x}| \leq 1/2$.
Thus, by the above claim, $|\inpr{\alpha_n/2}{x}| \leq \hat{\rho}_{\ell'}(x + \spn{A})$ holds for any $x \in \sk{\Lk{\spCC^\star}{A}}$ with $\tau(x) \in J'$,
that is, $\pm\alpha_n/2 \in \B{\hat{\rho}_{\ell'}}$.

Let $\alpha_+ \coloneqq (\hat{\alpha} + \alpha' + \alpha_n)/2 = \sum_{i = i_1}^n \alpha_i$ and $\alpha_- \coloneqq (\hat{\alpha} - \alpha' - \alpha_n)/2 = \sum_{i = i_2}^{n - 1} \alpha_i$.
Then, we have $\alpha_+, \alpha_- \in \RS$ (see \Cref{tab:classical-positive-roots}) and $\alpha_+, \alpha_- \in \eps \hat{\alpha} + \B{\hat{\rho}_\ell} + \B{\hat{\rho}_{\ell'}} \subseteq \face{\B{\rho}}{\eps\hat{\alpha}} \subseteq \B{\rho}$ by the above arguments and the relation~\eqref{eq:F}.
In particular, $\alpha_+, \alpha_- \in \RS_0$.
Since $\hat{\alpha} = \alpha_+ + \alpha_-$,
we obtain $\hat{\alpha} \in \cone{\RS_0}$.

(Type D).
Since $\RS$ is irreducible and the root system of type $\Dynkin{D}{3}$ is isomorphic to that of type $\Dynkin{A}{3}$ by \Cref{lem:Dynkin-shape}~(1),
we may assume that $n \geq 4$.
By \Cref{cl:positive-F}~(4),
$\hat{\alpha}$ is a positive root of the form
\begin{align}
    \hat{\alpha} = \sum_{i = i_1}^{i_2-1} \alpha_i + \sum_{j = i_2}^{n-2} 2 \alpha_j + \alpha_{n-1} + \alpha_n
\end{align}
for some $i_1, i_2 \in [n]$ with $1 \leq i_1 < i_2 \leq n-2$ (see \Cref{tab:classical-positive-roots}).
Since $I_1 \neq \emptyset$ (\Cref{cl:positive-F}~(3)),
we obtain $\eps = 1/2$.
Therefore,
\begin{align}
    \eps \hat{\alpha} = \sum_{i = i_1}^{i_2-1} \frac{1}{2}\alpha_i + \sum_{j = i_2}^{n-2}  \alpha_j + \frac{1}{2}\alpha_{n-1} + \frac{1}{2}\alpha_n,
\end{align}
implying that $I_{1/2} = [i_1, i_2-1] \cup \{n-1, n\}$ and $I_1 = [i_2, n-2]$.

By the same proof as \Cref{cl:half-integral-subm},
all of $\hat{\rho}, \hat{\rho}_1, \dots, \hat{\rho}_k$ are half-integral spherical submodular functions,
there is a connected component $J$ of $D(\RS|_{-A})$ containing all $v_i$ with $i \in [i_1, i_2 - 1]$ that is of type A,
and there is $\ell \in [k]$ such that $\B{\hat{\rho}_\ell}$ contains both of $\pm \alpha'/2$.
In addition, by $n-1, n \in I_{1/2}$ and \Cref{cl:positive-F}~(2),
there is a connected component $J'$ (resp. $J''$) of $D(\RS|_{-A})$ containing $v_{n-1}$ (resp. $v_n$) that is different from $J$.
Let $\ell', \ell'' \in [k]$ be the indices with $D((\RS|_{-A})_{\ell'}) = J'$ and $D((\RS|_{-A})_{\ell''}) = J''$, respectively,
and define $\RS_{J'} \coloneqq (\RS|_{-A})_{\ell'}$ and $\RS_{J''} \coloneqq (\RS|_{-A})_{\ell''}$.

We further consider the following two cases:
(i) $\tau(A)$ contains $v_{n-2}$, or equivalently, $J'$ and $J''$ are distinct;
and (ii) $\tau(A)$ does not contain $v_{n-2}$, or equivalently, $J' = J''$.

(i). In this case, we have $J' = \{ v_{n-1} \}$ and $J'' = \{ v_n \}$.
Then, both $\RS_{J'}$ and $\RS_{J''}$ are root systems of type $\Dynkin{A}{1}$.
Since $\hat{\rho}_{\ell'}, \hat{\rho}_{\ell''}$ are half-integral,
we obtain $\pm \alpha_{n-1}/2 \in \B{\hat{\rho}_{\ell'}}$ and $\pm \alpha_n/2 \in \B{\hat{\rho}_{\ell''}}$.
Let $\alpha_+ \coloneqq (\hat{\alpha} + \alpha' + \alpha_{n-1} + \alpha_n)/2 = \sum_{i = i_1}^n \alpha_i$
and $\alpha_- \coloneqq (\hat{\alpha} - \alpha' - \alpha_{n-1} - \alpha_n)/2 = \sum_{i = i_2}^{n-2} \alpha_i$.
Then, we have $\alpha_+, \alpha_- \in \RS$ (see \Cref{tab:classical-positive-roots}) and $\alpha_+, \alpha_- \in \eps \hat{\alpha} + \B{\hat{\rho}_\ell} + \B{\hat{\rho}_{\ell'}} + \B{\hat{\rho}_{\ell''}} \subseteq \face{\B{\rho}}{\eps\hat{\alpha}} \subseteq \B{\rho}$ by the above arguments and the relation~\eqref{eq:F}.
In particular, $\alpha_+, \alpha_- \in \RS_0$.
Since $\hat{\alpha} = \alpha_+ + \alpha_- \in \cone{\RS_0}$,
we obtain $\hat{\alpha} \in \cone{\RS_0}$.

(ii).
In this case, we have $J' = J'' = \{ v_s, v_{s+1}, \dots, v_{n-1}, v_n \}$ for some $s \in [i_2 + 1, n-2]$.
By the same proof as \Cref{cl:>=1/2} (with the shape of the Dynkin diagram of type D; see \Cref{tab:classical-Dynkin-diagrams}),
$\hat{\rho}_{\ell'}(x + \spn{A}) \geq 1/2$ holds for $x \in \sk{\Lk{\spCC^\star}{A}}$ with $\tau(x) \in J'$.

By the direct calculation based on the definition~\eqref{eq:reflection-dual} of $s_\alpha$, the conditions \eqref{cond:D0}--\eqref{cond:D2}, and the shape of the Dynkin diagram of type D (see \Cref{tab:classical-Dynkin-diagrams}),
for $t \in [s, n]$ and $t' \in [s, n-2]$,
we can obtain
\begin{align}
    s_{\alpha_t}\left(\sum_{i = t'}^{n-2} 2\alpha_i + \alpha_{n-1} + \alpha_n\right)
    &=
    \begin{dcases}
        \sum_{i = t'-1}^{n-2} 2\alpha_i + \alpha_{n-1} + \alpha_n & \text{if $t = t'-1$},\\
        \sum_{i = t'+1}^{n-2} 2\alpha_i + \alpha_{n-1} + \alpha_n & \text{if $t = t'$},\\
        \sum_{i = t'}^{n-2} 2\alpha_i + \alpha_{n-1} + \alpha_n & \text{otherwise},
    \end{dcases}\\
    s_{\alpha_t}(\alpha_{n-1} - \alpha_n) &=
    \begin{cases}
        -(\alpha_{n-1} + \alpha_n) & \text{if $t = n-1$},\\
        \alpha_{n-1} + \alpha_n & \text{if $t = n$},\\
        \alpha_{n-1} - \alpha_n & \text{otherwise}.
    \end{cases}
\end{align}
Hence, letting
\begin{align}
    \Mdom' \coloneqq \Set*{ \pm\left( \sum_{i = t'}^{n-2} 2\alpha_i + \alpha_{n-1} + \alpha_n \right) }{ t' \in [s, n-1] } \cup \{ \pm(\alpha_{n-1} - \alpha_n) \} \subseteq \Mdom,\notag
\end{align}
we have $W_{\RS_{J'}} \Mdom' = \Mdom'$.
Moreover, for any $p \in \Mdom'$ and $t \in [s, n]$,
we obtain $|\inpr{p}{x_t}| \leq 1$, since the coefficient of each $\alpha_t$ in $p \in \Mdom'$ is $\pm a(x_t)$ or $0$.
Thus, for any $x \in \sk{\Lk{\spCC^\star}{A}}$ with $\tau(x) \in J'$, or equivalently, $t \in [s,n]$ and $w \in W_{\RS_{J'}}$ with $x = w x_t$,
we obtain $|\inpr{\alpha_{n-1} + \alpha_{n}}{x}| = |\inpr{\alpha_{n-1} + \alpha_{n}}{wx_t}| = |\inpr{w^{-1}(\alpha_{n-1} + \alpha_{n})}{x_t}| \leq 1$.
This implies that $\pm(\alpha_{n-1}+\alpha_n)/2 \in \B{\hat{\rho}_{\ell'}}$.

Let $\alpha_+ \coloneqq (\hat{\alpha} + \alpha' + \alpha_{n-1}+\alpha_n)/2 = \sum_{i = i_1}^n \alpha_i$
and $\alpha_- \coloneqq (\hat{\alpha} - \alpha' - \alpha_{n-1}-\alpha_n)/2 = \sum_{i = i_2}^{n-2} \alpha_i$.
Then,
we have $\alpha_+, \alpha_- \in \RS$ (see \Cref{tab:classical-positive-roots}) and $\alpha_+, \alpha_- \in \eps \hat{\alpha} + \B{\hat{\rho}_\ell} + \B{\hat{\rho}_{\ell'}} \subseteq \face{\B{\rho}}{\eps\hat{\alpha}} \subseteq \B{\rho}$ by the above arguments and the relation~\eqref{eq:F}.
In particular, $\alpha_+, \alpha_- \in \RS_0$.
Since $\hat{\alpha} = \alpha_+ + \alpha_-$,
we obtain $\hat{\alpha} \in \cone{\RS_0}$.
\end{proof}

The following is a direct consequence of the proof of \Cref{prop:tangent-cone},
which is of independent interest.
\begin{corollary}
    Let $\RS$ be an irreducible root system,
    $B$ an M-convex set of type $\RS$,
    and $p \in B$.
    Suppose that $\alpha \in \RS$ belongs to $\tcone{\conv{B}}{p}$,
    and
    let $\eps \coloneqq \sup{\Set{ \eps' }{ p + \eps' \alpha \in \conv{B} }}$.
    Then, the following hold.
    \begin{enumerate}[label={\textup{(\arabic*)}}]
        \item If $\RS$ is of type A, then $\eps \geq 1$, i.e., $p + \alpha \in B$.
        \item If $\RS$ is of type B, C, or D,
        then either $\eps \geq 1$, i.e., $p + \alpha \in B$,
        or $\eps = 1/2$ and there are two roots $\alpha_+, \alpha_- \in \RS$ such that $p + \alpha_+, p + \alpha_- \in B$ and $\alpha = \alpha_+ + \alpha_-$.
    \end{enumerate}
\end{corollary}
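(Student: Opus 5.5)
The plan is to read both assertions off the proof of \Cref{prop:tangent-cone}, with no new case analysis. First reduce to $p = 0$: replace $\rho$ by $\rho - p$, which stays integral by \Cref{lem:rho-p}, and $B$ by $B - p$. Here $\rho$ is the integral spherical submodular function on $\spCC^\star$ with $\conv{B} = \B{\rho}$, supplied by \Cref{cor:Mpoly-subm}. Irreducibility of $\RS$ is assumed, so the component decomposition is not needed.

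Since $\alpha \in \RS$ lies in $\tcone{\conv{B}}{p}$ and $\conv{B}$ is an M-convex polyhedron, we have $\alpha \in \bar{\RS}_p(B)$, so $\eps > 0$. If $\eps \geq 1$, convexity gives $p + \alpha \in \conv{B}$. Since $p + \alpha \in \Mdom$ and $B = \conv{B} \cap \Mdom$, it follows that $p + \alpha \in B$. This handles the first alternative in both (1) and (2).

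Now suppose $\eps < 1$. The supremum is attained because $\conv{B}$ is closed, so $\eps\alpha$ is exactly the boundary point $\eps\hat{\alpha}$ used in the proof of \Cref{prop:tangent-cone}, with $\hat{\alpha} = \alpha$. Then \Cref{cl:positive-F} applies verbatim. By part (4), some simple-root coefficient of $\alpha$ is at least $2$.

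For type~A, every positive root has all coefficients at most one, which contradicts \Cref{cl:positive-F}~(4). Hence $\eps \geq 1$ always, which is (1).

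For types B, C, and D, the case analysis in that proof shows first that $\alpha$ has one of the listed forms with coefficient $2$ on a block. Combined with $I_1 \neq \emptyset$, this forces $\eps = 1/2$. The same analysis then constructs $\alpha_+, \alpha_- \in \RS \cap \B{\rho - p}$, hence $p + \alpha_\pm \in \conv{B} \cap \Mdom = B$, with $\alpha = \alpha_+ + \alpha_-$. This is exactly (2).

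The main point requiring care is to confirm that the proof of \Cref{prop:tangent-cone} indeed pins down $\eps$ exactly, not merely $\eps \le 1/2$. The argument for this is that the coefficients of $\eps\alpha$ on the $I_1$ indices are positive integers while the corresponding coefficients of $\alpha$ are $2$. Together with $\eps < 1$, this yields $\eps = 1/2$ in each of types B, C, and D.
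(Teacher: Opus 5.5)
Your proposal is correct and follows the paper's own route. The paper states this corollary as a direct consequence of the proof of \Cref{prop:tangent-cone}: the case $\eps \geq 1$ is settled by hole-freeness, type~A is excluded by \Cref{cl:positive-F}~(4), and for types~B, C, D (with $\Dynkin{D}{3}$ reduced to $\Dynkin{A}{3}$) the value $\eps = 1/2$ and the roots $\alpha_\pm$ are taken from the type-by-type analysis. Your justification that $\eps = 1/2$ is forced also matches the paper's use of $I_1 \neq \emptyset$: an index in $I_1$ must carry coefficient $2$, because a coefficient-$1$ index would give the non-integer $\eps$.
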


\section{Characterizations of discrete convexity}\label{sec:chara}
In this section, we provide combinatorial characterizations for the discrete convex sets and functions defined in \Cref{sec:DCA:discrete}. 
\Cref{subsec:chara:L-convex-sets,subsec:chara:L-convex-functions} characterize L-convex sets and functions in terms of the discrete midpoint convexity. 
\Cref{subsec:chara:M-convex-sets} characterizes M-convex sets and functions through hole-freeness and the maximality property.
Finally, \Cref{subsec:chara:M-convex-functions} characterizes M-convex functions based on the M-convexity of their effective domains, the M-sublinearity of their (discrete version of) directional derivatives, and a local-to-global lower bound condition.

\subsection{L-convex sets}\label{subsec:chara:L-convex-sets}
The following theorem summarizes characterizations of L-convex sets,
including the \emph{discrete midpoint convexity} corresponding to the condition~(d) below.
\begin{theorem}\label{thm:chara:L-convex-set}
    For a nonempty set $D \subseteq \Ldom$,
    the following are equivalent:
    \begin{enumerate}[label={\textup{(\alph*)}}]
        \item $D$ is an L-convex set.
        \item $\isc{\euCC}{D}$ is a convex subcomplex of $\euCC$
        \item $\ext{D}$ is convex in $V$.
        \item $\sk{\cell{A}{(x+y)/2}} \subseteq D$ for $x, y \in D$.
    \end{enumerate}
\end{theorem}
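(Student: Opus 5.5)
The plan is to prove the cycle (a) $\Rightarrow$ (d) $\Rightarrow$ (c) $\Rightarrow$ (b) $\Rightarrow$ (a). Most implications follow directly from results already established; the one with real content is (d) $\Rightarrow$ (c).

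(a) $\Rightarrow$ (d). Let $D$ be the vertex set of a nonempty convex subcomplex $\CC'$. Then $\geo{\CC'} = \ext{D}$ is convex by \Cref{lem:convex_subcomplex}. For $x,y \in D$, the midpoint $z = (x+y)/2$ lies in $\geo{\CC'}$, so $\cell{A}{z} \in \CC'$. Hence $\sk{\cell{A}{z}} \subseteq D$.

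(c) $\Rightarrow$ (b) and (b) $\Rightarrow$ (a). The subcomplex $\isc{\euCC}{D}$ has geometric realization $\ext{D}$, and its vertex set is exactly $D$. \Cref{lem:convex_subcomplex}, (c) $\Rightarrow$ (a), then gives (b). Condition (b) is (a) by the definition of an L-convex set.

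(d) $\Rightarrow$ (c). This is the main step. I would first show that $\isc{\euCC}{D}$ is a convex subcomplex, i.e.\ closed under products. By \Cref{lem:convex_subcomplex}(b), it suffices to show that $D$ is gallery-convex. Concretely, for any cells $A,B$ with $\sk{A},\sk{B} \subseteq D$, we need $\sk{AB} \subseteq D$.

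The approach is induction on distances using \Cref{lem:sign-sequence}(7). Fix $x \in D$ and a cell $B \in \isc{\euCC}{D}$. The first goal is to show that every vertex of $xB$, the product cell, lies in $D$; more generally, that every vertex of $\St{\euCC}{x}$ lying on a minimal gallery from $x$ toward $B$ is in $D$. Iterate midpoints: take $y \in \sk{B}$ and $z = (x+y)/2$. Then $\sk{\cell{A}{z}} \subseteq D$ by (d), and $d(\cell{A}{z},B) \le d(x,B)$. If the inequality is strict, a vertex $x'$ of $\cell{A}{z}$ chosen via \Cref{lem:sign-sequence}(6) has $d(x',B) < d(x,B)$, and induction applies. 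If equality holds, then $x \preceq \cell{A}{z}$, so $\cell{A}{z} \in \St{\euCC}{x}$ and points toward $y$.

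Combining these steps, I would show that for each hyperplane $H \in \affcalH$ not strictly separating $D$-cells, the half-space description of \Cref{lem:convex_subcomplex}(b) holds. Equivalently, $\ext{D}$ equals the intersection of the closed half-spaces $\{\inpr{\alpha}{x} \le k\}$ containing $D$. The convexity of $\ext{D}$ then follows.

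An alternative route avoids galleries. One proves $\conv D \subseteq \ext{D}$ by showing that $\ext{D}$ is midpoint-closed on the dense set of points with dyadic barycentric structure, together with the closedness of $\ext{D}$ (a finite local union of closed simplices). Midpoint convexity plus closedness gives convexity. Here the obstacle is passing from vertex midpoints to midpoints of arbitrary points of $\ext{D}$, which again needs a local argument in stars: \Cref{lem:star_complex} gives convexity of $\geo{\St{\euCC}{x}}$, and applying (d) within the star handles it.

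I expect this reduction from vertex-level midpoint closure to genuine convexity to be the main obstacle. I would first try the distance-induction argument via \Cref{lem:sign-sequence}(4)–(7), aiming to show $\isc{\euCC}{D}$ is closed under the product $AB$.
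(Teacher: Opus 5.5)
\textbf{There is a genuine gap in (d)$\Rightarrow$(c), which carries the whole content of the theorem.} Your implications (a)$\Rightarrow$(d), (c)$\Rightarrow$(b) and (b)$\Rightarrow$(a) are fine; the problem is the main step.

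\begin{itemize}
\item \emph{Strict case.} Your target is the cell $xB$, but you move to a vertex $x'$ of $\cell{A}{z}$ with $d(x',B)<d(x,B)$. The induction hypothesis then tells you something about $x'B$ (or $\cell{A}{z}B$), and nothing in your sketch relates that cell to $xB$. Also, such an $x'$ comes from \Cref{lem:sign-sequence}~(5), not (6).
\item \emph{Equality case.} You stop at $x\preceq\cell{A}{z}$. This gives $xB=\cell{A}{z}B$ with $d(\cell{A}{z},B)=d(x,B)$, so the induction makes no progress.
\item \emph{Reduction from general cells.} You never reduce arbitrary pairs $A,B\in\isc{\euCC}{D}$ to pairs in which $A$ is a vertex.
\item \emph{Closing remarks.} The sentence about the half-space description only restates what must be shown. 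Your alternative route explicitly leaves open the passage from vertex midpoints to arbitrary midpoints, which is again the entire difficulty.
\end{itemize}

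\textbf{The missing idea} is to keep the \emph{same} product while replacing $y$ by the midpoint cell. For a vertex pair, $xy=x\cell{A}{w}$ for every $w\in(x,y)$, because the product only sees points of the segment near $x$. The paper proves $AB\in\isc{\euCC}{D}$ by induction on $d(A,B)$ in three reductions:
\begin{itemize}
\item general cells reduce to a vertex times a cell via $AB=x_0(x_1\cdots x_kB)$ and \Cref{lem:sign-sequence}~(4),(5);
\item a vertex times a cell reduces to vertex pairs via the join $xB=(xy_0)\cdots(xy_k)$;
\item a vertex pair is handled by $xy=x\cell{A}{z}$ with $d(x,\cell{A}{z})\le d(x,y)$.
\end{itemize}

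\textbf{Caution at the equality case} if you reconstruct this. \Cref{lem:sign-sequence}~(7), applied with $B=x$, gives $d(x,\cell{A}{z})\le d(x,y)$, but equality forces $y\preceq\cell{A}{z}$, not $x\preceq\cell{A}{z}$ as the paper's text states. The former does occur without the latter. Take type $\RSD$ with $n=4$, $x=(1,\tfrac12,\tfrac12,0)$ and $y=0$. Then $d(x,\cell{A}{z})=d(x,y)=2$, while $\sk{\cell{A}{z}}=\{0,e_1,(\tfrac12,\tfrac12,\tfrac12,\pm\tfrac12)\}$ does not contain $x$.

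\textbf{A clean repair of the vertex case:}
\begin{itemize}
\item If $x\in\cl{\cell{A}{z}}$, then $xy=\cell{A}{z}$ and you are done.
\item Otherwise, let $w$ be the last point of $\cl{\cell{A}{z}}$ on the segment from $z$ to $x$. Then $\sk{\cell{A}{w}}\subseteq\sk{\cell{A}{z}}\subseteq D$ and $xy=x\cell{A}{w}$. Every hyperplane separating $x$ from $\cell{A}{w}$ also separates $x$ from $y$. Since the segment leaves $\cl{\cell{A}{z}}$ at $w$, some $H\in\affcalH(\RS)$ passes through $w$ without containing the segment. This $H$ strictly separates $x$ from $y$ but contains $\cell{A}{w}$. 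Hence $d(x,\cell{A}{w})<d(x,y)$, and the induction hypothesis applies.
\end{itemize}
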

\begin{proof}
    (a) $\Leftrightarrow$ (b) $\Leftrightarrow$ (c) immediately follows from the definition of L-convexity and \Cref{lem:convex_subcomplex,lem:chara-integral-L-convex-poly}.

    (c) $\Rightarrow$ (d).
    Recall that $\ext{D}$ is the geometric realization $\geo{\isc{\euCC}{D}}$ of the subcomplex $\isc{\euCC}{D}$ induced by $D$.
    Take any $x, y \in D$.
    Since $\ext{D}$ is convex in $V$, we have $(x+y)/2 \in \ext{D} = \geo{\isc{\euCC}{D}}$,
    which implies $\cell{A}{(x+y)/2} \in \isc{\euCC}{D}$, i.e., $\sk{\cell{A}{(x+y)/2}} \subseteq D$.

    (d) $\Rightarrow$ (a).
    Let $D \subseteq \Ldom$ be a nonempty set satisfying the condition in the assertion~(d).
    It suffices to show that $\isc{\euCC}{D}$ is convex,
    i.e., for any $A, B \in \isc{\euCC}{D}$, we have $AB \in \isc{\euCC}{D}$.
    Indeed, if $\isc{\euCC}{D}$ is convex,
    then $\geo{\isc{\euCC}{D}}$ is an integral L-convex polyhedron by \Cref{lem:chara-integral-L-convex-poly},
    and hence $D = \geo{\isc{\euCC}{D}} \cap \Ldom$ is an L-convex set.

    We prove this by induction on $d(A,B)$.
    If $d(A,B) = 0$, then $A$ and $B$ are joinable and $\sk{AB} = \sk{A} \cup \sk{B}$.
    Hence we obtain $AB \in \isc{\CC}{D}$, since $\sk{A} \cup \sk{B} \subseteq D$.

    Suppose that $d(A,B) > 0$.
    We consider the following three cases: (i) $A$ and $B$ are vertices, (ii) $A$ is a vertex, and (iii) $A$ and $B$ are general simplices.

    (i). Suppose that $\sk{A} = \{x\}$ and $\sk{B} = \{y\}$.
    We first observe that, for any $w \in (x, y)$,
    we have $xy = x\cell{A}{w}$.
    Indeed, the convex combination $(1-\eps)x + \eps y$ for sufficiently small $\eps > 0$ can be simulated by some convex combination of $x$ and $w$.
    Set $z \coloneqq (x+y)/2 \in (x, y)$.
    The condition (d) says that $\cell{A}{z} \in \isc{\CC}{D}$.
    By \Cref{lem:sign-sequence}~(7),
    we have $d(x, y) \geq d(x, \cell{A}{z})$.
    If $d(x, y) > d(x, \cell{A}{z})$,
    then by induction hypothesis we have $\isc{\CC}{D} \ni x\cell{A}{z} = xy$.
    If $d(x, y) = d(x, \cell{A}{z})$,
    then \Cref{lem:sign-sequence}~(7) says that $x$ is a vertex of $\cell{A}{z}$,
    which implies that $xy = x \cell{A}{z} = \cell{A}{z}$.
    Hence $xy \in \isc{\CC}{D}$ holds.

    (ii). Suppose that $A = x$ and $\sk{B} = \{ y_0, y_1, \dots, y_k \}$.
    Since $d(x, B) \geq d(x, y_i)$,
    by induction hypothesis and the argument in (i)
    we have $xy_i \in \isc{\CC}{D}$ for each $i$.
    Furthermore, $xy_0, xy_1, \dots, xy_k$ are joinable and its join is
    $(xy_0)(xy_1) \cdots (xy_k) = xy_0\cdots y_k = AB$,
    in which the first equality follows from \Cref{lem:sign-sequence}~(2) and the second follows from $A = x$ and $B = y_0 y_1 \dots y_k$.
    Hence we obtain $AB \in \isc{\CC}{D}$.

    (iii).
    Suppose that $\sk{A} = \{ x_0, \dots, x_k \}$ and $\sk{B} = \{ y_0, \dots, y_\ell \}$.
    Then, we have $d(A, B) = d(x_0 x_1 \cdots x_k, B) \geq d(x_0 x_1 \dots x_{k-1}, x_kB) \geq \cdots \geq d(x_0, x_{1} \cdots x_k B)$,
    where the inequalities follow from \Cref{lem:sign-sequence}~(4).
    Hence, by induction hypothesis and the argument in (ii),
    we obtain $x_0x_{1} \cdots x_k B = AB \in \isc{\CC}{D}$.
\end{proof}

Using the explicit descriptions of $\sk{\cell{A}{(x+y)/2}}$ for $x, y \in \sk{\euCC(\RS)}$ given in \Cref{subsec:DMC},
we can rephrase the condition (d) of \Cref{thm:chara:L-convex-set} with more explicit formulae for $\RSA$, $\RSB$, $\RSC$, and $\RSD$,
which are given in \Cref{cor:DMC:set}.

\subsection{L-convex functions}\label{subsec:chara:L-convex-functions}

As in the case of L-convex sets, we can characterize L-convex functions by the discrete midpoint convexity.
Here, the \emph{discrete midpoint convexity} is referred to as the condition for a function $\funcdoms{f}{\Ldom}{\ER}$ with $\dom{f} \neq \emptyset$ that,
for each $x, y \in \Ldom$, it satisfies
\begin{align}\label{eq:midpointconvexity}
    f(x) + f(y) \geq 2\sum_{v \in \Ldom} \cell{\lambda}{z}(v) f(v),
\end{align}
where $z$ is the midpoint $(x+y)/2$ of $x$ and $y$.
\begin{theorem}\label{thm:chara:L-conv-func}
    For a function $\funcdoms{f}{\Ldom}{\ER}$ with $\dom{f} \neq \emptyset$,
    the following are equivalent:
    \begin{enumerate}[label={\textup{(\alph*)}}]
        \item $f$ is L-convex.
        \item $\ext{f}$ is convex in $V$.
        \item $f$ satisfies the discrete midpoint convexity~\eqref{eq:midpointconvexity}.
        \item $\dom{f}$ is L-convex and $f$ satisfies the discrete midpoint convexity~\eqref{eq:midpointconvexity} for any two adjacent maximal simplices $A_1, A_2$ of $\euCC[\dom{f}]$,
        $x \in \sk{A_1} \setminus \sk{A_2}$, and $y \in \sk{A_2} \setminus \sk{A_1}$.
    \end{enumerate}
\end{theorem}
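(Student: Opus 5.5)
I will prove the cycle (a) $\Rightarrow$ (b) $\Rightarrow$ (c) $\Rightarrow$ (d) $\Rightarrow$ (a).

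For (a) $\Rightarrow$ (b), \Cref{prop:L-conv:unique} gives $\ext{f}=\hat{f}$, where $\hat{f}$ is a primal-integral locally polyhedral L-convex function, so $\ext{f}$ is convex.

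For (b) $\Rightarrow$ (c), I copy the argument for (b) $\Rightarrow$ (c) in \Cref{thm:submodular-chara}:
\[
f(x)+f(y)=\ext{f}(x)+\ext{f}(y)\geq 2\ext{f}(z)=2\sum_v\cell{\lambda}{z}(v)f(v).
\]
This uses the convention that the right-hand side is $+\infty$ when some vertex of $\cell{A}{z}$ lies outside $\dom{f}$.

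For (c) $\Rightarrow$ (d), note first that for $x,y\in\dom{f}$ the inequality forces $\sk{\cell{A}{z}}\subseteq\dom{f}$, since the coefficients $\cell{\lambda}{z}(v)$ are positive on $\sk{\cell{A}{z}}$. Then $\dom{f}$ satisfies condition (d) of \Cref{thm:chara:L-convex-set}, so it is L-convex. The restricted midpoint inequality in (d) is a special case of (c).

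The substantive step is (d) $\Rightarrow$ (a). By \Cref{cor:ext=conv=clconv}, $P\coloneqq\ext{\dom{f}}$ is an integral L-convex polyhedron, and it is the realization of the convex subcomplex $\CC'\coloneqq\euCC[\dom{f}]$. By \Cref{lem:chamber-complex}, $\CC'$ is a chamber complex whose maximal simplices all have the same dimension as $P$. The function $\ext{f}$ is affine on each maximal simplex. I will show that $\ext{f}$ is convex on $P$ by a local-to-global argument: a continuous function on a convex set that is piecewise affine on a locally finite triangulation is convex if and only if it is convex across every codimension-one wall. It suffices to check walls interior to $P$.

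At an interior wall shared by adjacent maximal simplices $A_1,A_2$, take $x\in\sk{A_1}\setminus\sk{A_2}$ and $y\in\sk{A_2}\setminus\sk{A_1}$. The midpoint $z$ lies in the wall's supporting hyperplane $H$ (the wall is a reflection hyperplane exchanging $x$ and $y$ up to translation). Moreover $z\in\geo{\St{\euCC}{F}}$ for the common face $F$, so $\cell{A}{z}$ lies in $\cl A_1\cup\cl A_2$, and in fact in the closed wall region. The inequality $f(x)+f(y)\geq 2\ext{f}(z)$ then says that the affine extension of $\ext{f}|_{A_1}$ evaluated at $y$ is at most $f(y)$. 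This is exactly convexity across the wall.

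With $\ext{f}$ convex on $P$ and $+\infty$ outside, $\ext{f}$ is a proper convex function. Its restriction to each chamber of $\CC'$ is affine and coincides with $\ext{f}$ there, so each closed chamber is contained in some member of $\fan(\ext{f})$. The next step is to show that $\dom{\ext{f}}$ is covered by members of $\fan(\ext{f})$ that are integral L-convex polyhedra. For each $x\in\dom{f}$, the set $\argmin(\ext{f}-p)$, for suitable $p$, is a union of closed cells of $\CC'$ and convex. By \Cref{lem:convex_subcomplex} and \Cref{lem:chara-integral-L-convex-poly} it is therefore an integral L-convex polyhedron, as in the proof of (b) $\Rightarrow$ (c) of \Cref{prop:spherical-submodular-cone}. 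Applying \Cref{prop:loc-poly-LM-convex-sufficient-condition}~(1) to this covering family gives that $\ext{f}$ is primal-integral locally polyhedral L-convex, and hence $f$ is L-convex.

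\textbf{Main obstacle.} The hardest part is the wall-crossing step. One must verify that $\cell{A}{z}$ has all its vertices in $\sk{A_1}\cup\sk{A_2}$ and that the midpoint inequality is equivalent to convexity across the wall. This probably needs \Cref{lem:sign-sequence}~(7) together with the star convexity of \Cref{lem:star_complex}. One must also pass rigorously from local convexity to global convexity. I would first try the standard argument of restricting to generic line segments inside $P$: these cross only finitely many walls transversally, and along such a segment the piecewise-affine function has nondecreasing slopes.
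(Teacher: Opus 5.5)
Your plan follows the paper's proof. The implications (a)$\Rightarrow$(b)$\Rightarrow$(c)$\Rightarrow$(d) are argued the same way. For (d)$\Rightarrow$(a) the paper likewise:
\begin{itemize}
  \item proves convexity of $\ext{f}$ on the union of two adjacent closed maximal simplices;
  \item glues this along segments whose interiors meet no simplex of codimension at least two;
  \item passes to arbitrary segments by continuity;
  \item observes that every member of $\fan(\ext{f})$ is a convex union of closed cells, hence an integral L-convex polyhedron.
\end{itemize}
The paper gets local polyhedrality directly from the local finiteness of $\affcalH(\RS)$ rather than from \Cref{prop:loc-poly-LM-convex-sufficient-condition}~(1), but your variant also works.

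One assertion in your wall-crossing step is false: the midpoint $z=(x+y)/2$ need not lie on the wall. The reflection argument works only when $A_1,A_2$ are chambers of $\euCC$, i.e., when $\dom f$ is full-dimensional. In (d) the maximal simplices of $\euCC[\dom f]$ may be lower-dimensional.

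For example, in type $\RSB$ with $n=3$, let $\dom f=\Ldom\cap\{x\in V\mid x(2)=x(3)\}$. Let $A_1$ have vertices $0$, $e_1$, $\frac12(e_1+e_2+e_3)$, and let $A_2$ have vertices $0$, $\frac12(e_2+e_3)$, $\frac12(e_1+e_2+e_3)$. These are adjacent maximal simplices of $\euCC[\dom f]$, and $x=e_1$, $y=\frac12(e_2+e_3)$ even have different marks $1$ and $2$. Yet
\[
z=\left(\tfrac12,\tfrac14,\tfrac14\right)=\tfrac14\cdot 0+\tfrac14 e_1+\tfrac12\cdot\tfrac12(e_1+e_2+e_3)
\]
lies in the relative interior of $A_1$, off the line through the common edge. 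For the same reason, $z\in\geo{\St{\euCC}{F}}$ does not by itself give $\cell{A}{z}\subseteq\cl A_1\cup\cl A_2$, since the star of $F$ contains further chambers.

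What you actually need, and what the paper proves, is only that $\cl A_1\cup\cl A_2$ is convex. After restricting $\affcalH(\RS)$ to $\aff\geo{\euCC[\dom f]}$, the cells $A_1,A_2$ are chambers separated by exactly one hyperplane. So their union is cut out by the common signs at all other hyperplanes, and hence $z\in\cl A_1\cup\cl A_2$.

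Now assume $z\in\cl A_1$, and write $\ell_i$ for the affine extension of $\ext f|_{\cl A_i}$. Then (d) gives $\ell_1(x)+\ell_2(y)\ge 2\ell_1(z)=\ell_1(x)+\ell_1(y)$, so $\ell_2(y)\ge\ell_1(y)$. Since $\ell_2-\ell_1$ vanishes on $\aff(\cl A_1\cap\cl A_2)$ and $y$ lies strictly on the $A_2$ side, this is exactly convexity across the wall. With this repair the rest of your argument goes through.
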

\begin{proof}
  The implication (a) $\Rightarrow$ (b) immediately follows from \Cref{prop:L-conv:unique}.

  (b) $\Rightarrow$ (a).
  Since the affine hyperplane arrangement $\affcalH(\RS)$ is locally finite, i.e., every point of $V$ has a neighborhood that meets only finitely many affine hyperplanes in $\affcalH(\RS)$ (see e.g.,~\cite[Definition~10.7]{Abramenko2010}) and
  $\ext{f}$ is affine on the closure of each simplex,
  we can conclude that
  $\ext{f}$ is locally polyhedral by its convexity.
  
  Let $P = \argmin(\ext{f} - p) \in \fan(\ext{f})$ and take $x \in P$.
Since $\ext{f}-p$ is affine on $\cl{\cell{A}{x}}$ and attains its minimum
at $x \in\cell{A}{x} = \relint(\cl{\cell{A}{x}})$, every vertex of $\cell{A}{x}$ also belongs to $P$.
Thus, $P$ is the geometric realization of a subcomplex of
$\euCC$.
In addition,
since $P$ is convex, this subcomplex is convex by \Cref{lem:convex_subcomplex}, and hence $P$ is an integral
L-convex polyhedron by \Cref{lem:chara-integral-L-convex-poly}.
  Therefore,
  $\ext{f}$ is a primal-integral locally polyhedral L-convex function.
  Since $f = \ext{f}|_{\Ldom}$,
  we conclude that $f$ is L-convex.

  (b) $\Rightarrow$ (c).
  By the definition of $\ext{f}$ and its convexity, for $x,y \in \Ldom$, we have
  $f(x) + f(y) = \ext{f}(x) + \ext{f}(y) \geq 2\ext{f}((x+y)/2) = 2\sum_{v \in \Ldom} \cell{\lambda}{(x+y)/2}(v) f(v)$.

  (c) $\Rightarrow$ (d).
  The discrete midpoint convexity~\eqref{eq:midpointconvexity} of $f$
  implies that $\sk{\cell{A}{(x+y)/2}} \subseteq \dom{f}$ for any $x, y \in \dom{f}$.
  Thus, by \Cref{thm:chara:L-convex-set},
  $\dom{f}$ is an L-convex set.
  The latter statement is immediate.

  (d) $\Rightarrow$ (b).
  Suppose that $f$ satisfies the condition~(d).
  Recall that $\dom{\ext{f}} = \geo{\euCC[\dom{f}]}$ by the definition of the Lov\'{a}sz extension.
  Since $\dom{f}$ is an L-convex set,
  $\euCC[\dom{f}]$ is a nonempty convex subcomplex of $\euCC$ by \Cref{thm:chara:L-convex-set},
  implying that $\euCC[\dom{f}]$ is a chamber complex by \Cref{lem:chamber-complex}.

  We see the following claim:
  \begin{claim}\label{cl:A1-U-A2}
    For any two adjacent maximal simplices (chambers) $A_1, A_2$ of $\euCC[\dom{f}]$,
    the set $\cl{A_1} \cup \cl{A_2}$ is convex.
    In addition,
    the restriction of $\ext{f}$ to $\cl{A_1} \cup \cl{A_2}$
    is convex.
  \end{claim}
  \begin{proof}[Proof of \Cref{cl:A1-U-A2}]  
    After restricting the affine hyperplane arrangement $\affcalH(\RS)$ to $\aff{\geo{\euCC[\dom{f}]}}$, the simplices $A_1$ and $A_2$ are adjacent chambers separated by $H \coloneqq \aff(\cl{A_1}\cap\cl{A_2})$,
    implying that
    their sign sequences differ only at this hyperplane, or equivalently,
    that they are strictly separated by exactly one hyperplane
    (see, e.g.,~\cite[Proposition~3.78]{Abramenko2010}).
    Hence, $\cl{A_1}\cup\cl{A_2}$ is
    the intersection of the closed half-spaces determined by their
    common signs at all hyperplanes other than $H$, and is therefore
    convex.

    Since $\ext{f}$ is affine on $\cl{A_1}$ and on $\cl{A_2}$,
    the restriction of $\ext{f}$ to $\cl{A_1} \cup \cl{A_2}$ is the piecewise affine function consisting of two pieces $\cl{A_1}$ and $\cl{A_2}$.
    Let $\ell_1$ (resp. $\ell_2$) denote the affine extension of $\ext{f}|_{\cl{A_1}}$ (resp. $\ext{f}|_{\cl{A_2}}$) to $\aff{\geo{\euCC[\dom{f}]}}$.
    Then, we have $\ell_1(z) = \ell_2(z)$ for $z \in H$.
    Let $x \in \sk{A_1} \setminus \sk{A_2}$ and $y \in \sk{A_2} \setminus \sk{A_1}$;
    we may assume that $(x+y)/2 \in \cl{A_1}$.
    The condition~(d), or more precisely,
    the discrete midpoint convexity~\eqref{eq:midpointconvexity} for $x$ and $y$,
    implies that
    \begin{align}
      \ell_1(x) + \ell_2(y) = \ext{f}(x) + \ext{f}(y) \geq 2 \ext{f}((x+y)/2) = 2 \ell_1((x+y)/2) = \ell_1(x) + \ell_1(y).
    \end{align}
    Thus, $\ell_2(y) \geq \ell_1(y)$, or equivalently, $\ell_2(z) \geq \ell_1(z)$ holds for $z \in \cl{A_2}$.
    In addition,
    since the affine function $\ell_2-\ell_1$ vanishes on the hyperplane $H$ and
$A_1$ and $A_2$ lie on opposite sides of $H$,
the inequality $\ell_1(z) \geq \ell_2(z)$ holds for $z \in \cl{A_1}$.
Thus, we obtain $\ext{f}(z) = \max\{ \ell_1(z), \ell_2(z)\}$ for $z \in \cl{A_1} \cup \cl{A_2}$,
which proves that $\ext{f}$ is convex on $\cl{A_1} \cup \cl{A_2}$.
  \end{proof}

  Take any $a, b \in \dom{\ext{f}} = \geo{\euCC[\dom{f}]}$;
  our aim is to show that $\ext{f}$ is convex on the closed segment $[a, b]$,
  which implies that $\ext{f}$ is convex.
  Note that $[a, b] \subseteq \dom{\ext{f}}$ by the convexity of $\dom{\ext{f}}$.
  In the following argument, codimension is taken relative to $\geo{\euCC[\dom{f}]}$.

  We first consider the case where the segment
$[a,b]$ is contained in the closure of some simplex,
or
the relative interior $(a,b)$ meets no simplex of codimension at
least two;
we say that the pair $(a,b)$ is \emph{in simple position} in this case. 

By the local finiteness of the affine hyperplane arrangement
$\affcalH(\RS)$,
the segment $[a,b]$ meets only finitely many affine hyperplanes.
Thus, there are distinct points $a \eqqcolon a_0,a_1,\dots,a_k \coloneqq b$
appearing in this order on $[a,b]$ and maximal simplices $A_1,A_2,\ldots,A_k$
of $\euCC[\dom{f}]$ such that $[a_{i-1},a_i] \subseteq \cl{A_i}$.
If $k = 1$, i.e., $[a,b]$ is included in the closure of some simplex, then $\ext{f}$ is affine, and hence, convex on $[a,b]$.
Suppose that $k \geq 2$.
By the assumption, $A_i$ and $A_{i+1}$ are
adjacent for every $i\in[k-1]$.
It follows from \Cref{cl:A1-U-A2} and the inclusion $[a_{i-1}, a_{i+1}] \subseteq \cl{A_{i}} \cup \cl{A_{i+1}}$
that
$\ext{f}$ is convex on $[a_0, a_2], [a_1, a_3], [a_2, a_4], \dots, [a_{k-2}, a_{k}]$.
Since consecutive intervals overlap in a nondegenerate interval,
these convexity properties can be successively glued together.
Thus, $\ext{f}$ is convex on $[a_0, a_{k}] = [a, b]$.

We then consider the case where the pair $(a,b)$ is not in simple position,
i.e., $[a,b]$ is not contained in the closure of a single simplex
and its relative interior meets some simplex of codimension at
least two.
By the local finiteness of $\affcalH(\RS)$,
there are 
only finitely many simplices of $\euCC[\dom{f}]$ that meet $\conv(\{a\} \cup B(b; \eps))$,
where $B(b; \eps)$ denotes the open ball centered at $b$ with radius $\eps$.
Let $B_1, B_2, \dots, B_k$ be all of such simplices of $\euCC[\dom{f}]$ having codimension at least two.
Then, the set $a + \cone(B_i - a)$ is of codimension at least one for each $i$.
Let $B' \coloneqq (B(b; \eps) \cap \geo{\euCC[\dom{f}]}) \setminus \bigcup_{i = 1}^k(a + \cone(B_i - a))$.
Since $B'$ is a nonempty relatively open subset of $\aff{\geo{\euCC[\dom{f}]}}$,
it cannot be covered
by a finite union of subsets of codimension at least one.
Therefore, we obtain $B'\neq\emptyset$.
By the definition of $B'$,
the pair $(a, b')$ is in simple position for any $b' \in B'$.

By the above argument,
there is a sequence $(b_m)_{m = 1,2,\dots}$ such that $(a, b_m)$ is in simple position for all $m$
and $\lim_{m \to \infty} b_m = b$.
Since $[a, b_m]$ is in simple position,
for every $\lambda \in [0,1]$, we have $(1-\lambda) \ext{f}(a) + \lambda \ext{f}(b_m) \geq \ext{f}((1-\lambda)a + \lambda b_m)$.
Since $\ext{f}$ is continuous on $\geo{\euCC[\dom{f}]}$,
letting $m \to \infty$, we obtain $(1-\lambda) \ext{f}(a) + \lambda \ext{f}(b) \geq \ext{f}((1-\lambda)a + \lambda b)$,
which implies that $\ext{f}$ is convex on $[a,b]$.
\end{proof}

\begin{remark}\label{rmk:proof:subm}
    We can show the implication (c) $\Rightarrow$ (b) of \Cref{thm:submodular-chara} as follows.
    Fix $x_* \in \Ldom$
    and let $\funcdoms{\rho}{\nbor{x_*}}{\ER}$ be a function with $\rho(x_*) < +\infty$
    satisfying the submodular inequality~\eqref{eq:subm-ineq}.
    Let $\rho_{\Ldom}$ be the function obtained from $\rho$ by extending the domain to $\Ldom$,
    i.e.,
    \begin{align}
        \rho_{\Ldom}(x) \coloneqq
        \begin{cases}
            \rho(x) & \text{if $x \in \nbor{x_*}$},\\
            +\infty & \text{otherwise}.
        \end{cases}
    \end{align}
    Then, $\rho_{\Ldom}$ clearly satisfies the discrete midpoint inequality~\eqref{eq:midpointconvexity},
    which implies that $\ext{\rho_{\Ldom}}$ is convex in $V$ by (c) $\Rightarrow$ (b) of \Cref{thm:chara:L-conv-func}.
    Thus, $\ext{\rho}$ is convex,
    since $\ext{\rho}$ is the restriction of $\ext{\rho_{\Ldom}}$ to $\geo{\St{\euCC}{x_*}}$,
    where we recall that $\geo{\St{\euCC}{x_*}}$ is a convex set by \Cref{lem:star_complex}.
\end{remark}

As in \Cref{subsec:chara:L-convex-sets},
using the explicit descriptions of $\cell{\lambda}{z} \in \cvcoeff{\sk{\cell{A}{z}}}$ for $x, y \in \sk{\euCC(\RS)}$ with $z \coloneqq (x+y)/2$ given in \Cref{subsec:DMC},
we can rephrase the condition (d) of \Cref{thm:chara:L-conv-func} with more explicit formulae for $\RSA$, $\RSB$, $\RSC$, and $\RSD$,
which are given in \Cref{cor:DMC:func}.

\subsection{M-convex sets}\label{subsec:chara:M-convex-sets}
We summarize characterizations of M-convex sets, including the maximality property~\eqref{cond:MP} (corresponding to the condition~(c)) and the discrete tangent-cone property (corresponding to the condition~(d)), as follows.
Here,
a subset $B \subseteq \Mdom$ is said to be \emph{hole-free} if $B = (\conv{B}) \cap \Mdom$.
\begin{theorem}\label{thm:chara:M-convex-set}
    For a nonempty set $B \subseteq \Mdom$,
    the following are equivalent:
    \begin{enumerate}[label={\textup{(\alph*)}}]
        \item $B$ is an M-convex set.
        \item $B = \B{\rho} \cap \Mdom$ for some integral spherical submodular function $\rho$.
        \item $B$ is a hole-free set having the maximality property~\eqref{cond:MP}.
        \item $B$ is a hole-free set and $q-p \in \cone(\tRS{p}{B})$ for any $p,q \in B$,
        i.e.,
        there exist $\beta_1, \beta_2, \dots, \beta_k \in \tRS{p}{B}$ and nonnegative reals $\mu_1, \mu_2, \dots, \mu_k \in \R_+$ such that
        $p + \sum_{\ell = 1}^k \mu_\ell \beta_\ell = q$.
    \end{enumerate}
\end{theorem}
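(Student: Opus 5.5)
I would prove the cycle (a) $\Leftrightarrow$ (b), (a) $\Rightarrow$ (c) $\Rightarrow$ (a), and (a) $\Rightarrow$ (d) $\Rightarrow$ (a), relying throughout on \Cref{cor:Mpoly-subm}, \Cref{thm:chara:M-poly}, and \Cref{prop:tangent-cone}.

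\emph{(a) $\Leftrightarrow$ (b).} By definition, $B$ is M-convex if and only if $B = P \cap \Mdom$ for some integral M-convex polyhedron $P$. By \Cref{cor:Mpoly-subm}, such polyhedra are exactly the base polyhedra $\B{\rho}$ of integral spherical submodular functions $\rho$ on a spherical expression, and we may take the expression to be $\spCC^\star$. So this equivalence is immediate.

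\emph{(a) $\Leftrightarrow$ (c).} Suppose $B$ is M-convex. Then $B = (\conv{B}) \cap \Mdom$, so $B$ is hole-free, and $\conv{B}$ is an integral M-convex polyhedron. Given $p, q \in B$ and a simple system $\simp$, the only-if part of \Cref{thm:chara:M-poly}~(1) gives a point $r \in \conv{B}$ with $r \geq_\simp p, q$. This $r$ need not be a lattice point, so I would repeat the LP argument of that proof inside the integral polyhedron $\conv{B}$. The feasible region $\conv{B} \cap (p + \cone{\simp})$ has a vertex optimum, and I would argue it can be taken in $\Mdom$. The cleaner route is to use \Cref{cor:M-convex-set:MP}-type reasoning: maximize a strictly positive combination of the fundamental coweights over $\conv{B} \cap (p+\cone\simp) \cap (q + \cone\simp)$; the problem is feasible by \Cref{thm:chara:M-poly}~(1). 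The optimum is attained at a face of $\conv{B}$ exposed by a functional $\sum_i c_i \omega_i^\vee$ with $c_i > 0$, which lies in an open chamber. That face is therefore a single point, which is a vertex of $\conv{B}$, and by integrality it lies in $\Mdom$, hence in $B$. Since the tangent cone at this vertex is M-convex, it dominates $p$ and $q$. Conversely, if $B$ is hole-free with \eqref{cond:MP}, then \Cref{thm:chara:M-poly}~(2) says $\conv{B}$ is an integral M-convex polyhedron, and hole-freeness gives $B = \conv{B} \cap \Mdom$.

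\emph{(a) $\Rightarrow$ (d).} Hole-freeness holds as above. For $p, q \in B$ we have $q - p \in \tcone{\conv{B}}{p}$, and this cone equals $\cone(\tRS{p}{B})$ by \Cref{prop:tangent-cone}.

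\emph{(d) $\Rightarrow$ (a).} This is the step I expect to be the main obstacle. I would show that $P \coloneqq \conv{B}$ is an M-convex polyhedron, which requires $P$ to be closed. For $p \in B$, hypothesis (d) gives $B - p \subseteq \cone(\tRS{p}{B})$, and hence $\tcone{P}{p} = \cl\cone(B - p) = \cone(\tRS{p}{B})$, an M-convex cone. For a general point $x \in P$, I would argue that $x$ lies in the relative interior of a face spanned by points of $B$, and that the tangent cone at $x$ is the sum of the lineality directions of that face and the tangent cone at a point of $B$ in that face. Roots generate the face directions, since these are differences of points of $B$, each lying in a cone of roots. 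To control infinite $B$, closedness and polyhedrality would follow from \Cref{lem:tcone-poly-chara} once I show there are only finitely many tangent cones, which holds because each is generated by a subset of $\RS$. The delicate point is passing from tangent cones at lattice points to those at arbitrary points of $\clconv{B}$. I would handle it by showing $\clconv{B} = \bigcap_{p\in B}(p + \cone\tRS{p}{B})$ and invoking \Cref{thm:chara:M-poly} via \eqref{cond:MP}: for a simple system $\simp$, starting from $p$, repeatedly add roots in $\tRS{p}{B}$ that are positive with respect to $\simp$ to reach $\simp$-dominant points. Then $\conv{B}$ is integral by \Cref{thm:chara:M-poly}~(2), and hole-freeness finishes the proof.
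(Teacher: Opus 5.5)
Your direct argument for (a)$\Rightarrow$(c) breaks down when $\conv{B}$ is unbounded, because the linear program need not have an optimum. For instance, $B=\Mdom$ is an M-convex set with $\conv{B}=V^*$. More generally, suppose the recession cone of $\conv{B}$ contains a root $\beta$ that is positive with respect to $\simp$. Then moving from any feasible point in direction $\beta$ stays in $\conv{B}\cap(p+\cone{\simp})\cap(q+\cone{\simp})$ and strictly increases $\sum_i c_i\omega_i^\vee$, so there is no maximizing vertex. Your vertex argument is sound only when this maximum exists (for example, for polytopes). That argument says: the face of $\conv{B}$ maximizing a chamber-interior functional is a single point $v$, this point lies in $B$, and $\tcone{\conv{B}}{v}\subseteq -\cone{\simp}$ because every root in that cone pairs negatively with the functional. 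Even then, you should maximize over $\conv{B}$ itself and then observe that $v$ is feasible, rather than assert that the constrained optimum is an exposed face of $\conv{B}$. In the unbounded case, \Cref{thm:chara:M-poly}~(1) only gives a dominating point that need not lie in $\Mdom$. The only-if part of \Cref{thm:chara:M-poly}~(2) is not available either, since the paper derives it from this very theorem.

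The missing step is half-present in your (d)$\Rightarrow$(a) paragraph. Drop its tangent-cone detour: it appeals to \Cref{lem:tcone-poly-chara}, which presupposes that $\conv{B}$ is closed, and to an unproved identity for $\clconv{B}$. Instead, prove (d)$\Rightarrow$(c) directly.
\begin{itemize}
\item Given $\simp$ and $p,q\in B$, start from $r\coloneqq p$.
\item While some $\simp$-coordinate satisfies $r_i<q_i$, apply (d) to the pair $(r,q)$, so using $\tRS{r}{B}$ rather than $\tRS{p}{B}$. This gives $\beta\in\tRS{r}{B}$ with positive $i$th $\simp$-coefficient.
\item Every root lies in $\cone{\simp}\cup\cone(-\simp)$, so $\beta$ is a positive root. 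Hence $r+\beta\in B$ and $r+\beta\geq_\simp r\geq_\simp p$.
\item Both $\beta$ and $q-r$ have integral $\simp$-coordinates (\Cref{lem:root_integrality}). So $\sum_i\max\{q_i-r_i,0\}$ decreases by at least one per step, and the process terminates at some $r\in B$ with $r\geq_\simp p$ and $r\geq_\simp q$.
\end{itemize}
Combine this with your (a)$\Rightarrow$(d) via \Cref{prop:tangent-cone} and your (c)$\Rightarrow$(a) via the if part of \Cref{thm:chara:M-poly}~(2) together with hole-freeness. This yields the cycle (a)$\Rightarrow$(d)$\Rightarrow$(c)$\Rightarrow$(a), which is exactly the paper's proof and makes a direct (a)$\Rightarrow$(c) argument unnecessary.
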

\begin{proof}
    (a) $\Leftrightarrow$ (b) follows from \Cref{cor:Mpoly-subm}.

    (a) $\Rightarrow$ (d).
    Let $B$ be an M-convex set.
    The hole-freeness of $B$ is clear.
    Since $\conv{B}$ is an integral M-convex polyhedron,
    for any $p, q \in B$,
    we have $q - p \in \tcone{\conv{B}}{p} = \cone(\tRS{p}{B})$,
    where the identity follows from \Cref{prop:tangent-cone}.
    Thus, the assertion~(d) holds.

    (d) $\Rightarrow$ (c).
    It suffices to show that $B$ has the maximality property~\eqref{cond:MP}.
    Fix any simple system $\Delta = \{ \alpha_1, \alpha_2, \dots, \alpha_n \}$
    and
    take any $p = \sum_{i = 1}^n p_i \alpha_i, q = \sum_{i = 1}^n q_i \alpha_i \in B$.
    Let $r = \sum_{i = 1}^n r_i \alpha_i \coloneqq p$.
    By the assumption,
    there exist $\beta_1, \beta_2, \dots, \beta_k \in \RS$ and nonnegative reals $\mu_1, \mu_2, \dots, \mu_k \in \R_+$ satisfying
    \begin{align}
        r + \sum_{\ell = 1}^k \mu_\ell \beta_\ell = q \qquad \text{and} \qquad r+\beta_\ell \in B \quad (\ell \in [k]).
    \end{align}
    Suppose that $r_i < q_i$.
    Since $r + \sum_{\ell = 1}^k \mu_\ell \beta_\ell = q$,
    there is at least one of $\beta_1, \beta_2, \dots, \beta_k$, say $\beta_1 = \sum_{i = 1}^n b_i \alpha_i$, such that $b_i > 0$.
    Hence we obtain that $r_i < r_i + b_i$ and $\beta_1$ is a positive root with respect to $\simp$, i.e., $r <_\simp r+\beta_1$.
    We update $r \leftarrow r + \beta_1 \in B$.
    Since each positive root has nonnegative integral coefficients with respect to $\simp$, the value $\sum_{i=1}^n \max\{q_i-r_i,0\}$
decreases by at least one at each update.
Hence, the procedure terminates after finitely many steps;
    we finally obtain $r \in B$ such that $r \geq_\Delta p$ and $r \geq_\Delta q$.
    Thus, $B$ has \eqref{cond:MP}.

    (c) $\Rightarrow$ (a).
    By the if part of \Cref{thm:chara:M-poly}~(2),
    which has been already shown,
    we have
    $\conv{B}$ is an integral M-convex polyhedron.
    Since $B$ is hole-free by the assumption,
    we obtain $B = (\conv{B}) \cap \Mdom$,
    which implies that $B$ is an M-convex set, i.e., the assertion~(a).
\end{proof}

\begin{remark}\label{rmk:Mpoly:proof}
    We are ready to complete the proof of \Cref{thm:chara:M-poly}~(2).
    If $P$ is an integral M-convex polyhedron,
    then $B \coloneqq P \cap \Mdom$ forms an M-convex set
    and it has~\eqref{cond:MP} by (a) $\Rightarrow$ (c) of \Cref{thm:chara:M-convex-set}.
    Since $P = \conv{B}$,
    we obtain the only-if part of \Cref{thm:chara:M-poly}~(2).
\end{remark}

\subsection{M-convex functions}\label{subsec:chara:M-convex-functions}
The following theorem provides an intrinsic characterization of M-convex functions,
in terms of the discrete tangent-cone property related to the condition~(d) of \Cref{thm:chara:M-convex-set}. 
\begin{theorem}\label{thm:chara-M-conv-func}
    A function $\funcdoms{g}{\Mdom}{\ER}$ with $\dom{g} \neq \emptyset$ is M-convex if and only if it satisfies the following conditions:
    \begin{itemize}
        \item $\dom{g}$ is an M-convex set;
        \item for any $p \in \dom{g}$, the function $\gamma_{g,p}$ is valid,
        or equivalently, $\cone{\gamma_{g,p}}$ is M-sublinear; and
        \item for any $p, q \in \dom{g}$,
        we have
        $g(q) \geq g(p) + (\cone{\gamma_{g,p}})(q-p)$,
        or equivalently,
        there exist $\beta_1, \beta_2, \dots, \beta_k \in \RS_p(\dom{g})$ and nonnegative reals $\mu_1, \mu_2, \dots, \mu_k \in \R_+$ satisfying
        \begin{align}\label{eq:ineq:M-convex-func}
            p + \sum_{\ell = 1}^k \mu_\ell \beta_\ell = q \quad \text{and} \quad g(q) \geq g(p) + \sum_{\ell = 1}^k \mu_\ell \gamma_{g,p}(\beta_\ell).
        \end{align}
    \end{itemize}
\end{theorem}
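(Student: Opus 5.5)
The necessity direction is routine. Sufficiency is the substance, and there the plan is to build the continuous extension explicitly and then apply \Cref{prop:loc-poly-LM-convex-sufficient-condition}~(2).

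\emph{Necessity.} Let $g$ be M-convex and let $\hat{g}$ be its unique primal-integral extension, which equals $\conv{g}$ by \Cref{prop:M-conv:unique}. Then $\dom{g}=\dom{\hat g}\cap\Mdom$, and $\dom{\hat g}$ is an integral M-convex polyhedron by \Cref{prop:dom:LM-convex}, so $\dom{g}$ is an M-convex set. By \Cref{prop:directional:derivative:M}, $\cone{\gamma_{g,p}}=\hat g'(p;\cdot)$, which is M-sublinear. Hence $\gamma_{g,p}$ is valid. The inequality $g(q)\ge g(p)+\hat g'(p;q-p)$ holds by convexity of $\hat g$. For the equivalent form \eqref{eq:ineq:M-convex-func}, note that $q-p\in\tcone{\conv\dom g}{p}=\cone(\RS_p(\dom g))$ by \Cref{prop:tangent-cone}. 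Then write $\hat g'(p;q-p)$ using the linear piece of $\hat g'(p;\cdot)$ on a cone $\cone(\RS_p(P\cap\Mdom))\ni q-p$, exactly as in the proof of \Cref{prop:directional:derivative:M}.

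\emph{Sufficiency.} Set $\hat g\coloneqq\conv{g}$. I would show that for each $p\in\dom g$ the set $P_p\coloneqq p+\B{\rho_p}$ belongs to $\fan(\hat g)$, where $\rho_p$ is the spherical submodular function whose L-sublinear extension is the conjugate-side object of $\cone{\gamma_{g,p}}$. Concretely, pick any $x$ in the relative interior of a maximal cone of $\fan(\cone\gamma_{g,p})$ and let $P_p\coloneqq\argmin(\hat g-x)$; the first target is $p\in P_p$. The plan has three steps.

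\emph{Step 1: $p$ minimizes $g-x$ on $\dom g$.} For any $q\in\dom g$, the third condition gives $g(q)-g(p)\ge(\cone\gamma_{g,p})(q-p)$. Moreover $(\cone\gamma_{g,p})(d)\ge\inpr{d}{x}$ for all $d$, provided $x$ is chosen as a subgradient of $\cone\gamma_{g,p}$ at $0$ (that is, $x\in\D{\gamma_{g,p}}$). This gives $p\in\argmin(g-x)$, hence $p\in\argmin(\hat g-x)$ since $\hat g=\conv g$.

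\emph{Step 2: the faces have the right shape.} I need $\argmin(\hat g-x)$ to be an integral M-convex polyhedron for suitable $x$, and these sets must cover $\dom\hat g=\conv\dom g$. The minimizer set equals $\conv(\argmin(g-x)\cap\dom g)$. I would show that $B_x\coloneqq\argmin(g-x)$ is an M-convex set via \Cref{thm:chara:M-convex-set}~(d), using the fact that the third condition applied at each point of $B_x$ keeps exchanges inside $B_x$. Hole-freeness of $B_x$ should follow from hole-freeness of $\dom g$ together with Step 1 applied at a point of $B_x$, since a lattice point in $\conv B_x$ lies in $\dom g$ and cannot have larger value than the affine interpolation.

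\emph{Step 3: covering and conclusion.} Covering follows because every $p\in\dom g$ lies in some $\argmin(\hat g-x)$ by Step 1, and these sets together with their faces cover $\conv\dom g$. To see this, I would argue that each point of $\conv\dom g$ lies in $\conv$ of lattice points sharing a common subgradient, using local finiteness as in the proof of \Cref{prop:loc-poly-LM-convex-sufficient-condition}. Then \Cref{prop:loc-poly-LM-convex-sufficient-condition}~(2) gives that $\hat g$ is a primal-integral locally polyhedral M-convex function. Its restriction to $\Mdom$ is $g$ because $\hat g(p)\le g(p)$, and Step 1 gives equality.

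I expect the main obstacle to be Step 2: proving that $\argmin(g-x)$ is hole-free and satisfies the tangent-cone property. The third condition only controls increments measured from $p$, not between two arbitrary minimizers. I would first try to show that $\dom g$ covered by integral M-convex pieces forces the interpolation to be exact, via \Cref{prop:tangent-cone} applied to $\dom g$.
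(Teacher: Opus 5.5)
Your necessity argument, your Step~1 (if $x\in\D{\gamma_{g,p}}$ then $p\in\argmin(g-x)$), and your proof that $\hat{g}|_{\Mdom}=g$ are correct; for the last one, lattice points outside $\dom{g}$ are handled by hole-freeness. Your overall architecture is also the paper's: integral M-convex pieces $\conv(\argmin(g-x))$ covering $\conv(\dom{g})$, followed by \Cref{prop:loc-poly-LM-convex-sufficient-condition}~(2). However, the two load-bearing steps are only asserted.

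\emph{First gap.} In Step~2 you state $\argmin(\hat{g}-x)=\conv(\argmin(g-x))$, but only $\supseteq$ is immediate. Since $\dom{g}$ may be unbounded, the infimum defining $(\conv{g})(q)$ need not be attained. So a priori a point $q\notin\conv{B_x}$ could reach $\min(\hat{g}-x)$ only in the limit along lattice points escaping to infinity. The missing ingredient is the global bound $(\clconv{g})(q)\geq g(p)+(\cone{\gamma_{g,p}})(q-p)$ for $p\in\dom{g}$ and $q\in V^*$. It follows from your own Step~1. By the third condition, $\conv(\dom{g})-p\subseteq\dom(\cone{\gamma_{g,p}})$, so $q-p$ lies in a linear piece $\argmin(\cone{\gamma_{g,p}}-y)$ with $y\in\D{\gamma_{g,p}}$. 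Step~1 then makes $g(p)+\inpr{\cdot-p}{y}$ an affine minorant of $g$, which gives the bound. Now assume $B_x$ is M-convex and write $\conv{B_x}=\B{\rho}$. Take $q\notin\conv{B_x}$, a violated inequality $\inpr{q}{\hat{x}}>\rho(\hat{x})$, and a lattice point $\hat{p}\in B_x$ on the face where that inequality is tight. The map $\alpha\mapsto\gamma_{g,\hat{p}}(\alpha)-\inpr{\alpha}{x}$ is nonnegative and vanishes exactly on $\tRS{\hat{p}}{B_x}$. By \Cref{prop:tangent-cone}, its conical hull therefore vanishes exactly on $\tcone{\conv{B_x}}{\hat{p}}$, which does not contain $q-\hat{p}$. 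Hence the bound at $\hat{p}$ is strict, and $q\notin\argmin(\hat{g}-x)$ since $\hat{g}=\conv{g}\geq\clconv{g}$. This is exactly where the type-dependent tangent-cone identity enters; your last sentence points in this direction but contains no argument.

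\emph{Second gap.} The covering in Step~3 is unsupported. Knowing that each lattice point of $\dom{g}$ lies in some $\conv{B_x}$ does not make these sets cover $\conv(\dom{g})$. \Cref{lem:argmin-support}~(1) covers $\relint(\dom{\hat{g}})$ only by members of $\fan(\hat{g})$, and you do not know these are of the form $\conv{B_x}$: a priori $\argmin(\hat{g}-x)\neq\emptyset$ while $\argmin(g-x)=\emptyset$. The paper handles this in three steps:
\begin{itemize}
  \item it proves that the sets $\conv{B_x}$ form a polyhedral complex, using $\fan(\cone{\gamma_{g,p}})$ and the identity above for closure under faces;
  \item it shows, via \Cref{lem:tcone-fan}, that from any point $p$ of their union one can move a positive distance toward any $q\in\conv(\dom{g})$ inside a single member;
  \item it finishes with a nearest-point argument using the closedness in \Cref{lem:union-closed:M}.
\end{itemize}
An appeal to ``local finiteness'' does not replace this.

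By contrast, the obstacle you flagged is the easy part. The third condition holds for every ordered pair in $\dom{g}$, so applying it between two minimizers of $g-x$ gives the exchange property of $B_x$ directly. For hole-freeness, let $p$ be a lattice point with $p=\sum_i\lambda_i p_i$, $p_i\in B_x$; it lies in $\dom{g}$ because $\dom{g}$ is hole-free. Apply the third condition at $p$ itself toward each $p_i$ and average. Sublinearity and $(\cone{\gamma_{g,p}})(0)=0$ (from validity) then give $(g-x)(p)\leq\min(g-x)$. Finally, the cones of $\fan(\cone{\gamma_{g,p}})$ lie in $V^*$ while $x\in V$, so ``$x$ in the relative interior of a maximal cone'' is ill-typed; the right condition on $x$ is simply $x\in\D{\gamma_{g,p}}$.
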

Before the proof, we review a general fact on the tangent cone of a polyhedron.
Recall again that,
for a polyhedron $P \subseteq V^*$ and $p \in P$,
let $\face{P}{p}$ denote the unique face of $P$ satisfying $p \in \relint{\face{P}{p}}$.
\begin{lemma}\label{lem:tcone-fan}
    Let $P$ be a polyhedron and $p, q \in P$
    such that $q \in \face{P}{p}$.
    Then, we have $\tcone{\tcone{P}{q}}{p-q} = \tcone{P}{p}$.
\end{lemma}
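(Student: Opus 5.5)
The plan is to prove both inclusions from the facts that tangent cones of polyhedra are determined by the active constraints, and that $\tcone{P}{q}$ is itself a polyhedral cone whose active constraints at $p-q$ are exactly those of $P$ active at $p$. Write $P = \Set{ r \in V^* }{ \inpr{r}{x_j} \leq b_j \ (j \in J) }$ with $J$ finite; this is an outer-representation as in \eqref{eq:poly-outer}. For $r \in P$ let $J(r) \coloneqq \Set{ j \in J }{ \inpr{r}{x_j} = b_j }$. The standard fact underlying \Cref{lem:poly:tcone} (and used in the proof of \Cref{lem:L-convex-tcone}) gives
\begin{align}
\tcone{P}{r} = \Set{ d \in V^* }{ \inpr{d}{x_j} \leq 0 \ (j \in J(r)) }.
\end{align}

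First I show $J(q) \supseteq J(p)$ when $q \in \face{P}{p}$. Since $\face{P}{p}$ is a face with $p$ in its relative interior, any constraint active at $p$ is active on all of $\face{P}{p}$. Indeed, $\face{P}{p}$ equals $\Set{ r \in P }{ \inpr{r}{x_j} = b_j \ (j \in J(p)) }$, which is the standard description of the minimal face containing $p$. Hence every $j \in J(p)$ is active at $q$.

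Next I compute the tangent cone of $K \coloneqq \tcone{P}{q}$ at the point $p - q$. Note that $p - q \in K$ because $p \in P$. The cone $K$ is a polyhedron with constraints $\inpr{d}{x_j} \leq 0$ for $j \in J(q)$. The constraint $j$ is active at $p-q$ exactly when $\inpr{p-q}{x_j} = 0$, that is, when $\inpr{p}{x_j} = b_j$, using that $j \in J(q)$. So the active set of $K$ at $p-q$ is $J(q) \cap J(p) = J(p)$, by the inclusion from the previous step. Applying the displayed formula to $K$, with right-hand sides equal to $0$, gives
\begin{align}
\tcone{K}{p-q} = \Set{ d \in V^* }{ \inpr{d}{x_j} \leq 0 \ (j \in J(p)) } = \tcone{P}{p}.
\end{align}

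The only delicate point is justifying the description of $\face{P}{p}$ through the active set $J(p)$. For this I would cite \cite[Chapter~8.3]{Schrijver2000-xk}, or argue directly. If some $j \in J(p)$ were inactive at some $q' \in \face{P}{p}$, then since $p \in \relint{\face{P}{p}}$, the point $p + t(p - q')$ lies in $\face{P}{p} \subseteq P$ for small $t > 0$. But the linear functional $\inpr{\cdot}{x_j}$ would exceed $b_j$ there, a contradiction. Everything else is routine bookkeeping of active constraints.
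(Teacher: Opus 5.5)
Your proof is correct, but it takes a different route from the paper. You fix an outer-representation of $P$ and use the active-constraint formula $\tcone{P}{r} = \Set{ d \in V^* }{ \inpr{d}{x_j} \leq 0 \ (j \in J(r)) }$. The lemma then reduces to the inclusion $J(p) \subseteq J(q)$ for $q \in \face{P}{p}$. Your relative-interior argument proves exactly this inclusion. Only this inclusion is needed, not the full active-set description of $\face{P}{p}$ from Schrijver. Once you have it, the active set of $\tcone{P}{q}$ at $p-q$ is $J(q) \cap J(p) = J(p)$, and you are done.

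The paper argues locally and without choosing a representation. It writes $\tcone{P}{q} = \cone\{d_1,\dots,d_m\}$ and uses the polytope $q + \conv\{0, \eps_1 d_1, \dots, \eps_m d_m\} \subseteq P$ to show that $P$ coincides with $q + \tcone{P}{q}$ on a small closed neighborhood of $q$. It then moves $p$ to $p' = q + \eps(p-q)$ inside that neighborhood, so that the two tangent cones agree at $p'$. Finally it transfers the equality back to $p$ by applying \Cref{lem:poly:tcone} twice, using $\face{P}{p'} = \face{P}{p}$ and the analogous fact for the cone $\tcone{P}{q}$.

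Your version is shorter and more self-contained. In fact, your inclusion argument applied in both directions reproves \Cref{lem:poly:tcone}, so you need no black box beyond the active-constraint formula. The paper's version avoids any choice of inequalities and isolates the geometric input: near $q$, a polyhedron looks like $q$ plus its tangent cone.
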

\begin{proof}
    Suppose that $\tcone{P}{q} = \cone\{d_1, d_2, \dots, d_m\}$.
    Since $P$ is a polyhedron,
    for each $i \in [m]$,
    there is $\eps_i > 0$ such that $q + \eps_i d_i \in P$.
    Let $P' \coloneqq q + \conv\{0,\eps_1 d_1, \eps_2 d_2, \dots, \eps_m d_m\} \subseteq P$.
    Then, there is a closed neighborhood $N_q$ of $q$ such that $N \coloneqq P' \cap N_q = P \cap N_q = (\tcone{P}{q} + q) \cap N_q$.
    Thus, for $p' \coloneqq q + \eps(p-q) \in N \cap \relint{\face{P}{p}}$ with a sufficiently small $\eps > 0$,
    we have $\tcone{N}{p'} = \tcone{P}{p'} = \tcone{P'}{p'} = \tcone{\tcone{P}{q}}{p' - q}$.
    Furthermore, since $\face{P}{p'} = \face{P}{p}$ and $\face{(\tcone{P}{q})}{p' - q} = \face{(\tcone{P}{q})}{p - q}$,
    we obtain $\tcone{P}{p'} = \tcone{P}{p}$ and $\tcone{\tcone{P}{q}}{p'-q} = \tcone{\tcone{P}{q}}{p-q}$ by \Cref{lem:poly:tcone}.
    Thus, $\tcone{P}{p} = \tcone{\tcone{P}{q}}{p-q}$ holds.
\end{proof}

\begin{proof}[Proof of \Cref{thm:chara-M-conv-func}]
  The only-if part immediately follows from \Cref{prop:dom:LM-convex} and \Cref{prop:directional:derivative:M}.

We show the if part.
Suppose that $\funcdoms{g}{\Mdom}{\ER}$ satisfies the three conditions in the statement.
Since $\dom{g}$ is an M-convex set, $\conv(\dom{g})$ is an integral M-convex polyhedron,
particularly, a closed convex set.
Thus, we have $\dom(\clconv{g}) = \conv(\dom{g})$ by \Cref{lem:conv=clconv}.

Our aim in the following is to show that
\begin{enumerate}[label={\textup{(\Roman*)}}, ref={\textup{\Roman*}}]
    \item \label{assertion:I}
    there is a subset $\fan' \subseteq \fan(\clconv{g})$ such that every member of $\fan'$ is an integral M-convex polyhedron and $\conv(\dom{g}) = \bigcup_{P \in \fan'} P$; and
    \item \label{assertion:II}
    $g = \clconv{g}|_{\Mdom}$.
\end{enumerate}
If \eqref{assertion:I} holds, then $\clconv{g}$ is a primal-integral locally polyhedral M-convex function by \Cref{prop:loc-poly-LM-convex-sufficient-condition}~(2); and if \eqref{assertion:II} holds, then $g$ is the restriction of $\clconv{g}$ to $\Mdom$.
Thus, $g$ forms an M-convex function.

To this end, we first prove the following.
Here, we define
\begin{align}
    \barfan(g) &\coloneqq \Set{ \conv(B) }{ B \in \fan(g) },\\
    \barfan(g; q) &\coloneqq \Set{ P \in \barfan(g) }{ q \in P } \qquad (q \in V^*).
\end{align}
Also, let $\geo{\barfan(g)} \coloneqq \bigcup_{P \in \barfan(g)} P$.
\begin{claim}\label{cl:barfan}
    \begin{enumerate}[label={\textup{(\arabic*)}}]
        \item Every $B \in \fan(g)$ is an M-convex set.
        \item Let $p \in \dom{g}$ and $x \in V$.
        Then, $p \in \argmin(g-x)$ if and only if $(\gamma_{g, p} - x)(\alpha) \geq 0$ for all $\alpha \in \RS$.
        \item Let $p \in \dom{g}$ and $x \in V$.
        Then, $\argmin(\cone{\gamma_{g,p}} - x) \in \fan(\cone{\gamma_{g,p}})$ if and only if
        $\argmin(g-x) \in \fan(g;p)$.
        In addition, we have $\argmin(\cone{\gamma_{g,p}} - x) = \tcone{\conv(\argmin(g-x))}{p}$ if it is nonempty.
        \item For $x \in V$ with $\argmin(g-x) \neq \emptyset$,
        we have $\argmin(\clconv{g} - x) = \conv(\argmin(g-x))$.
        In particular, we have $\barfan(g) \subseteq \fan(\clconv{g})$.
        \item $\barfan(g)$ forms a polyhedral complex consisting of integral M-convex polyhedra.
        \item For $p \in \geo{\barfan(g)}$ and $q \in \conv(\dom{g})$, there is $P \in \barfan(g; p)$ such that $q-p \in \tcone{P}{p}$.
    \end{enumerate}
\end{claim}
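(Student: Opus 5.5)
The six assertions are proved in the order (2), (1), (3), (4), (5), (6). Throughout, we use only the three hypotheses of the if part and results stated earlier.

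\textbf{Assertion (2).} If $p \in \argmin(g-x)$, then $(g-x)(p+\alpha) \geq (g-x)(p)$ for every $\alpha \in \RS$, which is exactly $(\gamma_{g,p}-x)(\alpha) \geq 0$. Conversely, suppose $\gamma_{g,p}-x \geq 0$ on $\RS$. Taking conical combinations gives $\cone{\gamma_{g,p}} - x \geq 0$ on $V^*$. The third hypothesis then yields, for every $q \in \dom{g}$,
\[
g(q) - g(p) \geq (\cone{\gamma_{g,p}})(q-p) \geq \inpr{q-p}{x},
\]
so $p \in \argmin(g-x)$.

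\textbf{Assertion (1).} Let $B \coloneqq \argmin(g-x) \neq \emptyset$ and $m \coloneqq \min(g-x)$. We verify condition (d) of \Cref{thm:chara:M-convex-set}.

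For the tangent-cone part, take $p,q \in B$. The third hypothesis gives roots $\beta_\ell \in \RS_p(\dom{g})$ and $\mu_\ell > 0$ with $q-p = \sum_\ell \mu_\ell \beta_\ell$ and $g(q)-g(p) \geq \sum_\ell \mu_\ell \gamma_{g,p}(\beta_\ell)$. Subtracting $x$ gives
\[
0 = (g-x)(q) - (g-x)(p) \geq \sum_\ell \mu_\ell (\gamma_{g,p}-x)(\beta_\ell).
\]
Each term on the right is $\geq 0$ by (2), so every term vanishes. Hence $p+\beta_\ell \in B$ for all $\ell$, and therefore $q-p \in \cone(\tRS{p}{B})$.

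For hole-freeness, take $r \in \conv{B} \cap \Mdom$ and write $r = \sum_q \lambda(q) q$ with $q \in B$. Then $r \in \dom{g}$ because $\dom{g}$ is hole-free. Set $h \coloneqq \cone{\gamma_{g,r}} - x$; this is a proper sublinear function because $\gamma_{g,r}$ is valid, so $h(0)=0$. The third hypothesis at base point $r$ gives
\[
0 = h\Bigl(\sum_q \lambda(q)(q-r)\Bigr) \leq \sum_q \lambda(q) h(q-r) \leq \sum_q \lambda(q)\bigl[(g-x)(q) - (g-x)(r)\bigr] = m - (g-x)(r).
\]
Thus $(g-x)(r) \leq m$, so $r \in B$.

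\textbf{Assertion (3).} Since $\cone{\gamma_{g,p}} - x$ is sublinear, its minimum is either $0$ or $-\infty$. So its argmin is nonempty iff $\gamma_{g,p}-x \geq 0$ on $\RS$, iff $p \in \argmin(g-x)$ by (2). In that case, as in the proof of \Cref{lem:M-sublinear},
\[
\argmin(\cone{\gamma_{g,p}} - x) = \cone\Set{\alpha \in \RS}{\gamma_{g,p}(\alpha) = \inpr{\alpha}{x}} = \cone(\tRS{p}{\argmin(g-x)}).
\]
By (1) and \Cref{prop:tangent-cone}, the right-hand side equals $\tcone{\conv(\argmin(g-x))}{p}$.

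\textbf{Assertion (4).} One inclusion is routine: $\clconv{g} \leq g$ and $\inf(\clconv{g} - x) = \inf(g-x)$ by conjugacy (\Cref{lem:conjugate}). Hence $\conv{B} \subseteq \argmin(\clconv{g}-x)$.

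The reverse inclusion is where I expect the main difficulty. The plan is as follows. Suppose $y \in \argmin(\clconv{g}-x)$ but $y \notin \conv{B}$. Separate $y$ from the integral M-convex polyhedron $\conv{B}$ by some $z \in V$.

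\emph{Step 1.} Show that for small $t > 0$ the function $g - (x+tz)$ attains its minimum, and its argmin is the face of $B$ maximizing $z$.

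\emph{Step 2.} Derive a contradiction: this produces an affine minorant of $g$ that strictly exceeds $\clconv{g}$ at $y$.

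For Step 1, I would argue locally via (2) and (3). Near each point of $\conv{B}$, only finitely many cones $\cone{\gamma_{g,p}}$ are relevant, because they are M-sublinear with integral data. This gives a uniform positive gap for $\gamma_{g,p}-x$ on roots outside the tight set, and that gap absorbs the perturbation $tz$. The step I expect to resist is making this uniform when $B$ is infinite, since no global finiteness is available.

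\textbf{Assertion (5).} By (4), $\barfan(g) \subseteq \fan(\clconv{g})$. Every member is an integral M-convex polyhedron by (1) and \Cref{thm:chara:M-convex-set}. Intersections of members are handled exactly as in \Cref{lem:quasi-polyhedral-convex-function}~(3), using the averaged argument $x \mapsto (x+x')/2$. For closure under faces, a face of $\conv{B}$ is exposed by some $z$; Step 1 of (4) shows that it is $\conv(\argmin(g-x-tz))$ for small $t>0$, hence a member of $\barfan(g)$.

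\textbf{Assertion (6).} Pick $P \in \barfan(g;p)$ minimal, so that $p \in \relint{P}$, and pick a lattice point $q' \in P \cap \Mdom$. By (3), the cones $\tcone{Q}{q'}$ for $Q \in \barfan(g;q')$ form $\fan(\cone{\gamma_{g,q'}})$. Their union is $\dom(\cone{\gamma_{g,q'}}) = \cone(\RS_{q'}(\dom{g}))$, which equals $\tcone{\conv(\dom{g})}{q'}$ by \Cref{prop:tangent-cone}.

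Now transport from $q'$ to $p$ using \Cref{lem:tcone-fan}. Since $p \in \relint{P}$ and $q' \in P$, we have $q' \in \face{Q}{p}$ for any $Q \supseteq P$ in this family, so \Cref{lem:tcone-fan} gives $\tcone{Q}{p} = \tcone{\tcone{Q}{q'}}{p-q'}$. Because these cones form a fan covering $\tcone{\conv(\dom{g})}{q'}$, their tangent cones at $p - q'$ cover $\tcone{\conv(\dom{g})}{p} \ni q-p$. Hence $q-p \in \tcone{Q}{p}$ for some $Q \in \barfan(g;p)$.
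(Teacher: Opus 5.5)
Your arguments for (1), (2) and (3) are correct and essentially the paper's. Your hole-freeness computation with the sublinear function $\cone{\gamma_{g,r}}-x$ is a tidier version of the paper's contradiction argument. The genuine gap is the reverse inclusion $\argmin(\clconv{g}-x)\subseteq\conv{B}$ in (4), which you leave open. Since you derive closure under faces in (5) from the same unproven Step~1, (5) and hence (6) inherit the gap. Your proposed route also cannot work as stated. A local finiteness argument near $\conv{B}$ does not control $g-(x+tz)$ at points of $\dom{g}$ far from $\conv{B}$, where $tz$ is large. What is needed is a \emph{global} minorant of $g$ anchored at a single point, not a uniform local gap.

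That minorant is already in your hypotheses. Fix $\hat{p}\in\dom{g}$ and set $\phi(q)\coloneqq g(\hat{p})+(\cone{\gamma_{g,\hat{p}}})(q-\hat{p})$. The third condition of the theorem says $g\geq\phi$. Since $\gamma_{g,\hat{p}}$ is valid, $\phi$ is a proper polyhedral, hence closed, convex function, so every affine minorant of $\phi$ is an affine minorant of $g$. Therefore
\[
\clconv{g}(q)\geq g(\hat{p})+(\cone{\gamma_{g,\hat{p}}})(q-\hat{p})\qquad(q\in V^*).
\]
The paper obtains the same inequality via the fan of $\cone{\gamma_{g,\hat{p}}}$ and (3).

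Now the reverse inclusion in (4) follows with no uniformity needed:
\begin{itemize}
\item Let $y\notin\conv{B}$ be separated by $z$. Take $\hat{p}$ to be a lattice point of the nonempty integral face of $\conv{B}$ maximizing $z$. Then $\hat{p}\in B$ and $y-\hat{p}\notin\tcone{\conv{B}}{\hat{p}}$.
\item By (2) and (3), $\cone{\gamma_{g,\hat{p}}}-x$ is nonnegative with zero set exactly $\tcone{\conv{B}}{\hat{p}}$. So it is strictly positive at $y-\hat{p}$.
\item The displayed bound then gives $(\clconv{g}-x)(y)>(g-x)(\hat{p})=\min(\clconv{g}-x)$.
\end{itemize}

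For closure under faces in (5), argue locally at a lattice point $p$ of a face $Q$ of $P=\conv{B}$, as the paper does:
\begin{itemize}
\item $\tcone{Q}{p}$ is a face of $\tcone{P}{p}\in\fan(\cone{\gamma_{g,p}})$, hence a member of this polyhedral fan.
\item By (3), $\tcone{Q}{p}$ equals $\tcone{\conv(\argmin(g-x'))}{p}$ for some $x'$.
\item By (4) and \Cref{lem:argmin-support}~(2), $\conv(\argmin(g-x'))$ is a face of $P$ whose relative interior meets that of $Q$, so the two faces coincide.
\end{itemize}
With (4) and (5) in place, your (6) is essentially the paper's argument. The member you select there contains a point of $\relint{P}$, hence has $P$ as a face, which is exactly what \Cref{lem:tcone-fan} requires.
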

\begin{proof}[Proof of \Cref{cl:barfan}]
    (1).
    Take any $B = \argmin(g-x) \in \fan(g)$.
    We only see the case of $x = 0$, i.e., $B = \argmin{g} \neq \emptyset$;
    the proof for the general case is essentially the same.
    Our aim is to show that $\argmin{g}$ satisfies the condition~(d) of \Cref{thm:chara:M-convex-set},
    implying that $\argmin{g}$ is an M-convex set.

    Suppose to the contrary that $\argmin{g}$ is not hole-free.
    Take $p \in (\conv(\argmin{g}) \cap \Mdom) \setminus \argmin{g}$.
    Since $p$ belongs to $\conv(\argmin{g})$,
    there is $\lambda \in \cvcoeff{\argmin{g}}$ with $p = \sum_{q \in \argmin{g}} \lambda(q)q$.
    Suppose that $\supp{\lambda} = \{p_1, p_2, \dots, p_m\}$.
    Since $\dom{g}$ is hole-free and $p \in \conv(\dom{g}) \cap \Mdom$,
    we have $p \in \dom{g}$.
    Thus, we obtain $\min{g} < g(p) < +\infty$.

    By the assumption,
    for each $i \in [m]$,
    there are $\beta_1^i, \beta_2^i, \dots, \beta_{k_i}^i \in \RS_p(\dom{g})$ and positive reals $\mu_1^i, \mu_2^i, \dots, \mu_{k_i}^i$ such that
    \begin{align}
        \sum_{\ell = 1}^{k_i} \mu_\ell^i \beta_\ell^i = p_i - p \quad \text{and} \quad \sum_{\ell = 1}^{k_i} \mu_\ell^i \gamma_{g,p}(\beta_\ell^i) \leq g(p_i) - g(p) < 0.
    \end{align}
    Thus, we obtain
    \begin{align}
        \sum_{i = 1}^m \lambda_i \left(\sum_{\ell = 1}^{k_i} \mu_\ell^i \beta_\ell^i\right) = \sum_{i = 1}^m \lambda_i (p_i - p) = 0 \quad \text{and} \quad \sum_{i = 1}^m \lambda_i \left(\sum_{\ell = 1}^{k_i} \mu_\ell^i \gamma_{g,p}(\beta_\ell^i)\right) < 0,
    \end{align}
    implying that $\phext{\gamma_{g,p}}(0) = -\infty$, i.e., $\gamma_{g,p}$ is not valid.
    This contradicts the assumption that $\gamma_{g, p}$ is valid.
    Therefore, $\argmin{g}$ is hole-free.

    Take any $p, q \in \argmin{g}$.
    By the assumption,
    there are $\beta_1, \beta_2, \dots, \beta_k \in \RS_p(\dom{g})$ and nonnegative reals $\mu_1, \mu_2, \dots, \mu_k \in \R_+$
    such that $p + \sum_{\ell = 1}^k \mu_\ell \beta_\ell = q$
    and $g(q) \geq g(p) + \sum_{\ell = 1}^k \mu_\ell \gamma_{g,p}(\beta_\ell)$;
    we may assume that $\mu_\ell>0$ for all $\ell\in[k]$.
    Since $p \in \argmin{g}$,
    we have $\gamma_{g,p}(\beta_\ell) = g(p + \beta_\ell) - g(p) \geq 0$ for all $\ell$.
    Thus, by $g(q) \geq g(p) + \sum_{\ell = 1}^k \mu_\ell \gamma_{g,p}(\beta_\ell)$
    and $q \in \argmin{g}$,
    we obtain $g(p + \beta_\ell) = g(p) = \min{g}$ for all $\ell$,
    which implies that
    $p + \beta_\ell \in \argmin{g}$,
    or equivalently, $\beta_\ell \in \tRS{p}{\argmin{g}}$.
    Therefore,
    $\argmin{g}$ is an M-convex set by \Cref{thm:chara:M-convex-set}.

    (2).
    We only consider the case of $x = 0$;
    the proof for the general case is essentially the same.

    (Only-if part).
    Suppose that $p \in \argmin{g}$.
    Then, for all $\alpha \in \RS$, we have $g(p+\alpha) \geq g(p)$, implying that $\gamma_{g,p}(\alpha) \geq 0$.

    (If part).
    Suppose that there is $\hat{p} \in \dom{g}$ with $g(\hat{p}) < g(p)$.
    Since $(\cone{\gamma_{g,p}})(\hat{p} - p) \leq g(\hat{p}) - g(p) < 0$,
    there is $\alpha \in \dom{\gamma_{g,p}} \subseteq \RS$
    such that $\gamma_{g,p}(\alpha) < 0$,
    which implies that $g(p+\alpha) = g(p) + \gamma_{g,p}(\alpha)  < g(p)$ for some $\alpha \in \RS$.

    (3).
    Fix an arbitrary $p \in \dom{g}$.
    For $x \in V$, we have
    \begin{align}
      \argmin(\cone{\gamma_{g,p}} - x) \in \fan(\cone{\gamma_{g,p}}) 
      &\iff \argmin(\cone(\gamma_{g,p} - x)) \in \fan(\cone{\gamma_{g,p}})\\
      &\iff (\gamma_{g,p}-x)(\alpha) \geq 0 \quad (\alpha \in \RS)\\
      &\iff p \in \argmin(g-x)\\
      &\iff \argmin(g-x) \in \fan(g; p),
    \end{align}
    where the first equivalence follows from $\cone{\gamma_{g,p}} - x = \cone(\gamma_{g,p} - x)$
    and the third follows from the assertion~(2).
    This implies the first statement.

    In addition, for $x \in V$ with $K \coloneqq \argmin(\cone{\gamma_{g,p}} - x) \in \fan(\cone{\gamma_{g,p}})$, or equivalently, $B \coloneqq \argmin(g-x) \in \fan(g; p)$,
    we have
    \begin{align}
        K = \cone{\Set{ \alpha \in \RS }{ (\gamma_{g,p} - x)(\alpha) = 0 }}
        = \cone(\tRS{p}{B})
        = \tcone{\conv{B}}{p},
    \end{align}
    where the last equality follows from \Cref{prop:tangent-cone}.
    Thus, the second statement follows.

    (4).
    We first see that
    \begin{align}\label{eq:clconv-g-lower-bound}
        \clconv{g}(q) \geq (\cone{\gamma_{g,p}})(q-p) + g(p)
    \end{align}
    holds for $p \in \dom{g}$ and $q \in V^*$.
    Fix an arbitrary $p \in \dom{g}$.
    If $q \notin \dom(\clconv{g})$, then the above inequality clearly holds.
    Take any $q \in \dom(\clconv{g}) = \conv(\dom{g})$.
    Then, we have
    \begin{align}
        q - p \in \conv(\dom{g}) - p \subseteq \tcone{\conv(\dom{g})}{p} = \cone(\tRS{p}{\dom{g}}) = \dom(\cone{\gamma_{g,p}})\notag,
    \end{align}
    where the first equality follows from \Cref{prop:tangent-cone}.
    Since $\cone{\gamma_{g,p}}$ is a polyhedral sublinear function,
    we have $\dom(\cone{\gamma_{g,p}}) = \geo{\fan(\cone{\gamma_{g,p}})}$ by \Cref{lem:quasi-polyhedral-convex-function}~(1),
    i.e.,
    there is $x \in V$ such that $q - p \in \argmin(\cone{\gamma_{g,p}} - x) \in \fan(\cone{\gamma_{g,p}})$.
    Fix such $x$.
    Since $\cone{\gamma_{g,p}}$ is linear on $\argmin(\cone{\gamma_{g,p}} - x)$,
    we obtain $(\cone{\gamma_{g,p}})(q-p) = \inpr{q-p}{x}$.
    Moreover, it follows from $\argmin(\cone{\gamma_{g,p}} - x) \in \fan(\cone{\gamma_{g,p}})$ and the former statement of the assertion~(3) that $\argmin(g-x) \in \fan(g; p)$, i.e., $p \in \argmin(g-x)$.
    Thus, $g(p') \geq g(p) + \inpr{p'-p}{x}$ holds for any $p' \in \Mdom$,
    which implies that $\clconv{g}(q) \geq g(p) + \inpr{q-p}{x}$ by the definition~\eqref{eq:func:clconv} of $\clconv{g}$.
    By combining them,
    we obtain $\clconv{g}(q) \geq g(p) + (\cone{\gamma_{g,p}})(q-p)$.

    We are ready to show the assertion~(4).
    We only consider the case where $x = 0$ and $B \coloneqq \argmin{g} \in \fan(g)$;
    the proof for the general case is essentially the same.
    Since $\argmin{g} \neq \emptyset$,
    we have $\min{g} = \min{\clconv{g}}$,
    which immediately implies
    the inclusion $\argmin(\clconv{g}) \supseteq \conv{B}$.
    Thus,
    it suffices to show the reverse inclusion $\argmin(\clconv{g}) \subseteq \conv{B}$.

    Take any $p \notin \conv{B}$; our aim is to prove that $p \notin \argmin(\clconv{g})$.
    Since $B$ is an M-convex set by the assertion~(1),
    there is an integral spherical submodular function $\rho$ such that $\conv{B} = \B{\rho}$.
    Hence, there is $\hat{x} \in \dom{\rho}$ such that $\inpr{p}{\hat{x}} > \rho(\hat{x})$.
    Since $\B{\rho} = \B{\cone{\rho}}$ and $\rho(\hat{x}) = (\cone{\rho})(\hat{x})$,
    the set $P' \coloneqq \Set{ q \in \B{\rho} }{ \inpr{q}{\hat{x}} = \rho(\hat{x}) }$
    is a nonempty face of $\B{\rho}$ by \Cref{lem:conjugate-poly-sublinear}.
    In addition, $P'$ is an integral M-convex polyhedron by \Cref{lem:face:M-poly}.
    Thus, $B' \coloneqq P' \cap \Mdom = \Set{ q \in B }{ \inpr{q}{\hat{x}} = \rho(\hat{x}) }$ is an M-convex set.

    Fix an arbitrary $\hat{p} \in B'$.
    Since $\hat{p} \in B = \argmin{g}$,
    we have $\argmin(\cone{\gamma_{g, \hat{p}}}) \in \fan(\cone{\gamma_{g,\hat{p}}})$
    by the former statement of assertion~(3).
    Thus,
    the latter statement of the assertion~(3) implies that
    \begin{align}
        (\cone{\gamma_{g, \hat{p}}})(q)
        \begin{cases}
            = 0 & \text{if $q \in \tcone{\conv{B}}{\hat{p}}$},\\
            > 0 & \text{otherwise}.
        \end{cases}
    \end{align}
    Since $\tcone{\conv{B}}{\hat{p}} \subseteq \Set{ q - \hat{p} \in V^* }{ \inpr{q - \hat{p}}{\hat{x}} \leq 0 } = \Set{ q \in V^* }{ \inpr{q}{\hat{x}} \leq \rho(\hat{x}) } - \hat{p}$ and $\inpr{p}{\hat{x}} > \rho(\hat{x})$,
    we have $p - \hat{p} \notin \tcone{\conv{B}}{\hat{p}}$, i.e., $(\cone{\gamma_{g, \hat{p}}})(p - \hat{p}) > 0$.
    Thus,
    \begin{align}
        \clconv{g}(p) \geq (\cone{\gamma_{g, \hat{p}}})(p - \hat{p}) + g(\hat{p}) > g(\hat{p}) = \min(\clconv{g})
    \end{align}
    holds,
    where the first inequality follows from~\eqref{eq:clconv-g-lower-bound}.
    This implies $p \notin \argmin(\clconv{g})$.

    (5).
    By the assertion~(1),
    every $P \in \barfan(g)$ is an integral M-convex polyhedron,
    which implies that any nonempty face of $P \in \barfan(g)$ is an integral M-convex polyhedron by \Cref{lem:face:M-poly}.
    Hence, it suffices to show that $\barfan(g)$ forms a polyhedral complex, i.e.,
    (i) if $P \in \barfan(g)$ and $Q$ is a nonempty face of $P$, then $Q \in \barfan(g)$;
    and
    (ii) if $P, Q \in \barfan(g)$ with $P \cap Q \neq \emptyset$, then $P \cap Q \in \barfan(g)$.

    (i).
    Take any $P \in \barfan(g)$ and any nonempty face $Q$ of $P$.
    Since $P$ is an integral M-convex polyhedron, so is $Q$ by \Cref{lem:face:M-poly}.
    Let $B \coloneqq P \cap \Mdom$ and $B' \coloneqq Q \cap \Mdom$ be the M-convex sets.
    We note that $B \in \fan(g)$.

    Choose an arbitrary $p \in B' \subseteq B$.
    By the assertion~(3),
    we have
    $\tcone{P}{p} \in \fan(\cone{\gamma_{g,p}})$.
    In addition,
    since
    $\tcone{Q}{p}$ is a nonempty face of $\tcone{P}{p}$
    and $\fan(\cone{\gamma_{g,p}})$ forms a polyhedral fan,
    we obtain $\tcone{Q}{p} \in \fan(\cone{\gamma_{g,p}})$.
    Thus, by the assertion~(3), there is $x' \in V$ such that
    \begin{align}
        \argmin(\cone{\gamma_{g,p}} - x') = \tcone{Q}{p} = \tcone{\conv(\argmin(g-x'))}{p}.
    \end{align}

    Let $P' \coloneqq \conv(\argmin(g-x'))$;
    our aim is to show that $Q = P'$,
    which implies that $Q \in \barfan(g)$.
    By the assertion~(4),
    we have $P, P' \in \fan(\clconv{g})$.
    It follows from the identity $\tcone{P'}{p} = \tcone{Q}{p}$
    that $\relint{P'} \cap \relint{Q} \neq \emptyset$,
    implying further that $\relint{P'} \cap P \neq \emptyset$.
    By \Cref{lem:argmin-support}~(2), $P'$ is a face of $P$.
    Since $Q$ is also a face of $P$ and $\relint{P'} \cap \relint{Q} \neq \emptyset$, we obtain $P' = Q$.

(ii).
Take any $P, Q \in \barfan(g)$ with $P \cap Q \neq \emptyset$.
Suppose that $P = \conv(\argmin(g-x))$ and $Q = \conv(\argmin(g-x'))$.
By the assertion~(4),
we have $P, Q \in \fan(\clconv{g})$,
particularly,
$P = \argmin(\clconv{g} - x)$ and $Q = \argmin(\clconv{g} - x')$.
Furthermore,
since $P \cap Q \neq \emptyset$,
we obtain
$P \cap Q = \argmin(\clconv{g} - x) \cap \argmin(\clconv{g} - x') = \argmin(\clconv{g} - (x+x')/2) \in \fan(\clconv{g})$,
implying that
$P \cap Q$ forms a nonempty face of both $P$ and $Q$.
Thus, it follows from (i) that $P \cap Q \in \barfan(g)$.

(6).
Take any $p \in \geo{\barfan(g)}$ and $q \in \conv(\dom{g})$.
    Since $p \in \geo{\barfan(g)}$ and $\barfan(g)$ is a polyhedral complex by the assertion~(5),
    there is a (unique) integral M-convex polyhedron $\hat{P} \in \barfan(g)$ such that $p \in \relint(\hat{P})$.

    Choose an arbitrary $\hat{p} \in \hat{P} \cap \Mdom$.
    Since $\hat{P} \in \barfan(g; \hat{p})$,
    we have $\tcone{\hat{P}}{\hat{p}} \in \fan(\cone{\gamma_{g,\hat{p}}})$
    by the assertion~(3).
    Moreover,
    we obtain
    \begin{align}
        [p,q] - \hat{p} \in \tcone{\conv(\dom{g})}{\hat{p}} = \cone(\tRS{\hat{p}}{\dom{g}}) = \dom(\cone{\gamma_{g,\hat{p}}}) = \geo{\fan(\cone{\gamma_{g,\hat{p}}})},\notag
    \end{align}
    where the first equality follows from \Cref{prop:tangent-cone}
    and the last follows from \Cref{lem:quasi-polyhedral-convex-function}
    and the fact that $\cone{\gamma_{g, \hat{p}}}$ is a polyhedral sublinear function.
    Since $\fan(\cone{\gamma_{g,\hat{p}}})$ is a polyhedral fan consisting of finitely many polyhedral cones,
    there is a sufficiently small $\delta > 0$ such that $[p, p + \delta(q-p)] - \hat{p}$ belongs to some $K \in \fan(\cone{\gamma_{g,\hat{p}}})$,
    which implies $q - p \in \tcone{K}{p-\hat{p}}$.
    In addition,
    by the assertion~(3),
    there is $P \in \barfan(g; \hat{p})$ with $K = \tcone{P}{\hat{p}}$.
    Since $p - \hat{p} \in K = \tcone{P}{\hat{p}}$ and $P$ is a polyhedron,
    we have $\hat{p} + \eps(p - \hat{p}) \in P$
    for a sufficiently small $\eps > 0$.
    On the other hand, by $p \in \relint{\hat{P}}$ and $\hat{p} \in \hat{P}$,
    we obtain $P \cap \relint{\hat{P}} \neq \emptyset$.
    Thus, by \Cref{lem:argmin-support}~(2) and the assertion~(4), $\hat{P}$ is a face of $P$
    and $\face{P}{p} = \hat{P}$.
    By \Cref{lem:tcone-fan},
    we obtain $\tcone{P}{p} = \tcone{\tcone{P}{\hat{p}}}{p - \hat{p}} = \tcone{K}{p - \hat{p}} \ni q - p$.
\end{proof}

We are ready to show the assertions~\eqref{assertion:I} and \eqref{assertion:II},
which complete the proof of the if part (and hence \Cref{thm:chara-M-conv-func}).

We first consider the assertion~\eqref{assertion:II}.
Fix any $p \in \Mdom$.
Since $\dom{g}$ is hole-free,
if $p \notin \dom{g}$,
then $p \notin \conv(\dom{g}) = \dom(\clconv{g})$.
Thus, $g(p) = \clconv{g}(p) = +\infty$.
Suppose that $p \in \dom{g}$.
Since $\fan(\cone{\gamma_{g,p}})$ is a nonempty polyhedral fan,
there is $x \in V$ such that $\argmin(\cone{\gamma_{g,p}} - x) \in \fan(\cone{\gamma_{g,p}})$,
which implies that $p \in \argmin(g-x)$ by \Cref{cl:barfan}~(3).
For such $x$,
by \Cref{cl:barfan}~(4),
we have $\argmin(\clconv{g} - x) = \conv(\argmin(g-x)) \ni p$.
Hence, we obtain $\clconv{g}(p) - \inpr{p}{x} = \min(\clconv{g}-x) = \min(g-x) = g(p) - \inpr{p}{x}$,
which implies that $\clconv{g}(p) = g(p)$.
Thus, the assertion~\eqref{assertion:II} holds.

We finally see the assertion~\eqref{assertion:I}.
By
\Cref{cl:barfan}~(4),
we have $\barfan(g) \subseteq \fan(\clconv{g})$.
Furthermore, by \Cref{cl:barfan}~(1),
every member of $\barfan(g)$ is an integral M-convex polyhedron.
Thus, it suffices to prove that $\conv(\dom{g}) = \geo{\barfan(g)}$.

Suppose to the contrary that $\conv(\dom{g}) \supsetneq \geo{\barfan(g)}$.
Take any $q \in \conv(\dom{g}) \setminus \geo{\barfan(g)}$.
Since $\geo{\barfan(g)}$ is a closed set by \Cref{lem:union-closed:M},
the minimum of $\|q - p'\|_2$ over $p' \in \geo{\barfan(g)}$ exists,
which is strictly positive.
Let $p \in \argmin_{p' \in \geo{\barfan(g)}} \|q-p'\|_2$.
Then, by \Cref{cl:barfan}~(6),
there is $P \in \barfan(g; p)$ such that $q - p \in \tcone{P}{p}$,
i.e., $\hat{p} \coloneqq p + \eps(q-p) \in P \subseteq \geo{\barfan(g)}$ for a sufficiently small $\eps > 0$.
This contradicts the choice of $p$ as the closest point in $\geo{\barfan(g)}$ to $q$,
since $\|q - \hat{p}\|_2 < \|q - p\|_2$.
\end{proof}

\begin{remark}
  The condition that $\dom{g}$ is an M-convex set in \Cref{thm:chara-M-conv-func} can be replaced by the condition that $\dom{g}$ is a nonempty hole-free subset of $\Mdom$.
  Indeed, the nonempty hole-freeness of $\dom{g}$ and the third condition~\eqref{eq:ineq:M-convex-func} in \Cref{thm:chara-M-conv-func} imply that $\dom{g}$ forms an M-convex set by the implication (d) $\Rightarrow$ (a) in \Cref{thm:chara:M-convex-set}.
\end{remark}

\section{Connections with existing concepts}\label{sec:examples}
In this section, we clarify how the discrete convexity notions
introduced in this paper are related to existing classes.
Our purpose is to determine when the present notions coincide with
known ones and, when they do not, to identify the precise differences
in their discrete domains, integrality conditions, and conditions on
function values.

In type~A, the present framework recovers the original DCA,
including L${}^\natural$- and M${}^\natural$-convex functions and
ordinary submodular functions.
In types~B, C, and~D, we compare the present notions with
(valuated) $\Delta$-matroids, (valuated) even $\Delta$-matroids,
bisubmodular functions, BS-convex sets and functions, and UJ-convex
functions.
For general root systems, we compare M-convex polytopes and
spherical submodular functions with Coxeter matroids and Coxeter
submodular functions, respectively.
These comparisons show which existing notions are recovered as
special cases of the present framework and where the present
definitions differ from them.

Several of these correspondences require a change of domain or a
normalization, such as taking the quotient by the all-one direction,
passing to a zero-sum lattice, or rescaling the representative points on the vertices of a spherical Coxeter complex.
We make these identifications explicit and distinguish the
real-valued and integral settings whenever the corresponding notions
of integrality differ.
Throughout this section, the symbol $\simeq$ denotes the natural
correspondence induced by the identification or normalization under
consideration.
The integrality comparisons and explicit midpoint descriptions
used below rely on \Cref{subsec:proof:prop:integral-subm,subsec:DMC}.

\subsection{Original DCA}\label{subsec:original-DCA}

We compare the L- and M-convex functions of type $\RSA$
introduced in this paper with the corresponding function classes in
the original DCA~\cite{Murota1998-yj,Murota2003-yb}.

\subsubsection{L- and L${}^\natural$-convex functions}\label{subsubsec:example:L}
We first present formal definitions of L- and L${}^\natural$-convex functions.
A function $\funcdoms{\tilde{f}}{\Z^{n+1}}{\ER}$ with
$\dom{\tilde{f}} \neq \emptyset$ is said to be \emph{L-convex} if it
satisfies the following two conditions:
\begin{itemize}
  \item It is linear in the direction $\onevec_{n+1}$, i.e., there
  is $r \in \R$ such that
  \begin{align}\label{eq:linear}
    \tilde{f}(x+\onevec_{n+1})
    =
    \tilde{f}(x)+r
    \qquad
    (x\in\Z^{n+1}).
  \end{align}

  \item It satisfies the discrete midpoint convexity, i.e.,
  \begin{align}\label{eq:DMC}
    \tilde{f}(x)+\tilde{f}(y)
    \geq
    \tilde{f}\rbra*{\floor*{\frac{x+y}{2}}}
    +
    \tilde{f}\rbra*{\ceil*{\frac{x+y}{2}}}
    \qquad
    (x,y\in\Z^{n+1}).
  \end{align}
\end{itemize}
Here,
for a real number $r \in \mathbb{R}$, let $\floor{r}$ and $\ceil{r}$ denote the floor function (rounding down to the nearest integer) and the ceiling function (rounding up to the nearest integer), respectively.
A function $f$ on $\Z^{n+1}$ is said to be \emph{translation-invariant} if
$f(x + \onevec_{n+1}) = f(x)$ for all $x \in \Z^{n+1}$.
A translation-invariant L-convex function is exactly an L-convex function with $r = 0$ in the condition~\eqref{eq:linear}.
A function $\funcdoms{f}{\Z^n}{\ER}$ with
$\dom{f} \neq \emptyset$ is said to be
\emph{L${}^\natural$-convex} if there is a translation-invariant L-convex function
$\funcdoms{\tilde{f}}{\Z^{n+1}}{\ER}$ such that
$f(x) = \tilde{f}(x,0)$ for all $x \in \Z^n$.
In other words, an L${}^\natural$-convex function on $\Z^n$ is the
restriction of a translation-invariant L-convex function on $\Z^{n+1}$.
It is well known~\cite[Theorem~7.7]{Murota2003-yb} that the discrete midpoint convexity~\eqref{eq:DMC} characterizes the L${}^\natural$-convexity,
i.e.,
a function $\funcdoms{f}{\Z^n}{\ER}$ with
$\dom{f} \neq \emptyset$ is L${}^\natural$-convex if and only if it satisfies~\eqref{eq:DMC}.
An L-convex or L${}^\natural$-convex function is said to be
\emph{integral} if it is integer-valued.
The classes of (integral) translation-invariant L-convex functions and (integral) L${}^\natural$-convex functions are essentially equivalent.

A translation-invariant function on $\Z^{n+1}$ can be regarded
as a function on the primal discrete domain $\sk{\euCC(\RSA)}$ for type
$\RSA$,
where $\sk{\euCC(\RSA)} = \Z^{n+1}/\spn{\onevec_{n+1}}$ (see~\Cref{tab:classical-coxeter-vertices}).
Indeed, for a translation-invariant function $\tilde{f}$ on $\Z^{n+1}$,
we define the function on $\Z^{n+1} / \spn{\onevec_{n+1}}$ by $[x] \mapsto \tilde{f}(x)$ for $x \in \Z^{n+1}$ and $[x] \coloneqq x + \spn{\onevec_{n+1}} \in \Z^{n+1} / \spn{\onevec_{n+1}}$,
which is well defined.
Conversely, every function on this quotient has a unique
translation-invariant lift to $\Z^{n+1}$ through the quotient map.

The above argument with the characterization of the L-convexity in terms of the discrete midpoint convexity (given in \Cref{cor:DMC:func}) immediately implies that
the class of translation-invariant L-convex functions naturally corresponds to that of L-convex functions of type $\RSA$.
Specifically,
for a translation-invariant L-convex function $\funcdoms{\tilde{f}}{\Z^{n+1}}{\ER}$,
the function
$[x] \mapsto \tilde{f}(x)$ for $x \in \Z^{n+1}$
is L-convex of type $\RSA$.
Conversely, for an L-convex function $\funcdoms{f}{\sk{\euCC(\RSA)}}{\ER}$ of type $\RSA$,
the function $x \mapsto f([x])$ for $x \in \Z^{n+1}$ forms a translation-invariant L-convex function on $\Z^{n+1}$.
Therefore, we obtain the following:
\begin{align}
    &\{ \text{translation-invariant L-convex functions on $\Z^{n+1}$} \}\\
    &\simeq \{ \text{L${}^\natural$-convex functions on $\Z^n$} \}\\
    &\simeq \{ \text{L-convex functions of type $\RSA$} \}.
\end{align}

We next make the correspondence for integral functions precise.
Let $\funcdoms{\tilde{f}}{\Z^{n+1}}{\ER}$ be a
translation-invariant L-convex function, and
$\funcdoms{f}{\sk{\euCC(\RSA)}}{\ER}$ the corresponding
L-convex function of type $\RSA$, i.e.,
$f([x]) = \tilde{f}(x)$ for $x\in\Z^{n+1}$.
By \Cref{thm:integral:L-func}~(2), 
the function $f$ is
integral if and only if there is $c \in \R$ such that $f([x])-c \in \Z$ for all $[x]\in\dom{f}$.
Under the above correspondence, this condition is equivalent to $\tilde{f}(x)-c \in \Z$ for all $x\in\dom{\tilde{f}}$.
Thus, if $\tilde{f}$ is integral in the standard sense,
i.e., if it is integer-valued, then $f$ is integral in the sense
of this paper.
Conversely, if $f$ is integral, then the translation-invariant
lift of $f-c$ is an integer-valued L-convex function for some
$c\in\R$.
Since the correspondence between translation-invariant L-convex
functions on $\Z^{n+1}$ and L${}^\natural$-convex functions on
$\Z^n$ preserves the function values, the same statement holds for
L${}^\natural$-convex functions.
Consequently, we
have the equivalences
\begin{align}
  &\{\text{integral translation-invariant L-convex functions on
  $\Z^{n+1}$ with an additive constant}\}\notag
  \\
  &\simeq
  \{\text{integral L${}^\natural$-convex functions on $\Z^n$ with an additive constant}\}\notag
  \\
  &\simeq
  \{\text{integral L-convex functions of type $\RSA$}\}.\label{eq:example:integral:L}
\end{align}

As a special case, we obtain the relations among (integral) submodular functions on $\{0,1\}^{n+1}$, (integral) spherical submodular functions of type $\RSA$,
and (integral) submodular functions on $\sk{\St{\euCC(\RSA)}{0}}$.
Here, a function $\funcdoms{\rho}{\{0,1\}^{n+1}}{\ER}$ with $\dom{\rho} \neq \emptyset$ is said to be \emph{submodular}
if it satisfies the \emph{submodular inequality}:
\begin{align}
    \rho(x) + \rho(y) \geq \rho(x \wedge y) + \rho(x \vee y) \qquad (x, y \in \{0,1\}^{n+1}),
\end{align}
where $(x \wedge y)(i) \coloneqq \min\{ x(i), y(i) \}$ and $(x \vee y)(i) \coloneqq \max\{ x(i), y(i) \}$ for $i \in [n+1]$.
A submodular function is said to be \emph{integral} if it is integer-valued.
Since $x \wedge y = \floor{(x+y)/2}$ and $x \vee y = \ceil{(x+y)/2}$,
we can represent the submodular inequality as:
$\rho(x) + \rho(y) \geq \rho(\floor{(x+y)/2}) + \rho(\ceil{(x+y)/2})$ for $x, y \in \{0,1\}^{n+1}$.
That is, submodular functions on $\{0,1\}^n$ can be viewed as the restriction of L${}^\natural$-convex functions to $\{0,1\}^n$.

By an argument similar to that for L- and L${}^\natural$-convex functions,
we obtain
\begin{align}
    &\{ \text{(integral) submodular functions $\rho$ on $\{0,1\}^{n+1}$ such that $\rho(0) = \rho(\onevec_{n+1}) < +\infty$} \}\\
    &\simeq \{ \text{(integral) submodular functions $\rho$ of type $\RSA$ (with $\rho([0]) \in \Z$)} \}
\end{align}
and
\begin{align}
    &\{ \text{(integral) submodular functions $\rho$ on $\{0,1\}^{n+1}$ such that $\rho(0) = \rho(\onevec_{n+1}) = 0$} \}\notag\\
    &\simeq \{ \text{(integral) spherical submodular functions of type $\RSA$} \}.\label{eq:subm=typeA}
\end{align}
Here, the condition that $\rho(0) = \rho(\onevec_{n+1})$ corresponds to the translation-invariant condition for L-convex functions.

\subsubsection{M- and M${}^\natural$-convex functions}
The formal definitions of M- and
M${}^\natural$-convex functions are as follows.
A function $\funcdoms{\tilde{g}}{\Z^{n+1}}{\ER}$ with
$\dom{\tilde{g}}\neq\emptyset$ is said to be \emph{M-convex} if,
for any $p,q\in\dom{\tilde{g}}$ and $i\in[n+1]$ with
$p(i)>q(i)$, there is $j\in[n+1]$ with $p(j)<q(j)$ such that
\begin{align}\label{eq:M-exchange-original}
  \tilde{g}(p)+\tilde{g}(q)
  \geq
  \tilde{g}(p-e_i+e_j)
  +
  \tilde{g}(q+e_i-e_j).
\end{align}
We can easily see that the effective domain of an M-convex function $\funcdoms{\tilde{g}}{\Z^{n+1}}{\ER}$ lies on the set $\Set{ p \in \Z^{n+1} }{ \sum_{i = 1}^{n+1} p(i) = r }$ for some $r \in \Z$ (see e.g.,~\cite[Proposition~6.1]{Murota2003-yb}).
A function on $\Z^{n+1}$ is said to be \emph{zero-sum} if $\dom{\tilde{g}} \subseteq \Set{ p \in \Z^{n+1} }{ \sum_{i = 1}^{n+1} p(i) = 0 }$.
Since $\Mdom(\RSA) = \Set{ p \in \Z^{n+1} }{ \sum_{i = 1}^{n+1} p(i) = 0 }$,
a zero-sum function on $\Z^{n+1}$ can be naturally regarded as a function on $\Mdom(\RSA)$.
A function $\funcdoms{g}{\Z^n}{\ER}$ with
$\dom{g}\neq\emptyset$ is said to be
\emph{M${}^\natural$-convex} if there is a zero-sum M-convex function $\funcdoms{\tilde{g}}{\Z^{n+1}}{\ER}$ such that $g(p) = \tilde{g}(p, -\sum_{i = 1}^n p(i))$ for $p \in \Z^n$.
In other words, an M${}^\natural$-convex function on $\Z^n$ is the projection of a zero-sum M-convex function on $\Z^{n+1}$ to $\Z^n$.
An M-convex or M${}^\natural$-convex function is said to be
\emph{integral} if it is integer-valued.

The classes of zero-sum M-convex functions on $\Z^{n+1}$ and
M-convex functions of type $\RSA$ are in fact naturally equivalent,
after identifying the zero-sum lattice with $\Mdom(\RSA)$, i.e.,
we have
\begin{align}
    &\{ \text{zero-sum M-convex functions on $\Z^{n+1}$} \}\\
    &\simeq \{ \text{M${}^\natural$-convex functions on $\Z^n$} \}\\
    &\simeq \{ \text{M-convex functions of type $\RSA$} \}.
\end{align}
Indeed,
it is known~\cite[Theorem~6.43~(1)]{Murota2003-yb} that 
a function
$\funcdoms{\tilde{g}}{\Z^{n+1}}{\ER}$ with $\dom{\tilde{g}} \neq \emptyset$ is M-convex if and only if it satisfies
\begin{itemize}
  \item $\tilde{g}$ is the restriction of $\clconv{\tilde{g}}$ to $\Z^{n+1}$; and
  \item $\fan(\clconv{\tilde{g}})$ consists of integral M-convex polyhedra.
\end{itemize}
Here, it follows from the equivalence~\eqref{eq:subm=typeA}, \Cref{cor:Mpoly-subm}, and \cite[(4.40) of Section~4.8]{Murota2003-yb}, which states the equivalence between the integral M-convex polyhedra and integral submodular functions, that
the integral M-convex polyhedra lying in $\Set{p \in \R^{n+1}}{\sum_{i = 1}^{n+1} p(i) = 0}$ are exactly the same as the integral M-convex polyhedra of type $\RSA$.
Thus, by \Cref{lem:argmin-support}~(1), \Cref{lem:union-closed:M}, and \Cref{prop:loc-poly-LM-convex-sufficient-condition}~(2),
the second bullet is equivalent to the condition that $\clconv{\tilde{g}}$ is a primal-integral locally polyhedral M-convex function.
Hence, $\tilde{g}$ satisfies both bullets if and only if the restriction of $\tilde{g}$ to $\Mdom(\RSA)$ forms an M-convex function of type $\RSA$.

In the integral case,
by combining the equivalence~\eqref{eq:example:integral:L} with the conjugacy between the integral L- and M-convex functions (\Cref{thm:conjugacy} and \cite[Theorem~8.12~(1)]{Murota2003-yb}),
we immediately obtain
\begin{align}
  &\{\text{zero-sum integral M-convex functions on
  $\Z^{n+1}$ with an additive constant}\}\notag
  \\
  &\simeq
  \{\text{integral M${}^\natural$-convex functions on $\Z^n$ with an additive constant}\}\notag
  \\
  &\simeq
  \{\text{integral M-convex functions of type $\RSA$}\}.
\end{align}

\subsection{(Valuated) \texorpdfstring{$\Delta$}{delta}-matroids and even \texorpdfstring{$\Delta$}{delta}-matroids}
\label{subsec:example:delta-matroids}

We compare $\Delta$-matroids and their valuated versions with
M-convex sets and functions of types~$\RSB$ (or $\RSC$) and~$\RSD$.
For notational simplicity,
we identify a subset $X \subseteq [n]$ with its characteristic vector $e_X \coloneqq \sum_{i\in X}e_i \in \{0,1\}^n$,
which induces the identification of $2^{[n]}$ with $\{0,1\}^n$.
In particular, we use the notation $e_X \symdiff Y \coloneqq e_{X \symdiff Y}$ for $X, Y \subseteq [n]$,
where $\symdiff$ denotes the symmetric difference.
For $p \in \{0,1\}^n$,
we refer to $\card{\supp{p}}$ as the \emph{size} of $p$.

A nonempty subset $B \subseteq \{0,1\}^n$
is called a \emph{$\Delta$-matroid}~\cite{Bouchet1987-gl,Dress1986-oh,Chandrasekaran1988-wc} if it satisfies the symmetric
exchange axiom: for any $p,q\in B$ and
$i \in \supp{(p - q)}$, there is
$j \in \supp{(p - q)}$ such that
$p \symdiff \{i,j\} \in B$.
A $\Delta$-matroid $B$ is said to be \emph{even} if the sizes of all
its members have the same parity, i.e.,
$\card{\supp{p}} \equiv \card{\supp{q}} \pmod 2$ for all $p,q\in B$.
That is, either every member has even size or every
member has odd size.
For an (even) $\Delta$-matroid $B$,
its convex hull $\conv{B}$ is called the
\emph{(even) $\Delta$-matroid polytope} of $B$.

The following polyhedral characterizations are well known (\cite{Gelfand1987-mn} and \cite[Section~6.3 and Theorem~6.3.1]{Borovik2011-io}; see also \Cref{subsec:Coxeter-matroids} below).
For a nonempty subset $B \subseteq \{0,1\}^n$,
we have
\begin{align}
    \text{$B$ is a $\Delta$-matroid} &\iff \text{every edge of $\conv{B}$ is parallel to a root in $\RSB$},\label{eq:Delta-typeB}\\
    \text{$B$ is an even $\Delta$-matroid} &\iff \text{every edge of $\conv{B}$ is parallel to a root in $\RSD$}\label{eq:evenDelta-typeD},
\end{align}
Since a tangent cone of a polytope is generated by its incident
edge directions, the above characterizations identify
$\Delta$-matroid polytopes and even $\Delta$-matroid polytopes with
particular M-convex polytopes of types~$\RSB$ and~$\RSD$,
respectively.

Let $B \subseteq \{0,1\}^n$ be a $\Delta$-matroid.
Since $\Mdom(\RSB) = \Z^n$ (see~\Cref{tab:root-coroot-lattices}),
we have $B \subseteq \Mdom(\RSB)$.
Then, the equivalence~\eqref{eq:Delta-typeB} immediately implies that
\begin{align}\label{eq:delta-matroid-M-set-B}
  \{
    \text{$\Delta$-matroids on $[n]$}
  \}
  =
  \{
      \text{M-convex sets of type $\RSB$ contained in $\{0,1\}^n$}
  \}.
\end{align}
Thus, $\Delta$-matroids can be viewed as M-convex sets of type $\RSB$.

Let $B$ be an even $\Delta$-matroid.
Since $\Mdom(\RSD) = \Set*{p\in\Z^n}{\sum_{i=1}^n p(i)\in2\Z}$ (see~\Cref{tab:root-coroot-lattices}),
the set of $p \in \{0,1\}^n$ having even size equals $\{0,1\}^n \cap \Mdom(\RSD)$.
Thus, if every member of $B$ has even size,
then we have $B \subseteq \Mdom(\RSD)$.
Then, the equivalence~\eqref{eq:evenDelta-typeD} immediately implies that
\begin{align}
  &\{
    \text{even $\Delta$-matroids consisting of even-size elements of $\{0,1\}^n$}
  \}\notag\\
  &=
  \{
      \text{M-convex sets of type $\RSD$ contained in $\Mdom(\RSD) \cap \{0,1\}^n$}
  \}.\label{eq:even-delta-matroid-M-set-D}
\end{align}
In the case where every member of $B$ has odd
size,
we consider the twist of $B$ by an arbitrary singleton $\{i\} \subseteq [n]$.
Here,
for $S\subseteq[n]$, the \emph{twist} of $B$ by
$S$ is defined by $B \ast S \coloneqq \Set*{ p\symdiff S}{ p\in B}$,
which is again a $\Delta$-matroid
(resp., an even $\Delta$-matroid)
if $B$ is a $\Delta$-matroid
(resp., an even $\Delta$-matroid).
Since the one-element twist $B \ast \{i\}$
consists entirely of even-size feasible sets,
it is an M-convex set of type~$\RSD$ by
\eqref{eq:even-delta-matroid-M-set-D}.
Thus, even $\Delta$-matroids can be regarded as
M-convex sets of type~$\RSD$, either directly or after a
one-element twist.

We then consider the valuated case.
A function $\funcdoms{\nu}{\{0,1\}^n}{\ER}$ is called a \emph{valuated $\Delta$-matroid} (in the sense of~\cite{cheung2025valuateddeltamatroidsprincipal}) if $\fan(\nu)$ consists of $\Delta$-matroids.
Since $\dom{\nu} \subseteq \{0,1\}^n$, which is finite,
the above definition of valuated $\Delta$-matroids
immediately implies that $\nu = (\clconv{\nu})|_{\{0,1\}^n}$ and $\fan(\clconv{\nu}) = \{ \conv{B} \mid B \in \fan(\nu) \}$.
Combining this with the equivalence~\eqref{eq:delta-matroid-M-set-B},
we obtain
\begin{align}
  &\{
    \text{valuated $\Delta$-matroids on $\{0,1\}^n$}
  \}
  \notag\\
  &\simeq
  \{
      \text{M-convex functions $g$ of type $\RSB$ such that $\dom{g} \subseteq \{0,1\}^n$}
  \}.
\end{align}
Similarly, a \emph{valuated even $\Delta$-matroid} is a function $\funcdoms{\nu}{\{0,1\}^n}{\ER}$ such that $\fan(\nu)$ consists of even $\Delta$-matroids;
either every member of the effective domain has even size or every member of the effective domain has odd size.
Then, by the same argument as above, we have
\begin{align}
  &\{
    \text{valuated even $\Delta$-matroids $\nu$ such that $\dom{\nu} \subseteq \{0,1\}^n \cap \Mdom(\RSD)$}
  \}\\
  &\simeq
  \{
      \text{M-convex functions $g$ of type $\RSD$ such that $\dom{g} \subseteq \{0,1\}^n \cap \Mdom(\RSD)$}
  \}.
\end{align}

We finally comment on the definition of valuated $\Delta$-matroids via the simultaneous exchange property,
which has been used in the field of combinatorial optimization.
A $\Delta$-matroid $B$ is said to be \emph{symmetric} (or \emph{strong})
if, for any $p, q \in B$ and $i \in \supp{(p-q)}$, there is $j \in \supp{(p-q)}$ such that both $p \symdiff \{i,j\}$ and $q \symdiff \{i,j\}$ belong to $B$. 
It is known (see e.g.,~\cite{calvert2026characterisationsstrongdeltamatroids}) that $\Delta$-matroids are not symmetric in general, whereas all even $\Delta$-matroids are symmetric.

In the earlier literature
\cite{Dress1991-ty,Wenzel1993-ee,Takazawa2014-rl},
valuated
$\Delta$-matroids are only defined over symmetric $\Delta$-matroids as follows:
A function $\funcdoms{\nu}{\{0,1\}^n}{\ER}$ is called a \emph{valuated $\Delta$-matroid} if,
for any $p, q \in \dom{\nu}$ and $i \in \supp{(p-q)}$, there is $j \in \supp{(p-q)}$ such that
\begin{align}\label{eq:valuated-simultaneous-exchange}
  \nu(p)+\nu(q)
  \geq
  \nu\rbra*{
    p\symdiff \{i,j\}
  }
  +
  \nu\rbra*{
    q\symdiff \{i,j\}
  },
\end{align}
which is a direct generalization of the simultaneous exchange axiom of (valuated) matroids and M-convex functions (see~\eqref{eq:M-exchange-original}).
The condition~\eqref{eq:valuated-simultaneous-exchange} immediately implies
that $\dom{\nu}$ and all members of $\fan(\nu)$ form symmetric $\Delta$-matroids.
Thus, the exchange-based valuated $\Delta$-matroids form a proper subclass
of the valuated $\Delta$-matroids in the sense of~\cite{cheung2025valuateddeltamatroidsprincipal}.

\subsection{Bisubmodular functions}\label{subsec:example:bisubmodular}

We next compare bisubmodular functions with submodular functions of
types~$\RSB$ and~$\RSC$.
A function $\funcdoms{\rho}{\{-1,0,1\}^n}{\ER}$ with
$\dom{\rho} \neq \emptyset$ is said to be \emph{bisubmodular} if it satisfies the \emph{bisubmodular inequality}:
\begin{align}\label{eq:bisubmodular}
  \rho(x)+\rho(y)
  \geq
  \rho(x\sqcap y)+\rho(x\sqcup y)
  \qquad
  (x,y\in\{-1,0,1\}^n),
\end{align}
where 
\begin{align}
  (x\sqcap y)(i)
  \coloneqq
  \begin{cases}
    x(i) & \text{if $x(i)=y(i)$},\\
    0 & \text{if $x(i)\neq y(i)$},
  \end{cases}
  \qquad
  (x\sqcup y)(i)
  \coloneqq
  \begin{cases}
    x(i) & \text{if $x(i)=y(i)$ or $y(i) = 0$},\\
    y(i) & \text{if $x(i) = 0$},\\
    0 & \text{if $\{x(i), y(i)\} = \{ -1, 1 \}$},
  \end{cases}
\end{align}
for $i \in [n]$.
A bisubmodular function $\rho$ is said to be \emph{normalized} if $\rho(0) = 0$,
and
\emph{integral} if it is
integer-valued.

We may easily check that $\{-1,0,1\}^n = \nbor{0}(\RSC)$ and
\begin{align}\label{eq:bisubmodular:rounding}
    x\sqcap y = \even*{\frac{x+y}{2}}, \qquad x \sqcup y =
  \odd[\bigg]{\frac{x+y}{2}}
\end{align}
for $x, y \in \{-1,0,1\}^n$, where
for $c \in \Z/2$, we define
\begin{align}
    \even{c} &\coloneqq
    \begin{cases}
        c & \text{if $c \in \Z$},\\
        \text{the nearest even integer to $c$} & \text{if $c \in \Z + 1/2$},
    \end{cases}\\
    \odd{c} &\coloneqq
    \begin{cases}
        c & \text{if $c \in \Z$},\\
        \text{the nearest odd integer to $c$} & \text{if $c \in \Z + 1/2$}.
    \end{cases}
\end{align}
Thus, by the midpoint formula in terms of $\RSC$ (given in~\eqref{eq:DMC:coeff:C}) and
\Cref{thm:submodular-chara}, the bisubmodular inequality
\eqref{eq:bisubmodular} is precisely the submodular inequality on
$\nbor{0}(\RSC)$.
Therefore, a function $\rho$ on $\{-1,0,1\}^n$ with $\rho(0) < +\infty$ is bisubmodular if
and only if it is a submodular function of type~$\RSC$.

The equivalence $\RSB \sim \RSC$ immediately implies that a bisubmodular function can also be regarded as a submodular function of type $\RSB$.
We here present the precise formulation; recall \Cref{tab:classical-coxeter-chambers,tab:classical-coxeter-vertices}.
One can easily check that
\begin{align}
    \nbor{0}(\RSB) = \Set{x/\kappa(x)}{ x \in \{ -1,0,1 \}^n},
\end{align}
where
\begin{align}
    \kappa(x) \coloneqq
    \begin{cases}
        1 & \text{if $\card{\supp{x}}\leq1$},\\
        2 & \text{if $\card{\supp{x}}\geq2$},
    \end{cases}
    \qquad (x \in \{ -1,0,1 \}^n).
\end{align}
Equivalently, letting $a_{\mathrm{B}}$ be the mark function of $\euCC(\RSB)$,
we have
\begin{align}
    \{ -1,0,1 \}^n = \Set{ a_{\mathrm{B}}(y) y }{y \in \nbor{0}(\RSB)}.
\end{align}
In particular, the set $\{-1,0,1\}^n \setminus \{0\} = \opnbor{0}(\RSC)$ can be viewed as the vertex set of a spherical expression of $\spCC(\RSB)$.
For a function $\funcdoms{\rho}{\{-1,0,1\}^n}{\ER}$ with $\rho(0) < +\infty$,
we define $\funcdoms{\rho_{\mathrm{B}}}{\nbor{0}(\RSB)}{\ER}$ by
\begin{align}\label{eq:def:rho_B}
    \rho_{\mathrm{B}}(y) \coloneqq \rho(0) + \frac{\rho(a_{\mathrm{B}}(y)y) - \rho(0)}{a_{\mathrm{B}}(y)} \qquad (y \in \nbor{0}(\RSB)).
\end{align}
Then, $\rho$ is bisubmodular if and only if $\rho_{\mathrm{B}}$ is submodular of type $\RSB$,
since
\begin{align}
    &\text{$\rho$ is bisubmodular}\\
    &\iff \text{$\rho$ is submodular of type $\RSC$}\\
    &\iff \text{$x \mapsto \rho(x) - \rho(0)$ for $x \in \opnbor{0}(\RSC)$ is spherical submodular of type $\RSC$}\\
    &\iff \text{$a_{\mathrm{B}}(y)y \mapsto \rho(a_{\mathrm{B}}(y)y) - \rho(0)$ for $y \in \opnbor{0}(\RSB)$ is spherical submodular of type $\RSB$}\\
    &\iff \text{$y \mapsto \rho_{\mathrm{B}}(y) - \rho_{\mathrm{B}}(0)$ for $y \in \opnbor{0}(\RSB)$ is spherical submodular of type $\RSB$}\\
    &\iff \text{$\rho_{\mathrm{B}}$ is submodular of type $\RSB$}.
\end{align}

Combining the above arguments, we obtain
\begin{align}
  &\{
    \text{bisubmodular functions on $\{-1,0,1\}^n$ with $\rho(0) < +\infty$}
  \}\\
  &=
  \{
    \text{submodular functions of type $\RSC$}
  \}\\
  &\simeq
  \{
    \text{submodular functions of type $\RSB$}
  \}
\end{align}
and
\begin{align}
  &\{
    \text{normalized bisubmodular functions on $\{-1,0,1\}^n$}
  \}\notag\\
  &\simeq
  \{
    \text{spherical submodular functions of type $\RSC$}
  \}\notag\\
  &\simeq
  \{
    \text{spherical submodular functions of type $\RSB$}
  \}.\label{eq:normalized:bisubmodular}
\end{align}

The distinction between types~$\RSB$ and
$\RSC$ appears when integrality is considered.
By the definition~\eqref{eq:def:rho_B},
a bisubmodular function $\funcdoms{\rho}{\{-1,0,1\}^n}{\ER}$ with $\rho(0) \in \Z$ is integral if and only if
$\rho_{\mathrm{B}}(0) \in \Z$
and
$\rho_{\mathrm{B}}(y) \in \Z/a_{\mathrm{B}}(y) \cup \{+\infty\}$ for $y \in \nbor{0}(\RSB)$.
By $a_{\mathrm{B}}(0) = 1$ and \Cref{cor:integral-subm},
these are equivalent to the conditions that $\rho_{\mathrm{B}}(0) \in \Z$ and $\rho_{\mathrm{B}}$ is integral submodular of type $\RSB$.
On the other hand,
letting $a_{\mathrm{C}}$ denote the mark function of $\euCC(\RSC)$, we have
\begin{align}
    a_{\mathrm{C}}(x) =
    \begin{cases}
        1 & \text{if $\card{\supp{x}} \in \{0,n\}$},\\
        2 & \text{if $1 \leq \card{\supp{x}} \leq n-1$},
    \end{cases}
    \qquad (x \in \{ -1,0,1 \}^n = \nbor{0}(\RSC));
\end{align}
see \Cref{tab:classical-coxeter-vertices}.
Then, by $a_{\mathrm{C}}(0) = 1$ and \Cref{cor:integral-subm} again,
a submodular function $\funcdoms{\rho}{\{-1,0,1\}^n}{\ER}$ of type $\RSC$ with $\rho(0) \in \Z$
is integral if and only if
\begin{align}
    \rho(x) \in
    \begin{cases}
        \EZ & \text{if $\card{\supp{x}} \in \{0,n\}$},\\
        \Z/2 \cup \{+\infty\} & \text{if $1 \leq \card{\supp{x}} \leq n-1$},
    \end{cases}
    \qquad (x \in \{ -1,0,1 \}^n = \nbor{0}(\RSC)).
\end{align}

Consequently, for $n\geq2$, we have
\begin{align}\label{eq:integral:bisubmodular:typeBC}
  &\{
    \text{integral bisubmodular functions $\rho$ on $\{-1,0,1\}^n$ with $\rho(0) < +\infty$}
  \}
  \\
  &\simeq
  \{
    \text{integral submodular functions $\rho$ of type $\RSB$ with $\rho(0) \in \Z$}
  \}
  \\
  &\subsetneq
  \{
    \text{integral submodular functions $\rho$ of type $\RSC$ with $\rho(0) \in \Z$}
  \}
\end{align}
and
\begin{align}
  &\{
    \text{normalized integral bisubmodular functions}
  \}\notag
  \\
  &\simeq
  \{
    \text{integral spherical submodular functions of type $\RSB$}
  \}\notag
  \\
  &\subsetneq
  \{
    \text{integral spherical submodular functions of type $\RSC$}
  \}.\label{eq:normalized:integral:bisubmodular}
\end{align}

For a normalized bisubmodular function $\rho$ on $\{-1,0,1\}^n$,
the submodular polyhedron $\B{\rho}$ (in our sense) is called the \emph{bisubmodular polyhedron} in the context of bisubmodular optimization,
in which we regard $\rho$ as a submodular function of type $\RSC$.
In addition, a bisubmodular polyhedron is said to be \emph{integral}
if it is generated from an integral normalized bisubmodular function.
By using the notion of bisubmodular and M-convex polyhedra,
the relations~\eqref{eq:normalized:bisubmodular} and~\eqref{eq:normalized:integral:bisubmodular}
can be rephrased as
\begin{align}
  \{
    \text{bisubmodular polyhedra}
  \}
  &=
  \{
    \text{M-convex polyhedra of type $\RSB$}
  \}\\
  &=
  \{
    \text{M-convex polyhedra of type $\RSC$}
  \},\\
  \{
    \text{integral bisubmodular polyhedra}
  \}
  &=
  \{
    \text{integral M-convex polyhedra of type $\RSB$}
  \}\\
  &\subsetneq
  \{
    \text{integral M-convex polyhedra of type $\RSC$}
  \}\label{eq:integral:bisubmodular:polyhedra}.
\end{align}

\subsection{BS-convex sets and functions}\label{subsec:example:BS-convex}

We next compare BS-convex sets and functions
\cite{Fujishige2014-bj,Iwamasa2024-jw} with the
M-convexity of type $\RSB$ introduced in this paper.
A set $B \subseteq\Z^n$ is said to be \emph{BS-convex} if $B = \B{\rho} \cap \Z^n$
for some normalized integral bisubmodular function $\rho$ on $\{-1,0,1\}^n$.
Thus, it immediately follows from the identity $\Mdom(\RSB) = \Z^n$ (see~\Cref{tab:root-coroot-lattices}) and the relation~\eqref{eq:integral:bisubmodular:polyhedra}
that
\begin{align}\label{eq:BS-set:typeB-M-set}
  \{
    \text{BS-convex sets}
  \}
  =
  \{
    \text{M-convex sets of type $\RSB$}
  \}.
\end{align}

A function $\funcdoms{g}{\Z^n}{\EZ}$ (resp. $\funcdoms{g}{\Z^n}{\ER}$) with $\dom{g} \neq \emptyset$ is said to be \emph{BS-convex} (resp. \emph{real-valued BS-convex})\footnote{
In~\cite{Fujishige2014-bj}, the definition
requires only that every member of $\fan(g)$ be a BS-convex set.
This condition alone does not imply integral convexity.
The strengthened definition adopted here, which additionally
requires
$\dom{g}=\dom(\clconv{g})\cap\Z^n$
and
$\dom{g}=\geo{\fan(g)}$,
was formulated in consultation with Satoru Fujishige, who is the author of~\cite{Fujishige2014-bj}, through private
communication.
} if
$\dom{g} = \dom{(\clconv{g})} \cap \Z^n$,
$\dom{g} = \geo{\fan(g)}$,
and
$\fan(g)$ consists of BS-convex sets.
Since
a real-valued BS-convex function $g$ is \emph{integrally convex}, i.e.,
$(\clconv{g})(q) = \clconv(g|_{[p, p + \onevec_{n}]})(q)$
for any $p \in \Z^n$ and $q \in [p, p + \onevec_{n}]$ (see~\cite{Moriguchi2019-dl,Murota2022-ff,Murota2023-xc}),
we have $g = (\clconv{g})|_{\Z^n}$ and $\dom(\clconv{g}) = \bigcup_{B \in \fan(g)} \conv{B}$.
Thus, by the identity~\eqref{eq:BS-set:typeB-M-set}, any real-valued BS-convex function is M-convex of type $\RSB$.
Since the converse clearly holds, we obtain
\begin{align}
  \{
    \text{BS-convex functions}
  \}
  &=
  \{
    \text{integer-valued M-convex functions of type $\RSB$}
  \}
  \\
  &\subsetneq
  \{
    \text{integral M-convex functions of type $\RSB$}
  \}
  \\
  &\subsetneq
  \{
    \text{M-convex functions of type $\RSB$}
  \}\\
  &=
  \{
    \text{real-valued BS-convex functions}
  \}.
\end{align}

\subsection{UJ-convex functions}\label{subsec:example:UJ-convex}

A function $\funcdoms{f}{\Z^n}{\ER}$ with $\dom{f} \neq \emptyset$ is said to be \emph{UJ-convex}~\cite{Fujishige2014-bj} (see also \cite[Example~12]{Hirai2018-lw})
if its Lov\'{a}sz extension $\ext{f}$ along with the Euclidean Coxeter complex $\euCC(\RSC)$ forms a convex function.
By the above definition, the identity $\sk{\euCC(\RSC)} = \Z^n$ (see~\Cref{tab:classical-coxeter-vertices}), and \Cref{thm:chara:L-conv-func},
UJ-convex functions and L-convex functions of type $\RSC$ are exactly the same objects, i.e.,
\begin{align}\label{eq:UJ=typeC-L}
  \{
    \text{UJ-convex functions on $\Z^n$}
  \}
  =
  \{
    \text{L-convex functions of type $\RSC$}
  \}.
\end{align}

A UJ-convex function is said to be \emph{integral} if it is integer-valued.
On the other hand, by \Cref{thm:integral:L-func},
an L-convex function $\funcdoms{f}{\Z^n}{\ER}$ of type $\RSC$ is integral (in our sense) if
\begin{align}
  f(x)
  \in
  \begin{cases}
    \EZ & \text{if $x\in(2\Z)^n\cup(2\Z+1)^n$},\\
    \Z/2 \cup \{ +\infty \} & \text{otherwise};
  \end{cases}
\end{align}
see \Cref{tab:classical-coxeter-vertices} for the mark function of $\euCC(\RSC)$.
Hence, every integral UJ-convex function is an integral
L-convex function of type~$\RSC$, but the converse need not hold.
Consequently, for $n\geq2$, we obtain
\begin{align}\label{eq:integral-UJ:typeC}
  \{
    \text{integral UJ-convex functions on $\Z^n$}
  \}
  \subsetneq
  \{
    \text{integral L-convex functions of type $\RSC$}
  \}.\notag
\end{align}

\subsection{Coxeter matroids}\label{subsec:Coxeter-matroids}
Let $\RS$ be an irreducible root system of rank $n$, and let
$W_{\RS}$ be its Weyl group (recall the definition~\eqref{eq:def:Weyl-group} of $W_{\RS}$).
Fix a point $p\in V^*$ and consider its Weyl-group orbit
$W_{\RS}p$.
A nonempty subset $B\subseteq W_{\RS}p$ is a
\emph{Coxeter matroid}~\cite{Borovik2011-io} if $B$ satisfies the maximality property~\eqref{cond:MP}, i.e., for every simple
system $\simp$ of $\RS$, the set $B$ has a unique maximal member
with respect to $\leq_{\simp}$.

Since all points in $W_{\RS}p$ have the same norm by $W_{\RS} \leq \OG(V^*)$, every
member of $B$ is a vertex of $\conv{B}$.
The Coxeter-matroid maximality property is therefore equivalent to
the condition that, for every simple system $\simp$, the polytope
$\conv{B}$ has a unique maximal member with respect to
$\leq_{\simp}$.
By \Cref{cor:M-convex-set:MP}, this is equivalent
to the M-convexity of $\conv{B}$.
Since the tangent cone of a polytope at a vertex is generated by
the directions of the incident edges, a polytope is M-convex of
type $\RS$ if and only if every edge is parallel to a root in
$\RS$.
Consequently, for $B \subseteq W_{\RS}p$, we have
\begin{align}\label{eq:Coxeter-matroid-M-polytope}
  \text{$B$ is a Coxeter matroid}
  &\quad\Longleftrightarrow\quad
  \text{$\conv{B}$ is an M-convex polytope of type $\RS$}\\
  &\quad\Longleftrightarrow\quad
  \text{every edge of $\conv{B}$ is parallel to a root in $\RS$}.
\end{align}
This recovers the polyhedral characterization of Coxeter matroids
due to Gelfand and Serganova~\cite{Gel-fand1987-da},
usually referred to as the \emph{Gelfand--Serganova theorem}; see~\cite[Section~6.3 and Theorem~6.3.1]{Borovik2011-io}.
In type~$\RSA$, after translating an ordinary matroid base polytope
to the zero-sum hyperplane, this characterization specializes to
the classical edge characterization of matroid base polytopes due
to Gelfand, Goresky, MacPherson, and
Serganova~\cite[Theorem~4.1]{Gelfand1987-mn} (see also~\cite[Theorem~17.1]{Fujishige2005-sp}).

As we can see above~\eqref{eq:Coxeter-matroid-M-polytope}, \emph{Coxeter matroid polytopes}, which are the convex hulls of Coxeter matroids, are precisely the M-convex polytopes
whose vertex sets are contained in a single Weyl-group orbit.
In addition, $\conv{B}$ is integral in the sense of this paper if
and only if $p \in\Mdom(\RS)$.
Indeed, if $p\in\Mdom(\RS)$, then
$B\subseteq\Mdom(\RS)$ and
$\conv{B}$ is integral.
Conversely, if $\conv{B}$ is integral, then each of its
vertices belongs to $\Mdom(\RS)$; hence one, and therefore every,
point in $W_{\RS}p$ belongs to $\Mdom(\RS)$.

\subsection{Coxeter submodular functions}\label{subsec:Coxeter-submodular}
Again, let $\RS$ be an irreducible root system of rank $n$ and $W_{\RS}$ its Weyl group.
Fix an arbitrary simple system $\simp = \{\alpha_1,\dots,\alpha_n\}$
of $\RS$.
Let $\omega_1^\vee,\dots,\omega_n^\vee \in V$ be the corresponding
fundamental coweights,
and define the set
\begin{align}
    \mathcal{R}_{\RS^\vee} \coloneqq W_{\RS} \{ \omega_1^\vee,\dots,\omega_n^\vee \} = \Set*{ w \omega_i^\vee }{ w \in W_\RS, \ i \in [n] } \subseteq V,
\end{align}
which is the union of the $W_{\RS}$-orbits of the fundamental coweights of $\RS$.
We note that $\mathcal{R}_{\RS^\vee}$ can be viewed as the vertex set of a certain spherical expression of $\spCC(\RS)$.
Specifically, \Cref{lem:inpr} implies
\begin{align}
    \sk{\spCC^\star(\RS)} = \Set*{ w\omega_i^\vee / a_i  }{ w \in W_\RS, \ i \in [n] },
\end{align}
where $a_1, a_2, \dots,a_n$ denote the marks of $\alpha_1, \alpha_2, \dots, \alpha_n$, respectively.

A finite-valued function
$\funcdoms{\rho}{\mathcal{R}_{\RS^\vee}}{\R}$ is called a
\emph{Coxeter submodular function of type $\RS^\vee$} or \emph{$\RS^\vee$-submodular function}\footnote{
In~\cite{Ardila2020-ll}, a \emph{$\RS$-submodular function} is defined
on the union of the $W_{\RS}$-orbits of the \emph{fundamental weights} $\omega_1, \omega_2, \dots, \omega_n \in V^*$ corresponding to $\simp$,
i.e., $\inpr{\omega_i}{\alpha_j^\vee} = \delta_{ij}$,
where $\delta_{ij}$ denotes the Kronecker delta.
The fundamental weights of $\RS^\vee$ are the fundamental
coweights of $\RS$, and hence the domain
$\mathcal{R}_{\RS^\vee}$ is exactly the domain used for a
$\RS^\vee$-submodular function in their convention.
Under the inner-product identification of $V$ with $V^*$, the
spherical Coxeter complexes of $\RS$ and $\RS^\vee$ are naturally identified.
}~\cite{Ardila2020-ll}
if its positively homogeneous Lov\'{a}sz extension $\phext{\rho}$ of $\rho$ along with $\spCC(\RS)$ is convex.
Therefore, \Cref{prop:spherical-submodular-cone} immediately implies that
\begin{align}
  &\{
    \text{$\RS^\vee$-submodular functions}
  \}\\
  &\simeq
  \{
    \text{finite-valued spherical submodular functions of type $\RS$}\label{eq:Coxeter-submodular}
  \}.
\end{align}
In particular, a $\RS^\vee$-submodular function $\funcdoms{\rho}{\mathcal{R}_{\RS^\vee}}{\R}$ corresponds to the finite-valued spherical submodular function $\funcdoms{\rho^\star}{\sk{\spCC(\RS)^\star}}{\R}$ of type $\RS$ defined by
\begin{align}\label{eq:Coxeter-spherical-submodular}
    \rho^\star(w\omega_i^\vee / a_i) \coloneqq \rho(w\omega_i^\vee) / a_i.
\end{align}
It is worth mentioning that $\B{\rho} = \B{\rho^\star}$ holds.

The correspondence also preserves integrality in the following
sense. 
A $\RS^\vee$-submodular function $\funcdoms{\rho}{\mathcal{R}_{\RS^\vee}}{\R}$ is said to be \emph{integral} (or \emph{discrete} in~\cite{Ardila2020-ll}) if it is integer-valued, i.e.,
\begin{align}
  \rho(w\omega_i^\vee)
  \in
  \Z
  \qquad
  (w\in W_{\RS},\ i\in[n]).
\end{align}
Thus, the corresponding finite-valued spherical submodular
function $\funcdoms{\rho^\star}{\sk{\spCC^\star(\RS)}}{\R}$
satisfies
\begin{align}
  \rho^\star(w\omega_i^\vee/a_i)
  \in
  \Z/a_i
  \qquad
  (w\in W_{\RS},\ i\in[n])
\end{align}
by the relation~\eqref{eq:Coxeter-spherical-submodular},
which is equivalent to the condition that $\rho^\star$ is integral by \Cref{prop:integral-subm}~(1).
Thus, we obtain
\begin{align}
  &\{
    \text{integral $\RS^\vee$-submodular functions}
  \}\\
  &\simeq
  \{
    \text{integral finite-valued spherical submodular functions of type $\RS$}
  \}.
\end{align}

Ardila et al.~\cite{Ardila2020-ll} originally introduced
Coxeter submodular functions to describe the deformation of the $\RS$-permutahedra, which are called \emph{generalized Coxeter permutahedra (of type $\RS$)} or \emph{Coxeter polymatroids (of type $\RS$)}.
We note that, under the inner-product identification of $V$ with $V^*$,
generalized Coxeter permutahedra of type $\RS$ and of type $\RS^\vee$ are the same objects.
Specifically, a polytope $P \subseteq V^*$ is a generalized Coxeter permutahedron of type $\RS$
if and only if $P = \B{\rho}$ for some $\RS^\vee$-submodular function $\rho$.
Thus, the relation~\eqref{eq:Coxeter-submodular} immediately implies
\begin{align}
\{\text{generalized Coxeter permutahedra of type $\RS$}\} = \{ \text{M-convex polytopes of type $\RS$} \}.\notag
\end{align}
In addition, our notion can capture the \emph{extended} deformation of the $\RS$-permutahedra in the following sense:
\begin{align}
\{\text{extended generalized Coxeter permutahedra of type $\RS$}\} = \{ \text{M-convex polyhedra of type $\RS$} \}.\notag
\end{align}

The class of
generalized Coxeter permutahedra contains
\emph{weight polytopes}~\cite{Fulton1991-dr},
\emph{Coxeter graphic zonotopes}~\cite{Zaslavsky1982-sy},
Coxeter matroid polytopes~\cite{Borovik2011-io}
(see also \Cref{subsec:Coxeter-matroids}), and
\emph{Coxeter associahedra}~\cite{Hohlweg2011-ab},
whereas its extended version also contains
\emph{parsets}~\cite{Reiner1992-ev}; see~\cite[Proposition~4.4]{Ardila2020-ll}.
Thus, the notion of M-convex polyhedra places these classes in a
common framework based on root systems and convex analysis.

\section{Conclusion}\label{sec:conclusion}

In this paper, we have developed a framework of discrete convex
analysis over classical root systems by adopting the vertex set of
the Euclidean Coxeter complex and the root lattice as the primal and
dual discrete domains, respectively.
We introduced the corresponding L- and M-convex sets and functions,
established their local-to-global optimality properties and
combinatorial characterizations, and proved the one-to-one
correspondence between integral L-convex functions and integral
M-convex functions through the discrete Fenchel--Legendre conjugate.
The examples in \Cref{sec:examples} relate this framework to the
original DCA and to several existing notions associated with
classical root systems.

We summarize several directions for future research as follows.

\paragraph{Exceptional root systems.}
Most of the L-convex theory and the discrete conjugacy theorem
remain valid for arbitrary irreducible root systems, including the
exceptional types.
The principal obstacle to extending the corresponding local and
intrinsic results on the M-convex side is the discrete tangent-cone property
described in \Cref{prop:tangent-cone}.
Establishing this property for the exceptional root systems is a natural direction for future research.

\paragraph{Duality beyond type A.}
The discrete conjugacy theorem proved in this paper gives a
  one-to-one correspondence between integral L- and M-convex
  functions.
  In continuous convex analysis and in the original DCA, the
  corresponding conjugacy theory leads further to a Fenchel duality
  theorem, which provides a general min--max relation; see~\cite[Section~31]{Rockafellar1996-wm} and~\cite[Chapter~8]{Murota2003-yb}.

The direct analogue of the discrete Fenchel duality in our setting might be the min--max relation of the following form:
For an integral L-convex function $\funcdoms{f}{\sk{\euCC(\RS)}}{\ER}$ 
and an integral L-concave function $\funcdoms{h}{\sk{\euCC(\RS)}}{\R \cup \{-\infty\}}$ such that $f \geq h$, and $\dom{f} \cap \dom{h} \neq \emptyset$ or $\dom{f^\bullet} \cap \dom{h^\circ} \neq \emptyset$, we have
\begin{align}\label{eq:discrete-Fenchel-duality:L}
    \inf{\Set{f(x) - h(x)}{ x \in \sk{\euCC(\RS)} }} = \sup{\Set{h^\circ(p) - f^\bullet(p)}{ p \in \Mdom(\RS) }},
\end{align}
or equivalently,
for an integral M-convex function $\funcdoms{g}{\Mdom(\RS)}{\ER}$ 
and an integral M-concave function $\funcdoms{k}{\Mdom(\RS)}{\R \cup \{-\infty\}}$ such that $g \geq k$, and $\dom{g} \cap \dom{k} \neq \emptyset$ or $\dom{g^\bullet} \cap \dom{k^\circ} \neq \emptyset$, we have
\begin{align}\label{eq:discrete-Fenchel-duality:M}
    \inf{\Set{g(p) - k(p)}{ p \in \Mdom(\RS) }} = \sup{\Set{k^\circ(x) - g^\bullet(x)}{ x \in \sk{\euCC(\RS)} }},
\end{align}
where $h^\circ(p) \coloneqq -(-h)^\bullet(-p)$ and $k^\circ(x) \coloneqq -(-k)^\bullet(-x)$ are the Fenchel--Legendre concave conjugates of $h$ and $k$,
respectively.

The usual Fenchel duality does not extend uniformly beyond
type A; it already fails in type $\Phi_{\Dynkin{C}{3}}$.
For example, the two sets $B_1, B_2 \subseteq \Mdom(\RS_{\Dynkin{C}{3}}) = \Set{ p \in (\Z/2)^3 }{ p(1) + p(2) + p(3) \in \Z }$ defined by
\begin{align}
    B_1 &\coloneqq \{ (0,0,0), (0,0,1), (-1/2,1/2,1), (-1/2,1/2,0) \},\\
    B_2 &\coloneqq \{ (-1/2,0,-1/2), (-1/2, 0, 1/2), (0, 1/2, 1/2), (0, 1/2, -1/2) \},
\end{align}
which are M-convex of type $\RS_{\Dynkin{C}{3}}$,
satisfy $B_1 \cap B_2 = \emptyset$ but $\conv{B_1} \cap \conv{B_2} = \{ (-1/4, 1/4, p_3) \mid 0 \leq p_3 \leq 1/2 \}$.
These two sets provide a counterexample to the duality.
Indeed, let $g \coloneqq \delta_{B_1}$ and $k \coloneqq -\delta_{B_2}$, where
$\delta_B$ denotes the indicator function of $B$.
Then, $g$ is integral M-convex, $k$ is integral M-concave,
$g\geq k$, and the conjugates $g^\bullet$ and $k^\circ$ are
finite-valued.
Since $B_1\cap B_2=\emptyset$, the primal value $\inf{\Set{g(p) - k(p)}{ p \in \Mdom(\RS) }}$ is $+\infty$.
On the other hand,
we have $g^\bullet(x) = \max\Set{\inpr{p}{x}}{p\in B_1}$ and 
$k^\circ(x) = \min\Set{\inpr{q}{x}}{q\in B_2}$.
Since $k^\circ(x) \leq \inpr{\bar{p}}{x} \leq g^\bullet(x)$
for any
$\bar{p}\in\conv{B_1}\cap\conv{B_2}$,
the dual value $\sup{\Set{k^\circ(x) - g^\bullet(x)}{ x \in \sk{\euCC(\RS_{\Dynkin{C}{3}})} }}$ is at most zero; actually, it is exactly zero since $x=0$ is feasible.
Thus, the duality does not hold.

  An important research direction is to establish a general duality
  theorem that remains valid for all classical types.
  Such a theorem may take a form different from the usual Fenchel duality
  theorem.
  Identifying an appropriate formulation, together with verifiable
  conditions for strong duality and optimality certificates, is an
  important direction from the viewpoint of discrete optimization.

  \paragraph{Algorithmic and computational aspects.}
The results of this paper are primarily structural and
concern convex analysis.
Developing an algorithmic theory for the L- and M-convex functions
introduced here is another important direction for future research.
For example, the local-to-global optimality properties established here suggest
steepest descent algorithms based on the neighborhood structure of the
Euclidean Coxeter complex and on root directions in the root
lattice;
it remains to
derive bounds on the number of iterations to reach an optimal solution.
It is of interest to determine which algorithmic techniques from
the original DCA extend to the other classical types and which
require type-specific approaches.

  \paragraph{Conjugacy on Euclidean buildings.}
  The L-convexity studied in this paper is defined on the vertex set
  of a Euclidean Coxeter complex, which may be regarded as a single
  apartment of a \emph{Euclidean building}~\cite[Chapter~11]{Abramenko2010}.
  L-convexity on Euclidean buildings has been developed
  extensively in recent years
  \cite{Hirai2017-gx,Hirai2018-lw,Hamada2021-gw,Hirai2025-go}.
  A natural next problem is to identify a conjugate class for these
  functions and to establish an analogue of the conjugacy
  developed in this paper.

  A Euclidean building is not naturally embedded into a Euclidean space and does
  not admit a canonical global dual space or root lattice in the
  same manner as a single apartment.
  It is therefore necessary to determine suitable replacements for
  linear perturbations, dual discrete points, and the
  Fenchel--Legendre conjugate.
  Addressing these questions is likely to require a 
  theory on metric spaces beyond Euclidean spaces from the viewpoint of convex analysis,
  which has recently attracted significant attention (see e.g.,~\cite{Bacak2014-jw,Burgisser2019-zq,Hirai2024-ao} and references therein).
  Such a theory would provide a further connection between discrete
  convex analysis and building theory.

  \paragraph{Applications.}
  Another direction is to identify concrete applications of the
  L- and M-convexity notions introduced in this paper.
  The possible applications are not limited to combinatorial
  optimization.
  The original DCA has been used in operations research, economics,
  game theory, and several areas of pure mathematics; see the surveys
  \cite{Murota2008-ej,Murota2024}.
  It is natural to investigate whether the present extensions can
  model problems in these areas whose underlying discrete structures
  exhibit symmetries of classical types other than type~A.

  The framework may also be relevant to areas in which Coxeter
  matroids, Coxeter submodular functions, and generalized Coxeter
  permutahedra arise.
  As discussed in \Cref{subsec:Coxeter-matroids,subsec:Coxeter-submodular}, these objects can be
  interpreted through M-convex polyhedra of the appropriate root
  system type.
  This interpretation places several constructions associated with Coxeter groups
  within a common framework based on convex analysis.
  It is therefore of interest to determine whether the structural,
  conjugacy, and optimality results developed here can yield new
  tools for their combinatorial, geometric, or algorithmic study.

\section*{Acknowledgments}
The author thanks Soichiro Fujii for valuable discussions,
and Hiroshi Hirai for helpful comments,
including his suggestion that led to a
simpler proof of \Cref{thm:chara:L-conv-func}.
The author also thanks Satoru Fujishige for clarifying the
definition of BS-convex functions through private communication.
This work was supported by JSPS KAKENHI Grant Numbers JP22K17854, JP24K21315, JP24K02901, Japan.

\newpage
\appendix
\crefalias{section}{appendix}

\section{Deferred proofs}\label{sec:deferred}
\subsection{Proof of \texorpdfstring{\Cref{prop:distance-function}}{Proposition~5.7}}\label{sec:proof:prop:distance-function}
    The only-if part is obvious from the definition of tight distance functions.
    We prove the if part,
    i.e., $\gamma(\alpha) = (\cone{\gamma})(\alpha)$ for all $\alpha \in \cone(\dom{\gamma}) \cap \RS$.
    Note that $\gamma \geq \cone{\gamma}$ generally holds by the definition~\eqref{eq:func:cone} of $\cone{\gamma}$.
    The equivalence between the two formulations in each type is straightforward to verify from the explicit descriptions of the corresponding root systems 
    (see \Cref{tab:classical-coordinate-realizations}).
    Hence, in the remainder of the proof, we assume the latter coordinate-wise conditions.

    Fix an arbitrary $\hat{\alpha} \in \cone(\dom{\gamma}) \cap \RS$.
    Suppose to the contrary that $\gamma(\hat{\alpha}) > (\cone{\gamma})(\hat{\alpha})$.
    Then, there is a nonnegative coefficient $\mu' \in \cncoeff{\dom{\gamma}}$
    such that $\hat{\alpha} = \sum_{\alpha \in \dom{\gamma}} \mu'(\alpha) \alpha$ and $\sum_{\alpha \in \dom{\gamma}} \mu'(\alpha) \gamma(\alpha) < \gamma(\hat{\alpha})$.
    Letting $r \coloneqq \sum_{\alpha \in \dom{\gamma}} \mu'(\alpha) \gamma(\alpha)$ for such $\mu'$,
    we define the following LP:
    \begin{align}
    \begin{array}{ll}
	\text{Minimize} & \displaystyle \sum_{\alpha \in \dom{\gamma}} \mu(\alpha) \left(\| \alpha - \hat{\alpha} \|_1 + \| \alpha \|_1 \right)\\
	\text{subject to} & \displaystyle \hat{\alpha} = \sum_{\alpha \in \dom{\gamma}} \mu(\alpha) \alpha, \\
        & \mu(\alpha) \geq 0 \quad (\alpha \in \dom{\gamma}),\\
        & \displaystyle \sum_{\alpha \in \dom{\gamma}} \mu(\alpha) \gamma(\alpha) \leq r.
	\end{array}    
\end{align}
Since the set of feasible solutions of the above LP is nonempty
and the objective function is lower bounded by $0$ on the feasible set,
the above LP admits an optimal solution.
Let $\hat{\mu}$ denote such an optimal solution.
Our aim is to show that we can construct another feasible solution that attains a smaller objective value than $\hat{\mu}$,
which leads to a contradiction.

First, we see the following conditions that guarantee the existence of a feasible solution attaining a smaller objective value.
Here, for $p \in \R^n$,
let $\supp^+{p} \coloneqq \Set{ i \in [n]}{p(i) > 0}$ and $\supp^-{p} \coloneqq \Set{ i \in [n]}{p(i) < 0}$.
\begin{claim}\label{cl:optimality-condition}
    Let $\mu$ be a feasible solution of the above LP.
    \begin{enumerate}[label={\textup{(\arabic*)}}]
        \item If there are $\alpha, -\alpha \in \supp{\mu}$, then we can construct another feasible solution attaining a smaller objective value than $\mu$.
        \item If there are
        $\alpha_1, \alpha_2 \in \supp{\mu}$ such that $\alpha_1 + \alpha_2 \in \RS$
        and
        $i \in (\supp^\sigma(\alpha_1) \setminus \supp^\sigma(\hat{\alpha})) \cap \supp^{-\sigma}(\alpha_2)$ for some $i$ and $\sigma \in \{\pm1\}$,
        then we can construct another feasible solution attaining a smaller objective value than $\mu$.
    \end{enumerate}
\end{claim}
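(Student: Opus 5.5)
The plan is to prove both parts by an explicit local modification of $\mu$. Each modification preserves the three LP constraints (the representation of $\hat{\alpha}$, nonnegativity, and the budget $\sum\mu(\alpha)\gamma(\alpha)\leq r$) and strictly decreases the cost $c(\alpha)\coloneqq\|\alpha-\hat{\alpha}\|_1+\|\alpha\|_1$. Throughout, the hypotheses are the coordinate-wise conditions of \Cref{prop:distance-function}, which we are free to assume in this part of the argument.

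For (1), let $\alpha,-\alpha\in\supp{\mu}$ and set $t\coloneqq\min\{\mu(\alpha),\mu(-\alpha)\}>0$. Define $\mu'$ by lowering both $\mu(\alpha)$ and $\mu(-\alpha)$ by $t$. The sum $\sum\mu'(\beta)\beta$ is unchanged, since $t\alpha+t(-\alpha)=0$. The budget changes by $-t(\gamma(\alpha)+\gamma(-\alpha))\leq 0$, using the hypothesis $\gamma(\alpha)+\gamma(-\alpha)\geq0$. The cost drops by $t(c(\alpha)+c(-\alpha))\geq 2t\|\alpha\|_1>0$, because roots are nonzero. So $\mu'$ is feasible with strictly smaller objective value.

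For (2), set $\beta\coloneqq\alpha_1+\alpha_2\in\RS$ and $t\coloneqq\min\{\mu(\alpha_1),\mu(\alpha_2)\}$. Define $\mu'$ by lowering $\mu(\alpha_1),\mu(\alpha_2)$ by $t$ and raising $\mu(\beta)$ by $t$. The representation of $\hat{\alpha}$ is preserved.

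For the budget we need $\gamma(\beta)\leq\gamma(\alpha_1)+\gamma(\alpha_2)$. This is exactly the first-listed inequality $\gamma(\alpha_1)+\gamma(\alpha_2)\geq\gamma(\alpha_1+\alpha_2)$ for distinct roots whose sum is a root. That inequality is part of the hypothesis in every type; in type~C it is stated for $2\RSC$, which is the realization used there. The two roots are distinct, since $\alpha_1=\alpha_2$ would give $2\alpha_1\in\RS$, contradicting \eqref{cond:R2}. Note that $\beta$ also lies in $\dom{\gamma}$ by this inequality, so $\mu'$ is supported on $\dom{\gamma}$ as the LP requires.

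The main obstacle is the cost inequality $c(\beta)<c(\alpha_1)+c(\alpha_2)$. I would handle it coordinatewise. At each coordinate $j$, the triangle inequality gives $|\beta(j)|\leq|\alpha_1(j)|+|\alpha_2(j)|$. It also gives
\[
|\beta(j)-\hat{\alpha}(j)| = |(\alpha_1(j)-\hat{\alpha}(j))+\alpha_2(j)| \leq |\alpha_1(j)-\hat{\alpha}(j)|+|\alpha_2(j)|.
\]
Summing these over $j$ yields
\[
c(\beta)\leq\|\alpha_1-\hat{\alpha}\|_1+\|\alpha_2\|_1+\|\alpha_1\|_1+\|\alpha_2\|_1 .
\]
To compare this with $c(\alpha_1)+c(\alpha_2)$ it suffices to note that $\|\alpha_2\|_1\leq\|\alpha_2-\hat{\alpha}\|_1+\|\hat{\alpha}\|_1$. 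That bound, however, carries the slack term $\|\hat{\alpha}\|_1$, so the crude estimate is not enough. Instead I would argue coordinatewise and compare $|\beta(j)|+|\beta(j)-\hat{\alpha}(j)|$ with $|\alpha_1(j)|+|\alpha_1(j)-\hat{\alpha}(j)|+|\alpha_2(j)|+|\alpha_2(j)-\hat{\alpha}(j)|$. The function $u\mapsto|u|+|u-h|$ is convex and equals $|h|$ on the interval between $0$ and $h$. Hence the contribution of $\beta$ at coordinate $j$ is at most that of $\alpha_1$ and $\alpha_2$ together, provided the triangle inequality is used on the correct pair of terms.

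Strictness comes from the hypothesized index $i$, where $\alpha_1(i)$ has sign $\sigma$, $\alpha_2(i)$ has sign $-\sigma$, and $\hat{\alpha}(i)$ does not have sign $\sigma$. The cancellation $|\beta(i)|<|\alpha_1(i)|+|\alpha_2(i)|$ is then strict. Moreover, $\alpha_1(i)$ lies strictly outside the interval between $0$ and $\hat{\alpha}(i)$ on the $\sigma$ side, so the $\alpha_1$ term already exceeds the minimum value $|\hat{\alpha}(i)|$ by $2|\alpha_1(i)|>0$, and moving toward $\beta(i)$ reduces it. I would carry out this check by a short case split on $\hat{\alpha}(i)\in\{0,-\sigma\cdot(\text{positive})\}$, using that root coordinates lie in $\{0,\pm1,\pm2\}$ (or $\{0,\pm\tfrac12,\pm1\}$). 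This yields a strict total decrease, completing (2).
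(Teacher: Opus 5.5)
Your part~(1), the feasibility checks in part~(2) (including $\alpha_1+\alpha_2\in\dom{\gamma}$), and your strictness analysis at the coordinate $i$ are fine. The gap is the non-strict coordinatewise step used for the cost.

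Write $f_h(u)\coloneqq|u|+|u-h|$. You claim that for every coordinate $j$,
\[
f_{\hat{\alpha}(j)}(\beta(j))\leq f_{\hat{\alpha}(j)}(\alpha_1(j))+f_{\hat{\alpha}(j)}(\alpha_2(j)).
\]
Convexity of $f_h$ gives $f_h(a+b)\leq\max\{f_h(a),f_h(b)\}$ only when $a+b$ lies between $a$ and $b$, i.e., when $a$ and $b$ do not have the same strict sign. When they do, subadditivity can fail, e.g.\ $f_1(2)=3>2=f_1(1)+f_1(1)$.

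This really happens under the hypotheses of the claim in type~C, with the realization $2\RSC$. Take $\alpha_1=e_1+e_2$, $\alpha_2=e_1-e_2$, $\beta=2e_1$, $\hat{\alpha}=e_1+e_3$, $i=2$, $\sigma=+1$. At coordinate $1$, $\beta$ contributes $3$ while $\alpha_1$ and $\alpha_2$ contribute $1$ each. Neither pairing of the triangle inequality helps there, since $|\alpha_k(1)|>|\alpha_k(1)-\hat{\alpha}(1)|$ for $k=1,2$. The total inequality still holds in this example ($4<8$), but only because the cancelling coordinate compensates, and your argument does not account for that.

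The paper avoids coordinatewise bookkeeping for the $\|\cdot-\hat{\alpha}\|_1$ term. It writes $\beta-\hat{\alpha}=(\alpha_1-\sigma e_i-\hat{\alpha})+(\alpha_2+\sigma e_i)$.
\begin{itemize}
\item Because $\sigma(\alpha_1(i)-\hat{\alpha}(i))\geq1$, the first summand has norm $\|\alpha_1-\hat{\alpha}\|_1-1$.
\item For every root in the realizations used whose $i$th coordinate has sign $-\sigma$, one has $\|\alpha_2+\sigma e_i\|_1\leq1$.
\item Hence $\|\beta-\hat{\alpha}\|_1\leq\|\alpha_1-\hat{\alpha}\|_1$.
\end{itemize}
Combining this with $\|\alpha_2-\hat{\alpha}\|_1\geq0$ and the strict inequality $\|\beta\|_1<\|\alpha_1\|_1+\|\alpha_2\|_1$ (from the cancellation at $i$) yields $c(\beta)<c(\alpha_1)+c(\alpha_2)$.

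Alternatively, you can keep your coordinatewise scheme with one extra case. A same-sign overlap occurs only in type~C, for $\beta=2\sigma_k e_k$. There the excess at coordinate $k$ is at most $1$, while the gain at the cancelling coordinate $i$ is at least $3$.
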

\begin{proof}[Proof of \Cref{cl:optimality-condition}]
    (1).
Suppose that there are $\alpha, -\alpha \in \supp{\mu}$.
Then, update the coefficient as $\mu(\alpha) \leftarrow \mu(\alpha) - \delta$ and $\mu(-\alpha) \leftarrow \mu(-\alpha) - \delta$ for a sufficiently small $\delta > 0$.
Since $\alpha + (-\alpha) = 0$ and $\gamma(\alpha) + \gamma(-\alpha) \geq 0$,
the resulting coefficient is also a feasible solution.
Furthermore,
the objective value strictly decreases.

(2).
Suppose that there are
$\alpha_1, \alpha_2 \in \supp{\mu}$ such that $\alpha_1 + \alpha_2 \in \RS$
and
$i \in (\supp^\sigma(\alpha_1) \setminus \supp^\sigma(\hat{\alpha})) \cap \supp^{-\sigma}(\alpha_2)$ for some $i$ and $\sigma \in \{\pm1\}$.
Since $\alpha_1 + \alpha_2 \in \RS$,
we can update the coefficient as $\mu(\alpha_1) \leftarrow \mu(\alpha_1) - \delta$, $\mu(\alpha_2) \leftarrow \mu(\alpha_2) - \delta$, and $\mu(\alpha_1 + \alpha_2) \leftarrow \mu(\alpha_1 + \alpha_2) + \delta$ for a sufficiently small $\delta > 0$.
Since $\gamma(\alpha_1) + \gamma(\alpha_2) \geq \gamma(\alpha_1 + \alpha_2)$,
the resulting coefficient is also a feasible solution.
Furthermore,
since $\| \alpha_1 - \hat{\alpha} \|_1 \geq \| \alpha_1 + \alpha_2 - \hat{\alpha} \|_1$, $\| \alpha_2 - \hat{\alpha} \|_1 \geq 0$,
and $\| \alpha_1 \|_1 + \| \alpha_2 \|_1 > \| \alpha_1 + \alpha_2 \|_1$,
we obtain 
$\| \alpha_1 - \alpha \|_1 + \| \alpha_2 - \alpha \|_1 + \|\alpha_1\|_1 + \|\alpha_2 \|_1  > \| \alpha_1 + \alpha_2 - \alpha \|_1 + \| \alpha_1 + \alpha_2 \|_1$.
Here, the inequality $\| \alpha_1 - \hat{\alpha} \|_1 \geq \| \alpha_1 + \alpha_2 - \hat{\alpha} \|_1$
follows from the facts that $\| \alpha_1 - \sigma e_i - \hat{\alpha} \|_1 = \| \alpha_1 - \hat{\alpha} \|_1 - 1$
and $\| \alpha_1 + \alpha_2 - \hat{\alpha} \|_1 \leq \| \alpha_1 - \sigma e_i - \hat{\alpha} \| + 1$
by the assumption $i \in (\supp^\sigma(\alpha_1) \setminus \supp^\sigma(\hat{\alpha})) \cap \supp^{-\sigma}(\alpha_2)$.
This implies that the objective value strictly decreases.
\end{proof}
We note that $\supp{\hat{\mu}} \setminus \{ \hat{\alpha} \} \neq \emptyset$, since the optimal solution $\hat{\mu} \in M_{\dom{\gamma}}$ satisfies $\hat{\alpha} = \sum_{\alpha \in \dom{\gamma}} \mu(\alpha) \alpha$
and $\sum_{\alpha \in \dom{\gamma}} \hat{\mu}(\alpha) \gamma(\alpha) \leq r < \gamma(\hat{\alpha})$.
Suppose that there is $\alpha_1 \in \supp{\hat{\mu}} \setminus \{ \hat{\alpha} \}$ with $(\supp^+(\alpha_1) \setminus \supp^+(\hat{\alpha})) \cup (\supp^-(\alpha_1) \setminus \supp^-(\hat{\alpha})) \neq \emptyset$;
we may assume that $i \in \supp^+(\alpha_1) \setminus \supp^+(\hat{\alpha})$.
Then,
since $\hat{\alpha} = \sum_{\alpha \in \dom{\gamma}} \hat{\mu}(\alpha) \alpha$,
there is $\alpha_2 \in \supp{\hat{\mu}}$ with $i \in\supp^-(\alpha_2)$;
we refer to such $\alpha_2$ as a \emph{counterpart} of $\alpha_1$ (with respect to $i$).
By \Cref{cl:optimality-condition},
if $\alpha_1 + \alpha_2 = 0$ or $\alpha_1 + \alpha_2 \in \RS$,
then we can construct a feasible solution whose objective value is smaller than that of $\hat{\mu}$,
which contradicts the optimality of $\hat{\mu}$.
Therefore,
it suffices to consider the case where every $\alpha_1 \in \supp{\hat{\mu}} \setminus \{ \hat{\alpha} \}$ satisfies either
\begin{itemize}
    \item $(\supp^+(\alpha_1) \setminus \supp^+(\hat{\alpha})) \cup (\supp^-(\alpha_1) \setminus \supp^-(\hat{\alpha})) = \emptyset$; or
    \item $(\supp^+(\alpha_1) \setminus \supp^+(\hat{\alpha})) \cup (\supp^-(\alpha_1) \setminus \supp^-(\hat{\alpha})) \neq \emptyset$
    and
    $\alpha_1 + \alpha_2 \notin \RS \cup \{0\}$
    for any counterpart $\alpha_2 \in \supp{\hat{\mu}}$ of $\alpha_1$.
\end{itemize}

In the following, we derive a contradiction for each type of root system by proving the existence of a feasible solution whose objective value is smaller than that of $\hat{\mu}$.

(Type~A).
In the case of $\RS = \RSA$,
we can easily see that any $\alpha_1 \in \supp{\hat{\mu}} \setminus \{ \hat{\alpha} \}$ admits its counterpart $\alpha_2$ such that $\alpha_1 + \alpha_2 \in \RSA \cup \{0\}$.
That is, we can always find a feasible solution whose objective value is smaller than that of $\hat{\mu}$,
which contradicts the optimality of $\hat{\mu}$.

(Type~B).
We consider the following two cases:
(i) there is $\alpha \in \supp{\hat{\mu}} \setminus \{ \hat{\alpha} \}$ such that $(\supp^+(\alpha) \setminus \supp^+(\hat{\alpha})) \cup (\supp^-(\alpha) \setminus \supp^-(\hat{\alpha})) \neq \emptyset$;
and (ii) there is no such $\alpha \in \supp{\hat{\mu}} \setminus \{ \hat{\alpha} \}$.

(i). Let $\alpha_1 \in \supp{\hat{\mu}} \setminus \{ \hat{\alpha} \}$ with $(\supp^+(\alpha_1) \setminus \supp^+(\hat{\alpha})) \cup (\supp^-(\alpha_1) \setminus \supp^-(\hat{\alpha})) \neq \emptyset$;
we may assume that $1 \in \supp^+(\alpha_1) \setminus \supp^+(\hat{\alpha})$.
Then, there is a counterpart $\alpha_2 \in \supp{\hat{\mu}}$ of $\alpha_1$ such that $1 \in \supp^-(\alpha_2)$.
Since $\alpha_1 + \alpha_2 \notin \RSB \cup \{0\}$,
we obtain $\alpha_1 = e_1 + \sigma e_j$ and $\alpha_2 = -e_1 + \sigma e_j$ for some $j \neq 1$ and $\sigma \in \{\pm1\}$;
we may assume that $\alpha_1 = e_1 + e_2$ and $\alpha_2 = -e_1 + e_2$.
Then,
$\| \alpha_1 - \hat{\alpha} \|_1 + \| \alpha_2 - \hat{\alpha} \|_1 \geq 2\| (\alpha_1 + \alpha_2)/2 - \hat{\alpha} \|_1 = 2\| \sigma e_j - \hat{\alpha} \|$ holds by the convexity of the function $p \mapsto \| p - \hat{\alpha} \|_1$,
and $\| \alpha_1 \|_1 + \| \alpha_2 \|_1 > 2 \| (\alpha_1 + \alpha_2)/2 \|_1 = 2 \|\sigma e_j\|_1$ holds.
We update the coefficient as
$\hat{\mu}(\alpha_1) \leftarrow \hat{\mu}(\alpha_1) - \delta$,
$\hat{\mu}(\alpha_2) \leftarrow \hat{\mu}(\alpha_2) - \delta$,
and
$\hat{\mu}(\sigma e_j) \leftarrow \hat{\mu}(\sigma e_j) + 2\delta$
for a sufficiently small $\delta > 0$.
Since $\gamma(\alpha_1) + \gamma(\alpha_2) \geq 2 \gamma(\sigma e_j)$ and $\| \alpha_1 - \hat{\alpha} \|_1 + \| \alpha_2 - \hat{\alpha} \|_1 + \|\alpha_1\|_1 + \| \alpha_2 \|_1 > 2\| \sigma e_j - \hat{\alpha} \|_1 + 2\| \sigma e_j \|_1$,
the resulting coefficient is also a feasible solution
and
the objective value strictly decreases.
This contradicts the optimality of $\hat{\mu}$.

(ii). In this case,
we obtain
$\supp{\hat{\mu}} \subseteq \{ \hat{\alpha}, \alpha_1, \alpha_2 \}$,
in which $\hat{\alpha} = \sigma_i e_i + \sigma_j e_j$,
$\alpha_1 = \sigma_i e_i$, and $\alpha_2 = \sigma_j e_j$ for distinct $i,j$ and $\sigma_i, \sigma_j \in \{ \pm1 \}$.
Since $\hat{\alpha} = \sum_{\alpha \in \dom{\gamma}} \hat{\mu}(\alpha) \alpha$ and $\supp{\hat{\mu}} \setminus \{ \hat{\alpha}\} \neq \emptyset$,
we have $\alpha_1, \alpha_2 \in \supp{\hat{\mu}}$.
We here update the coefficient as
$\hat{\mu}(\alpha_1) \leftarrow \hat{\mu}(\alpha_1) - \delta$,
$\hat{\mu}(\alpha_2) \leftarrow \hat{\mu}(\alpha_2) - \delta$,
and
$\hat{\mu}(\hat{\alpha}) \leftarrow \hat{\mu}(\hat{\alpha}) + \delta$
for a sufficiently small $\delta > 0$.
Since $\alpha_1 + \alpha_2 = \hat{\alpha} \in \RSB$,
$\gamma(\alpha_1) + \gamma(\alpha_2) \geq \gamma(\hat{\alpha})$,
and
$\| \alpha_1 - \hat{\alpha} \|_1 + \| \alpha_2 - \hat{\alpha} \|_1 + \| \alpha_1 \|_1 + \| \alpha_2 \|_1 > \| \alpha_1 + \alpha_2 - \hat{\alpha} \|_1 + \| \alpha_1 + \alpha_2 \|_1$,
the resulting coefficient is also a feasible solution
and
the objective value strictly decreases.
This contradicts the optimality of $\hat{\mu}$.

(Type~C).
Since,
for any $\alpha, \alpha' \in 2\RSC$ with $\supp^\sigma(\alpha) \cap \supp^{-\sigma}(\alpha') \neq \emptyset$ for some $\sigma \in \{\pm1\}$,
we have $\alpha + \alpha' = 0$ or $\alpha + \alpha' \in 2\RSC$,
it suffices to consider the case where there are no $\alpha \in \supp{\hat{\mu}} \setminus \{ \hat{\alpha} \}$ such that $(\supp^+(\alpha) \setminus \supp^+(\hat{\alpha})) \cup (\supp^-(\alpha) \setminus \supp^-(\hat{\alpha})) \neq \emptyset$,
which is the same as the case (ii) of Type~B.
Then, by the same argument as in the case (ii) of Type~B,
we may assume that $\hat{\alpha} = \sigma_i e_i + \sigma_j e_j$ for some distinct $i,j$ and $\sigma_i, \sigma_j \in \{ \pm1 \}$
and
$\alpha_1 \coloneqq 2\sigma_i e_i, \alpha_2 \coloneqq 2\sigma_j e_j \in \supp{\hat{\mu}}$.
We can update the coefficient as
$\hat{\mu}(\alpha_1) \leftarrow \hat{\mu}(\alpha_1) - \delta$,
$\hat{\mu}(\alpha_2) \leftarrow \hat{\mu}(\alpha_2) - \delta$,
and
$\hat{\mu}(\hat{\alpha}) \leftarrow \hat{\mu}(\hat{\alpha}) + 2\delta$
for a sufficiently small $\delta > 0$.
Since
$\gamma(\alpha_1) + \gamma(\alpha_2) \geq 2\gamma(\hat{\alpha})$
and
$\| \alpha_1 - \hat{\alpha} \|_1 + \| \alpha_2 - \hat{\alpha} \|_1 + \|\alpha_1\|_1 + \|\alpha_2\|_1 > 2\| (\alpha_1 + \alpha_2)/2 - \hat{\alpha} \|_1 + 2\| (\alpha_1 + \alpha_2)/2 \|_1$,
the resulting coefficient is also a feasible solution
and
the objective value strictly decreases.
This contradicts the optimality of $\hat{\mu}$.

(Type~D).
For any $\alpha \in \RSD \setminus \{ \hat{\alpha} \}$,
we have $(\supp^+(\alpha) \setminus \supp^+(\hat{\alpha})) \cup (\supp^-(\alpha) \setminus \supp^-(\hat{\alpha})) \neq \emptyset$.
Hence,
we only need to consider the case where,
for any $\alpha_1 \in \supp{\hat{\mu}} \setminus \{ \hat{\alpha} \}$
and any counterpart $\alpha_2$ of $\alpha_1$,
we have $\alpha_1 + \alpha_2 \notin \RSD \cup \{0\}$.
That is, if $\alpha_1 = \sigma_i e_i + \sigma_j e_j$ with $i \in \supp^{\sigma_i}(\alpha_1) \setminus \supp^{\sigma_i}(\hat{\alpha})$,
then a counterpart $\alpha_2 \in \supp{\hat{\mu}}$ of $\alpha_1$ with respect to $i$ must be $\alpha_2 = -\sigma_i e_i + \sigma_j e_j$.

Take any $\alpha_1 \in \supp{\hat{\mu}} \setminus \{ \hat{\alpha} \}$;
we may assume that $\alpha_1 = e_1 + e_2$.
If $\supp(\alpha_1) (= \{1,2\}) \subseteq (\supp^+(\alpha_1) \setminus \supp^+(\hat{\alpha})) \cup (\supp^-(\alpha_1) \setminus \supp^-(\hat{\alpha})) (= \supp^+(\alpha_1) \setminus \supp^+(\hat{\alpha}))$,
then the unique counterpart $\alpha_2 = -e_1 + e_2$ of $\alpha_1$ with respect to $1$
and $\alpha_2' = e_1 - e_2$ of $\alpha_1$ with respect to $2$
satisfy $\alpha_2 + \alpha_2' = 0$,
which contradicts the optimality of $\hat{\mu}$ by \Cref{cl:optimality-condition}~(1).
Hence, we may assume that $1 \in \supp^+(\alpha_1) \setminus \supp^+(\hat{\alpha}) \not\ni 2$,
i.e.,
$\hat{\alpha} = e_2 + \sigma e_i$ for some $i \neq 2$ and $(i, \sigma) \neq (1,1)$.
Note that the unique counterpart $\alpha_2$ of $\alpha_1$ with respect to $1$ is $\alpha_2 = - e_1 + e_2$.

Suppose that $(i, \sigma) = (1,-1)$, i.e.,
$\hat{\alpha} = -e_1 + e_2 = \alpha_2$.
Since $\hat{\alpha} = \sum_{\alpha \in \dom{\gamma}} \hat{\mu}(\alpha) \alpha$ and there are no $\alpha \in \supp{\hat{\mu}}$ different from $\alpha_2$ satisfying $1 \in \supp^-(\alpha)$,
there must exist $\alpha_1' \in \supp{\hat{\mu}}$ such that $2 \in \supp^-(\alpha_1')$,
i.e., $\alpha_1' = \sigma' e_j - e_2$ for some $j \neq 2$ and $\sigma' \in \{\pm1\}$.
If $j = 1$, then $\alpha_1' = e_1 - e_2 = -\alpha_2$ or $\alpha_1' = -e_1 - e_2 = -\alpha_1$.
Hence, in both cases,
we obtain a contradiction to the optimality of $\hat{\mu}$ by \Cref{cl:optimality-condition}~(1).
Otherwise, i.e., if $j \neq 1$,
then $\supp(\alpha_1') \subseteq (\supp^+(\alpha_1') \setminus \supp^+(\hat{\alpha})) \cup (\supp^-(\alpha_1') \setminus \supp^-(\hat{\alpha}))$.
By the same argument as above,
we obtain the counterparts $\alpha$ and $\alpha'$ of $\alpha_1'$ with respect to $2$ and $j$, respectively,
that satisfy $\alpha + \alpha' = 0$.
This contradicts the optimality of $\hat{\mu}$ by \Cref{cl:optimality-condition}~(1).

Suppose that $i \neq 1$;
we may assume that
$\hat{\alpha} = e_2 + e_3$.
Since $\hat{\alpha} = \sum_{\alpha \in \dom{\gamma}} \hat{\mu}(\alpha) \alpha$,
there is $\alpha_3 \in \supp{\hat{\mu}} \setminus \{ \hat{\alpha} \}$ such that $3 \in \supp^+(\alpha_3)$,
i.e., $\alpha_3 = e_3 + \sigma'' e_j$ for some $j$ and $\sigma'' \in \{\pm1\}$ with $j \neq 3$ and $(j,\sigma'') \neq (2,1)$.
By the same argument as above,
we can derive a contradiction if $\supp(\alpha_3) = \{2,3\}$.

If $j = 2$, i.e., $\alpha_3 = -e_2 + e_3$,
then by the same argument as the case of $(i, \sigma) = (1,-1)$ above,
we can derive a contradiction.
If $j \neq 2$, then we may assume that $\alpha_3 = e_3 + e_4$.
The unique counterpart $\alpha_4$ of $\alpha_3$ with respect to $4$ is $\alpha_4 = e_3 - e_4$.
Thus, we have $\alpha_1 = e_1 + e_2, \alpha_2 = -e_1 + e_2, \alpha_3 = e_3 + e_4, \alpha_4 = e_3 - e_4\in \supp{\hat{\mu}}$
and $\hat{\alpha} = e_2 + e_3 = (\alpha_1 + \alpha_2 + \alpha_3 + \alpha_4)/2$.
We here update the coefficient as
$\hat{\mu}(\alpha_1) \leftarrow \hat{\mu}(\alpha_1) - \delta$,
$\hat{\mu}(\alpha_2) \leftarrow \hat{\mu}(\alpha_2) - \delta$,
$\hat{\mu}(\alpha_3) \leftarrow \hat{\mu}(\alpha_3) - \delta$,
and
$\hat{\mu}(\alpha_4) \leftarrow \hat{\mu}(\alpha_4) - \delta$,
and
$\hat{\mu}(\hat{\alpha}) \leftarrow \hat{\mu}(\hat{\alpha}) + 2\delta$
for a sufficiently small $\delta > 0$.
Since
$\gamma(\alpha_1) + \gamma(\alpha_2) + \gamma(\alpha_3) + \gamma(\alpha_4) \geq 2\gamma(\hat{\alpha})$
and
$\| \alpha_1 - \hat{\alpha} \|_1 + \| \alpha_2 - \hat{\alpha} \|_1 + \| \alpha_3 - \hat{\alpha} \|_1 + \| \alpha_4 - \hat{\alpha} \|_1 + \|\alpha_1\|_1 + \|\alpha_2\|_1 + \|\alpha_3\|_1 + \|\alpha_4\|_1 > 2\| \hat{\alpha} - \hat{\alpha} \|_1 + 2\| \hat{\alpha} \|_1$,
the resulting coefficient is also a feasible solution
and
the objective value strictly decreases.
This contradicts the optimality of $\hat{\mu}$.

\subsection{Proof of \texorpdfstring{\Cref{prop:integral-subm}}{Proposition~5.17}~(2)}\label{subsec:proof:prop:integral-subm}
    Suppose that $\tau(x_*) = v_{i} \in \tilde{D}(\RS)$.
    Recall that $\Lk{\euCC}{x_*}$ is a spherical expression of $\spCC(\tilde{\RS}|_{- x_*})$
    and $\opnbor{x_*} - x_* = \sk{\Lk{\euCC}{x_*}}$.
    Let $\rho^\circ$ be the spherical submodular function on $\opnbor{x_*} - x_*$
    corresponding to $\rho$, i.e., $\rho^\circ(x - x_*) \coloneqq \rho(x) - \rho(x_*)$ for $x \in \opnbor{x_*}$,
    and $\sigma$ the L-sublinear function of type $\tilde{\RS}|_{-x_*}$ whose restriction to $\opnbor{x_*} - x_*$ is $\rho^\circ$.
    We note that $\sigma$ can be viewed as an L-sublinear function of type $\RS$, since $\tilde{\RS}|_{-x_*} \precsim \RS$, or more precisely, $\tilde{\RS}|_{-x_*} \subseteq \RS$.

    We first show the former statement.
    Assume that there is $r \in \R$ such that
    $\rho(x) - r \in \Z/a(x)$ for all $x \in \dom{\rho}$,
    and
    let $K$ be an arbitrary maximal L-convex cone in $\fan(\sigma)$.
    Our aim is to prove that there is $p_K \in \Mdom$ such that $K = \argmin(\sigma - p_K)$ as in the proof of the if part of the assertion~(1).

    Let $\CC_K$ be the convex subcomplex of $\spCC(\tilde{\RS}|_{-x_*})$ whose geometric realization is $K$.
    Fix an arbitrary maximal simplex $A_K$ of $\CC_K$
    and let $A_*$ be the simplex of $\Lk{\euCC}{x_*}$ corresponding to $A_K$.

    Suppose that $\esimp \coloneqq \{ \alpha_0, \alpha_1, \dots, \alpha_n \}$ is the extended simple system corresponding to some chamber $C$ of $\St{\euCC}{A_*}$,
    in which $\sk{C} = \{ x_0 ,x_1, \dots, x_n \}$
    and $\tau(x_k) = \tau_{\esimp}(\alpha_k) = v_k \in \tilde{D}(\RS)$.
    Note that $x_* = x_i$, since $\tau(x_*) = v_i$.
    Then, we define the vector $p_K$ by
    \begin{align}\label{eq:pK:simp}
        p_K \coloneqq\sum_{x_k \in \sk{A_*} \cup \{x_i\}} a(x_k)\left( \rho(x_k) - r \right) \alpha_k.
    \end{align}
    Since $\rho(x) - r \in \Z/a(x)$ for $x \in \sk{A_*} \cup \{x_i\}$ by the assumption,
    we have $a(x_k)(\rho(x_k)-r) \in \Z$.
    Thus, we obtain $p_K \in \Mdom$.

    In the following, we show that $K = \argmin(\sigma - p_K)$.
    By the same argument as in the proof of the if part of the assertion~(1),
    it suffices to prove that
    \begin{align}\label{eq:aim:inpr}
        \inpr{p_K}{x_k - x_i}\ (= \rho^\circ(x_k - x_i)) = \rho(x_k) - \rho(x_i) \qquad (x_k \in \sk{A_*}).
    \end{align}

    Let $C_*$ be the simplex of $\euCC$ with $\sk{C_*} = \sk{C} \setminus \{x_i\}$,
    which can be viewed as a chamber of $\Lk{\euCC}{x_i}$ with $A_* \preceq C_*$.
    Then, $\esimp|_{-x_i} = \esimp \setminus \{ \alpha_i \}$ is the simple system of $\tilde{\RS}|_{-x_i}$ corresponding to $C_*$.
    Let $a_{\rm Lk}$ denote the mark function of $\Lk{\euCC}{x_i}$.
    Suppose that $p_K$ is representable as
    $p_K = \sum_{\alpha_k \in \esimp|_{-x_i}} p_k \alpha_k$;
    we refer to this representation as the \emph{$\esimp|_{-x_i}$-representation} of $p_K$.
    Then, by \Cref{cor:inpr}~(1) and \Cref{lem:standard}~(1),
    we obtain
    \begin{align}\label{eq:pk:inpr}
        \inpr{p_K}{x_k - x_i} = \left(p_k / a_{\rm Lk}(x_k)\right) \cdot \left(a_{\rm Lk}(x_k)/a(x_k)\right) = p_k / a(x_k).
    \end{align}

    Since $\alpha_i = -\sum_{j \in [0,n] : j \neq i} (a(x_j)/a(x_i)) \alpha_j$,
    the $\esimp|_{-x_i}$-representation of $p_K$ is obtained as follows:
    \begin{align}
      p_K &= a(x_i) (\rho(x_i) - r) \alpha_i + \sum_{x_k \in \sk{A_*}} a(x_k)\left( \rho(x_k) - r \right) \alpha_k\\
      &= -\sum_{j \in [0,n] : j \neq i} a(x_j) (\rho(x_i) - r) \alpha_j + \sum_{x_k \in \sk{A_*}} a(x_k)\left( \rho(x_k) - r \right) \alpha_k\\
      &= \sum_{x_k \in \sk{A_*}} a(x_k)\left( \rho(x_k) - \rho(x_i) \right) \alpha_k - \sum_{x_j \notin \sk{A_*} \cup \{x_i\}} a(x_j) (\rho(x_i) - r) \alpha_j.
    \end{align}
    Thus, it immediately follows from~\eqref{eq:pk:inpr} that $\inpr{p_K}{x_k - x_i} = \rho(x_k)-\rho(x_i)$ for $x_k \in \sk{A_*}$,
    implying the equations~\eqref{eq:aim:inpr}.

    We then show the latter statement;
    assume that $\rho$ is $\RS$-integral and that, for each maximal simplex $A$ of $\euCC[\dom{\rho}]$ and $x \in \sk{A} \setminus \{x_*\}$, there is a vertex $x' \in \sk{A} \setminus \{x\}$ such that $a(x') = 1$.

    Since $\rho$ is $\RS$-integral, so is $\sigma$.
    Thus, there is $p \in \Mdom$ such that $\argmin(\sigma - p)$ forms a maximal L-convex cone in $\fan(\sigma)$.
    For such $p$,
    we define
    \begin{align}\label{eq:r}
        r \coloneqq (\rho(x_*) - \inpr{p}{x_*}) - \lfloor \rho(x_*) - \inpr{p}{x_*} \rfloor.
    \end{align}
    Then, the following claim holds.
    \begin{claim}\label{cl:r}
      The value of $r$ is independent of the choice of $p \in \Mdom$ such that $\argmin(\sigma - p)$ is a maximal L-convex cone in $\fan(\sigma)$.
    \end{claim}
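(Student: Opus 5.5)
The plan is to reduce the claim to an integrality statement. Let $p, p' \in \Mdom$ be two vectors such that $\argmin(\sigma - p)$ and $\argmin(\sigma - p')$ are maximal L-convex cones in $\fan(\sigma)$. By~\eqref{eq:r}, it suffices to prove $\inpr{p - p'}{x_*} \in \Z$. The point is that $p - p' \in \Mdom$, and by \Cref{cor:inpr}~(2) we only know a priori that $\inpr{q}{x} \in \Z/a(x)$ for $q \in \Mdom$ and $x \in \Ldom$. The integrality must therefore come from transferring the pairing from $x_*$ to a vertex of mark $1$, and this transfer is exactly what the extra hypothesis on maximal simplices of $\euCC[\dom{\rho}]$ supplies.

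\emph{Step 1 (connectivity).} The maximal cones of $\fan(\sigma)$ all have dimension $\dim(\dom{\sigma})$, since $\dom{\sigma}$ is convex and $\fan(\sigma)$ is a polyhedral fan covering it. I will connect $K = \argmin(\sigma-p)$ and $K' = \argmin(\sigma-p')$ by a sequence of maximal cones in which consecutive members share a facet; this uses the standard fact that a polyhedral fan with convex support is connected through codimension-one faces. Since $\inpr{\cdot}{x_*}$ is additive in $p$, it then suffices to treat two cases: (a) $K = K'$, and (b) $K$ and $K'$ are adjacent, sharing a facet $F$.

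\emph{Step 2 (one transfer step).} Let $q \coloneqq p - p'$.
\begin{itemize}
\item In case (b), $\sigma$ agrees with both $p$ and $p'$ on $F$, so $\inpr{q}{d} = 0$ for all $d \in F$. By \Cref{lem:chara-L-convex-cone}, $K$ is realized by a convex subcomplex, which is a chamber complex by \Cref{lem:chamber-complex}. I choose a maximal simplex of it whose corank-one face $B$ realizes $F$. Via \Cref{lem:complex:contraction}~(2), this simplex corresponds to a maximal simplex $A$ of $\euCC[\dom{\rho}]$ containing $x_*$, with $\sk{A} = \{x_*\} \cup \{x\} \cup \sk{B}$ for one extra vertex $x \neq x_*$. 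Then $\inpr{q}{y - x_*} = 0$ for every $y \in \sk{B}$, so $\inpr{q}{y} = \inpr{q}{x_*}$ for all such $y$.
\item In case (a), $q$ vanishes on all of $K$, so the same identity holds for every $y \in \sk{A} \setminus \{x_*\}$, where $A$ is any maximal simplex of the complex of $K$.
\end{itemize}

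\emph{Step 3 (using the mark-$1$ vertex).} By hypothesis there is $x' \in \sk{A} \setminus \{x\}$ with $a(x') = 1$.
\begin{itemize}
\item If $x' = x_*$, then $\inpr{q}{x_*} \in \Z$ directly by \Cref{cor:inpr}~(2).
\item Otherwise $x' \in \sk{B}$ (or $x' \in \sk{A} \setminus \{x_*\}$ in case (a)). Then $\inpr{q}{x_*} = \inpr{q}{x'} \in \Z/a(x') = \Z$, again by \Cref{cor:inpr}~(2).
\end{itemize}
In case (a) I take $x$ to be any vertex of $A$ other than $x_*$ when one exists. Summing over the chain from Step 1 gives $\inpr{p - p'}{x_*} \in \Z$, hence the same fractional part in~\eqref{eq:r}.

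The main obstacle is Step 2: making the correspondence between the facet $F$ of the fan and a corank-one face $B$ of a maximal simplex $A \ni x_*$ of $\euCC[\dom{\rho}]$ precise. This needs \Cref{lem:complex:contraction}~(2) together with \Cref{lem:standard}~(2), and the rescaling of link vertices there does not affect the vanishing of $\inpr{q}{\cdot}$. A separate issue is the degenerate situation $\dom{\rho} = \{x_*\}$: there $A = \{x_*\}$, the hypothesis is vacuous, and the argument above gives nothing. I would handle this case separately, noting that every $p \in \Mdom$ is then admissible, so $r$ must be fixed by convention, or the claim is applied only when $\dom{\rho} \supsetneq \{x_*\}$.
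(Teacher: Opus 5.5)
Your proposal is correct and follows the paper's proof. You use the same reduction, via a chain of maximal cones of $\fan(\sigma)$ meeting in facets (each of the form $\argmin(\sigma-p_j)$ with $p_j\in\Mdom$ by $\RS$-integrality), to the cases $K=K'$ and $\dim(K\cap K')=\dim K-1$, with the hypothesis supplying a mark-$1$ vertex in $\sk{A}\setminus\{x\}$. Your final step is cleaner than the paper's: you note that $\inpr{p-p'}{\cdot}$ equals $\inpr{p-p'}{x_*}$ on all of $\{x_*\}\cup\sk{B}$ and apply \Cref{cor:inpr}~(2) at the mark-$1$ vertex, whereas the paper expands $p-p'$ in an extended simple system with subcases $x_0\in\sk{A}$ and $x_0\notin\sk{A}$.

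Your caveat about $\dom{\rho}=\{x_*\}$ is also correct. If $a(x_*)\geq 2$ the claim fails there, since every $p\in\Mdom$ is admissible and $\inpr{p}{x_*}$ need not be an integer. The paper's proof tacitly needs $\sk{A}\neq\{x_*\}$ as well, and in that case the conclusion of \Cref{prop:integral-subm}~(2) holds trivially with $r=\rho(x_*)$.
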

    \begin{proof}[Proof of \Cref{cl:r}]
      For $p \in \Mdom$,
      we define $r(p) \coloneqq (\rho(x_*) - \inpr{p}{x_*}) - \lfloor \rho(x_*) - \inpr{p}{x_*} \rfloor$.
      Since $\fan(\sigma)$ is a polyhedral fan arising from a proper polyhedral sublinear function,
      for any two maximal L-convex cones $K, K' \in \fan(\sigma)$,
      there is a sequence $K \eqqcolon K_0, K_1, \dots, K_\ell \coloneqq K'$ of maximal L-convex cones in $\fan(\sigma)$ such that $K_{j-1} \cap K_{j}$ is a codimension-one L-convex cone in $\fan(\sigma)$, i.e., $\dim(K_{j-1} \cap K_{j}) = \dim{K_j} - 1$ (such as a gallery of a chamber complex; see \Cref{subsec:simplicial_complex}).
      Thus,
      it suffices to show that
      $\inpr{p - p'}{x_*} \in \Z$ for any $p, p' \in \Mdom$ such that $K \coloneqq \argmin(\sigma - p)$ and $K' \coloneqq \argmin(\sigma - p')$ form maximal L-convex cones in $\fan(\sigma)$ and
      \begin{enumerate}[label={\textup{(\roman*)}}]
        \item $K = K'$; or
        \item $\dim(K \cap K') = \dim{K} - 1$.
      \end{enumerate}
      Indeed, this implies that $\inpr{p - p'}{x_*} \in \Z$ for any $p, p' \in \Mdom$ such that $\argmin(\sigma - p)$ and $\argmin(\sigma - p')$ form maximal L-convex cones in $\fan(\sigma)$,
      and hence, that $r(p) = r(p')$.

      In the following, we consider the above two cases (i) $K = K'$ and (ii) $\dim(K \cap K') = \dim{K} - 1$.
      Let $\CC_K$ be the convex subcomplex of $\spCC(\tilde{\RS}|_{-x_*})$ whose geometric realization is $K$.

      (i) $K = K'$.
      Let
      $A_K$ be an arbitrary maximal simplex of $\CC_K$,
      $A_*$ the corresponding simplex of $\Lk{\euCC}{x_*}$ to $A_K$,
      and $A$ the join of $A_*$ and $x_*$, i.e., the simplex such that $\sk{A} = \sk{A_*} \cup \{x_*\}$.
      We note that $A$ is a maximal simplex of $\euCC[\dom{\rho}]$.
      Suppose that $\esimp \coloneqq \{ \alpha_0, \alpha_1, \dots, \alpha_n \}$ is the extended simple system corresponding to some chamber $C$ of $\euCC$ (or $\St{\euCC}{x_*}$) with $A_* \preceq A \preceq C$,
    in which $\sk{C} = \{ x_0 ,x_1, \dots, x_n \}$
    and $\tau(x_k) = \tau_{\esimp}(\alpha_k) = v_k \in \tilde{D}(\RS)$.
    We note that $x_0 \in \Mdom^{\vee}(\RS)$ and $\Lk{\euCC}{x_0} - x_0 = \Lk{\euCC}{0} = \spCC^\star$,
    where the second equality follows from \Cref{lem:link-standard}.
    Since $\tau(x_*) = v_i$, we have $x_* = x_i$.
    Thus, we obtain that
    \begin{align}
        \inpr{p - p'}{x_0} &\in \Z,\label{eq:x0}\\
        \inpr{\alpha}{x_* - x_0} &=
        \begin{cases}
            1/a(x_*) & \text{if $x_* \neq x_0$ and $\alpha = \alpha_i$},\\
            -1 & \text{if $x_* \neq x_0$ and $\alpha = \alpha_0$},\\
            0 & \text{if $x_* = x_0$ or $\alpha \in \simp \setminus \{\alpha_i\}$}
        \end{cases}\label{eq:inpr:xi-x0}.
    \end{align}
    Here, \eqref{eq:x0} follows from $p-p' \in \Mdom$ and $x_0 \in \Mdom^{\vee}(\RS)$,
    and \eqref{eq:inpr:xi-x0} follows from \Cref{lem:inpr} and $\alpha_0 = -\sum_{k = 1}^n a(x_k) \alpha_k$.

    Since $\inpr{p - p'}{x} = 0$ for any $x \in K$,
    we have $p - p' \in (\spn{K})^{\perp} = (\spn{A})^{\perp} = \spn{(\esimp|_{-x_*})|_{-A_*}} = \spn{\esimp|_{-A}}$,
    where the second equality follows from \Cref{lem:span}~(1).
    Thus, $p - p' \in (\spn{\esimp|_{-A}}) \cap \Mdom$.
    We further divide the analysis into the following two cases: (i-1) $x_0 \in \sk{A}$ and (i-2) $x_0 \notin \sk{A}$.
    
    (i-1) $x_0 \in \sk{A}$.
    In this case, we have $\esimp|_{-A} \subseteq \simp \setminus \{ \alpha_i \}$,
    which implies that $\inpr{p-p'}{x_* - x_0} = 0$ by~\eqref{eq:inpr:xi-x0}.
    Thus, we obtain from~\eqref{eq:x0} that
    \begin{align}
        \inpr{p - p'}{x_*} = \inpr{p - p'}{x_* - x_0} + \inpr{p - p'}{x_0} = \inpr{p - p'}{x_0} \in \Z.
    \end{align}

    (i-2) $x_0 \notin \sk{A}$.
    In this case, we have $x_* \neq x_0$.
    Let $I \coloneqq \Set{ k \in [0,n] }{ x_k \in \sk{A} }$.
    We note that $i \in I \not\ni 0$ and $\spn{\esimp|_{-A}} = \spn{\Set{ \alpha_k \in \esimp }{ k \in [0,n] \setminus I }}$.
    Suppose that $p - p' = \sum_{k \in [0,n] \setminus I} c_k \alpha_k$.
    Then, we obtain
    \begin{align}
        \inpr{p - p'}{x_*} &= \inpr{p - p'}{x_* - x_0} + \inpr{p - p'}{x_0}\\
        &= \sum_{k \in [0,n] \setminus I} c_k\inpr{\alpha_k}{x_* - x_0} + \inpr{p - p'}{x_0}\\
        &= -c_0 + \inpr{p - p'}{x_0}.\label{eq:inpr:xi-x0:ii}
    \end{align}
    Here, since $p - p'$ is representable as
    \begin{align}
        p - p' &= c_0 \alpha_0 + \sum_{k \in [n] \setminus I} c_k \alpha_k\\
        &= \sum_{k \in [n] \setminus I} (c_k - a(x_k)c_0) \alpha_k - \sum_{k \in I} a(x_k) c_0 \alpha_k.
    \end{align}
    and $p - p' \in \Mdom$,
    we have $c_k - a(x_k)c_0 \in \Z$ for $k \in [n] \setminus I$, and $a(x_k) c_0 \in \Z$ for $k \in I$.
    By the assumption, there is $k \in I$ such that $a(x_k) = 1$.
    Hence, $c_0$ must be an integer by $a(x_k) c_0 \in \Z$.
    Thus, by~\eqref{eq:x0} and~\eqref{eq:inpr:xi-x0:ii},
    we obtain $\inpr{p - p'}{x_*} \in \Z$.

    (ii) $\dim(K \cap K') = \dim{K} - 1$.
    Let $A_K$ be a maximal simplex of $\CC_K$ such that there is a codimension-one face $A_K'$ of $A_K$ satisfying $A_K' \subseteq K \cap K'$,
    or equivalently, $\spn{A_K'} = K \cap K'$.
    Suppose that $\tau(A_K') = \tau(A_K) \setminus \{v_{j}\}$.
    We similarly define $A_*$, $A$, $C$, and $\esimp$ as in the case for (i).

    Since $\inpr{p - p'}{x} = 0$ for any $x \in K \cap K'$,
    we have $p - p' \in (\spn{K \cap K'})^{\perp} = (\spn{A_K'})^{\perp} = \spn{\esimp|_{-(\tau(A) \setminus \{v_{j}\})}}$.
    Thus, $p - p' \in \spn{\esimp|_{-(\tau(A) \setminus \{v_{j}\})}} \cap \Mdom$.
    By the same argument as in (i) above,
    we can see that $\inpr{p - p'}{x_*} \in \Z$,
    where we remark that, by the assumption, there is $x_k \in \sk{A} \setminus \{x_j\}$ such that $a(x_k) = 1$.
    \end{proof}

    Recall the definition~\eqref{eq:r} of $r$.
    Since $\rho(x_*) - r = \inpr{p}{x_*} + \lfloor \rho(x_*) - \inpr{p}{x_*}\rfloor$
    for some $p \in \Mdom$,
    we have $\rho(x_*) - r \in \Z/a(x_*)$ by \Cref{cor:inpr}~(1).

    Take any $x \in \dom{\rho}$.
    Let $K \in \fan(\sigma)$ be a maximal L-convex cone containing $x - x_*$
    and $p \in \Mdom$ with $K = \argmin(\sigma - p)$.
    Then, we have
    \begin{align}
        \rho(x) - r &= \rho(x) - (\rho(x_*) - \inpr{p}{x_*}) + \lfloor \rho(x_*) - \inpr{p}{x_*} \rfloor\\
        &= \inpr{p}{x} + \lfloor \rho(x_*) - \inpr{p}{x_*} \rfloor,
    \end{align}
    where the first and second equalities follow from \Cref{cl:r} and $\inpr{p}{x - x_*} = \rho(x) - \rho(x_*)$.
    Thus, by \Cref{cor:inpr}~(1),
    we obtain $\rho(x) - r \in \Z/a(x)$.

\section{Explicit descriptions of the discrete midpoint convexity}\label{subsec:DMC}
We provide the explicit discrete midpoint formulae on the Euclidean Coxeter complexes of type $\RSA$, $\RSB$, $\RSC$, and $\RSD$.

\begin{example}[Discrete midpoint on type A]\label{ex:complex_A}
    Recall that $\sk{\euCC(\RSA)} = \Z^{n+1} / \spn{\onevec_{n+1}}$; see~\Cref{tab:classical-coxeter-vertices}.
    Here,
    we can easily check that, for $x, x', y, y' \in \Z^{n+1}$ with $[x] = [x']$ and $[y] = [y']$,
    while $\sbra*{\floor{(x+y)/2}} \neq \sbra*{\floor{(x'+y')/2}}$ and $\sbra*{\ceil{(x+y)/2}} \neq \sbra*{\ceil{(x'+y')/2}}$ in general,
  the identity
  \begin{align}
    \set*{\sbra*{\floor*{\frac{x+y}{2}}}, \sbra*{\ceil*{\frac{x+y}{2}}}} = \set*{ \sbra*{\floor*{\frac{x'+y'}{2}}}, \sbra*{\ceil*{\frac{x'+y'}{2}}}}
    \end{align}
    always holds.
    Based on this fact,
    whenever $\sbra*{\floor{(x+y)/2}}$ and $\sbra*{\ceil{(x+y)/2}}$ appear symmetrically,
    we denote the two elements of the set $\set*{\sbra*{\floor{(x+y)/2}}, \sbra*{\ceil{(x+y)/2}} }$ by $\floor*{([x] + [y])/2}$ and $\ceil*{([x] + [y])/2}$,
    namely,
    \begin{align}
        \set*{\floor*{\frac{[x] + [y]}{2}}, \ceil*{\frac{[x] + [y]}{2}}} = \set*{\sbra*{\floor*{\frac{x+y}{2}}}, \sbra*{\ceil*{\frac{x+y}{2}}}}
    \end{align}
    for $[x], [y] \in \Z^{n+1}/\spn{\onevec_{n+1}} = \sk{\euCC(\RSA)}$.

  It follows from a direct calculation that
  \begin{align}
    \sk{\face{A}{([x]+[y])/2}} &= \set*{\floor*{\frac{[x]+[y]}{2}}, \ceil*{\frac{[x]+[y]}{2}}},\label{eq:DMC:vertex:A}\\
    \frac{[x]+[y]}{2} &= \frac{1}{2} \floor*{\frac{[x]+[y]}{2}} + \frac{1}{2} \ceil*{\frac{[x]+[y]}{2}}\label{eq:DMC:coeff:A}
\end{align}
for $[x], [y] \in \sk{\euCC(\RSA)}$.
Here, we note that $\floor{([x]+[y])/2}, \ceil{([x]+[y])/2}$ are well defined by the above argument.
\end{example}

\begin{example}[Discrete midpoint on type C]\label{ex:complex_C}
    Recall that $\sk{\euCC(\RSC)} = \Z^n$; see~\Cref{tab:classical-coxeter-vertices}.
    It follows from a direct calculation that
  \begin{align}
    \sk{\face{A}{(x+y)/2}} &= \set*{\even*{\frac{x+y}{2}}, \odd[\bigg]{\frac{x+y}{2}}},\label{eq:DMC:vertex:C}\\
    \frac{x+y}{2} &= \frac{1}{2} \even*{\frac{x+y}{2}} + \frac{1}{2} \odd[\bigg]{\frac{x+y}{2}}\label{eq:DMC:coeff:C}
    \end{align}
    for $x, y \in \sk{\euCC(\RSC)}$,
    where $(x+y)/2 \in (\Z/2)^n$.
\end{example}
Before considering the cases of $\RSB$ and $\RSD$,
we prepare several rounding operations as follows:
For $c \in \Z/4$,
we define
\begin{align}
    \qeven{c} &\coloneqq
    \even{2c}/2 =
    \begin{cases}
        c & \text{if $c \in \Z/2$},\\
        \text{the nearest integer to $c$} & \text{if $c \in (\Z/4) \setminus (\Z/2)$},
    \end{cases}\\
    \qodd{c} &\coloneqq
    \odd{2c}/2 =
    \begin{cases}
        c & \text{if $c \in \Z/2$},\\
        \text{the nearest half-integer to $c$} & \text{if $c \in (\Z/4) \setminus (\Z/2)$},
    \end{cases}\\
    \qqeven{c} &\coloneqq
    \begin{cases}
        c & \text{if $c \in \Z/2$},\\
        \qeven{c} - 1 & \text{if $c \in (\Z/4) \setminus (\Z/2)$ and $c < \qeven{c}$},\\
        \qeven{c} + 1 & \text{if $c \in (\Z/4) \setminus (\Z/2)$ and $c > \qeven{c}$},
    \end{cases}\\
    \qqodd{c} &\coloneqq
    \begin{cases}
        c & \text{if $c \in \Z/2$},\\
        \qodd{c} - 1 & \text{if $c \in (\Z/4) \setminus (\Z/2)$ and $c < \qodd{c}$},\\
        \qodd{c} + 1 & \text{if $c \in (\Z/4) \setminus (\Z/2)$ and $c > \qodd{c}$},
    \end{cases}\\
    \qfloor{c} &\coloneqq
    \begin{cases}
        \floor{c} & \text{if $c \in \Z + 1/2$},\\
        \qeven{c} & \text{otherwise, i.e., $c \in \Z \cup \rbra*{(\Z/4) \setminus (\Z/2)}$},
    \end{cases}\\
    \qceil{c} &\coloneqq
    \begin{cases}
        \ceil{c} & \text{if $c \in \Z + 1/2$},\\
        \qeven{c} & \text{otherwise, i.e., $c \in \Z \cup \rbra*{(\Z/4) \setminus (\Z/2)}$},
    \end{cases}\\
    \qqfloor{c} &\coloneqq
    \begin{cases}
        c & \text{if $c \in \Z + 1/2$},\\
        c - 1/2 & \text{if $c \in \Z$},\\
        \qodd{c} & \text{otherwise, i.e., $c \in (\Z/4) \setminus (\Z/2)$},
    \end{cases}\\
    \qqceil{c} &\coloneqq
    \begin{cases}
        c & \text{if $c \in \Z + 1/2$},\\
        c + 1/2 & \text{if $c \in \Z$},\\
        \qodd{c} & \text{otherwise, i.e., $c \in (\Z/4) \setminus (\Z/2)$}.
    \end{cases}
\end{align}
Note that, for $c \in \Z/4$, we have $\qfloor{c}, \qceil{c} \in \Z$ and $\qqfloor{c}, \qqceil{c} \in \Z + 1/2$.
These rounding operations are extended to vectors component-wise.
\begin{example}[Discrete midpoint on type B]\label{ex:complex_B}
    Recall that
    \begin{align}
        \sk{\euCC(\RSB)} &= \Set{ x \in (\Z/2)^n }{ \card{\Set{ i \in [n] }{ x(i) \in \Z + 1/2 }} \neq 1 }\\
        &= \Set{ x \in (\Z/2)^n }{ \card{\Set{ i \in [n] }{ x(i) \in \Z}} \neq n-1 };
    \end{align}
    see~\Cref{tab:classical-coxeter-vertices}.
    Let $x, y \in \sk{\euCC(\RSB)}$ and $z \coloneqq (x+y)/2 \in (\Z/4)^n$.
  Then, we can easily observe that
  \begin{align}
    \qeven{z} \notin \sk{\euCC(\RSB)} &\iff \card{\Set{i \in [n]}{z(i) \in \Z + 1/2}} = 1,\\
    \qodd{z} \notin \sk{\euCC(\RSB)} &\iff \card{\Set{i \in [n]}{z(i) \in \Z}} = n-1.
\end{align}
Thus, $\qeven{z}, \qodd{z} \notin \sk{\euCC(\RSB)}$ if and only if
$z \in (\Z/2)^n$ and $z \notin \sk{\euCC(\RSB)}$;
in this case, $\even{z}$ and $\odd{z}$ are well defined.

Based on the above observation and a direct calculation, we obtain
  \begin{align}
            \sk{\face{A}{z}} &=
            \begin{cases}
                \{\qeven{z}, \qodd{z}\} & \text{if $\qeven{z}, \qodd{z} \in \sk{\euCC(\RSB)}$},\\
                \{\qeven{z}, \qqeven{z}\} & \text{if $\qeven{z} \in \sk{\euCC(\RSB)} \not\ni \qodd{z}$},\\
                \{\qodd{z}, 
                \qfloor{z}, \qceil{z}\} & \text{if $\qeven{z} \notin \sk{\euCC(\RSB)} \ni \qodd{z}$},\\
                \{\even{z}, \odd{z}\} & \text{if $\qeven{z}, \qodd{z} \notin \sk{\euCC(\RSB)}$},
            \end{cases}\label{eq:DMC:vertex:B}\\
            z &=
            \begin{dcases}
                \frac{1}{2}\qeven{z} + \frac{1}{2}\qodd{z} & \text{if $\qeven{z}, \qodd{z} \in \sk{\euCC(\RSB)}$},\\
                \frac{3}{4}\qeven{z} + \frac{1}{4}\qqeven{z} & \text{if $\qeven{z} \in \sk{\euCC(\RSB)} \not\ni \qodd{z}$},\\
                \frac{1}{2}\qodd{z} + \frac{1}{4}\qfloor{z} + \frac{1}{4}\qceil{z} & \text{if $\qeven{z} \notin \sk{\euCC(\RSB)} \ni \qodd{z}$},\\
                \frac{1}{2}\even{z} + \frac{1}{2}\odd{z} & \text{if $\qeven{z}, \qodd{z} \notin \sk{\euCC(\RSB)}$}.
            \end{dcases}\label{eq:DMC:coeff:B}
        \end{align}
        Here, in the case of $\qeven{z}, \qodd{z} \notin \sk{\euCC(\RSB)}$, we can replace $\even{z}$ and $\odd{z}$ with $\floor{z}$ and $\ceil{z}$.
\end{example}

\begin{example}[Discrete midpoint on type D]\label{ex:complex_D}
    Recall that
  \begin{align}
    \sk{\euCC(\RSD)} &= \Set*{ x \in (\Z/2)^n }{ \card{\Set{ i \in [n] }{ x(i) \in \Z+1/2}} \notin \{1,n-1\} }\\
    &= \Set*{ x \in (\Z/2)^n }{ \card{\Set{ i \in [n] }{ x(i) \in \Z}} \notin \{1,n-1\} };
  \end{align}
  see~\Cref{tab:classical-coxeter-vertices}.
  Let $x, y \in \sk{\euCC(\RSD)}$ and $z \coloneqq (x+y)/2 \in (\Z/4)^n$.
  Then, we can easily observe that
  \begin{align}
    \qeven{z} \notin \sk{\euCC(\RSD)} &\iff \card{\Set{i \in [n]}{z(i) \in \Z + 1/2}} \in \{1,n-1\},\\
    \qodd{z} \notin \sk{\euCC(\RSD)} &\iff \card{\Set{i \in [n]}{z(i) \in \Z}} \in \{ 1,n-1 \}.
\end{align}
Thus, $\qeven{z}, \qodd{z} \notin \sk{\euCC(\RSD)}$ if and only if
one of the following conditions~\eqref{cond:N1}--\eqref{cond:N3} holds:
\begin{enumerate}[label={\textup{(N\arabic*)}}, ref={\textup{N\arabic*}}]
    \item $\card{\Set{i \in [n]}{z(i) \in \Z + 1/2}} = 1$ and $\card{\Set{i \in [n]}{z(i) \in \Z}} = n-1$.\label{cond:N1}
    \item $\card{\Set{i \in [n]}{z(i) \in \Z + 1/2}} = n-1$ and $\card{\Set{i \in [n]}{z(i) \in \Z}} = 1$.\label{cond:N2}
    \item $\card{\Set{i \in [n]}{z(i) \in \Z + 1/2}} = \card{\Set{i \in [n]}{z(i) \in \Z}} = 1$.\label{cond:N3}
\end{enumerate}
We note that $z \in (\Z/2)^n$ if $z$ satisfies~\eqref{cond:N1} or~\eqref{cond:N2}.

Based on the above observation and a direct calculation,
we obtain
  \begin{align}
            \sk{\face{A}{z}} &=
            \begin{cases}
                \{\qeven{z}, \qodd{z}\} & \text{if $\qeven{z}, \qodd{z} \in \sk{\euCC(\RSD)}$},\\
                \{\qeven{z}, \qfloor{z}, \qceil{z}\} & \text{if $\qeven{z} \in \sk{\euCC(\RSD)} \not\ni \qodd{z}$},\\
                \{\qodd{z}, \qqodd{z}\} & \text{if $\qeven{z} \notin \sk{\euCC(\RSD)} \ni \qodd{z}$},\\
                \{\even{z}, \odd{z}\} & \text{if $z$ satisfies \eqref{cond:N1}},\\
                \{\qqfloor{z}, \qqceil{z}\} & \text{if $z$ satisfies \eqref{cond:N2}},\\
                \{\qfloor{z}, \qceil{z}, \qqfloor{z}, \qqceil{z}\} & \text{if $z$ satisfies \eqref{cond:N3}},
            \end{cases}\label{eq:DMC:vertex:D}\\
            z &=
            \begin{dcases}
                \frac{1}{2}\qeven{z} + \frac{1}{2}\qodd{z} & \text{if $\qeven{z}, \qodd{z} \in \sk{\euCC(\RSD)}$},\\
                \frac{1}{2}\qeven{z} + \frac{1}{4} \qfloor{z} + \frac{1}{4} \qceil{z} & \text{if $\qeven{z} \in \sk{\euCC(\RSD)} \not\ni \qodd{z}$},\\
                \frac{3}{4}\qodd{z} + \frac{1}{4}\qqodd{z} & \text{if $\qeven{z} \notin \sk{\euCC(\RSD)} \ni \qodd{z}$},\\
                \frac{1}{2}\even{z} + \frac{1}{2}\odd{z} & \text{if $z$ satisfies \eqref{cond:N1}},\\
                \frac{1}{2}\qqfloor{z} + \frac{1}{2}\qqceil{z} & \text{if $z$ satisfies \eqref{cond:N2}},\\
                \frac{1}{4}\qfloor{z} + \frac{1}{4}\qceil{z} + \frac{1}{4}\qqfloor{z} + \frac{1}{4}\qqceil{z} & \text{if $z$ satisfies \eqref{cond:N3}}.
            \end{dcases}\label{eq:DMC:coeff:D}
        \end{align}
        Here, in the case where $z$ satisfies \eqref{cond:N1}, we can replace $\even{z}$ and $\odd{z}$ with $\floor{z}$ and $\ceil{z}$.
\end{example}

By the descriptions~\eqref{eq:DMC:vertex:A},~\eqref{eq:DMC:vertex:B},~\eqref{eq:DMC:vertex:C}, and~\eqref{eq:DMC:vertex:D} on the vertex sets of the simplex containing the midpoint of two vertices of $\Ldom$,
we can rephrase the condition (d) of \Cref{thm:chara:L-convex-set} for each case of types~A, B, C, and D as follows.
\begin{corollary}\label{cor:DMC:set}
A nonempty set $D \subseteq \Ldom$ is L-convex if and only if the following holds for each case of $\RS = \RSA$ (Type~A), $\RS =\RSB$ (Type~B), $\RS =\RSC$ (Type~C), and $\RS =\RSD$ (Type~D):
    \begin{description}[font=\normalfont]
        \item[(Type~A)] For any $[x], [y] \in D$, we have $\floor{([x]+[y])/2}, \ceil{([x]+[y])/2} \in D$.
        \item[(Type~B)] For any $x, y \in D$, we have
        \begin{align}
            \begin{cases}
                \qeven{z}, \qodd{z} \in D & \text{if $\qeven{z}, \qodd{z} \in \sk{\euCC(\RSB)}$},\\
                \qeven{z}, \qqeven{z} \in D & \text{if $\qeven{z} \in \sk{\euCC(\RSB)} \not\ni \qodd{z}$},\\
                \qodd{z}, 
                \qfloor{z}, \qceil{z} \in D & \text{if $\qeven{z} \notin \sk{\euCC(\RSB)} \ni \qodd{z}$},\\
                \even{z}, \odd{z} \in D & \text{if $\qeven{z}, \qodd{z} \notin \sk{\euCC(\RSB)}$},
            \end{cases}
        \end{align}
        where $z \coloneqq (x+y)/2$.
        \item[(Type~C)] For any $x,y \in D$, we have $\even{(x+y)/2}, \odd{(x+y)/2} \in D$.
        \item[(Type~D)] For any $x,y \in D$, we have
        \begin{align}
            \begin{cases}
                \qeven{z}, \qodd{z} \in D & \text{if $\qeven{z}, \qodd{z} \in \sk{\euCC(\RSD)}$},\\
                \qeven{z}, \qfloor{z}, \qceil{z} \in D & \text{if $\qeven{z} \in \sk{\euCC(\RSD)} \not\ni \qodd{z}$},\\
                \qodd{z}, \qqodd{z} \in D & \text{if $\qeven{z} \notin \sk{\euCC(\RSD)} \ni \qodd{z}$},\\
                \even{z}, \odd{z} \in D & \text{if $z$ satisfies \eqref{cond:N1}},\\
                \qqfloor{z}, \qqceil{z} \in D & \text{if $z$ satisfies \eqref{cond:N2}},\\
                \qfloor{z}, \qceil{z}, \qqfloor{z}, \qqceil{z} \in D & \text{if $z$ satisfies \eqref{cond:N3}},
            \end{cases}
        \end{align}
        where $z \coloneqq (x+y)/2$.
    \end{description}
\end{corollary}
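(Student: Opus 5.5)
The plan is to reduce \Cref{cor:DMC:set} directly to condition (d) of \Cref{thm:chara:L-convex-set}. That theorem says a nonempty $D \subseteq \Ldom$ is L-convex if and only if $\sk{\cell{A}{(x+y)/2}} \subseteq D$ for all $x, y \in D$. So it suffices to show, for each classical type, that the set of vertices listed in the corresponding case of the corollary is exactly $\sk{\cell{A}{z}}$ with $z = (x+y)/2$. Once this identification is in hand, the corollary's condition is literally condition (d), and the equivalence follows immediately.

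For each type we invoke the explicit midpoint descriptions proved in \Cref{subsec:DMC}.
\begin{itemize}
\item In type A we use \eqref{eq:DMC:vertex:A}, noting (as in \Cref{ex:complex_A}) that the unordered pair $\{\floor{([x]+[y])/2}, \ceil{([x]+[y])/2}\}$ is well defined on the quotient $\Z^{n+1}/\spn{\onevec_{n+1}}$. Hence membership of both elements in $D$ does not depend on the chosen representatives.
\item In type C we use \eqref{eq:DMC:vertex:C}.
\item In types B and D we use \eqref{eq:DMC:vertex:B} and \eqref{eq:DMC:vertex:D}, respectively.
\end{itemize}

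In types B and D the case split in the corollary matches verbatim the case split in those formulas. The conditions in each case are membership of $\qeven{z}$ and $\qodd{z}$ in $\sk{\euCC(\RS)}$, or one of \eqref{cond:N1}--\eqref{cond:N3}. The preceding observations in \Cref{ex:complex_B,ex:complex_D} show these cases are exhaustive and mutually exclusive, so each $z$ falls in exactly one case.

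The only point needing care is that the operators used in each case are well defined there. For instance, $\even{z}$ and $\odd{z}$ require $z \in (\Z/2)^n$, which holds in the last case of type B and under \eqref{cond:N1} in type D, again by those observations. With that checked, nothing beyond bookkeeping remains.

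The main obstacle therefore lies not in this corollary but in the explicit descriptions \eqref{eq:DMC:vertex:B} and \eqref{eq:DMC:vertex:D}, which the corollary takes as given from \Cref{subsec:DMC}. If I had to verify them, I would use the chamber inequalities of \Cref{tab:classical-coxeter-chambers}. The step would be to check that each listed set of vertices spans a face of a common chamber containing $z$, and that the stated convex coefficients, from \eqref{eq:DMC:coeff:B} and \eqref{eq:DMC:coeff:D}, are positive and reproduce $z$.
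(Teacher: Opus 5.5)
Your proposal is correct and matches the paper's argument. The paper obtains the corollary exactly as you do: it substitutes the explicit descriptions \eqref{eq:DMC:vertex:A}, \eqref{eq:DMC:vertex:B}, \eqref{eq:DMC:vertex:C}, and \eqref{eq:DMC:vertex:D} of $\sk{\cell{A}{(x+y)/2}}$ into condition~(d) of \Cref{thm:chara:L-convex-set}, and your remarks on well-definedness agree with the observations in \Cref{ex:complex_A,ex:complex_B,ex:complex_D}.
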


Similarly, by the descriptions~\eqref{eq:DMC:coeff:A},~\eqref{eq:DMC:coeff:B},~\eqref{eq:DMC:coeff:C}, and~\eqref{eq:DMC:coeff:D} on the convex combinations representing the midpoint of two vertices of $\Ldom$,
we can rephrase the condition (d) of \Cref{thm:chara:L-conv-func} for each case of types~A, B, C, and D as follows.
\begin{corollary}\label{cor:DMC:func}
    A function $\funcdoms{f}{\Ldom}{\ER}$ with $\dom{f} \neq \emptyset$ is L-convex if and only if the following holds for each case of $\RS = \RSA$ (Type~A), $\RS =\RSB$ (Type~B), $\RS =\RSC$ (Type~C), and $\RS =\RSD$ (Type~D):
    \begin{description}[font=\normalfont]
        \item[(Type~A)] For any $[x], [y] \in \dom{f}$, we have
        \begin{align}
            f([x]) + f([y]) \geq f\rbra*{\floor*{\frac{[x]+[y]}{2}}} + f\rbra*{\ceil*{\frac{[x]+[y]}{2}}}
        \end{align}
        \item[(Type~B)] For any $x, y \in \dom{f}$, we have
        \begin{align}
            f(x) + f(y)
            \geq
            \begin{dcases}
                f(\qeven{z}) + f(\qodd{z}) & \text{if $\qeven{z}, \qodd{z} \in \sk{\euCC(\RSB)}$},\\
                \frac{3}{2}f(\qeven{z}) + \frac{1}{2}f(\qqeven{z}) & \text{if $\qeven{z} \in \sk{\euCC(\RSB)} \not\ni \qodd{z}$},\\
                f(\qodd{z}) + \frac{1}{2}f(\qfloor{z}) + \frac{1}{2}f(\qceil{z}) & \text{if $\qeven{z} \notin \sk{\euCC(\RSB)} \ni \qodd{z}$},\\
                f(\even{z}) + f(\odd{z}) & \text{if $\qeven{z}, \qodd{z} \notin \sk{\euCC(\RSB)}$}.
            \end{dcases}
        \end{align}
        where $z \coloneqq (x+y)/2$.
        \item[(Type~C)] For any $x,y \in \dom{f}$, we have
        \begin{align}
            f(x) + f(y) \geq f\rbra*{\even*{\frac{x+y}{2}}} + f\rbra*{\odd[\bigg]{\frac{x+y}{2}}}.
        \end{align}
        \item[(Type~D)] For any $x,y \in \dom{f}$, we have
        \begin{align}
            f(x) + f(y) \geq
            \begin{dcases}
                f(\qeven{z}) + f(\qodd{z}) & \text{if $\qeven{z}, \qodd{z} \in \sk{\euCC(\RSD)}$},\\
                f(\qeven{z}) + \frac{1}{2} f(\qfloor{z}) + \frac{1}{2} f(\qceil{z}) & \text{if $\qeven{z} \in \sk{\euCC(\RSD)} \not\ni \qodd{z}$},\\
                \frac{3}{2}f(\qodd{z}) + \frac{1}{2}f(\qqodd{z}) & \text{if $\qeven{z} \notin \sk{\euCC(\RSD)} \ni \qodd{z}$},\\
                f(\even{z}) + f(\odd{z}) & \text{if $z$ satisfies \eqref{cond:N1}},\\
                f(\qqfloor{z}) + f(\qqceil{z}) & \text{if $z$ satisfies \eqref{cond:N2}},\\
                \frac{1}{2}\rbra*{f(\qfloor{z}) + f(\qceil{z}) + f(\qqfloor{z}) + f(\qqceil{z})} & \text{if $z$ satisfies \eqref{cond:N3}},
            \end{dcases}
        \end{align}
        where $z \coloneqq (x+y)/2$.
    \end{description}
\end{corollary}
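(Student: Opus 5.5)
The plan is to obtain \Cref{cor:DMC:func} directly from the equivalence (a)$\Leftrightarrow$(c) of \Cref{thm:chara:L-conv-func}, combined with the explicit midpoint decompositions of \Cref{subsec:DMC}. By that theorem, $f$ is L-convex if and only if it satisfies the discrete midpoint convexity~\eqref{eq:midpointconvexity}. This reads
\[
f(x)+f(y)\geq 2\sum_{v\in\Ldom}\cell{\lambda}{z}(v)f(v),\qquad z\coloneqq (x+y)/2 ,
\]
for all $x,y\in\Ldom$. Consequently, it suffices to identify, for each type, the coefficient vector $\cell{\lambda}{z}$ and to multiply it by two.

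\textbf{Step 1.} Reduce to $x,y\in\dom{f}$. If $x\notin\dom{f}$ or $y\notin\dom{f}$, then the left-hand side of~\eqref{eq:midpointconvexity} is $+\infty$, so the inequality holds trivially. Hence~\eqref{eq:midpointconvexity} for all $x,y\in\Ldom$ is equivalent to~\eqref{eq:midpointconvexity} for all $x,y\in\dom{f}$. This matches the quantifiers in \Cref{cor:DMC:func}.

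\textbf{Step 2.} Substitute the coefficients type by type.
\begin{itemize}
\item Type A: \eqref{eq:DMC:vertex:A} and~\eqref{eq:DMC:coeff:A} give the weights $1/2,1/2$ on $\floor{([x]+[y])/2}$ and $\ceil{([x]+[y])/2}$. These are well defined as an unordered pair, and the right-hand side is symmetric in them.
\item Type C: \eqref{eq:DMC:coeff:C} gives the weights $1/2,1/2$ on $\even{z}$ and $\odd{z}$.
\item Type B: the four cases of~\eqref{eq:DMC:coeff:B}, after doubling, produce the four displayed right-hand sides. Here $2\cdot\tfrac34=\tfrac32$ and $2\cdot\tfrac14=\tfrac12$.
\item Type D: the six cases of~\eqref{eq:DMC:coeff:D}, after doubling, give exactly the six stated expressions.
\end{itemize}

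The only genuine content, and hence the main obstacle, lies in the correctness of the midpoint descriptions~\eqref{eq:DMC:vertex:B}--\eqref{eq:DMC:coeff:D}, which the paper asserts by ``direct calculation''. If these need to be checked, I would proceed in four steps:
\begin{itemize}
\item take the chamber description of \Cref{tab:classical-coxeter-chambers} for the relevant type;
\item verify that the listed points lie in $\Ldom$, using the vertex sets of \Cref{tab:classical-coxeter-vertices} together with the case conditions on $\qeven{z}$ and $\qodd{z}$;
\item verify that these points are pairwise adjacent, i.e. that they lie in a common closed chamber;
\item confirm that the stated positive weights sum to one and reproduce $z$, which is coordinatewise arithmetic in $\Z/4$.
\end{itemize}
Uniqueness of barycentric coordinates in the simplex $\cell{A}{z}$ then identifies $\cell{\lambda}{z}$. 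Since the corollary is stated after those descriptions, I would take them as given, so the proof reduces to Steps 1 and 2.
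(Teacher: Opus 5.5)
Your proposal is correct and follows the paper's argument: invoke the discrete midpoint characterization of \Cref{thm:chara:L-conv-func}, reduce to $x,y\in\dom{f}$ as in your Step~1, and substitute and double the coefficients \eqref{eq:DMC:coeff:A}, \eqref{eq:DMC:coeff:B}, \eqref{eq:DMC:coeff:C}, and \eqref{eq:DMC:coeff:D}, taking those midpoint descriptions as given. You also cite the right condition, (a)$\Leftrightarrow$(c) of that theorem, whereas the paper's sentence introducing the corollary says ``condition (d)'', which appears to be a slip since the corollary quantifies over all pairs in $\dom{f}$ rather than only adjacent chambers.
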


\end{document}